\documentclass[letterpaper, 11pt]{amsart}
\usepackage{amssymb,amsmath,amsfonts,amsthm}
\usepackage{graphicx}
\usepackage[normalem]{ulem}
\graphicspath{ {./images/} }
\usepackage{setspace}

\usepackage{mathrsfs}  
\usepackage{psfrag}
\usepackage{mathtools}
\usepackage{color}
\usepackage{todonotes}
\usepackage{enumitem}

\definecolor{Red}{rgb}{1,0,0}
\definecolor{Blue}{rgb}{0,0,1}
\definecolor{Green}{rgb}{0,1,0}
\definecolor{magenta}{rgb}{1,0,.6}
\definecolor{gold}{rgb}{.6,.5,0}
\definecolor{orange}{rgb}{1,0.4,0}
\definecolor{darkgreen1}{rgb}{0, .35, 0}
\definecolor{darkgreen}{rgb}{0, .6, 0}
\definecolor{darkred}{rgb}{.75,0,0}

\usepackage{hyperref}
\hypersetup{colorlinks=true, citecolor=green, linkcolor=blue, hypertexnames=false}

\usepackage{chngcntr}
\counterwithin{equation}{section}

\DeclarePairedDelimiter{\floor}{\lfloor}{\rfloor}
\DeclarePairedDelimiter{\roof}{\lceil}{\rceil}
\theoremstyle{plain}
\newtheorem{main}{Theorem}

\newtheorem{theorem}{Theorem}[section]
\newtheorem{lemma}[theorem]{Lemma}
\newtheorem{proposition}[theorem]{Proposition}

\newtheorem{conjecture}{Conjecture}

\theoremstyle{remark}
\newtheorem{remark}[theorem]{Remark}
\newtheorem{definition}[theorem]{Definition}

\newcommand\numberthis{\addtocounter{equation}{1}\tag{\theequation}}

\newcommand{\Sing}{\operatorname{Sing}}

\newcommand{\bl}[1]{\textcolor{blue}{#1}}

           \def\ea{\end{array}}
          \def\ec{\end{center}}
     \def\ed{\end{description}}
        \def\ee{\end{equation}}
       \def\eea{\end{eqnarray}}
     \def\eeaa{\end{eqnarray*}}
 \def\et{\end{thebibliography}}
\def\bib{\bibitem}


\def\bM{{\bf{M}}}

\def\Orb{{\rm Orb}}

\def\Cl{{\rm Cl}}

\def\Reg{{\rm Reg}}
\def\Sing{{\rm Sing}}

\def\CR{{\rm CR}}

\newcommand{\interior}[1]{%
	{\kern0pt#1}^{\mathrm{o}}%
}

\def\supp{\operatorname{supp}}

\def\cG{{\mathcal G}}

\def\cD{{\mathcal D}}
\def\cC{{\mathcal C}}
\def\cO{{\mathcal O}}

\def\cU{{\mathcal U}}
\def\cV{{\mathcal V}}
\def\cR{{\mathcal R}}
\def\cB{{\mathcal B}}

\def\cF{{\mathcal F}}
\def\cM{{\mathcal M}}
\def\cN{{\mathcal N}}
\def\cP{{\mathcal P}}

\def\cR{{\mathcal R}}

\def\cS{{\mathcal S}}

\def\loc{\operatorname{loc}}

\def\vep{\varepsilon}

\def\RR{{\mathbb R}}

\def\ZZ{{\mathbb Z}}
\def\NN{{\mathbb N}}

\def\Var{\operatorname{Var}}
\def\Exp{\operatorname{Exp}}

\def\dxt{\delta_{(x,t)}}

	\title[An Ergodic Spectral Decomposition Theorem]{An Ergodic Spectral Decomposition Theorem for Singular Star Flows}
\date{\today}
\author{Maria Jose Pacifico, Fan Yang and Jiagang Yang}

\address{Instituto de Matem\'atica, Universidade Federal do Rio de Janeiro, C. P. 68.530, CEP 21.945-970,  Rio de Janeiro, RJ, Brazil.}
 \email{pacifico@im.ufrj.br }

\address{Department of Mathematics, Wake Forest University, Winston-Salem, NC, USA.}
\email{yangf@wfu.edu}

\address{Departamento de Geometria, Instituto de Matem\'atica e Estat\'\i stica, Universidade Federal Fluminense, Niter\'oi, Brazil}
\email{yangjg\@@impa.br}

\thanks{Pacifico’s work was partially supported by CAPES-Finance Code 001, CNPq Projeto Universal No. 404943/2023-3, CNPq-Brazil grant 307776/2019-0 and by
	Foundation for Research Support of the State of Rio de Janeiro (FAPERJ) grant CNE
	E-26/202.850/2018(239069). J.\ Yang’s work was partially supported by CAPES Finance Code 001, CNPq-Brazil grant 312054/2023-8, CNPq-Projeto Universal No.
	404943/2023-3, PRONEX, and
	MATH-AmSud 220029. F.\ Yang’s work was partially supported by National Science
	Foundation (NSF) grant DMS-2418590.}

\setcounter{tocdepth}{2}
\begin{document}

\begin{abstract}
	For Axiom A diffeomorphisms and flows, the celebrated Spectral Decomposition Theorem of Smale states that the non-wandering set decomposes into a finite disjoint union of isolated compact invariant sets, each of which is the homoclinic class of a periodic orbit. For singular star flows which can be seen as ``Axiom A flows with singularities'', this result remains open and is known as the Spectral Decomposition Conjecture. In this paper, we will provide a positive answer to an ergodic version of this conjecture: $C^1$ open and densely, singular star flows with positive topological entropy can only have finitely many ergodic measures of maximal entropy. More generally, we obtain the finiteness of equilibrium states for any H\"older continuous potential functions satisfying a mild, yet optimal, condition. We also show that $C^1$ open and densely, star flows are almost expansive, and the topological pressure of continuous functions vary continuously with respect to the vector field in $C^1$ topology. 
	\hspace{0.5cm}

%
\end{abstract}

\maketitle

\tableofcontents

\section{Introduction}\label{s.intro}
Smale's Spectral Decomposition Theorem \cite{Smale} provides perhaps one of the most important description on the topological structure of Axiom A systems, including both diffeomorphisms and  non-singular flows (i.e., $|X(x)|\ne 0$ for all $x\in\bM$).  It states that the non-wandering set decomposes into finitely many transitive, isolated pieces, each of which is the closure of homoclinic orbits of a hyperbolic periodic orbit. Each of such pieces is called a hyperbolic basic set. From there, studying the topological/statistical properties of Axiom A systems can be done by considering the corresponding properties on each hyperbolic basic set; then, the finiteness is used to translate such properties from individual hyperbolic basic sets to the entire system.

For flows with singularities, however, the situation is quite different and more complicated. Such flows cannot be uniformly hyperbolic, and are generally not structural stable. So to pursue a ``Spectral Decomposition Theorem'' for such systems, one need to first find the correct notion of ``hyperbolicity for singular flows''. Such a quest naturally started with the {\em star systems}. Here a system is said to have the star property, if for every system sufficiently close to it under $C^1$ topology, all periodic orbits and singularities (if exist) are hyperbolic. For diffeomorphisms and non-singular flows, it was proven by Frank \cite{Franks} and Liao \cite{Liao81} that structural stability implies the star property. Later, Liao \cite{Liao81} and Ma\~n\'e \cite{Mane} (See also \cite{Palis88}) proved that for diffeomorphisms, the star property is equivalent to Axiom A. Later, the same result was proven independently by Hayashi \cite{Hayashi}, Gan and Wen  \cite{GW} for non-singular star flows. Following this idea, one can regard singular star flows as ``Axiom A systems with singularities'', and the Spectral Decomposition Conjecture for singular flows can be stated as follows (for the definition of chain recurrence classes, see Section~\ref{ss.crc}).
\begin{conjecture}\cite{Palis, ZGW, SGW}\label{c.sdt}
	$C^1$ open and densely (or $C^1$ generically), singular star flows have only finitely many non-trivial chain recurrence classes, each of which is the homoclinic class of a hyperbolic periodic orbit. 
\end{conjecture}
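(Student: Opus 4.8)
\emph{Structure and reductions.} The plan is to work on a $C^1$ open and dense set of star flows where both the structure theory of star flows and the standard $C^1$-generic tools are available. By the structure theory of singular star flows (Gan--Wen, Bonatti--da Luz), $C^1$ open and densely the chain recurrent set $\mathcal R(X)$ carries a multisingular hyperbolic structure: a continuous dominated splitting over $\mathcal R(X)$ whose associated rescaled linear Poincar\'e flow is uniformly hyperbolic along every periodic orbit and every regular orbit of $\mathcal R(X)$, with only finitely many possible indices. Intersecting with the Kupka--Smale property and with the generic consequences of the connecting and ergodic closing lemmas (Pugh, Hayashi, Bonatti--Crovisier, Ma\~n\'e, Liao), I may further assume that every non-trivial chain recurrence class contains a hyperbolic periodic orbit and is the Hausdorff limit of periodic orbits it contains, that homoclinic classes depend lower-semicontinuously and chain recurrence classes upper-semicontinuously on the vector field, and that a chain recurrence class is either a homoclinic class or a hyperbolic periodic orbit or singularity that does not accumulate other chain classes. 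The conjecture then reduces to two assertions: \textbf{(A)} every non-trivial chain recurrence class $C$ coincides with the homoclinic class $H(\gamma)$ of each of its periodic orbits $\gamma$; \textbf{(B)} there are only finitely many non-trivial chain recurrence classes.

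\emph{Each class is a homoclinic class.} Given $C\ni\gamma$, I would first show the index is constant on the periodic orbits of $C$: otherwise, since $C$ is generically approximated by periodic orbits and the rescaled Poincar\'e flow has a dominated splitting over $C$, a $C^1$ perturbation in the spirit of Ma\~n\'e and Liao along a periodic orbit realizing the weakest contraction would create a non-hyperbolic periodic orbit near $\gamma$ still lying in a chain recurrence class, contradicting the star property. With the index constant, the rescaled linear Poincar\'e flow is uniformly hyperbolic over the closure of the union of periodic orbits of $C$, and Hayashi's connecting lemma produces, for every periodic orbit $p\subset C$, transverse intersections of $W^s(\gamma)$ with $W^u(p)$ and of $W^u(\gamma)$ with $W^s(p)$ along the regular part of $C$; orbit segments passing near an accumulated singularity $\sigma$ are routed through the strong stable and strong unstable manifolds of $\sigma$, which are well controlled by multisingular hyperbolicity. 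Hence all periodic orbits of $C$ are homoclinically related to $\gamma$, and a shadowing argument for the rescaled flow yields $C=H(\gamma)$.

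\emph{Finiteness: the crux.} In the Axiom A case finiteness is immediate: a hyperbolic set has local product structure, so each basic set is open in the non-wandering set and compactness forces finiteness. Here the plan is to establish the singular analogue, namely a \emph{local product structure} for the multisingular hyperbolic set $\mathcal R(X)$, from which I would deduce that each non-trivial homoclinic class $H(\gamma)$ is relatively open in $\mathcal R(X)$ minus the singular set and that each hyperbolic singularity is accumulated by only finitely many chain recurrence classes; together with compactness of $\mathcal R(X)$, finiteness of $\mathrm{Sing}(X)$, and the last generic property above, this gives \textbf{(B)}. Away from the singularities, this local product structure is a standard consequence of the uniform hyperbolicity of the rescaled linear Poincar\'e flow and a shadowing lemma for the flow restricted to the regular part. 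The entire difficulty sits at a singularity $\sigma$ accumulated by $\mathcal R(X)$: the time reparametrization (the rescaling cocycle) that turns the stable and unstable directions at $\sigma$ into the hyperbolic directions of the rescaled flow is \emph{unbounded}, and a priori it need not be compatible across orbit segments making deep versus shallow near-$\sigma$ excursions, nor across two distinct classes accumulating the same $\sigma$.

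\emph{Attacking finiteness, and the main obstacle.} Concretely I would: (1) for each hyperbolic singularity $\sigma$, list the finitely many ``accumulation types'' with which a chain recurrence class can approach $\sigma$ --- read off from how the center-stable and center-unstable bundles of the multisingular splitting meet $W^{ss}(\sigma)$ and $W^{uu}(\sigma)$ --- and show this type is constant along a fixed class; (2) show that a class approaching $\sigma$ with a given type does so with a \emph{uniform} local product structure near $\sigma$, which amounts to a uniform bound on the rescaling cocycle over the class, say by proving that near-$\sigma$ excursions of regular orbits of $\mathcal R(X)$ have ``entry--exit'' geometry controlled, up to uniform constants, solely by the fixed eigenvalues of $\sigma$; (3) combine (1)--(2) with the finiteness of singularities and of accumulation types to obtain finitely many isolated homoclinic classes. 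The main obstacle is step (2): the rescaling cocycle can blow up along orbits shadowing $W^{ss}(\sigma)$ or $W^{uu}(\sigma)$ for arbitrarily long times, and securing a uniform local product structure despite this appears to require either a new $C^1$ perturbation that destroys the offending heteroclinic configurations between $\sigma$ and the periodic orbits accumulating on it, or a genuinely new finiteness/compactness input for multisingular hyperbolic sets; this is the step I expect to be the true bottleneck, and it is the reason the topological conjecture lies deeper than the ergodic statement established in this paper.
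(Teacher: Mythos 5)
This statement is presented in the paper as an open conjecture (attributed to Palis, Zhu--Gan--Wen, Shi--Gan--Wen); the paper does not prove it and explicitly says so. What the paper establishes is an \emph{ergodic} analogue (finiteness of equilibrium states / measures of maximal entropy), plus, via the earlier work \cite{PYY23a} quoted as Theorem~\ref{t.dichotomy}, the statement that generically every non-trivial chain recurrence class with \emph{positive} topological entropy is an isolated homoclinic class. So your proposal should be judged as an attack on an open problem, not compared against a proof in the paper.

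As an attack, it has a genuine gap at the reduction stage, before the bottleneck you identify. You write that by ``generic consequences of the connecting and ergodic closing lemmas'' you may assume every non-trivial chain recurrence class \emph{contains} a hyperbolic periodic orbit. That is not available here. Crovisier's generic result (Lemma~\ref{l.generic}(3) in the paper) only gives that each non-trivial class is the \emph{Hausdorff limit} of periodic orbits, which may lie in other classes; for singular flows the closing/connecting lemma arguments that upgrade this to containment break down near singularities. Indeed Theorem~\ref{t.dichotomy}(2) explicitly allows, even generically among star flows, non-trivial sectional-hyperbolic chain recurrence classes with zero entropy that contain \emph{no} periodic orbit and support only point masses at singularities; ruling these out is precisely Conjecture~\ref{c.aperiodic}, which is equivalent to the remaining content of the Spectral Decomposition Conjecture. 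Your step (A) presupposes a periodic orbit $\gamma\subset C$ to relate everything to, so it silently assumes away the hard case. Your step (B) inherits the same problem: even granting that positive-entropy classes are isolated homoclinic classes, infinitely many of them could accumulate (with entropies tending to zero) on an aperiodic class, so finiteness also hinges on excluding case (2). Your diagnosis of the near-singularity rescaling cocycle as the technical obstruction to a local product structure is reasonable and related in spirit, but the sharper known formulation of what remains open is the nonexistence of these aperiodic zero-entropy classes, and your proposal does not engage with it.
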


The first step towards this conjecture is to characterize the hyperbolicity of singular star flows. To this end, singular-hyperbolicity and sectional-hyperbolicity were proposed in \cite{MPP}, \cite{MM} and \cite{ZGW}; they capture the structure of the famous classical Lorenz attractor \cite{Lo63}, the geometric Lorenz model \cite{ABS, Gu76,GW79}, and their higher dimensional counterparts. More recently, multi-singular hyperbolic (for the precise definition, see Section~\ref{ss.def}) was proposed in \cite{BD21} as a generalization of sectional-hyperbolicity. In particular, it allows singularities of different indices to coexist in a chain recurrence class, for which a robust example was constructed in \cite{daLuz}.  Furthermore, the authors of \cite{BD21} proved that $C^1$ open and densely, multi-singular hyperbolicity is equivalent to the star property; see Theorem~\ref{t.BD} below.
Therefore, the Spectral Decomposition Conjecture is equivalent to the following conjecture:
\begin{conjecture}\label{c.sdt1}
	Every multi-singular hyperbolic chain recurrence class is isolated.
\end{conjecture}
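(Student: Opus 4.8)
The plan is to establish Conjecture~\ref{c.sdt1} for $C^1$ generic star flows, which together with Theorem~\ref{t.BD} would also settle Conjecture~\ref{c.sdt}. Let $\phi$ be such a flow and let $C$ be a non-trivial multi-singular hyperbolic chain recurrence class. The first step is a reduction to the homoclinic class of a periodic orbit. Multi-singular hyperbolicity provides a dominated splitting for the linear Poincar\'e flow over the regular part of $C$, and by the Gan--Wen analysis of the star property \cite{GW} this splitting is uniformly hyperbolic on every compact invariant subset of $C$ disjoint from $\Sing(\phi)$. Combined with the ergodic closing lemma and Liao-type shadowing, this lets one approximate recurrent orbits of $C$ by hyperbolic periodic orbits lying in $C$; the $C^1$ generic coincidence of chain recurrence classes with homoclinic classes then gives $C=H(p)$ for some hyperbolic periodic orbit $p$.

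The heart of the proof is a \emph{singular local product structure} for $C$. Away from $\Sing(\phi)\cap C$ one already has, from the uniform hyperbolicity of the linear Poincar\'e flow, stable and unstable manifolds of locally constant dimension and the classical product structure on flow boxes, hence a genuine shadowing lemma there. The difficulty is to extend this across a neighborhood of each $\sigma\in\Sing(\phi)\cap C$. Here the plan is to work on the blow-up of $\bM$ along the flow direction and exploit the extended linear Poincar\'e flow together with the reparametrizing cocycle of Bonatti--da Luz \cite{BD21}: on the blow-up, multi-singular hyperbolicity becomes genuine uniform hyperbolicity after rescaling time by $\int|X|\,ds$, which produces invariant cone fields and true contraction and expansion estimates. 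Pushing these down, one builds around each $\sigma\in C$ finitely many ``center-stable'' and ``center-unstable'' plaques whose transverse intersections recover the local pieces of $C$, the index jumps between different singularities being absorbed into the changing splitting along the connecting orbit segments.

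Gluing finitely many flow boxes on the regular part of $C$ with finitely many such singular charts produces a candidate isolating neighborhood $U$ of $C$. To check isolation, take $z\in\bigcap_{t\in\RR}\phi_t(U)$. Between consecutive visits of the orbit of $z$ to the singular charts, the product structure on the flow boxes forces that orbit to shadow orbits of $C$; inside a singular chart, the reparametrized hyperbolicity prevents the orbit from leaving the chart once it is $C^0$-close to $W^s_{\loc}(\sigma)\cup W^u_{\loc}(\sigma)$; and the chain recurrence of $C$ allows one to close up $\e$-chains through $z$ in both time directions for every $\e>0$. Maximality of the chain recurrence class then forces $z\in C$.

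The decisive obstacle is this last passage through a neighborhood of a singularity when $C$ contains singularities of several indices. There the time rescaling $\int|X|\,ds$ diverges for orbits that linger near $\sigma$, so the hyperbolicity estimates transported down from the blow-up degenerate; the dimensions of the center-stable and center-unstable plaques one must match differ on the two sides; and there is no flow box to organize them. Controlling how an orbit that follows $W^u_{\loc}(\sigma)$ and later re-enters along $W^s_{\loc}(\sigma')$ of another singularity glues the corresponding plaques, without creating spurious invariant points in $U\setminus C$, and doing so with a modulus uniform in the time spent near $\Sing(\phi)$, is exactly where Smale's Axiom A argument breaks down; this is why Conjecture~\ref{c.sdt1} is still open. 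A secondary difficulty is upgrading the generic conclusions of the first step to the full $C^1$ open-and-dense statement.
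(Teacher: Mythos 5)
The statement you are asked to prove is Conjecture~\ref{c.sdt1}; the paper does not prove it, and there is no ``paper's own proof'' to compare against. You correctly recognize this: your sketch ends by acknowledging that the decisive obstacle (matching center-stable/center-unstable plaques across singularities of different indices, with estimates that stay uniform as orbits linger near $\Sing(X)$) is exactly where the Axiom~A argument breaks, and that ``this is why Conjecture~\ref{c.sdt1} is still open.'' So this is not a proof and does not claim to be one; it is an honest account of why the problem is hard. A few remarks on how your sketch sits relative to what the paper actually establishes.

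Your first reduction step is essentially Theorem~\ref{t.dichotomy} from \cite{PYY23a}: for $C^1$ generic star flows, every non-trivial chain recurrence class with positive topological entropy is an isolated homoclinic class. What remains open is precisely the zero-entropy case of Theorem~\ref{t.dichotomy}~(2) --- a sectional-hyperbolic class with no periodic orbits supporting only point masses at singularities --- which is Conjecture~\ref{c.aperiodic}. Your sketch does not engage with this case at all: the product-structure strategy you outline presupposes a supply of periodic orbits and recurrent regular orbits to shadow, and none of the machinery you invoke (ergodic closing lemma, Liao shadowing, $C=H(p)$) applies when the class carries no regular ergodic measure. So even granting everything in your second and third paragraphs, the argument would only re-derive what \cite{PYY23a} already proves and would leave Conjecture~\ref{c.aperiodic} untouched.

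One technical inaccuracy worth flagging: you describe the Bonatti--da~Luz reparametrization as ``rescaling time by $\int |X|\,ds$.'' The reparametrizing cocycle $h^t_\sigma$ of Definition~\ref{d.multising.BD} is a multiplicative cocycle over the \emph{extended} linear Poincar\'e flow on the Grassmannian, compatible with $\|Df_t|_{\langle X\rangle}\|$ near each singularity; it is not a literal time-change of the flow by the speed, and treating it as one obscures the fact that the hyperbolicity it produces lives on $\mathfrak M(\Lambda)\subset G^1$, not on a blow-up of $\bM$. The paper's own way of extracting usable uniform estimates from this structure is via scaled Liao tubular neighborhoods and the scaled Poincar\'e flow $\psi_t^*$ (Section~\ref{ss.Liao}, Lemmas~\ref{l.robust.dom.spl}--\ref{l.robust.hyp}), not via a blow-up chart.

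Finally, note what the paper does instead: rather than attacking Conjecture~\ref{c.sdt1} directly, it proves an \emph{ergodic} spectral decomposition (Theorems~\ref{mc.C}--\ref{mc.A}) --- finitely many equilibrium states, uniqueness on each isolated multi-singular hyperbolic class, robust almost expansivity, and continuity of pressure --- all of which are compatible with Conjecture~\ref{c.sdt1} remaining open for the zero-entropy aperiodic classes. If you want to make progress on the conjecture itself, the place to focus is Conjecture~\ref{c.aperiodic}, not the product-structure gluing you describe.
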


A partial answer to this question was obtained by the authors of this article in a previous work \cite{PYY23a}; See Theorem~\ref{t.dichotomy} below. In summary, the only remaining case where Conjecture \ref{c.sdt1} might fail is when the topological entropy of the chain recurrence class is zero. In this case, the class supports no ergodic measure other than point masses of singularities. 

In this paper, we will consider the Spectral Decomposition Conjecture of star flows from an ergodic theory point of view. The goal is to obtain the existence and finiteness of a particular family of ergodic, invariant probability measures, namely the equilibrium states of (H\"older) continuous potential functions. Among them are the potential functions for the measure of maximal entropy and the geometric potential. 

 We denote by $\mathscr X^{1,*}_+(\bM)$ the set of $C^1$ star vector fields on $\bM$ with positive topological entropy. Our main results are: 

\begin{main}\label{mc.C}
	There exists a $C^1$ open and dense set $\cU\subset \mathscr X^{1,*}_+(\bM)$, such that every $X\in\cU$ has only finitely many ergodic measures of maximal entropy. 
\end{main}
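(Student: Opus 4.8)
The plan is to reduce the statement to a finiteness result on each chain recurrence class and then glue. First I would invoke the dichotomy from \cite{PYY23a} (Theorem~\ref{t.dichotomy} in the excerpt): $C^1$ open and densely among star vector fields, every nontrivial chain recurrence class $C$ is either isolated (hence the flow restricted to $C$ is the homoclinic class of a hyperbolic periodic orbit and carries a sectional/multi-singular hyperbolic structure) or has zero topological entropy, in which case it supports no ergodic measure except point masses at singularities. Since we are assuming $h_{\mathrm{top}}(X)>0$, the variational principle gives $h_{\mathrm{top}}(X)=\sup_{C} h_{\mathrm{top}}(X|_C)$ over the (at most countably many) chain recurrence classes; the zero-entropy classes contribute nothing, and since by Theorem~\ref{t.BD} a star flow in the residual-and-open set is multi-singular hyperbolic, there are only finitely many classes with positive entropy (this is precisely the content one extracts from Conjecture~\ref{c.sdt1} being verified in the positive-entropy regime by \cite{PYY23a}). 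Hence every MME of $X$ is supported on one of finitely many isolated, multi-singular hyperbolic homoclinic classes $C_1,\dots,C_N$, and it suffices to prove that the flow restricted to each $C_i$ has only finitely many ergodic MMEs, and that $C^1$-openly-and-densely one can make this uniform.

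The core of the argument is therefore the study of a single multi-singular hyperbolic homoclinic class with positive entropy. Here I would use the now-standard approach for singular-hyperbolic flows: reparametrize (rescale) the flow away from the singularities to tame the effect of orbits passing near $\mathrm{Sing}(X)$, producing a flow that behaves like a non-singular sectional-hyperbolic one on the relevant set of measures (those with no atoms at singularities — which is automatic for any ergodic measure of positive entropy, since point masses at hyperbolic singularities have zero entropy). On this set one obtains entropy expansiveness (or at least almost expansiveness, as advertised in the abstract) together with a uniform hyperbolic-like structure: a dominated splitting with uniform sectional expansion. One can then either (i) build a symbolic/coding model — a finite-to-one extension by a countable-state subshift with the BIP property, or a Markov tower — for the rescaled flow away from singularities, and appeal to the thermodynamic formalism for such systems (Sarig-type results, or the Climenhaga–Thompson machinery) which yields finiteness (indeed, on a transitive homoclinic class, uniqueness) of MMEs; or (ii) directly run a Climenhaga–Thompson-type argument, verifying the specification-at-intermediate-scales and the Bowen gap conditions for the rescaled flow, using the homoclinic-class structure to get the needed specification on the ``hyperbolic part'' and controlling the ``bad'' (near-singularity) part via the rescaling and a pressure gap. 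Either route gives: each $C_i$ carries finitely many ergodic MMEs for $X|_{C_i}$.

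The main obstacle — and the step I would spend the most care on — is the interplay between the singularities and the entropy/pressure estimates: one must show that the orbit segments spending a long time near $\mathrm{Sing}(X)$ form a set of subexponentially small complexity, i.e.\ carry pressure strictly below $h_{\mathrm{top}}(X|_{C_i})$, so that they can be absorbed into the ``obstruction'' part of a decomposition argument. This is exactly where multi-singular hyperbolicity (as opposed to mere singular hyperbolicity, where this was handled in earlier work) makes things delicate, because singularities of different indices coexist and the local dynamics near each must be analyzed via the linear Poincaré/extended flow and the associated cocycle; the uniform domination furnished by multi-singular hyperbolicity is what ultimately makes the near-singularity complexity estimate work. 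Finally, to upgrade from ``finitely many MMEs for each fixed $X$ in a residual-and-open set'' to the clean statement of Theorem~C, I would note that isolation of the classes is a $C^1$-open condition, the number $N$ of positive-entropy classes and the hyperbolic structures are locally constant / robust on the open-and-dense set, and the per-class finiteness bound produced by the coding or the Climenhaga–Thompson argument is uniform on a $C^1$-neighborhood; intersecting the open-and-dense sets coming from Theorem~\ref{t.BD}, Theorem~\ref{t.dichotomy}, and the (almost) expansiveness genericity yields the desired $\cU\subset\mathscr X^{1,*}_+(\bM)$.
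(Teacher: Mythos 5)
Your high-level architecture matches the paper's: reduce via the dichotomy of \cite{PYY23a} (Theorem~\ref{t.dichotomy}) to isolated multi-singular hyperbolic homoclinic classes, prove uniqueness of the MME on each such class by a Climenhaga--Thompson argument with rescaled Poincar\'e-flow estimates controlling the near-singularity obstruction, and then upgrade to $C^1$-openness via robustness of the per-class uniqueness. This is essentially Theorems~\ref{m.B}, \ref{m.C} and the deduction in Section~\ref{s.star}.

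However, there is a genuine gap in the reduction step. You assert that ``there are only finitely many classes with positive entropy'' and attribute this to \cite{PYY23a}, but that reference only gives that each positive-entropy class is isolated --- it does \emph{not} rule out infinitely many such classes (say, with entropies tending to $0$), nor does it directly rule out infinitely many distinct classes all achieving $h_{\mathrm{top}}(X)$. The paper does not try to bound the number of positive-entropy classes at this stage. Instead, it argues by contradiction directly at the level of measures: if $\{\mu^i\}$ were infinitely many distinct ergodic MMEs, pass to a weak-$^*$ limit $\mu$. The crucial input is Theorem~\ref{m.A} (robust almost expansivity), which via \cite{B72} gives upper semicontinuity of $h_\nu(X)$ in $\nu$; hence $\mu$ is again an MME, a typical ergodic component $\tilde\mu$ of $\mu$ is an ergodic MME supported on some chain recurrence class $\tilde C$, and by Theorem~\ref{t.dichotomy} $\tilde C$ is isolated. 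Isolation then forces the classes $C_n \supset \supp\mu^n$ to equal $\tilde C$ for large $n$, contradicting uniqueness of the MME on $\tilde C$ (Theorem~\ref{m.B}). You mention almost expansivity only in passing, for per-class existence; in fact it is indispensable exactly here, to close the accumulation argument. Without it the proposal has no mechanism preventing MMEs from escaping along infinitely many distinct classes. (The paper's Theorem~\ref{mc.B}, bounding the number of classes with entropy $\ge h$, is a separate consequence of the same almost-expansivity/entropy-continuity machinery, not an input available for free.)

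A second, smaller imprecision: for the $C^1$-openness you assert that the number $N$ of positive-entropy classes is ``locally constant / robust.'' The paper only obtains upper semicontinuity of the count of classes with entropy $\ge h$ (Theorem~\ref{mc.B}), which suffices; local constancy is more than is proven or needed. The openness in the paper comes from Theorem~\ref{m.C} (robust uniqueness of the equilibrium state on a fixed isolating neighborhood of each isolated class, for all nearby vector fields --- where for perturbed fields the homoclinic-relation hypothesis (B) of Theorem~\ref{m.B} may fail and must be re-derived via a semicontinuity argument on infinite hyperbolic times), combined once more with upper semicontinuity of $h_\nu$ in both the measure and the vector field to rule out new MMEs appearing away from the classes of $X$.
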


\begin{main}\label{mc.B}
	For every $h>0$, there exists a $C^1$ open and dense set $\cU\subset \mathscr X^{1,*}_+(\bM)$, such that  every $X\in \cU$ has only finitely many chain recurrence classes with topological entropy greater than $h$.  For each such chain recurrence class $C$, $X|_C$ has a unique measure of maximal entropy. Furthermore, the number of such classes varies upper semi-continuously with respect to the system in $C^1$ topology.
\end{main}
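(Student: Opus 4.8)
\emph{Setup and structure of high-entropy classes.} I would take $\cU\subset\mathscr X^{1,*}_+(\bM)$ to be the intersection of the $C^1$ open and dense set on which the star property coincides with multi-singular hyperbolicity on every chain recurrence class (Theorem~\ref{t.BD}), the $C^1$ open and dense set on which the conclusion of Theorem~\ref{t.dichotomy} holds, and the $C^1$ open and dense set on which $X$ is almost expansive. Fix $X\in\cU$ and $h>0$. If a chain recurrence class $C$ satisfies $h_{\mathrm{top}}(X|_C)>h$, then $C$ is multi-singular hyperbolic, and since positive entropy excludes the ``zero entropy'' alternative of Theorem~\ref{t.dichotomy}, $C$ is isolated and equals the homoclinic class of a hyperbolic periodic orbit. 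This structural fact is the common starting point for all three assertions of the theorem.

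\emph{Finiteness (the main step).} Suppose there were pairwise distinct chain recurrence classes $C_1,C_2,\dots$ with $h_{\mathrm{top}}(X|_{C_n})>h$ for all $n$. The star property forces the Lyapunov exponents of every invariant measure to be uniformly bounded away from zero along the relevant bundles, and a positive-entropy ergodic measure gives no mass to the singularities; hence applying Katok's horseshoe theorem for flows to a measure of maximal entropy of $X|_{C_n}$ produces a basic hyperbolic set $\Lambda_n\subset C_n$ with $h_{\mathrm{top}}(X|_{\Lambda_n})>h$. Crucially, all the $\Lambda_n$ lie in one fixed neighborhood of the chain recurrent set on which multi-singular hyperbolicity holds with uniform constants, so the $\Lambda_n$ are uniformly hyperbolic transverse to the flow; passing to a subsequence, $\Lambda_n$ converges in the Hausdorff topology to a compact invariant set $\Lambda_\infty$ that is chain transitive (a Hausdorff limit of transitive sets is chain transitive) and uniformly hyperbolic with the same constants. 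The step I expect to be the main obstacle is showing $h_{\mathrm{top}}(X|_{\Lambda_\infty})\ge h$: the mechanism is that a uniformly hyperbolic set confined to a thin flow box around a periodic orbit or singularity has entropy tending to $0$ with the width of the box, so a sequence of uniformly hyperbolic horseshoes of entropy $>h$ must occupy a definite amount of room and cannot collapse onto an entropy-deficient limit. Granting this, $\Lambda_\infty$ lies in a chain recurrence class $C_\infty$ with $h_{\mathrm{top}}(X|_{C_\infty})\ge h>0$, hence isolated; if $U$ is an isolating neighborhood of $C_\infty$, then $\Lambda_n\subset U$ for all large $n$, so $\Lambda_n$ lies in the maximal invariant subset of $U$, namely $C_\infty$, and transitivity of $\Lambda_n$ forces $C_n=C_\infty$ for all large $n$, contradicting distinctness.

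\emph{Unique measure of maximal entropy on each class.} Fix such a class $C$: an isolated, multi-singular hyperbolic homoclinic class of positive entropy. Existence of a measure of maximal entropy follows from upper semicontinuity of the entropy functional on the set of invariant measures giving zero mass to the singularities, together with the fact that measures of near-maximal entropy are of this kind. For uniqueness I would use almost expansiveness to obtain a form of entropy expansiveness valid for all hyperbolic ergodic measures of positive entropy, establish a gluing/specification property on the core of $C$ (from the uniform hyperbolicity and the abundance of periodic orbits homoclinically related to the fixed one), and conclude via a Bowen-type or Climenhaga--Thompson-type uniqueness criterion; the singularities are inert here, since they carry zero entropy and every measure of maximal entropy is hyperbolic.

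\emph{Upper semicontinuity of the count.} Let $C_1,\dots,C_N$ be the chain recurrence classes of $X\in\cU$ with entropy $>h$. Being robustly isolated multi-singular hyperbolic homoclinic classes, they persist for all nearby $Y$ as homoclinic classes $C_i(Y)$ inside fixed isolating neighborhoods; since robust horseshoes with entropy arbitrarily close to $h_{\mathrm{top}}(X|_{C_i})$ persist with the same entropy, $\liminf_{Y\to X}h_{\mathrm{top}}(Y|_{C_i(Y)})\ge h_{\mathrm{top}}(X|_{C_i})>h$, so $C_i(Y)$ still has entropy $>h$ for $Y$ near $X$. Moreover the uniform hyperbolicity constants and the ``definite room'' estimate from the finiteness step are themselves robust, so that argument applies with uniform bounds on a whole $C^1$-neighborhood of $X$, preventing any further high-entropy class from appearing or accumulating as $Y\to X$; thus every chain recurrence class of $Y$ with entropy $>h$ must be one of the $C_i(Y)$. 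Hence the count equals $N$ on a neighborhood of $X$, and in particular it varies upper semicontinuously with the vector field in the $C^1$ topology.
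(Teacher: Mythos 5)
Your proposal diverges from the paper's proof in the finiteness and upper semi-continuity steps, and both divergent steps have gaps.

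\emph{Finiteness.} The paper's argument is short: take a Hausdorff limit $C_n\to\tilde C$ of the distinct high-entropy classes; $\tilde C$ sits inside some chain recurrence class $C$; by Theorem~\ref{mc.A} (almost expansivity, hence entropy expansivity, hence upper semi-continuity of $\mu\mapsto h_\mu$) one can pass MMEs of $X|_{C_n}$ to a weak-$^*$ limit supported in $\tilde C$ with entropy $\ge h$, so $h_{\mathrm{top}}(X|_C)\ge h>0$; Theorem~\ref{t.dichotomy} then makes $C$ isolated, which is incompatible with $C_n\to\tilde C\subset C$ with $C_n\neq C$. Your proposal replaces this with Katok horseshoes $\Lambda_n\subset C_n$ and a ``definite room'' heuristic. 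The heuristic as stated does not establish $h_{\mathrm{top}}(X|_{\Lambda_\infty})\ge h$: a Hausdorff limit of hyperbolic horseshoes that does not collapse to a thin flow box can nonetheless have zero topological entropy (a web of finitely many periodic orbits connected by heteroclinic intersections occupies a definite region of the manifold and has entropy zero). Topological entropy is not upper semi-continuous under Hausdorff convergence of invariant sets without an entropy-expansivity input. You actually already include almost expansivity in your definition of $\cU$, so the fix is to use it, at which point the horseshoe detour is unnecessary; one works directly with the $C_n$ and their MMEs as the paper does. A secondary issue: multi-singular hyperbolicity gives uniform bounds on the \emph{scaled} linear Poincar\'e flow, not the unscaled one, so ``the $\Lambda_n$ are uniformly hyperbolic with the same constants'' need not hold if the horseshoes accumulate on singularities; this is another way the heuristic can fail.

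\emph{Upper semi-continuity of the count.} Your argument shows that any high-entropy class of a nearby $Y$ must lie in some isolating neighborhood $U_i$ of $C_i$, but it never rules out two distinct high-entropy chain recurrence classes of $Y$ both sitting in the same $U_i$. In that case $Y$ would have more than $N$ such classes and the bound fails. Theorem~\ref{m.C}/uniqueness of equilibrium states does not by itself exclude this, since the second class could have strictly smaller (but still $>h$) entropy and its MME would simply not be the MME of the maximal invariant set in $U_i$. The paper fills this gap with a tameness statement (Lemma~\ref{l.n.2}): for $C^1$ generic $X$ one can choose the $U_i$ and a $C^1$ neighborhood $\cU_X$ so that every $Y\in\cU_X$ has exactly one chain recurrence class in each $U_i$, after which the pigeonhole count goes through. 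Your argument needs this ingredient or a replacement for it.

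\emph{Uniqueness of MME on each class.} This part of your outline is essentially the paper's Theorem~\ref{m.B}/\ref{m.C} invoked as a black box, which is acceptable.

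Overall: the intended structure is sound and the dichotomy theorem and almost expansivity are correctly identified as the key inputs, but the two steps where you deviate from the paper need the corrections above.
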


Theorem \ref{mc.C} is a special case of a more general result on equilibrium states. Given $\phi: \bM\to \mathbb R$, we denote by $\mathscr X^{1,*}_\phi(\bM)$ the set of $C^1$ star vector fields for which 
\begin{equation}\label{e.pgap.star}
\phi(\sigma) < P(\phi, X), \forall \sigma\in\Sing(X),
\end{equation}
where $P(\phi, X)$ is the topological pressure of $\phi$.

\begin{main}\label{mc.C1}
	For every H\"older continuous function $\phi:\bM\to\RR$, there exists a $C^1$ open and dense set $\cU\subset 
	\mathscr X^{1,*}_\phi(\bM)$, such that every $X\in\cU$ has only finitely many ergodic equilibrium states for $\phi.$
\end{main}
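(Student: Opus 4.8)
The strategy is to reduce Theorem~\ref{mc.C1} to a statement about finitely many chain recurrence classes, and on each of those to apply a thermodynamic machinery adapted to multi-singular hyperbolic flows. First, by the equivalence (multi-singular hyperbolicity $=$ star property, $C^1$ open and densely; Theorem~\ref{t.BD}), I may restrict to an open dense set of $X\in\mathscr X^{1,*}_\phi(\bM)$ for which every chain recurrence class is multi-singular hyperbolic. The starting observation is that any ergodic equilibrium state $\mu$ for $\phi$ is supported on a single chain recurrence class $C$, and satisfies $h_\mu(X)+\int\phi\,d\mu=P(\phi,X)$. By the pressure gap hypothesis \eqref{e.pgap.star}, $\mu$ cannot be a point mass at a singularity, since $\phi(\sigma)<P(\phi,X)$ forces $h_\mu=0$ and then $\int\phi\,d\mu=\phi(\sigma)<P(\phi,X)$, a contradiction. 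Hence every ergodic equilibrium state has positive free energy exceeding $\sup_\sigma\phi(\sigma)$, which should let me invoke the dichotomy of \cite{PYY23a} (Theorem~\ref{t.dichotomy}): the carrying class $C$ has positive topological entropy, and — on the open dense set — is isolated and is the homoclinic class of a hyperbolic periodic orbit. In particular, every equilibrium state is carried by one of the finitely many non-trivial homoclinic classes $C_1,\dots,C_k$ (the count being controlled, as in Theorem~\ref{mc.B}, by an entropy-gap argument: classes with $h_{\mathrm{top}}>h_0$ for $h_0:=P(\phi,X)-\sup_\sigma\phi(\sigma)>0$ are finite in number and their number is upper semicontinuous).

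**The core estimate on each class.**
It then remains to show that on each fixed isolated multi-singular hyperbolic homoclinic class $C$, there are only finitely many ergodic equilibrium states for $\phi|_C$. Here I would use the geometry of multi-singular hyperbolicity: away from the singularities the flow is (non-uniformly, but with dominated splitting) hyperbolic, and near a singularity the rescaling cocycle tames the loss of hyperbolicity. The natural tool is a coding via a suitable cross-section / Poincaré return map, or alternatively the expansivity-type property: by the "almost expansive" statement advertised in the abstract (which I am allowed to use as an earlier result of the paper), $X|_C$ is entropy-expansive or at least expansive off a measure-zero set, so equilibrium states for $\phi$ coincide with equilibrium states for the return map on a symbolic extension, and the potential remains Hölder (or at least has summable variations, using the pressure gap to control the roof function near singularities). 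On such a system — a topologically transitive countable-state Markov shift, or a uniformly hyperbolic basic set after desingularization — one has either uniqueness (if the class is "primitive") or finitely many ergodic equilibrium states: uniqueness on each basic piece by Bowen's argument (expansiveness $+$ specification, both of which hold on a homoclinic class after passing to a power and a transitive component), and finiteness overall because a homoclinic class decomposes into finitely many such transitive components under the $n$-th power of the return map. I would package this as: $X|_C$ has a unique equilibrium state for $\phi$, giving simultaneously the finiteness and the refined count in Theorems~\ref{mc.C} and \ref{mc.B}.

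**The main obstacle.**
The hard part will be the passage from the flow to a good symbolic or hyperbolic model \emph{across} the singularities while keeping the potential in a class for which thermodynamic formalism gives uniqueness. For the measure of maximal entropy ($\phi\equiv 0$) the pressure gap reads $h_{\mathrm{top}}>0$ and the roof function issue is mild; but for a general Hölder $\phi$ the return-time function blows up near $\Sing(X)$, so the induced potential $\int_0^{\tau}\phi$ has unbounded variation unless one exploits \eqref{e.pgap.star} quantitatively — this is exactly where the "mild yet optimal condition" is used, to ensure that the contribution of long excursions near singularities is exponentially negligible in the pressure, i.e. that the induced system has finite Gurevich pressure realized by a recurrent potential (Sarig's theory), or that one can use the inducing scheme of \cite{PYY23a}. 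Handling this requires: (i) controlling $\int_0^\tau\phi$ on orbit segments shadowing a singular loop, using hyperbolic times and the rescaled cocycle; (ii) verifying the Gurevich/recurrence condition $P_G(\text{induced }\phi)=P(\phi,X)$; and (iii) transferring uniqueness back to the flow. A secondary technical point is ensuring the "finitely many transitive components under a power" statement for the return map to a homoclinic class — standard for basic sets, but for multi-singular classes it must be combined with the isolation provided by Theorem~\ref{t.dichotomy}. Once these are in place, the $C^1$ open-and-dense conclusion follows because all ingredients (isolation, the homoclinic-class structure, the pressure gap, almost expansivity) hold on a $C^1$ open dense subset of $\mathscr X^{1,*}_\phi(\bM)$, and the upper semicontinuity of the count follows from upper semicontinuity of entropy/pressure together with the robustness of the finitely many homoclinic classes.
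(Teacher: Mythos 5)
Your high-level reduction is the right one and matches the paper's: every ergodic equilibrium state for $\phi$ is supported on a chain recurrence class; by the pressure gap it cannot be a point mass at a singularity; hence by Theorem~\ref{t.dichotomy} the carrying class is isolated, positive-entropy, a homoclinic class; and by upper semicontinuity of entropy (Theorem~\ref{m.A}) only finitely many such classes can carry an equilibrium state. Where you diverge is in the \emph{core estimate on each class}, and that is where there are genuine gaps.

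First, you propose to prove uniqueness on each isolated multi-singular hyperbolic homoclinic class $C$ via a symbolic model: a Poincar\'e cross-section, a countable-state Markov shift, Sarig's recurrence/Gurevich theory, or desingularization to a uniformly hyperbolic basic set. No such coding is constructed in this paper, and you correctly flag that building one ``across the singularities'' is the hard part --- but the proposal does not resolve it. The difficulty is not merely technical: for a general multi-singular hyperbolic set there is no dominated splitting on the tangent bundle, singularities of different indices may coexist, $C$ need not be an attractor, and there is no known Markov partition or inducing scheme (the reference to an ``inducing scheme of \cite{PYY23a}'' does not correspond to anything in that paper). The paper sidesteps this entirely by working \emph{directly on the flow} with the improved Climenhaga--Thompson criterion (Theorem~\ref{t.improvedCL1}): one decomposes orbit segments into $(\cP,\cG,\cS)$ using four kinds of Pliss times, proves a Bowen property on $\cG$ via scaled shadowing and fake foliations (Section~\ref{s.bowen}), and proves specification on $\cG$ via transversal intersections of invariant manifolds at infinite hyperbolic times with a reference periodic orbit (Section~\ref{s.spec}). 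None of these steps require a symbolic model, and in fact the Bowen estimate is exactly where the pressure gap \eqref{e.pgap.star} enters quantitatively, through recurrence Pliss times controlling the time spent near $\Sing(X)$.

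Second, and more fundamentally, you write that specification holds on a homoclinic class ``after passing to a power and a transitive component'' as if by Smale's spectral decomposition. This is not available here. For a singular multi-singular hyperbolic class, specification is one of the main theorems being proved (Theorem~\ref{t.spec}), it is by no means automatic, and it requires the extra Assumption~(B) --- that \emph{all} periodic orbits in $C$ are pairwise homoclinically related --- which holds only $C^1$ generically (Lemma~\ref{l.generic.HR}). Without this hypothesis one cannot use the periodic orbit $\gamma$ as a ``bridge,'' and the inclination-lemma argument that produces the shadowing orbit collapses. Likewise, ``finitely many transitive components under a power of the return map'' is the spectral decomposition for basic sets; nothing of the sort is established for multi-singular hyperbolic classes.

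Third, the openness claim (``the $C^1$ open-and-dense conclusion follows because all ingredients hold on a $C^1$ open dense subset'') is not a proof. Assumption~(B) above is generic but \emph{not} open; for a nearby vector field $Y$ the periodic orbits in the continuation $\Lambda_Y$ need not be pairwise homoclinically related. The paper must therefore prove a separate robust uniqueness theorem (Theorem~\ref{m.C}) by re-establishing specification for $Y$ using only objects defined for $X$ (Lemma~\ref{l.fake.intersect.robust}, Lemma~\ref{l.bipliss2.robust}, Theorem~\ref{t.spec.robust}), and then combine this with upper semicontinuity of entropy to bound the number of equilibrium states on a full $C^1$ neighborhood of each generic $X$ (Theorem~\ref{t.mc.B1}). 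That last step is essential to pass from ``residual'' to ``open and dense'' and is missing from your sketch.
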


We also obtain the upper semi-continuity on the number of equilibrium states. For the precise statement, see Theorem~\ref{t.mc.B1}.

%
%
%
%


\begin{remark}\label{r.Pineq1}
	Under the star property, there can only be finitely many singularities, each of which is hyperbolic and non-degenerate. As a result, the assumption $\phi(\sigma)<P(\phi,X)$ is a finite, verifiable condition among star flows. 
	
	We remark that for a given star vector field $X$, $\phi(\sigma)<P(\phi,X)$ is a $C^0$ open condition among continuous functions, since $P(\phi,X)$ varies continuously w.r.t.\,the potential function \cite[Theorem 9.7]{Wal}. Furthermore, for a given continuous potential function $\phi$, it is a $C^1$ open condition among singular star flows with positive topological entropy due to Theorem~\ref{m.A1} below. 
	
	Previously we proved in \cite{PYY23a} that for $C^1$ generic flows,  $\phi(\sigma)<P(\phi,X)$ is a $C^0$-dense condition on each sectional-hyperbolic attractor. Note that such attractors must satisfy the star property locally. 
\end{remark}

\begin{remark}\label{r.Pineq2}
	For each $\sigma\in\Sing(X)$ we denote by $\mu_\sigma$ the point mass on $\sigma$. Then, by the variational principle, for any continuous potential function $\phi:\bM\to \RR$ we have 
	$$
	\phi(\sigma) = h_{\mu_\sigma}(X) + \int \phi\,d\mu_\sigma \le P(\phi,X);
	$$ 
	equality holds if and only if $\mu_\sigma$ is an equilibrium state of $\phi$. Therefore, \eqref{e.pgap.star} is the same as asking that the point masses on singularities are not equilibrium states. 
	
	In a subsequent work \cite{SY} we will show that this condition is indeed optimal. In particular, we will construct examples of singular star flows together with $C^\infty$ potential functions $\phi:\bM\to\RR$, such that ergodic  equilibrium states of $\phi$ are {\em exactly} the point masses of singularities of $X$. In particular, $\phi$ can have multiple equilibrium states co-existing on the same chain recurrence class.
\end{remark}

Prior to this paper, for singular star flows, only the finiteness of physical measures for (a $C^1$ open and dense family of) $C^\infty$ star flows has been obtained in \cite{CWYZ}. The proof exploits the fact that physical measures must satisfy Pesin's entropy formula and have absolutely continuous conditional measures on the center unstable manifolds.   In comparison, for star diffeomorphisms and non-singular star flows, the counterpart of Theorem \ref{mc.C} is well-known. The proof of the non-singular case combines the Spectral Decomposition Theorem of Smale together with the seminal work of Bowen \cite{B75_2}, which shows that every hyperbolic homoclinic class can only support one MME (or one equilibrium state for every H\"older continuous potential function). For singular star flows, we will first show that every {\em isolated} non-trivial\footnote{Here being non-trivial means that $\Lambda$ is not a periodic orbit or a singularity.} chain recurrence class can only support one equilibrium state (Theorem~\ref{m.B}); then, we will use the result of our previous work \cite{PYY23a} to study how such classes can approach a non-isolated class. One crucial step is to show that the metric entropy varies upper semi-continuously as a function of the vector field. This is achieved by the following theorem:
	
\begin{main}\label{mc.A}
	$C^1$ open and densely, every star flow is almost expansive. Furthermore, the scale of expansivity can be made uniform in a $C^1$ small neighborhood. 
\end{main}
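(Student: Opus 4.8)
The plan is to first reduce to the multi-singular hyperbolic case. By Theorem~\ref{t.BD}, there is a $C^1$ open and dense subset of star vector fields all of whose chain recurrence classes are multi-singular hyperbolic, so it suffices to produce, for each such $X_0$, a scale $\epsilon>0$ and a $C^1$-neighborhood $\cU_0\ni X_0$ on which almost expansiveness holds at scale $\epsilon$. Multi-singular hyperbolicity supplies, on the chain recurrent set and throughout a $C^1$-neighborhood of $X_0$, a dominated splitting for the linear Poincar\'e flow together with a Liao-type rescaling under which the rescaled linear Poincar\'e cocycle is uniformly hyperbolic. Since under the star property $X_0$ has only finitely many singularities, each hyperbolic and non-degenerate and depending continuously on the vector field, all the relevant constants — domination rates, hyperbolicity exponents, sizes of the local strong stable/unstable manifolds of the singularities and of the normal-form charts around them — can be taken uniform over a $C^1$ ball $\cU_0$. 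This uniformity is precisely what will yield the uniform scale in the statement.

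Next I would analyze the non-expansive set $N(\epsilon)=\{x:\Gamma_\epsilon(x)\neq \phi_{[-s,s]}(x)\}$, where $\Gamma_\epsilon(x)$ is the set of points whose orbit $\epsilon$-shadows that of $x$ up to continuous reparametrization and $s=s(\epsilon)$ is a fixed small time. Away from a prescribed neighborhood $U$ of $\Sing(X)$, uniform hyperbolicity of the rescaled linear Poincar\'e flow together with Liao's shadowing/closing machinery \cite{Liao81,GW} produces a local product structure transverse to the flow, hence a genuine reparametrized expansivity at a scale $\epsilon_0=\epsilon_0(U)$: two orbit segments that remain $\epsilon_0$-close while avoiding $U$ lie on a common center stable plaque and a common center unstable plaque, so they coincide up to a small time shift. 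Consequently any $x\in N(\epsilon)$ must have its orbit return to arbitrarily small neighborhoods of $\Sing(X)$ and remain there for arbitrarily long stretches of time.

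The core difficulty is the control of the passages near the singularities, which is the genuinely new point in the multi-singular setting (where singularities of different indices coexist in one class). Near each hyperbolic $\sigma\in\Sing(X)$, multi-singular hyperbolicity forces a partially hyperbolic local picture $E^{ss}\oplus(\RR X\oplus E^{cu}$-type$)$ with domination strong enough that the local strong stable and strong unstable foliations are well defined and vary continuously; one then runs a Komuro-type argument, adapted from the geometric Lorenz model to this local model. Two orbits that stay $\epsilon$-close throughout a long near-$\sigma$ passage are captured on a common local strong stable leaf while approaching $\sigma$ and on a common local strong unstable leaf while leaving it; iterating these constraints along all such passages forces the two orbits onto a single orbit, unless one of them is forward- or backward-asymptotic to $\sigma$. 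The delicate point — and what I expect to be the main obstacle — is that $\epsilon$-closeness is measured in $\bM$ while the hyperbolic estimates live in the rescaled normal bundle, where distances are dilated by a factor blowing up near $\sigma$; one must take $\epsilon$ small relative to the normal-form scale and exploit that along a near-singular passage the two orbits spend only a controlled proportion of the time in the region where the rescaling factor is large. Away from the singularities the statement is, by contrast, a fairly standard consequence of Liao's theory. The conclusion is an inclusion, valid uniformly over $\cU_0$,
\[
N(\epsilon)\ \subseteq\ \Sing(X)\ \cup\ \bigcup_{\sigma\in\Sing(X)}\big(W^{s}(\sigma)\cup W^{u}(\sigma)\big),
\]
up to the flow direction; in particular every point of $N(\epsilon)$ other than a singularity itself is non-recurrent, since it converges to a singularity in forward or backward time.

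Finally I would upgrade this to the measure-theoretic almost expansiveness needed for upper semicontinuity of metric entropy. A point of $W^{s}(\sigma)\cup W^{u}(\sigma)$ distinct from $\sigma$ is not recurrent, so by Poincar\'e recurrence any invariant probability measure $\mu$ with $\mu(N(\epsilon))>0$ must give positive mass to some singularity $\sigma$, whence $\mu=\mu_\sigma$ by ergodicity. Therefore every ergodic measure that is not a point mass on a singularity assigns zero measure to $N(\epsilon)$, i.e. $X$ is almost expansive at scale $\epsilon$, and by the uniformity established in the first three steps the scale $\epsilon$ works for every vector field in the $C^1$-neighborhood $\cU_0$, which proves the theorem.
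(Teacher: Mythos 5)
Your overall reduction matches the paper exactly: invoke Theorem~\ref{t.BD} to pass to the $C^1$ open and dense set of star flows whose chain recurrent set is multi-singular hyperbolic, prove a robust almost-expansivity statement there (the paper's Theorem~\ref{m.A}, applied to $\CR(X)$), and finish with Poincar\'e recurrence to show that the only obstruction to expansivity lies on $\bigcup_\sigma W^s(\sigma)\cup W^u(\sigma)$, which is null for every ergodic measure other than a $\delta_\sigma$. The uniformity of the scale in a $C^1$ ball, obtained from the finiteness and hyperbolicity of the singularities plus $C^1$-continuity of the relevant constants, is also the paper's argument.

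Where your sketch diverges from the paper is in how the near-singular passages are controlled, and here you should be more careful. You claim that two orbits which remain $\epsilon$-close through a long passage near $\sigma$ are ``captured on a common local strong stable leaf while approaching $\sigma$ and on a common local strong unstable leaf while leaving it.'' As stated this is too strong and somewhat circular: being $\epsilon$-close bounds the separation, it does not place the two orbits on a common leaf. What the paper actually proves (Lemmas~\ref{l.Elarge} and \ref{l.Flarge}) is a dichotomy for the transverse separation measured via the fake-foliation local product structure on the normal bundle: if the $F$-length dominates the $E$-length upon entering the passage, it still dominates (and is exponentially expanded) upon exit; symmetrically in reverse time if the $E$-length dominates upon exit. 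Iterating this over the full sequence of passages, and interleaving with the uniform expansion of the dominating length along excursions away from $\Sing(X)$ (Lemma~\ref{l.expansion.reg}), forces the dominating length to be zero. This is genuinely more delicate than a ``common leaf'' argument, in part because the relevant cones and lengths live on the normal bundle $N=E_N\oplus F_N$ at the Liao relative scale $\rho|X|$, not in a fixed linearizing chart around $\sigma$, and in part because the dichotomy can switch from $E$ to $F$ across different passages. You do correctly identify the main obstacle---the mismatch between the fixed scale $\epsilon$ on $\bM$ and the blown-up Liao scale near the singularity---but the resolution is precisely the fake-foliation machinery and the $E$/$F$-length bookkeeping, which your Komuro-style sketch does not supply; also crucial, and missing from your sketch, is the fact that orbits in $\Lambda$ can only enter a neighborhood of a Lorenz-type singularity along the one-dimensional center cone (Lemma~\ref{l.nearEc} and Lemma~\ref{l.nbhd.1}), which is what makes the local analysis tractable at all.

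So: same high-level strategy, same endgame, but the crucial near-singularity lemma is stated in a form that, as written, is not quite correct and would need to be replaced by the paper's $E$/$F$-length dichotomy (or an equivalent careful statement) before the proof is complete.
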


In the next subsection, we will state the technical results of this paper, namely the expansivity, continuity of the topological pressure, and the uniqueness of equilibrium states for each multi-singular hyperbolic chain recurrence class. 

\subsection{Statement of technical results}\label{ss.statement}
Throughout this article,  $\mathscr{X}^1(\bM)$ denote the collection of $C^1$ vector fields on a compact Riemannian manifold $\bM$ without boundary. Given $X\in\mathscr{X}^1(\bM)$, we write $(f_t)_{t\in\RR}$ for the one-parameter flow generated by $X$, and $\Sing(X)$ for the collection of singularities of $X$. Given an invariant set $\Lambda$, we write 
$\Sing_\Lambda(X) = \Sing(X)\cap\Lambda$ for the singularities in $\Lambda$. We shall always assume that all singularities in $\Lambda$ are hyperbolic and non-degenerate (meaning that the eigenvalues are non-zero). Such an assumption is $C^r$ open and dense for every $r\ge 1$ due to the Kupka-Smale theorem. We will also frequently assume, in Section~\ref{s.Pliss} and onward, that singularities in $\Lambda$ are {\em active}; roughly speaking, this means that singularities are approximated by regular orbits in $\Lambda$. For the precise definition, see Definition~\ref{d.active.sing} below.

Now we are ready to state our main results on multi-singular hyperbolic sets of a $C^1$ vector field. The precise definition is given in Section~\ref{s.preliminary}, Definition~\ref{d.multising}. 

\begin{main}\label{m.A}
	Let $\bM$ be a compact Riemannian manifold without boundary,  $X\in\mathscr X^1(\bM)$  and $\Lambda\subset\bM$ be a multi-singular hyperbolic compact invariant set of $X$.
	Then there exist $\vep>0$, a $C^1$ open neighborhood $\cU$ of $X$ and an open neighborhood $U$ of $\Lambda$, such that every $Y\in \cU$ is almost expansive at scale $\vep$ on its maximal invariant set in $U$.
\end{main}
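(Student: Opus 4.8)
The plan is to reduce almost expansivity at scale $\vep$ --- i.e.\ that the set of points that are not $\vep$-expansive (those admitting a distinct orbit $\vep$-shadowing them for all time, up to a reparametrization fixing $0$, without lying on their own orbit up to a bounded time change) is null for every invariant probability measure --- to two ingredients: a robust uniform hyperbolicity of the rescaled linear Poincaré flow granted by multi-singular hyperbolicity, which pins together two shadowing orbits that recur away from the singularities in both time directions; and Poincaré recurrence, which makes the set of regular orbits \emph{not} recurring in this way null. Concretely, since $\Lambda$ is multi-singular hyperbolic for $X$ (Definition~\ref{d.multising}), I would first use the $C^1$-robustness of this property and the linear Poincaré flow machinery of \cite{BD21}: there exist an open $U\supset\Lambda$ and a $C^1$-neighborhood $\cU\ni X$ such that, for every $Y\in\cU$ with flow $(f^Y_t)$, the maximal invariant set $\Lambda_Y:=\bigcap_{t\in\RR}f^Y_t(U)$ is multi-singular hyperbolic, the dominated splitting $\cN=\cN^{cs}\oplus\cN^{cu}$ of the associated rescaled linear Poincaré flow $\psi^{\ast}_t$ over the regular part of $\Lambda_Y$ being uniformly dominated, contracting on $\cN^{cs}$ and expanding on $\cN^{cu}$, with constants independent of $Y\in\cU$. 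Shrinking $U,\cU$, I would fix $\vep>0$, uniform over $\cU$, below: the radius of Grobman--Hartman linearizing charts at each (hyperbolic, non-degenerate) singularity; the radius of a tubular neighborhood of orbits in which the normal projection onto $\cN$ of a second $\vep$-close orbit --- its \emph{connecting vector} $v_t\in\cN_{f^Y_t(x)}$ --- is well-defined and varies regularly with the orbit; and the local product structure scale. Then fix $Y\in\cU$ and write $g_t=f^Y_t$, $\Lambda'=\Lambda_Y$ (only the active singularities in $\Lambda'$ matter, the rest being isolated in $\Lambda'$ and hence trivially $\vep$-expansive).

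\textbf{Core step.} I would then prove, and this is the main obstacle: if $x\in\Lambda'$ is regular and neither $\omega(x)$ nor $\alpha(x)$ is contained in $\Sing(Y)$, then $x$ is $\vep$-expansive. Given $y\in\bM$ and an increasing homeomorphism $h$ of $\RR$ with $h(0)=0$ and $d(g_t(x),g_{h(t)}(y))\le\vep$ for all $t$, the choice of $\vep$ yields the connecting vector $v_t\in\cN_{g_t(x)}$ with $\sup_t\|v_t\|\le C\vep$; since the flow direction is quotiented out, the reparametrization drops out and $t\mapsto v_t$ solves a non-autonomous equation whose linear part is the (unrescaled) linear Poincaré flow of $Y$ along the orbit of $x$, with remainder quadratic in $\vep$. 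Decompose $v_t=v_t^{cs}+v_t^{cu}$. The orbit of $x$ splits into two kinds of segments: sojourns in a fixed compact subset of $\Lambda'$ disjoint from $\Sing(Y)$, and \emph{complete passages} near an active singularity, entering near its stable manifold and exiting near its unstable manifold; by the uniform hyperbolicity of $\psi^\ast$ and the normalization of the rescaling along complete passages in \cite{BD21}, each such segment contributes a definite amount of expansion of the linear Poincaré flow on $\cN^{cu}$ and of contraction on $\cN^{cs}$, measured in real time. Since $\omega(x)\not\subset\Sing(Y)$, the forward orbit of $x$ returns infinitely often to the compact region, making infinitely many complete passages, so the accumulated expansion of the linear Poincaré flow on $\cN^{cu}$ along the forward orbit diverges; with $\sup_t\|v_t\|<\infty$ this forces $v_0^{cu}=0$ by the usual dichotomy for perturbations of hyperbolic linear cocycles, and symmetrically $\alpha(x)\not\subset\Sing(Y)$ forces $v_0^{cs}=0$. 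Hence $v_t\equiv0$, so $g_{h(t)}(y)$ lies on the orbit of $x$ for all $t$, i.e.\ $y=g_s(x)$ with $|s|$ controlled by $\vep$. Near singularities this analysis is carried out in the linearizing charts, and away from them it reduces to the uniform hyperbolicity of the rescaled linear Poincaré flow as in the non-singular star case \cite{Hayashi,GW}; the hard part is the bookkeeping of the rescaling across the chart boundaries --- showing that a complete passage can never destroy the accumulated hyperbolicity --- which is exactly where the multi-singular hyperbolic structure is indispensable.

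\textbf{Conclusion.} Let $N(\vep)\subset\Lambda'$ be the set of points that are not $\vep$-expansive. Each $\sigma\in\Sing(Y)$ is $\vep$-expansive, since for $\vep$ below its linearizing scale the only full orbit staying $\vep$-close to $\sigma$ is $\sigma$ itself; hence $N(\vep)$ consists of regular points, and by the core step $N(\vep)\subset\{x:\omega(x)\subset\Sing(Y)\}\cup\{x:\alpha(x)\subset\Sing(Y)\}$. Let $\mu$ be any $g_t$-invariant Borel probability on $\Lambda'$ and split $\mu=\mu|_{\Sing(Y)}+\mu|_{\Lambda'\setminus\Sing(Y)}$; the first summand gives $N(\vep)$ no mass because $N(\vep)\cap\Sing(Y)=\emptyset$. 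Applying Poincaré recurrence to $g_1$ and to $g_{-1}$, $\mu|_{\Lambda'\setminus\Sing(Y)}$-a.e.\ point $x$ satisfies $x\in\omega(x)\cap\alpha(x)$, hence, being regular, $\omega(x)\not\subset\Sing(Y)$ and $\alpha(x)\not\subset\Sing(Y)$, so $x\notin N(\vep)$. Therefore $\mu(N(\vep))=0$ for every invariant $\mu$, i.e.\ $g_t|_{\Lambda'}$ is almost expansive at scale $\vep$; and since $\vep,U,\cU$ were chosen uniformly over $\cU$, the theorem follows (whence also Theorem~\ref{mc.A}, via the open and dense equivalence of the star property with multi-singular hyperbolicity, Theorem~\ref{t.BD}).
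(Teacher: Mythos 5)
Your proposal follows the same overall architecture as the paper's proof of Theorem~A (stated as Theorem~\ref{t.exp}, proved in Section~\ref{ss.ThmA.proof}): fix $\vep>0$ uniformly over a $C^1$-neighborhood using the robustness from Theorem~\ref{t.robust.hyp}, split the tangent information to the bi-infinite Bowen ball along the dominated splitting $E_N\oplus F_N$, accumulate contraction on the $E$-side and expansion on the $F$-side over ``away-from-singularity'' stretches and over ``complete passages'' near singularities, deduce that the displacement must be zero, and then invoke Poincar\'e recurrence to handle the exceptional set $W^{s}(\sigma)\cup W^{u}(\sigma)$. The final measure-theoretic step is essentially identical.

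The substantive difference is in the middle, and it is there that your proposal has a gap you acknowledge but do not close. You formalize the comparison between the two orbits via a \emph{connecting vector} $v_t\in N(g_t(x))$ satisfying a non-autonomous ODE with the linear Poincar\'e flow as its linear part and a quadratic remainder, then appeal to ``the usual dichotomy for perturbations of hyperbolic linear cocycles.'' This does not directly work here, for two reasons that are precisely the difficulties the paper's construction is built to overcome. First, the connecting vector at scale $\vep$ is not well-defined near singularities: Liao's tubular neighborhood where the normal projection is controlled has \emph{relative} size $\rho|X(x)|$, which shrinks to zero as the orbit approaches a singularity, while the Bowen ball has fixed absolute radius $\vep$. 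So $v_t$ leaves the tube on every passage through a neighborhood of $\Sing(X)$, and $\sup_t\|v_t\|\le C\vep$ is not a statement you can push through the passage. Second, the linear Poincar\'e flow is \emph{not} a uniformly hyperbolic cocycle over $\Lambda$: the contraction/expansion in Definition~\ref{d.multising}~(2) only kicks in for orbit segments whose endpoints avoid $\Sing_\Lambda(X)$, and the orbit can spend arbitrarily long times inside a fixed neighborhood of a singularity with no hyperbolicity at all; the ``usual dichotomy'' requires uniform constants in real time and fails here. You flag the resulting ``bookkeeping of the rescaling across the chart boundaries'' as the hard part, but this is not a bookkeeping issue: it is the main technical content. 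The paper replaces the connecting-vector linearization with the nonlinear fake foliations and the $E$-/$F$-length bookkeeping of Section~\ref{ss.fake}, and the passages near singularities are handled not by the linear Poincar\'e flow at all but by the local analysis near the hyperbolic singularity in Lemmas~\ref{l.Elarge} and~\ref{l.Flarge}, which compare the $E$- and $F$-lengths at the \emph{entrance} and \emph{exit} of $W_r(\sigma)$ (both outside the bad region) using the singularity's own dominated splitting and the inequality $\lambda_\sigma^c+\lambda_\sigma^{uu}>0$. Your proposal would need a replacement for these two lemmas; without it, the argument that ``a complete passage can never destroy the accumulated hyperbolicity'' is an assertion, not a proof.

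One smaller imprecision: ``the forward orbit returns infinitely often to the compact region, making infinitely many complete passages'' conflates two different things. An orbit can return to the compact region infinitely often yet make only finitely many (or zero) passages through a neighborhood of $\Sing(X)$; the paper's proof treats this as a separate case (Case~2 of Section~\ref{ss.ThmA.proof}), using only Lemma~\ref{l.expansion.reg} in that case. Your argument should be split accordingly; the no-passage case is easier, but it has to be stated.
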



As a corollary of Theorem~\ref{m.A}, we obtain the  continuity of the topological pressure for continuous potential functions.
\begin{main}\label{m.A1}
	Let $\bM$ be a compact Riemannian manifold without boundary,  $X\in\mathscr X^1(\bM)$  and $\Lambda\subset\bM$ be a multi-singular hyperbolic compact invariant set of $X$ that is isolated  and has positive topological entropy.\footnote{As we have shown in \cite{PYY23a}, for $C^1$ generic star flows, every non-trivial, isolated chain recurrence class (which must be multi-singular hyperbolic due to Theorem~\ref{t.BD}) must have positive topological entropy.} Assume that $\phi: \bM\to \RR$ is a continuous function. Then, the topological pressure $P(\phi,X|_\Lambda)$ varies continuously w.r.t.\,the vector field $X$ in $C^1$ topology in the following sense: for every $\vep>0$, there exist a $C^1$ open neighborhood $\cU$ of $X$ and an open neighborhood $U$ of $\Lambda$, such that for every $Y\in\cU$ and its maximal invariant set $\Lambda_Y\subset U$, the topological pressure satisfies
	$$
	|P(\phi, X|_{\Lambda}) - P(\phi, Y|_{\Lambda_Y})|<\vep.
	$$
	
	In particular, the topological entropy of $X|_\Lambda$ varies continuously in $C^1$ topology.
\end{main}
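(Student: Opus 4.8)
The plan is to deduce Theorem~\ref{m.A1} from Theorem~\ref{m.A} by establishing separately the upper and the lower semicontinuity of $Y\mapsto P(\phi,Y|_{\Lambda_Y})$ at $X$; together these give the asserted $\vep$--$\delta$ continuity. First I would fix a compact isolating neighborhood $U_0$ of $\Lambda$, so that $\Lambda$ is the maximal invariant set of $X$ in $U_0$, and recall the standard fact that the maximal invariant set in a fixed compact region varies upper semicontinuously with the vector field in the $C^0$ topology (hence a fortiori in $C^1$): for any prescribed neighborhood $V\supset\Lambda$ one can shrink $U\subset U_0$ and the $C^1$ neighborhood $\cU$ so that $\Lambda_Y\subset V$ for every $Y\in\cU$. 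In particular $\Lambda_Y\to\Lambda$ in the Hausdorff metric, the singularities in $\Lambda_Y$ are precisely the hyperbolic continuations of those in $\Lambda$, and the continuation of any uniformly hyperbolic sublocus of $\Lambda$ stays inside $\Lambda_Y$. I would further shrink $\cU$ and $U$ so that Theorem~\ref{m.A} applies: every $Y\in\cU$ is almost expansive at one common scale $\vep>0$ on its maximal invariant set $\Lambda_Y\subset U$.

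\emph{Upper semicontinuity.} I would take $Y_n\to X$ in $\mathscr X^1(\bM)$ and, by the variational principle \cite{Wal}, pick ergodic invariant measures $\mu_n$ of $Y_n|_{\Lambda_{Y_n}}$ with $h_{\mu_n}(Y_n)+\int\phi\,d\mu_n>P(\phi,Y_n|_{\Lambda_{Y_n}})-1/n$. After passing to a subsequence, $\mu_n\to\mu$ weak-$*$; since $\Lambda_{Y_n}\to\Lambda$ and each $\mu_n$ is $Y_n$-invariant, $\mu$ is an $X$-invariant probability carried by $\Lambda$, and $\int\phi\,d\mu_n\to\int\phi\,d\mu$ by continuity of $\phi$. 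It then suffices to show $\limsup_n h_{\mu_n}(Y_n)\le h_\mu(X)$. This is where almost expansivity at the common scale $\vep$ enters: by a standard consequence of almost expansivity at scale $\vep$, the metric entropy of $(Y_n,\mu_n)$ coincides with its entropy computed at the fixed scale $\vep$ (in the sense of Katok, via Bowen $(T,\vep)$-balls covering a set of measure $1-\delta$), and entropy at a fixed positive scale is jointly upper semicontinuous in the pair (vector field, measure): a near-optimal finite family of $(T,\vep/4)$-Bowen balls for $X$ covering a set of $\mu$-mass $>1-\delta$ becomes, by uniform convergence of the time-$t$ maps ($|t|\le T$) of $Y_n$ to those of $X$, a family of $(T,\vep)$-Bowen balls for $Y_n$ of the same cardinality covering a set of $\mu_n$-mass $>1-\delta$; letting $T\to\infty$ along a suitable subsequence and then $\delta\to0$ yields $\limsup_n h_{\mu_n}(Y_n)\le h_\mu(X,\vep/2)\le h_\mu(X)$. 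Hence $\limsup_n P(\phi,Y_n|_{\Lambda_{Y_n}})\le h_\mu(X)+\int\phi\,d\mu\le P(\phi,X|_\Lambda)$.

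\emph{Lower semicontinuity.} I would fix $\eta>0$ and, by the variational principle, an ergodic measure $\mu$ of $X|_\Lambda$ with $h_\mu(X)+\int\phi\,d\mu>P(\phi,X|_\Lambda)-\eta$. If $h_\mu(X)>0$, a Katok-type horseshoe approximation for ergodic measures of multi-singular hyperbolic sets (cf.~\cite{PYY23a}) produces a uniformly hyperbolic basic set $\Delta\subset\Lambda\setminus\Sing(X)$ with $h_{\mathrm{top}}(X|_\Delta)>h_\mu(X)-\eta$ and all of whose invariant measures are weak-$*$ close to $\mu$, so $P(\phi,X|_\Delta)>P(\phi,X|_\Lambda)-3\eta$. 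If $h_\mu(X)=0$, then $\mu$ is either a point mass at a singularity of $\Lambda$ or --- using the density of periodic measures available in the multi-singular hyperbolic setting (cf.~\cite{PYY23a}) --- is weak-$*$ approximated by periodic measures carried by hyperbolic periodic orbits inside $\Lambda$; taking $\Delta$ to be that singularity or such a periodic orbit again gives $P(\phi,X|_\Delta)>P(\phi,X|_\Lambda)-3\eta$. In every case $\Delta$ is uniformly hyperbolic and locally maximal, hence for $Y$ near $X$ it has a hyperbolic continuation $\Delta_Y\subset U$ with $\Delta_Y\subset\Lambda_Y$, and by the classical continuity of the topological pressure on hyperbolic basic sets (Markov partitions and the variational principle for subshifts, with the time reparametrization of the continuation tending to the identity) one has $P(\phi,Y|_{\Delta_Y})\to P(\phi,X|_\Delta)$. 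Therefore $P(\phi,Y|_{\Lambda_Y})\ge P(\phi,Y|_{\Delta_Y})>P(\phi,X|_\Lambda)-4\eta$ for all $Y$ near $X$; since $\eta$ was arbitrary this is the lower semicontinuity, and taking $\phi\equiv0$ gives the continuity of the topological entropy.

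\emph{Main obstacle.} The step I expect to be hardest is the joint upper semicontinuity of the metric entropy $(Y,\mu)\mapsto h_\mu(Y)$ invoked above. It is precisely here that the \emph{uniformity} of the expansivity scale in Theorem~\ref{m.A} --- a scale that does not depend on the vector field --- is indispensable: only a fixed scale lets one pin down a single finite family of Bowen balls and pass from $X$ to all nearby $Y_n$ and from $\mu$ to all nearby $\mu_n$ at once, whereas a scale degenerating along $Y_n\to X$ would destroy the covering estimate. The genuinely technical work is to carry this bookkeeping out in the flow setting, where the reparametrizations built into the definition of (almost) expansivity for flows, together with the singularities contained in $\Lambda$, have to be controlled throughout; the lower semicontinuity, by contrast, is essentially classical once the relevant hyperbolic subloci of $\Lambda$ have been located.
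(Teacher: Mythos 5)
Your proof is correct and follows essentially the same two-pronged strategy as the paper: upper semicontinuity of $Y\mapsto P(\phi,Y|_{\Lambda_Y})$ from Theorem~\ref{m.A} (robust almost expansivity at a uniform scale, hence robust $h$-expansivity via \cite{LVY}, hence joint upper semicontinuity of metric entropy via Bowen \cite{B72}), and lower semicontinuity via a Katok-type horseshoe approximation (the paper cites \cite{LSWW} and \cite{Gel}) combined with the stability and pressure-continuity of uniformly hyperbolic basic sets under $C^1$ perturbation. The only difference is that you spell out the Bowen-ball covering transfer from $X$ to nearby $Y_n$ and you explicitly treat the possibility that the near-maximizing measure has zero entropy (singular point mass, periodic, or aperiodic hyperbolic measure approximated by periodic orbits via Liao's shadowing), edge cases the paper folds into the phrase ``slightly modifying the result of \cite{Gel}'' without further comment.
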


\begin{remark}\label{r.ThmA}
	It is important to note that, in Theorem~\ref{m.A} and \ref{m.A1} there is no assumption on $\Lambda$ being transitive or chain transitive. We also do not assume that singularities in $\Lambda$ are active. 
\end{remark}

\begin{remark}\label{r.ThmA1}
	The definition of multi-singular hyperbolicity in this article  (Definition~\ref{d.multising}) follows \cite{CDYZ}. It is equivalent to the original definition by Bonatti and da Luz (see Definition~\ref{d.multising.BD} in the Appendix) under the extra assumption that all singularities in $\Lambda$ are active. See \cite[Theorem D and E]{CDYZ}. This assumption is used in Theorem~\ref{m.B} below but not in Theorem~\ref{m.A} or \ref{m.A1}. However, in Section~\ref{ss.ThmA.proof} we will show that for every {\em regular} (i.e., not a point mass of a singularity) ergodic measure $\mu$ on $\Lambda$, every singularity in $\supp\mu$ is active. This will solve the discrepancy between these two definitions. 
\end{remark}

Next, let us consider the uniqueness of equilibrium states.

\begin{main}\label{m.B}
Let $\bM$ be a compact Riemannian manifold without boundary,  $X\in\mathscr X^1(\bM)$, and $\Lambda\subset\bM$ be a multi-singular hyperbolic compact invariant set of $X$ that is an isolated chain recurrence class. Further assume that:
\begin{enumerate}
	\item[(A)] all singularities in $\Lambda$ are non-degenerate and active;
	\item[(B)] $\Lambda$ contains a periodic orbit; furthermore, all periodic orbits in $\Lambda$ are pairwise homoclinically related;
	\item[(C)] $\phi:\bM\to \RR$ is a H\"older continuous function whose topological pressure  satisfies
	\begin{equation}\label{e.pgap}
		\phi(\sigma)< P(\phi, X|_\Lambda), \,\forall \sigma\in \Sing(X|_\Lambda).
	\end{equation}
\end{enumerate}
Then there exists a unique equilibrium state $\mu_\phi$ for $X|_\Lambda$ supported on $\Lambda$ which is ergodic.  In particular, $X|_\Lambda$ has a unique measure of maximal entropy if the topological entropy of $X|_\Lambda$ is positive. 
\end{main}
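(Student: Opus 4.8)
The plan is to reduce the uniqueness of equilibrium states for the singular flow $X|_\Lambda$ to the well-developed theory of equilibrium states for countable Markov shifts (or, equivalently, for uniformly hyperbolic systems with good coding), by building a symbolic model that ``sees'' the hyperbolic part of $\Lambda$ while controlling the loss of mass near the singularities. First I would use multi-singular hyperbolicity together with assumption (A) (active, non-degenerate singularities) to fix a dominated splitting and a linear Poincaré flow along regular orbits, and then use the rescaling/reparametrization machinery (as in \cite{BD21,CDYZ}) to produce, on the set of regular orbits of $\Lambda$, a sectional-hyperbolic-like hyperbolic structure for the rescaled linear Poincaré flow. The key point for building the model is Theorem~\ref{m.A}: almost expansivity at a uniform scale $\vep$ guarantees that the natural extension / symbolic coding identifies orbits up to finitely many exceptions, so that the measure-theoretic entropy is upper semicontinuous and a Katok-type or Sarig-type construction of a countable Markov partition applies to $X|_\Lambda$ away from the singularities. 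Concretely, I would build a family of cross-sections transverse to the flow, bounded away from $\Sing(X|_\Lambda)$, and code the first-return map; assumption (B) (existence of a periodic orbit, all periodic orbits pairwise homoclinically related) ensures that this symbolic system has a single ``dominant'' irreducible component carrying all the entropy, i.e.\ the homoclinic class structure gives topological transitivity on the relevant part of the shift.

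Next, I would bring in the pressure-gap hypothesis (C), $\phi(\sigma)<P(\phi,X|_\Lambda)$ for every $\sigma\in\Sing(X|_\Lambda)$, exactly as in Remark~\ref{r.Pineq2}: it says the point masses on singularities are not equilibrium states, hence any equilibrium state $\mu$ with $\mu(\Sing)=0$ is \emph{regular}, and its ergodic components are regular ergodic measures. By Remark~\ref{r.ThmA1}, every singularity in the support of a regular ergodic measure is automatically active, so the two notions of multi-singular hyperbolicity coincide on $\supp\mu$ and the hyperbolic theory applies without extra hypotheses. The pressure gap also gives the crucial a priori estimate that controls the ``boundary at infinity'' of the countable Markov shift: the pressure contributed by orbits spending a long time near a singularity is strictly dominated by $P(\phi,X|_\Lambda)$, so the induced potential on a cross-section bounded away from $\Sing$ is \emph{positively recurrent} in Sarig's sense, and the Generalized Ruelle-Perron-Frobenius / Buzzi-Sarig theory yields a unique Ruelle-conditionally-invariant (hence, after the standard suspension correspondence, a unique) equilibrium state on the dominant component. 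Uniqueness across components follows from (B): homoclinic relatedness of all periodic orbits forces the spectral-decomposition-type argument to produce exactly one component of full pressure.

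The final step is to check that this symbolic equilibrium state corresponds to a genuine flow-invariant probability measure on $\Lambda$ (no escape of mass to the singularities), that it is ergodic, and that it is the \emph{unique} equilibrium state among \emph{all} invariant measures, not just the regular ones. For the first point I would use the pressure gap once more, via the Abramov formula for the suspension: a sequence of measures converging to an equilibrium state cannot concentrate near $\Sing$ without violating $\phi(\sigma)<P(\phi,X|_\Lambda)$, because near a singularity the roof function blows up while the integral of $\phi$ stays close to $\phi(\sigma)$, driving the ratio (entropy plus integral over roof-average) strictly below $P$. Ergodicity is automatic from the topological transitivity of the dominant component. For the comparison with arbitrary invariant measures, one decomposes into ergodic components; each regular component is handled by the model, and each singular component has $\phi$-pressure-like value at most $\phi(\sigma)<P$, so it cannot be part of an equilibrium state. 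The statement about the MME when $h_{\mathrm{top}}(X|_\Lambda)>0$ is the special case $\phi\equiv 0$, for which \eqref{e.pgap} reads $0<h_{\mathrm{top}}(X|_\Lambda)$ and is satisfied by hypothesis.

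I expect the main obstacle to be the construction and recurrence analysis of the countable Markov model: showing that the first-return map to a cross-section bounded away from the singularities admits a countable Markov partition with a Gibbs/positive-recurrence property for the induced potential, and that the exceptional set where almost expansivity fails (a zero-measure set for every regular ergodic measure, by Theorem~\ref{m.A}) does not interfere with either existence or uniqueness. In particular, carefully tracking the interplay between the reparametrization needed to make the linear Poincaré flow hyperbolic, the Hölder regularity of $\phi$ (which must survive the reparametrization and the passage to the cross-section), and the pressure gap at the singularities — so that the induced system is positively recurrent rather than merely recurrent — is the technical heart of the argument.
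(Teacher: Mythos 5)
Your proposal takes a genuinely different route from the paper: you want to code $X|_\Lambda$ by a countable Markov shift (Katok horseshoes / Sarig--Lima-type symbolic dynamics on cross-sections bounded away from $\Sing(X)$), establish positive recurrence of the induced potential from the pressure gap, and invoke Sarig's RPF theory on the dominant irreducible component. The paper instead verifies an improved Climenhaga--Thompson criterion (Theorem~\ref{t.improvedCL1}) directly on the flow, constructing a collection of ``good'' orbit segments $\cG=\cG_B\cap\cG_S$ via four types of Pliss times, proving the Bowen property (through scaled shadowing into Liao's tubular neighborhoods and fake-foliation product structures) and tail specification (through transversal intersections between fake leaves at hyperbolic times and the invariant manifolds of one reference periodic orbit $\gamma$), and then verifying the pressure gap on the complement $\cB_1\cup\cB_2\cup\cB_3\cup[\cP]\cup[\cS]$ using Lemma~\ref{l.gap1}.

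There is, however, a gap that I believe is fatal to your approach as stated: the theorem is stated for $X\in\mathscr X^1(\bM)$, not $C^{1+\alpha}$, and every known construction of a countable Markov partition for a non-uniformly hyperbolic flow or diffeomorphism (Katok's horseshoe machinery, Sarig's and Lima--Sarig's symbolic dynamics, Buzzi--Crovisier--Sarig) relies essentially on Pesin theory at $C^{1+\alpha}$ regularity: Pesin's stable/unstable manifold theorem, Lyapunov charts with Hölder-continuous size, bounded distortion of holonomies, and Katok's shadowing lemma. None of these are available for merely $C^1$ vector fields; indeed the paper explicitly flags this when, in the proof of Lemma~\ref{l.measure.intersect}, it must replace Katok's shadowing lemma by Liao's shadowing lemma (Lemma~\ref{l.Liao.shadowing}) precisely ``for singular star flows that are only $C^1$''. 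In $C^1$ there is no a priori positive-size stable manifold at a Lyapunov-regular point, so your cross-section return map need not admit the locally-finite, uniformly-large Markov boxes on which Sarig's thermodynamic formalism for countable Markov shifts is built. An independent (but related) obstruction is that for multi-singular hyperbolic sets the dominated splitting lives only on the normal bundle, with stable/unstable disk sizes at a point $x$ scaling like $|X(x)|$; even on a cross-section bounded away from $\Sing$, the reference geometry for the coding is not uniform, and the reparametrization cocycle that would make the lifted potential well behaved is, as you yourself note, only continuous rather than Hölder. The entire architecture of Sections~\ref{s.fakefoliation}--\ref{s.spec} of the paper — fake foliation charts at $|X|$-relative scale, $E$/$F$-hyperbolic times and recurrence Pliss times, and the scaled-shadowing Proposition~\ref{p.key} — was developed precisely to replace Pesin theory in the $C^1$, singular setting; your proposal would need an analogous replacement before the Markov-coding program could even get off the ground, and no such replacement currently exists in the literature.
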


Indeed we will obtain a strong statement: the conclusion of Theorem~\ref{m.B} remains true under $C^1$ small perturbation of the vector fields. This will be key to obtain $C^1$ openness in Theorem~\ref{mc.C}.

\begin{main}\label{m.C}
	There exists a $C^1$ residual set $\mathcal R\subset \mathscr X^1(\bM)$, such that for every $X\in\mathcal R$, every non-trivial, isolated chain recurrence class $\Lambda$ that is multi-singular hyperbolic, and every H\"older continuous function $\phi:\bM\to\mathbb R$ satisfying \eqref{e.pgap}, there exist a $C^1$ neighborhood $\cU$ of $X$ and an open neighborhood $U$ of $\Lambda$, such that for every $C^1$ vector field $Y\in\cU$ and the maximal invariant set $\tilde \Lambda_Y$ of $Y$ in $U$, there exists a unique equilibrium state of $\phi$ for $Y|_{\tilde\Lambda_Y}$. In particular, there exists a unique MME for $Y|_{\tilde\Lambda_Y}$.
\end{main}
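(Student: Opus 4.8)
The plan is to upgrade Theorem~\ref{m.B}, which gives uniqueness of the equilibrium state for a single multi-singular hyperbolic isolated chain recurrence class, to a statement that is robust under $C^1$ perturbation, and to do so on a residual set where the hypotheses (A)--(C) of Theorem~\ref{m.B} propagate to nearby systems. The starting point is the observation that multi-singular hyperbolicity is a $C^1$ open condition on the maximal invariant set in a fixed isolating block $U$: if $\Lambda = \bigcap_{t\in\RR} f_t(\overline U)$ is multi-singular hyperbolic, then by Theorem~\ref{m.A} (together with the cone-field characterization of multi-singular hyperbolicity) there is a $C^1$ neighborhood $\cU$ of $X$ and an isolating block $U$ of $\Lambda$ such that for every $Y\in\cU$ the maximal invariant set $\tilde\Lambda_Y$ of $Y$ in $U$ is again multi-singular hyperbolic, with uniform constants, and every $Y\in\cU$ is almost expansive at a uniform scale $\vep$ on $\tilde\Lambda_Y$. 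By Theorem~\ref{m.A1}, the topological pressure $P(\phi, Y|_{\tilde\Lambda_Y})$ varies continuously, so shrinking $\cU$ we keep $P(\phi, Y|_{\tilde\Lambda_Y})>\phi(\sigma)$ for every continued singularity $\sigma$ (there are finitely many, all hyperbolic and non-degenerate, hence they and their indices continue); thus hypothesis (C) and non-degeneracy in (A) are $C^1$-robust.

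The subtle hypotheses are that the singularities in $\tilde\Lambda_Y$ remain active (part of (A)), that $\tilde\Lambda_Y$ still contains a periodic orbit, and that all periodic orbits in $\tilde\Lambda_Y$ are pairwise homoclinically related (hypothesis (B)); none of these is $C^1$-open in general, which is why the theorem is stated only on a residual set $\mathcal R$. To handle this I would invoke the $C^1$-generic theory of chain recurrence classes: on a residual set, every isolated chain recurrence class is a homoclinic class (by Bonatti--Crovisier-type arguments, which are already in force for star flows in this context), all periodic orbits inside a nontrivial homoclinic class are homoclinically related after a $C^1$-small perturbation made robust by openness of homoclinic relations, and, as established in \cite{PYY23a} and recalled in Remark~\ref{r.ThmA1}, for a $C^1$-generic star flow and every regular ergodic measure, the singularities in its support are active. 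The key point is that these generic properties, once they hold for $X\in\mathcal R$ on the class $\Lambda$, pass to $\tilde\Lambda_Y$ for all $Y$ in a neighborhood $\cU$: a periodic orbit in $\Lambda$ continues to one in $\tilde\Lambda_Y$; the homoclinic relations among periodic orbits are carried by transverse intersections of stable/unstable manifolds, which persist; and activeness of singularities — being about the existence of regular orbits in the class accumulating on the singularity — is a consequence of multi-singular hyperbolicity plus the presence of a nontrivial homoclinic class and thus persists on $\cU$ (alternatively, by Remark~\ref{r.ThmA1}, activeness of any singularity in the support of a regular measure is automatic, and $\tilde\Lambda_Y$ supports such a measure because it has a periodic orbit). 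Once (A), (B), (C) are verified for $Y|_{\tilde\Lambda_Y}$, Theorem~\ref{m.B} applies verbatim to give a unique ergodic equilibrium state for $\phi$, and when $h_{\mathrm{top}}(Y|_{\tilde\Lambda_Y})>0$ the potential $\phi=0$ (or any scaled version) gives a unique MME; positivity of entropy is itself $C^1$-robust by Theorem~\ref{m.A1}.

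Assembling the residual set: let $\mathcal R$ be the intersection of the residual sets from \cite{PYY23a} (for activeness of singularities in supports of regular measures, and for the dichotomy of Theorem~\ref{t.dichotomy}), the Kupka--Smale residual set (non-degenerate, hyperbolic periodic orbits and singularities), the generic set on which isolated chain recurrence classes are homoclinic classes with all periodic orbits homoclinically related, and a residual set ensuring the conclusion of Theorem~\ref{m.A1} holds along a countable basis. For $X\in\mathcal R$, a nontrivial isolated multi-singular hyperbolic chain recurrence class $\Lambda$, and $\phi$ Hölder satisfying \eqref{e.pgap}: take an isolating block $U$ and neighborhood $\cU$ as above so that for every $Y\in\cU$ the set $\tilde\Lambda_Y$ is multi-singular hyperbolic, almost expansive at uniform scale, with $\phi(\sigma_Y)<P(\phi,Y|_{\tilde\Lambda_Y})$, containing a continued periodic orbit with all periodic orbits homoclinically related and all singularities active; then Theorem~\ref{m.B} yields the unique equilibrium state, and the MME when entropy is positive.

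The main obstacle I expect is exactly the robustness of hypothesis (B) (pairwise homoclinic relatedness of periodic orbits in $\tilde\Lambda_Y$) and of activeness: while homoclinic relations among a \emph{fixed} pair of continued periodic orbits clearly persist, one must rule out that under perturbation $\tilde\Lambda_Y$ acquires \emph{new} periodic orbits not homoclinically related to the old ones, or "loses" the regular recurrence that makes singularities active — this is where one genuinely needs the $C^1$-generic structure of homoclinic classes for multi-singular hyperbolic sets (namely that such a class, being a homoclinic class of a periodic orbit, has a dense subset of periodic orbits all homoclinically related to the reference orbit, a property inherited robustly because multi-singular hyperbolicity forces the index of periodic orbits and the continuation of the chain structure inside $U$). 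Making this inheritance precise, rather than merely generic, is the technical heart of the argument and is presumably what the authors carry out in detail below.
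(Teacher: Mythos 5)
You correctly identify the bottleneck (robustness of Assumption~(B) of Theorem~\ref{m.B}), and your reasoning for Assumptions~(A) and~(C)---continuity of the pressure via Theorem~\ref{m.A1}, chain-recurrence-class structure of $\tilde\Lambda_Y$ via \cite{Abd}, hence active singularities---matches the paper's Section~\ref{s.opendense}. But your proposed resolution of the bottleneck is not the one in the paper, and as written it does not close the gap: showing that homoclinic relations among \emph{continued} periodic orbits persist is easy, but ruling out that $\tilde\Lambda_Y$ acquires a \emph{new} periodic orbit not homoclinically related to the continuation of $\gamma$ is precisely what fails for a fixed (non-generic) $Y$. Pairwise homoclinic relatedness of all periodic orbits in a homoclinic class is a generic property obtained via connecting lemmas, and there is no mechanism in your argument (``multi-singular hyperbolicity forces the index'' only rules out the trivial index obstruction, not a failure of transversality or connectedness) that would make it hold for every $Y$ in an open set. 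Indeed the paper states flatly at the start of Section~\ref{s.opendense}: ``Assumption (B) may not hold for $Y\in\cU$.''

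The paper's actual route avoids verifying (B) for $Y$ altogether. It observes (Remark~\ref{r.Bowen}) that the Bowen property, Theorem~\ref{t.Bowen}, never uses (B), so it holds for $Y$ directly. For the specification property---the only place (B) enters---it reproves specification for $Y$ using structures defined \emph{for $X$}: the compact sets $K_W^E(X), K_W^F(X)$, the single periodic orbit $\gamma$ of $X$ (with continuation $\gamma_Y$), the constant $d_0$, and the neighborhoods $U^E,U^F\supset K_W^*(X)$. The two key replacements are Lemma~\ref{l.fake.intersect.robust}, which shows by continuity of invariant manifolds, bundles, and upper semi-continuity of hyperbolic times that fake leaves of long hyperbolic-time orbit segments of $Y$ starting (or ending) in $U^E$ (or $U^F$) transversally intersect $W^{u/s}(\gamma_Y)$ at a uniform scale; and Lemma~\ref{l.bipliss2.robust}, a compactness/semicontinuity argument showing that for $Y$ close to $X$, simultaneous Pliss times of $Y$ must fall inside $U^E$ (a neighborhood of a set built from $X$). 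With these, Theorem~\ref{t.spec.robust} gives specification for $\cG_S(Y)$ with $\gamma_Y$ as the bridging orbit, entirely bypassing the question of whether all periodic orbits of $Y$ in $\tilde\Lambda_Y$ are homoclinically related. The rest of the CT-criterion machinery (Sections~\ref{ss.para}--\ref{ss.gap}) then runs for $Y$ with uniform constants. This is the ``technical heart'' you anticipated but left open; it is genuinely necessary, because the statement you try to prove in step~2 of your plan (that $Y$ satisfies~(B)) is in general false.
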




\subsection{Structure of the paper and the sketch of the proof of Theorem~\ref{m.B} and \ref{m.C}}

In Section \ref{s.preliminary} we collect some useful tools to be used throughout the article. Among them, Liao's theory on singular flows has been covered in full detail in our previous paper \cite[Section 2]{PYY23}. Therefore we will only state the relevant results without providing the proof. 
In literature there are two (almost) equivalent definitions of multi-singular hyperbolicity. The definition used in this article, i.e., Definition \ref{d.multising}, is a modified version of \cite{CDYZ}. The equivalence of our definition with that in \cite{CDYZ} and \cite{BD21} can be found in the Appendix. 

In this article we will consider the maximal invariant set $\Lambda$ in an open isolating neighborhood $U$. In this setting one cannot simply apply the (improved) CT-criterion from \cite{PYY21} on $\Lambda$ since the shadowing orbits obtained from the specification property is not necessarily contained in $\Lambda$.  In our previous paper \cite{PYY23} we were able to bypass this issue by showing that when $\Lambda$ is a sectional-hyperbolic attractor, one can choose the shadowing orbit from $\Lambda$. However, this is no longer the case when $\Lambda$ is not sectional-hyperbolic (or when $\Lambda$ is sectional-hyperbolic but is not an attractor). Therefore we need a further improvement \cite{PYYY24} over the (already) improved CT-criterion, which can be found in Section \ref{s.CT}.

Section \ref{s.fakefoliation} contains the first major tool of this article: fake foliations on the normal plane of regular points that are invariant under the sectional Poincar\'e map. These foliations are tangent to their respective cones, and form a local product structure on the normal plane. The downside of such a construction is that they are only well-defined within Liao's ``relative uniform'' scale (uniformly small $\mbox{const}\cdot |X(x)|$), and therefore may not remain well-posed for points in the Bowen ball, particularly when those points get close to singularities. Nonetheless, we will show that distance along such foliations are uniform 
contracted / expanded over carefully parsed orbit segments (Lemma \ref{l.Elarge}, \ref{l.Flarge} and \ref{l.expansion.reg}). As a result, typical points must have trivial infinite Bowen balls, leading to Theorem~\ref{m.A}. Then, we conclude Section \ref{s.fakefoliation}  with the proof of Theorem~\ref{m.A1}.

The rest of the paper is devoted to the proof of Theorem~\ref{m.B} and \ref{m.C}. Certain tools of this article come from our previous paper \cite{PYY23} where we proved that every sectional-hyperbolic attractor, including the classical Lorenz attractor, supports a unique equilibrium state. 
However, we encounter major difficulties comparing to \cite{PYY23}: for sectional-hyperbolic flows, the tangent bundle has a dominated splitting $E^s\oplus F^{cu}$ and the stable direction $E^s$ is uniformly contacted by the tangent flow. For multi-singular hyperbolic flows, there is no invariant splitting on the tangent bundle. Instead, the normal bundle, which is only defined on the punctured manifold $\bM\setminus \Sing(X)$, splits into $E_N\oplus F_N$ with neither bundle uniformly contracted or expanded by the linear Poincar\'e flow. To deal with such non-uniformity and non-compactness, we need to consider the co-existence of four types of Pliss times: two for the contraction / expansion of the linear Poincar\'e flow along each bundle, and two more for the forward and backward recurrence to neighborhoods of singularities. Results concerning those Pliss times can be found in Section \ref{s.Pliss}. This section also contains an introduction to the infinite Pliss times (note that the aforementioned Pliss times, as well as those used in \cite{PYY23}, are only defined for a finite orbit segment), but we will comment on it later when we consider the specification property. 

Section \ref{s.mainthm.proof} contains the proof of Theorem \ref{m.B}, assuming that the Bowen property and the specification property hold on a collection of good orbit segments (Theorem \ref{t.Bowen} and \ref{t.spec}). 

Section \ref{s.bowen} and \ref{s.spec} are devoted to the proof of Theorem \ref{t.Bowen} and \ref{t.spec} and will become significantly different from \cite{PYY23}. 
In Section \ref{s.bowen} we will prove Theorem \ref{t.Bowen}, which states that for orbit segments where all four aforementioned Pliss times co-exist, one has bounded distortion estimate for the Birkhoff sum of the potential function. The key idea is to combine hyperbolicity of the scaled linear Poincar\'e flow with a good recurrence estimate to prove that for a good orbit segment $(x,t)$, points in the Bowen ball must be {\em scaled shadowed} by the orbit of $x$ (Proposition \ref{p.key}), i.e., the distance between the orbit of $x$ and $y$ at time $s\in [0,t]$ is within $\mbox{const}\cdot |X(x_s)|$. This means that the fake foliations constructed earlier remain well-posed for all orbit segments starting in the Bowen ball of $(x,t)$. This allows us to reproduce the standard bounded distortion estimate, and obtain the Bowen property. 

The specification property requires a very subtle treatment, and can be found in Section \ref{s.spec}. Our previous paper \cite{PYY23} heavily depends on the existence of a hyperbolic periodic orbit whose unstable manifold transversely intersect with the stable manifold of every regular point. Such a property have been established for sectional-hyperbolic attractors, but is not known for multi-singular hyperbolic flows or for sectional hyperbolic sets that are not attractors.\footnote{Such an example in dimension 4 has been constructed in an ongoing work.} Indeed, very little is known on the topological structure of such classes, except that they are ($C^1$ generically) homoclinic classes (\cite{PYY23a}). To overcome this, we shall assume that all periodic orbits are pairwise homoclinically related (Assumption (B) of Theorem~\ref{m.B}). This is a very mild condition that holds $C^1$ generically, and turns out to be  sufficient for the specification property. From there, we will consider a large subset of $\Lambda$ with good, uniform hyperbolic property, detectable by all ``good'' invariant measures, near which there is a good topological characterization (e.g., transversal intersection between invariant manifolds, good recurrence to the singular set, etc).
This property will allow us to establish the specification property (Theorem \ref{t.spec}). We would like to remark that the proof here provides an alternative approach in the classical Lorenz attractor case by relaxing the assumptions in \cite{PYY23}.

Section~\ref{s.opendense} contains the proof of Theorem~\ref{m.C}. The proof resembles that of Theorem~\ref{m.B}; in particular, the proof of the Bowen property (Theorem~\ref{t.Bowen}) is untouched. For the specification property, the key step is to show that Assumption (B) implies that for nearby vector fields $Y$, all measures with large entropy / pressure must be homoclinically related. Once this is done, we can reproduce the proof of Theorem~\ref{t.spec} and finish the proof of Theorem~\ref{m.C}.

Finally, in Section \ref{s.star} we will apply Theorem \ref{m.A} to \ref{m.C} to obtain the finiteness of MME and general equilibrium states for star flows and their perturbations, proving Theorem~\ref{mc.C} to \ref{mc.A}.

{
\subsection{Sectional hyperbolicity and multi-singular hyperbolicity: a quick comparison} 
For the convenience of our readers and to highlight the technical difficulties that we face in this paper (compared to \cite{PYY23}), we provide in this subsection a comparison between sectional hyperbolic attractors and general multi-singular hyperbolic sets (the latter includes the former as a special case).

We start with the definition of sectional hyperbolicity.

\begin{definition}\label{d.sectionalhyperbolic}
	A compact invariant set $\Lambda$ of a $C^1$ vector field $X$ is called {\em sectional-hyperbolic}, if the tangent bundle admits a dominated splitting $T_\Lambda\bM = E^{ss}\oplus F^{cu}$, such that $Df_t\mid_{E^{ss}}$ is uniformly contracting, and $Df_t\mid_{F^{cu}}$ is {\em sectional-expanding:}  there are constants $C_1>0,\lambda>1$ such that for every $x\in\Lambda$ and any subspace $V_x\subset F^{cu}_x$ with $\dim V_x = 2$, we have 
	\begin{equation}\label{e.sect.hyp}
		|\det Df_t(x)|_{V_x}| \ge C_1 \lambda ^t \mbox{ for all } t>0.
	\end{equation}
\end{definition}

Not every sectional-hyperbolic set is an attractor. Those that are attractors enjoy some additional properties:
\begin{enumerate}
	\item $\Lambda$ has a well-defined stable foliation.
	\item Singularities in $\Lambda$ must have the same stable index (\cite{MM, SGW}).
	\item $X|_\Lambda$ has positive topological entropy (\cite{PYY}).
	\item $\Lambda$ contains a hyperbolic periodic orbit $\gamma$ (\cite{PYY}) whose stable manifold is dense in a neighborhood of $\Lambda$ \cite{CY}.
	\item The unstable manifold of $\gamma$ is a {\em quasi $u$-section}: it transversely intersects with the stable manifold of every regular point (\cite{CY,PYY23}).
	\item $\Lambda$ is robustly transitive (\cite{PYY23}).
\end{enumerate}

On the other hand, multi-singular hyperbolic sets (the precise definition is postponed to the next section) may not have any dominated splitting on the tangent bundle. Instead, the splitting exists on the normal bundle $N = E_N\oplus F_N$, and neither bundle is uniform under the tangent dynamics (in which case is the linear Poincar\'e flow). In this case, it can be shown that $\Lambda$ has a dominated splitting on the tangent bundle if, and only if, all singularities in $\Lambda$ have the same stable index, in which case $\Lambda$ is sectional-hyperbolic (\cite{SGW}). In general, singularities in $\Lambda$ may have different indices (differ by one) (\cite{SGW}). 

For multi-singular hyperbolic sets $\Lambda$ that are not sectional-hyperbolic, the topological properties are largely unknown. The only existing result, to our knowledge, is that $C^1$ generically, every such class must be isolated and contains a periodic orbit \cite{PYY23a}.  

The properties mentioned above are summarized into the following table.

\begin{center}
	\begin{tabular}{ |c|c|c| } 
		\hline
		& SH attractor & MH isolated sets \\ 
		\hline
		Isolated & Yes & Yes \\
		\hline 
		Dominated splitting & $T_\Lambda\bM = E^s\oplus F^{cu}$ & $N_\Lambda = E_N\oplus F_N $ \\ 
		\hline
		Singularities in $\Lambda$ & Same index & Could have different indices \\
		\hline
		Invariant foliations & $\mathcal F^s$ & May not exist \\
		\hline 
		Robust transitivity & Yes & Unknown\\
		\hline
		$W^u$ of Periodic orbits & Quasi $u$-section & Unknown \\
		\hline
		Dynamics near $\Lambda$ & Attracted to $\Lambda$ & Could be of Saddle type \\
		\hline
	\end{tabular}
\end{center}

We remark that not all multi-singular hyperbolic chain recurrence classes are attractors (see for instance \cite{daLuz}), especially when $\Lambda$ indeed contains singularities of different indices. In this case, orbits near $\Lambda$ may eventually escape a filtrating neighborhood of $\Lambda$ and move towards other chain recurrence classes. 

}

\section{Preliminary}\label{s.preliminary}
For the convenience of our readers, all the notations are consistent with our previous paper~\cite{PYY23}. For certain notations and tools such as the Liao's tubular neighborhood theory, we invite our readers to the comprehensive discussion in~\cite[Section 2 and 3]{PYY23}. 

Throughout this paper,  a singular flow is a flow where $\Sing(X)\ne\emptyset$. As a standard assumption, we will assume that all the singularities are hyperbolic and non-degenerate (i.e., the derivatives are invertible). This is a $C^r$ open and dense assumption in the space of $C^r$ vector fields, for every $r\ge 1.$

As in~\cite{PYY21,PYY23}, we shall identify $(x,t)\in\bM\times\RR^+$ with  the orbit segment $\{f_s(x):0\le s< t\}$. We will also use the standard notation 
$$
x_t = f_t(x).
$$
In comparison, we will use superscripts when referring to a sequence of points, e.g., $x^1,x^2,\ldots.$ In this case, $(x^i)_s$ refers to $f_s(x^i)$. 
\subsection{Multi-singular hyperbolicity}\label{ss.def}
Bonatti and da Luz introduced multi-singular hyperbolicity in~\cite{BD21} to capture the hyperbolic structure of singular star flows. Their definition involves {reparametrization cocycles}\footnote{Roughly speaking, Bonatti and da Luz's definition requires that under a proper reparametrization which is given by a multiplicative cocycle over the flow, the {\em extended linear Poincar\'e flow} (the extension of the linear Poincar\'e flow to the projective tangent bundle) is uniformly hyperbolic.} for the extended flow on the Grassmannian manifold. Later, the authors of~\cite{CDYZ} introduced a new definition which, in most situations, are equivalent to the previous one (see Theorem~\ref{t.sec.hyp}). This new definition only involves the well-known notion of the linear Poincar\'e flow, and is more suitable for the structure this paper. 

The set of regular points is denoted by
$$
\Reg(X) = \bM\setminus \Sing(X). 
$$
For a compact invariant set $\Lambda$, the normal bundle over $\Lambda$ is defined over $\Lambda\cap \Reg(X)$ as the orthogonal complement of the flow direction in the tangent bundle. More precisely:
$$
N_\Lambda = \bigsqcup_{x\in\Lambda\cap\Reg(X)} N(x) \mbox{, where } N(x) =\langle X(x)\rangle^\perp\mbox{ is the normal plane at $x$}.
$$ 
When no confusion is caused, we will sometimes drop the index $\Lambda$. 

Writing $\pi_{N,x}: T_x\bM\to N(x)$ for the orthogonal projection to the normal plane $N(x)$, we define the {\em linear Poincar\'e flow} $\psi_t: N\to N$ to be the projection of the tangent flow to the normal bundle. To be more precise, given $v\in N(x)$, we let
\begin{equation}\label{e.Poin}
	\psi_t(v) = \pi_{N,x_t}\circ Df_t(v) = Df_t(v) -\frac{< Df_t(v), X(x_t) >}{\|X(x_t)\|^2}X(x_t),
\end{equation}
where $< .,. >$ is the inner product on $T_xM$ given by the Riemannian metric. The {\em scaled linear Poincar\'e flow} $\psi^*_t$ is the linear Poincar\'e flow scaled by the flow speed, that is,
\begin{equation}\label{e.scaledP}
	\psi_t^*(v) = \frac{\psi_t(v)}{\|Df_t\mid_{\langle X(x) \rangle}\|} = \frac{|X(x)|}{|X(x_t)|}\psi_t(v).
\end{equation}
The scaled linear Poincar\'e flow takes into consideration the flow speed at each regular point, and is compatible with Liao's theorem on the tubular neighborhoods. 
More properties of the scaled linear Poincar\'e flow will be discussed in Section~\ref{ss.Liao}.

As shown in \cite{SGW} and \cite{BD21}, star flows may not have any dominated splitting on the tangent bundle. Therefore we must consider dominated splittings on the normal bundle:
\begin{definition}\label{d.sing.dom.spl}
	For $X\in\mathscr{X}^1(\bM)$ and compact invariant set $\Lambda$, a {\em singular dominated splitting of index $i$} is a decomposition of the normal bundle $N_{\Lambda} = E_N\oplus F_N$ which is invariant for the linear Poincar\'e flow $(\psi_t)$, such that
	\begin{enumerate}
		\item $\dim E_N = i$;
		\item the splitting is dominated for the linear Poincar\'e flow: there exist constants $C>0$ and $\lambda>1$, such that 
		\begin{equation}\label{e.dom.spl}
		\|\psi_t\mid_{E_N(x)}\| \cdot\|\psi_{-t}\mid_{F_N(x_t)}\| < C\lambda^{-t},\,\,\forall x\in\Lambda\cap\Reg(X), t>0.
		\end{equation}		
		\item for each singularity $\sigma\in\Sing_\Lambda(X)$:
		\begin{enumerate}
			\item  either there exists a dominated splitting of the form $T_\sigma\bM = E_\sigma^{ss}\oplus F_\sigma$ for the tangent flow $(Df_t)$ with $\dim E_\sigma^{ss} = i$, such that $E_\sigma^{ss}$ is uniformly contracted, and 
			$$
			W^{ss}(\sigma) \cap \Lambda = \{\sigma\};
			$$
 			in this case we say that $\sigma$ is of {\em Lorenz-type}, and denote the collection of such singularities by $\Sing^+_\Lambda(X)$;
			\item or there exists a dominated splitting of the form $T_\sigma\bM = E_\sigma \oplus E_\sigma^{uu}$ for the tangent flow $(Df_t)$ with $\dim E_\sigma^{uu} = \dim F_N = \dim\bM-1-i$, such that $E_\sigma^{uu}$ is uniformly expanded, and 
			$$
			W^{uu}(\sigma) \cap \Lambda = \{\sigma\};
			$$
			in this case we say that $\sigma$ is of {\em reverse Lorenz-type}, and denote the collection of such singularities by $\Sing^-_\Lambda(X)$;
		\end{enumerate}
	\end{enumerate}
\end{definition}

\begin{remark}\label{r.symmetry}
	It is clear from the definition that a singular dominated splitting for $X$  of index $i$ is a singular dominated splitting for $-X$ of index $\bM-1-i$. Furthermore, there is a strong symmetry in the classification of singularities into Lorenz and reverse Lorenz-type:
	\begin{equation}\label{e.symmetry.1}
		\Sing^+_\Lambda(X) = \Sing^-_\Lambda(-X),
	\end{equation}
	i.e., every Lorenz-type singularity for $X$ is a reverse Lorenz-type singularity for $-X$, and vice versa. This observation will be used frequently in the rest of this paper.
\end{remark}

\begin{remark}\label{r.dom.scaled}
	As we explained in our previous paper~\cite[Section 2.2]{PYY23}, one can define {dominated splittings for the scaled linear Poincar\'e flow} by replacing $\psi_t$ in~\eqref{e.dom.spl} with $\psi_t^*$. Since $$
	\frac{|\psi_t^*(u)|}{|\psi_t^*(v)|} = \frac{\frac{|\psi_t(u)|}{\|Df_t\mid_{\langle X(x)\rangle}\|}}{\frac{|\psi_t(v)|}{\|Df_t\mid_{\langle X(x)\rangle}\|}}=\frac{|\psi_t(u)|}{|\psi_t(v)|},
	$$
	every dominated splitting for $(\psi_t)_t$ is a dominated splitting for $(\psi_t^*)_t$ with the same constants, and vice versa.
\end{remark}

In this paper, we shall assume w.l.o.g. that $E_N$ and $F_N$ are orthogonal at every regular point by changing the metric if necessary. By speeding up the flow (i.e., replacing $X$ with $cX$ for some $c>1$)\footnote{There is a one-to-one correspondence between equilibrium states of $X$ and $cX$; as a result, the existence and uniqueness of equilibrium states for $cX$ is equivalent to the existence and uniqueness of equilibrium states for $X$ (with the corresponding potential functions); see Section~\cite[Section 4]{PYY21} for detail.} we may assume the following {\em one-step domination}:
\begin{equation}\label{e.onestepDS1}
	\|\psi^*_1\mid_{E_N(x)}\| \|\psi^*_{-1}\mid_{F_N(x_t)}\| \le \frac12.
\end{equation}

 Given $\alpha>0$, one can define the $(\alpha,F_N)$-cone field on the normal bundle by
$$
C^N_\alpha(F_N(x)) = \{v = v_{E_N} + v_{F_N}\in N(x): |v_{E_N}|\le\alpha|v_{F_N}| \}.
$$ 
Then~\eqref{e.onestepDS1} implies the forward invariance of the $(\alpha,F_N)$-cone field  under $\psi^*_1$. The $(\alpha,E_N)$-cone field can be defined similarly, and is backward invariant.

Note that {\em a priori}, a singularity can be both of Lorenz and reverse Lorenz-type. For example, an isolated singularity whose dynamics does not interact with the rest of $\Lambda$. To rule out such singularities, we define active singularities as follow.
\begin{definition}\label{d.active.sing}
	We say that a hyperbolic singularity $\sigma\in\Lambda$ is active in $\Lambda$, if both 
	$W^s(\sigma)\cap \Lambda\setminus \{\sigma\}$ and $W^u(\sigma)\cap\Lambda\setminus \{\sigma\}$ are non-empty.  
\end{definition}
In other words, $\sigma$ is active if there exist regular orbits in $\Lambda$ that approach $\sigma$ (to arbitrarily small scales) and then leave $\sigma$. 

\begin{remark}\label{r.active}
	It is proven in~\cite[Theorem D and E]{CDYZ} that if all singularities in $\Lambda$ are active, then the definition in~\cite{CDYZ} is equivalent to that in~\cite{BD21}. This assumption is mild, as it is satisfied by all non-trivial chain recurrence classes (for the precise definition, see Section~\ref{ss.crc}). Furthermore, every compact invariant set $\Lambda$ can be written as a compact invariant  subset $\tilde\Lambda$ with all singularities active, together with (finitely many) singularities that are not active, each of which is connected to $\tilde \Lambda$ with either its stable manifold or unstable manifold, but not both. In this case, the non-trivial dynamics is supported on $\tilde\Lambda.$ If we assume that Assumption (C) of Theorem~\ref{m.B} holds (which we will do throughout this paper), then the point masses of singularities are not equilibrium states. Furthermore, the stable and unstable manifolds of singularities do not support non-trivial invariant measures due to Poincar\'e Recurrence Theorem. As a result, equilibrium states of $X|_\Lambda$, if exists (the existence will be given by Theorem~\ref{m.A} and~\cite{B72}), must be supported on $\tilde\Lambda$. Then one can apply Theorem \ref{m.B} to $X|_{\tilde\Lambda}$ to obtain the uniqueness of equilibrium states on $\tilde\Lambda$, and therefore on $\Lambda$. 
\end{remark}

Now we are ready to introduce multi-singular hyperbolicity.

\begin{definition}\label{d.multising}
	A compact invariant set $\Lambda$ for a $C^1$ vector field $X$ is multi-singular hyperbolic of index $i$, if 
	\begin{enumerate}
		\item $\Lambda$ admits a singular dominated splitting $E_N\oplus F_N$ of index $i$;
		\item there exists $\eta>1$ such that for every open isolating neighborhood $V$ of $\Sing_\Lambda(X)$ that is sufficiently small, there exists $T_V>0$ such that the following inequalities hold:\footnote{Here $\floor{t}$ is the integer part of $t$.} 
		\begin{equation}\label{e.hyp0}
			\prod_{i=0}^{\floor{t}-1}\left\|\psi_1|_{E_N(x_i)}\right\|\le \eta^{-\floor{t}},   \prod_{i=0}^{\floor{t}-1}\big\|\psi_{-1}|_{F_N((x_{\floor{t}-i}))}\big\|\le \eta^{-t},
		\end{equation}	
		\begin{equation}\label{e.hyp}
			\prod_{i=0}^{\floor{t}-1}\left\|\psi^*_1|_{E_N(x_i)}\right\|\le \eta^{-\floor{t}}, \mbox{ and } \prod_{i=0}^{\floor{t}-1}\big\|\psi_{-1}^*|_{F_N((x_{\floor{t}-i}))}\big\|\le \eta^{-t},
		\end{equation}
		whenever $x,x_t\in  \Lambda\cap V^c$ and $t>T_V$;
		\item each singularity in $\Lambda$ is hyperbolic with splitting $T_\sigma\bM = E_\sigma^{ss}\oplus E_\sigma^c\oplus E_\sigma^{uu}$, satisfying 
		$$
		\dim E_\sigma^{ss} = \dim E_N, \,\,\dim E_\sigma^{uu} = \dim F_N,\,\, \dim E^c= 1; 
		$$
		furthermore, the largest negative Lyapunov exponent $\lambda_\sigma^{ss}$ along $E_\sigma^{ss}$, the smallest Lyapunov exponent $\lambda_\sigma^{uu}$ along $E_\sigma^{uu}$ and the center Lyapunov exponent $\lambda_\sigma^c$ satisfy
		\begin{equation}\label{e.sing.Lya}
		0<|\lambda_\sigma^c|<\min(-\lambda_\sigma^{ss}, \lambda_\sigma^{uu}).
		\end{equation}
	\end{enumerate} 
\end{definition}

\begin{remark}
	For a multi-singular hyperbolic set $\Lambda$ of index $i$, all periodic orbits in $\Lambda$ are hyperbolic and have the same stable index (i.e., the dimension of the stable subspace) which is $i$; on the other hand, the indices of singularities must be $i$ or $i+1$ depending on whether $\lambda^c_\sigma$ is positive or negative. We call $i$ the index of $\Lambda$.
\end{remark}

\begin{remark}\label{r.hyp.measure}
	Recall that an invariant ergodic measure for a flow $X$ that is not the point mass of a singularity (i.e., a regular measure) is called hyperbolic, if the only zero Lyapunov exponent of $\mu$ (defined using the tangent flow $(Df_t)$) comes from the flow direction. It is easy to prove (see, for instance, \cite[Theorem 2.12]{PYY}) that for every regular measure (i.e., a measure that is not  a point mass of a singularity), the Lyapunov exponents defined by $(Df_t)$ (except for the zero exponent from the flow direction) coincide with those defined by $(\psi_t)$ and those by $(\psi_t^*)$. As a result, a regular measure $\mu$ is hyperbolic if and only if the Lyapunov exponents defined by $(\psi_t)$ are all non-zero.  Definition~\ref{d.multising} (2) implies that all Lyapunov exponents $\lambda_i(\mu)$ of $\mu$ must satisfy $|\lambda_i(\mu)|\ge \eta$. This shows that every regular measure is hyperbolic.
\end{remark}

\begin{remark}
	Later we will show that for certain regular orbits $(x,t)$ that ``stay close to $\Lambda$'', there exists a singular dominated splitting on the normal bundle over the finite orbit segment $(x,t)$ such that \eqref{e.hyp0} and \eqref{e.hyp} hold. Such a statement involves the extended flow on the Grassmannian manifold and is therefore postponed to the next subsection. See Lemma~\ref{l.robust.dom.spl} and \ref{l.robust.hyp}.
\end{remark}

Definition~\ref{d.multising} is slightly different from~\cite[Definition 1.6]{CDYZ}; in the latter, item (2) was stated as

\begin{enumerate}
	\item[(2)'] there exist $\eta>1$, $T > 0$ and a compact isolating neighborhood  $V$ of $\Sing_\Lambda(X)$ such that
	$$
	\|\psi_t|_{E_N(x)}\|\le \eta^{-t}, \mbox{ and } \|\psi_{-t}|_{F_N(x_t)}\|\le \eta^{-t},
	$$
	whenever $x,x_t\in \Lambda\cap V^c$ and $t>T$.
\end{enumerate}
(2)' is useful when considering Lyapunov exponents. 
In comparison, Definition~\ref{d.multising} (2) is easier to use in combination with the Pliss Lemma (see Theorem~\ref{t.pliss}) to produce hyperbolic times. For more details, see Section~\ref{s.Pliss}. 

We shall prove the following proposition, which states that these two definitions are equivalent. 

\begin{proposition}\label{p.equivalent}
	Let $\Lambda$ be a compact invariant set with all singularities hyperbolic and active. Then:
	\begin{itemize}
		\item (2) implies (2)'.
		\item \cite[Definition 1.6]{CDYZ} implies (2).
	\end{itemize}
	Consequently, Definition~\ref{d.multising} and \cite[Definition 1.6]{CDYZ} are equivalent. 
\end{proposition}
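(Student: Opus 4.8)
The plan is to prove the two implications separately; the first is a one‑line estimate, while the second carries all the content.

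\medskip
\noindent\emph{Proof that (2) $\Rightarrow$ (2)$'$.} Let $(x,t)$ be an orbit segment with $x,x_t\in\Lambda\cap V^c$; since the endpoints are regular the whole segment is regular, so all the maps below are defined. Writing $\psi_t|_{E_N(x)}$ as $\psi_{t-\floor t}|_{E_N(x_{\floor t})}$ composed with the $\floor t$ unit‑time maps $\psi_1|_{E_N(x_i)}$, submultiplicativity of the operator norm and \eqref{e.hyp0} give
$\|\psi_t|_{E_N(x)}\|\le C_0\prod_{i=0}^{\floor t-1}\|\psi_1|_{E_N(x_i)}\|\le C_0\,\eta^{-\floor t}\le C_0\eta\cdot\eta^{-t}$, where $C_0=\sup_{y\in\bM,\,0\le s\le1}\|Df_s(y)\|<\infty$. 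Choosing $\eta'\in(1,\eta)$ and $T'$ so large that $C_0\eta\le(\eta/\eta')^{T'}$ yields $\|\psi_t|_{E_N(x)}\|\le(\eta')^{-t}$ for all $t>T'$; the estimate for $\|\psi_{-t}|_{F_N(x_t)}\|$ is identical, or follows by applying the same argument to $-X$ (Remark~\ref{r.symmetry}). Taking $V$ to be a compact isolating neighborhood inside an admissible open one from (2) finishes this direction. Note that only item (2) is used here, not (1) or (3).

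\medskip
\noindent\emph{Proof that \cite[Definition 1.6]{CDYZ} $\Rightarrow$ (2).} We are given (1), (2)$'$ and (3), with constants $\eta_0>1$, $T_0$ and an isolating neighborhood $V_0$ of $\Sing_\Lambda(X)$, and we must produce a single $\eta>1$ so that for every sufficiently small isolating neighborhood $V$ of $\Sing_\Lambda(X)$ there is $T_V$ with \eqref{e.hyp0} and \eqref{e.hyp} on orbit segments with endpoints in $\Lambda\cap V^c$. I would first convert the one‑step factors on the \emph{regular part} of an orbit into a genuine contraction by speeding up the flow: replacing $X$ by $cX$ for a large integer $c$ changes neither the dynamics nor the validity of either definition (exactly as in the reduction to \eqref{e.onestepDS1}), and one unit $cX$‑step is a time‑$c$ step of the old flow, so by (2)$'$ every such step that begins and ends in $\Lambda\cap V_0^c$ contracts $E_N$, and backward‑expands $F_N$, by a factor $\ge\eta_0^{\,c}$; for $c$ large this is $\ge\eta_1$ for a fixed $\eta_1>1$, and the same holds for $\psi^*_1,\psi^*_{-1}$ wherever the speed ratio $|X(\cdot)|/|X(\cdot_c)|$ is bounded, which is automatic outside a neighborhood of $\Sing(X)$. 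Now fix a small isolating neighborhood $V$ of $\Sing_\Lambda(X)$, choose an isolating neighborhood $V_0'$ of $\Sing_\Lambda(X)$ with $V_0'\subset V$ small enough for the local analysis below, and given $(x,t)$ with $x,x_t\in\Lambda\cap V^c$ decompose the integers in $[0,\floor t)$ into the maximal blocks on which $x_i\in(V_0')^c$ (\emph{outer blocks}) and the complementary blocks, each of which is an \emph{excursion} spent in $V_0'$ near a single singularity $\sigma$. Over an outer block every one‑step factor occurring in \eqref{e.hyp0}--\eqref{e.hyp} is $\le\eta_1^{-1}$, so the corresponding partial product is $\le\eta_1^{-(\mathrm{block\ length})}$, which is more than enough.

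\medskip
\noindent\emph{The excursion estimate, and the main obstacle.} What remains — and this is the technical heart — is to bound from above, over a single excursion that enters $V_0'$ near $\sigma$ along $W^s(\sigma)$ and leaves along $W^u(\sigma)$, each of the four partial products $\prod\|\psi_1|_{E_N(x_i)}\|$, $\prod\|\psi^*_1|_{E_N(x_i)}\|$, $\prod\|\psi_{-1}|_{F_N(x_{\floor t-i})}\|$, $\prod\|\psi^*_{-1}|_{F_N(x_{\floor t-i})}\|$ by $e^{-(\mathrm{small})\cdot(\mathrm{length})}$, uniformly in the depth of the excursion and for singularities of both Lorenz‑ and reverse‑Lorenz‑type. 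This is precisely where hypothesis (3) is used: near $\sigma$ one analyses $\psi_1$ and $\psi^*_1$ on $E_N$ and $F_N$ by comparing, via the $C^1$‑linearization of the flow at the hyperbolic singularity $\sigma$ and the $\psi^*_1$‑invariance of the cone fields $C^N_\alpha(F_N)$, $C^N_\alpha(E_N)$ that keeps $E_N,F_N$ in the right cones, the contraction/expansion of $E_N,F_N$ (which, by the singular dominated splitting and the classification of $\sigma$, is governed near $\sigma$ by $\lambda^{ss}_\sigma$ and $\lambda^{uu}_\sigma$) against the variation of the flow speed $|X(x_i)|$ along the excursion (governed by the center exponent $\lambda^c_\sigma$, since the orbit enters and leaves tangent to the weak direction $E^c_\sigma$); the inequality $0<|\lambda^c_\sigma|<\min(-\lambda^{ss}_\sigma,\lambda^{uu}_\sigma)$ of \eqref{e.sing.Lya} is exactly what makes the \emph{scaled} partial products contract over the excursion rather than merely stay bounded. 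Alternatively, this is precisely the information recorded by the Bonatti--da Luz reparametrization cocycle, so one may instead invoke the equivalence of \cite[Definition 1.6]{CDYZ} with the Bonatti--da Luz definition (valid here since all singularities are active, cf.\ \cite[Theorem D and E]{CDYZ}) and read the four partial products off the resulting uniform hyperbolicity of the reparametrized extended linear Poincar\'e flow. Concatenating the outer‑block and excursion estimates and absorbing the finitely many short outer blocks and the two fractional end pieces into a constant (harmless once $T_V$ is taken large, which is allowed) yields \eqref{e.hyp0} and \eqref{e.hyp}; the backward statements on $F_N$ also follow from the forward ones on $E_N$ applied to $-X$ via Remark~\ref{r.symmetry}. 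The two implications together give the asserted equivalence of Definition~\ref{d.multising} with \cite[Definition 1.6]{CDYZ}, and the last sentence of the proposition is immediate.

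\medskip
I expect the excursion estimate to be the only real difficulty: pushing all four partial products (scaled and unscaled, on $E_N$ forward and on $F_N$ backward) through a passage near a singularity — in particular making the scaled ones contract, which is where \eqref{e.sing.Lya} is indispensable — and doing so uniformly in the depth of the passage and for singularities of both types. The outer blocks, and the passage between the product form (2) and the composition form (2)$'$, are, once the flow has been sped up, essentially just submultiplicativity of the operator norm.
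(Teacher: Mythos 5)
Your treatment of (2) $\Rightarrow$ (2)$'$ is correct in substance (the paper simply calls it trivial), except that the closing sentence has the inclusion backwards: to produce the \emph{compact} $V$ of (2)$'$ you want $V=\overline{V_1}$ for a small open isolating $V_1$ from (2) — a compact set \emph{containing} the open one — since you need $V^c\subset V_1^c$ so that $x,x_t\in V^c$ lets you apply (2).

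The converse is where the real issues lie. Your primary route (blocks plus excursions near $\Sing_\Lambda(X)$) has two problems. First, the $C^1$-linearization you invoke at a hyperbolic singularity is not available: $X$ is only $C^1$, Hartman--Grobman gives a topological conjugacy only, and $C^1$-linearization (Sternberg/\cite{Newhouse}) needs higher regularity and nonresonance, neither of which is assumed. Second, and more basically, the ``excursion estimate'' — that the four scaled/unscaled partial products contract over an arbitrarily deep passage near $\sigma$ at a rate proportional to its length, uniformly in depth and for both Lorenz and reverse-Lorenz type — is the entire content of the implication, and you only gesture at it. It is of the same delicate flavour as Lemmas~\ref{l.Elarge}, \ref{l.Flarge}, but for norms of $\psi_1^*$ rather than for $E$/$F$-lengths, and cannot be extracted from \eqref{e.sing.Lya} in a sentence.

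Your fallback route is the one the paper actually uses, but ``read the four partial products off the reparametrized hyperbolicity'' elides the computation that carries the proof. Concretely: after a change of metric (or speeding up) so that $\|h^1_+\cdot\psi_1|_{\tilde E_N}\|\le\eta^{-1}$ and $\|h^{-1}_-\cdot\psi_{-1}|_{\tilde F_N}\|\le\eta^{-1}$ hold on the compact set $\mathfrak B(\Lambda)$, one uses the pointwise identity
\[
\|\psi^*_{-1}|_{F_N(y)}\| \;=\; \frac{\|h^{-1}_-\cdot\psi_{-1}|_{F_N(y)}\|}{h^{-1}_-(\langle X(y)\rangle)}\cdot\frac{|X(y)|}{|X(y_{-1})|},
\]
notices that upon multiplying over the orbit both the scalar cocycle $h_-$ and the speed ratios \emph{telescope}, giving
\[
\prod_{i=0}^{t-1}\|\psi^*_{-1}|_{F_N(x_{t-i})}\| \;\le\; \frac{1}{h_-^{-t}(\langle X(x_t)\rangle)}\cdot\frac{|X(x_t)|}{|X(x)|}\cdot\eta^{-t},
\]
and then controls the two boundary factors by a constant $C_V$ precisely because $x,x_t\notin V$: this is Definition~\ref{d.cocycle}, item two, for $h_-$, and compactness of $\Lambda\cap V^c$ for the speed ratio. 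Absorbing $C_V$ by taking $T_V$ large and $\eta'\in(1,\eta)$ gives \eqref{e.hyp}; \eqref{e.hyp0} is the same with the speed-ratio factor dropped, and the $E_N$ inequalities follow by applying the $F_N$ argument to $-X$. This telescoping is where the proof lives; as written, your fallback is a pointer to it rather than a derivation.
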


The proof of this proposition can be found in Appendix~\ref{A.multi-sing}.

For the next lemma, recall the definition of $\Sing_\Lambda^\pm(X)$ in Definition \ref{d.sing.dom.spl} (3).
\begin{lemma}\label{l.lorenz-type}
	Let $\Lambda$ be a multi-singular hyperbolic set for a $C^1$ vector field $X$ with all singularities active. Then 
	\begin{equation}
	\begin{split}
	&\Sing_\Lambda^+(X) = \{\sigma\in \Sing_\Lambda(X): \lambda_\sigma^c<0\}, \mbox{ and }\\
		&\Sing_\Lambda^-(X) = \{\sigma\in \Sing_\Lambda(X): \lambda_\sigma^c>0\}.
	\end{split}
	\end{equation}
	In particular, 
	$$
	\Sing_\Lambda^+(X)\cap \Sing_\Lambda^-(X)=\emptyset.
	$$
\end{lemma}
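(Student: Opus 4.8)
The plan is to rule out, using the activeness hypothesis, the two ``wrong'' cases in the classification of singularities of a singular dominated splitting (Definition~\ref{d.sing.dom.spl}(3)), and then to use the $-X$ symmetry of Remark~\ref{r.symmetry} to halve the work. Throughout, write $i=\dim E_N$ and let $T_\sigma\bM = E_\sigma^{ss}\oplus E_\sigma^c\oplus E_\sigma^{uu}$ be the hyperbolic splitting from Definition~\ref{d.multising}(3), with $\dim E_\sigma^{ss}=i$, $\dim E_\sigma^c=1$, and $0<|\lambda_\sigma^c|<\min(-\lambda_\sigma^{ss},\lambda_\sigma^{uu})$; in particular $\lambda_\sigma^c\neq 0$. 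Since $\Lambda$ carries a singular dominated splitting of index $i$, Definition~\ref{d.sing.dom.spl}(3) says every $\sigma\in\Sing_\Lambda(X)$ is of Lorenz-type or of reverse Lorenz-type (a priori possibly both), so $\Sing_\Lambda(X)=\Sing^+_\Lambda(X)\cup\Sing^-_\Lambda(X)$.

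The key step is to show that $\sigma\in\Sing^+_\Lambda(X)$ forces $\lambda_\sigma^c<0$. Being of Lorenz-type gives a dominated splitting $T_\sigma\bM=E'\oplus F_\sigma$ for $(Df_t)$ with $\dim E'=i$, $E'$ uniformly contracted, and $W^{ss}(\sigma)\cap\Lambda=\{\sigma\}$, where $W^{ss}(\sigma)$ is tangent to $E'$. Since $Df_1(\sigma)$ is a linear hyperbolic isomorphism, an $i$-dimensional dominated, uniformly contracted subbundle is unique and must equal the span of the generalized eigenspaces of the $i$ eigenvalues of smallest real part; hence $E'=E_\sigma^{ss}$ and $F_\sigma=E_\sigma^c\oplus E_\sigma^{uu}$. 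If $\lambda_\sigma^c>0$, then $E_\sigma^c\oplus E_\sigma^{uu}$ is the unstable subspace, so $W^s(\sigma)$ is tangent to $E_\sigma^{ss}$ and has dimension $i$, i.e. $W^s(\sigma)$ coincides with $W^{ss}(\sigma)$; but activeness gives $W^s(\sigma)\cap\Lambda\setminus\{\sigma\}\neq\emptyset$ while the Lorenz-type condition gives $W^{ss}(\sigma)\cap\Lambda=\{\sigma\}$, a contradiction. So $\lambda_\sigma^c<0$, proving $\Sing^+_\Lambda(X)\subseteq\{\sigma:\lambda_\sigma^c<0\}$.

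Next I would apply this to $-X$. By Remark~\ref{r.symmetry}, $\Lambda$ is multi-singular hyperbolic for $-X$, all singularities remain active, $\Sing^-_\Lambda(X)=\Sing^+_\Lambda(-X)$, and $\lambda_\sigma^c(-X)=-\lambda_\sigma^c(X)$; hence $\sigma\in\Sing^-_\Lambda(X)$ implies $\lambda_\sigma^c(X)>0$, giving $\Sing^-_\Lambda(X)\subseteq\{\sigma:\lambda_\sigma^c>0\}$. Since these two right-hand sides are disjoint, so are $\Sing^+_\Lambda(X)$ and $\Sing^-_\Lambda(X)$; and conversely, using the partition $\Sing_\Lambda(X)=\Sing^+_\Lambda(X)\cup\Sing^-_\Lambda(X)$ together with $\lambda_\sigma^c\neq 0$, if $\lambda_\sigma^c<0$ then $\sigma\notin\Sing^-_\Lambda(X)$, so $\sigma\in\Sing^+_\Lambda(X)$, yielding the reverse inclusions and hence the claimed equalities.

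The main obstacle — really the only non-formal point — is the identification $E'=E_\sigma^{ss}$: one must know that a uniformly contracted, dominated subbundle of prescribed dimension $i$ at a hyperbolic fixed point is unique and coincides with the span of the $i$ slowest eigendirections. This is the standard uniqueness of dominated splittings of a given index specialized to a linear hyperbolic isomorphism; everything else is bookkeeping of the definitions plus the $-X$ symmetry.
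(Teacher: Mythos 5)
Your proof is correct and follows essentially the same route as the paper's: use $W^{ss}(\sigma)\cap\Lambda=\{\sigma\}$ together with activeness to force $W^{s}(\sigma)\neq W^{ss}(\sigma)$ and hence $\lambda_\sigma^c<0$ for Lorenz-type singularities, then invoke the $-X$ symmetry. The only cosmetic differences are (i) you make explicit the identification $E'=E_\sigma^{ss}$ via uniqueness of the index-$i$ dominated splitting for the linear map $Df_1(\sigma)$, which the paper leaves implicit, and (ii) you obtain both reverse inclusions formally from the partition $\Sing_\Lambda(X)=\Sing^+_\Lambda(X)\cup\Sing^-_\Lambda(X)$ and $\lambda_\sigma^c\neq 0$, whereas the paper re-runs the contradiction argument on $W^{uu}$ for the ``$\supset$'' direction; both are valid and equivalent in content.
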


\begin{proof} We only prove that  $\Sing_\Lambda^+(X) = \{\sigma\in \Sing_\Lambda(X): \lambda_\sigma^c<0\}$. The same argument applies to $\Sing_\Lambda^-(X)$ by considering $-X$ (keep in mind Remark~\ref{r.symmetry}).
	
	``$\subset$'': take $\sigma\in \Sing_\Lambda^+(X)$, we have $W^{ss}(\sigma)\cap\Lambda\setminus\{\sigma\}=\emptyset$. On the other hand, since $\sigma$ is active, it holds that $W^s(\sigma)\cap\Lambda\setminus\{\sigma\}\ne\emptyset$. This shows that $W^{ss}(\sigma)\ne W^s(\sigma)$, indicating that $E^c_\sigma<0$.
	
	``$\supset$'': let $\sigma\in\Sing_\Lambda(X) $ be such that $\lambda_\sigma^c<0$. Then $W^{u}(\sigma) = W^{uu}(\sigma)$. Assume for contradiction's sake that $\sigma\in \Sing_\Lambda^-(X)$, then $W^{u}(\sigma)\cap\Lambda\setminus\{\sigma\} = W^{uu}(\sigma)\cap\Lambda\setminus\{\sigma\}=\emptyset$. This contradicts with the assumption that $\sigma$ is active. Therefore we must have $\sigma\in \Sing_\Lambda(X)\setminus \Sing_\Lambda^-(X)= \Sing_\Lambda^+(\sigma)$.
\end{proof}

The next two theorems, taken from~\cite{CDYZ}, will be useful in establishing the robust behavior of multi-singular hyperbolic flows.

\begin{theorem}\label{t.robust.dom}\cite[Theorem A]{CDYZ}
Let $X\in\mathscr{X}^1(\bM)$ and $\Lambda$ be a compact invariant set admitting a singular dominated splitting of index $i$. Then there exist a $C^1$ neighborhood $\cU$ of $X$ and a neighborhood $U$ of $\Lambda$ such that for any $Y \in \cU$, the maximal invariant set in $U$ admits a singular dominated splitting of index $i$.
\end{theorem}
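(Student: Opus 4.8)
The plan is to verify the two halves of Definition~\ref{d.sing.dom.spl} more or less independently: (i) the persistence of the dominated splitting $E_N\oplus F_N$ on the normal bundle, which is a robustness statement for dominated splittings, obstructed only by the fact that the linear Poincar\'e flow degenerates at the singularities; and (ii) the persistence of the hyperbolic structure attached to each singularity, which has an easy linear part and one subtle dynamical assertion, namely $W^{ss}(\sigma)\cap\Lambda=\{\sigma\}$ (resp.\ $W^{uu}(\sigma)\cap\Lambda=\{\sigma\}$). These recombine to give the singular dominated splitting of index $i$ on the maximal invariant set $\Lambda_Y$.

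For part (i), I would first remove the degeneracy at the singularities by passing to the extended linear Poincar\'e flow, in the spirit of Bonatti--da Luz. Let $\mathbb{P}T\bM$ be the projectivized tangent bundle, a compact manifold carrying the projectivized tangent flow, and set
$$
\widetilde\Lambda=\overline{\{(x,[X(x)]):x\in\Lambda\cap\Reg(X)\}}\subset\mathbb{P}T\bM,
$$
a compact invariant set. Over $\widetilde\Lambda$ the orthogonal complement of the tautological flow line is a continuous bundle $\widetilde N$ of rank $\dim\bM-1$, and the tangent flow induces on it a continuous linear cocycle $\widetilde\psi_t$ restricting to $\psi_t$ over $\Lambda\cap\Reg(X)$. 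Since every $\sigma\in\Sing_\Lambda(X)$ is hyperbolic and non-degenerate, the fiber of $\widetilde\Lambda$ over $\sigma$ consists only of directions accumulated by $[X(x_n)]$ along regular orbits $x_n\to\sigma$ in $\Lambda$, and over each such fiber $\widetilde\psi_t$ inherits the hyperbolicity (hence the relevant index-$i$ domination) of $Df_t(\sigma)$; in this language a singular dominated splitting of index $i$ on $\Lambda$ is precisely an ordinary dominated splitting $\widetilde N=\widetilde E\oplus\widetilde F$ of index $i$ for the continuous cocycle $\widetilde\psi_t$ over $\widetilde\Lambda$. Now I would invoke the classical persistence of dominated splittings for linear cocycles over compact invariant sets (Hirsch--Pugh--Shub, Bonatti--D\'iaz--Viana): a dominated splitting of a given index survives, with the domination constants deteriorating by at most $\varepsilon$, under $C^0$-small perturbations of the cocycle and passage to Hausdorff-nearby invariant sets. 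Since $Y\mapsto(\text{projectivized flow},\widetilde\psi_t)$ depends continuously on $Y$ in the $C^1$ topology, and since the hyperbolic singularities persist and keep the fibers over the singularities under control, the assignment $Y\mapsto\widetilde\Lambda_Y$ — where $\Lambda_Y$ is the maximal invariant set of $Y$ in $U$ and $\widetilde\Lambda_Y$ is its lift to $\mathbb{P}T\bM$ — is upper semicontinuous as $Y\to X$ and $U\downarrow\Lambda$; hence for $Y$ in a $C^1$ neighborhood $\cU$ of $X$ and $U$ small enough, $\widetilde\Lambda_Y$ carries a dominated splitting of index $i$, which restricts to the required splitting $N_{\Lambda_Y}=E_N\oplus F_N$ on $\Lambda_Y\cap\Reg(Y)$. (The one-step normalizations such as \eqref{e.onestepDS1} are then recovered by speeding up $Y$.)

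For part (ii), fix $\sigma\in\Sing_\Lambda(X)$, say of Lorenz-type; the reverse Lorenz-type case follows by applying the same argument to $-Y$ together with Remark~\ref{r.symmetry}. Since $\sigma$ is hyperbolic and non-degenerate it has a continuation $\sigma_Y$ for $Y\in\cU$, and the dominated splitting $T_\sigma\bM=E^{ss}_\sigma\oplus F_\sigma$ --- a spectral gap at a fixed point --- persists with the same dimensions and the same uniform contraction of $E^{ss}$; thus $\sigma_Y$ is again of Lorenz-type as soon as we know $W^{ss}(\sigma_Y)\cap\Lambda_Y=\{\sigma_Y\}$. Via the extended flow this last condition is equivalent to the assertion that $\widetilde\Lambda_Y$ contains no point $(\sigma_Y,[v])$ with $[v]\in\mathbb{P}E^{ss}_{\sigma_Y}$, since an orbit in $W^{ss}(\sigma_Y)\cap\Lambda_Y\setminus\{\sigma_Y\}$ approaches $\sigma_Y$ with flow-direction tending into $\mathbb{P}E^{ss}_{\sigma_Y}$, and conversely such a fiber can only be fed by $\Lambda_Y$-orbits converging to $\sigma_Y$ tangent to $E^{ss}_{\sigma_Y}$. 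I would prove this by contradiction and compactness: if it failed there would be $Y_n\to X$, isolating neighborhoods $U_n\downarrow\Lambda$, and regular points $y_n\in W^{ss}_{\mathrm{loc}}(\sigma_{Y_n})\cap\Lambda_{Y_n}\setminus\{\sigma_{Y_n}\}$; flowing $y_n$ backward to a fixed small sphere around $\sigma$ and passing to a subsequence yields $z_n\to z\in\Lambda$ with $z\neq\sigma$ and the forward orbit of $z$ trapped in a small ball around $\sigma$, so $z\in W^s(\sigma)\cap\Lambda$. The contradiction with $W^{ss}(\sigma)\cap\Lambda=\{\sigma\}$ then comes from upgrading $z\in W^s(\sigma)$ to $z\in W^{ss}(\sigma)$, and this is exactly where the dominated splitting on $\widetilde\Lambda_Y$ from part (i) enters: the continuity of $\widetilde E\oplus\widetilde F$ across the singular fibers of $\widetilde\Lambda_Y$ is incompatible with the flow-directions of the orbit of $z_n$ accumulating on $\mathbb{P}E^{ss}_{\sigma_{Y_n}}$ unless those orbits genuinely lie in $W^{ss}(\sigma_{Y_n})$, whose limit must then lie in $W^{ss}(\sigma)$.

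The main obstacle is this last assertion, the robustness of $W^{ss}(\sigma)\cap\Lambda=\{\sigma\}$. Part (i) is standard once the extended linear Poincar\'e flow is correctly set up, and the linear half of part (ii) is routine. The difficulty is that $\Lambda_Y$ is merely the maximal invariant set of $Y$ in an open set $U$ --- it is neither an isolated chain recurrence class nor transitive --- so a priori $\Lambda_Y$ could acquire new orbits that shadow the strong stable manifold of $\sigma_Y$; excluding them forces one to tie the local hyperbolic picture at $\sigma_Y$ to the global dominated splitting on the normal bundle, uniformly in $Y$ near $X$, for which the extended flow supplies the bookkeeping. This is the heart of \cite[Theorem~A]{CDYZ}.
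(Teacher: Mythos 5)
This theorem is imported from \cite{CDYZ}; the present paper does not supply a proof, so I can only compare your outline against the framework the paper builds around the extended linear Poincar\'e flow (Section~\ref{ss.active} and Appendix~\ref{A.multi-sing}) and the proofs of the related Lemmas~\ref{l.nbhd.1}--\ref{l.robust.hyp}, which use exactly the Grassmannian lift and compactness-of-$\mathfrak B(\Lambda)$ mechanism you describe. Your overall plan --- lift to $G^1$, invoke persistence of dominated splittings for a continuous cocycle over a compact invariant set, and handle the singularities separately --- is the right one and matches the machinery this paper uses when it proves its own robustness lemmas.

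There is, however, a genuine confusion at what you correctly identify as the heart of the matter. In your part~(ii) compactness argument you produce $y_n\in W^{ss}_{\loc}(\sigma_{Y_n})\cap\Lambda_{Y_n}\setminus\{\sigma_{Y_n}\}$, flow backwards within $B_{r_0}(\sigma)$ to $z_n\in\partial B_{r_0}(\sigma)$, and then claim you only get $z\in W^s(\sigma)\cap\Lambda$ and need the dominated splitting on $\widetilde\Lambda_Y$ to upgrade $z$ to $W^{ss}(\sigma)$. This is backwards. By construction the whole backward orbit segment from $y_n$ to $z_n$ lies in $W^{ss}_{\loc}(\sigma_{Y_n})$ and stays inside $B_{r_0}(\sigma)$; since the local strong stable manifold of a hyperbolic singularity is a $C^1$ disk of uniform radius that persists and varies continuously in the $C^1$ topology (this is the classical invariant-manifold theorem for the spectral gap $E^{ss}_\sigma\oplus F_\sigma$, and has nothing to do with the global normal-bundle splitting), one has $W^{ss}_{\loc}(\sigma_{Y_n})\cap\overline{B_{r_0}}\to W^{ss}_{\loc}(\sigma)\cap\overline{B_{r_0}}$, hence $z\in W^{ss}_{\loc}(\sigma)\cap\Lambda\setminus\{\sigma\}$ directly, and the contradiction is immediate. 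Invoking the splitting $\widetilde E\oplus\widetilde F$ over $\widetilde\Lambda_Y$ for this step is not merely superfluous but circular: the Hausdorff upper semicontinuity $\widetilde\Lambda_Y\to\widetilde\Lambda$ that underpins your part~(i) is itself conditional on knowing that $\Lambda_Y$ cannot approach $\sigma_Y$ through $E^{ss}_{\sigma_Y}$-directions (this is exactly the content of Lemma~\ref{l.lgw} and Lemma~\ref{l.nbhd.1} in the paper, both of which use $W^{ss}(\sigma)\cap\Lambda=\{\sigma\}$ as input). The correct logical order is therefore: (a) robustness of $W^{ss}(\sigma)\cap\Lambda=\{\sigma\}$ by continuity of local strong stable manifolds and compactness; (b) control of the singular fibers $\mathfrak B_\sigma(\Lambda_Y)\subset\mathbb P(E^c_{\sigma_Y}\oplus E^{uu}_{\sigma_Y})$ via Lemma~\ref{l.lgw}, hence Hausdorff upper semicontinuity of the lift; (c) persistence of the dominated splitting for the extended cocycle. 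Related to this, your claim that a singular dominated splitting of index $i$ is ``precisely'' a dominated splitting for $\widetilde\psi_t$ over $\widetilde\Lambda$ is asserted without justification: the singular fibers must be shown to avoid $\mathbb P E^{ss}_\sigma$ (resp.\ $\mathbb P E^{uu}_\sigma$) before the index-$i$ domination can propagate continuously across them, and this is a non-trivial lemma, not a definition.
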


\begin{theorem}\label{t.robust.hyp}\cite[Theorem B]{CDYZ}
Let $X\in\mathscr{X}^1(\bM)$ and $\Lambda$ be a compact invariant set that is multi-singular hyperbolic. Then there exist a $C^1$ neighborhood $\cU$ of $X$ and a neighborhood $U$ of $\Lambda$ such that the maximal invariant set of $Y \in \cU$ in $U$ is multi-singular hyperbolic.
\end{theorem}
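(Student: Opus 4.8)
The plan is to verify the three clauses of Definition~\ref{d.multising} for the maximal invariant set $\Lambda_Y\subset U$ of a $C^1$-nearby vector field $Y$. Clause~(1) --- the existence of a singular dominated splitting $E_N^Y\oplus F_N^Y$ of index $i$ on $\Lambda_Y$, which already includes the classification of the singularities into Lorenz-type and reverse Lorenz-type of Definition~\ref{d.sing.dom.spl}(3) --- is exactly the content of Theorem~\ref{t.robust.dom}. For clause~(3), shrink $U$ so that $\Sing(X)\cap\overline U=\Sing_\Lambda(X)$ is a finite set of hyperbolic singularities; then for $Y$ sufficiently $C^1$-close, the singularities of $Y$ in $U$ are exactly the hyperbolic continuations $\sigma_Y$ of the $\sigma\in\Sing_\Lambda(X)$ (no new ones appear since $\|X\|$ is bounded away from zero outside fixed small balls around $\Sing_\Lambda(X)$), and each $\sigma_Y\in\Lambda_Y$. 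The splitting $E^{ss}_\sigma\oplus E^c_\sigma\oplus E^{uu}_\sigma$ is spectrally determined --- \eqref{e.sing.Lya} forces $\lambda^{ss}_\sigma<\lambda^c_\sigma<\lambda^{uu}_\sigma$, so there are spectral gaps on both sides of the one-dimensional $E^c_\sigma$ --- and the eigenvalues depend continuously on $Y$, so the dimension constraints and the inequality \eqref{e.sing.Lya} persist, being open conditions. Hence the entire difficulty is clause~(2): the uniform estimates \eqref{e.hyp0} and \eqref{e.hyp} for $(\psi^Y_t)$ and $(\psi^{*,Y}_t)$ along $E_N^Y,F_N^Y$ outside small isolating neighborhoods of $\Sing(Y)\cap U$.

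For clause~(2) I would argue by contradiction and compactness, together with a local linearization estimate near the singularities. By Remark~\ref{r.symmetry} it suffices to treat the two $E_N$-inequalities (the $F_N$ ones follow by applying the statement to $-X$), and by Remark~\ref{r.dom.scaled} together with \eqref{e.scaledP} the scaled and unscaled versions differ only by the flow-speed cocycle, so the genuinely new estimate is the one for $\psi^{*,Y}_1|_{E_N^Y}$. Fix a small isolating neighborhood $V$ of $\Sing_\Lambda(X)$ and constants $\eta,T_V$ witnessing Definition~\ref{d.multising}(2) for $X$; suppose for contradiction that there are $Y_n\to X$ in $C^1$, points $x^n,(x^n)_{t_n}\in\Lambda_{Y_n}\cap V^c$ with $t_n\to\infty$ and $\prod_{i=0}^{\floor{t_n}-1}\|\psi^{*,Y_n}_1|_{E_N^{Y_n}((x^n)_i)}\|>\eta_0^{-\floor{t_n}}$ for a fixed $1<\eta_0<\eta$ (if the $t_n$ stayed bounded one extracts a limiting $X$-orbit segment in $\Lambda$ with endpoints in $V^c$ that violates the estimate, contradicting $X$'s property after a slight enlargement of $V$). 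Decompose $[0,t_n]$ into the maximal subintervals on which the $Y_n$-orbit of $x^n$ is in $V$ (the excursions) and the complementary subintervals, contained in the compact set $\overline{\bM\setminus V}$. On an excursion near a Lorenz-type singularity $\sigma$, the orbit enters near $W^s(\sigma_{Y_n})$ and exits near $W^{uu}(\sigma_{Y_n})$ while $E_N^{Y_n}$ stays $C^0$-close to $E^{ss}_{\sigma_{Y_n}}$ --- the orbit is uniformly away from $W^{ss}(\sigma_{Y_n})$, which by Theorem~\ref{t.robust.dom} meets $\Lambda_{Y_n}$ only at $\sigma_{Y_n}$ --- and then \eqref{e.sing.Lya} makes $\psi^{*,Y_n}_1|_{E_N^{Y_n}}$ contract along the whole excursion, because the decay rate $\lambda^{ss}_{\sigma_{Y_n}}$ of $E^{ss}$ dominates the rates $-\lambda^c_{\sigma_{Y_n}}$ (on entry) and $-\lambda^{uu}_{\sigma_{Y_n}}$ (on exit) at which the flow-speed scaling inflates $E^{ss}$; this local estimate is robust since it only uses the eigenvalues and the strong-stable space at $\sigma_{Y_n}$, which converge to those at $\sigma$. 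On the complementary subintervals one transfers $X$'s estimate \eqref{e.hyp}, using that a limit of $Y_n$-orbit segments ($Y_n\to X$) with points in $\Lambda_{Y_n}$ is an $X$-orbit segment in $\Lambda$ (as $\Lambda_{Y_n}$ converges to $\Lambda$ from above) and that $\psi^{*,Y_n}_1\to\psi^*_1$ and $E_N^{Y_n}\to E_N$ uniformly off $V$. Multiplying these two families of estimates --- after a two-scale bookkeeping that absorbs short excursions and short complementary intervals into longer blocks --- gives $\prod\|\psi^{*,Y_n}_1|_{E_N^{Y_n}}\|\le\eta_0^{-\floor{t_n}}$ for $t_n$ large and $Y_n$ near $X$, a contradiction.

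The main obstacle is exactly the interface between these two regimes, that is, the non-compactness of the scaled linear Poincaré flow: $\psi^*_t$ is undefined at $\Sing(X)$, $\log\|\psi^*_1\|$ is unbounded nearby, and an orbit segment in $\Lambda_{Y_n}$ may spend an overwhelming fraction of its length in deep, long excursions near singularities, so one cannot simply take a weak-$*$ limit of empirical measures (which may collapse onto the singularities) and invoke the hyperbolicity of $X$. Making the excursion estimates quantitative and patching them with the complementary ones without losing the exponential rate, uniformly over all excursion depths and lengths, is where one genuinely needs the \emph{persistence of the singular splitting} $E^{ss}_\sigma\oplus E^c_\sigma\oplus E^{uu}_\sigma$ and the gap \eqref{e.sing.Lya}, not merely hyperbolicity of the singularities; this is the technical core. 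An alternative, more conceptual route would be to reduce first, as in Remark~\ref{r.active}, to the case of active singularities, then use Proposition~\ref{p.equivalent} and \cite[Theorems D and E]{CDYZ} to pass to the Bonatti--da Luz formulation (Definition~\ref{d.multising.BD}), in which multi-singular hyperbolicity is uniform hyperbolicity of a reparametrized extended linear Poincaré flow over a \emph{compact} blow-up; robustness would then reduce to the classical $C^0$-openness of uniform hyperbolicity of a linear cocycle over a compact flow, the only substantial points being that Bonatti--da Luz's reparametrization cocycle depends robustly on the vector field and that the blow-ups of $\Lambda_{Y_n}$ converge to that of $\Lambda$.
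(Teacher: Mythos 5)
This result is quoted in the paper from \cite[Theorem B]{CDYZ}; the paper itself supplies no proof, so there is no internal argument to compare against, and what follows is an assessment of your proposal on its own terms. Your overall structure is right: clause~(1) of Definition~\ref{d.multising} is exactly Theorem~\ref{t.robust.dom}, clause~(3) is a finite list of open spectral conditions, and the real content is the uniform estimates in clause~(2). You also identify the correct obstruction --- $\psi^*_t$ is not a continuous cocycle over any compact set canonically associated with $\Lambda$, so a crude weak-$*$ compactness argument collapses onto the singularities.

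The central step of your primary (contradiction) route, however, contains a genuine gap. In the excursion estimate you assert that a $\Lambda_{Y_n}$-orbit entering $V$ near a Lorenz-type $\sigma_{Y_n}$ is ``uniformly away from $W^{ss}(\sigma_{Y_n})$,'' deducing this from $W^{ss}(\sigma_{Y_n})\cap\Lambda_{Y_n}=\{\sigma_{Y_n}\}$. That inference does not hold: the latter is a purely set-theoretic disjointness and by itself allows $\Lambda_{Y_n}$-orbits to come arbitrarily close to $W^{ss}(\sigma_{Y_n})$ at a fixed distance $r$ from $\sigma_{Y_n}$, whereas your rate comparison (the flow-speed cocycle inflating $E^{ss}_N$ at rate at worst $e^{-\lambda^c}$ or $e^{\lambda^{uu}}$) requires a quantitative, $n$-uniform cone condition --- entry within a fixed $\alpha$-cone of $E^c_{\sigma}\oplus E^{uu}_\sigma$. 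That is the content of Lemma~\ref{l.lgw} and Lemma~\ref{l.nbhd.1}, and their proofs (and the $Y$-robust version) hinge on the compactness of $\mathfrak B(\Lambda)\subset G^1$ and the upper semi-continuity of $Y\mapsto\mathfrak B(\Lambda_Y)$. Once you invoke that, you are already in the Grassmannian blow-up picture, and the contradiction argument with its ``two-scale bookkeeping'' (which you leave undefined, and which is itself delicate because excursion depths, excursion durations, and complementary-interval lengths are all uncontrolled a priori) becomes redundant rather than simpler. Your alternative route --- pass to the Bonatti--da Luz Definition~\ref{d.multising.BD} over the compact set $\mathfrak B(\Lambda)$ and invoke $C^0$-openness of uniform hyperbolicity for cocycles over compact bases --- is the right skeleton and is essentially how \cite{CDYZ} proceed, but observe that the bridge you propose (Proposition~\ref{p.equivalent} plus a reduction ``as in Remark~\ref{r.active}'' to active singularities) needs activity, and Remark~\ref{r.active} does not establish that the active/non-active decomposition is itself stable under $C^1$-perturbation and passage to a maximal invariant set; \cite[Theorem B]{CDYZ} is stated for Definition~\ref{d.multising.BD} without any activity hypothesis precisely to sidestep this.
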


We conclude this subsection with the following theorem on the relation between multi-singular hyperbolicity and sectional hyperbolicity (see~\cite[Definition 1]{PYY23}).
\begin{theorem}\label{t.sec.hyp}\cite[Theorem C]{CDYZ}
	Let $X\in\mathscr{X}^1(\bM)$ and $\Lambda$ be a compact invariant set such that every singularity in $\Lambda$ is active. Then:
	\begin{enumerate}
		\item $\Lambda$ is uniformly hyperbolic if and only if $\Lambda$ is multi-singular hyperbolic and does not contain any	singularity;
		\item $\Lambda$ is sectional-hyperbolic (for $X$ or $-X$) if and only if $\Lambda$ is multi-singular hyperbolic and all the singularities in $\Lambda$ have the same stable index.
	\end{enumerate}
\end{theorem}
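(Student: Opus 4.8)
The plan is to prove the two ``if and only if'' statements by treating the two implications of each separately, using the $X\leftrightarrow -X$ duality (Remark~\ref{r.symmetry}) both to halve the work in (2) and to reduce its ``for $X$ or $-X$'' clause to a single case. Part (1) is the quick one. If $\Lambda$ is uniformly hyperbolic then, by definition, $T_\Lambda\bM=E^s\oplus\langle X\rangle\oplus E^u$ with a nonvanishing flow direction, so $\Sing_\Lambda(X)=\emptyset$; projecting $E^s,E^u$ orthogonally to the normal bundle gives a $\psi_t$-invariant splitting $E_N\oplus F_N$, and on the compact singularity-free set $\Lambda$ the flow speed $|X|$ is pinched between two positive constants, so $\psi_t$ and $\psi_t^*$ are uniformly comparable and inherit uniform hyperbolicity (with bounded-distortion projections) from $Df_t\mid_{E^s\oplus E^u}$; after a harmless speed-up this yields Definition~\ref{d.multising}(1)--(2), and (3) is vacuous. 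Conversely, if $\Lambda$ is multi-singular hyperbolic with $\Sing_\Lambda(X)=\emptyset$, the isolating neighborhood $V$ in Definition~\ref{d.multising}(2) can be taken empty, so \eqref{e.hyp0} holds on all long orbit segments in $\Lambda$; together with domination this makes $\psi_t$ uniformly hyperbolic on $N_\Lambda$, and Doering's theorem (hyperbolicity of the linear Poincar\'e flow plus absence of singularities forces hyperbolicity of $\Lambda$) finishes it.

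For the forward direction of (2), I would assume $\Lambda$ is sectional-hyperbolic for $X$ (the $-X$ case being symmetric), with $T_\Lambda\bM=E^{ss}\oplus F^{cu}$. First record the standard fact $X(x)\in F^{cu}_x$ on $\Reg(X)$, so that $E_N:=\pi_N(E^{ss})$, $F_N:=\pi_N(F^{cu})$ are complementary in $N_\Lambda$ with $\dim E_N=\dim E^{ss}=:i$ and $\langle X(x),v\rangle\subset F^{cu}_x$ for $v\in F_N(x)$. Uniform contraction of $E^{ss}$ gives the $E_N$-estimates in \eqref{e.hyp0}--\eqref{e.hyp}, while sectional expansion of $F^{cu}$, via the identity $|\det Df_t\mid_{\langle X(x),v\rangle}|=(|X(x_t)|/|X(x)|)^2\,|\psi^*_t(v)|$, gives the $F_N$-estimates; here one needs a Liao-type estimate on orbit segments passing close to a singularity, where the weak (but dominated) contraction inside $F^{cu}_\sigma$ makes a single step of $\|\psi_{-1}\mid_{F_N}\|$ exceed $1$ but is paid off by the $V^c$-portions. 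For the singularities: in any sectional-hyperbolic set $W^{ss}(\sigma)\cap\Lambda=\{\sigma\}$, so activeness of $\sigma$ forces $W^s(\sigma)\supsetneq W^{ss}(\sigma)$ and, since sectional expansion allows at most one contracting direction in $F^{cu}_\sigma$, $\dim W^s(\sigma)=i+1$; this produces the splitting $E^{ss}_\sigma\oplus E^c_\sigma\oplus E^{uu}_\sigma$ of Definition~\ref{d.multising}(3), with \eqref{e.sing.Lya} coming from domination together with sectional expansion. Hence $\Lambda$ is multi-singular hyperbolic of index $i$ and all of its singularities have stable index $i+1$.

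The backward direction of (2) is the heart of the matter. Given multi-singular hyperbolicity with a common singular index, Lemma~\ref{l.lorenz-type} shows $\lambda^c_\sigma$ has a fixed sign, so (replacing $X$ by $-X$ if needed) I may assume every $\sigma$ is Lorenz-type and aim to prove $\Lambda$ is sectional-hyperbolic for $X$. Step one is to upgrade the normal-bundle splitting $E_N\oplus F_N$ to a tangent-bundle dominated splitting $E^{ss}\oplus F^{cu}$: over a regular point take $E^{ss}_x$ to be the unique $i$-dimensional $Df$-invariant subspace projecting into a thin cone about $E_N(x)$ (existence via a graph transform using the contraction in \eqref{e.hyp0}), set $E^{ss}_\sigma$ at singularities, and observe that $E^{ss}$ extends continuously across each $\sigma$ precisely because all singularities are Lorenz-type with $\dim E^{ss}_\sigma=i$---there is no index jump (cf.~\cite{SGW})---while $F^{cu}$ is the complementary bundle, containing $X$ on $\Reg(X)$. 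Step two verifies hyperbolicity of this splitting: $E^{ss}$ is uniformly contracted by combining \eqref{e.hyp0} over $V^c$-segments with the strong contraction of $E^{ss}_\sigma$ near singularities (where $\pi_N$ has bounded distortion on $E^{ss}$), and $F^{cu}$ is sectionally expanding by splicing the $\psi^*$-expansion from \eqref{e.hyp} over $V^c$-segments (turned into area growth of $\langle X,v\rangle$ via the determinant identity) with the exponential area growth of $2$-planes inside $F^{cu}_\sigma$ forced by \eqref{e.sing.Lya}, telescoping the transition factors $(|X(x_t)|/|X(x)|)^2$ à la Liao's tubular-neighborhood estimates.

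I expect the main obstacle to be the singular analysis in part (2): on one side, establishing the Poincar\'e-flow inequalities \eqref{e.hyp0}--\eqref{e.hyp} near singularities in the forward direction; on the other, in the backward direction, proving that the normal-bundle splitting fills in to a \emph{continuous} tangent-bundle splitting at the singularities (this is exactly where the ``common index'' hypothesis is indispensable, since mixed indices obstruct it) and then running the Liao-type estimates in the singular neighborhoods---where $\psi_t$ and $\psi_t^*$ diverge and $|X|\to 0$---to promote mere domination to sectional expansion, the spectral gap \eqref{e.sing.Lya} being what keeps those estimates under control.
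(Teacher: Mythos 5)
The paper itself does not prove Theorem~\ref{t.sec.hyp}; it cites it verbatim as~\cite[Theorem C]{CDYZ}, so there is no in-paper proof to compare against. Assessed on its own merits, your sketch has the right skeleton and reproduces the classical route of the literature: part~(1) via Doering's theorem (hyperbolicity of $\psi_t$ on a singularity-free compact set forces hyperbolicity of $\Lambda$), part~(2) forward via the area identity $|\det Df_t|_{\langle X(x),v\rangle}| = (|X(x_t)|/|X(x)|)^2|\psi_t^*(v)|/|v|$, and part~(2) backward by trying to promote the normal-bundle splitting to a tangent-bundle one using the common-index hypothesis. Your identification of where the common-index hypothesis is used (preventing an index jump when filling in $E^{ss}_\sigma$ at singularities) is exactly the right observation, as is the use of Lemma~\ref{l.lorenz-type} to fix the sign of $\lambda^c_\sigma$ and reduce to the Lorenz-type case.

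The concrete gap is in ``step one'' of the backward direction of (2). A graph transform producing $E^{ss}_x$ as the unique $Df$-invariant $i$-plane over $E_N(x)$ requires a \emph{uniform} one-step contraction estimate on $\psi^*_1|_{E_N}$; but Definition~\ref{d.multising}(2) (Equations \eqref{e.hyp0}--\eqref{e.hyp}, and you want \eqref{e.hyp} here, not \eqref{e.hyp0}) only delivers that over orbit segments that start and end outside an isolating neighborhood $V$ of $\Sing_\Lambda(X)$, with a waiting time $T_V$ that blows up as $V$ shrinks. Near singularities the normal bundle is undefined, $|X|\to 0$, and the pointwise estimates degenerate, so the graph transform does not converge as written and ``no index jump'' alone does not give continuity of the resulting bundle across $\sigma$. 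The mechanism that repairs this---in~\cite{SGW}, in~\cite{CDYZ}, and in the paper's own Appendix~\ref{A.multi-sing} for the cognate Lemmas~\ref{l.nbhd.1}--\ref{l.robust.hyp}---is to pass to the extended linear Poincar\'e flow on the compact set $\mathfrak{B}(\Lambda)\subset G^1$, where the domination and contraction estimates \emph{are} uniform and where Lemma~\ref{l.lgw} pins the limiting flow directions at a Lorenz-type $\sigma$ inside $E^c_\sigma\oplus E^{uu}_\sigma$; the tangent-bundle splitting is then read off from the splitting of the pullback bundle over $\mathfrak{B}(\Lambda)$. Your ``splicing/telescoping \`a la Liao'' in step two has the same issue in a milder form: the estimates you want to splice are phrased over $V^c$-excursions, and the bookkeeping of the $(|X(x_t)|/|X(x)|)^2$ factors across singular passages is precisely what the reparametrizing cocycles $h^t_\sigma$ of Definition~\ref{d.multising.BD} are designed to do. So the approach is salvageable but the stated construction does not close; the missing ingredient is the Grassmannian/cocycle machinery rather than any additional idea.
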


In particular, if the indices of all singularities in $\Lambda$ are equal to $i$, then $\Lambda$ is sectional-hyperbolic. If all the indices are $i+1$, then $\Lambda$ is sectional-hyperbolic for $-X$.

\subsection{How regular orbits approach an active singularity: dynamics on the Grassmannian manifold}\label{ss.active}
Below we consider how regular orbits in a multi-singular hyperbolic set $\Lambda$ approach and escape active singularities. We will see that when a regular orbit $\gamma$ in $\Lambda$ approaches some active $\sigma \in\Lambda$:
\begin{enumerate}
	\item if $\sigma\in\Sing_\Lambda^+(\Lambda)$, then $\gamma$ must approach $\sigma$ along the one-dimensional $E^c_\sigma$ direction ($\lambda_\sigma^c<0$);
	\item if $\sigma\in\Sing_\Lambda^-(\Lambda)$, then $\gamma$ must escape $\sigma$ along the one-dimensional $E^c_\sigma$ direction ($\lambda_\sigma^c>0$).
\end{enumerate}
These two statements are symmetric by replace $X$ with $-X$. Therefore we shall only state and prove case (1).

To rigorously formulate this phenomenon, we introduce the {\em extended linear Poincar\'e flow} which was first developed in~\cite{LGW} (see also \cite{SGW, GY} and \cite{CY1, CY25}). We remark that this concept will only be used in this subsection and the appendix, so uninterested readers can safely skip it.

Denote by 
$$
G^1 = \{L_x: L_x \mbox{ is a 1-dimensional subspace of }T_x\bM, x\in \bM\}
$$
the Grassmannian manifold of $\bM$. 
Given a $C^1$ flow $(f_t)_t$, the tangent flow $(Df_t)_t$ acts naturally on $G^1$ by mapping each $L_x$ to $Df_t(L_x)$. 

Write $\beta:G^1\to \bM$ and $\xi: T\bM\to \bM$ the bundle projection. The pullback bundle of $T\bM$:
$$
\beta^*(T\bM) = \{(L_x,v)\in G^1\times T\bM: \beta(L)= \xi(v)=x\} 
$$
is a vector bundle over $G^1$ with dimension $\dim \bM$. The tangent flow $(Df_t)_t$ lifts naturally to $\beta^*(TM)$:
$$
Df_t(L_x,v) =(Df_t(L_x),Df_t(v)).
$$

Recall that the linear Poincar\'e flow $\psi_t$  projects the tangent flow to the normal bundle $N$. The key observation is that this projection can be defined not only w.r.t.the normal bundle but to the orthogonal complement of any section $\{L_x:x\in \bM\}\subset G^1$.

To be more precise, given $L = \{L_x:x\in \bM\}$ we write 
$$
N^L = \{(L_x,v)\in \beta^*(TM): v \perp L_x\}.
$$
Then $N^L$, consisting of vectors perpendicular to $L$, is a sub-bundle of $\beta^*(T\bM)$ over $G^1$ with dimension $\dim \bM-1$. The {\em extended linear Poincar\'e flow} is then defined as 
$$
\psi_t = \psi_{t}^L: N^L\to N^L,\,\,
\psi_t(L_x,v) = \pi(Df_t(L_x,v)), 
$$
where $\pi$ is the orthogonal projection from fibres of $\beta^*(TM)$ to the corresponding fibres of $N^L$ along $L$.

If we define the map 
\begin{equation}\label{e.zeta}
\zeta: \Reg(X)\to G^1,\,\, \zeta(x) = \langle X(x)\rangle,
\end{equation}
i.e., $\zeta$ maps every regular point $x$ to the unique $L_x\in G^1$ with  $\beta(L_x)=x$ such that $L_x$ is generated by the flow direction at $x$, then the extended linear Poincar\'e flow on $N^{\zeta(\Reg(X))}$ can be naturally identified with the linear Poincar\'e flow defined earlier. On the other hand, given any invariant set $\Lambda$ of the flow, consider the compact set: 
\begin{equation}\label{e.BLambda}
\mathfrak B(\Lambda) = \Cl\left({\zeta(\Lambda\cap \Reg(X))}\right)
\end{equation}
where  $\Cl$ denotes the closure of a set. In other words, $\mathfrak B(\Lambda) $ consists of those directions in $G^1$ that can be approximated by the flow direction of regular points in $\Lambda$. 
If $\Lambda$ contains no singularity, then $\mathfrak B(\Lambda) $ can be seen as a natural copy of $\Lambda$ in $G^1$ equipped with the direction of the flow at each point of $\Lambda$. If $\sigma\in\Lambda$ is a singularity, then $ \mathfrak B(\Lambda) $ also contains all the directions in $\beta^{-1}(\sigma)$ that can be approximated by the limiting flow direction as the orbit of regular points in $\Lambda$ approach $\sigma$. 
This motivates us to define, for $\sigma\in\Sing_\Lambda(X)$,
$$
\mathfrak B_\sigma(\Lambda) = \{L\in \mathfrak B(\Lambda): \beta(L)=\sigma\}.
$$ 
So we have $\mathfrak B(\Lambda) = \bigcup_{\sigma\in\Lambda}\mathfrak B_\sigma(\Lambda) \cup \zeta(\Lambda\cap \Reg(X))$.

The following lemma is obtained from the proof of~\cite[Lemma 4.4]{LGW}. See also \cite{MPP}.

\begin{lemma}\label{l.lgw}
	Let $\Lambda$ be a multi-singular hyperbolic set for a $C^1$ vector field $X$ with all singularities active. Then:
	\begin{itemize}
		\item for every $\sigma\in\Sing_\Lambda^+(X)$ and every $L\in\mathfrak B_\sigma(\Lambda)$, one has
		$$
		L\subset E^c_\sigma\oplus E^{uu}_\sigma.
		$$
		\item for every $\sigma\in\Sing_\Lambda^-(X)$ and every $L\in\mathfrak B_\sigma(\Lambda)$, one has
		$$
		L\subset E^{ss}_\sigma\oplus E^c_\sigma.
		$$
	\end{itemize} 
\end{lemma}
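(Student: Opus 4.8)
By Remark~\ref{r.symmetry} it suffices to treat the case $\sigma\in\Sing_\Lambda^+(X)$, i.e.\ $\lambda_\sigma^c<0$ (using Lemma~\ref{l.lorenz-type}), and show that every $L\in\mathfrak B_\sigma(\Lambda)$ lies in $E^c_\sigma\oplus E^{uu}_\sigma$. The plan is to exploit the fact that $L$ is, by definition, a limit $L=\lim_n \langle X(y^n)\rangle$ of flow directions of regular points $y^n\in\Lambda$ with $y^n\to\sigma$, and that the line $L$ sits inside a one-dimensional piece of $\mathfrak B(\Lambda)$ that is invariant under the action of $(Df_t)_t$ on $G^1$ — precisely, $\mathfrak B(\Lambda)$ is a compact $Df_t$-invariant subset of $G^1$ and $\mathfrak B_\sigma(\Lambda)=\mathfrak B(\Lambda)\cap\beta^{-1}(\sigma)$ is invariant under the induced linear action of $Df_t|_{T_\sigma\bM}$ on the projective space $\mathbb P(T_\sigma\bM)$.

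The core of the argument is a domination/invariant-cone estimate on $\mathbb P(T_\sigma\bM)$. First I would decompose $T_\sigma\bM = E^{ss}_\sigma\oplus E^c_\sigma\oplus E^{uu}_\sigma$ and recall from \eqref{e.sing.Lya} that $0<|\lambda_\sigma^c|<\min(-\lambda_\sigma^{ss},\lambda_\sigma^{uu})$, so that there are dominated splittings $E^{ss}_\sigma \oplus (E^c_\sigma\oplus E^{uu}_\sigma)$ (the first bundle strictly more contracted) and $(E^{ss}_\sigma\oplus E^c_\sigma)\oplus E^{uu}_\sigma$ (the last strictly more expanded) for $Df_t|_{T_\sigma\bM}$. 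Standard hyperbolic-linear-algebra then gives a backward-invariant cone field around $E^{ss}_\sigma\oplus E^c_\sigma$ and, dually, a forward-invariant cone field around $E^c_\sigma\oplus E^{uu}_\sigma$, with all lines outside the relevant cone being attracted (in projective distance) exponentially fast toward $E^{ss}_\sigma\oplus E^c_\sigma$ under $Df_{-t}$, resp.\ toward $E^c_\sigma\oplus E^{uu}_\sigma$ under $Df_t$. To show $L\subset E^c_\sigma\oplus E^{uu}_\sigma$ I need: (i) $L$ is not in the complement of the forward cone around $E^c_\sigma\oplus E^{uu}_\sigma$, and (ii) $L$ is not in the interior of the forward cone but off the subspace. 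For (i): if $L$ had a nonzero $E^{ss}_\sigma$-component, then applying $Df_{-t}$ for large $t$ pushes $L$ exponentially close to $E^{ss}_\sigma\oplus E^c_\sigma$ and, iterating, into $E^{ss}_\sigma$; but one can trace $L$ back along $\mathfrak B_\sigma(\Lambda)$: invariance of $\mathfrak B(\Lambda)$ means $Df_{-t}(L)\in\mathfrak B_\sigma(\Lambda)$ for all $t$, and since $\sigma$ is active, points of $\Lambda$ actually leave a neighborhood of $\sigma$ in forward time along $W^u(\sigma)=W^{uu}(\sigma)$ (here $\lambda_\sigma^c<0$), which forces the approaching regular directions to avoid a cone around $E^{ss}_\sigma$; contradiction. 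For (ii): if $L\subset E^{ss}_\sigma\oplus E^c_\sigma\oplus E^{uu}_\sigma$ with nonzero $E^{uu}_\sigma$ and nonzero $E^c_\sigma$ component but not inside $E^c_\sigma\oplus E^{uu}_\sigma$ — i.e.\ some $E^{ss}_\sigma$ component — that's already excluded by (i); the only remaining possibility is that $L$ has a nonzero $E^{ss}_\sigma$ part, which (i) rules out. So the whole content is really establishing (i).

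More carefully, the argument for (i) should go: suppose $L\in\mathfrak B_\sigma(\Lambda)$ has nonzero component in $E^{ss}_\sigma$. Write $L=\lim_n\langle X(y^n)\rangle$ with $y^n\in\Lambda\cap\Reg(X)$, $y^n\to\sigma$. Because $\sigma$ is active with $W^s(\sigma)\cap\Lambda\setminus\{\sigma\}\neq\emptyset$ and $W^u(\sigma)\cap\Lambda\setminus\{\sigma\}\neq\emptyset$, and $W^u(\sigma)=W^{uu}(\sigma)$ is tangent to $E^{uu}_\sigma$, the pieces of orbits of $y^n$ passing near $\sigma$ must enter roughly along $W^{ss}(\sigma)=W^s(\sigma)$ direction-wise only to the extent compatible with later leaving along $E^{uu}_\sigma$; the Grobman–Hartman linearization near $\sigma$, combined with the strong-domination inequality $|\lambda^c_\sigma|<-\lambda^{ss}_\sigma$, shows that the flow-direction line $\langle X(y)\rangle$ of a point $y$ on such a near-passing orbit, once the orbit is within a small neighborhood of $\sigma$, is exponentially attracted (in $G^1$) toward $E^c_\sigma\oplus E^{uu}_\sigma$ as the orbit proceeds forward — indeed $\langle X(y)\rangle$ at the "entry" may be near $E^{ss}_\sigma$, but by the time the orbit is closest to $\sigma$ and then turns to leave, the line has already been swept into a cone around $E^c_\sigma\oplus E^{uu}_\sigma$; taking $y^n\to\sigma$ forces the limiting direction $L$ into $E^c_\sigma\oplus E^{uu}_\sigma$. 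This is essentially the content extracted from the proof of \cite[Lemma 4.4]{LGW}, and I would cite that computation rather than redo the linearization estimate in full.

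The main obstacle, and the step requiring the most care, is the quantitative control near $\sigma$: one must show that the \emph{flow direction} (not an arbitrary tangent line) of a regular orbit in $\Lambda$ that shadows the transition $W^{ss}\rightsquigarrow W^{uu}$ at $\sigma$ cannot accumulate on a direction with nonzero $E^{ss}_\sigma$-component. This is where the strict inequality \eqref{e.sing.Lya} between the center exponent and the strong-stable/strong-unstable exponents is essential — it is exactly what guarantees the invariant cone around $E^c_\sigma\oplus E^{uu}_\sigma$ that the flow direction is forced into — and where the multi-singular hyperbolicity of $\Lambda$ (via the domination \eqref{e.hyp0}–\eqref{e.hyp} on the normal bundle outside $V$, ensuring the approaching regular orbits genuinely track the hyperbolic structure) is used to rule out orbits that merely sit on $W^s(\sigma)$ or $W^u(\sigma)$ without the required recurrence. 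Apart from this, the projective-linear-algebra lemmas (invariant cones under dominated linear maps, exponential attraction of lines) are routine and can be invoked directly.
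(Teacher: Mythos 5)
Your overall architecture is right: reduce to the Lorenz-type case by the $X\leftrightarrow -X$ symmetry of Remark~\ref{r.symmetry}, observe that $\mathfrak B_\sigma(\Lambda)$ is a closed $Df_t$-invariant subset of $\mathbb P(T_\sigma\bM)$, and then push backward along the linear flow so that, if $L$ had a nonzero $E^{ss}_\sigma$-component, the $\alpha$-limit of $L$ in $\mathbb P(T_\sigma\bM)$ would meet $\mathbb P(E^{ss}_\sigma)\cap\mathfrak B_\sigma(\Lambda)$; the remaining task is then to show this last set is empty, and this is essentially what the cited computation in \cite[Lemma 4.4]{LGW} and \cite[Lemma 2.15]{PYY23} delivers.

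However, you misidentify the structural input that makes $\mathfrak B_\sigma(\Lambda)\cap\mathbb P(E^{ss}_\sigma)=\emptyset$ hold, and this is the entire content of the lemma. You attribute it to (a) $\sigma$ being active, and (b) the multi-singular domination \eqref{e.hyp0}--\eqref{e.hyp} on the normal bundle ``ruling out orbits that merely sit on $W^s(\sigma)$ or $W^u(\sigma)$.'' Neither is the right ingredient. Activeness is symmetric between $W^s$ and $W^u$ and would apply equally to a reverse-Lorenz singularity, where the conclusion you want is false; and orbits sitting on $W^s(\sigma)\setminus W^{ss}(\sigma)$ or on $W^u(\sigma)$ are in fact perfectly allowed (they produce limiting directions in $E^c_\sigma$ or in $E^{uu}_\sigma$ respectively, both inside the asserted subspace), so there is nothing to rule out there. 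The ingredient that does the work is the defining property of $\Sing^+_\Lambda(X)$ in Definition~\ref{d.sing.dom.spl}~(3)~(a), namely $W^{ss}(\sigma)\cap\Lambda\setminus\{\sigma\}=\emptyset$. It is \emph{this} disjointness, combined with the exponent gap $|\lambda^c_\sigma|<-\lambda^{ss}_\sigma$ from \eqref{e.sing.Lya}, that forces the flow directions $\langle X(y^n)\rangle$ of regular points $y^n\in\Lambda$ approaching $\sigma$ to be eventually bounded away from $E^{ss}_\sigma$: the backward orbit of $y^n$ must exit a fixed ball $B_{r_0}(\sigma)$ at a point $z^n\in\partial B_{r_0}(\sigma)\cap\Lambda$, which (by $W^{ss}(\sigma)\cap\Lambda=\{\sigma\}$ and compactness) is uniformly bounded away from the local strong-stable manifold, so that after the long passage $f_{t_n}(z^n)=y^n$ the dominant stable direction seen by the flow vector is $E^c_\sigma$, not $E^{ss}_\sigma$.

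Related to this misidentification, you write ``enter roughly along $W^{ss}(\sigma)=W^s(\sigma)$''; this equality is false for a Lorenz-type singularity. Since $\lambda^c_\sigma<0$ here, we have $W^s(\sigma)\supsetneq W^{ss}(\sigma)$ (this is precisely what the paper establishes in the proof of Lemma~\ref{l.lorenz-type}), and the whole point is to show that orbits of $\Lambda$ approach $\sigma$ through the slower ``$E^c_\sigma$'' part of $W^s(\sigma)$, never through $W^{ss}(\sigma)$. Without invoking $W^{ss}(\sigma)\cap\Lambda=\{\sigma\}$ your argument does not actually close, so as written there is a genuine gap even though the surrounding scaffolding is sound.
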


We remark that the proof of the first part is essentially the same as~\cite[Lemma 2.15]{PYY23}, despite the latter being stated for sectional-hyperbolic  (in which case all singularities in $\Lambda$ are of Lorenz-type). The only place where sectional hyperbolicity is used in~\cite[Lemma 2.15]{PYY23} is to obtain $W^{ss}(\sigma)\cap\Lambda\setminus \{\sigma\}=\emptyset$. In our case, this is guaranteed by the definition of singular dominated splitting, see Definition~\ref{d.sing.dom.spl} (3), Case (a). The second part follows by considering $-X$.

Next we define the ``center cone'' at the tangent space of a singularity $\sigma$ as
$$
C_\alpha(E_\sigma^c) = \{v\in T_\sigma\bM, v=v^{ss}+v^c+v^u, \max\left\{|v^{ss}, v^u|\right\}\le\alpha |v^c| \}
$$
(note that this cone is not invariant under the tangent flow unless one further  requires  that $v^u=0$)
and consider 
$$
\cC_{\alpha}(E_\sigma^c) = \exp_{\sigma}\left(C_\alpha(E_\sigma^c)\right)
$$
its image under the exponential map. $\cC_{\alpha}(E_\sigma^c)$ can be considered as a {\em cone on the manifold} around the center direction of $\sigma$ despite that $\sigma$ may not have a center manifold. The domination between $E^{ss}_\sigma$ and $E^c_\sigma$ implies that this cone has certain invariance property under $f_1$, in the sense that 
$$
f_1\left(\cC_{\alpha}(E_\sigma^c)\cap W^s(\sigma)\right)\subset \cC_{\theta\alpha}(E_\sigma^c\cap W^s(\sigma)),\mbox{ for some }\theta\in(0,1). 
$$
Then a similar argument as in the previous lemma shows that for some $T>0$ and every $t>T$, $y_t\in	\cC_{\alpha}(E_\sigma^c)\cap W^s(\sigma)$. This observation leads to the following lemma. 

\begin{lemma}\label{l.nearEc}
	Under the same assumption as Lemma~\ref{l.lgw}, for every $\alpha>0$, there exist $r_0>0$ and $\overline r\in (0,r_0)$, such that for every $r\in(0,\overline r]$ and every $x\in B_{r}(\sigma)\cap\Lambda$, letting $t_x= \sup\{t>0: (x_{-t}, 0)\subset B_{r_0}(\sigma)\}$ then one has
	$$
	x_{-t_x} \in \partial B_{r_0}(\sigma)\cap \cC_{\alpha}(E_\sigma^c).
	$$ 
\end{lemma}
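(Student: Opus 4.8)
The plan is to argue by contradiction, after isolating one geometric fact that I will call Claim~A: \emph{there is $\rho_0>0$ such that every regular $p\in W^s_{\loc}(\sigma)\cap\Lambda$ with $0<|p-\sigma|<\rho_0$ already lies in $\cC_{\alpha/2}(E^c_\sigma)$.} As in the surrounding discussion I only treat case (1), i.e.\ $\sigma\in\Sing^+_\Lambda(X)$, the case $\sigma\in\Sing^-_\Lambda(X)$ following by symmetry via $-X$ and Remark~\ref{r.symmetry}; and I read the statement for those $x$ with $t_x<\infty$, which holds whenever $x\notin W^u_{\loc}(\sigma)$ — in particular for every $x$ in the support of a regular invariant measure. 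Recall that for $\sigma\in\Sing^+$ one has $\lambda^c_\sigma<0$ by Lemma~\ref{l.lorenz-type}, so $W^s_{\loc}(\sigma)$ is a $C^1$ disc tangent to $E^{ss}_\sigma\oplus E^c_\sigma$ at $\sigma$.

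To prove Claim~A I would argue by contradiction: given regular $p_j\in W^s_{\loc}(\sigma)\cap\Lambda$ with $p_j\to\sigma$ and $p_j\notin\cC_{\alpha/2}(E^c_\sigma)$, pass to a subsequence so that the unit vectors $u_j=\exp_\sigma^{-1}(p_j)/|\exp_\sigma^{-1}(p_j)|$ converge; since $W^s_{\loc}(\sigma)$ is tangent to $E^{ss}_\sigma\oplus E^c_\sigma$, the limit $u$ lies in $E^{ss}_\sigma\oplus E^c_\sigma$. The Taylor expansion $X(p)=D_\sigma X\big(\exp_\sigma^{-1}(p)\big)+o(|p-\sigma|)$ then gives $\langle X(p_j)\rangle\to\langle D_\sigma X(u)\rangle=:L$, and since $D_\sigma X$ preserves the eigenspaces, $L\subset E^{ss}_\sigma\oplus E^c_\sigma$. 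On the other hand $p_j\in\Lambda\cap\Reg(X)$ and $p_j\to\sigma$, so $L\in\mathfrak B_\sigma(\Lambda)$, and Lemma~\ref{l.lgw} gives $L\subset E^c_\sigma\oplus E^{uu}_\sigma$; hence $L=E^c_\sigma$. As $D_\sigma X$ is invertible and preserves $E^{ss}_\sigma$, $\langle D_\sigma X(u)\rangle=E^c_\sigma$ forces $u\in E^c_\sigma$, so $u_j$ tends to a unit vector of $E^c_\sigma$ — contradicting that $p_j$ stays at a fixed positive distance from the cone. The mechanism is that being on $W^s_{\loc}(\sigma)$ near $\sigma$ confines $\langle X\rangle$ to $E^{ss}_\sigma\oplus E^c_\sigma$ while Lemma~\ref{l.lgw} confines it to $E^c_\sigma\oplus E^{uu}_\sigma$; the intersection is the line $E^c_\sigma$, and $X\approx D_\sigma X\cdot(\text{displacement})$ transfers this back to the displacement. (Claim~A is the uniform-near-$\sigma$ refinement of the remark preceding the lemma; given it, the assertion there about $y_t$ for large $t$ follows at once, since $y_t\to\sigma$.)

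With Claim~A in hand I would finish as follows. Fix $r_0\in(0,\rho_0)$ small enough that $\overline{B_{r_0}(\sigma)}$ contains no other singularity, $\exp_\sigma$ is a diffeomorphism on it, the local stable set $\bigcap_{t\ge0}f_{-t}(\overline{B_{r_0}(\sigma)})$ coincides with the $C^1$ disc above, and $\cC_{\alpha/2}(E^c_\sigma)\setminus\{\sigma\}$ lies in the interior of $\cC_\alpha(E^c_\sigma)$ (automatic from the definitions of the cones). If the conclusion failed, there would be $r_k\downarrow0$ and $x^k\in B_{r_k}(\sigma)\cap\Lambda$ with $z^k:=(x^k)_{-t_k}\notin\cC_\alpha(E^c_\sigma)$, $t_k:=t_{x^k}$; continuity of $t\mapsto|(x^k)_{-t}-\sigma|$ and the definition of $t_k$ give $z^k\in\partial B_{r_0}(\sigma)$ and $(x^k)_u\in\overline{B_{r_0}(\sigma)}$ for all $u\in[-t_k,0]$. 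A Gronwall estimate $|f_t(p)-\sigma|\ge e^{-\mu t}|p-\sigma|$ ($\mu=\sup_{\overline{B_{r_0}}}\|DX\|$, valid while the orbit stays in $\overline{B_{r_0}}$), applied to $z^k$ on $[0,t_k]$ with endpoint $x^k$, forces $r_k\ge e^{-\mu t_k}r_0$, hence $t_k\ge\mu^{-1}\ln(r_0/r_k)\to\infty$. Passing to a subsequence $z^k\to z^*\in\partial B_{r_0}(\sigma)\cap\Lambda$, and since $f_s(z^k)\in\overline{B_{r_0}}$ for $0\le s\le t_k$ with $t_k\to\infty$, the whole forward orbit of $z^*$ stays in $\overline{B_{r_0}}$; so $z^*\in W^s_{\loc}(\sigma)\cap\Lambda$ with $z^*\ne\sigma$ and $|z^*-\sigma|=r_0<\rho_0$, and Claim~A puts $z^*$ in the interior of $\cC_\alpha(E^c_\sigma)$. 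But then $z^k\in\cC_\alpha(E^c_\sigma)$ for all large $k$ — a contradiction.

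I expect Claim~A to be the only real obstacle. The naïve hope ``$z^*\in W^s_{\loc}(\sigma)\cap\Lambda$ and $z^*\notin W^{ss}_{\loc}(\sigma)$, hence $z^*$ is in the center cone'' fails: a priori $\Lambda$ may accumulate on $W^{ss}(\sigma)$ at points away from $\sigma$, and the domination $E^{ss}_\sigma\prec E^c_\sigma$ inside $W^s_{\loc}(\sigma)$ only shows that the forward orbit of $z^*$ enters $\cC_\alpha(E^c_\sigma)$ after a time that degenerates as $z^*$ approaches $W^{ss}_{\loc}(\sigma)$. This is exactly why the flow-direction argument — and with it the input of Lemma~\ref{l.lgw} — is needed; the remaining ingredients (the Gronwall bound forcing $t_k\to\infty$ and the compactness passage to $z^*$) are routine.
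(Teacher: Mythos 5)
Your proof is correct and follows the route the paper points to via the reference to [PYY23, Lemma 2.16], with the flow-direction confinement of Lemma~\ref{l.lgw} (hence $W^{ss}(\sigma)\cap\Lambda=\{\sigma\}$) as the essential input. Isolating Claim~A --- that every regular $p\in W^s_{\loc}(\sigma)\cap\Lambda$ sufficiently close to $\sigma$ already lies in the center cone --- is the right uniform statement, and your derivation of it is sound: the $C^1$ tangency of $W^s_{\loc}(\sigma)$ to $E^{ss}_\sigma\oplus E^c_\sigma$ together with the first-order Taylor expansion of $X$ at $\sigma$ forces the limiting flow line $L=\langle D_\sigma X(u)\rangle$ into $E^{ss}_\sigma\oplus E^c_\sigma$, Lemma~\ref{l.lgw} forces $L$ into $E^c_\sigma\oplus E^{uu}_\sigma$, hence $L=E^c_\sigma$, and the invertibility of $D_\sigma X$ and invariance of $E^{ss}_\sigma$ pull this back to $u\in E^c_\sigma$, contradicting $u_j\notin C_{\alpha/2}(E^c_\sigma)$. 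You also correctly note that the cone-forward-invariance remark preceding the lemma only yields eventual entry of forward iterates into the cone and does not by itself put the limit $z^*$ of the exit points in the cone --- Claim~A is precisely the uniform refinement that is needed, and this is where the paper's hint $W^{ss}(\sigma)\cap\Lambda\setminus\{\sigma\}=\emptyset$ enters (through Lemma~\ref{l.lgw}). The Gronwall estimate forcing $t_k\to\infty$, the compactness passage to $z^*\in W^s_{\loc}(\sigma)\cap\Lambda\cap\partial B_{r_0}(\sigma)$ regular, the use of $\cC_{\alpha/2}\setminus\{\sigma\}\subset\operatorname{int}\cC_\alpha$, and the restriction to the case $t_x<\infty$ are all handled correctly.
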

The proof is omitted. See the proof of~\cite[Lemma 2.16]{PYY23}, and keep in mind $W^{ss}(\sigma)\cap\Lambda\setminus \{\sigma\}=\emptyset.$

These two lemmas combined show that near a Lorenz-type singularity $\sigma$, regular orbits in $\Lambda$ can only approach $\sigma$ along the one-dimensional center direction. By considering $-X$, we see that regular orbit can only escape reverse Lorenz-type singularities along the one-dimensional $E^c$ direction.

Next we turn our attention to finite orbit segments $(x,t)$ that stay in a small neighborhood of $\Lambda$.  A priori, there may not exists a dominated splitting on the normal bundle over $(x,t)$ that is the continuation of that on $\Lambda$ (see for instance \cite{MPP99}), 
nor does $(x,t)$ enter $B_r(\sigma)$ through the center direction (for instance, consider $x$ on the strong stable manifold of a Lorenz-type singularity). 

To avoid this issue one must carefully choose the neighborhood $U$ and only consider certain ``good'' orbits. This is summarized in the following lemma.

\begin{lemma}\label{l.nbhd.1}
	Let $\Lambda$ be a multi-singular hyperbolic set for a $C^1$ vector field $X$. Then, there exist $r_0>0$, an open neighborhood $U$ of $\Lambda$, and $\overline r\in (0,r_0)$, such that the following statement hold.
	
	For every $r\in (0,\overline r]$, every $x\in U\cap (B_{r_0}(\Sing_\Lambda(X)))^c$, every $t>0$ such that $(x,t)\subset U$, and every $\sigma\in\Sing^+_\Lambda(X)$,  assume that:
	\begin{enumerate}
		\item $t_x>0$ satisfies $x_{t_x}\in \partial B_{r_0}(\sigma)$;
		\item $t_x'>t_x$ satisfies $x_{t_x'}\in B_{r}(\sigma)$;
		\item $(x_{t_x}, t_x'-t_x)\subset B_{r_0}(\sigma)$.
	\end{enumerate}
	Then one has 
	$$
	x_{t_x} \in \partial B_{r_0}(\sigma)\cap \cC_{\alpha}(E_\sigma^c).
	$$
	A similar statement holds for $\sigma\in \Sing^-_\Lambda(X)$. Furthermore, the same holds with $U$ replaced with a sub-neighborhood of $\Lambda$ in $U$.
\end{lemma}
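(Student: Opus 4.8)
The plan is to deduce the lemma from Lemma~\ref{l.nearEc} by a compactness argument. Fix $\alpha>0$, and treat only $\sigma\in\Sing^+_\Lambda(X)$: the case $\sigma\in\Sing^-_\Lambda(X)$ follows verbatim by replacing $X$ with $-X$ and invoking Remark~\ref{r.symmetry}, since the metric balls $B_r(\sigma)$ and the center cone $\cC_\alpha(E^c_\sigma)$ depend only on the line $E^c_\sigma$ and are therefore unchanged under $X\mapsto-X$. Since $\Sing_\Lambda(X)$ is finite, I would first apply Lemma~\ref{l.nearEc} with cone parameter $\alpha/2$ to all $\sigma\in\Sing^+_\Lambda(X)$ simultaneously, obtaining $r_0>0$ and $\overline r_1\in(0,r_0)$ with the ``exit through the $(\alpha/2)$-center cone'' property for orbits contained in $\Lambda$. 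Shrinking $r_0$ if necessary (the conclusion of Lemma~\ref{l.nearEc} holds for all sufficiently small $r_0$), I would also arrange that $\exp_\sigma$ is a diffeomorphism on the ball of radius $2r_0$, that the closed balls $\overline{B_{r_0}(\sigma)}$, $\sigma\in\Sing_\Lambda(X)$, are pairwise disjoint, and that the maximal invariant set of $X$ in each $\overline{B_{r_0}(\sigma)}$ is $\{\sigma\}$ (possible because a hyperbolic singularity is an isolated invariant set). This is the $r_0$ claimed in the lemma; and since the conclusion only becomes easier to satisfy as $U$ and $\overline r$ shrink, it is enough to produce \emph{one} admissible pair $(U,\overline r)$, which also settles the final clause about sub-neighborhoods.

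Next I would argue by contradiction. If no such pair existed, then with $U_n$ the $1/n$-neighborhood of $\Lambda$ and $\overline r^{(n)}=\min\{1/n,\overline r_1\}$, for each $n$ there would exist $r_n\le\overline r^{(n)}$, a point $x^n\in U_n\cap(B_{r_0}(\Sing_\Lambda(X)))^c$, a time $t^n>0$ with $(x^n,t^n)\subset U_n$, a singularity $\sigma_n\in\Sing^+_\Lambda(X)$, and times $0<t^n_x<(t^n_x)'\le t^n$ such that $x^n_{t^n_x}\in\partial B_{r_0}(\sigma_n)$, $x^n_{(t^n_x)'}\in B_{r_n}(\sigma_n)$ and $(x^n_{t^n_x},(t^n_x)'-t^n_x)\subset B_{r_0}(\sigma_n)$, but $x^n_{t^n_x}\notin\cC_\alpha(E^c_{\sigma_n})$. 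Using finiteness of $\Sing^+_\Lambda(X)$ and compactness of the spheres $\partial B_{r_0}(\sigma)$, I would pass to a subsequence with $\sigma_n\equiv\sigma$ and $x^n_{t^n_x}\to z\in\partial B_{r_0}(\sigma)$, so $z\ne\sigma$. Since $x^n_{t^n_x}\in\overline{(x^n,t^n)}\subset\overline{U_n}\subset\overline{U_m}$ for $n\ge m$, the limit lies in $\bigcap_m\overline{U_m}=\Lambda$. Writing $s^n=(t^n_x)'-t^n_x$, the relation $f_{s^n}(x^n_{t^n_x})=x^n_{(t^n_x)'}\in B_{r_n}(\sigma)$ with $r_n\to0$ forces $f_{s^n}(x^n_{t^n_x})\to\sigma$; if $\liminf s^n$ were finite, passing to a further subsequence would give $f_{s_*}(z)=\sigma$ for some $s_*\ge0$, impossible since $\sigma$ is a fixed point and $z\ne\sigma$. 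Hence $s^n\to\infty$, so for every $T>0$ and all large $n$ the segment $(x^n_{t^n_x},T)$ lies in $B_{r_0}(\sigma)$; letting $n\to\infty$ and then $T\to\infty$ shows that the whole forward orbit of $z$ stays in $\overline{B_{r_0}(\sigma)}$. By the isolation arranged above, $\omega(z)=\{\sigma\}$, i.e.\ $z\in W^s(\sigma)\cap\Lambda\setminus\{\sigma\}$.

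The remaining task is to place $z$ in the $(\alpha/2)$-center cone. Since $z\in W^s(\sigma)$, its forward orbit eventually enters $B_{\overline r_1}(\sigma)$ while never leaving $B_{r_0}(\sigma)$, which is precisely the situation analyzed in the proof of Lemma~\ref{l.nearEc} (following \cite[Lemma 2.16]{PYY23}): the $E^{ss}_\sigma$-$E^c_\sigma$ domination makes the cone $\cC_\beta(E^c_\sigma)\cap W^s_{\loc}(\sigma)$ backward invariant as long as the orbit stays in $B_{r_0}(\sigma)$, and $W^{ss}(\sigma)\cap\Lambda=\{\sigma\}$ from Definition~\ref{d.sing.dom.spl}(3)(a) (cf.\ Lemma~\ref{l.lgw}) forces regular orbits of $\Lambda$ near $\sigma$ to accumulate on $\sigma$ along $E^c_\sigma$. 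Running this backward estimate from deep inside $B_{\overline r_1}(\sigma)$ outward along the forward orbit of $z$ gives $z\in\cC_{\alpha/2}(E^c_\sigma)$. Because $C_{\alpha/2}(E^c_\sigma)\setminus\{0\}$ lies in the interior of $C_\alpha(E^c_\sigma)$ and $\exp_\sigma$ is a diffeomorphism near $\sigma$, the point $z\ne\sigma$ lies in the interior of $\cC_\alpha(E^c_\sigma)$; hence $x^n_{t^n_x}\in\cC_\alpha(E^c_\sigma)$ for all large $n$, contradicting the choice of $x^n$.

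I expect the compactness bookkeeping in the middle step to be routine. The genuine difficulty is the last step: although $z$ lies on $\partial B_{r_0}(\sigma)$ and its forward orbit converges to $\sigma$, that orbit may graze $\partial B_{r_0}(\sigma)$ before descending, so $z$ is not literally ``the backward exit point'' of an orbit segment to which Lemma~\ref{l.nearEc} applies as a black box. One therefore has to re-run the backward cone-invariance estimate of \cite[Lemma 2.16]{PYY23} directly along the forward orbit of $z$ --- which is legitimate, since that estimate only uses that the orbit remains in $\overline{B_{r_0}(\sigma)}$ --- and to choose $r_0$ (any sufficiently small value works) so that $\partial B_{r_0}(\sigma)$ is well positioned with respect to the flow. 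A secondary point is to keep every constant uniform over the finitely many singularities of $\Lambda$.
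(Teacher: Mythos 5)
Your proof takes a genuinely different route from the paper's. The paper's argument in Appendix~A is direct and constructive: using Lemma~\ref{l.lgw} it first chooses $r_0$ so small that $\Lambda\cap\partial B_{r_0}(\sigma)$ lies in a thin cone around $E^c_\sigma\oplus E^{uu}_\sigma$ (small $E^{ss}_\sigma$-component), extends this to a thickened neighborhood $\tilde U$ of $\Lambda\setminus\bigcup_\sigma B_{r_0}(\sigma)$, sets $U=\tilde U\cup\bigcup_\sigma B_{r_0}(\sigma)$, and then propagates the cone condition inward to the $E^c_\sigma$-cone by the $E^{uu}$-growth estimate of~\cite[Lemma 2.16]{PYY23}. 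You instead argue by contradiction with a compactness extraction, reducing to a single limit point $z\in\Lambda\cap W^s(\sigma)\cap\partial B_{r_0}(\sigma)$, $z\neq\sigma$, whose forward orbit stays in $\overline{B_{r_0}(\sigma)}$. The compactness bookkeeping (finiteness of $\Sing_\Lambda(X)$, $z\in\bigcap_m\overline{U_m}=\Lambda$, $s^n\to\infty$ forcing $\omega(z)=\{\sigma\}$) is correct, and trading the explicit construction of $U$ for an existence argument is legitimate here.

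The weak spot is the mechanism you invoke in the final step. You claim the $E^{ss}_\sigma$--$E^c_\sigma$ domination makes $\cC_\beta(E^c_\sigma)\cap W^s_{\loc}(\sigma)$ \emph{backward} invariant and propose ``running the backward estimate from deep inside $B_{\overline r_1}(\sigma)$ outward.'' This is the wrong direction: domination of $E^{ss}_\sigma$ by $E^c_\sigma$ makes this cone \emph{forward} invariant (it narrows under $f_t$ for $t>0$; this is exactly what the paper records before Lemma~\ref{l.nearEc}, with $f_1\big(\cC_\alpha(E^c_\sigma)\cap W^s(\sigma)\big)\subset\cC_{\theta\alpha}$, $\theta<1$). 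Iterating backward from a narrow cone at $f_T(z)$ only widens it and gives no control on $z$. Fortunately your reduction to $z\in W^s(\sigma)$ makes the cone propagation unnecessary: (i) since $z\in\Lambda\cap\partial B_{r_0}(\sigma)$, Lemma~\ref{l.lgw} (which is where $W^{ss}(\sigma)\cap\Lambda=\{\sigma\}$ enters) gives, for $r_0$ small, that the $E^{ss}_\sigma$-component of $z$ is a small fraction of $|z|\approx r_0$ --- this is precisely the ``thin cone'' fact the paper's direct construction also relies on, and you should make explicit that your $r_0$ is chosen small enough for it, not merely small enough for the \emph{statement} of Lemma~\ref{l.nearEc}; and (ii) since $z\in W^s_{\loc}(\sigma)$, which is tangent to $E^{ss}_\sigma\oplus E^c_\sigma$ at $\sigma$, the $E^{uu}_\sigma$-component of $z$ is $o(|z|)$ as $r_0\to0$. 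Combining (i) and (ii) gives $z\in\cC_{\alpha/2}(E^c_\sigma)$ with no iteration at all. In the paper's direct approach step (ii) has to be replaced by a forward $E^{uu}$-growth estimate because there the entry point need not lie on $W^s(\sigma)$; your limiting procedure buys you $z\in W^s(\sigma)$ for free, which is a genuine simplification --- but you should say so rather than appeal to a non-existent backward invariance.
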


The lemma states that  for a Lorenz-type singularity $\sigma$ and for orbit segments in $U$ that start outside $B_{r_0}(\sigma)$ and enters the smaller ball $B_{r}(\sigma)$, it can only enter the larger ball $B_{r_0}(\sigma)$ along the one-dimensional center direction. In other words, one can choose the neighborhood $U$ to avoid 
entering $B_{r_0}(\sigma)$ along $W^{ss}(\sigma)$.  Similarly, it can only escape $B_{r_0}(\sigma)$ for a reverse Lorenz-type singularity along the one-dimensional center direction. We remark that this lemma does not require $\Lambda$ to be isolated. 
The proof of this lemma can be found in Appendix \ref{A.multi-sing}.

Next we turn our attention to the dominated splitting on the normal bundle over $(x,t)$ that satisfies (1) to (3) of the previous lemma. For this purpose, we write 
$$
N_{(x,t)} =  \bigsqcup_{s\in[0,t)} N(x_s).
$$

\begin{lemma}\label{l.robust.dom.spl}
	Under the assumptions of Lemma~\ref{l.nbhd.1}, one can shrink $U$ to obtain constants $C>0$ and $\lambda>1$, such that 
	for every orbit segment $(x,t)\subset U$ satisfying $x, x_t\notin B_{r_0}(\Sing_\Lambda(X))$, the following statements hold:
	\begin{enumerate}
		\item There exists a splitting $N_{(x,t)}=E_N\oplus F_N$ with $\dim E_N = i$ that is invariant for the linear Poincar\'e flow  in the sense that for every $s\in (0,t)$ and $s'\in (0, t-s)$,
		$$
		\psi_{s'}(E_N(x_s)) = E_N(x_{s+s'});\,\, \psi_{s'}(F_N(x_s)) = F_N(x_{s+s'}).
		$$
		\item The splitting is dominated in the sense that for all $s\in (0,t) \mbox{ and } s'\in (0,t-s),$ one has
		\begin{equation}\label{e.dom.spl1}
			\|\psi_{s'}\mid_{E_N(x_s)}\| \cdot\|\psi_{-{s'}}\mid_{F_N(x_{s+s'})}\| < C\lambda^{-t}.
		\end{equation}
	\end{enumerate}
\end{lemma}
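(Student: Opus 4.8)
The plan is to build the splitting $E_N\oplus F_N$ over each orbit segment $(x,t)$ by a cone-field argument: outside small balls around the singularities we propagate the robust dominated splitting of $\Lambda$, near each singularity $\sigma$ we use the hyperbolic splitting $E_\sigma^{ss}\oplus E_\sigma^c\oplus E_\sigma^{uu}$ together with the entry/exit control of Lemma~\ref{l.nbhd.1}, and we then read off domination from the uniform contraction of cone widths. I work throughout with the scaled linear Poincar\'e flow $\psi^*$; by Remark~\ref{r.dom.scaled} a dominated splitting for $\psi^*$ is one for $\psi$ with the same constants, so it suffices to treat $\psi^*$.

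First I would set up the cone fields. By Theorem~\ref{t.robust.dom} we may shrink $U$ so that the maximal invariant set $\tilde\Lambda_U$ of $X$ in $U$ carries a singular dominated splitting $E_N\oplus F_N$ of index $i$, which extends the one on $\Lambda$. Domination yields, for a small $\alpha>0$, a $\psi^*_1$-forward-invariant cone field $C^N_\alpha(F_N)$ and a $\psi^*_1$-backward-invariant cone field $C^N_\alpha(E_N)$ over $\tilde\Lambda_U\cap\Reg(X)$, contracting the parameter uniformly as $\alpha\mapsto\theta\alpha$ per unit time, $\theta\in(0,1)$. Since $\Lambda\setminus B_{r_0}(\Sing_\Lambda(X))$ (with $r_0$ as in Lemma~\ref{l.nbhd.1}, shrunk if necessary) is a compact subset of $\Reg(X)$, these cone fields extend to forward/backward-invariant, contracting cone fields on a neighborhood $W$ of it; shrinking $U$ further we may assume $\overline U\setminus B_{r_0}(\Sing_\Lambda(X))\subset W$, so that the cone fields are defined and invariant along $(x,t)\setminus B_{r_0}(\Sing_\Lambda(X))$ for every $(x,t)\subset U$. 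Call $(x,t)$ \emph{admissible} if, as in the statement, $(x,t)\subset U$ and its endpoints lie outside $B_{r_0}(\Sing_\Lambda(X))$.

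Next I would continue the cone fields across the balls $B_{r_0}(\sigma)$. Fix $\sigma\in\Sing_\Lambda(X)$, say $\sigma\in\Sing^+_\Lambda(X)$ (the other case follows by replacing $X$ with $-X$, using Remark~\ref{r.symmetry}). If an admissible $(x,t)$ enters $B_{r_0}(\sigma)$, then by Lemma~\ref{l.nbhd.1} it does so through a point of $\partial B_{r_0}(\sigma)\cap\cC_\alpha(E_\sigma^c)$, and a cone-invariance argument as in the proof of Lemma~\ref{l.nearEc} (using the domination of $E_\sigma^c$ over $E_\sigma^{ss}$) shows that the orbit stays in $\cC_\alpha(E_\sigma^c)$ while inside $B_{r_0}(\sigma)$; hence there the flow direction is $\alpha$-close to the one-dimensional $E_\sigma^c$ and $N(y)$ is $\alpha$-close to $E_\sigma^{ss}\oplus E_\sigma^{uu}$. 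Using the gap \eqref{e.sing.Lya}, I would continue the cone fields near $\sigma$ by taking the $F_N$-cone around $\pi_{N,y}(E_\sigma^c\oplus E_\sigma^{uu})$ and the $E_N$-cone around $E_\sigma^{ss}$: these stay forward/backward invariant and contracting because $Df_1$ uniformly contracts $E_\sigma^{ss}$ while the worst backward expansion rate on $\pi_{N,y}(E_\sigma^c\oplus E_\sigma^{uu})$ under $\psi^*_{-1}$ is controlled by $|\lambda_\sigma^c|<-\lambda_\sigma^{ss}$, and because, by Lemma~\ref{l.lgw}, the flow direction of $\Lambda$ near $\sigma$ already lies in $E_\sigma^c\oplus E_\sigma^{uu}$, so the interior cone fields match the exterior ones along $\partial B_{r_0}(\sigma)$ once $r_0$ is small. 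This is the step that avoids the pathology of \cite{MPP99}: without Lemma~\ref{l.nbhd.1} an orbit could enter $B_{r_0}(\sigma)$ along $W^{ss}(\sigma)$ and no continuation would exist.

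Finally I would assemble the splitting. With invariant, contracting cone fields now defined along every admissible $(x,t)$, set $F_N(x_s)=\bigcap_{0<\tau\le s}\psi^*_\tau\big(\overline{C^N_\alpha(F_N)}(x_{s-\tau})\big)$ for $s\in(0,t]$, and define $E_N(x_s)$ symmetrically by propagating the $E_N$-cone field backward from $x_t$. The nested cones shrink uniformly, so these are subspaces of the correct dimensions; they lie in disjoint cones, hence are transverse and give $N(x_s)=E_N(x_s)\oplus F_N(x_s)$, and invariance is immediate from the construction. The uniform per-unit-time contraction of the cone parameter then translates, via the standard equivalence between invariant contracting cone fields and dominated splittings, into the estimate \eqref{e.dom.spl1}, with $C$ and $\lambda$ depending only on $\theta$, the one-step bounds, and the spectral gaps at the finitely many singularities of $\Lambda$; passing back from $\psi^*$ to $\psi$ via Remark~\ref{r.dom.scaled} finishes the proof. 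The main obstacle is precisely the passage near the singularities — $N$ is undefined at $\sigma$, $|X|$ degenerates, and $N(y)$ rotates as $y\to\sigma$, so the splitting on $\Lambda$ does not automatically continue over a finite segment dipping into $B_{r_0}(\sigma)$; it is the combination of Lemma~\ref{l.nbhd.1}, Lemma~\ref{l.lgw} and the Lyapunov gap \eqref{e.sing.Lya} that forces a consistent continuation of the cone fields across $B_{r_0}(\sigma)$ and keeps domination (in fact slightly improved) valid there.
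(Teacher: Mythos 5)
Your proposal takes a genuinely different route from the paper's. The paper does not build the splitting by hand via cone fields; instead it lifts everything to the Grassmannian bundle $G^1$, uses the result (from \cite[Section 3]{CDYZ}) that the singular dominated splitting on $N_\Lambda$ extends continuously to a dominated splitting of the extended linear Poincar\'e flow over the compact set $\mathfrak B(\Lambda)\subset G^1$, and then invokes the robustness theorem of \cite[Appendix B]{BDV} to get a neighborhood $\mathfrak U\supset\mathfrak B(\Lambda)$ in $G^1$ over which a dominated continuation exists. All the work then reduces to showing that $\zeta((x,t))\subset\mathfrak U$ for admissible $(x,t)$, which is exactly where the cone $C(\alpha)=\{z: |z_i|/|z|<\alpha,\ i\le k-1\}$ around the plane $E^c_\sigma\oplus E^{uu}_\sigma$ and its flow-invariance (from the domination $E^{ss}_\sigma\oplus(E^c_\sigma\oplus E^{uu}_\sigma)$) are used. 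The Grassmannian route buys you the whole cone-field machinery for free from \cite{BDV}; your route tries to reprove it directly on the orbit segment.

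There is, however, a concrete error in your treatment near the singularity, and it is precisely at the point where the two routes diverge. You claim that once the orbit enters $\partial B_{r_0}(\sigma)$ through $\cC_\alpha(E^c_\sigma)$, it \emph{stays} in $\cC_\alpha(E^c_\sigma)$ while inside $B_{r_0}(\sigma)$, hence the flow direction remains $\alpha$-close to the one-dimensional $E^c_\sigma$ and $N(y)$ stays close to $E^{ss}_\sigma\oplus E^{uu}_\sigma$. This is false, and the paper says so explicitly right after defining $C_\alpha(E^c_\sigma)$: ``note that this cone is not invariant under the tangent flow unless one further requires that $v^u=0$.'' The domination of $E^c_\sigma$ over $E^{ss}_\sigma$ only controls the ratio $|v^{ss}|/|v^c|$; the unstable component $|v^{uu}|/|v^c|$ grows under the forward flow and eventually dominates, so the orbit leaves $\cC_\alpha(E^c_\sigma)$. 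What is actually flow-invariant (and what the paper uses) is the larger cone $C(\alpha)$ around the whole plane $E^c_\sigma\oplus E^{uu}_\sigma$; consequently the flow direction rotates from near $E^c_\sigma$ at entry to near $E^{uu}_\sigma$ at exit, and $N(y)$ rotates accordingly rather than staying close to $E^{ss}_\sigma\oplus E^{uu}_\sigma$. Your subsequent cone definitions — $E_N$-cone around $E^{ss}_\sigma$, $F_N$-cone around $\pi_{N,y}(E^c_\sigma\oplus E^{uu}_\sigma)$ — happen to be the right objects and survive this correction, since the $Df_t$-domination of $E^{ss}_\sigma$ by $E^c_\sigma\oplus E^{uu}_\sigma$ and Lemma~\ref{l.lgw} are what really matter, not proximity to $E^c_\sigma$ itself. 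But as written, the invariance claim for $\cC_\alpha(E^c_\sigma)$ is the key justification you offer for the continuation across $B_{r_0}(\sigma)$, and it fails; the argument needs to be rebuilt around $C(\alpha)$ and the $E^{ss}_\sigma\oplus(E^c_\sigma\oplus E^{uu}_\sigma)$ domination rather than around the center cone.
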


Then, we establish exponential contraction / expansion along $E_N$ and $F_N$, respectively, for finite orbit segments in $U$ that start and end outside a small neighborhood of $\Sing_\Lambda(X)$.

\begin{lemma}\label{l.robust.hyp}
	Under the assumptions of Lemma~\ref{l.nbhd.1}, one can further shrink $r_0$ and $U$ so that there exists a constant $\eta>1$, such that for every open isolating neighborhood $W$ of $\Sing_\Lambda(X)$ that is contained in $B_{r_0}(\Sing_\Lambda(X))$, there exists a constant $T_{W}>0$ such that the following hold:
	
	\noindent For orbit segments $(x,t)\in U$, assume that 
	\begin{enumerate}
		\item there exist $t_1,t_2\ge 0$ such that $(x_{-t_1}, t_1+t+t_2)\in \cO(U)$ and $x_{-t_1}\notin B_{r_0}(\Sing_\Lambda(X)), x_{t+t_2}\notin B_{r_0}(\Sing_\Lambda(X))$; in particular, by Lemma~\ref{l.robust.dom.spl}, the ``larger'' orbit segment $(x_{-t_1}, t_1+t+t_2)$ that contains $(x,t)$ has a dominated splitting $E_N\oplus F_N$ on its normal bundle;
		\item $x,x_t\notin W$ and $t>T_{W}$.
	\end{enumerate}
	Then we have
	\begin{equation}\label{e.hyp1}
		\prod_{i=0}^{\floor{t}-1}\left\|\psi_1|_{E_N(x_i)}\right\|\le \eta^{-\floor{t}},   \prod_{i=0}^{\floor{t}-1}\big\|\psi_{-1}|_{F_N((x_{\floor{t}-i}))}\big\|\le \eta^{-t},
	\end{equation}	
	\begin{equation}\label{e.hyp2}
		\prod_{i=0}^{\floor{t}-1}\left\|\psi^*_1|_{E_N(x_i)}\right\|\le \eta^{-\floor{t}}, \mbox{ and } \prod_{i=0}^{\floor{t}-1}\big\|\psi_{-1}^*|_{F_N((x_{\floor{t}-i}))}\big\|\le \eta^{-t},
	\end{equation}
\end{lemma}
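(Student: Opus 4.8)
\emph{Strategy.} The plan is to derive \eqref{e.hyp1} and \eqref{e.hyp2} from the multi-singular hyperbolicity of $\Lambda$ itself, i.e.\ from \eqref{e.hyp0} and \eqref{e.hyp}, by a compactness argument. By Remark~\ref{r.symmetry} it is enough to prove the two estimates along $E_N$ (the first inequality of each of \eqref{e.hyp1} and \eqref{e.hyp2}); applying these to $-X$ — for which $E_N^{-X}=F_N^X$, $\psi^{-X}_1=\psi^X_{-1}$, and Lorenz- and reverse-Lorenz-type singularities are interchanged — then yields the $F_N$ estimates. The unscaled and the scaled $E_N$ estimates will be proved by one and the same argument, invoking \eqref{e.hyp0} in the first case and \eqref{e.hyp} in the second.

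\emph{Setup.} Using Theorem~\ref{t.robust.hyp} I would first fix a neighborhood $U_0$ of $\Lambda$ whose maximal invariant set $\Lambda_{U_0}$ is multi-singular hyperbolic, and let $\eta_0>1$ be a constant witnessing both \eqref{e.hyp0} and \eqref{e.hyp} on $\Lambda_{U_0}$; since every invariant subset of $\Lambda_{U_0}$ inherits the same constant, it is harmless to shrink $U$ (with $\overline U\subset U_0$) afterwards. After replacing the Riemannian metric by one adapted to the hyperbolicity of the finitely many singularities, with a margin dictated by \eqref{e.sing.Lya}, both $\log\|Df_1|_{E^{ss}_\sigma}\|$ and $\log\big(e^{-\lambda^c_\sigma}\|Df_1|_{E^{ss}_\sigma}\|\big)$ become strictly negative for every $\sigma\in\Sing_\Lambda(X)$. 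Next I would shrink $r_0$ and $U$ so that Lemma~\ref{l.robust.dom.spl} provides the splitting $E_N\oplus F_N$ on the normal bundle of every orbit segment in $U$ satisfying assumption~(1), $C^0$-close to the splitting of $\Lambda_{U_0}$, and so that Lemma~\ref{l.nbhd.1} applies. The point of this is that the functions $g:=\log\|\psi_1|_{E_N}\|$ and $g^*:=\log\|\psi^*_1|_{E_N}\|$ — defined along such orbit segments, and on the normal bundle of $\Lambda_{U_0}$ — then extend to bounded functions on $\overline U$ which are continuous off $\Sing_\Lambda(X)$ and whose values near a singularity $\sigma$ are $<\log\|Df_1|_{E^{ss}_\sigma}\|+\epsilon<0$, respectively $<\log\big(e^{-\lambda^c_\sigma}\|Df_1|_{E^{ss}_\sigma}\|\big)+\epsilon<0$ along the center-cone approaches singled out by Lemma~\ref{l.nbhd.1} (and even more negative along the remaining approaches, where the flow-speed ratio $|X(x)|/|X(x_1)|$ does not exceed $1$).

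\emph{The measure estimate and the contradiction.} The heart of the argument is the claim that for every $f_1$-invariant probability $\mu$ supported on $\Lambda_{U_0}$ one has $\int g\,d\mu<-\log\eta$ and $\int g^*\,d\mu<-\log\eta$, where $\eta\in(1,\eta_0)$ is fixed close enough to $1$. By ergodic decomposition one reduces to $f_1$-ergodic $\mu$. If $\mu(V)<1$ for a fixed small isolating neighborhood $V$ of $\Sing_\Lambda(X)$, then $\mu$-a.e.\ point visits $\Lambda_{U_0}\cap V^c$ along integer times tending to infinity, and \eqref{e.hyp0} (resp.\ \eqref{e.hyp}), applied to long sub-segments with endpoints in $\Lambda_{U_0}\cap V^c$, together with the Birkhoff theorem, yields $\int g\,d\mu\le-\log\eta_0<-\log\eta$ (resp.\ for $g^*$); if $\mu(V)=1$, the bound is immediate from the values of $g,g^*$ near $\Sing_\Lambda(X)$ established in the setup. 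Now, assuming the lemma fails, there is an isolating $W\subset B_{r_0}(\Sing_\Lambda(X))$ for which no $T_W$ works, hence orbit segments $(x^n,t^n)\subset U$ with $t^n\to\infty$, satisfying assumption~(1), with $x^n,(x^n)_{t^n}\notin W$, and (say) $\frac1{\floor{t^n}}\sum_{i=0}^{\floor{t^n}-1}g\big((x^n)_i\big)>-\log\eta$. A weak-$*$ limit $\nu$ of the empirical measures $\nu_n:=\frac1{\floor{t^n}}\sum_{i=0}^{\floor{t^n}-1}\delta_{(x^n)_i}$ is then $f_1$-invariant and supported on $\Lambda_{\overline U}\subset\Lambda_{U_0}$ (the sub-segments of any fixed length $2m$ centered at $(x^n)_i$ lie in $\overline U$ except for $i$ in a set of density $\to0$), and $\int g\,d\nu=\lim_n\int g\,d\nu_n\ge-\log\eta$, the passage to the limit being legitimate since $g$ is bounded, continuous off $\Sing_\Lambda(X)$, and uniformly controlled near $\Sing_\Lambda(X)$ thanks to Lemma~\ref{l.nbhd.1}. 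This contradicts the claim; \eqref{e.hyp2} is obtained identically with $g^*$ in place of $g$.

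\emph{Main obstacle.} I expect the crux to be everything in the setup that concerns singularities: one must simultaneously keep $E_N$ close to $E^{ss}_\sigma$ (which follows from the domination inside the singular dominated splitting) and tame the rescaling $|X(x)|/|X(x_1)|$, which is genuinely unbounded on $\bM\setminus\Sing(X)$ but is controlled along the center-cone approaches precisely because \eqref{e.sing.Lya} forces $e^{-\lambda^c_\sigma}\|Df_1|_{E^{ss}_\sigma}\|<1$. This is exactly what assumption~(1) is for: it lets us invoke Lemma~\ref{l.robust.dom.spl} (to have the splitting at all) and Lemma~\ref{l.nbhd.1} (so that excursions into $B_{r_0}(\sigma)$ can only enter — and, for reverse Lorenz-type, leave — through the center cone), which is what prevents the empirical measures from accumulating on spurious directions over the singularities. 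The only other genuine care needed is the bookkeeping of constants, i.e.\ choosing $\eta\in(1,\eta_0)$ small enough that both the ``regular'' rate $-\log\eta_0$ and the ``singular'' values stay strictly below $-\log\eta$.
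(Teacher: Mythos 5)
Your strategy — lift to empirical measures, take weak-$*$ limits, and use Birkhoff — is a genuinely different route from the paper's, which works directly on the Grassmannian: it extends $\|h^1_+\cdot\psi_1|_{E_N}\|\le(\eta')^{-1}$ from $\mathfrak B(\Lambda)$ to a neighborhood $\mathfrak U$ by uniform continuity, and then repeats the telescoping of Proposition~\ref{p.equivalent}, the boundary terms at $x,x_t\notin W$ being controlled by Definition~\ref{d.cocycle}. However, the ``measure estimate'' at the heart of your argument is not correct, and there is a further issue with the weak-$*$ passage. Note first that $g=\log\|\psi_1|_{E_N}\|$ and $g^*$ are not single-valued near $\Sing_\Lambda(X)$: their value near $\sigma$ depends on the line $L=\langle X(x)\rangle\in G^1$, which has several limits in $\mathfrak B_\sigma(\Lambda)$. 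So $\int g\,d\nu_n\to\int g\,d\nu$ (with $\nu$ a measure on $\bM$) is not legitimate, and you must lift to $G^1$. Once you do, the claim ``$\int g\,d\hat\mu<-\log\eta$ for every invariant $\hat\mu$'' is false. Over a Lorenz-type $\sigma$ your reasoning is fine: $\mathfrak B_\sigma(\Lambda)\subset\mathbb P(E^c_\sigma\oplus E^{uu}_\sigma)$ and $E_N(L)=E^{ss}_\sigma$, so $g(L)=\lambda^{ss}_\sigma<0$. But over a reverse Lorenz-type $\sigma$ the weakest strong-stable eigendirection $L_0\subset E^{ss}_\sigma$ is a $(Df_1)$-fixed point in $\mathfrak B_\sigma(\Lambda)$ (it is the limit of $\langle X(y)\rangle$ for $y\in W^s(\sigma)\cap\Lambda=W^{ss}(\sigma)\cap\Lambda$ as $y\to\sigma$), and the index-$i$ singular dominated splitting forces $E_N(L_0)=(E^{ss}_\sigma\cap N^{L_0})\oplus E^c_\sigma$: the $(n-1-i)$-dimensional dominating bundle $F_N(L_0)$ must consist of the strong-unstable eigendirections, since replacing $E^c_\sigma$ by a line of $E^{uu}_\sigma$ inside $E_N$ would break domination ($\lambda^c_\sigma<\lambda^{uu}_\sigma$). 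Hence $g(L_0)=\lambda^c_\sigma>0$ and $g^*(L_0)=\lambda^c_\sigma-\lambda^{ss}_\sigma>0$, and $\delta_{L_0}$ is an invariant probability violating your claim. Your sentence asserting ``even more negative along the remaining approaches'' is exactly where this case is mishandled.

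What keeps the lemma true is that the empirical measures you actually obtain — from orbit segments that complete each pass through $B_{r_0}(\sigma)$, since $x^n,(x^n)_{t^n}\notin W$ — are constrained: over a reverse Lorenz-type $\sigma$ they must weight the entry regime ($L$ near $L_0$, $g\approx\lambda^c_\sigma$) and the exit regime ($L$ near $E^c_\sigma$, $g\approx\lambda^{ss}_\sigma$) in a ratio governed by the exponents, so that \eqref{e.sing.Lya} (which gives $\lambda^c_\sigma+\lambda^{ss}_\sigma<0$) forces the net contribution negative. Encoding this balance is exactly the role of the Bonatti--da Luz rescaling cocycle: $g+\log h^1_+$ \emph{is} pointwise $\le-\log\eta'$ on a neighborhood of $\mathfrak B(\Lambda)$, and the cocycle term integrates to zero against your limit measures because $\log h^{t_n}_+(\zeta(x^n))$ stays bounded when $x^n,(x^n)_{t^n}\notin W$. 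Without the cocycle (or an equivalent accounting of the time spent near $\sigma$), the key claim simply does not hold, and the proof has a genuine gap.
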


\begin{remark}\label{r.U}
	The neighborhood $U$ in Lemma \ref{l.nbhd.1} to \ref{l.robust.hyp} can be replaced by any open neighborhood of $\Lambda$ that is contained in $U$. 
\end{remark}

The previous two lemmas states that for any finite orbit segment in $U$ (not necessarily in $\Lambda$) that starts outside $B_{r_0}(\Sing_\Lambda)(X)$, there exists a dominated splitting on the normal bundle of $(x,t)$; furthermore, the exponential contraction / expansion behavior as in Definition \ref{d.multising}, Equations \eqref{e.hyp0} and \eqref{e.hyp} remain true as long as $x$ and $x_t$ are taken uniformly away from singularity, and the orbit segment is sufficiently long.

The proof of these lemmas can be found in Appendix \ref{A.multi-sing}. We remark that these results do not immediately follow from Definition \ref{d.sing.dom.spl} and \ref{d.multising} due to the lack of compactness and uniform estimates on the normal bundle of $\Lambda$. Indeed their proof requires the original definition of Bonatti and da Luz (Definition \ref{d.multising.BD} in Appendix \ref{A.multi-sing}) and is a result of the compactness of $\mathfrak B(\Lambda)$ in the Grassmannian manifold.  Indeed, we will show that by choose $U$ carefully, for any orbit segment $(x,t)\in\cO(U)$ that start outside $B_{r_0}(\Sing_\Lambda(X))$, $\zeta((x,t))$ is an orbit segment for the extended flow on $G^1$ that is contained in a small neighborhood of $\mathfrak{B}(\Lambda)$ where $\zeta$ is given by \eqref{e.zeta} and $\mathfrak{B}(\Lambda)$ is defined by \eqref{e.BLambda}. Then the result will follow from the hyperbolicity for (the extension of) the  splitting $E_N\oplus F_N$ on the compact set $\mathfrak{B}(\Lambda)$.

\subsection{Chain recurrence classes of generic vector fields}\label{ss.crc}
In this section we collect some well-known results on the chain recurrence classes of $C^1$ generic vector fields. Here a property is called $C^1$ generic, if it is satisfied by a $C^1$ residual set of vector fields, i.e., it is satisfied on a dense $G_\delta$ set.

Following the famous work of Conley \cite{Con}, an $\vep-$chain from $x$ to $y$ is a sequence of points $x=x^0,\ldots, x^n = y$ together with times $t_i\ge 1$, $i=0,\ldots, n-1$ such that $d((x^{i})_{t_{i}}, x^{i+1})<\vep$, for all $i=0,\ldots, n-1$. We say that $y$ is chain attainable from $x$, or $x\mapsto y$, if there exists an $\vep-$chain from $x$ to $y$ for every $\vep>0$. This defines a relation: 
$$
x\sim y \mbox{ if and only if } x\mapsto y \mbox{ and } y\mapsto x.
$$
This relation is not necessarily an equivalence relation since it may not be reflexive. To solve this issue, one considers the {\em chain recurrent set} 
$$
\CR(X) = \{x\in\bM: x\sim x\}.
$$
Then ``$\sim$'' is an equivalence relation on $\CR(X)$, whose equivalence classes are called chain recurrence classes. For each $x\in\CR(X)$ we will denote by $C(x)$ the chain recurrence class that contains $x$.

It is easy to see from the definition that for a sequence of chain recurrence classes of a given vector field, the Hausdorff limit is contained in a chain recurrence class. More generally, chain recurrence classes vary upper semi-continuously: if $\{Y_n\}$ is a sequence of vector fields converging to $X$ in $C^1$ topology, and $C_n$ are chain recurrence classes of $Y_n$, then the Hausdorff limit of $C_n$ is contained in a chain recurrence class of $X$.

We also remark that by the Poincar\'e Recurrence Theorem, the support of every ergodic invariant measure $\mu$ is contained in some chain recurrence class.

Below we gather some well-known results concerning the chain recurrence classes of $C^1$ generic vector fields.  

\begin{lemma}\label{l.generic}
	The following properties are $C^1$ generic:
	\begin{enumerate}
		\item (Kupka-Smale Theorem) $X$ is Kupka-Smale: every critical element is hyperbolic, and the stable manifold of any critical element intersects the unstable manifold of any other critical element transversely.
		\item (\cite{BC}) if a chain recurrence class $C$ contains a periodic orbit $\gamma$, then $C$ coincides with the homoclinic class $H(\gamma)$.
		\item (\cite{Cr06}) Every non-trivial chain recurrence class is the Hausdorff limit of a sequence of periodic orbits.
		\item \bl{(\cite{Abd}) For each hyperbolic saddle $\gamma$, the homoclinic class of $\gamma$ depends continuously on $X$ and the continuation of $\gamma$.}
	\end{enumerate}
\end{lemma}
We will revisit Lemma~\ref{l.generic} for star flows shortly. See Theorem~\ref{t.dichotomy} below.

\subsection{Singular star flows}\label{ss.star}
We start with the precise definition of the star property.
\begin{definition}\label{d.star}
	We say that a $C^1$ vector field $X\in\mathscr{X}^1(\bM)$ has the star property, if there exists a $C^1$ neighborhood $\cU$ of $X$ such that for every $Y\in\cU$, every critical element (i.e., singularity and periodic orbit) of $Y$ is hyperbolic. 
\end{definition}

It has been shown for diffeomorphisms and non-singular flows that the star property implies uniform hyperbolicity. 
For singular flows, however, the situation is much more complicated. These flows are generally not structure stable and not uniformly hyperbolic. An attempt to characterize the hyperbolicity of singular star flows was first made in~\cite{MPP99} where singular hyperbolicity was proposed. It was proven that in dimension three, $C^1$ generically, the star property is equivalent to singular hyperbolicity~\cite{SGW}. However, the situation is very different in higher dimensions. This is caused by the coexistence of singularities of difference indices (in which case they can only differ by one, due to \cite{SGW}) in the same chain recurrence class; see \cite{SGW}. Such a robust example was constructed in~\cite{daLuz} in dimension five. Then in~\cite{BD21}, the following theorem was proven.

\begin{theorem}\label{t.BD}\cite[Theorem 3]{BD21}
$C^1$ open and densely among star flows, the chain recurrent set $\CR(X)$ is contained in the union of finitely many pairwise disjoint filtrating regions in which X is multi-singular hyperbolic.
\end{theorem}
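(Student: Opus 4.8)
The statement bundles together two assertions: that $\CR(X)$ can be covered by finitely many pairwise disjoint filtrating regions, and that $X$ is multi-singular hyperbolic on the maximal invariant set of each of them. The plan is to establish the hyperbolicity first, for a $C^1$ dense set of star flows, and then use its $C^1$-robustness both to upgrade to openness and to organize the covering. For the non-singular part there is nothing new: for a star flow, Liao's theory together with the work of Gan--Wen and \cite{LGW} already gives, on any compact invariant set disjoint from $\Sing(X)$, a dominated splitting $E_N\oplus F_N$ for the linear Poincar\'e flow and Liao's relative uniform hyperbolicity for the scaled linear Poincar\'e flow $\psi^*_t$. The only genuine difficulty is the behaviour of regular orbits that pass near a singularity $\sigma$, where $|X(x)|\to 0$ makes $\psi_t$ degenerate.

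\textbf{Reparametrization near singularities (the core).} I would follow Bonatti--da Luz \cite{BD21} and lift everything to the Grassmannian bundle $G^1$, working with the extended linear Poincar\'e flow over the compact set $\mathfrak B(\CR(X))=\Cl\big(\zeta(\CR(X)\cap \Reg(X))\big)$, which does contain, over each $\sigma$, the limiting flow directions of nearby regular orbits (controlled by Lemmas~\ref{l.lgw}--\ref{l.nearEc}). One then looks for a multiplicative reparametrization cocycle $h$ over the extended flow --- built from the flow speed and, near each $\sigma$, from the central eigenvalue $\lambda^c_\sigma$ --- making the $h$-reparametrized extended linear Poincar\'e flow \emph{uniformly} hyperbolic on $\mathfrak B(\CR(X))$, with splitting extending $E_N\oplus F_N$. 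The existence of such an $h$ is forced by the star property through a Ma\~n\'e-type contradiction: if the reparametrized extended flow were not uniformly hyperbolic over $\CR(X)$, then a closing lemma together with a Franks-type perturbation of the extended cocycle, realized as a genuine $C^1$ perturbation of $X$, would produce a non-hyperbolic periodic orbit or singularity arbitrarily $C^1$-close to $X$, contradicting the star property. This yields multi-singular hyperbolicity of $\CR(X)$, and of the maximal invariant set of every sufficiently small isolating neighbourhood, for a $C^1$ dense set of star flows; since multi-singular hyperbolicity is $C^1$-robust (Theorems~\ref{t.robust.dom} and~\ref{t.robust.hyp}), the property then holds on a $C^1$ open and dense set.

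\textbf{From hyperbolicity to finitely many filtrating regions.} By Conley theory $\CR(X)$ is the disjoint union of its chain recurrence classes, which vary upper semicontinuously in the Hausdorff metric. A chain class containing no singularity is, by Gan--Wen, a hyperbolic set, hence isolated; therefore a sequence of distinct chain classes can accumulate only on a class containing one of the (finitely many) singularities. Around each such ``singular'' class I would take a small isolating neighbourhood and use multi-singular hyperbolicity of its maximal invariant set (as permitted by Theorem~\ref{t.robust.hyp}) to promote it to a filtrating neighbourhood whose maximal invariant set is isolated and absorbs the class together with all but finitely many of the hyperbolic classes accumulating on it; each of the remaining finitely many hyperbolic classes gets its own small filtrating block. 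Shrinking and disjointifying gives finitely many pairwise disjoint filtrating regions covering $\CR(X)$, and $X$ is multi-singular hyperbolic on each by the previous step.

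\textbf{Main obstacle.} The delicate point is the reparametrization step: producing a cocycle $h$ that is simultaneously tuned to the flow speed (so the rescaled flow does not feel the slowdown near $\sigma$) and ``central'' at every singularity (so that the rescaled extended linear Poincar\'e flow is genuinely uniformly hyperbolic with the correct index $i$), and then transferring an abstract perturbation of this cocycle back to an honest $C^1$ perturbation of the vector field so that the star property can be invoked. The input that even makes $h$ well defined is a precise description of how regular orbits enter and leave neighbourhoods of the singularities --- essentially the content of Lemmas~\ref{l.lgw}, \ref{l.nearEc} and \ref{l.nbhd.1}. Everything beyond that is Conley-theoretic and robustness bookkeeping.
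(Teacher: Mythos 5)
The paper does not prove this theorem; it is quoted verbatim as \cite[Theorem 3]{BD21} and used as a black box. So there is no ``paper's own proof'' to compare against. Judged on its own terms, your sketch is aimed in the right general direction --- Bonatti and da Luz do work with the extended linear Poincar\'e flow on the Grassmannian, do introduce a reparametrization cocycle, and do obtain hyperbolicity by a Ma\~n\'e-type contradiction using the star property together with Liao's selecting lemma and the ergodic closing lemma --- but as written there are two real gaps.

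First, a circularity. You invoke Lemmas~\ref{l.lgw}, \ref{l.nearEc} and \ref{l.nbhd.1} as ``the input that even makes $h$ well defined,'' i.e.\ as the ingredient controlling how regular orbits approach a singularity. But every one of those lemmas is stated in the paper under the hypothesis that $\Lambda$ is already multi-singular hyperbolic (or at least already carries a singular dominated splitting with the Lorenz-type / reverse-Lorenz-type dichotomy at each singularity). They are consequences of the conclusion you are trying to establish, not tools available beforehand. In the actual argument the control on flow directions over $\sigma$ --- that $\mathfrak B_\sigma(\Lambda)$ avoids the strong stable (or strong unstable) subspace --- has to be extracted from the star hypothesis itself, via perturbation arguments (if a regular orbit could accumulate on $W^{ss}(\sigma)$ inside a chain class, one connects to create a non-hyperbolic critical element). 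You cannot quote the paper's structural lemmas here.

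Second, the passage from multi-singular hyperbolicity of $\CR(X)$ to finitely many pairwise disjoint filtrating regions is dismissed as ``Conley-theoretic bookkeeping,'' but the finiteness is one of the genuinely nontrivial assertions of the theorem. The step ``a chain class disjoint from $\Sing(X)$ is, by Gan--Wen, hyperbolic'' is not a direct application of Gan--Wen, whose theorem is about \emph{globally} non-singular star flows; one still has to rule out interaction with nearby singular behavior and this takes an argument. More seriously, the claim that a filtrating neighbourhood of a singular class ``absorbs the class together with all but finitely many of the hyperbolic classes accumulating on it'' needs justification: a priori infinitely many pairwise distinct hyperbolic chain classes could accumulate on a singular one while none of them lies in any given filtrating region, since each separate class sits on the opposite side of some attractor--repeller pair. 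Showing this cannot happen --- i.e.\ that after the hyperbolic structure is in place only finitely many blocks survive --- is precisely the content one would need to supply, and it is not merely bookkeeping. As it stands, the proposal asserts the conclusion at this point rather than proving it.
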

This theorem shows that multi-singular hyperbolicity is indeed the counterpart of uniformly hyperbolicity when the system has singularities. 

For Theorems \ref{mc.C} to \ref{mc.A} on star flows, we need the following theorem concerning the topological structure of multi-singular hyperbolic chain recurrence classes. 

\begin{theorem}\cite[Theorem A]{PYY23a}\label{t.dichotomy}
	There is a residual set $\cR$ of $C^1$ star flows, such that for every $X \in \cR$ and every non-trivial chain recurrence class $C$ of $X$, we have 
	\begin{enumerate}
		\item if $h_{top}(X|_C)>0$, then $C$ contains some periodic point $p$ and is an isolated homoclinic class;
		\item if $h_{top}(X|_C)=0$, then $C$ is sectional-hyperbolic for $X$ or $-X$, and contains no periodic orbits. In this case, every ergodic invariant measure $\mu$ with $\supp\mu\subset C$ must satisfy $\mu = \delta_{\sigma}$ for some $\sigma\in \Sing_C(X)$.
	\end{enumerate}
\end{theorem}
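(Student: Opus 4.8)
The plan is to deduce this from Theorem~\ref{t.BD}, the $C^1$ generic facts of Lemma~\ref{l.generic}, and the structural results on multi-singular hyperbolicity from Section~\ref{ss.def}. First I would take the residual set $\cR$ to be the intersection of: the $C^1$ residual set on which Theorem~\ref{t.BD} holds (so $\CR(X)$ is contained in finitely many filtrating regions on which $X$ is multi-singular hyperbolic); the Kupka--Smale set; the generic set of Lemma~\ref{l.generic}(2) on which a chain recurrence class containing a periodic orbit equals its homoclinic class; Crovisier's generic set of Lemma~\ref{l.generic}(3) on which every non-trivial chain recurrence class is a Hausdorff limit of periodic orbits; and the generic set on which every ergodic measure is approximated, in the weak-$*$ topology and in Hausdorff distance of supports, by periodic orbits lying in the \emph{same} chain recurrence class (this last item is the $C^1$ generic statement that removes the perturbation from Ma\~n\'e's ergodic closing lemma and the Bonatti--Crovisier connecting lemma). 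For $X\in\cR$ and a non-trivial chain recurrence class $C$, Theorem~\ref{t.BD} shows $C$ is multi-singular hyperbolic of some index $i$, and Remark~\ref{r.hyp.measure} shows that every regular ergodic measure supported on $C$ is hyperbolic with all Lyapunov exponents of modulus $\ge\eta$.

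\emph{Positive entropy.} If $h_{top}(X|_C)>0$, the variational principle gives an ergodic $\mu$ with $h_\mu(X)>0$; since the point mass at a singularity has zero entropy, $\mu$ is regular, hence hyperbolic. By the genericity built into $\cR$, $\supp\mu$ is accumulated in Hausdorff distance by hyperbolic periodic orbits contained in $C$; in particular $C$ contains a periodic orbit $p$, so $C=H(p)$ by Lemma~\ref{l.generic}(2). It remains to prove $C=H(p)$ is isolated. I would construct a filtrating (isolating) neighborhood of $C$ as follows: outside a small neighborhood of $\Sing_C(X)$ the splitting $E_N\oplus F_N$ is uniformly hyperbolic for the scaled linear Poincar\'e flow, and via Liao's uniform tubular neighborhoods this yields invariant cone fields and a local product structure on normal sections; near each singularity, Lemmas~\ref{l.lgw}, \ref{l.nearEc} and \ref{l.nbhd.1} control how orbits of $C$ enter and leave it (only along the one-dimensional center direction on the relevant side), which lets one splice the local pieces into a genuine isolating block. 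This is the step I expect to be the main obstacle, especially when $C$ contains singularities of \emph{both} indices $i$ and $i+1$: then there is no dominated splitting on the tangent bundle, and the block must be assembled from normal-bundle data together with the description of $\mathfrak B(C)$ in the Grassmannian.

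\emph{Zero entropy.} If $h_{top}(X|_C)=0$, I first claim $C$ has no periodic orbit: otherwise $C=H(p)$ would be a non-trivial homoclinic class of a multi-singular hyperbolic set, forcing a transverse homoclinic intersection, hence a horseshoe, hence positive entropy --- a contradiction. Next, all singularities in $C$ have the same stable index: if there were singularities of indices $i$ and $i+1$ in the chain transitive set $C$, a cycle-breaking argument using the connecting lemma (in the spirit of \cite{BD21, daLuz}) would produce, generically and inside $C$, a hyperbolic periodic orbit, contradicting the previous step. Hence by Theorem~\ref{t.sec.hyp}(2), $C$ is sectional-hyperbolic for $X$ or for $-X$. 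Finally, if some ergodic measure $\mu$ on $C$ were regular, it would be hyperbolic, and the genericity of $\cR$ would again furnish a periodic orbit in $C$, again a contradiction; so every ergodic measure supported on $C$ is a point mass $\delta_\sigma$, $\sigma\in\Sing_C(X)$. Besides the isolation step above, the secondary obstacles are the ``different indices $\Rightarrow$ periodic orbit in $C$'' and ``positive entropy $\Rightarrow$ periodic orbit in $C$'' implications, which both require keeping the produced periodic orbit inside the chain recurrence class without perturbing $X$.
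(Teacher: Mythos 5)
This theorem is imported from \cite{PYY23a} and is not proved in the present paper, so there is no internal proof to compare against; I can only evaluate your proposal on its own terms. Your skeleton is right: positive entropy produces a regular hyperbolic ergodic measure (via Remark~\ref{r.hyp.measure}), hence (generically) a periodic orbit, hence $C=H(p)$ by Lemma~\ref{l.generic}(2); zero entropy forbids periodic orbits by the horseshoe argument, forces all singularities to share an index, and then Theorem~\ref{t.sec.hyp}(2) gives sectional-hyperbolicity. But there is a serious gap in the isolation step. As you describe it, the filtrating block is assembled entirely from data that multi-singular hyperbolicity alone provides: the splitting $E_N\oplus F_N$, Liao tubular neighborhoods, and Lemmas~\ref{l.lgw}, \ref{l.nearEc}, \ref{l.nbhd.1} on passage near singularities. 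Nowhere does your construction use the periodic orbit $p$, the positive entropy, or the homoclinic-class structure. If that construction worked, it would prove that \emph{every} non-trivial multi-singular hyperbolic chain recurrence class of a generic star flow is isolated, i.e.\ it would settle Conjecture~\ref{c.sdt1} unconditionally; but the zero-entropy case of that conjecture is explicitly stated as open in this paper (Conjecture~\ref{c.aperiodic}). So your sketch must be missing something: the isolation of $H(p)$ has to exploit the periodic orbit in an essential way (for example, via transverse homoclinic intersections or density of invariant manifolds that force nearby chain recurrent dynamics to be captured by $H(p)$), and the argument as written never does so.

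There is a second, more localized gap, recurring in three places: producing a periodic orbit \emph{inside} $C$. You invoke a generic statement that every regular ergodic measure supported in $C$ is approximated, in Hausdorff distance of supports, by periodic orbits that lie in the same chain recurrence class. Liao--Gan shadowing (Lemma~\ref{l.Liao.shadowing}) produces, without perturbation, periodic orbits Hausdorff-close to $\supp\mu$, but a priori each such orbit lies in its own chain recurrence class $C_n$, and you cannot conclude $C_n=C$ without already knowing $C$ is isolated --- which is circular. Lemma~\ref{l.generic}(3), as stated, only says $C$ is a Hausdorff limit of periodic orbits, not that any of them lie in $C$. You need to name precisely the generic mechanism that bridges this gap (a Crovisier-style statement that for a residual set, a class accumulated in Hausdorff distance by periodic orbits already contains one), and it is not among the items you list. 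The same caveat applies to your cycle-breaking argument for the different-index case: the connecting lemma produces a periodic orbit only after a $C^1$ perturbation, and carrying it back to the unperturbed $X$ as an orbit of $C$ requires a genericity argument you leave implicit.
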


So now Conjecture \ref{c.sdt} and \ref{c.sdt1} becomes:
\begin{conjecture}\label{c.aperiodic}
	Case (2) of Theorem~\ref{t.dichotomy} does not exist for generic singular star flows. In particular, every non-trivial sectional hyperbolic chain recurrence class must support an ergodic invariant measure that is not the point mass of a singularity. 
\end{conjecture}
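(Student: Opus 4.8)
Since Theorem~\ref{t.dichotomy} reduces this statement to the Spectral Decomposition Conjecture itself, a complete proof is out of reach with current techniques; below I describe the line of attack I would pursue and the point at which I expect it to stall.

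\textbf{Reduction.} Fix $X$ in the residual set of Theorem~\ref{t.dichotomy} (intersected with its analogue for $-X$) and suppose for contradiction that some non-trivial chain recurrence class $C$ has $h_{top}(X|_C)=0$. By Theorem~\ref{t.dichotomy}(2), after replacing $X$ by $-X$ if necessary (using the symmetry of Remark~\ref{r.symmetry}), $C$ is sectional-hyperbolic for $X$, contains no periodic orbit, and every ergodic measure supported in $C$ is a point mass $\delta_\sigma$ with $\sigma\in\Sing_C(X)$. By Theorem~\ref{t.sec.hyp} all singularities in $C$ are Lorenz-type with a common stable index; if there were none, $C$ would be a non-trivial hyperbolic basic set and would carry periodic (hence regular) measures, so $\Sing_C(X)=\{\sigma_1,\dots,\sigma_k\}$ with $k\ge1$. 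Since $C$ is non-trivial it contains a regular point $x$; the hypothesis forces every weak-$*$ limit of the empirical averages $\frac1T\int_0^T\delta_{x_s}\,ds$ to be supported in $\{\sigma_1,\dots,\sigma_k\}$, so for each $\epsilon>0$ the set of times $s$ with $x_s\notin B_\epsilon(\Sing_C(X))$ has zero density: the orbit spends almost all of its time deep inside the linearizing neighborhoods of the $\sigma_i$, and every bit of non-trivial dynamics is carried by the rare excursions between them.

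\textbf{Main step: forcing a periodic orbit.} The plan is to build a Poincar\'e return map and apply Lorenz-map technology. Pick a regular point $p\in C$, a small cross-section $\Sigma\ni p$ transverse to $X$ and disjoint from a neighborhood of $\Sing_C(X)$, and consider the first-return map $R$ on $\Sigma\cap C$, defined wherever the forward orbit returns to $\Sigma$. Sectional hyperbolicity endows $R$ with a contracting foliation $\mathcal F^s$ (from $E^{ss}$) and uniform expansion of codimension-one area transverse to $\mathcal F^s$ (from the sectional-expanding $F^{cu}$), with ``holes'' along the traces of the $W^{ss}(\sigma_i)$ --- the geometric Lorenz picture. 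Quotienting by $\mathcal F^s$ gives a piecewise expanding Lorenz-type map $\bar R$. One then wants to: (i) use the chain recurrence of $C$ together with Lemmas~\ref{l.nearEc} and \ref{l.nbhd.1} (orbits enter and leave each ball $B_{r_0}(\sigma_i)$ along the one-dimensional center direction) to show that the ``singular transition graph'' on $\{\sigma_1,\dots,\sigma_k\}$ has a non-trivial loop --- otherwise $C$ would be a finite union of connecting orbits, i.e.\ trivial --- and hence that the non-escaping set of $\bar R$ is non-trivial; (ii) invoke the existence of periodic points for piecewise expanding maps on such an invariant set, and lift one through $\mathcal F^s$ using the contraction to a genuine periodic orbit of $X$ near $C$; (iii) conclude, via Lemma~\ref{l.generic}(2)--(3) and robustness (Theorems~\ref{t.robust.dom} and \ref{t.robust.hyp}), that this periodic orbit actually lies in $C$, contradicting that $C$ falls under Case (2) of Theorem~\ref{t.dichotomy}. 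A weaker output also suffices: producing a single regular ergodic measure on $C$ already forces $h_{top}(X|_C)>0$, hence Case (1).

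\textbf{Main obstacle.} The fragile steps are (i) and (ii). When $C$ is an attractor (or a repeller for $-X$) one has genuine Poincar\'e recurrence on a full-measure set, $R$ is defined almost everywhere, and the Lorenz-map machinery applies directly; but the class produced by Case (2) is sectional-hyperbolic and \emph{neither} an attractor nor a repeller, so one has only chain recurrence. Then $R$ may fail to be defined on a residual part of $\Sigma\cap C$, the recurrent part of $\bar R$ can a priori be a thin Cantor set, and controlling the holes well enough to extract even one periodic orbit is exactly what is open. In other words, one must rule out robustly aperiodic ``singular cycles'' running between several Lorenz-type singularities, and the scaled-linear-Poincar\'e-flow and fake-foliation tools built in this paper --- all calibrated to the positive-entropy regime, in which a periodic orbit comes for free --- do not reach this. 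This is precisely why the statement is still a conjecture, and why the main theorems of this paper instead impose the pressure-gap (positive-entropy) hypothesis that makes the periodic orbit available from the start.
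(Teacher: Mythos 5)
The target statement here is not a theorem but an explicitly labeled conjecture (\ref{c.aperiodic}); the paper offers no proof of it, and its main results (Theorems~\ref{mc.C}--\ref{mc.A}) deliberately route around it by imposing positive topological entropy or the pressure-gap hypothesis~\eqref{e.pgap.star}, which by Theorem~\ref{t.dichotomy} confines one to Case~(1). So there is nothing in the paper to compare your write-up against, and you were right to frame it as a sketch of a strategy rather than a proof.

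That said, your analysis of why the problem is hard is accurate and tracks the paper's own framing. The reduction is sound: in Case~(2) the class is sectional-hyperbolic with all singularities of Lorenz type and a common index, carries no periodic orbit, and every regular orbit spends asymptotic density $1$ of its time arbitrarily close to $\Sing_C(X)$. Your identification of the obstacle is also the right one, and in fact matches the comparison table in Section~1.4: for a sectional-hyperbolic set that is not an attractor, the stable foliation $\mathcal F^s$ need not exist as a foliation of a neighborhood, the first-return map $R$ on a cross-section is only defined along a possibly thin Cantor set, and chain recurrence is far too weak a substitute for Poincar\'e recurrence to make the quotient Lorenz map $\bar R$ usable. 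One additional point worth flagging in your step (iii): even if a periodic orbit $\gamma$ were produced \emph{near} $C$, concluding $\gamma\subset C$ is not automatic for a non-isolated class; you would need some version of the connecting lemma or a chain-recurrence argument (of the type in Lemma~\ref{l.generic}(2)--(3)) and this is itself delicate when $C$ is only known to be chain transitive. So the chain of implications in (i)--(iii) has gaps at each joint, as you say. In short: you have correctly diagnosed both the status of the statement (open) and the locus of the difficulty (no attractor structure, hence no honest return-map theory), which is as much as can be asked here.
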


\subsection{Liao's theory on the scaled linear Poincar\'e flow}\label{ss.Liao}
Below we list a number of results concerning the properties of Liao's tubular neighborhood and the scaled linear Poincar\'e flow $(\psi^*_t)_t$. A comprehensive discussion as well as the proofs can be found in~\cite[Section 2.3]{PYY23}. 

Recall the definition of the linear Poincar\'e flow and its scaling from~\eqref{e.Poin} and~\eqref{e.scaledP}, and note that they are not uniformly continuous since the flow direction $\langle X\rangle$ is only defined on the open set $\Reg(X)$ and is not uniformly continuous. To solve this issue, Liao introduced a tubular neighborhood theory for flow orbits, and prove that the linear Poincar\'e flow is uniformly continuous within the scale $\rho|X|$ (see Proposition~\ref{p.tubular3} for the precise statement). For this reason, in order to establish properties for which the uniform continuity is crucial (e.g., existence of the invariant manifolds, local product structure, and the Bowen property) we need to consider flow orbits that are within $\rho|X|$ scale of each other. This motivates the definition of the scaled shadowing property (Definition~\ref{d.shadow1}).

We start with the boundedness of the (scaled) linear Poincar\'e flow. 
\begin{lemma}\label{l.scaledflow} \cite[Lemma 2.1]{GY} (see also \cite[Lemma 2.1]{PYY23}) \label{l.psi.bdd}
	Let $X$ be a $C^1$ vector field. Then, for any $\tau>0$, there exists $C_\tau>0$ such that for any $t\in[-\tau,\tau]$,
	$$
	\|\psi_t\|\le C_\tau, \mbox{ and }\|\psi^*_t\|\le C_\tau.
	$$
\end{lemma}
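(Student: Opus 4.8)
The statement to prove (Lemma~\ref{l.scaledflow}) is the uniform boundedness of the linear Poincar\'e flow $\psi_t$ and the scaled linear Poincar\'e flow $\psi^*_t$ over a bounded time window $[-\tau,\tau]$. The plan is to reduce everything to the boundedness of the tangent flow $Df_t$ together with the fact that an orthogonal projection never increases norms. First I would recall from \eqref{e.Poin} that $\psi_t = \pi_{N,x_t}\circ Df_t$ restricted to $N(x)$, where $\pi_{N,x_t}$ is the orthogonal projection onto the normal plane $N(x_t)$. Since $\|\pi_{N,x_t}\|\le 1$ (orthogonal projections are norm-$\le 1$), we immediately get $\|\psi_t(v)\|\le \|Df_t(v)\|\le \|Df_t\|\,\|v\|$ for every $v\in N(x)$.

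The next step is to bound $\|Df_t\|$ uniformly for $t\in[-\tau,\tau]$ and $x\in\bM$. This is standard: the map $(t,x)\mapsto \|Df_t(x)\|$ (operator norm on $T_x\bM$) is continuous on the compact set $[-\tau,\tau]\times\bM$, hence bounded by some constant depending only on $\tau$; call it $C_\tau'$. (Alternatively, this follows from Gronwall applied to the variational equation $\frac{d}{dt}Df_t = DX(f_t)\circ Df_t$, using $\sup_{\bM}\|DX\|<\infty$, which gives $\|Df_t\|\le e^{\tau \sup\|DX\|}$.) This yields the bound $\|\psi_t\|\le C_\tau'$.

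For the scaled flow, recall from \eqref{e.scaledP} that $\psi^*_t = \frac{|X(x)|}{|X(x_t)|}\,\psi_t$. The only issue is controlling the ratio $\frac{|X(x)|}{|X(x_t)|}$, which is not obviously bounded near $\Sing(X)$. However, on a regular orbit the flow speed evolves by $\frac{d}{dt}\log|X(x_t)| = \frac{\langle DX(x_t)X(x_t), X(x_t)\rangle}{|X(x_t)|^2}$, whose absolute value is bounded by $\sup_{\bM}\|DX\|=:M$. Integrating over $[0,t]$ with $|t|\le\tau$ gives $e^{-M\tau}\le \frac{|X(x_t)|}{|X(x)|}\le e^{M\tau}$, so $\frac{|X(x)|}{|X(x_t)|}\le e^{M\tau}$. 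Combining, $\|\psi^*_t\|\le e^{M\tau}\,C_\tau'$, and we may take $C_\tau := e^{M\tau}C_\tau'$ (or the max of the two bounds) to conclude.

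The main obstacle is the apparent blow-up of the flow-speed ratio near singularities; the key observation that resolves it is that along any \emph{single} orbit segment of bounded length the speed cannot change by more than a multiplicative constant $e^{M\tau}$, because $\log|X|$ has derivative bounded by $\|DX\|_\infty$ along orbits — this is precisely why the estimate is uniform even though $|X|$ itself degenerates at $\Sing(X)$. Everything else is routine compactness and Gronwall.
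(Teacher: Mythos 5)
Your proof is correct. The paper does not prove this lemma itself — it cites it from [GY, Lemma 2.1] — so there is no internal argument to compare against, but your two-step strategy (bound $\psi_t$ by $\|Df_t\|$ via the orthogonal projection, then control the speed ratio to pass to $\psi^*_t$) is exactly the standard one. A small simplification worth noting for the second step: since the vector field is flow-invariant, $X(x_t) = Df_t(X(x))$, one gets directly
$$
\frac{|X(x)|}{|X(x_t)|} = \frac{|Df_{-t}(X(x_t))|}{|X(x_t)|} \le \|Df_{-t}\| \le C_\tau',
$$
which avoids the separate Gronwall estimate on $\frac{d}{dt}\log|X(x_t)|$ and reuses the same constant that bounds $\|Df_t\|$, yielding $\|\psi^*_t\|\le (C_\tau')^2$. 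Your log-derivative argument is equally valid and has the same content (it is Gronwall applied to a scalar ODE rather than the variational equation), so this is only a matter of economy, not correctness.
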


Next, we define, for a regular point $x$, and a constant $\mathfrak{d}_0>0$ less than the injectivity radius of $\bM$,
$$
\cN(x) = \exp_x(N_{\mathfrak{d}_0}(x)),
$$
and 
$$
\cN_{\rho}(x) =\{y\in \cN(x): d_{\cN(x)}(x,y)<\rho\}
$$
the projection of the normal plane to the manifold $\bM$.  Note that {\em a priori} $\cN(x)$ may contain a singularity.

To avoid such an issue, for every $x\in\Reg(X)$ and $\rho>0$, denote by 
$$
U_{\rho|X(x)|}(x)   = \left\{v+tX(x)\in T_x\bM: v\in N(x), |v|\le \rho |X(x)|, |t|<\rho\right\}
$$
a flow box on the tangent space of $x$ with size $\rho|X(x)|$. Then we define the map:
\begin{equation}\label{e.Fx}
	F_x: U_{\rho|X(x)|}(x)\to\bM,\,\, F_x(v+tX(x)) = f_t(\exp_{x}(v)),
\end{equation} 

\begin{proposition}\label{p.Fx}\cite[Proposition 2.2]{WW}
	For any $C^1$ vector field $X$ on $\bM$, there exists $\overline\rho_0>0$ such that for any regular point $x\in\Reg(X)$, the map $F_x: U_{\overline\rho_0|X(x)|}(x)\to \bM$ is an embedding whose image contains no singularity of $X$, and satisfies $m(D_pF_x)\ge\frac13$ and $\|D_pF_x\|\le 3$ for every $p\in  U_{\overline\rho_0|X(x)|}(x)$.
\end{proposition}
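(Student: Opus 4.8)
The plan is to reduce the statement to the single principle behind Liao's tubular neighborhood theory: once transverse lengths are measured in units of $|X(x)|$, the map $F_x$ is a $C^1$-small perturbation of the identity on its (convex) domain, \emph{uniformly in the regular point $x$}. The scaling $\rho|X(x)|$ on the normal directions is exactly what places the flow direction and the transverse directions at a common scale, because flowing for the unscaled time $|t|<\rho$ displaces a point by only $O(\rho|X(x)|)$. Throughout I use only two consequences of compactness of $\bM$ and $X\in\mathscr X^1(\bM)$: the bound $K:=\sup_{y\in\bM}\|D_yX\|<\infty$ (so that $|X(y)|\ge|X(x)|-Kd(x,y)$, via the covariant-derivative mean value inequality), and bounded geometry of $(\bM,g)$, namely $\|D_v\exp_x-P_{x\to\exp_x v}\|\le C_1|v|$ for $|v|$ below the injectivity radius, with $C_1$ and the injectivity radius uniform over $\bM$; here $P$ denotes parallel transport. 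Set $M_0:=\sup_y|X(y)|$.

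\textbf{Step 1: the orbit segment stays at scale $|X(x)|$ and avoids $\Sing(X)$.} I would first produce $\rho_1>0$ such that for every regular $x$, every $v\in N(x)$ with $|v|\le\rho_1|X(x)|$ and every $|t|\le\rho_1$, the orbit segment joining $\exp_x(v)$ to $f_t(\exp_x v)$ stays inside $B_{3\rho_1|X(x)|}(x)$ and satisfies $|X|\ge\tfrac12|X(x)|$ on it. This is one Gr\"onwall estimate (treating $t\ge 0$; the backward case is symmetric): with $g(s)=d(f_s(\exp_x v),x)$ one has $g'(s)\le|X(f_s(\exp_x v))|\le|X(x)|+Kg(s)$ and $g(0)=|v|$, whence $g(s)\le(|v|+s|X(x)|)e^{Ks}\le 2\rho_1 e^{K\rho_1}|X(x)|$; shrinking $\rho_1$ (depending only on $K$) makes this $\le 3\rho_1|X(x)|$, and then $|X|\ge|X(x)|-3K\rho_1|X(x)|\ge\tfrac12|X(x)|>0$ along the segment. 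In particular the image of $F_x$ contains no singularity.

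\textbf{Step 2: $D_pF_x$ is uniformly close to an isometry.} From $F_x(v+tX(x))=f_t(\exp_x v)$ one computes, for $p=v+tX(x)$, that $D_pF_x(X(x))=X(F_x(p))$ and $D_pF_x(w)=D_{\exp_x v}f_t\bigl(D_v\exp_x w\bigr)$ for $w\in N(x)$; in particular $D_0F_x=\mathrm{id}$. For the flow-direction component, Step 1 gives $\|X(F_x(p))-P_{x\to F_x(p)}X(x)\|\le K\,d(F_x(p),x)\le 3K\rho_1|X(x)|$, so on the unit vector $X(x)/|X(x)|$ the discrepancy from parallel transport is $O(\rho_1)$. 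For the transverse component, combine $\|D_v\exp_x-P\|\le C_1|v|\le C_1\rho_1 M_0$ with the uniform variational estimate $\|D_yf_t-P_{y\to f_t(y)}\|\le e^{K|t|}-1$ (Gr\"onwall applied to $s\mapsto P_{f_s(y)\to y}D_yf_s-\mathrm{id}$), so that $D_pF_x(w)$ lies within $O(\rho_1)$ of $P_{x\to F_x(p)}w$. Hence $D_pF_x$ differs by at most $C\rho_1$, with $C$ uniform in $x$ and $p$, from a linear isometry $T_x\bM\to T_{F_x(p)}\bM$. Choosing $\overline\rho_0\le\rho_1$ small enough (still uniform in $x$) we get, on $U_{\overline\rho_0|X(x)|}(x)$, both $m(D_pF_x)\ge\tfrac13$, $\|D_pF_x\|\le 3$, and also $\|D_p(\exp_x^{-1}\circ F_x)-\mathrm{id}\|\le\tfrac12$ (the latter is legitimate since, by Step 1, the image of $F_x$ sits in a ball of radius $\ll\mathrm{inj}(\bM)$ about $x$, on which $\exp_x^{-1}$ and its derivative are controlled by bounded geometry).

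\textbf{Step 3: $F_x$ is an embedding; the main obstacle.} By Step 2 $F_x$ is an immersion between equidimensional manifolds, hence a local diffeomorphism, so it remains only to prove injectivity on the convex box $U_{\overline\rho_0|X(x)|}(x)$ — a continuous, open, injective map is a homeomorphism onto its image. Set $G_x:=\exp_x^{-1}\circ F_x$, a well-defined map into $T_x\bM$ with $\|D_pG_x-\mathrm{id}\|\le\tfrac12$ on the box (Step 2); the mean value inequality on the convex domain then gives $\|G_x(p)-G_x(q)\|\ge\tfrac12\|p-q\|$, so $G_x$, hence $F_x$, is injective. The one genuinely delicate point is the \emph{uniformity in $x$} of Step 2: one must carry the factor $|X(x)|$ through the composition ``$\exp_x$, then flow for time $|t|<\rho$'', and be careful that the various parallel transports are taken along comparable short paths (the geodesic $x\to F_x(p)$ versus the concatenation $x\to\exp_x v\to f_t(\exp_x v)$), the mismatch between them being quadratically small in the path length and hence harmlessly absorbed. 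Once Step 1 is in hand, the remainder is the textbook ``$C^1$-close to the identity implies a nice embedding on a convex set'' argument.
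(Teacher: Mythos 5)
Your proof is correct and follows essentially the same route as the cited source: the paper itself does not prove Proposition~\ref{p.Fx} but defers entirely to \cite[Proposition~2.2]{WW}, and Remark~\ref{r.Liao.robust} discloses that there $\overline\rho_0$ is taken to be $1/(10L_X)$ with $L_X=\sup\|DX\|$, which is precisely the conclusion of your Step~1 Gr\"onwall estimate. The two substantive ingredients you isolate --- that scaling transverse lengths by $|X(x)|$ makes $F_x$ a uniformly $C^1$-small perturbation of $\exp_x$ on a convex box, and that bounded geometry plus $\sup\|DX\|<\infty$ supply the needed uniformity in $x$ --- are exactly what drives the argument in \cite{WW}.
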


\begin{remark}\label{r.Liao.robust}
	It is worth noting that the constant $\overline\rho_0$ in Proposition~\ref{p.Fx}, as well as the constant $C_\tau$ in Lemma~\ref{l.scaledflow}, can be chosen continuously with respect to $X$ in $C^1$ topology. Indeed, in the proof of~\cite[Proposition 2.1 and 2.2]{WW}, the constant $\overline \rho_0$ is taken to be $1/(10L_X)$, where 
	\begin{equation}\label{e.lip}
		L_X = \sup\{\|DX\|\}.
	\end{equation}
\end{remark}

For $\rho_0=\frac{\overline\rho_0}{3}$, let
\begin{equation}\label{e.Px1}
	\cP_x:  B_{\rho_0|X(x)|}(x)\to \cN_{\rho_0|X(x)|}(x),\,\, \cP_x(y) = F_x(v) = \exp_x(v). 
\end{equation}
In other words, $\cP_x$ projects every point $y\in B_{\rho_0|X(x)|}(x)$ to the normal plane $\cN_{\rho_0|X(x)|}(x)$ along the flow line through the point $y$. In particular, there exists a function 
$$\tau_x(y): B_{\rho_0|X(x)|}(x)\to [-\overline\rho_0,\overline\rho_0]$$
such that 
\begin{equation}\label{e.Px2}
	\cP_x(y) = f_{\tau_x(y)}(y). 
\end{equation}

Next, consider the holonomy along flow orbits. In order for such holonomy maps to be well-defined, one must avoid all singularities (i.e., one must work with the scale $\rho|X|$ for some $\rho$ sufficiently small). This is taken care of by the next proposition.

\begin{proposition}\label{p.tubular}\cite[Lemma 2.2]{GY}
	There exists $\rho_0>0$ and $K_0>1$, such that for every $\rho\le\rho_0$ and every regular point $x$ of $X$, the holonomy map along flow lines
	$$
	\cP_{1,x}: \cN_{\rho K_0^{-1} |X(x)|}(x) \to \cN_{\rho|X(x_1)|}(x_1)
	$$ 
	is well-defined, differentiable, and injective. 
\end{proposition}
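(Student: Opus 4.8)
The statement to establish is Proposition~\ref{p.tubular}: the existence of uniform constants $\rho_0>0$ and $K_0>1$ such that for every $\rho\le\rho_0$ and every regular point $x$, the flow holonomy $\cP_{1,x}\colon \cN_{\rho K_0^{-1}|X(x)|}(x)\to \cN_{\rho|X(x_1)|}(x_1)$ is well-defined, differentiable, and injective. The plan is to realize $\cP_{1,x}$ as a composition of three maps for which we already have uniform control: the embedding $F_x^{-1}$ straightening a flow box around $x$, the time-$1$ flow map $f_1$ carried through $F_{x_1}$, and the projection $\cP_{x_1}$ onto the normal plane at $x_1$. First I would invoke Proposition~\ref{p.Fx} to fix $\overline\rho_0$ so that each $F_x$ is an embedding onto a singularity-free neighborhood with $m(D_pF_x)\ge \frac13$ and $\|D_pF_x\|\le 3$; the key point is that $\overline\rho_0 = 1/(10L_X)$ depends only on $L_X=\sup\|DX\|$ (Remark~\ref{r.Liao.robust}), hence is uniform over all regular points. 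Combined with Lemma~\ref{l.scaledflow} applied at $\tau=2$, which gives $\|\psi_t\|,\|\psi_t^*\|\le C_2$ for $|t|\le 2$, we get a uniform bound on how much the flow distorts the normal directions over a unit time interval.

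Next I would carry out the geometric construction. Set $\rho_0 = \frac{\overline\rho_0}{3}$ and choose $K_0>1$ large enough (depending only on $C_2$, the constant $3$ from Proposition~\ref{p.Fx}, and the comparison between $|X(x)|$ and $|X(x_1)|$, which is controlled uniformly since $|X(x_1)|/|X(x)| = \|Df_1|_{\langle X(x)\rangle}\|$ lies in a compact interval bounded away from $0$ and $\infty$). The claim is that for $y\in \cN_{\rho K_0^{-1}|X(x)|}(x)$, the forward orbit of $y$ over time roughly $1$ stays inside the image of the embedding $F_{x_1}\colon U_{\overline\rho_0|X(x_1)|}(x_1)\to\bM$, so that $\cP_{x_1}$ from \eqref{e.Px1} is applicable at the endpoint. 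Concretely, $\cP_{1,x} = \cP_{x_1}\circ f_1$ restricted to $\cN_{\rho K_0^{-1}|X(x)|}(x)$; well-definedness amounts to checking $f_1\big(\cN_{\rho K_0^{-1}|X(x)|}(x)\big)\subset B_{\rho_0|X(x_1)|}(x_1)$, which follows from the Lipschitz bound $\|D_pF_x\|\le 3$ (so the normal disc of radius $\rho K_0^{-1}|X(x)|$ has diameter at most $6\rho K_0^{-1}|X(x)|$ in $\bM$), the bound $\|\psi_1\|\le C_2$ on how the flow spreads normal vectors, and the comparison of flow speeds; shrinking $K_0$'s reciprocal absorbs all these constants. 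Differentiability is immediate because each factor ($f_1$ is $C^1$ since $X\in\mathscr X^1$, and $\cP_{x_1}=F_{x_1}\circ(\text{linear projection})\circ F_{x_1}^{-1}$ locally) is differentiable.

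For injectivity I would argue that the derivative of $\cP_{1,x}$ at any point of the domain is close, up to the uniform factor bounds, to the scaled linear Poincar\'e map $\psi^*_1\colon N(x)\to N(x_1)$, which is a linear isomorphism; more precisely, since $F_x, F_{x_1}$ have derivatives within $[\frac13,3]$ in operator norm and their nonlinear corrections are $O(\rho)$ on the relevant scale, the map $\cP_{1,x}$ is a small $C^1$-perturbation of a linear isomorphism once $\rho_0$ is chosen small enough. By the uniform lower bound $m(D_pF_x)\ge\frac13$ and the fact that $f_1$ is a diffeomorphism, two distinct points $y,y'\in\cN_{\rho K_0^{-1}|X(x)|}(x)$ have distinct images under $f_1$, and since both images project to $\cN_{\rho|X(x_1)|}(x_1)$ along \emph{distinct} flow lines (the flow lines through $y$ and $y'$ are disjoint, being orbit segments of an embedded flow box), their projections under $\cP_{x_1}$ are distinct. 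The main obstacle, and where care is needed, is the bookkeeping of the three competing scales — the radius $\rho K_0^{-1}|X(x)|$ at the source, the radius $\rho|X(x_1)|$ at the target, and the fixed embedding radius $\overline\rho_0|X(\cdot)|$ — to verify that one genuinely uniform choice of $K_0$ (independent of $x$ and of $\rho$) makes the inclusion $f_1(\text{source disc})\subset(\text{embedding box at }x_1)$ hold; this reduces to the elementary but slightly fiddly estimate comparing $|X(x)|$ and $|X(x_1)|$ and tracking the factor-$3$ and $C_2$ constants through the composition, and everything else (differentiability, injectivity) then follows formally. This is exactly the content of \cite[Lemma 2.2]{GY}, so I would also simply cite that reference for the detailed estimates.
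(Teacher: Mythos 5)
Your proposal is correct and follows the same route the paper indicates: the paper simply cites \cite[Lemma 2.2]{GY}, and the construction you give, realizing $\cP_{1,x}=\cP_{x_1}\circ f_1$ inside the flow box of Proposition~\ref{p.Fx} and fixing $K_0$ (as $e^{2L}$) via the ball inclusion \eqref{e.ballinclusion}, is exactly what the paper records in Remark~\ref{r.Liao.robust1}. Your closing remark about absorbing constants into $K_0$ correctly covers the remaining bookkeeping, namely that $\cP_{x_1}\circ f_1$ must land in the prescribed target $\cN_{\rho|X(x_1)|}(x_1)$ (not merely in the domain $B_{\rho_0|X(x_1)|}(x_1)$ of $\cP_{x_1}$), and that the source disk sits inside the region where $F_x$ is an embedding so the transversality/injectivity argument applies.
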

Applying Proposition~\ref{p.tubular} recursively, we obtain
\begin{proposition}\label{p.tubular1}
	For every $\rho\le\rho_0$, every regular point $x$ of $X$ and $T\in\NN$, the holonomy map along flow lines
	$$
	\cP_{T,x}: \cN_{\rho K_0^{-T} |X(x)|}(x) \to \cN_{\rho|X(x_1)|}(x_1)
	$$ 
	is well-defined, differentiable, and injective.
\end{proposition}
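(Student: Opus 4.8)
The plan is to obtain $\cP_{T,x}$ as the composition of $T$ consecutive length-one holonomies, applying Proposition~\ref{p.tubular} at each step with an appropriately shrinking scale. Concretely, for $k=1,\dots,T$ set $\rho^{(k)}=\rho\,K_0^{-(T-k)}$; since $\rho\le\rho_0$ and $K_0>1$, each $\rho^{(k)}\le\rho_0$, so Proposition~\ref{p.tubular} applies to the regular point $x_{k-1}$ at scale $\rho^{(k)}$ and produces a well-defined, differentiable, injective holonomy map
\[
\cP_{1,x_{k-1}}\colon \cN_{\rho^{(k)}K_0^{-1}|X(x_{k-1})|}(x_{k-1})\;\longrightarrow\;\cN_{\rho^{(k)}|X(x_k)|}(x_k).
\]
With the convention $\rho^{(0)}=\rho K_0^{-T}$, one has $\rho^{(k)}K_0^{-1}=\rho K_0^{-(T-k+1)}=\rho^{(k-1)}$, so the image of the $k$-th map is exactly the domain of the $(k+1)$-th. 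Hence the composition
\[
\cP_{T,x}:=\cP_{1,x_{T-1}}\circ\cdots\circ\cP_{1,x_1}\circ\cP_{1,x_0}
\]
is well-defined on all of $\cN_{\rho K_0^{-T}|X(x)|}(x)$ and takes values in $\cN_{\rho|X(x_T)|}(x_T)$.

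It then remains to observe that a composition of differentiable injective maps is again differentiable (chain rule) and injective, and that this composition genuinely coincides with the time-$T$ flow holonomy onto $\cN(x_T)$: this is immediate from $f_T=f_1\circ\cdots\circ f_1$ together with the fact that each $\cP_{1,\cdot}$ is, by construction, the flow holonomy onto the normal slice over time one. Equivalently, the whole argument can be phrased as a one-line induction on $T$: the base case $T=1$ is Proposition~\ref{p.tubular}, and the inductive step composes the already-constructed $\cP_{T-1,x_1}$ (applied at scale $\rho$) with $\cP_{1,x_0}$ (applied at scale $\rho K_0^{-(T-1)}$), matching image to domain as above.

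There is no real obstacle here — the statement is a bookkeeping consequence of Proposition~\ref{p.tubular}. The only point deserving attention is to make sure the geometric shrinkage factor $K_0^{-1}$ accumulated over the $T$ steps compounds to exactly $K_0^{-T}$, and that every intermediate scale $\rho^{(k)}$ stays below $\rho_0$ so that Proposition~\ref{p.tubular} is legitimately invoked at each stage; both are evident from the explicit choice of $\rho^{(k)}$. (Note also the harmless index mismatch in the displayed target of the statement: the codomain is $\cN_{\rho|X(x_T)|}(x_T)$, and the proof above delivers precisely that.)
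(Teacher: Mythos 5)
Your proof is correct and is precisely the argument the paper has in mind: the paper states Proposition~\ref{p.tubular1} with only the phrase ``Applying Proposition~\ref{p.tubular} recursively, we obtain,'' and your writeup fills in exactly that recursion — choosing the intermediate scales $\rho^{(k)}=\rho K_0^{-(T-k)}$ so that the codomain of each one-step holonomy matches the domain of the next, with all scales staying below $\rho_0$. You are also right that the displayed codomain $\cN_{\rho|X(x_1)|}(x_1)$ in the statement is a typo for $\cN_{\rho|X(x_T)|}(x_T)$.
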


\begin{remark}\label{r.Liao.robust1}
Note  that $\cP_{t,x}(x) = x_t$; however, $\cP_{t,x}(y)$ is generally different from $y_t$ for $y\in  \cN_{\rho_0K_0^{-1}  |X(x)|}(x)$. To deal with this issue, let us give an alternate definition for $\cP_{t,x}$. Let $L_X = \sup\{\|DX\|\}$ as before; It follows (\cite[p.3191]{WW}) that
\begin{equation}\label{e.ballinclusion}
	f_t\left(B_{e^{-2L|t|}\rho_0|X(x)|}(x)\right)\subset B_{\rho_0|X(x_t)|}(x_t). 
\end{equation}
Then one can define the sectional Poincar\'e map $\cP_{t,x}$ as
\begin{equation}\label{e.Ptx1}
	\cP_{t,x}: \cN_{e^{-2L|t|}\rho_0  |X(x)|}(x) \to \cN_{\rho_0|X(x_t)|}(x_t),\,\, \cP_{t,x} = \cP_{x_t}\circ f_t.
\end{equation}
It is straightforward to check that this definition of $\cP_{t,x}$ is consistent with the previous one, and $K_0$ in Proposition~\ref{p.tubular} can be taken as $e^{2L}$. As a result, $K_0$ can be chosen uniformly in a small neighborhood of $X$ in $C^1$ topology.
\end{remark}

For $x\in\Reg(X), y\in \cN_{\rho_0K_0^{-1}  |X(x)|}(x)$ and $t\in [0,1]$, we define
\begin{equation}\label{e.tau}
	\tau_{x,y}(t) = t+\tau_{x_t}(y_t)
\end{equation}
where $\tau_{x_t}(y_t)$ is given by~\eqref{e.Px2}. $\tau_{x,y}(t)$ is well-defined due to~\eqref{e.ballinclusion}. Then \eqref{e.Ptx1} implies that
\begin{equation}\label{e.tau1}
	f_{\tau_{x,y}(t)}(y)=\cP_{t,x}(y)\in \cN_{\rho|X(x_t)|}(x_t), \forall t\in [0,1].
\end{equation}

The next lemma controls the derivative of $\tau_{x,y}(1)$.
\begin{lemma}\label{l.tau}\cite[Lemma 2.7]{PYY23}
	For every $x\in\Reg(X)$ and $t\in [0,1]$, $\tau_{x,y}(t)$ is differentiable as a function of $y\in \cN_{\rho_0K_0^{-1}|X(x)|}(x)$; furthermore, there exits $K_\tau>0$ such that for all $x\in \Reg(X)$ it holds
	\begin{equation}\label{e.Ktau}
		|X(x)|\cdot \sup\left\{\left\|D_y \tau_{x,y}(1)\right\| : y\in\cN_{\rho_0K_0^{-1}|X(x)|}(x) \right\}\le K_\tau.
	\end{equation}
\end{lemma}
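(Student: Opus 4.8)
The statement to prove is Lemma~\ref{l.tau}, which controls the $y$-derivative of the return time function $\tau_{x,y}(1)$, multiplied by the flow speed $|X(x)|$, uniformly over regular points $x$. Since this lemma is cited from the authors' previous paper \cite{PYY23}, the proof here should be a clean reconstruction of the standard argument.

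\medskip

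\noindent\textbf{Proof proposal.} The plan is to extract $\tau_{x,y}(1)$ from the defining relation and differentiate it implicitly. Recall from~\eqref{e.Ptx1} and~\eqref{e.tau} that $\cP_{1,x}(y) = \cP_{x_1}(f_1(y)) = f_{\tau_{x_1}(f_1(y))}(f_1(y))$, so that $\tau_{x,y}(1) = 1 + \tau_{x_1}(f_1(y))$, and the map $y\mapsto f_1(y)$ is just the time-one flow, whose derivative $Df_1$ is uniformly bounded (say by $C_1$ from Lemma~\ref{l.scaledflow}, or directly by $e^{L_X}$). Thus it suffices to bound $|X(x)|\cdot\|D_z \tau_{x_1}(z)\|$ for $z=f_1(y)\in B_{\rho_0|X(x_1)|}(x_1)$, and to control the ratio $|X(x)|/|X(x_1)|$. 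For the latter, since $X$ is $C^1$ and $\bM$ is compact, $|X(x)|/|X(x_1)| = \|Df_1|_{\langle X(x)\rangle}\|^{-1}$ is bounded above and below by constants depending only on $X$; so it is enough to bound $|X(x_1)|\cdot\|D_z\tau_{x_1}(z)\|$ uniformly over regular points and $z$ in the flow box.

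\medskip

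\noindent First I would analyze the projection $\cP_w\colon B_{\rho_0|X(w)|}(w)\to \cN_{\rho_0|X(w)|}(w)$ for a generic regular point $w$ (here $w=x_1$). By~\eqref{e.Fx} and~\eqref{e.Px1}, $\cP_w(z) = \exp_w(v)$ where $z = F_w(v+\tau_w(z)X(w))$, i.e., $z = f_{\tau_w(z)}(\exp_w(v))$. Applying the inverse chart $F_w^{-1}$, which exists and has uniformly bounded derivative ($m(DF_w)\ge 1/3$, $\|DF_w\|\le 3$ by Proposition~\ref{p.Fx}), we may work in the linear coordinates $U_{\rho_0|X(w)|}(w)\subset T_w\bM$, where $\cP_w$ becomes the projection onto $N(w)$ along the $X(w)$-direction: if $F_w^{-1}(z) = v + sX(w)$ with $v\in N(w)$, then $\tau_w(z) = -s$ (up to the time-reparametrization built into $F_w$; more precisely $\tau_w(z)$ differs from $-s$ by a factor controlled by the flow, bounded uniformly). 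Differentiating, $D_z\tau_w$ is (minus) the $X(w)$-component of $D_zF_w^{-1}$, normalized by $1/|X(w)|$ since $F_w$ is parametrized so that the $X(w)$-direction carries unit flow speed. Hence $|X(w)|\cdot\|D_z\tau_w\| \le \|D_z F_w^{-1}\| \le 3$ by Proposition~\ref{p.Fx}. Chaining this with the bound on $Df_1$ and on the flow-speed ratio gives a uniform constant $K_\tau$, completing the proof.

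\medskip

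\noindent\textbf{Main obstacle.} The only delicate point is bookkeeping the normalization: $F_x$ is defined in~\eqref{e.Fx} by $F_x(v+tX(x)) = f_t(\exp_x v)$, so the ``$t$''-coordinate already measures genuine flow time, and one must check that $\tau_x(y)$ as defined in~\eqref{e.Px2} really is (up to the bounded discrepancy between $\exp_x v$ and its projection along the true flow line versus the linear one) just the negative of this $t$-coordinate of $F_x^{-1}(y)$. This is where the factor $|X(x)|$ enters: a unit of the coordinate $v$ corresponds to physical distance $\approx 1$, while a unit change in the flow-time coordinate moves a physical distance $\approx|X(x)|$; so to stay inside the ball $B_{\rho_0|X(x)|}(x)$ one travels flow-time at most $\rho_0$, and the derivative of flow-time with respect to the ambient point picks up a factor $|X(x)|^{-1}$, exactly cancelling the $|X(x)|$ in front of the supremum in~\eqref{e.Ktau}. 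Once this normalization is pinned down, every remaining estimate is a uniform bound from Proposition~\ref{p.Fx} and the compactness of $\bM$, so the constant $K_\tau$ can indeed be taken uniform over all regular $x$, and — invoking Remark~\ref{r.Liao.robust} — even locally uniform in $X$ in the $C^1$ topology.
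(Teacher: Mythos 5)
Your proposal is essentially the paper's argument: the constant $K_\tau = 3\sup\|Df_1\|\cdot\sup\|Df_{-1}\|$ mentioned in Remark~\ref{r.Ktau.robust} is precisely what drops out of your chain of estimates (the factor $3$ from $\|DF_{x_1}^{-1}\|\le 3$, $\sup\|Df_1\|$ from differentiating $f_1$, and $\sup\|Df_{-1}\|$ from bounding $|X(x)|/|X(x_1)|$). The one place where you hedge unnecessarily is in the paragraph labeled ``Main obstacle'': there is no discrepancy to control. By the definitions~\eqref{e.Fx},~\eqref{e.Px1},~\eqref{e.Px2}, if $F_x^{-1}(y)=v+sX(x)$ with $v\in N(x)$, then $y=f_s(\exp_x v)$ and $\cP_x(y)=\exp_x v$, so $\tau_x(y)=-s$ \emph{exactly} (not merely up to a bounded factor) on the flow box where $F_x$ is an embedding. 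The projection $u\mapsto s$ onto the flow-time coordinate in $T_x\bM$ has operator norm exactly $1/|X(x)|$, whence $|X(x)|\cdot\|D_y\tau_x\|\le\|D_yF_x^{-1}\|\le 3$. You should also record that $f_1$ maps $\cN_{\rho_0 K_0^{-1}|X(x)|}(x)$ into $B_{\rho_0|X(x_1)|}(x_1)$ (Equation~\eqref{e.ballinclusion} with $K_0=e^{2L}$), so that $\tau_{x_1}$ is defined at $y_1=f_1(y)$ and the chain rule applies. With those two points tidied up, the argument is complete and coincides with the one in \cite[Lemma 2.7]{PYY23}.
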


\begin{remark}\label{r.Ktau.robust}
	In the proof of \cite[Lemma 2.7]{PYY23}. the constant $K_\tau$ was taken to be  $3\sup\|Df_1\|  \cdot \sup\|Df_{-1}\|$. Then it is clear that $K_\tau$ can be chosen continuously w.r.t.\,$X$ in $C^1$ topology.
\end{remark}

Next, let us consider the smoothness of $\cP_{1,x}$. Fix $\overline\rho_0$ given by Proposition~\ref{p.Fx}, we lift the holonomy map $\cP_{t,x}$ to the normal bundle and scale it to obtain
\begin{equation}\label{e.p}
	P_{t,x}: N_{\overline\rho_0K_0^{-1}|X(x)|}(x)\to N_{\overline\rho_0|X(x)|}(x),\,\,P_{t,x} = \exp_{x_t}^{-1}\circ\cP_{t,x}\circ\exp_x,
\end{equation}
and 
\begin{equation}\label{e.p*}
	P_{t,x}^*: N_{\overline\rho_0K_0^{-1}}(x)\to N_{\overline\rho_0}(x),\,\,P_{t,x}^*(y) :=\frac{P_{t,x}(y|X(x)|)}{|X(x_t)|}.
\end{equation}
 We have $D_x\cP_{t,x} = D_0P_{t,x} = \psi_t(x)$, and $D_0P_{t,x}^* = \psi^*_t(x)$. It should also be noted that the domain of  $P^*_{1,x}$ has a uniform size $\overline \rho_0 K_0^{-1}$ independent of $x$. The following propositions present some uniform estimates for $DP_{t,x}$ and $DP_{t,x}^*$.
\begin{proposition}\cite[Lemma 2.3 and 2.4]{GY} \cite[Lemma 2.9]{PYY23}\label{p.tubular3}
	The following results holds at every regular point $x\in \Reg(X)$ with all constants $a,\rho,\rho_1,K_1',K_1^*$ uniformly in $x$:
	\begin{enumerate}
		\item 	$DP_{1,x}$ is uniformly continuous at a uniformly relative scale in the following sense: for every $a>0$ and $\rho\in [0,\rho_0]$, there exists $0<\rho_1<\rho$ such that if $y,z\in \cN_{\rho K_0^{-1}|X(x)|}(x)$ with $d(y,y')<\rho_1 K_0^{-1} |X(x)|$, then we have
		$$
		\|D_y\cP_{1,x} - D_{z}\cP_{1,x}\|<a.
		$$
		\item $DP^*_{1,x}$ is uniformly continuous at a uniform scale (not just relative!) in the following sense: for every $a>0$ there exists $\rho_1>0$ such that for $y,z\in N(x)$, if $d(y,z)<  \rho_1$ then 
		$$
		\|D_yP^*_{1,x} - D_{z}P^*_{1,x}\|<a.
		$$
		\item  there exists $K_1'>0$ such that 
		$$
		\|D\cP_{1,x}\|\le K_1',\,\, \mbox{ and }\,\,	\|D(\cP_{1,x})^{-1}\|\le K_1'.
		$$
		\item there exists $K_1^*\ge K_1'$ such that  $\|DP^*_{1,x}\|\le K^*_1$;
		\item for every $\rho\in(0,\rho_0 K_0^{-1}]$, there exists $\rho'\in(0,\rho]$ such that for every  regular point $x$, we have 
		$$
		\cP_{1,x}\left(\cN_{\rho |X(x)|}(x)\right)\supset \cN_{\rho'|X(x_1)|}(x_1).
		$$
	\end{enumerate}
\end{proposition}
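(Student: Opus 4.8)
The plan is to pass to the scale-invariant picture supplied by Liao's tubular neighbourhood theory (Propositions~\ref{p.Fx}, \ref{p.tubular} and \ref{p.tubular1}, together with \eqref{e.p} and \eqref{e.p*}) and then to run a chain-rule argument, both for the sup-norm bounds (3)--(4) and for the moduli of continuity of the derivatives (1)--(2). The single observation that makes every estimate uniform in $x$ is that, although $|X(x)|$ itself may be arbitrarily small near $\Sing(X)$, the ratio $|X(x_s)|/|X(x)| = \|Df_s|_{\langle X(x)\rangle}\|$ always lies in the fixed compact interval $[c_0^{-1},c_0]$, where $c_0:=\sup_{|t|\le 2}\|Df_t\|<\infty$. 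Unwinding \eqref{e.p}, \eqref{e.p*} and the identities $\cP_{1,x}=\cP_{x_1}\circ f_1$, $\cP_{x_1}(z)=f_{\tau_{x_1}(z)}(z)$, one gets, after the rescaling $w\mapsto|X(x)|w$,
\[
P^*_{1,x}(w)=\tfrac{1}{|X(x_1)|}\,\exp_{x_1}^{-1}\!\Big(f_{\,1+\tau_{x_1}(f_1(\exp_x(|X(x)|w)))}\big(\exp_x(|X(x)|w)\big)\Big),
\]
so $P^*_{1,x}$ is built from (i) the rescaled chart $u\mapsto\exp_x(|X(x)|u)$ on the fixed ball $N_{\overline\rho_0 K_0^{-1}}(x)$; (ii) the time-one map $f_1$ followed by the small-time holonomy correction $f_{\tau_{x_1}(\cdot)}$ with $|\tau_{x_1}|\le\overline\rho_0$; and (iii) the rescaled inverse chart $\tfrac{1}{|X(x_1)|}\exp_{x_1}^{-1}$.

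First I would record that each of these factors is $C^1$, with a uniform bound on its derivative and a uniform modulus of continuity of its derivative, on a domain of uniform size. For the rescaled charts this holds because $\bM$ is compact, $\exp$ is smooth and $|X|$ is bounded: the derivative is $|X(x)|\,D\exp$, and moving $u$ by $\rho_1$ moves $|X(x)|u$ by at most $\|X\|_\infty\rho_1$, on which range $D\exp$ varies by at most $\omega_{D\exp}(\|X\|_\infty\rho_1)$. For the flow maps it holds because $X\in\mathscr X^1(\bM)$ on a compact manifold, so $(t,p)\mapsto f_t(p)$ is $C^1$ on $[-2,2]\times\bM$ and therefore has uniformly continuous derivative there; the holonomy time $\tau_{x_1}$ is defined implicitly through the flow box $F_{x_1}$ of Proposition~\ref{p.Fx} (for which $m(DF_{x_1})\ge\tfrac13$ and $\|DF_{x_1}\|\le 3$), so the implicit function theorem and Lemma~\ref{l.tau} give $|X(x_1)|\cdot\sup\|D\tau_{x_1}\|\le K_\tau$ and control its modulus at the relevant relative scale. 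Finally $|X(x)|/|X(x_1)|\in[c_0^{-1},c_0]$.

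Granting this, items (3)--(5) are quick. By the chain rule $\|DP^*_{1,x}\|$ is at most the product of the factor norms, which is (4) for a suitable $K_1^*$; conjugating back with $\exp$, $DP_{1,x}=\tfrac{|X(x_1)|}{|X(x)|}DP^*_{1,x}$ and $\cP_{1,x}=\exp_{x_1}\!\circ P_{1,x}\circ\exp_x^{-1}$, whence $\|D\cP_{1,x}\|\le K_1'$, while the bound on $\|D(\cP_{1,x})^{-1}\|$ comes from applying the same factorization to the backward holonomy $\cP_{-1,x_1}=(\cP_{1,x})^{-1}$ (equivalently, all factors have conorm bounded below); enlarge $K_1^*$ if needed so that $K_1^*\ge K_1'$. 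Item (5) is then metric: $\cP_{-1,x_1}$ is $K_1'$-Lipschitz and sends $x_1$ to $x$, so it maps $\cN_{\rho'|X(x_1)|}(x_1)$ into the ball about $x$ of radius $K_1'\rho'|X(x_1)|\le K_1' c_0\rho'|X(x)|$; taking $\rho'\le\min\{\rho/(K_1'c_0),\,\rho_0 K_0^{-1}\}$ places this ball inside $\cN_{\rho|X(x)|}(x)$, so applying $\cP_{1,x}$ gives $\cN_{\rho'|X(x_1)|}(x_1)\subset\cP_{1,x}\big(\cN_{\rho|X(x)|}(x)\big)$.

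For (1) and (2) I would use the elementary fact that a finite composition of maps, each $C^1$ with uniformly bounded derivative and a uniform modulus of continuity for its derivative, is again of this form, with a modulus depending only on the data of the factors; applying it to the factorization above yields (2) at an absolute scale $\rho_1$, and conjugating back with the (rescaled) charts rescales it to the relative scale $\rho_1 K_0^{-1}|X(x)|$ required in (1). The step I expect to be the main obstacle is the holonomy-correction time: $\|D\tau_{x_1}\|$ is of order $|X(x_1)|^{-1}$ and so degenerates as $x_1\to\Sing(X)$, which at first sight wrecks any uniform modulus. This is precisely offset by working at relative scale: writing $q=f_1(\exp_x(|X(x)|w))$ and $q'=f_1(\exp_x(|X(x)|w'))$, if $|w-w'|<\rho_1$ then $d(q,q')\lesssim|X(x)|\rho_1\asymp|X(x_1)|\rho_1$ (using the bounded ratio again), so $\bigl|\tau_{x_1}(q)-\tau_{x_1}(q')\bigr|\lesssim\rho_1$ stays uniformly small, while every other factor is uniformly continuous on a compact set. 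Collecting the contributions, $\|D_wP^*_{1,x}-D_{w'}P^*_{1,x}\|\le\Omega(\rho_1)$ for a modulus $\Omega$ depending only on $\bM$, on $L_X=\sup\|DX\|$, on $c_0$, and on the moduli of $D\exp$ and of $Df_t$ on $[-2,2]\times\bM$ — not on $x$; choosing $\rho_1$ with $\Omega(\rho_1)<a$ finishes (1) and (2). Since each of $\overline\rho_0$, $K_0$, $K_\tau$, $c_0$ and the relevant moduli can be chosen uniform over a $C^1$-neighbourhood of $X$ (Remarks~\ref{r.Liao.robust}, \ref{r.Liao.robust1}, \ref{r.Ktau.robust}), the same constants serve $C^1$-locally as well.
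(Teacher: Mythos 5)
The paper does not supply its own proof here --- the Proposition is quoted from \cite{GY} and \cite{PYY23} --- so there is no internal argument to compare against. Your factorization of $P^*_{1,x}$ through the scaled chart $w\mapsto\exp_x(|X(x)|w)$, the time-one map, the holonomy correction $\cP_{x_1}$, and the dual scaled chart $z\mapsto\exp_{x_1}^{-1}(z)/|X(x_1)|$ is the standard device in Liao's tubular-neighbourhood calculus; the verifications of (3)--(5) and the passage from (2) to (1) via $D_zP_{1,x}=\tfrac{|X(x_1)|}{|X(x)|}D_{z/|X(x)|}P^*_{1,x}$ followed by conjugation with $\exp$ are correct.

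You correctly single out the holonomy time $\tau_{x_1}$ as the one place where the blanket claim ``each factor has a uniformly bounded derivative with a uniform modulus for its derivative'' threatens to break, but what your fix actually verifies is only the first-order cancellation $|\tau_{x_1}(q)-\tau_{x_1}(q')|\lesssim\rho_1$ when $d(q,q')\lesssim|X(x_1)|\rho_1$. That takes care of the factor $Df_{\tau_{x_1}(\cdot)}$, but $D\tau_{x_1}$ also enters $D\cP_{x_1}=Df_{\tau}+X(\cP_{x_1}(\cdot))\otimes D\tau_{x_1}$ directly, and its modulus of continuity at absolute scale degenerates like $|X(x_1)|^{-1}$, so ``every other factor is uniformly continuous on a compact set'' is not literally true for this term. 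The same relative-scale cancellation does close it: from $\tau_{x_1}(y)=-\langle F_{x_1}^{-1}(y),X(x_1)\rangle/|X(x_1)|^2$ one gets $|X(x_1)|\,D\tau_{x_1}(q)=-|X(x_1)|^{-1}X(x_1)^{T}D_qF_{x_1}^{-1}$, an operator of norm at most $\|DF_{x_1}^{-1}\|\le 3$, and the modulus of $DF_{x_1}^{-1}$ over a displacement $\lesssim|X(x_1)|\rho_1$ is a fixed $\omega(\rho_1)$ by Proposition~\ref{p.Fx} together with the uniform continuity of $Df_t$ and $D\exp$; the remaining $|X(x_1)|$ in $D\cP_{x_1}$ is then absorbed by the $|X(\cP_{x_1}(q))|\asymp|X(x_1)|$ prefactor and the $|X(x)|$ coming from $DS_x$, exactly the cancellation you already invoked one derivative lower. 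So the idea is right and the dependence list you give for $\Omega$ is accurate; the second-order half of the cancellation just needs to be stated explicitly rather than folded into a compactness assertion that does not literally hold.
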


Next, we introduce the concept of {\em $\rho$-scaled shadowing} which tracks a point $y$ as it moves alongside the orbit of $x$ withing the scale $\rho|X|$. This definition is motivated by~\eqref{e.tau1}.
\begin{definition}\label{d.shadow1}
	For $0<\rho\le \rho_0$, we say that the orbit of $y$ is {\em $\rho$-scaled shadowed} by the orbit of $x$ up to  time $T\in(0,+\infty)$, if there exists a strictly increasing, continuous function
	$$
	\tau_{x,y}(t): [0,T]\to [0,\infty) 
	$$ 
	with $\tau_{x,y}(0) = 0$, such that for every $t\in[0,T]$, it holds
	$$
	f_{\tau_{x,y}(t)}(y)\in \cN_{\rho|X(x_t)|}(x_t). 
	$$
\end{definition}		
Note that the definition above requires $y\in \cN_{\rho|X(x)|}(x)$. Also, we have 
$$
f_{\tau_{x,y}(t)}(y) = \cP_{t,x}(y);
$$
that is, this definition of $\tau_{x,y}$ is consistent with Equation \eqref{e.tau}.

\begin{remark}\label{r.shadowing}
	By Proposition~\ref{p.tubular} and Proposition~\ref{p.tubular1} and the discussion following them,  for $T>0$, if $y\in \cN_{\rho K_0^{-T}|X(x)|}(x)$, then the orbit of $y$ is $\rho$-scaled shadowed by the orbit of $x$ up to  time $T$.
\end{remark}
\begin{lemma}\label{l.shadowing}\cite[Lemma 2.11]{PYY23}
	Let $\rho\in (0,\rho_0]$, and assume that the orbit of $y$ is $\rho$-scaled shadowed by the orbit of $x$ up to  time $t$. Then the following statements hold:
	\begin{enumerate}
		
		\item For every $0\le s < s' \le t$, the orbit of $\cP_{s,x}(y)$ is $\rho$-scaled shadowed by the orbit of $x_{s}$ up to   time $s'-s$. 
		\item If, in addition, that the orbit of $\cP_{t,x}(y)$ is $\rho$-scaled shadowed by the orbit of $x_t$ up to  time $t'$, then the orbit of $y$ is $\rho$-scaled shadowed by the orbit of $x$ up to  time $t+t'$.
	\end{enumerate}
\end{lemma}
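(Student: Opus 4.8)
The plan is to prove both statements directly from Definition~\ref{d.shadow1} by manipulating the reparametrization function, using the identity $f_{\tau_{x,y}(t)}(y)=\cP_{t,x}(y)$ recorded just after that definition, together with the fact (Propositions~\ref{p.tubular} and~\ref{p.tubular1}) that all the sectional Poincar\'e maps involved are well-defined on the relevant relative scales, so no domain issues arise.

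For part (1), let $\tau_{x,y}\colon[0,t]\to[0,\infty)$ be the reparametrization witnessing that the orbit of $y$ is $\rho$-scaled shadowed by the orbit of $x$ up to time $t$. Fix $0\le s<s'\le t$ and set $z=\cP_{s,x}(y)=f_{\tau_{x,y}(s)}(y)$, and define $\tilde\tau\colon[0,s'-s]\to[0,\infty)$ by $\tilde\tau(u)=\tau_{x,y}(s+u)-\tau_{x,y}(s)$. Since $\tau_{x,y}$ is strictly increasing and continuous with $\tau_{x,y}(0)=0$, the same holds for $\tilde\tau$, with $\tilde\tau(0)=0$. By additivity of the flow, $f_{\tilde\tau(u)}(z)=f_{\tau_{x,y}(s+u)}(y)\in\cN_{\rho|X(x_{s+u})|}(x_{s+u})=\cN_{\rho|X((x_s)_u)|}((x_s)_u)$ for every $u\in[0,s'-s]$. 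Hence $\tilde\tau$ witnesses that the orbit of $\cP_{s,x}(y)$ is $\rho$-scaled shadowed by the orbit of $x_s$ up to time $s'-s$ (and in fact $\tilde\tau=\tau_{x_s,z}$).

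For part (2), let $\tau'=\tau_{x_t,\cP_{t,x}(y)}\colon[0,t']\to[0,\infty)$ be the reparametrization provided by the extra hypothesis, so $\tau'(0)=0$ and $f_{\tau'(u)}(\cP_{t,x}(y))\in\cN_{\rho|X(x_{t+u})|}(x_{t+u})$ for $u\in[0,t']$. Concatenate by defining $\hat\tau\colon[0,t+t']\to[0,\infty)$ via $\hat\tau(v)=\tau_{x,y}(v)$ for $v\in[0,t]$ and $\hat\tau(v)=\tau_{x,y}(t)+\tau'(v-t)$ for $v\in[t,t+t']$. The two formulas agree at $v=t$ because $\tau'(0)=0$, so $\hat\tau$ is continuous; it is strictly increasing on each piece, and the second piece begins exactly at the value where the first ends, so $\hat\tau$ is strictly increasing on all of $[0,t+t']$, with $\hat\tau(0)=0$. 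For $v\in[0,t]$ we have $f_{\hat\tau(v)}(y)\in\cN_{\rho|X(x_v)|}(x_v)$ by hypothesis, and for $v\in[t,t+t']$ we compute $f_{\hat\tau(v)}(y)=f_{\tau'(v-t)}\bigl(f_{\tau_{x,y}(t)}(y)\bigr)=f_{\tau'(v-t)}\bigl(\cP_{t,x}(y)\bigr)\in\cN_{\rho|X(x_v)|}(x_v)$. Thus $\hat\tau$ witnesses that the orbit of $y$ is $\rho$-scaled shadowed by the orbit of $x$ up to time $t+t'$.

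There is essentially no serious obstacle here beyond bookkeeping; the only point requiring a moment's care is verifying in part (2) that the concatenated reparametrization $\hat\tau$ remains continuous and strictly increasing across the junction $v=t$, which is exactly what $\tau'(0)=0$ guarantees.
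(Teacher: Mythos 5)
Your proof is correct and is essentially the unique natural argument — the paper cites \cite[Lemma 2.11]{PYY23} for this result, and the proof there is precisely this kind of unwinding of Definition~\ref{d.shadow1}: restriction and translation of the reparametrization for part (1), concatenation of the two reparametrizations for part (2), with the check at the junction $v=t$ being the only point needing care. No gaps.
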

The next lemma will be useful when establishing the scaled shadowing property.
\begin{lemma}\label{l.shadowing2}\cite[Lemma 2.12]{PYY23}
	Let $\rho\in (0,\rho_0]$ and $x\in\Reg(X)$. Assume that the point $y\in \cN_{\rho K_0^{-1}|X(x)| }(x)$ satisfies the following property:  for every $k\in [1,\floor{t}]\cap\NN$, 
	the point 
	$$
	y^{k,*} = \cP_{1,x_{k-1}}\circ\cdots\circ\cP_{1,x}(y)
	$$
	exists and is contained in $\cN_{\rho K_0^{-1}|X(x_k)|}(x_k)$.  Then the orbit of $y$ is $\rho$-scaled shadowed by the orbit of $x$ up to  time $\floor{t}+1$.
\end{lemma}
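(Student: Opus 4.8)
The plan is to build the reparametrization $\tau_{x,y}$ on $[0,\floor{t}+1]$ by concatenating $\floor{t}+1$ elementary shadowing steps, each of length one, and gluing them with Lemma~\ref{l.shadowing}(2). First I would set $y^{0,*}=y$, so that by hypothesis $y^{k,*}\in\cN_{\rho K_0^{-1}|X(x_k)|}(x_k)$ holds for every $k\in\{0,1,\dots,\floor{t}\}$. The point is that this is exactly the hypothesis of Remark~\ref{r.shadowing} (equivalently Proposition~\ref{p.tubular}) applied with base point $x_k$ and $T=1$: it produces a strictly increasing continuous reparametrization $\tau_{x_k,y^{k,*}}\colon[0,1]\to[0,\infty)$ with $\tau_{x_k,y^{k,*}}(0)=0$ such that $f_{\tau_{x_k,y^{k,*}}(s)}(y^{k,*})\in\cN_{\rho|X(x_{k+s})|}(x_{k+s})$ for all $s\in[0,1]$; that is, the orbit of $y^{k,*}$ is $\rho$-scaled shadowed by the orbit of $x_k$ up to time $1$, and moreover $f_{\tau_{x_k,y^{k,*}}(1)}(y^{k,*})=\cP_{1,x_k}(y^{k,*})=y^{k+1,*}$ for $k<\floor{t}$.

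Next I would record the cocycle identity $\cP_{k,x}(y)=y^{k,*}$ for $0\le k\le\floor{t}$. By the recursive construction behind Proposition~\ref{p.tubular1}, the holonomy $\cP_{k,x}$ agrees with the composition $\cP_{1,x_{k-1}}\circ\cdots\circ\cP_{1,x}$ wherever the latter is defined; and the hypothesis that each intermediate image $y^{j,*}$ lands in the smaller tube $\cN_{\rho K_0^{-1}|X(x_j)|}(x_j)$, rather than merely the tube $\cN_{\rho|X(x_j)|}(x_j)$, is precisely what guarantees that one may apply the next holonomy map at every stage. Hence the composition is well defined at $y$ and coincides with $\cP_{k,x}(y)$, which is therefore equal to $y^{k,*}$.

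Then I would run an induction on $k$ using Lemma~\ref{l.shadowing}(2): the orbit of $y$ is $\rho$-scaled shadowed by the orbit of $x$ up to time $k$ for every $k\in\{1,\dots,\floor{t}+1\}$. The base case $k=1$ is the $k=0$ instance of the first paragraph. For the inductive step, assuming the statement up to time $k$ with $k\le\floor{t}$, the point $\cP_{k,x}(y)=y^{k,*}$ is defined, and its orbit is $\rho$-scaled shadowed by the orbit of $x_k$ up to time $1$; Lemma~\ref{l.shadowing}(2) then upgrades the shadowing of $y$ to time $k+1$. Taking $k=\floor{t}$ yields the desired shadowing up to time $\floor{t}+1$.

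The argument is essentially bookkeeping, and I do not expect a genuine obstacle. The one thing requiring attention is that the concatenation of the local reparametrizations $\tau_{x_k,y^{k,*}}$ indeed produces a globally strictly increasing continuous function $\tau_{x,y}$ on $[0,\floor{t}+1]$ with $\tau_{x,y}(0)=0$ — but this monotonicity and continuity are built into the statement of Lemma~\ref{l.shadowing}(2), so they need not be re-derived here. The only real subtlety, worth flagging explicitly, is that the hypothesis is stated with the smaller tube $\cN_{\rho K_0^{-1}|X(x_k)|}(x_k)$ precisely so that each unit block can both be shadowed (via Remark~\ref{r.shadowing}) and chained into the next via the holonomy cocycle.
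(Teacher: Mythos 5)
The paper does not reproduce a proof of this lemma (it cites \cite[Lemma 2.12]{PYY23}), so there is nothing in the present document to compare against directly. Your argument is nonetheless correct and is the natural one: the hypothesis $y^{k,*}\in\cN_{\rho K_0^{-1}|X(x_k)|}(x_k)$ is exactly what Remark~\ref{r.shadowing} with $T=1$ needs to shadow each unit block, the identification $\cP_{k,x}(y)=y^{k,*}$ holds because both are obtained by following the flow line through $y$ and intersecting $\cN(x_k)$ within the tubular structure (unique once the inductive shadowing hypothesis guarantees the orbit stays inside Liao's tube), and Lemma~\ref{l.shadowing}(2) then glues consecutive blocks. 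No gaps.
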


We conclude this section by the following remark on the robustness of the constants on the vector field $X$.

\begin{remark}\label{r.Liao.robust2}
All the constants in the results of this subsection, namely $ C_\tau, \rho_0,$ $\rho_1,  K_\tau, K_0, K_1', K_1^*$ can be chosen continuously with respect to the vector field $X$ in $C^1$ topology. See Remark~\ref{r.Liao.robust}, Remark~\ref{r.Liao.robust1} and the proof of~\cite[Lemma 2.7]{PYY23}.
\end{remark}

\subsection{Almost Expansivity} 
\begin{definition}
	Given a vector field $X$, $x\in\bM$ and $\vep>0$, we define the {\em bi-infinite Bowen ball} to be 
	$$
	\Gamma_\vep = \{y\in\bM: d(x_t,y_t)\le \vep, \forall t\in\RR\}. 
	$$
	We say that $X$ is {\em almost expansive at scale $\vep>0$}, if the set 
	$$
	\Exp(\vep): = \{x\in\bM: \Gamma_\vep(x)\subset (x_{-s},2s) \mbox{ for some } s=s(x)>0\} 
	$$
	satisfies $\mu(\Exp(\vep))=1$ for every $X$-invariant probability measure $\mu$.
	
	Finally, we say that $X$ is $C^r$ robustly almost expansive at scale $\vep>0$ (for a given $r\ge 1$), if there exists a $C^r$ neighborhood $\cU$ of $X$, such that every $Y\in\cU$ is almost expansive at scale $\vep$.
\end{definition}
In this paper, we shall only consider the case $r=1$.

By~\cite[Proposition 2.4]{LVY}, (robust) almost expansivity at scale $\vep$ implies (robust) entropy expansivity at the same scale. By~\cite{B72}, the metric entropy varies upper semi-continuously as a function of the invariant measure under weak-$^*$ topology.\footnote{In \cite{SYY} it is proven that, for singular flows away from homoclinic tangencies, the metric entropy varies upper semi-continuously provided that the limit measure $\mu$ satisfies $\mu(\Sing(X)) = 0$. Here we remove this extra assumption for star flows.} This leads to the following proposition.

\begin{proposition}\label{p.existence}
	If $X$ is almost expansive at some scale $\vep>0$, then every continuous potential function has an equilibrium state.
\end{proposition}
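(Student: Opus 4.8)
The plan is to derive Proposition~\ref{p.existence} from the general variational machinery for topological pressure, using almost expansivity only through the upper semi-continuity of the metric entropy functional.

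First I would recall the variational principle: for any continuous $\phi:\bM\to\RR$,
$$
P(\phi,X) = \sup\left\{h_\mu(X) + \int\phi\,d\mu : \mu\in\mathcal M_{\mathrm{inv}}(X)\right\},
$$
where the supremum is taken over all $X$-invariant Borel probability measures, and recall that $\mathcal M_{\mathrm{inv}}(X)$ is a nonempty, convex, weak-$^*$ compact set (compactness of $\bM$ plus the usual Krylov--Bogolyubov argument). The function $\mu\mapsto\int\phi\,d\mu$ is weak-$^*$ continuous since $\phi$ is continuous. Hence, to produce an equilibrium state --- a measure realizing the supremum --- it suffices to know that $\mu\mapsto h_\mu(X)$ is upper semi-continuous on $\mathcal M_{\mathrm{inv}}(X)$: then the functional $\mu\mapsto h_\mu(X)+\int\phi\,d\mu$ is upper semi-continuous on a compact set, so it attains its maximum, and by the variational principle that maximum value is exactly $P(\phi,X)$.

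Next I would supply the upper semi-continuity of the entropy. This is precisely where the hypothesis enters: by~\cite[Proposition 2.4]{LVY}, almost expansivity at scale $\vep$ implies entropy expansivity (more precisely $h$-expansivity) at the same scale $\vep$; and by Bowen's theorem~\cite{B72}, for an entropy-expansive flow the metric entropy $\mu\mapsto h_\mu(X)$ is upper semi-continuous with respect to the weak-$^*$ topology on $\mathcal M_{\mathrm{inv}}(X)$. (One should be slightly careful that the cited results are stated so as to apply to flows, or to the time-one map $f_1$, and that $h_\mu(X)=h_\mu(f_1)$ for invariant measures; this is standard.) Combining this with the previous paragraph gives the existence of an equilibrium state.

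The only genuine obstacle is bookkeeping rather than mathematics: one must make sure the chain ``almost expansive $\Rightarrow$ entropy expansive $\Rightarrow$ upper semi-continuous entropy'' is invoked in the correct form for singular flows, where orbits may accumulate on singularities and the flow reparametrization issues in the definition of the bi-infinite Bowen ball $\Gamma_\vep(x)$ must be handled as in~\cite[Proposition 2.4]{LVY}. Once that citation is granted, the proof is three lines: $\mathcal M_{\mathrm{inv}}(X)$ is compact, $\mu\mapsto h_\mu(X)+\int\phi\,d\mu$ is upper semi-continuous on it, so it attains its supremum, which by the variational principle equals $P(\phi,X)$; the maximizer is the desired equilibrium state. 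I would write it essentially in that compressed form, citing Proposition~2.4 of~\cite{LVY}, \cite{B72}, and the variational principle (e.g.~\cite[Theorem 9.10]{Wal}).
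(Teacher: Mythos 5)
Your argument matches the paper's exactly: the paper states, just before Proposition~\ref{p.existence}, that almost expansivity implies entropy expansivity at the same scale by~\cite[Proposition 2.4]{LVY}, that entropy expansivity gives upper semi-continuity of $\mu\mapsto h_\mu$ by~\cite{B72}, and then declares that this ``leads to'' the proposition --- the standard compactness-plus-variational-principle step you spell out. Your write-up is a correct unpacking of the same chain, with no genuine difference in approach.
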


{
\section{A Further Improved Climenhaga-Thompson Criterion}\label{s.CT}
The original Climenhaga-Thompson Criterion \cite{CT16} and its improvement in \cite{PYY23} provide a verifiable way to prove the uniqueness of equilibrium states for continuous flows on metric spaces. In this article, however, we need a further improved criterion  that we recently obtained in \cite{PYYY24} which applies to the maximal invariant set $\Lambda$ in an open set $U$.

In this section, we assume that $(f_t)_t$ is a continuous flow on a compact metric space $\bM$, and $\Lambda$ is a compact invariant set of $(f_t)_t$ with an isolating  neighborhood $U$; that is
$$
\Lambda = \bigcap_{t\in\RR} f_t(U).
$$ 
Note that every neighborhood of $\Lambda$ in $U$ is also an isolating neighborhood of $\Lambda$. In particular, we take $U_1$ a neighborhoods of $\Lambda$ such that 
$$
\Lambda \subset (U_1)^\circ\subset U_1 \subset U^\circ\subset U.
$$

We consider the following types of orbit segments:
\begin{itemize}
	\item For a set $I\subset \RR$ we write $f_{I}(x) = \{f_sx: s\in I\}$. 
	\item Every element $(x,t)\in \Lambda\times\RR^+$ is identified with the orbit segment $f_{[0,t)}(x) = \{f_sx: s\in [0,t)\}$; these are finite orbit segments that are contained in $\Lambda$. With slight abuse of notation we consider $(x,0)$ as the empty set rather than the singleton $\{x\}$. 
	\item We write $\cO(U_1)$ for the subset of $U_1\times \RR$ such that every $(x,t)\in \cO(U_1)$ satisfies
	$$
	f_sx\in U_1, \forall s\in [0,t);
	$$
	then we identify $(x,t)$ with the orbit segment $f_{[0,t)}(x)$. In other words, $\cO(U_1)$ consists of finite orbit segments that are entirely contained in $U_1$.
	\item $\cO(U_2)$ is defined similarly. 
\end{itemize}
It follows from invariance that
$$
\Lambda\times \RR\subset \cO(U_1) \subset \cO(U).
$$

Next we consider the pressure on a collection of finite orbit segments. For an orbit segment collection $\cC$, 
we define 
$$
(\cC)_t = \{x: (x,t)\in \cC\}. 
$$
Let $\phi: U\to \RR$ be a continuous function, and  $\delta>0,\vep>0$ be two positive (small) constants. 
We write 
\begin{equation}\label{e.Phi}
	\Phi_\vep(x,t) = \sup_{y\in B_{t,\vep}(x)}\int_0^t\phi(f_s(y))\,ds,
\end{equation}
with $\vep = 0$ being the standard Birkhoff integral
$$
\Phi_0(x,t) =\int_0^t\phi(f_s(y))\,ds.
$$
Putting $\Var(\phi,\vep) = \sup\{|\phi(x)-\phi(y)|: d(x,y)<\vep\}$, we obtain the trivial bound
$$
|\Phi_\vep(x,t) - \Phi_0(x,t)|\le t\Var(\phi,\vep).
$$
The {\em (two-scale) partition function} $\Lambda(\cC,\phi,\delta,\vep,t)$ for a collection of finite orbit segments $\cC$ and $t>0$ is defined as 
\begin{equation}\label{e.Lambda}
	\Lambda(\cC,\phi,\delta,\vep,t)=\sup\left\{\sum_{x\in E}e^{\Phi_\vep(x,t)}:E\subset (\cC)_t \mbox{ is $(t,\delta)$-separated}\right\}.
\end{equation}

The pressure of $\phi$ on $\cC$ with scale $\delta,\vep$ is defined as
\begin{equation}\label{e.P1}
	P(\cC,\phi,\delta,\vep) = \limsup_{t\to+\infty}\frac1t \log\Lambda(\cC,\phi,\delta,\vep,t).
\end{equation}
The monotonicity of the partition function in both $\delta$ and $\vep$ can be naturally translated to $P$. When $\vep=0$ we will often write $P(\cC,\phi,\delta)$, and let
\begin{equation}\label{e.P2}
	P(\cC,\phi) = \lim_{\delta\to0} P(\cC,\phi,\delta). 
\end{equation}
When $\cC =  \Lambda\times \RR^+$, this coincides with the standard definition of the topological pressure $P(\phi,X|_\Lambda)$ for the restriction of the flow on $\Lambda$.

\begin{definition}\cite[Definition 2.3]{CT16}
	A decomposition  $(\cP,\cG,\cS)$ for a collection of finite orbit segments $\cD$ consists of three collections $\cP,\cG,\cS$ and three functions $p,g,s:\cD\to \RR^+$ such that for every $(x,t)\in \cD$, the values $p=p(x,t), g=g(x,t)$ and $s=s(x,t)$ satisfy $t=p+g+s$, and
	\begin{equation}\label{e.decomp}
		(x,p)\in\cP,\hspace{0.5cm} (f_px,g)\in\cG,\hspace{0.5cm} (f_{p+g}x,s)\in\cS.
	\end{equation}
	Given a decomposition $(\cP,\cG,\cS)$ and a real number $M\ge0$, we write $\cG^M$ for the set of orbit segments $(x,t)\in \cD$ with $p\le  M$ and $s\le  M$.
\end{definition}

	As mentioned earlier, in this article we must deal with three ``spaces'' of finite orbit segments: $\Lambda\times\RR^+\subset \cO(U_1)\subset \cO(U)$. Suppose there exists a $(\cP_{1},\cG_1,\cS_1)-$decomposition on $\cD_1\subset \cO(U_1)$ (here we use the subscript to highlight the fact that the decomposition is defined for orbit segments in $\cO(U_1)$); then the following properties hold:
	\begin{enumerate}
		\item $\cP_1,\cG_1,\cS_1$ are subsets of $\cO(U_1)$; consequently, orbit segments in them are completely contained in $U_1$;
		\item by restricting the functions $p,g,s$ to $\cD_0:= \cD_1\cap\Lambda\times\RR^+$ {(which may be empty)} and considering $\cP_0 := \cP_1\cap\Lambda\times\RR^+$ and similarly defining $\cG_0,\cS_0$, one obtains a  $(\cP_0,\cG_0,\cS_0)-$decomposition of $\cD_0\subset \Lambda\times\RR^+$. 
	\end{enumerate}

Following~\cite[(2.9)]{CT16}, for $\cC\in\bM\times\RR^+$ we define the slightly larger collection $[\cC]\supset \cC$ to be
\begin{equation}\label{e.[C]}
	[\cC]:= \{(x,n)\in \bM\times \mathbb{N}:(f_{-s}x,n+s+t) \in\cC \mbox{ for some } s,t\in[0,1)\}.
\end{equation}
This allows us to pass from continuous time to discrete time.

\begin{definition}\label{d.Bowen}
	Given $\cC\subset  \bM\times \RR^+$, a potential $\phi$ is said to have the Bowen property on $\cC$ at scale $\vep>0$, if there exists $K>0$ such that
	\begin{equation}\label{e.Bowen}
		\sup\left\{|\Phi_0(x,t) - \Phi_0(y,t)|:(x,t)\in\cC, y\in B_{t,\vep}(x)\right\}\le K.
	\end{equation}
\end{definition}
\begin{definition}
	We say that $\cG\subset\bM\times\RR^+$ has weak specification at scale $\delta$ if there exists $\tau>0$ such that for every finite orbit collection $\{(x^i,t_i)\}_{i=1}^k\subset \cG$, there exists a point $y\in\bM$ (not necessarily in $\Lambda$, even when $\cG\subset \Lambda\times\RR^+$) and a sequence of ``gluing times'' $\tau_1,\ldots,\tau_{k-1}$ with $\tau_i\le \tau$ such that for $s_j = \sum_{i=1}^{j}t_i+\sum_{i=1}^{j-1}\tau_i$ and $s_0=\tau_0=0$, we have 
	\begin{equation}\label{e.spec}
		d_{t_j}(f_{s_{j-1}+\tau_{j-1}}(y), x^j)<\delta \mbox{ for every } 1\le j\le k.
	\end{equation}
	The constant $\tau = \tau(\delta)$ is referred to as the {\em maximum gap size}.

\end{definition}
\begin{definition}\label{d.tailspec}
	We say that $\cG$ has {\em tail (W)-specification}  at scale $\delta$ if there exists $T_0>0$ such that $\cG\cap(\bM\times [T_0,\infty))$ has (W)-specification at scale $\delta$. We may also say that $\cG$ has (W)-specification at scale $\delta$ for $t>T_0$ if we need to declare the choice of $T_0$.

	\bl{Furthermore, in the case $\cG_0\subset \Lambda\times\RR^+$, we say that $\cG_0$ has weak (tail) specification at scale $\delta$ with shadowing orbits in $U_1$, if $(y,s_k)\in \cO(U_1)$. }\bl{Similarly, we say that $\cG_1\subset \cO(U_1)$ has weak (tail) specification at scale $\delta$ with shadowing orbits in $U$, if $(y,s_k)\in \cO(U)$. }
\end{definition}

We are finally ready to state the main tool of this paper.

\begin{theorem}\cite[Theorem A]{PYYY24}\label{t.improvedCL1}
	Let $(f_t)_{t\in\RR}$ be a continuous flow on a compact metric space $X$. Assume that $\Lambda$ is a compact invariant set of $(f_t)$ that is isolated with isolating neighborhood $U$, and $\phi: U\to\RR$ a continuous potential function. Suppose that there exist $\vep>0,\delta>0$ with $\vep\ge1000\cdot \delta$, such that  $P_{\exp}^\perp (\phi,\vep;\Lambda)<P(\phi,X|_\Lambda)$. Also assume there exist  a neighborhoods $U_1\subset U$ of $\Lambda$ and $\cD_1\subset \cO(U_1)$ which admits a  $(\cP_1,\cG_1,\cS_1)-$decomposition that induces a decomposition  $(\cP_0,\cG_0,\cS_0)$ of  $\cD_0= \cD_1\cap\Lambda\times\RR^+$ with the following properties: 
	\begin{enumerate}[label={(\Roman*)}]
		\item[($I_0$)] $(\cG_0)^1$ has tail (W)-specification at scale $\delta$ with shadowing orbits contained in $\cO(U_1)$;
			\item [($I_1$)] $(\cG_1)^1$ has tail (W)-specification at scale $\delta$ with shadowing orbits contained in $\cO(U)$;
		\item[(II)]  $\phi$ has the Bowen property at scale $\vep$ on $\cG_1$;
		\item[($III$)] $P((\cO(U_1)\setminus\cD_1)\cup [\cP_1]\cup[\cS_1], \phi,\delta,\vep)<P(\phi,X|_\Lambda)$. 
	\end{enumerate}
	Then there exists a unique equilibrium state for the potential $\phi$ whose support is contained in $\Lambda$ and is ergodic.
\end{theorem}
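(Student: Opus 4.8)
\emph{Overview of the strategy.} The plan is to run the Climenhaga--Thompson machinery for uniqueness of equilibrium states via orbit-segment decompositions~\cite{CT16}, in the refined form that isolates the dynamics on a compact invariant set $\Lambda$ inside an open neighborhood $U$ and permits the shadowing orbits produced by specification to wander in the larger neighborhoods $U_1$ and $U$ rather than remaining in $\Lambda$. The three nested collections $\Lambda\times\RR^+\subset\cO(U_1)\subset\cO(U)$ and the hypotheses $(I_0)$, $(I_1)$, $(II)$, $(III)$ are arranged precisely so that one can build a reference measure carried by $\Lambda$ out of orbit segments in $\cO(U_1)$, while $(III)$ controls the pressure of the orbit segments that ``leak'' out of the good collection, namely $\cO(U_1)\setminus\cD_1$, $[\cP_1]$, and $[\cS_1]$.

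\emph{Existence and a reference measure.} The hypothesis $P_{\exp}^\perp(\phi,\vep;\Lambda)<P(\phi,X|_\Lambda)$ says that the set of points of $\Lambda$ whose bi-infinite Bowen ball at scale $\vep$ is non-trivial carries pressure strictly below $P(\phi,X|_\Lambda)$; as in~\cite{CT16} this forces the entropy map $\mu\mapsto h_\mu(X)$ to be upper semicontinuous on invariant measures supported in $\Lambda$, hence an equilibrium state exists. For uniqueness I would first build a reference measure: for each large $n$ pick an $(n,\delta)$-separated subset $E_n$ of $(\cG_0)_n$ nearly realizing the two-scale partition function $\Lambda(\cG_0,\phi,\delta,\vep,n)$, form $\nu_n=\Lambda(\cG_0,\phi,\delta,\vep,n)^{-1}\sum_{x\in E_n}e^{\Phi_\vep(x,n)}\delta_x$, and take a weak-$*$ limit $\nu$ of the averages $\tfrac1n\int_0^n(f_s)_*\nu_n\,ds$. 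Using the decomposition together with $(III)$ one shows $P(\cG_0,\phi,\delta,\vep)=P(\phi,X|_\Lambda)$ (the complementary orbit segments cannot contribute the full pressure), so $\nu$ is an equilibrium state, and $\nu(\Lambda)=1$ because each $\nu_n$ is supported on $\Lambda$.

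\emph{Lower Gibbs bound, and uniqueness.} The key estimate is a lower Gibbs bound $\nu(B_{n,\vep}(x))\ge K_1\,e^{-nP(\phi,X|_\Lambda)+\Phi_\vep(x,n)}$ for all $(x,n)\in[(\cG_0)^M]$ and any fixed $M\ge1$. This is where $\vep\ge1000\delta$ is used: given such $(x,n)$, the tail (W)-specification of $(\cG_0)^1$ at scale $\delta$ (hypothesis $(I_0)$, keeping the glued orbit in $\cO(U_1)$) allows one to insert a large family of generic orbit segments from $E_m$ through $B_{n,\vep}(x)$, while the Bowen property of $\phi$ on $\cG_1\supset\cG_0$ (hypothesis $(II)$, applied after enlarging good pieces via $(I_1)$ so they lie in $\cO(U)$) controls the Birkhoff integrals along the insertions; counting the resulting $(m,\delta)$-separated points yields the bound on $\nu_m$, which passes to the limit. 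For uniqueness, let $\mu$ be any ergodic equilibrium state. From $(III)$ and a counting argument, the orbit segments spending a definite positive proportion of time in $[\cP_1]\cup[\cS_1]\cup(\cO(U_1)\setminus\cD_1)$ carry pressure below $P(\phi,X|_\Lambda)$, so $\mu$ cannot charge them; hence $\mu(\Lambda)=1$ and $\mu$-a.e.\ $x$ has, along a density-one set of times $n$, a decomposition with $(x,n)\in[(\cG_0)^{M_n}]$, $M_n=o(n)$. Along these times the lower Gibbs bound for $\nu$, combined with the Katok local-entropy estimate for $\mu$ at the fixed scale $\vep$ (valid because $\mu$ assigns no mass to the non-$\vep$-expansive set, again thanks to $P_{\exp}^\perp(\phi,\vep;\Lambda)<P$), gives $\mu(B_{n,\vep}(x))\le C\,\nu(B_{n,\vep}(x))$; a Besicovitch-type covering argument upgrades this to $\mu\le C\nu$, so $\mu\ll\nu$ with bounded density. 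Finally, by the (W)-specification of $(\cG_0)^1$ together with the Gibbs bound one concludes $\nu$ is ergodic, so the invariant measure $\mu\ll\nu$ must equal $\nu$; thus the equilibrium state is unique, equals $\nu$, supported in $\Lambda$, and ergodic.

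\emph{Main obstacle.} The crux is the bookkeeping among $\Lambda\times\RR^+\subset\cO(U_1)\subset\cO(U)$ in the construction of $\nu$ and the proof of the lower Gibbs bound: the specification of $(\cG_0)^1$ glues orbit segments lying in $\Lambda$ into an orbit that lies only in $U_1$, whereas the Bowen property is available only on $\cG_1$, whose gluings lie only in $U$; one must therefore invoke $(I_0)$, $(I_1)$, $(II)$, $(III)$ in exactly the right order and verify that the term $P\big((\cO(U_1)\setminus\cD_1)\cup[\cP_1]\cup[\cS_1],\phi,\delta,\vep\big)<P(\phi,X|_\Lambda)$ in $(III)$ truly absorbs all the leakage, so that the reference measure ends up carried by $\Lambda$ rather than by the part of $U$ that escapes. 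Getting these estimates to interlock while keeping every orbit segment in the collection it is supposed to belong to is the main technical burden.
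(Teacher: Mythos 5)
This theorem is cited verbatim from [PYYY24, Theorem~A] and is not proved in the present paper, so there is no in-paper proof to benchmark against. Assessing your sketch on its own terms: the architecture (construct a reference measure $\nu$ from near-maximizing separated sets in $\cG_0$, prove a lower Gibbs bound via specification plus the Bowen property, deduce $\mu\ll\nu$ for an arbitrary ergodic equilibrium state $\mu$ via Katok's local-entropy estimate, conclude ergodicity of $\nu$) is the correct Climenhaga--Thompson blueprint, and you correctly identify that the novelty is the interplay among $\Lambda\times\RR^+\subset\cO(U_1)\subset\cO(U)$.

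But that interplay is precisely the content of this refined criterion, and your sketch does not close it---your ``Main obstacle'' paragraph names the difficulty and then stops, which leaves the proof incomplete at exactly the point where it differs from~\cite{CT16}. The missing bridge is the local-maximality observation that every weak-$*$ limit of empirical measures along orbit segments of $\cO(U)$ (hence of $\cO(U_1)$) is carried by $\Lambda$, so that $P(\cO(U),\phi)=P(\cO(U_1),\phi)=P(\phi,X|_\Lambda)$; this is Lemma~\ref{l.equalpressure} in this paper. Without it the argument is circular: the specification $(I_0)$ produces a shadowing orbit living only in $\cO(U_1)$, so the counting in your Gibbs bound controls a partition function over $\cO(U_1)$, not over $\Lambda\times\RR^+$, and you never tie the exponent back to $P(\phi,X|_\Lambda)$. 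Likewise, hypothesis~$(III)$ is a pressure gap for collections inside $\cO(U_1)$; to run the adapted-decomposition step on $\mu$-typical orbit segments in $\Lambda$ you must convert it, by the same lemma, into $P\bigl((\Lambda\times\RR^+\setminus\cD_0)\cup[\cP_0]\cup[\cS_0],\phi,\delta,\vep\bigr)<P(\phi,X|_\Lambda)$, which is the content of the unnamed lemma following Lemma~\ref{l.equalpressure}. A smaller inaccuracy: $\mu(\Lambda)=1$ is not a consequence of~$(III)$; it follows from local maximality together with Poincar\'e recurrence. What~$(III)$ actually supplies is that, for $\mu$-a.e.\ $x$, a density-one set of times $n$ admits a decomposition with $p+s$ small, which is where the Gibbs bound is subsequently applied.
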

Here, following our previous notation,  
\begin{equation}\label{e.g11}
	(\cG_1)^1 = \{(x,t)\in \cD_1: p\le 1,s\le 1\},		
\end{equation}
and $(\cG_0)^1  = \{(x,t)\in \cD_0: p\le 1,s\le 1\}= (\cG_1)^1\cap\Lambda\times\RR^+$.

We conclude this section by the following two lemmas. 
\begin{lemma}\cite[Lemma 3.1]{PYYY24}\label{l.equalpressure}
	Let $\Lambda$ be an isolated compact invariant set with isolating neighborhood $U$. Let $U_1$ be any neighborhood of $\Lambda$ that is contained in $U$, and $\phi:U\to\RR$ a continuous function. Then we have
	$$
	P(\cO(U),\phi) = P(\cO(U_1),\phi) = P(\Lambda\times\RR^+, \phi) = P(\phi,X|_\Lambda). 
	$$
\end{lemma}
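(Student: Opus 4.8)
\textit{Proof plan.} The strategy is a sandwich: the chain $\Lambda\times\RR^+\subset\cO(U_1)\subset\cO(U)$ gives one direction by monotonicity of the partition function, and the isolation of $\Lambda$ plus the variational principle gives the other. First, since every $(t,\delta)$-separated subset of the $t$-section of a smaller orbit-segment collection is one for a larger collection, \eqref{e.Lambda} yields $\Lambda(\Lambda\times\RR^+,\phi,\delta,0,t)\le\Lambda(\cO(U_1),\phi,\delta,0,t)\le\Lambda(\cO(U),\phi,\delta,0,t)$ for all $t,\delta>0$; applying $\tfrac1t\log$, $\limsup_{t\to\infty}$, then $\delta\to0$, and recalling that $P(\Lambda\times\RR^+,\phi)=P(\phi,X|_\Lambda)$ by definition, we obtain $P(\phi,X|_\Lambda)\le P(\cO(U_1),\phi)\le P(\cO(U),\phi)$. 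Thus it remains to prove the single reverse inequality $P(\cO(U),\phi)\le P(\phi,X|_\Lambda)$.

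\textbf{Reduction to a compact isolating neighborhood.} I would fix a \emph{compact} neighborhood $N$ of $\Lambda$ with $\Lambda\subset N^\circ\subset N\subset U^\circ$; as recalled in the text, $N$ is again isolating, so $\mathrm{Inv}(N):=\bigcap_{t\in\RR}f_t(N)=\Lambda$, and since $N$ is compact, every compact invariant set contained in $N$ lies in $\Lambda$. Next I would record the standard Conley-theoretic fact: since $\Lambda$ is isolated, for every neighborhood $V$ of $\Lambda$ there is $T=T(V)>0$ such that $(x,t)\in\cO(U)$ and $t>2T$ force $f_s(x)\in V$ for all $s\in[T,t-T]$ (otherwise a compactness argument produces a full orbit inside an isolating neighborhood but outside $V$). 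Applying this with $V=N^\circ$, the shift $x\mapsto f_T(x)$ maps $(\cO(U))_t$ into $(\cO(N))_{t-2T}$; a routine bounded-multiplicity estimate — partition a $(t,\delta)$-separated set $E\subset(\cO(U))_t$ according to which member of a fixed finite $(T,\delta/4)$-spanning set of $\bM$ tracks its first and last length-$T$ pieces, so that within each of the $\le|F|^2$ classes the separation witness falls in $(T,t-T)$ and hence $f_T$ is $(t-2T,\delta)$-separation-injective — combined with $|\Phi_0(x,t)-\Phi_0(f_Tx,t-2T)|\le 2T\|\phi\|_\infty$ gives $\Lambda(\cO(U),\phi,\delta,0,t)\le|F|^2 e^{2T\|\phi\|_\infty}\Lambda(\cO(N),\phi,\delta,0,t-2T)$, hence $P(\cO(U),\phi,\delta)\le P(\cO(N),\phi,\delta)$ and, letting $\delta\to0$, $P(\cO(U),\phi)\le P(\cO(N),\phi)$ (the same argument with $N^\circ\subset U_1$ also bounds $P(\cO(U_1),\phi)$ by $P(\cO(N),\phi)$). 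So it suffices to show $P(\cO(N),\phi)\le P(\phi,X|_\Lambda)$.

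\textbf{The main step.} Here I would run the Misiurewicz-style argument from the proof of the variational principle, adapted to the collection $\cO(N)$. Passing to the time-one map $f_1$ and the integrated potential $\phi^{(1)}(x)=\int_0^1\phi(f_sx)\,ds$ (so that $\Phi_0(x,n)=\sum_{j=0}^{n-1}\phi^{(1)}(f_1^jx)$ for $n\in\NN$, and discrete $(n,\delta)$-separation is comparable to flow $(n,\delta)$-separation up to the equicontinuity modulus of $\{f_s\}_{s\in[0,1]}$, reducing $P(\cO(N),\phi)$ to the corresponding discrete quantity), fix $\delta>0$ and, for each $n$, an optimal $(n,\delta)$-separated set $E_n\subset(\cO(N))_n$ with $Z_n:=\Lambda(\cO(N),\phi,\delta,0,n)=\sum_{x\in E_n}e^{\Phi_0(x,n)}$. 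Form the weighted measures $\nu_n=Z_n^{-1}\sum_{x\in E_n}e^{\Phi_0(x,n)}\delta_x$ and the time-averages $\mu_n=\tfrac1n\int_0^n(f_s)_*\nu_n\,ds$; since $f_{[0,n)}(E_n)\subset N$, each $\mu_n$ is supported in $N$. Choosing a subsequence along which $\tfrac1n\log Z_n\to P(\cO(N),\phi,\delta)$ and $\mu_n\to\mu$ weak-$*$, the limit $\mu$ is a \emph{flow}-invariant probability measure supported in the \emph{closed} set $N$; hence $\mathrm{supp}\,\mu$ is a compact invariant subset of the isolating neighborhood $N$, so $\mathrm{supp}\,\mu\subset\Lambda$. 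The usual entropy estimate — with a partition $\xi$ of $\bM$ of diameter $<\delta$ and $\mu(\partial\xi)=0$, so that each atom of $\bigvee_{j=0}^{n-1}f_1^{-j}\xi$ carries at most one point of $E_n$, giving $\log Z_n=H_{\nu_n}\!\big(\bigvee_{j=0}^{n-1}f_1^{-j}\xi\big)+n\!\int\!\phi^{(1)}d\mu_n$ — followed by the standard averaging/convexity inequalities and $n\to\infty$ yields $P(\cO(N),\phi,\delta)\le h_\mu(f_1)+\int\phi^{(1)}d\mu$. Since $\mathrm{supp}\,\mu\subset\Lambda$, the variational principle for $f_1|_\Lambda$ bounds the right side by $P(\phi^{(1)},f_1|_\Lambda)=P(\phi,X|_\Lambda)$; as $\delta>0$ was arbitrary, $P(\cO(N),\phi)\le P(\phi,X|_\Lambda)$.

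Combining the three paragraphs, $P(\cO(U),\phi)\le P(\cO(N),\phi)\le P(\phi,X|_\Lambda)\le P(\cO(U_1),\phi)\le P(\cO(U),\phi)$, so all four quantities coincide. I expect the only genuinely delicate point to be the main step: one must verify that the limiting empirical measure $\mu$ is \emph{flow}-invariant and supported in the \emph{closed} isolating neighborhood, so that $\mathrm{Inv}(N)=\Lambda$ can be invoked to conclude $\mathrm{supp}\,\mu\subset\Lambda$; the bounded-multiplicity bookkeeping, the reduction to the time-one map, and the choice of $\xi$ with $\mu$-null boundary are all routine. (If $U$ is already assumed compact, the second paragraph can be skipped and the Misiurewicz argument run directly on $\cO(U)$.)
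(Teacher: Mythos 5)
Your proof is correct and follows essentially the same route as the paper: the inclusion $\Lambda\times\RR^+\subset\cO(U_1)\subset\cO(U)$ gives one chain of inequalities by monotonicity of the partition function, and the reverse inequality is obtained by the Misiurewicz/variational-principle argument, using local maximality to conclude that the weak-$*$ limit of the empirical measures of long orbit segments in the isolating neighborhood is flow-invariant and supported in $\Lambda$. The paper's proof is a two-sentence sketch that simply asserts this key fact; you spell out the details.

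The one place where you do more work than the paper is the second paragraph, where you pass from $U$ to a compact sub-neighborhood $N$ via a bounded-multiplicity/spanning-set estimate before running the Misiurewicz argument on $\cO(N)$. In the paper's setup (Section 3 explicitly arranges $\Lambda\subset(U_1)^\circ\subset U_1\subset U^\circ\subset U$, so $U$ and $U_1$ are compact isolating neighborhoods with $\mathrm{Inv}(U)=\mathrm{Inv}(U_1)=\Lambda$) this detour is redundant — one can run the argument directly on $\cO(U)$ or $\cO(U_1)$, as you yourself observe in the closing parenthetical. Conversely, if one reads the lemma statement literally with $U$ an arbitrary (say open) neighborhood, then both your Conley-theoretic funneling claim and the paper's "any limit point ... must be supported in $\Lambda$" rely on $\overline{U}$ (or your $N$) still having $\mathrm{Inv}=\Lambda$; this is guaranteed by the paper's conventions but would need the compact-refinement step in general. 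You flagged this as the delicate point, which is the right diagnosis; there is no actual gap given how the paper sets up $U$ and $U_1$.
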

\begin{proof}
	This is a simple consequence of the proof of the variational principle \cite{Wal}. Note that by local maximality, for any sequence of orbit segments $(x^i,t_i)\in \cO(U_1)$ with $t_i\to\infty$, any limit point of the empirical measure (in weak-* topology) must be supported in $\Lambda$. This implies that $P(\cO(U),\phi)\le P(\phi,X|_\Lambda)$. The reversed inequality follows from inclusion.  
\end{proof}

\begin{lemma}\cite[Remark 2.2]{PYYY24}
	Condition $(III)$ in Theorem~\ref{t.improvedCL1} implies the following ``pressure gap'' property on $\Lambda$: 
	$$
	P((\Lambda\times\RR^+\setminus\cD_0)\cup [\cP_0]\cup[\cS_0], \phi,\delta,\vep)<P(\phi,X|_\Lambda).
	$$
	In particular, $\cD_0\ne\emptyset$ and satisfies $P(\cD_0, \phi) =  P(\phi,X|_\Lambda).$
\end{lemma}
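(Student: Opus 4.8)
The plan is to deduce the pressure gap on $\Lambda$ from Condition~$(III)$ by an elementary inclusion of orbit-segment collections, and then to read off the ``in particular'' statement from the max-additivity of the pressure over a disjoint union.

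First I would record the inclusions tying the collections living on $\Lambda\times\RR^+$ to those on $\cO(U_1)$. Since $\Lambda\times\RR^+\subseteq\cO(U_1)$ and $\cD_0=\cD_1\cap(\Lambda\times\RR^+)$, any orbit segment in $\Lambda\times\RR^+\setminus\cD_0$ already lies in $\cO(U_1)\setminus\cD_1$; likewise $\cP_0\subseteq\cP_1$, $\cS_0\subseteq\cS_1$, and the operation $\cC\mapsto[\cC]$ of \eqref{e.[C]} is monotone, so that $[\cP_0]\subseteq[\cP_1]$ and $[\cS_0]\subseteq[\cS_1]$. Hence
$$
(\Lambda\times\RR^+\setminus\cD_0)\cup[\cP_0]\cup[\cS_0]\ \subseteq\ (\cO(U_1)\setminus\cD_1)\cup[\cP_1]\cup[\cS_1].
$$
Because the partition function \eqref{e.Lambda} is monotone in its orbit-segment argument (a $(t,\delta)$-separated subset of a smaller collection is \emph{a fortiori} one of the larger), so is the two-scale pressure \eqref{e.P1}; thus the two-scale pressure of the left-hand collection is at most that of the right-hand one, and the latter is $<P(\phi,X|_\Lambda)$ by Condition~$(III)$. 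This is precisely the claimed pressure gap on $\Lambda$.

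For the ``in particular'' part I would split $\Lambda\times\RR^+=\cD_0\sqcup(\Lambda\times\RR^+\setminus\cD_0)$ and use that the partition function is sub-additive under such a splitting (any separated set decomposes into separated subsets of the two pieces), so that $P(\cdot,\phi)$ is max-additive over finite unions. At the level of the full pressure \eqref{e.P2} this gives
$$
P(\phi,X|_\Lambda)=P(\Lambda\times\RR^+,\phi)=\max\{\,P(\cD_0,\phi),\ P(\Lambda\times\RR^+\setminus\cD_0,\phi)\,\},
$$
the first equality being the definition (see also Lemma~\ref{l.equalpressure}). It then suffices to show $P(\Lambda\times\RR^+\setminus\cD_0,\phi)<P(\phi,X|_\Lambda)$: the maximum is then forced onto $\cD_0$, giving $P(\cD_0,\phi)=P(\phi,X|_\Lambda)$, and since the empty collection has pressure $-\infty$ this also forces $\cD_0\ne\emptyset$. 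To obtain this strict inequality I would combine the pressure gap just proved with the expansivity hypothesis $P_{\exp}^\perp(\phi,\vep;\Lambda)<P(\phi,X|_\Lambda)$ of Theorem~\ref{t.improvedCL1}, which guarantees that the two-scale pressure of the full orbit space at scales $(\delta,\vep)$ already recovers $P(\phi,X|_\Lambda)$; together with the trivial bound $|\Phi_\vep(x,t)-\Phi_0(x,t)|\le t\,\Var(\phi,\vep)$ and the fact that $P_{\exp}^\perp(\phi,\vep';\Lambda)<P(\phi,X|_\Lambda)$ persists as $\vep'\downarrow 0$, this lets one descend from the two-scale gap on $\Lambda\times\RR^+\setminus\cD_0$ to the genuine full-pressure gap.

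The only step I expect to require real care is this last descent, from the two-scale pressure at which Condition~$(III)$ and the expansivity hypothesis are phrased to the full pressure $P(\cdot,\phi)$ of \eqref{e.P2}; everything else is bookkeeping with the inclusions and the definition of the partition function. A cleaner route, if it is available in this setup, is instead to observe directly that the ``bad'' set $\cO(U_1)\setminus\cD_1$ and the prefix/suffix collections $[\cP_1],[\cS_1]$ have pressure bounded away from $P(\phi,X|_\Lambda)$ at all sufficiently small scales, so that the gap survives the limit over scales defining $P(\cdot,\phi)$.
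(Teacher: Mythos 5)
Your argument matches the paper's almost line for line. For the pressure gap, both you and the authors observe the inclusions $\Lambda\times\RR^+\setminus\cD_0\subset\cO(U_1)\setminus\cD_1$, $[\cP_0]\subset[\cP_1]$, $[\cS_0]\subset[\cS_1]$, and then invoke monotonicity of the two-scale pressure under inclusion; for the ``in particular'' part, both use the max-additivity $P(\phi,X|_\Lambda)=P(\Lambda\times\RR^+,\phi)=\max\{P(\Lambda\times\RR^+\setminus\cD_0,\phi),P(\cD_0,\phi)\}$ together with the gap just proved.

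The one place where you go beyond the paper is worth noting: you explicitly flag the descent from the fixed two-scale pressure $P(\cdot,\phi,\delta,\vep)$ appearing in Condition~$(III)$ to the genuine full pressure $P(\cdot,\phi)=\lim_{\delta'\to 0}P(\cdot,\phi,\delta')$, which is indeed not automatic since $P(\cdot,\phi,\delta')$ increases as $\delta'\downarrow 0$. The paper's own proof compresses this step into ``the previous inequality shows that $P(\Lambda\times\RR^+\setminus\cD_0,\phi)<P(\phi,X|_\Lambda)$'' without comment, deferring the details to \cite[Remark 2.2]{PYYY24}. Your instinct to invoke the expansivity hypothesis $P_{\exp}^\perp(\phi,\vep;\Lambda)<P(\phi,X|_\Lambda)$ to bridge the scales is the right one and is in the spirit of the Climenhaga--Thompson machinery, though you rightly acknowledge that the details of this descent are not fully worked out in your sketch. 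This is not a flaw in your argument relative to the paper — the paper is equally terse there — but it is the one genuinely nontrivial step hiding under the ``in particular.''
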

\begin{proof}
	The first part of the lemma follows from the observation that 
	$$
	[\cP_0]\subset [\cP_1], [\cS_0]\subset [\cS_1], \text{ and } \Lambda\times\RR^+\setminus\cD_0\subset (\cO(U_1)\setminus\cD_1).
	$$
	For the ``in particular'' part, note that 
	$$P(\phi,X|_\Lambda) = P(\Lambda\times\RR^+,\phi) =  \max\left\{P(\Lambda\times\RR^+\setminus\cD_0,\phi), P(\cD_0,\phi)\right\},
	$$
	and the previous inequality shows that $P(\Lambda\times\RR^+\setminus\cD_0,\phi)<P(\phi,X|_\Lambda)$.
\end{proof}
}

\section{Fake foliation charts on the normal bundle, almost expansivity and the continuity of topological pressure}\label{s.fakefoliation}
In this section we prove Theorem~\ref{m.A} and \ref{m.A1}. The main ingredients are:
\begin{itemize}
	\item the construction of a local product structure on the normal plane of regular points, given by fake foliations tangent to the $(\alpha,E_N)$-cone and $(\alpha,F_N)$-cone, respectively;
	\item controlling the contraction and expansion along fake foliations:
	\begin{itemize}
		\item for orbit segments away from singularities (Lemma~\ref{l.expansion.reg}); and
		\item for orbit segments near singularities (Lemma~\ref{l.Elarge} and \ref{l.Flarge}).
	\end{itemize} 
\end{itemize}
The fake foliation charts constructed in Section~\ref{ss.fake} will also be crucial in the proof of the Bowen property in Section~\ref{s.bowen}. 

We remark that the proof of Theorem~\ref{m.A} does not rely on, and indeed covers the result of~\cite[Theorem A]{PYY}, since every sectional-hyperbolic set is multi-singular hyperbolic.

\subsection{A fake foliation chart on the normal bundle}\label{ss.fake}
The first step is to construct a fake foliation chart on the normal plane $\cN(x)$ of every regular point $x\in\Lambda$, consisting of two foliations $\cF_{x,\cN}^E$ and $\cF_{x,\cN}^F$ tangent to some small cones of $E_N(x)$ and $F_N(x)$, respectively. These foliations will be invariant under the holonomy maps $\cP_{1,x}$. In view of Proposition~\ref{p.tubular3}, they only exist at the scale $\rho_0|X(x)|$. \bl{We will also explain how to obtain fake foliations for finite orbit segments $(x,t)\in \cO(U)$ with similar properties. }

We remark that the construction here is the same as~\cite[Section 6.1 and 6.2]{PYY23}. The only difference is that in~\cite{PYY23} the dominated splitting $E_N\oplus F_N$ was obtain by projecting the sectional-hyperbolic splitting $E^s\oplus F^{cu}$ to the normal bundle (see~\cite[Lemma 2.2]{PYY23}). In this paper, the dominated splitting is given by the definition of multi-singular hyperbolicity.

We start with the Hadamard-Perron Theorem.
\begin{theorem}[Hadamard-Perron Theorem]\cite[Theorem 6.2.8]{Katok} and~\cite[Section 3]{BW}\label{t.fakeleaves} Fix $\zeta<\eta$
	and let  $\{A_x\}_{x\in I}$ be a $GL(n,\RR)$ cocycle over an invertible dynamical system $T:I\to I$ (here we do not assume $I$ to be compact or even a metric space) such that 
	\begin{itemize}
		\item  $\forall x\in I, u\in\RR^k$ and $v\in \RR^{n-k}$, $ A_x(u,v) = (A_x^1(u), A_x^2(v))$;
		\item $\|(A^1_x)^{-1}\|\le \eta^{-1}$, and $\|A_x^2\|\le \zeta$.
	\end{itemize}
	Then for $0<\alpha<\min\{1, \sqrt{\eta/\zeta}-1\}$ and $\iota>0$ sufficiently small, 
	for every $\{f_x\}_{x\in I}$ a family of $C^1$ diffeomorphisms of $\RR^n$ such that 
	$$
	f_x(u,v) = (A_x^1(u)+f_x^1(u,v), A_x^2(v) + f_x^2(u,v))
	$$
	with $\|f_x^1\|_{C^1}<\iota$ and $\|f_x^2\|_{C^1}<\iota$ for all $x\in I$, there exist two foliations $\cF^E_{x}$ and $\cF^{F}_{x}$, with the following properties:
	\begin{enumerate}
		\item (tangent to respective cones) each leaf of $\cF^E_{x}$ is the graph of a $C^1$ function $h_x^E:\RR^{n-k}\to \RR^k$ with $\|Dh_x^E\|_{C^0}<\alpha$ for all $x\in I$; similarly, each leaf of $\cF^{F}_{x}$ is the graph of a $C^1$ function $h_x^{F}:\RR^{k}\to \RR^{n-k}$ with $\|Dh_x^{F}\|_{C^0}<\alpha$;
		\item (invariance) for every $x\in I$ and $y\in\RR^n$, it holds for $*=E,F$:
		$$
		f_x\left(\cF^{*}_{x}(y)\right) = \cF^{*}_{Tx}(f_x(y));
		$$
		\item (product structure) for every $y\in \RR^n$ and every $x\in I$, there exist two unique points $y^E\in \cF^{E}_{x}(0),y^{F}\in \cF^{F}_{x}(0)$ such that 
		$$
		\{y\}=\cF^{F}_{x}(y^E)\pitchfork \cF^{E}_{x}(y^{F}).
		$$ 
	\end{enumerate}
\end{theorem}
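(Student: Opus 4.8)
The plan is to prove this by the classical graph-transform proof of the Hadamard--Perron theorem, carried out uniformly in the index $x\in I$. The one structural point to keep in mind is that $I$ is given no topology or compactness: every hypothesis ($\|(A^1_x)^{-1}\|\le\eta^{-1}$, $\|A^2_x\|\le\zeta$, $\|f^i_x\|_{C^1}<\iota$) is uniform in $x$, so every estimate below is uniform as well and the contraction-mapping argument runs ``fiberwise'' with a single rate. It also suffices to construct $\cF^F_x$, the foliation by graphs over the expanding factor $\RR^k$: the family $\{f_x^{-1}\}_{x\in I}$ is again a cocycle-plus-perturbation of the same type, with the roles of $\RR^k$ and $\RR^{n-k}$ and of $\eta^{-1}$ and $\zeta$ interchanged (here is where $\zeta<\eta$ is used), so $\cF^E_x$ is obtained by applying the same construction to $\{f_x^{-1}\}$.

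\emph{Cone fields.} For $\alpha<\min\{1,\sqrt{\eta/\zeta}-1\}$, introduce $C^E_\alpha=\{(u,v)\in\RR^k\times\RR^{n-k}:|u|\le\alpha|v|\}$ and $C^F_\alpha=\{(u,v):|v|\le\alpha|u|\}$. At the linear level $A_x$ sends $C^F_\alpha$ strictly into $C^F_{(\zeta/\eta)\alpha}$ and $A_x^{-1}$ sends $C^E_\alpha$ strictly into $C^E_{(\zeta/\eta)\alpha}$, since the first coordinate is expanded by at least $\eta$ and the second contracted by at most $\zeta$. The slack encoded in $\alpha<\sqrt{\eta/\zeta}-1$ is exactly what guarantees that, after the $C^1$-small nonlinear terms $f^1_x,f^2_x$ are added, the differentials $Df_x$ and $Df_x^{-1}$ still map $C^F_\alpha$, respectively $C^E_\alpha$, into themselves, once $\iota=\iota(\alpha,\eta,\zeta)$ is chosen small enough. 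From cone invariance one then extracts the rate estimates: vectors in $C^F_\alpha$ are expanded by $Df_x$ by a factor $\ge\eta'$ and vectors in $C^E_\alpha$ contracted by $Df_x$ by a factor $\le\zeta'$, with $\zeta'<\eta'$. This is the only step with genuine content; everything after it is bookkeeping.

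\emph{Graph transform and regularity.} The requirement that $\{f_x\}$ carry the foliation of $\RR^n$ by $\alpha$-Lipschitz graphs over $\RR^k$ at $T^{-1}x$ onto the corresponding foliation at $x$ is a functional equation, which defines a graph transform $\Gamma$ on the space of uniformly $\alpha$-Lipschitz graph families, complete under an appropriate (weighted) sup-metric. Because the coordinate along the leaves is the expanding one, pushing a graph forward is well posed (the induced map on the $\RR^k$-factor is a small Lipschitz perturbation of the linear isomorphism $A^1_x$, hence a homeomorphism of $\RR^k$), and by the rates from the previous step $\Gamma$ contracts distances by $\le\zeta'/\eta'<1$. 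Its unique fixed point defines $\cF^F_x$; cone invariance forces each leaf to be the graph of a $C^1$ function $h^F_x:\RR^k\to\RR^{n-k}$ with $\|Dh^F_x\|_{C^0}<\alpha$, the passage from Lipschitz to $C^1$ leaves being the usual bootstrap (the candidate leafwise tangent distribution is itself the fixed point of a fiberwise contraction on cone-valued sections, and one checks it is tangent to the leaves, using $\|f^i_x\|_{C^1}<\iota$). Applying all of this to $\{f_x^{-1}\}$ produces $\cF^E_x$ with leaves graphs of $h^E_x:\RR^{n-k}\to\RR^k$, $\|Dh^E_x\|_{C^0}<\alpha$; this gives property (1), and property (2), $f_x(\cF^*_x(y))=\cF^*_{Tx}(f_x(y))$, is precisely the fixed-point equation.

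\emph{Product structure.} Since $\alpha<1$, the cones $C^E_\alpha$ and $C^F_\alpha$ meet only at $0$, so any $\cF^E_x$-leaf is transverse to any $\cF^F_x$-leaf. Given $y\in\RR^n$, the unique $\cF^F_x$-leaf through $y$ meets $\cF^E_x(0)$ in a single point: writing both leaves as graphs, this intersection is the fixed point of a composition of two $\alpha$-Lipschitz maps, hence of an $\alpha^2$-Lipschitz (contracting) self-map, so it exists and is unique; call it $y^E$. Symmetrically the $\cF^E_x$-leaf through $y$ meets $\cF^F_x(0)$ in a unique point $y^F$. Since $\cF^F_x(y^E)=\cF^F_x(y)$ and $\cF^E_x(y^F)=\cF^E_x(y)$ as leaves, we get $y\in\cF^F_x(y^E)\pitchfork\cF^E_x(y^F)$, and uniqueness of the intersection of two transverse leaves gives $\{y\}=\cF^F_x(y^E)\pitchfork\cF^E_x(y^F)$ with $y^E,y^F$ the only such points, which is property (3). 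The full details are those of \cite[Theorem 6.2.8]{Katok} and \cite[Section 3]{BW}; I expect the main obstacle to be the calibration of $\alpha$ and $\iota$ in the cone step, after which one is running the textbook graph transform with uniform-in-$x$ constants.
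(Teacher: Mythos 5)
Your proposal is a correct account of the standard graph-transform proof of the Hadamard--Perron theorem, and the paper itself gives no proof here but simply cites \cite[Theorem 6.2.8]{Katok} and \cite[Section 3]{BW}, which use exactly this approach (cone invariance, graph transform as a fiberwise contraction, $C^1$ bootstrap for the tangent distribution, and local product structure via Lipschitz fixed points). The key observations you flag --- that all hypotheses are uniform in $x$ so no topology on $I$ is needed, that $\zeta<\eta$ and the symmetry under $f_x\mapsto f_x^{-1}$ reduce to constructing $\cF^F_x$ alone, and that $\alpha<\min\{1,\sqrt{\eta/\zeta}-1\}$ controls both cone disjointness and the contraction rate after perturbation --- are precisely the points on which the textbook argument turns.
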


\subsubsection{Fake foliations for orbits in $\Lambda$}

Fix $\alpha>0$ small enough \footnote{In this paper we shall not specify how small $\alpha$ needs to be. We invite our readers to~\cite[Section 6.1]{PYY}. Roughly speaking, $\alpha$ need to be chosen so that the transition between different foliation charts within $\alpha$ cones of $E_N$ and $F_N$ can be nicely controlled. Such a control is needed in the proof of Lemma~\ref{l.Elarge} and~\ref{l.Flarge}.  Also see the start of~\cite[Section 6.2.2]{PYY}.}, we apply Theorem~\ref{t.fakeleaves} for this $\alpha$ (decrease it if necessary) to the family of scaled sectional Poincar\'e maps  $\{P^*_{1,x}\}$ defined by~\eqref{e.p*} over the time-one map $T=f_1$ on the index set $I=\Reg(X)$. In particular, we have  $n=\dim \bM-1$, $\RR^k=E_N$ and $\RR^{n-k}=F_N$. Here each normal plane $N(x)$ is identified with $\RR^n$, and the $C^1$ diffeomorphisms $f_x$ are defined as
\begin{equation}\label{e.tildeP}
\tilde P_{1,x} := \begin{cases}
	P_{1,x}^*, &|y|<\upsilon\\
	g_x, &|y|\in[\upsilon,K\upsilon]\\
	\psi^*_{1,x}, &|y|>K\upsilon
\end{cases}
\end{equation}
where $\upsilon$ is taken small enough such that the maps $P^*_{1,x}$ are $\iota$-close to $\psi^*_{1,x}$  under $C^1$ topology where $\iota=a>0$ is given by the previous theorem (see Proposition~\ref{p.tubular3} (2)); also recall that 
$$
D_0\tilde P_{1,x} =D_0P_{1,x}^*=\psi^*_{1,x};
$$
$K>0$ is a constant large enough such that the smooth bump functions $g_x$ satisfy $\|g_x-\psi^*_{1,x}\|_{C^1}<\iota$.  The construction of $\tilde P_{1,x}$ guarantees that 
$$
\|\tilde P_{1,x}-\psi^*_{1,x}\|_{C^1}<\iota,
$$
provided that $\upsilon$ is taken small enough and $K$ sufficiently large.

Then Theorem~\ref{t.fakeleaves} gives us two foliations $\cF^{E,*}_{x,N}$ and $\cF^{F,*}_{x,N}$. Here the sub-index $N$ highlights the fact that these foliations are defined on the (scaled) normal plane $N(x)\subset T_x\bM$ for every $x\in \Reg(X)$. The $*$ in the superscript is due to the foliations being defined using the scaled maps $P^*_{1,x}$. Note that these foliations are tangent to the $(\alpha, E_N)$-cone and $(\alpha, F_N)$-cone, respectively. They are invariant in the sense that 
$$
\tilde P_{1,x}(\cF^E_{x,N}) = \cF^E_{x_1,N},\,\,\, \tilde P_{1,x}(\cF^{F}_{x,N}) = \cF^{F}_{x_1,N}.
$$ 
It is worth pointing out that these foliations are likely not invariant under $\tilde P_{t,x}$ for $t\notin \ZZ$. 

Finally, we rescale these foliations by the flow speed $|X(x)|$ and push them to the manifold $\bM$ via the exponential map $\exp_x$. To be more precise, we define the maps
$$
S_x: N(x)\to N(x), S_x(u) = |X(x)|u
$$
and the foliations
$$
\cF^i_{x,\cN} = \exp_{x}\left(S_x\left(\cF^i_{x,N}\right)\right)\subset \cN_{{\rho_0}|X(x)|}(x) , i=E,F,
$$
Note that $\cF^i_{x,\cN}$ are well-defined on $\cN_{{\rho_0}|X(x)|}(x)$ \footnote{Indeed, these foliations are well-defined at a uniform scale $\cN(x) = \exp_x (N_{\mathfrak{d}_0}(x))$ where $\mathfrak{d}_0$ is the injectivity radius; however, since $\cP_{1,x}$ is only defined at a uniformly relative scale $\rho_0|X(x)|$, the invariance of those foliations only holds at a uniformly relative scale.} and are invariant under the maps $\cP_{1,x}$ in the following sense:
\begin{equation}\label{e.inv}
	\cP_{1,x}\left(\cF^{F}_{x,\cN} (y)\right)\supset \cF^{F}_{x_1,\cN} (\cP_{1,x}(y)),\,\, \mbox{ for } y\in \cN_{\rho_0K_0^{-1}|X(x)|}(x), x\in\Reg(X);
\end{equation}
and a similar statement holds for $\cF^{s}_{x,\cN}$ for the maps $\cP_{-1,x}$. It is also worth pointing out that these foliations are likely not invariant under $\cP_{t,x}$ for $t\notin \ZZ$.

The foliations $\cF^i_{x,\cN}$ form a local product structure on $\cN_{{\rho_0}|X(x)|}(x)$ in the following sense. For every $y\in \cN_{{\rho_0}|X(x)|}(x)$, we can write
$$
y=[y^{E},y^{F}]
$$
where $y^{{E}}\in \cF^E_{x,\cN}(x)$ (this is the leaf of $\cF^E_{x,\cN}$ that contains $x$), $y^{{F}}\in \cF^F_{x,\cN}(x)$ such that 
$$
\{y\}= \cF^F_{x,\cN}(y^{{E}})\pitchfork  \cF^E_{x,\cN}(y^{{F}}).
$$ 
See Figure~\ref{f.EF}.
\begin{figure}[h!]
	\centering
	\def\svgwidth{\columnwidth}
	\includegraphics[scale=0.6]{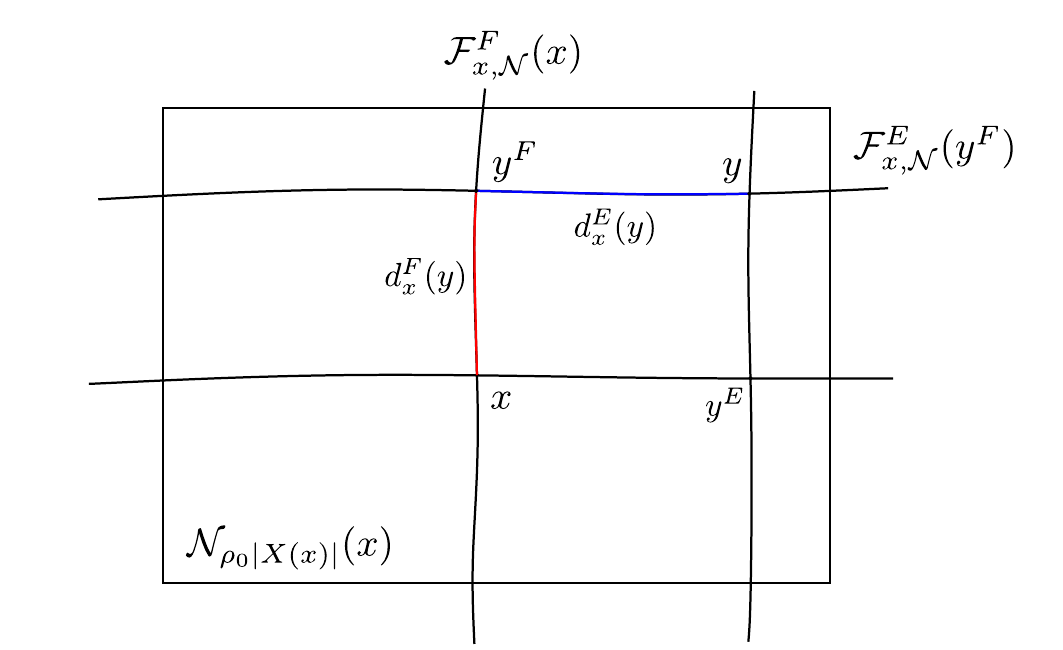}
	\caption{The $E$-length and $F$-length of $y$.}
	\label{f.EF}
\end{figure}

From now on, to simplify notation and highlight the structure of the proof, we make the following definition:
\begin{definition}\label{d.EFcomparison}
	Let $x\in\Reg(X)$ and $y\in \cN_{\rho_0|X(x)|}(x)$. Let $y^E, y^F\in \cN(x)$ be such that $y=[y^E,y^F]$. We define:
	$$
	d^E_x(y) = d_{\cF_{x,\cN}^E}( y^{F}, y),\,\, \mbox{ and }\,\, d^F_x(y) = d_{\cF_{x,\cN}^{F}}( x,y^{F}), 
	$$ 
	where $d_\cF(\cdot,\cdot)$ is the distance within the submanifold $\cF$. 
	
	We will sometimes refer to $d^E_x(y)$ and $d^F_x(y)$ as the $E$-length and $F$-length of $y$. See Figure~\ref{f.EF}. 
\end{definition}

Note that the $E$-length is defined using $y^F$ and $y$ as opposed to $x$ and $y^E$. Alternatively, one could use $x$ and $y^E$ to defined $d^E_x(y)$, and use $y^E$ and $y$ to define $d^F_x(y)$. This does not cause any substantial difference in the rest of this paper. 

{
\subsubsection{Fake foliations for finite orbit segments in $\cO(U)$}\label{sss.fakefoliation.nbhd}
Next we consider finite orbit segments that stay close to $\Lambda$. Similar to Section \ref{s.CT} we fix isolating neighborhoods $U$ of $\Lambda$, and denote by $\cO(U)$ the collection of finite orbit segments that are entirely contained in $U$. Furthermore, we shall require that $U$ satisfies Lemma~\ref{l.nbhd.1}, \ref{l.robust.dom.spl} and \ref{l.robust.hyp}. Then for every $(x,t)\in \cO(U)\setminus \Lambda\times\RR^+$ with $x,x_t\notin B_{r_0}(\Sing_\Lambda(X))$ and $r_0$ given by Lemma \ref{l.nbhd.1}, we obtain a dominated splitting $N_{(x,t)} = E_N\oplus F_N$ with contraction / expansion property under the (scaled) linear Poincar\'e flow.

Let $(x,t)$ as above be fixed. To apply the Hadamard-Perron Theorem and obtain fake foliations on the normal plane of points in $(x,t)$, one considers the following family of $C^1$ diffeomorphism:
$$
(f_s)_{s\in \RR} = \begin{cases}
 \tilde P_{1,x_s}, &s\in [0,t);\\
  \psi^*_{1,x_t}, & s\ge t\\
  \psi^*_{1,x}, & s<0
\end{cases}.
$$
where $\tilde P_{1,y}$ is given by \eqref{e.tildeP}. In other words, one ``extends'' the finite cocycle $(f_s)_{s\in[0,t)}$ to an infinite cocycle $(f_s)_{s\in \RR}$ by repeating with the linear map $\psi^*_{1,x_t}$ for $s \ge t$, and repeating with the linear map $\psi^*_{1,x}$ for $s< 0$. Then, Hadamard-Perron Theorem gives foliations $\cF_{x_s,N}^{E, (x,t),*}$ and $\cF_{x_s,N}^{F,(x,t),*}$ for $s\in [0,t)$. These foliations depend not only on the point $x$ but also on the orbit segment $(x,t)$, thus the superscript $(x,t)$. We then rescale these foliations by $|X(x_s)|$ and  pushed to the manifold by the exponential map $\exp_{x_s}(\cdot)$. This results in fake foliations $\cF_{x_s,\cN}^{i,(x,t)}$, $i=E,F$ on the normal plane of $x_s$, $s\in [0,t)$ that are invariant under the map $\cP_{1,x_s}$ in the following sense: 
$$
	\cP_{1,x_s}\left(\cF^{F,(x,t)}_{x_s,\cN} (y)\right)\supset \cF^{F,(x,t)}_{x_{s+1},\cN} (\cP_{1,x_s}(y)),\, \mbox{for } y\in \cN_{\rho_0K_0^{-1}|X(x)|}(x), s\in [0,t-1), 
$$
and
$$
\cP_{1,x_s}\left(\cF^{E,(x,t)}_{x_s,\cN} (y)\right)\subset  \cF^{E,(x,t)}_{x_{s+1},\cN} (\cP_{1,x_s}(y)),\, \mbox{for } y\in \cN_{\rho_0K_0^{-1}|X(x)|}(x), s\in [0,t-1).
$$
Note that invariance holds only if one does not iterate beyond $[0,t)$.  These foliations form a local product structure on  $\cN_{{\rho_0}|X(x_s)|}(x)$ as in the previous case, and the $E$- and $F$-lengths can be defined in the same way as before. 

\begin{remark}
	Note that for $x\in \Lambda$, the foliations $\cF^{i}_{x,\cN}$ depend only on $x$ (and its entire orbit); in comparison, for $(x,t)\in \cO(U)\setminus \Lambda\times\RR^+$ the fake foliations depend on both $x$ and $t.$
\end{remark}

\begin{remark}
	The construction in $\cO(U)\setminus \Lambda\times\RR^+$ is somewhat flexible. For example, one can use the diffeomorphism $\tilde P_{1,x}$ instead of $\psi^*_{1,x}$ for $s<0$. This results in different fake foliations and therefore a different local product structure, but does not substantially affect the rest of the proof.
	
	Alternatively one can even obtain two $C^1$ fake foliations (not just continuous foliations with $C^1$ leaves) in the following way: at $x$, take $\cF_{x,N}^{F,*}$ to be the $C^\infty$ foliation whose leaves are $(n-k)$-dimensional planes parallel to $F_N(x)$. At each $x_s$ for $s\in (0,t)$,  let $\cF_{x_s,N}^{F,*} = \tilde P_{s,x}(\cF_{x,N}^{F,*})$. Then $\cF_{x_s,N}^{F,*}$ is a $C^1$ foliation tangent to the $F_{N}$-cone due to the domination between $E_N$ and $F_N$  and consequently the forward invariance of the $F_{N}$-cone field. To obtain the other foliation $\cF_{x_s,N}^{E,*}$, repeat the previous construction with $-X$ starting on $N(x_t)$. These foliations form a local product structure and provide well-defined $E$- and $F$-lengths.  The reason we did not choose this construction is for the sake of consistency with $(x,t)\in \Lambda\times \RR^+$.
	
	
	Finally we remark that not all orbit segments in $\cO(U)$ have fake foliations constructed on their normal planes. This is due to the potential lack of dominated splitting on their normal bundles (for instance, points on the local strong stable manifold of a singularity $\sigma\in\Sing^+_\Lambda(X)$) (see \cite{MPP99}); only those start and end outside $B_{r_0}(\Sing_\Lambda(X))$ do. 
\end{remark}

We conclude this subsection by noting that the previous construction does not require $\Lambda$ to be isolated. Indeed the construction still holds if $\Lambda$ is replaced by the maximal invariant set $\tilde\Lambda$ in a small neighborhood of $\Lambda$, due to the flow being multi-singular hyperbolic on $\tilde\Lambda$ (Theorem~\ref{t.robust.hyp}).
}

\subsection{Contraction and expansion near singularities}\label{ss.sing}
In this section we establish the contraction and expansion of $E$- and $F$-lengths. They do not immediately follow from \eqref{e.hyp} due to a lack of uniform continuity for the linear Poincar\'e flow. 

In view of the symmetry (Remark~\ref{r.symmetry}), we will only consider Lorenz-type singularities $\Sing_\Lambda^+(X)$. The same holds for reverse Lorenz-type by considering $-X$ (and switch $E$ with $F$).

Let us start with the choice of a small neighborhood for each singularity. Fix $r_0>0$ small enough such that Lemma~\ref{l.nearEc} holds for $\alpha>0$ chosen in Section~\ref{ss.fake} and take $r\in (0,r_0)$. For each $\sigma\in\Sing_\Lambda(X)$ and $x\in {B_{r}(\sigma)}$, we let 
$$
t^-=t^-(x)=\sup\{s>0: (x_{-s},s)\subset \overline{B_{r_0}(\sigma)}\},\,\mbox{ and }
$$ 
$$
t^+=t^+(x)=\sup\{s>0: (x,s)\subset \overline{B_{r_0}(\sigma)}\}.
$$
Then the orbit segment $(x_{-t^-},t^-+t^+)$ is contained in the closed ball $\overline{B_{r_0}(\sigma)}$, with $x_{t^\pm}\in \partial B_{r_0}(\sigma)$. Now, we define
\begin{equation}\label{e.W.def}
	W_r(\sigma) = \bigcup_{x\in B_{r }(\sigma)} \{x_s:s\in(-t^-,t^+)\}.
\end{equation}
In other words, $W_{r }(\sigma)$ is a flow-saturated neighborhood of $\sigma$ that satisfies
$$
B_r(\sigma)\subset W_r(\sigma) \subset B_{r_0}(x).
$$
Note that $\partial B_{r_0}(\sigma)\cap \overline W_{r }(\sigma)$ consists of two pieces $\partial W_{r }^-(\sigma)$ and $\partial W_{r }^+(\sigma)$, such that if $x\in \partial W_{r }^-(\sigma)$ then there exists $x^+\in \partial W_{r }^+(\sigma)$ on the forward orbit of $x$ such that the orbit segment from  $x$ to $x^+$ is contained in $W_{r }(\sigma)$. In other words, points in $\partial W_{r }^-(\sigma)$ are moving towards $\sigma$ and points in $\partial W_{r }^+(\sigma)$ are moving away from $\sigma$. 

Clearly $W_{r }(\sigma)$ is an open neighborhood of $\sigma$. Meanwhile, define the open set 
$$
W_{r }:=\bigcup_{\sigma\in\Sing_\Lambda(X)}W_{r }(\sigma),
$$ then points in $(W_{r })^c$ 
are bounded away from all singularities. In particular, the flow speed on $(W_{r })^c$ is bounded away from zero, and the ``uniformly relative'' scale $\rho_0|X(x)|$ as in Section~\ref{ss.Liao} is indeed uniform.

The following two lemmas, taken from~\cite[Section 6]{PYY23}, will play central roles in the proof of expansivity and the Bowen property.  For $x\in\partial W_r^-\cap\Lambda$ we write $t(x) = \sup\{s>0: (x,s)\in W_r\}$ and $x^+= x_{t(x)}\in \partial W_r^+$; take $y\in B_{t(x),\vep}(x)\cap \Lambda$ and let $y^+ = \cP_{x^+}(y_{t(x)})\in \cN_{\rho_0|X(x^+)|}(x^+)$. Then, the following two lemmas  state that {for $r>0$ sufficiently small, for  orbit segments $(x,t)\in \cO(U)$ such that  $x\notin B_{r_0}(\Sing_\Lambda(X))$ and enters $B_{r}(\sigma)$ (note that they can only enter $B_{r_0}(\sigma)$ through the center direction, due to Lemma~\ref{l.nbhd.1})} :
\begin{enumerate}
	\item if upon {\em entering} $W_{r }$ we have the $F$-length of $y$ larger than the $E$-length at $y$, then the same is true when the orbit of $y$ leaves $W_{r }$;
	\item on the other hand, if upon {\em leaving} $W_{r }$ we have the $E$-length of $y^+$ larger than the $F$-length at $y^+$, then the same is true when $y$ enters $W_{r }$;
	\item in both cases, we see forward/backward expansion on the larger coordinate at an exponential speed.
\end{enumerate}


Recall the definition of $\rho$-scaled shadowing from Definition~\ref{d.shadow1} and its properties from Lemma \ref{l.shadowing2}.
\begin{lemma}\label{l.Elarge}\cite[Lemma 6.7]{PYY23}
	Let $\Lambda$ be a multi-singular hyperbolic compact invariant set, and $\sigma\in\Sing_\Lambda^+(X)$ be an active singularity. 
	Then, for all $r_0>0$ sufficiently small, there exists an open isolating neighborhood $U$ of $\Lambda$, such that for every $\rho\in (0,\rho_0]$, 
	there exist $\overline r\in(0,{r_0}),\vep_1>0$ such that for $0<r <\overline r$ and $0<\vep<\vep_1$, the following holds:\\
	{Let $(x,t)\in\cO(U)$ satisfy  $x\in \partial W_{r }^-$ and $ x^+ =f_{t(x)}(x)\in \partial W_r^+$.} For $y\in B_{t(x),\vep}(x)\cap \cN(x)$, assume that $y=[y^E,y^F]$ and that $y^+ = [y^{+,E}, y^{+,F}]$  where $y^+=\cP_{x^+}(y_{t(x)})\in \cN_{\rho_0|X(x^+)|}(x^+)$. Finally, assume that 
	\begin{equation}\label{e.Elarge}
		d^E_{x^+}(y^+)\ge d^F_{x^+}(y^+) , \hspace{0.5cm} \mbox{($y^+$ has large $E$-length at $x^+$)}
	\end{equation}
	then we have the following statements:
	\begin{enumerate}
		\item ($y$ has large $E$-length at $x$) we have $d^E_{x}(y)\ge d^F_{x}(y) $;
		\item (backward expansion on $E$-length) there exists $\lambda_{\sigma,E}>1$ independent of $r$, $\vep$, $x$ and $y$  such that 
		$$d^E_{x}(y)\ge\lambda_{\sigma,E}^{t(x)}d^E_{x^+}(y^+).$$
		\item  (scaled shadowing until leaving $W_{r }$) $y$ is $\rho$-scaled shadowed by the orbit of $x$ up to  time $t(x)$.
	\end{enumerate}
\end{lemma}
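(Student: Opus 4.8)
The plan is to reduce all three conclusions to the almost-linear behaviour of the flow near $\sigma$, the eigenvalue gap \eqref{e.sing.Lya} being the only quantitative input. First I would fix $r_0>0$ so small that on $B_{r_0}(\sigma)$ the tangent flow is $C^0$-close to its linearization at $\sigma$, so that the rates $\lambda_\sigma^{ss},\lambda_\sigma^c,\lambda_\sigma^{uu}$ govern all contraction and expansion estimates up to an error that can be absorbed into the strict inequalities in \eqref{e.sing.Lya}; I would then take $U$ as in Lemmas~\ref{l.nbhd.1}, \ref{l.robust.dom.spl} and \ref{l.robust.hyp} and $\overline r\in(0,r_0)$ as in Lemma~\ref{l.nbhd.1}, leaving $\vep_1$ to be chosen at the very end in terms of $\rho$, $r_0$ and $\inf_{\partial B_{r_0}(\sigma)}|X|$. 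By Lemma~\ref{l.nbhd.1} the segment $(x,t(x))$, which lies in $\overline{W_{r}(\sigma)}\subset\overline{B_{r_0}(\sigma)}$ with $x,x^+\in\partial B_{r_0}(\sigma)$, enters $B_{r_0}(\sigma)$ through the center cone $\cC_{\alpha}(E_\sigma^c)$; hence along the segment $E_N(x_s)$ stays $O(r_0)$-close to $\pi_{N,x_s}(E_\sigma^{ss})$, $F_N(x_s)$ to its orthogonal complement in $N(x_s)$, and the fake foliations $\cF^E_{x_s,\cN}$, $\cF^F_{x_s,\cN}$ of Section~\ref{sss.fakefoliation.nbhd} are defined at the endpoints and, wherever the scaled shadowing to be proven holds, along the whole segment.

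Granting conclusion (3), conclusions (1) and (2) follow quickly from two uniform facts valid near $\sigma$. (a) There is $\theta=\theta(\sigma)\in(0,1)$, independent of $r,\vep,x,y$, with $\|\psi^*_1|_{E_N(x_s)}\|\le\theta$ and $\|\psi_1|_{E_N(x_s)}\|\le\theta$ for all $0\le s<t(x)$; here one uses $\lambda_\sigma^{ss}<0$ together with $|\lambda_\sigma^c|<-\lambda_\sigma^{ss}$ to beat the flow-speed ratio $|X(x_s)|/|X(x_{s+1})|$ along the center cone. (b) By the one-step domination \eqref{e.onestepDS1} the $(\alpha,F_N)$-cone is $\psi^*_1$-invariant and the $(\alpha,E_N)$-cone is $\psi^*_{-1}$-invariant, the ratio of the $E_N$-component to the $F_N$-component of a vector being at least doubled under each backward time-$1$ step. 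Since the leaves of $\cF^E$ and $\cF^F$ are tangent to $\alpha$-cones, $d^E$ and $d^F$ agree with those components up to a factor $1+O(\alpha)$, and in the normal chart a time-$1$ step multiplies $d^E$ by $\|\psi^*_1|_{E_N}\|$ and $d^F$ by $\|\psi^*_1|_{F_N}\|$. Then (b) shows the ratio $d^E/d^F$ cannot decrease as one passes backward from $x^+$ to $x$, so $d^E_{x^+}(y^+)\ge d^F_{x^+}(y^+)$ forces $d^E_x(y)\ge d^F_x(y)$, which is (1); and (a) shows $d^E$ on the manifold scale is multiplied at each step by $\tfrac{|X(x_{s+1})|}{|X(x_s)|}\|\psi^*_1|_{E_N(x_s)}\|=\|\psi_1|_{E_N(x_s)}\|\le\theta$, so that multiplying over $\floor{t(x)}$ steps, absorbing the fractional remainder via Lemma~\ref{l.psi.bdd}, and using $|X(x)|\asymp|X(x^+)|$ (both on $\partial B_{r_0}(\sigma)$) yields $d^E_x(y)\ge\lambda_{\sigma,E}^{t(x)}d^E_{x^+}(y^+)$ for any fixed $\lambda_{\sigma,E}\in(1,\theta^{-1})$, which is (2); short passages are immediate from the uniform backward expansion of $E_N\approx E_\sigma^{ss}$.

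The heart of the matter is (3). I would prove by backward induction on $k=0,\dots,\floor{t(x)}$ that $z^{(k)}:=\cP_{-k,x^+}(y^+)$ is defined, lies in $\cN_{\rho K_0^{-1}|X(x_{t(x)-k})|}(x_{t(x)-k})$, and has $d^E\ge d^F$; the scaled shadowing of $y$ by $x$ up to time $t(x)$ then follows from Lemmas~\ref{l.shadowing2} and \ref{l.shadowing} after absorbing the fractional remainder. For the inductive step one splits the $\cN$-displacement of $z^{(k)}$ into its $E_N$- and $F_N$-parts and compares each with the budget $\rho K_0^{-1}|X(x_s)|$, using the linearized geometry in the center cone: writing the passage as an approach of length $T_1$ followed by a departure of length $T_2$, one has $\lambda_\sigma^cT_1+\lambda_\sigma^{uu}T_2\approx 0$ and $|X(x_s)|\asymp r_0e^{\lambda_\sigma^cs}$ on the approach. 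The $E_N$-part never exceeds $d^E_x(y)\le C\vep$ (by (a)), while the budget shrinks on the approach only at rate $e^{\lambda_\sigma^c}$, which is slower than the $E_N$-part's decay rate $e^{\lambda_\sigma^{ss}}$ since $\lambda_\sigma^{ss}<\lambda_\sigma^c$; hence that ratio is $\le C\vep/(r_0\inf_{\partial B_{r_0}(\sigma)}|X|)$. For the $F_N$-part, the hypothesis and (a) give $d^F_{x^+}(y^+)\le d^E_{x^+}(y^+)\le Ce^{\lambda_\sigma^{ss}t(x)}\vep$, while $d^F$ grows over the passage by at least $e^{(\lambda_\sigma^{uu}-o(1))t(x)}$ because $F_N$ stays near $E_\sigma^{uu}$; tracing forward from $x$ this makes $d^F_x(y)\le Ce^{(\lambda_\sigma^{ss}-\lambda_\sigma^{uu})t(x)}\vep$, and multiplying by the worst-case $F_N$-expansion $e^{\lambda_\sigma^{uu}s}$ over an approach and dividing by $|X(x_s)|\asymp r_0e^{\lambda_\sigma^cs}$, the exponents collapse (using $s\le t(x)$ and $\lambda_\sigma^{ss}-\lambda_\sigma^{uu}<0$) into $e^{(\lambda_\sigma^{ss}-\lambda_\sigma^c)s}=e^{(\lambda_\sigma^{ss}+|\lambda_\sigma^c|)s}\le 1$ by \eqref{e.sing.Lya}, leaving a bound $\le C\vep/r_0$; the departure points give the same bound. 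Choosing $\vep_1$ small enough that both ratios are $<1$ closes the induction, the inequality $d^E\ge d^F$ persisting along the way because the $E_N$-part grows and the $F_N$-part shrinks as $k$ increases. The case of bounded $t(x)$ is handled separately, the relative scale being bounded below there.

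The main obstacle is precisely the $F_N$-part estimate in (3): unlike in the sectional-hyperbolic case of \cite{PYY23}, $E_N$ is \emph{not} uniformly contracted by the unscaled tangent flow, so the degeneration of the relative scale $\rho|X(x_s)|$ near $\sigma$ must be offset solely by the gap \eqref{e.sing.Lya} and the large-$E$-length assumption at the exit, which together force the transverse displacement of $y$ from the orbit of $x$ to be exponentially small in the sojourn time $t(x)$ --- and it is this exponential smallness that makes the whole estimate close.
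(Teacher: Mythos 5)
Your overall framework is consistent with the paper's: the paper does not re-prove Lemma~\ref{l.Elarge} here but defers to \cite[Appendix~C]{PYY23}, noting only two modifications, namely (i) using Lemma~\ref{l.nbhd.1} in place of the hypothesis $x\in\Lambda$ to secure the center-cone entry (which you handle correctly), and (ii) replacing sectional hyperbolicity, used in that proof only for a single estimate ``(C.15)'', by the inequality $\lambda_\sigma^c+\lambda_\sigma^{uu}>0$ coming from~\eqref{e.sing.Lya}. Your plan---prove the scaled shadowing~(3) by a backward budget induction and then read off~(1) from one-step domination and~(2) from the contraction of $\psi_1|_{E_N}$ near $\sigma$---is structurally the right one, and your treatment of the $E_N$-part (uniform contraction $\|\psi_1|_{E_N}\|\approx e^{\lambda_\sigma^{ss}}$, scaled contraction via $|\lambda_\sigma^c|<-\lambda_\sigma^{ss}$) is sound.

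The genuine gap is in your $F_N$-estimate in step~(3). You assert that ``$F_N$ stays near $E^{uu}$'' during the passage, so that $\psi_t|_{F_N}$ expands at rate $\approx e^{\lambda_\sigma^{uu}t}$ over the whole sojourn. Neither holds. On the departure leg $X(x_s)$ rotates into $E^{uu}_\sigma$, hence $N(x_s)=X(x_s)^\perp$ rotates as well, and $F_N(x_s)\approx N(x_s)\cap(E^c_\sigma\oplus E^{uu}_\sigma)$ drifts \emph{away} from $E^{uu}_\sigma$ and toward $E^c_\sigma$; already in the three-dimensional Lorenz picture $F_N$ turns from $E^{uu}_\sigma$ at the entrance all the way to $E^c_\sigma$ at the exit, and $\psi_1|_{F_N}$ in fact \emph{contracts} on the departure, at rate $\approx e^{\lambda_\sigma^c}<1$. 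The correct bookkeeping uses the $2$-plane identity $|\psi_t(v)|=|\det Df_t|_{\langle X(x),v\rangle}|\,|v|/\|Df_t|_{\langle X(x)\rangle}\|$ for $v\in F_N\subset N\cap(E^c_\sigma\oplus E^{uu}_\sigma)$, so the determinant's growth $\gtrsim e^{(\lambda_\sigma^c+\lambda_\sigma^{uu})t}$ controls the whole estimate; with $\lambda_\sigma^cT_1+\lambda_\sigma^{uu}T_2\approx 0$ the net expansion of $\psi_{t(x)}|_{F_N}$ across the passage is $e^{\lambda_\sigma^{uu}T_1+\lambda_\sigma^cT_2}$, whose positivity is precisely $\lambda_\sigma^c+\lambda_\sigma^{uu}>0$. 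Your sketch never invokes this inequality---only $|\lambda_\sigma^c|<-\lambda_\sigma^{ss}$ appears---yet the paper singles out $\lambda_\sigma^c+\lambda_\sigma^{uu}>0$ as the one indispensable replacement for sectional hyperbolicity in this very lemma, so its absence is a conceptual omission and not a presentational one. Your final exponent still comes out negative, but only because the overestimated backward rate $e^{\lambda_\sigma^{uu}t(x)}$ and the exact forward re-expansion $e^{\lambda_\sigma^{uu}T_1}$ partially offset; tracking the true rate $e^{\lambda_\sigma^{uu}T_1+\lambda_\sigma^cT_2}$ both ways gives the exponent $(\lambda_\sigma^{ss}+|\lambda_\sigma^c|)\,t(x)$, and the determinant argument---hence $\lambda_\sigma^c+\lambda_\sigma^{uu}>0$---is what licences it.
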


Similarly, we have 	

\begin{lemma}\label{l.Flarge}\cite[Lemma 6.8]{PYY23}
	Let $\Lambda$ be a multi-singular hyperbolic compact invariant set, and $\sigma\in\Sing_\Lambda^+(X)$ be an active singularity. 
	Then, for all $r_0>0$ sufficiently small, {there exists an open isolating neighborhood $U$ of $\Lambda$}, 
	and $\overline r\in(0,{r_0}),\vep_1>0$ such that for $0<r <\overline r$ and $0<\vep<\vep_1$, the following holds:\\
	{Let $(x,t)\in\cO(U)$ satisfy  $x\in \partial W_{r }^-$ and $ x^+ =f_{t(x)}(x)\in \partial W_r^+$.} For $y\in B_{t(x),\vep}(x)\cap \cN(x)$, assume that $y=[y^E,y^F]$ and that $y^+ = [y^{+,E}, y^{+,F}]$  where $y^+=\cP_{x^+}(y_{t(x)})\in \cN_{\rho_0|X(x^+)|}(x^+)$. Finally, assume that 
	\begin{equation}\label{e.Flarge}
		d^F_x(y)\ge d^E_x(y), \hspace{1cm} \mbox{($y$ has large $F$-length at $x$)}
	\end{equation}
	then we have the following statements:
	\begin{enumerate}
		\item ($y^+$ has large $F$-length at $x^+$) we have $$ d^F_{x^+}(y^+)\ge d^E_{x^+}(y^+);$$
		\item (forward expansion on $F$-length) there exists $\lambda_{\sigma,F}>1$ independent of $r$, $\vep$, $x$ and $y$ such that 
		$$ d^F_{x^+}(y^+)\ge\lambda_{\sigma,F}^{t(x)}d^F_{x}(y).$$
	\end{enumerate}
	
\end{lemma}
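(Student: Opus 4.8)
The plan is to adapt the proof of \cite[Lemma 6.8]{PYY23}, the only change being that the dominated splitting $E_N\oplus F_N$ on the normal bundle now comes from multi-singular hyperbolicity (Definition~\ref{d.multising}) and its robust extension to orbit segments in $\cO(U)$ (Lemma~\ref{l.robust.dom.spl}), rather than from a sectional-hyperbolic splitting. It is convenient to work with the \emph{scaled} $F$-length $\hat d^F_x(y):=d^F_x(y)/|X(x)|$, which is precisely the leaf-distance along the un-rescaled fake foliation $\cF^F_{x,N}$ on the normal plane; this is the quantity that transforms well under the holonomy maps, since $\cF^F_{\cdot,N}$ is invariant under $\tilde P_{1,\cdot}$, whose derivative at $0$ is the scaled linear Poincar\'e flow $\psi^*_{1,\cdot}$ (and likewise for $\hat d^E$). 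By Proposition~\ref{p.tubular3}, the distortion between $\tilde P_{1,\cdot}$ and $\psi^*_{1,\cdot}$, and between the true leaf-distance and its linearization, is bounded by a \emph{uniform} multiplicative constant; together with forward invariance of the $(\alpha,F_N)$-cone field under $\psi^*_1$ (Remark~\ref{r.dom.scaled}, \eqref{e.onestepDS1}), this reduces both conclusions to estimating the growth of $\psi^*_t$ along a thin cone around $F_N$ over the transit $(x,t(x))$.

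First I would localize near $\sigma$: fix $r_0$ so small that on $B_{r_0}(\sigma)$ the flow is $C^1$-close to its linearization, with hyperbolic splitting $E^{ss}_\sigma\oplus E^c_\sigma\oplus E^{uu}_\sigma$ satisfying the spectral gap $|\lambda^c_\sigma|<\min(-\lambda^{ss}_\sigma,\lambda^{uu}_\sigma)$ of Definition~\ref{d.multising}(3), and shrink $U$ so that (by Lemma~\ref{l.nbhd.1}) any $(x,t(x))\in\cO(U)$ with $x\in\partial W_r^-$ enters $B_{r_0}(\sigma)$ inside the center cone $\cC_\alpha(E^c_\sigma)$. Since $\lambda^c_\sigma<0$, this means $x$ is close to $W^s(\sigma)$; the orbit first approaches $\sigma$, then leaves $B_{r_0}(\sigma)$ essentially along $W^{uu}(\sigma)=W^u(\sigma)$. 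Split the transit into an \emph{approach phase} $[0,t_1]$, on which $X(x_s)$ stays in a thin cone around $E^c_\sigma$, and an \emph{exit phase} $[t_1,t(x)]$, on which it lies in a thin cone around $E^{uu}_\sigma$; a standard hyperbolic-passage estimate in the linear model gives $|X(x_s)|/|X(x_{s'})|\approx e^{\lambda^c_\sigma(s-s')}$ on the approach phase and $\approx e^{\mu(s-s')}$ on the exit phase for some $\mu\ge\lambda^{uu}_\sigma$, and -- crucially -- the ratio bound $t_1/t_2\ge\lambda^{uu}_\sigma/|\lambda^c_\sigma|>1$, where $t_2=t(x)-t_1$. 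Along the transit, continuity of the normal splitting (Lemma~\ref{l.robust.dom.spl}) keeps $E_N$ in a thin cone around $E^{ss}_\sigma$ throughout, while $F_N$ sits in a thin cone around $E^{uu}_\sigma$ during the approach phase and rotates into (a thin cone around) $E^c_\sigma\oplus E^{uu}_\sigma$ during the exit phase.

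For conclusion (1), I would propagate $F$-dominance by tracking the $\cF^F$- and $\cF^E$-components of the displacement from $x$ to $y$ separately (these evolve independently, by invariance of the two foliations). On the approach phase $\psi^*_t$ expands the $F$-component at rate $\approx e^{(\lambda^{uu}_\sigma-\lambda^c_\sigma)s}$ and contracts the $E$-component at rate $\approx e^{(\lambda^{ss}_\sigma-\lambda^c_\sigma)s}$, so $F$-dominance is reinforced; on the exit phase both components contract, but the $E$-component contracts strictly faster (rate $\approx e^{(\lambda^{ss}_\sigma-\mu)s}$ versus, in the worst case, $\approx e^{(\lambda^c_\sigma-\mu)s}$ for the $F$-component, and $\lambda^{ss}_\sigma<\lambda^c_\sigma$), so $F$-dominance is again reinforced; choosing $\alpha$ small makes the cone rotation at the phase transition harmless, exactly as in \cite[Section~6.2]{PYY23}. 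Hence $d^F_{x^+}(y^+)\ge d^E_{x^+}(y^+)$. For conclusion (2), I would multiply the one-step estimates over the two phases: the scaled $F$-length grows by at least $e^{(\lambda^{uu}_\sigma-\lambda^c_\sigma)t_1}$ on the approach and shrinks by at most $e^{-(\mu-\lambda^c_\sigma)t_2}$ on the exit; plugging in the phase-length bounds and using $|\lambda^c_\sigma|<\lambda^{uu}_\sigma$ yields a net lower bound $e^{c\,t(x)}$ with $c>0$ depending only on $\sigma$. Passing from scaled back to unscaled lengths costs only the bounded factor $|X(x^+)|/|X(x)|\asymp1$, since $x,x^+\in\partial B_{r_0}(\sigma)$; as moreover $t(x)\ge T(r_0,r)>0$, any leftover constant can be absorbed into a slightly smaller rate, giving $\lambda_{\sigma,F}=e^{c/2}$.

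The main obstacle is the non-uniformity and loss of compactness near $\sigma$: the relative scale $\rho_0|X(z)|$ on which the fake foliations are invariant shrinks to $0$ as $z\to\sigma$, so one must first guarantee -- using the standing hypothesis $y^+\in\cN_{\rho_0|X(x^+)|}(x^+)$ together with the bounded-distortion holonomy estimates of Propositions~\ref{p.tubular1} and~\ref{p.tubular3} -- that $y$ and all of its intermediate images stay within the relative scale of the orbit of $x$ throughout the transit, so that the foliations, cones and distortion bounds are valid where they are used. Beyond that, the delicate point is the bookkeeping at the approach/exit transition: one must check that the expansion accumulated on the (longer) approach phase strictly dominates the possible contraction on the (shorter) exit phase \emph{uniformly as $t(x)\to\infty$}, which is exactly where the spectral-gap inequality $|\lambda^c_\sigma|<\lambda^{uu}_\sigma$ of Definition~\ref{d.multising}(3) -- not merely $\lambda^c_\sigma<0$ -- is indispensable.
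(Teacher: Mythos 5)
Your proposal is correct and follows essentially the same route as the paper: the paper gives no new proof but points to [PYY23, Appendix C] and records exactly the two modifications you identify—using Lemma~\ref{l.nbhd.1} (rather than Lemma~\ref{l.nearEc}, which required $x\in\Lambda$) so that orbit segments in $\cO(U)\setminus\Lambda\times\RR^+$ still enter $B_{r_0}(\sigma)$ through the center cone, and replacing the role of sectional hyperbolicity (used in [PYY23] only to obtain their Eq.\ (C.15)) by the local spectral gap $\lambda^c_\sigma+\lambda^{uu}_\sigma>0$ coming from \eqref{e.sing.Lya}. Your detailed two-phase passage estimate with the ratio bound and scaled $F$-length is a faithful reconstruction of the cited local argument; the paper's only shortcut is the observation that $\{\sigma\}$ is itself sectional-hyperbolic, so the [PYY23] argument can be invoked verbatim rather than re-derived.
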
	
Here note that the local product structure $[\cdot,\cdot]$ is well-defined at $y$ and $y^+$ (even if $x\notin \Lambda$) since both points are away from all singularities (once $r_0$ is fixed). Therefore one can find $c_{r_{0}}>0$ such that $|X(y)|, |X(y^+)|>c_{r_{0}}$ for all $y$ and $y^+$ involved in the statement of Lemma~\ref{l.Elarge} and \ref{l.Flarge}. 

The proof of both lemmas can be found in~\cite[Appendix C]{PYY23}. There are only two differences to note:
\begin{itemize}
	\item In \cite[Lemma 6.7, 6.8]{PYY23} we required that $x\in\Lambda$; this is to guarantee that the $x$ can only enter $B_{r_0}(\sigma)$ through the center direction (Lemma \ref{l.nearEc}). Here for orbit segments in $\cO(U)$ that are not in $\Lambda$, we use Lemma \ref{l.nbhd.1} instead to get the same properties.
	\item The proof of Lemma 6.7 and 6.8 in \cite{PYY23} does not involve sectional hyperbolicity. The proof \cite{PYY23} only uses sectional hyperbolicity to obtain Equation (C.15). In the current setting, the same estimate follows from \eqref{e.sing.Lya}, which means that $\lambda_\sigma^c+\lambda^{uu}_\sigma>0$ if $\sigma$ is of Lorenz-type.\footnote{Indeed, since $\lambda_\sigma^c+\lambda^{uu}_\sigma > 0$, the singleton $\{\sigma\}$ is sectional-hyperbolic. This suffices for the proof of~\cite[Lemma 6.8]{PYY23} to proceed, since it only involves a local analysis near a Lorenz-type singularity. }
\end{itemize}

Finally, we remark that by considering $-X$, these lemmas yield similar statements for reverse Lorenz-type singularities (in which case one switches $E$ and $F$).

\subsection{Proof of Theorem~\ref{m.A}}\label{ss.ThmA.proof}
We first state the precise result to be proven in this section.

\begin{theorem}\label{t.exp}
	Let $\Lambda$ be a compact, invariant multi-singular hyperbolic set (either in the sense of Definition~\ref{d.multising} or in the sense of Bonatti and da Luz, i.e., Definition~\ref{d.multising.BD}) for a $C^1$ vector field $X$. Then, there exist $\vep_{\Exp}>0$, a $C^1$ neighborhood $\cU$ of $X$ and an open neighborhood $U$ of $\Lambda$, such that for every $Y\in \cU$ and the maximal invariant set $\tilde \Lambda_Y$ of $Y$ in $U$, the following hold:
	
	For every ergodic invariant probability measure $\mu$ supported on $\tilde \Lambda_Y$ and every point $x\in\supp\mu\setminus \left(\bigcup_{\sigma\in \Sing(Y)\cap\tilde\Lambda_Y} W^s(\sigma)\cup W^u(\sigma)\right)$, there exists $s=s(x)>0$ such that the bi-infinite Bowen ball of scale $\vep_{\Exp}$ satisfies
	$$
	\Gamma_{\vep_{\Exp}}(x)\subset (x_{-s},2s).
	$$
\end{theorem}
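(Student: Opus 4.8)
The plan is to combine the robustness of multi-singular hyperbolicity (Theorem~\ref{t.robust.hyp}) with the fake-foliation machinery and the near-singularity contraction/expansion lemmas established in Section~\ref{s.fakefoliation}. First I would fix the neighborhood: by Theorem~\ref{t.robust.hyp}, choose a $C^1$ neighborhood $\cU$ of $X$ and a neighborhood $U_0$ of $\Lambda$ so that for every $Y\in\cU$ the maximal invariant set $\tilde\Lambda_Y\subset U_0$ is multi-singular hyperbolic, with uniform constants $\eta>1$, $C>0$, $\lambda>1$, and uniform Liao constants (Remark~\ref{r.Liao.robust2}). Shrinking $U_0$ if necessary, arrange that Lemmas~\ref{l.nbhd.1}, \ref{l.robust.dom.spl}, \ref{l.robust.hyp} hold uniformly, and fix $r_0$, $\overline r$, $\vep_1$ so that Lemmas~\ref{l.Elarge} and \ref{l.Flarge} apply uniformly over $\cU$; set $\vep_{\Exp}$ to be a small constant below $\vep_1$ and below the scale at which the fake foliations on normal planes of points in $(W_{r})^c$ form a genuine local product structure. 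From now on fix $Y\in\cU$, write $\tilde\Lambda=\tilde\Lambda_Y$, fix an ergodic $\mu$ on $\tilde\Lambda$ and a point $x\in\supp\mu$ not on any stable/unstable manifold of a singularity in $\tilde\Lambda$.

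The core dichotomy is the following. Take any $y\in\Gamma_{\vep_{\Exp}}(x)$. For each regular point $x_t$ of the orbit of $x$ lying in $(W_r)^c$, project $y_{\tau}$ (for the appropriate time reparametrization coming from $\rho$-scaled shadowing, cf.\ Definition~\ref{d.shadow1}) to the normal plane $\cN(x_t)$ via $\cP_{x_t}$, obtaining a point $\hat y(t)\in\cN_{\rho_0|X(x_t)|}(x_t)$, and decompose $\hat y(t)=[\hat y(t)^E,\hat y(t)^F]$ in the fake-foliation product structure. I claim the comparison ``$d^E_{x_t}(\hat y(t))\ge d^F_{x_t}(\hat y(t))$'' versus ``$d^F_{x_t}(\hat y(t))\ge d^E_{x_t}(\hat y(t))$'' is eventually monotone along the orbit: once the $F$-length dominates it stays dominant in forward time (Lemma~\ref{l.Flarge} handles the singular excursions, and Lemma~\ref{l.robust.hyp} together with the Hadamard--Perron invariance handles the regular stretches), and symmetrically the $E$-length stays dominant in backward time (Lemma~\ref{l.Elarge}). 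So there are three possibilities: the $F$-length dominates for all large positive times, or the $E$-length dominates for all large negative times, or both. In the first case, items (2)/(3) of Lemma~\ref{l.Flarge} plus the uniform expansion along $F_N$ for the regular stretches (Lemma~\ref{l.robust.hyp}, \eqref{e.hyp2}) force $d^F_{x_t}(\hat y(t))\to\infty$, contradicting $y\in\Gamma_{\vep_{\Exp}}(x)$ unless $d^F=0$ identically, i.e.\ $y$ lies in the $\cF^F$-leaf through $x$. Symmetrically in the second case $y$ lies in the $\cF^E$-leaf through $x$ (run the argument for $-Y$, recalling Remark~\ref{r.symmetry}). If both hold then $\hat y(t)=x_t$ for all $t$ in $(W_r)^c$; but then near singularities the scaled shadowing (Lemma~\ref{l.Elarge}(3)) forces $y$ to stay within $\rho|Y(x_t)|$ of $x_t$ throughout, and a standard argument shows $y$ lies on the orbit of $x$, giving $y\in(x_{-s},2s)$ for suitable $s$.

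It then remains to rule out the ``pure $\cF^E$'' and ``pure $\cF^F$'' cases. Suppose $y$ is on the $\cF^F_{x,\cN}$-leaf through $x$ but not on the orbit of $x$, so $d^F>0$. Forward in time the $F$-length grows exponentially along regular stretches and does not shrink across singular excursions (Lemma~\ref{l.Flarge}(2), with the exponent uniform), so it escapes the $\vep_{\Exp}$-ball — unless the orbit of $x$ only spends a bounded total amount of time in $(W_r)^c$ in forward time, which by the structure of multi-singular hyperbolic sets would force $\omega(x)\subset\Sing$, hence $x\in W^s(\sigma)$ for some singularity, excluded by hypothesis. The symmetric argument in backward time (via $-Y$) handles the $\cF^E$ case using $\alpha(x)\not\subset\Sing$. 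Putting these together, the only surviving possibility is $y\in(x_{-s},2s)$, which is the assertion; Theorem~\ref{m.A} follows since the excluded set $\bigcup_\sigma(W^s(\sigma)\cup W^u(\sigma))$ is $\mu$-null for every invariant $\mu$ by the Poincar\'e Recurrence Theorem, and $\vep_{\Exp}$, $\cU$, $U$ were chosen uniformly.

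The main obstacle I anticipate is the bookkeeping that glues the near-singularity estimates (Lemmas~\ref{l.Elarge}, \ref{l.Flarge}) to the regular-stretch estimates (Lemma~\ref{l.robust.hyp}) into a single clean monotonicity/growth statement for the $E$- and $F$-lengths along the whole orbit — in particular making sure the fake foliations on $\cO(U)$ remain well-posed along all the relevant orbit pieces (they are only defined at the relative scale $\rho_0|Y(\cdot)|$, which degenerates near singularities), that the time reparametrizations from successive scaled-shadowing segments compose correctly (Lemma~\ref{l.shadowing}), and that the exponential growth rates accumulated over infinitely many regular stretches genuinely diverge rather than being cancelled by the bounded-distortion contributions of the excursions. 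This is exactly the place where one uses that $x\notin W^s(\sigma)\cup W^u(\sigma)$: it guarantees the orbit returns to $(W_r)^c$ with positive frequency in both time directions, so the accumulated expansion is unbounded.
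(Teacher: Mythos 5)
Your overall strategy (fake foliations, comparison of $E$- and $F$-lengths on the normal plane, Lemmas~\ref{l.Elarge}, \ref{l.Flarge} near singularities, Lemma~\ref{l.reg} and~\ref{l.robust.hyp} along regular stretches, exclusion of the $W^s(\sigma)\cup W^u(\sigma)$ set via Poincar\'e Recurrence) matches the paper's. However, the part of your sketch that quantifies the divergence of the $F$-length in forward time has a genuine gap, and it is exactly the part you flagged as ``the main obstacle''.

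You attribute the expansion to the regular stretches outside $W_r$ and treat the excursions through $W_r$ as ``bounded-distortion contributions''. That has the roles backwards. Along a regular stretch of length $\le T_V$, the only uniform estimate available is $\|D\cP_{-t,\cdot}\|\le K_1'$ per unit time, which gives a bounded but possibly $<1$ multiplicative factor $L_X(K_1')^{T_V}$; short regular stretches between consecutive returns to $W_r$ may therefore \emph{contract} the $F$-length. You then argue that divergence can only fail if the total regular time is bounded, forcing $\omega(x)\subset\Sing$ and hence $x\in W^s(\sigma)$. This dichotomy is false: the orbit can return to $W_r$ infinitely often with short regular excursions in between, accumulating unbounded total regular time but with the regular-stretch contributions alone failing to expand. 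The paper's proof handles precisely this situation by observing that each excursion through $W_r$ \emph{itself} expands the $F$-length by at least $\lambda_0^{s_r}$ (Lemma~\ref{l.Flarge}(2)), where $s_r\to\infty$ as $r\to 0$, and then choosing $r$ small enough that
$$
b=\frac{\lambda_0^{s_r}}{L_X(K_1')^{T_V}}>1
$$
(see~\eqref{e.choice.b}). With this choice, each full cycle (singular excursion followed by the next regular stretch) expands the $F$-length by a factor $\ge\min\{\omega^{-1},b\}>1$, whether the regular stretch is long (giving $\omega^{-1}$ via Lemma~\ref{l.expansion.reg}(2)) or short (giving $b$). Without this compensation mechanism, which your plan does not incorporate, the divergence claim does not close.

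A second, smaller issue: your treatment of the case where the forward orbit enters $W_r$ only finitely many times is misstated. The paper's ``Case 2'' is not ``orbit spends bounded time outside $W_r$'' — it is the opposite, and it is the \emph{easy} case: after the last exit from $W_r$, the orbit stays outside $W_r$ forever, and because $x\notin W^s(\sigma)$ there exist arbitrarily large $t$ with $x_t\notin V=B_{r_0}(\Sing_\Lambda(X))$, so Lemma~\ref{l.expansion.reg}(2) applies directly to give divergence. Your sentence collapses the two cases into one and draws a conclusion ($\omega(x)\subset\Sing$) from the wrong hypothesis.
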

In other words, the expansivity can only fail for points on the stable or the unstable manifold of a singularity. Note that for every ergodic invariant measure $\mu$ it holds that $\mu\left(W^s(\sigma)\setminus \sigma\right)=0$ due to the Poincar\'e Recurrence Theorem; similarly $\mu\left(W^u(\sigma)\setminus \sigma\right) = 0$. Then Theorem~\ref{m.A} is an immediately corollary of Theorem~\ref{t.exp}. 

Below we prove Theorem~\ref{t.exp}.

Given a  compact invariant multi-singular hyperbolic set $\Lambda$ for a $C^1$ vector field $X$, we denote by $\cU$ the neighborhood of $X$ and $U$ the neighborhood of $\Lambda$ given by Theorem~\ref{t.robust.hyp}. Let $\tilde\Lambda$ be the maximal invariant set of $X$ in $U$ which is also multi-singular hyperbolic. Below we will prove that there exists $\vep_{\Exp}>0$ such that $X|_{\tilde\Lambda}$ is almost expansive at scale $\vep_{\Exp}$. One can easily check that our choice of $\vep_{\Exp}$ depends continuously on the vector field $X$ on the $C^1$ topology, therefore the same conclusion holds for every $Y\in\cU$ (by shrinking $\cU$ and decreasing $\vep_{\Exp}$ when necessary).

First we address the minor issue mentioned in Remark~\ref{r.ThmA}, namely the discrepancy between two definitions of multi-singular hyperbolicity.

\begin{lemma}\label{l.sing.active}
	Let $\mu$ be an ergodic invariant measure of $X|_{\tilde\Lambda}$ that is not a point mass of a singularity. Then every hyperbolic singularity $\sigma\in\supp\mu$ is active. In particular, on $\supp\mu$, two definitions of multi-singular hyperbolicity coincide.
\end{lemma}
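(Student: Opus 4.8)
The plan is to prove the stronger statement that for every hyperbolic $\sigma\in\supp\mu$, both $W^s(\sigma)\cap\supp\mu$ and $W^u(\sigma)\cap\supp\mu$ contain a point other than $\sigma$; this is exactly the assertion that $\sigma$ is active in $\supp\mu$ (Definition~\ref{d.active.sing}). Once this is known, the ``in particular'' clause follows: $\supp\mu$, being a compact invariant subset of the multi-singular hyperbolic set $\tilde\Lambda$, inherits multi-singular hyperbolicity (most transparently in the Bonatti--da Luz formulation, since it amounts to uniform hyperbolicity of an extended cocycle on the compact set $\mathfrak B(\supp\mu)\subset\mathfrak B(\tilde\Lambda)$), and with all of its singularities active, Proposition~\ref{p.equivalent} together with \cite[Theorem D and E]{CDYZ} shows that Definition~\ref{d.multising} and the Bonatti--da Luz definition agree on $\supp\mu$. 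By the symmetry in Remark~\ref{r.symmetry} (applying the argument to $-X$), it is enough to exhibit a point of $W^s(\sigma)\cap\supp\mu\setminus\{\sigma\}$.

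To that end I would first fix $r_0>0$ so small that $\{\sigma\}$ is the maximal invariant set of $X$ in $\overline{B_{r_0}(\sigma)}$; this is possible because $\sigma$ is hyperbolic (and non-degenerate). Since $\mu$ is ergodic and is not the point mass of a singularity, $\mu(\Sing(X))=0$, and by the Poincar\'e Recurrence Theorem $\mu(W^u(\sigma)\setminus\{\sigma\})=0$. As $\sigma\in\supp\mu$, for every $n\in\NN$ we may thus pick a regular point $x^n\in\supp\mu\cap B_{1/n}(\sigma)$ with $x^n\notin W^u(\sigma)$. Then $x^n\notin W^u_{\loc}(\sigma)$, so the backward orbit of $x^n$ eventually leaves $\overline{B_{r_0}(\sigma)}$; let $t_n>0$ be the first backward exit time and set $y^n=f_{-t_n}(x^n)$, so that $y^n\in\partial B_{r_0}(\sigma)\cap\supp\mu$ and the orbit segment $(y^n,t_n)$ is contained in $\overline{B_{r_0}(\sigma)}$.

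Passing to a subsequence, assume $y^n\to y^*\in\partial B_{r_0}(\sigma)\cap\supp\mu$ and $t_n\to t^*\in[0,+\infty]$. If $t^*<+\infty$, then $f_{t^*}(y^*)=\lim_n f_{t_n}(y^n)=\lim_n x^n=\sigma$, hence $y^*\in W^s(\sigma)$. If $t^*=+\infty$, then for each fixed $T>0$ the segments $(y^n,T)$ eventually lie in $\overline{B_{r_0}(\sigma)}$, so the entire forward orbit of $y^*$ stays in $\overline{B_{r_0}(\sigma)}$; its $\omega$-limit set is then a nonempty invariant subset of $\overline{B_{r_0}(\sigma)}$, hence equals $\{\sigma\}$ by the choice of $r_0$, so again $y^*\in W^s(\sigma)$. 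In both cases $y^*\in W^s(\sigma)\cap\supp\mu$ and $y^*\ne\sigma$ since $y^*\in\partial B_{r_0}(\sigma)$. Running the same construction for $-X$ yields a point of $W^u(\sigma)\cap\supp\mu\setminus\{\sigma\}$, so $\sigma$ is active and the proof is complete.

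The only non-formal ingredient is the local description of the dynamics near $\sigma$: that $\{\sigma\}$ is the maximal invariant set in a sufficiently small ball, and consequently that a forward orbit trapped in such a ball must converge to $\sigma$. Both are standard for a hyperbolic singularity, and this is precisely where the hypothesis ``$\sigma$ hyperbolic'' is used; the remainder of the argument is soft, relying only on ergodicity, Poincar\'e recurrence and a compactness/limit argument, and is essentially the observation that a regular ergodic measure charging every neighborhood of $\sigma$ must have orbits that both approach and recede from $\sigma$.
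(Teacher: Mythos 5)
Your proof is correct. The paper omits the argument, saying only that it ``is a simple application of the Birkhoff Ergodic Theorem,'' so a direct comparison is not possible, but your route is clean and complete: you use the (strong form of the) Poincar\'e Recurrence Theorem to guarantee $\mu(W^u(\sigma)\setminus\{\sigma\})=0$, which lets you pick regular points $x^n\in\supp\mu\cap B_{1/n}(\sigma)\setminus W^u(\sigma)$, and then a compactness argument on the backward exit points from a fixed ball produces a point of $W^s(\sigma)\cap\supp\mu$ on $\partial B_{r_0}(\sigma)$; the symmetric argument for $-X$ finishes the activity claim. This is entirely consistent with the paper's own practice --- immediately before Theorem~\ref{t.exp} the authors invoke Poincar\'e recurrence for precisely the fact $\mu(W^u(\sigma)\setminus\{\sigma\})=0$. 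Whether one phrases the ergodic input via Poincar\'e recurrence on $\supp\mu$ or via Birkhoff-typical orbits that equidistribute to $\mu$, the heart of the argument is the same compactness/$\omega$-limit step, so I would regard the two as essentially the same proof with a different choice of ergodic lemma (yours is arguably a touch more elementary since it does not need time averages). One small cosmetic point: the case $t^*<+\infty$ in your trichotomy is actually vacuous, since $f_{t^*}(y^*)=\sigma$ would force $y^*=f_{-t^*}(\sigma)=\sigma$, contradicting $y^*\in\partial B_{r_0}(\sigma)$; so $t_n\to\infty$ automatically. Keeping the case does no harm, but as written the conclusion you draw in it (``$y^*\in W^s(\sigma)$ and $y^*\neq\sigma$'') is self-contradictory, and it would read better to simply observe $t^*=\infty$ up front.
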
 

The proof is a simply application of the Birkhoff Ergodic Theorem, and is therefore omitted. 

	
	

{Before stepping into the next subsection, we remark that in Section \ref{ss.ThmA.proof}, we will only  consider points in the support of an ergodic measure on $\tilde \Lambda$. Consequently, the fake foliations only depend on $x$, and their invariance holds on the entire orbit of $x$. 
}

\subsubsection{Choice of parameters}\label{ss.choice.vexp}
We first fix $r_0>0$ such that Lemma~\ref{l.nearEc}, Lemma~\ref{l.Elarge} and Lemma~\ref{l.Flarge} hold for all singularities in $\tilde\Lambda$ (note that there are only finitely many singularities since they are all hyperbolic). Then, with $V = {B_{r_0}(\Sing_{\tilde\Lambda}(X))}$ an isolating neighborhood of $\Sing_{\tilde\Lambda}(X)$, we obtain $T_V$ as in Definition~\ref{d.multising} (2) so that \eqref{e.hyp} holds for $t>T_V$ and $x,x_t\notin V$. Let $K_1'$ be given by Proposition~\ref{p.tubular3} (3) which is the upper bound for $\|D\cP_{1.x}\|$ and $\|D(\cP_{1.x})^{-1}\|$.

Next, we let $\lambda_{\sigma,*}$, $*=E,F$ be given by Lemma~\ref{l.Elarge} and Lemma~\ref{l.Flarge}, and $\eta>1$ given by Definition~\ref{d.multising} (2). Fix 
\begin{equation*}\label{e.choice.eta'}
1<\eta'<\eta,
\end{equation*}
and
\begin{equation}\label{e.choice.lambda0}
1<\lambda_0<\min \left\{\lambda_{\sigma,*}, *=E,F, \sigma\in\Sing_{\tilde\Lambda}(X) \right\}.
\end{equation}
We shall further increase $T_V$ (which we assume w.l.o.g. to be an integer) if necessary so that (recall $L_X$ defined by~\eqref{e.lip}) 
\begin{equation}\label{e.choice.omega}
 \omega:= L_X\cdot  (\eta')^{-T_V} <1.
\end{equation}

Now we choose $r$. 
With $a = (\eta-\eta')/2$ we apply Proposition~\ref{p.tubular3} to find $\rho_1< \rho_0$ small enough so that Proposition~\ref{p.tubular3} (1) and (2) hold for $y,z\in \cN_{\rho_1 K_0^{-1}|X(x)|}(x)$ at every regular point $x$. 
Let $\overline r\in (0,r_0)$ and $\vep_1>0$ be given by Lemma~\ref{l.Elarge} and~\ref{l.Flarge} applied to $\rho_1$ and $r_0$. For $r\le\overline r$, recall the definition of $W_r(\sigma)$ from~\eqref{e.W.def}, and note that for each $r$ and singularity $\sigma$, there exists $s_r(\sigma)>0$ such that for every $x\in B_r(\sigma)$, 
$$
t^++t^->s_r(\sigma);
$$  
furthermore, $s_r(\sigma)\to\infty$ as $r\to 0$. This allows us to pick $r\in (0,\overline r]$ sufficiently small so that 
\begin{equation}\label{e.choice.b}
	b:=\frac{\lambda_0^{s_r}}{L_X(K_1')^{T_V}}>1,
\end{equation}
where $s_r = \min \{s_r(\sigma):\sigma\in\Sing_{\tilde\Lambda}(X)\}$. The purpose of this step is to use the expansion given by Lemma~\ref{l.Elarge} and~\ref{l.Flarge} near singularities to overcome the possibility that the time spend outside $\cup_\sigma W_r(\sigma)$ is too small for the hyperbolicity given by Definition~\ref{d.multising} (2) to take effect.

Finally we are ready to pick $\vep_{\Exp}$. Note that $W_r = \cup_{\sigma\in\Sing_{\tilde\Lambda}(X)} W_r(\sigma)$ is an open neighborhood of $\sigma\in\Sing(X)$. We let 
\begin{equation}\label{e.choice.vep}
	\vep_{\Exp} = \frac12\min\left\{\rho_1 K_0^{-1}\cdot\inf_{y\in W_r^c} |X(y)| ,\,\, \vep_1\right\}>0. 
\end{equation}
This choice of $\vep_{\Exp}$ means that 
for every orbit segment $(x,t)$  that is entirely outside $W_r^c$ and every $y\in B_{t,\vep_{\Exp}} (x)\cap \cN_{\rho_0|X(x)|}(x)$, the orbit of $y$ is $\rho_1$-scaled shadowed by the orbit of $x$ up to time $t$. Furthermore, the local product structure defined in Section~\ref{ss.fake} exists at every $\cP_{s,x}(y)$ for $y\in  B_{t,\vep_{\Exp}} (x)\cap \cN_{\rho_0|X(x)|}(x)$ and $s\in[0,t]$.

\subsubsection{Contraction, expansion and domination away from singularity}
We continue our discussion on orbit segments outside $W_r$. The following two lemmas provide control over the $E$ and $F$-length of $y$ as it is $\rho$-scaled shadowed by the orbit of $x$.

\begin{lemma}\cite[Lemma 7.4]{PYY23}\label{l.reg}
	There exists  $\lambda_R>1$, such that for every orbit segment $(x,t)\subset W_r^c$ and $y\in B_{t,\vep_{\Exp}}(x)$ with $t\ge 1$, we have 
	\begin{enumerate}[label=(\alph*)]
		\item assume that 
		\begin{equation}\label{e.Flarge1}
			d^F_x(\cP_x(y))\ge d^E_x(\cP_x(y)), \hspace{1cm} \mbox{($F$ large 	near $x$)}
		\end{equation}
		then we have 
		\begin{equation*}
			d^F_{x_t}(\cP_{x_t}(y_t))\ge \lambda_R^t\cdot d^E_{x_t}(\cP_{x_t}(y_t)); \hspace{1cm} \mbox{($F$ exponentially large 	near $x_t$)}
		\end{equation*}
		\item  assume that 
		\begin{equation}\label{e.Elarge1}
			d^E_{x_t}(\cP_{x_t}(y_t))\ge d^F_{x_t}(\cP_{x_t}(y_t)), \hspace{1cm} \mbox{($E$ large 	near $x_t$)}
		\end{equation}
		then we have 
		\begin{equation*}
			d^E_x(\cP_x(y))\ge \lambda_R^t\cdot d^F_x(\cP_x(y)). \hspace{1cm} \mbox{($E$ exponentially large 	near $x$)}
		\end{equation*}
	\end{enumerate} 
\end{lemma}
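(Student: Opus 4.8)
\textbf{Proof strategy for Lemma~\ref{l.reg}.} The plan is to run a cone-field argument in the fake-foliation coordinates of Section~\ref{ss.fake}, with the one-step domination~\eqref{e.onestepDS1} of the scaled linear Poincar\'e flow as the engine, and to control the nonlinearity and the fractional part of $t$ with the uniform estimates of Proposition~\ref{p.tubular3} and Lemma~\ref{l.scaledflow}. It suffices to prove (a): statement (b) follows by applying (a) to $-X$, exchanging the roles of $E$ and $F$ and of forward and backward time, keeping Remark~\ref{r.symmetry} in mind together with the fact that $\cF^E_{\cdot,\cN}$ is invariant under $\cP_{-1,\cdot}$.

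First I would record the standing set-up. Since $(x,t)\subset W_r^c$, the flow speed is bounded below along $(x,t)$, so by the choice of $\vep_{\Exp}$ in~\eqref{e.choice.vep} every $y\in B_{t,\vep_{\Exp}}(x)$ is $\rho_1$-scaled shadowed by the orbit of $x$ up to time $t$; hence, by Lemma~\ref{l.shadowing}, $\cP_{s,x}(\cP_x(y))=\cP_{x_s}(y_s)$ for $s\in[0,t]$, the local product structure $[\cdot,\cdot]$ of $\cF^E_{x_s,\cN},\cF^F_{x_s,\cN}$ is well defined at each $\cP_{x_s}(y_s)$, and therefore $d^E_{x_s}(\cP_{x_s}(y_s))$ and $d^F_{x_s}(\cP_{x_s}(y_s))$ make sense along the whole segment. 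Write $w_s=\cP_{x_s}(y_s)$. The hypothesis $d^F_x(w_0)\ge d^E_x(w_0)$ means, since the leaves of $\cF^E_{x,\cN},\cF^F_{x,\cN}$ are tangent to the $(\alpha,E_N)$- and $(\alpha,F_N)$-cones, that $\exp_x^{-1}(w_0)/|X(x)|\in N(x)$ lies in the $(\alpha',F_N)$-cone for a controlled $\alpha'=\alpha'(\alpha)$. Because the fake foliations are invariant under $\cP_{1,\cdot}$ and the generating maps $\tilde P_{1,x_s}$ are $\iota$-$C^1$-close to the linear maps $\psi^*_{1,x_s}=\psi^*_1|_{E_N(x_s)}\oplus\psi^*_1|_{F_N(x_s)}$, the map induced by $\cP_{1,x_s}$ on fake-foliation coordinates is $C(\alpha+\iota)$-close to $\psi^*_{1,x_s}$; consequently, once $\alpha,\iota$ are small as already required in Section~\ref{ss.fake}, forward invariance of the $(\alpha',F_N)$-cone field (a consequence of~\eqref{e.onestepDS1}) propagates inductively to give $d^F_{x_k}(w_k)\ge d^E_{x_k}(w_k)$ for every integer $k\le t$, and then, after a bounded-distortion correction from Lemma~\ref{l.scaledflow}, for all real $s\in[0,t]$ — this is conclusion (a) at $s=t$ once the exponential gain below is incorporated.

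For the quantitative gain, on that cone the ratio $d^F/d^E$ is, up to the factor $(1-C(\alpha+\iota))^2$, multiplied per unit time by
$$
\frac{m\!\left(\psi^*_1|_{F_N(x_s)}\right)}{\left\|\psi^*_1|_{E_N(x_s)}\right\|}=\Big(\left\|\psi^*_1|_{E_N(x_s)}\right\|\cdot\big\|\psi^*_{-1}|_{F_N(x_{s+1})}\big\|\Big)^{-1}\ \ge\ 2
$$
by~\eqref{e.onestepDS1}; choosing $\alpha,\iota$ small enough that $\lambda_0':=2(1-C(\alpha+\iota))^2>1$ gives $d^F_{x_{\floor{t}}}(w_{\floor t})\ge(\lambda_0')^{\floor t}\,d^E_{x_{\floor t}}(w_{\floor t})$. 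It then remains to propagate over the last time-interval of length $t-\floor t<1$, which by $\|DP^*_{1,x}\|\le K_1^*$ and Lemma~\ref{l.scaledflow} changes the ratio by at most a uniform factor; one then fixes $\lambda_R>1$ so that $\lambda_R^{t}$ stays below this expression for all $t\ge1$, exactly as in~\cite[Lemma~7.4]{PYY23} (reducing first to integer times if convenient, using $\floor t\ge t-1$, and shrinking $\lambda_R$ to absorb the fractional-time distortion). The main obstacle is precisely this last bookkeeping together with the nonlinearity control: one must keep $\alpha$ and $\iota$ small enough — consistently with all the other smallness constraints on them in Section~\ref{ss.fake} and in Lemmas~\ref{l.Elarge}--\ref{l.Flarge} — that both the cone field remains forward-invariant and the per-step ratio gain $\lambda_0'$ stays strictly above $1$, and then convert the per-integer-step domination gain into a clean exponential estimate valid from $t=1$ on.
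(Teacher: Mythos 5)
Your proposal follows exactly the approach the paper sketches: it says only that the lemma ``is a simple consequence of the dominated splitting $E_N\oplus F_N$ and the uniform continuity of $D\cP_{1,x}$'' and omits the argument (deferring to~\cite[Lemma 7.4]{PYY23}). Your cone-field propagation in the fake-foliation coordinates, the per-step ratio gain computed from the one-step domination~\eqref{e.onestepDS1} via $m(\psi^*_1|_{F_N})/\|\psi^*_1|_{E_N}\|=\big(\|\psi^*_1|_{E_N(x)}\|\,\|\psi^*_{-1}|_{F_N(x_1)}\|\big)^{-1}\ge 2$, the role of $\vep_{\Exp}$ in keeping the Bowen-ball orbit inside Liao's tubular neighborhood (so that $d^E,d^F$ stay defined along the segment), and the $(-X)$-symmetry used to deduce (b) from (a), are all correctly identified and are the ingredients the paper has in mind.

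There is one soft spot in your handling of non-integer $t$. You let $c$ denote the uniform distortion of the ratio over the final fractional interval of length $t-\floor t$, and propose to absorb it by ``shrinking $\lambda_R$.'' That does not work when $\floor t=1$: after one integer step the ratio is only $\ge \lambda_0'$, and after the fractional step only $\ge \lambda_0'/c$, which may be $\le 1$ unless $c<\lambda_0'$; no $\lambda_R>1$ gives $\lambda_R^t\le \lambda_0'/c$ for $t$ slightly above $1$ in that case. To close this, either note that every application of Lemma~\ref{l.reg} in the paper (in the proof of Theorem~\ref{t.exp} and in Lemma~\ref{l.Flarge1}) is made with $t\in\NN$, so the non-integer case is never used; or observe that a further time-rescaling (replacing $X$ by $cX$ for $c$ large) makes the one-step gain as large as one likes while the worst fractional-interval distortion stays bounded by the domination constant $C$ from~\eqref{e.dom.spl}, so one can arrange $\lambda_0'>c$ uniformly and then $\lambda_R:=(\lambda_0'/c)^{1}$ works for all $t\ge 1$ (with a further margin from $\floor t\ge t-1$). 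Aside from this bookkeeping point, the proof is sound and matches the paper's stated strategy.
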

This is a simple consequence of the dominated splitting $E_N\oplus F_N$ and the uniform continuity of $D\cP_{1,x}$. The proof is omitted.

\begin{remark}
	Unlike in Definition \ref{d.multising}, Lemma \ref{l.Elarge} or \ref{l.Flarge}, here we do not need $t$ to be large.
\end{remark}

\begin{lemma}\label{l.expansion.reg}
	For every orbit segment $(x,t)\subset W_r^c$ with $x,x_t\notin V$, the following statements hold for every $y\in B_{t,\vep_{\Exp}}(x)$:
	\begin{enumerate}
		\item if $t\le T_V$, 
		$$
		d^{F}_{x}(\cP_x(y)) \le L_X (K_1')^{T_V}  \cdot d^{F}_{x_t}(\cP_{x_t}(y_t));
		$$
		\item if $t> T_V$, then for $\omega<1$ defined by~\eqref{e.choice.omega},
		$$
		d^{F}_{x}(\cP_x(y)) \le L_X (\eta')^{-t}  \cdot d^{F}_{x_t}(\cP_{x_t}(y_t))\le  \omega\cdot d^{F}_{x_t}(\cP_{x_t}(y_t)). 
		$$
	\end{enumerate}
	A similar statement holds for the $E$-length of $y$ by considering $-X$. 
\end{lemma}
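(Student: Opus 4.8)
The plan is to decompose the orbit segment $(x,t)$ into a single ``long'' piece plus at most one ``short'' piece, and then apply the hyperbolicity of the scaled linear Poincar\'e flow (Definition~\ref{d.multising}~(2), more precisely the second inequality in \eqref{e.hyp}) along the long piece, and the a priori bound $\|D\cP_{1,x}\|\le K_1'$ (Proposition~\ref{p.tubular3}~(3)) along the short piece. The key point is that on $W_r^c$ the flow speed is bounded below, so the scaled linear Poincar\'e flow is uniformly continuous at the scale $\rho_1 K_0^{-1}|X(x)|$, and by our choice of $\vep_{\Exp}$ in \eqref{e.choice.vep}, the orbit of $y$ is $\rho_1$-scaled shadowed by the orbit of $x$ throughout $(x,t)$; hence $\cP_{s,x}(y)$ is well-defined and lies in the normal plane of $x_s$ for every $s\in[0,t]$, and the fake foliations (and thus the $E$- and $F$-lengths) are well-posed along the whole segment.

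First I would establish the case $t\le T_V$. Since $y$ is $\rho_1$-scaled shadowed by $x$, the point $\cP_x(y)^F$ (the $F$-component of $\cP_x(y)$ in the local product structure at $x$) lies on the leaf $\cF^F_{x,\cN}(x)$, which is tangent to the $(\alpha,F_N)$-cone; the holonomy $\cP_{s,x}$ maps this $F$-leaf into the $F$-leaf at $x_s$ (the invariance \eqref{e.inv}), and since $\|D\cP_{1,x_s}\|\le K_1'$ for each integer step, and the flow speed on $W_r^c$ is comparable along the orbit up to a factor controlled by $L_X$ (via the bound $|X(x_1)|/|X(x)|\le e^{L_X}\le L_X$ after speeding up, or more simply by the Lipschitz estimate on $X$), one gets $d^F_x(\cP_x(y))\le L_X (K_1')^{\lceil t\rceil}d^F_{x_t}(\cP_{x_t}(y_t))\le L_X (K_1')^{T_V}d^F_{x_t}(\cP_{x_t}(y_t))$; a fractional step at the end contributes a bounded factor absorbed into $L_X$. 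This is exactly statement~(1). For statement~(2), when $t>T_V$, since $x,x_t\notin V = B_{r_0}(\Sing_{\tilde\Lambda}(X))$ and $t>T_V$, Definition~\ref{d.multising}~(2) applies: $\prod_{i=0}^{\lfloor t\rfloor-1}\|\psi^*_{-1}|_{F_N(x_{\lfloor t\rfloor-i})}\|\le \eta^{-t}$, i.e.\ the scaled linear Poincar\'e flow is exponentially contracting on $F_N$ backwards. Since the fake $F$-foliation is tangent to the thin $(\alpha,F_N)$-cone and $\tilde P_{1,x_s}$ is $C^1$-close to $\psi^*_{1,x_s}$, the actual contraction of $F$-length differs from $\eta^{-t}$ by at most a factor $(\eta/\eta')^{t}$ (this is where the choice $a=(\eta-\eta')/2$ in Section~\ref{ss.choice.vexp} and Proposition~\ref{p.tubular3}~(1)--(2) enter), yielding an effective rate $(\eta')^{-t}$; together with the flow-speed comparison factor $L_X$ this gives $d^F_x(\cP_x(y))\le L_X(\eta')^{-t}d^F_{x_t}(\cP_{x_t}(y_t))$, and since $t>T_V$ we have $L_X(\eta')^{-t}\le L_X(\eta')^{-T_V}=\omega<1$ by \eqref{e.choice.omega}.

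The $E$-length statement follows by the symmetry $X\leftrightarrow -X$ (Remark~\ref{r.symmetry}): $E_N$ for $X$ is $F_N$ for $-X$, the first inequality in \eqref{e.hyp} for $X$ becomes the $F_N$-contraction for $-X$, the fake $E$-foliation for $X$ is the fake $F$-foliation for $-X$, and reversing time swaps the roles of $x$ and $x_t$; applying the already-proven statements~(1) and (2) to $-X$ gives the forward expansion of the $E$-length along $(x,t)$. I expect the main obstacle to be a careful bookkeeping issue rather than a conceptual one: namely, propagating the estimate through the non-integer part of $t$ and through the passage from the scaled linear Poincar\'e flow $\psi^*$ to the nonlinear holonomy $\cP_{1,x}$ on the fake foliation leaves (controlling the distortion of the foliation holonomy versus the linear map), making sure all constants ($L_X$, $K_1'$, $\eta'$) are uniform over $x\in W_r^c\cap V^c$ and depend continuously on $X$ in the $C^1$ topology so that Lemma~\ref{l.expansion.reg} can later be applied to nearby vector fields $Y$. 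Most of this is a direct adaptation of \cite[Lemma 7.4]{PYY23} and the arguments of Section~\ref{ss.sing}, so I would cite those for the routine parts and only spell out the modification caused by the absence of a uniform lower bound on the injectivity radius of the foliations (which is precisely why the hypothesis $x,x_t\notin V$ and the relative scale $\rho_1|X(x)|$ are needed).
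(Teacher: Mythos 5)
Your overall strategy---pull $\cP_{x_t}(y_t)$ back to $\cN(x)$ along the fake $F$-leaf, use the a priori derivative bound $K_1'$ when $t\le T_V$ and the product estimate from multi-singular hyperbolicity when $t>T_V$, and invoke the $X\leftrightarrow -X$ symmetry for the $E$-length---is exactly the paper's. The place where you diverge, and where the proposal as written has an error, is the choice of linear Poincar\'e flow and the accounting of $L_X$. The paper pulls back via the \emph{unscaled} flow $(\psi_t)$, using \eqref{e.hyp0}, precisely because $d^F_x$ is an actual arc length and, since $(x,t)\subset W_r^c$, the flow speed is bounded below so the relative-scale tubular neighborhood has uniform diameter; the paper flags this choice explicitly in the sentence immediately after the proof. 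You instead use $(\psi^*_t)$ and \eqref{e.hyp}. This is not fatal: the two bounds are related by the telescoping ratio $|X(x)|/|X(x_{\lfloor t\rfloor})|$, which is bounded on $W_r^c$. But that bound is $\sup|X|/\inf_{W_r^c}|X|$, an $r$-dependent constant, not $L_X$. You try to identify it with $L_X$ by invoking ``$|X(x_1)|/|X(x)|\le e^{L_X}\le L_X$,'' but this controls only a single step (iterating gives $e^{L_X t}$, which is useless over a long segment), and $e^{L_X}\le L_X$ is false for any $L_X>0$. In the paper's proof the factor $L_X$ has a different job: it bounds the derivative of the holonomy over the fractional interval $[\lfloor t\rfloor, t]$, and it enters both cases (1) and (2) for that reason alone. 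If you insist on the scaled flow, the lemma would hold with a different constant $C_r$ in place of $L_X$, and the definition of $\omega$ in \eqref{e.choice.omega}---hence also $T_V$---would need adjusting so that $\omega<1$ still holds. Switching to \eqref{e.hyp0} sidesteps all of this and yields exactly the stated constants.
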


\begin{proof}	
	For simplicity we write $y^s = \cP_{x_s}(y_s)$. As before,  We use $\floor{a}$ to denote the integer part of $a$.
	
	\noindent Case (1). If $t< T_V$,  let $\ell$ be any arc in $\cN_{\rho_0|X(x_t)|} (x_t)$ joining $x_t$ and $\left(y^t\right)^F$ with  length equals $d^F_{x_t}(y^t)$. Then on $\ell$, $\cP_{-t,x_t}$ is well-defined. By Proposition~\ref{p.tubular3} (3) we have 
	\begin{align*}
		d_x^F(\cP_x(y)) &\le \cP_{-t, x_t} (\ell) \le 
		\left( \sup_{z\in \ell}\left\| D_z\cP_{-t, x_t} \right\|\right) d_{x_t}^F(y^t)\\ 
		&\le L_X \left(\sup_{z\in \cP_{t-\floor{t}, x_t}(\ell)}\left\| D_z\cP_{-\floor{t}, x_{\floor{t}}} \right\|\right) d_{x_{t}}^F(y^t)\\
		&\le L_X (K_1')^{T_V}\cdot  d_{x_t}^F(y^t).
	\end{align*}
	
	\noindent Case (2). Let $t>T_V$, and $\ell$ be chosen as in Case (1). This time we use~\eqref{e.hyp0}, Proposition~\ref{p.tubular3} (1) and the choice of $\vep_{\Exp}$ to obtain 
	\begin{align*}
		&d_x^F(\cP_x(y))\\ 
		&\le L_X \prod_{i=0}^{\floor{t}-1}
		\left(\sup_{z\in \cP_{-(t -\floor{t}+i),x_t}(\ell) } \|D_z\cP_{-1,x_{\floor{t}-i}}|_{F_{N}(x_{\floor{t}-i})}\| + a\right)
		\cdot  d_{x_t}^F(y^t)\\
		&\le L_X \prod_{i=0}^{\floor{t}-1}
		\left( \|\psi_{-1,x_{\floor{t}-i}}\| + a \right)
		\cdot  d_{x_t}^F(y^t)\\
		&\le L_X (\eta')^{-\floor{t}} \cdot  d_{x_t}^F(y^t) \le \omega\cdot  d_{x_t}^F(y^t),
	\end{align*}
	as needed.
\end{proof}

Note that here we only need estimates on $(\psi_t)$ as opposed to $(\psi_t^*)$ because $(x,t)$ are outside $W_r$, therefore Liao's tubular neighborhood has uniform diameter. In comparison,  estimates on $(\psi_t^*)$ will be crucial in Section~\ref{s.Pliss} and~\ref{s.bowen} when we use simultaneous Pliss times for $(\psi_t^*)$ to show the scaled shadowing property for points in the Bowen ball.

\subsubsection{Proof of Theorem~\ref{t.exp}}

Given an ergodic invariant probability measure $\mu$, if it is a point mass of a singularity $\sigma$, then the hyperbolicity of $\sigma$ implies that for $\mu$ a.e. point (in this case, $\sigma$ itself), the bi-infinite Bowen point is just the point $\{\sigma\}$. So below we consider the case when $\mu$ is not a point mass of a singularity. By Lemma~\ref{l.sing.active}, all singularities in $\supp\mu$ are active, and two definitions of multi-singular hyperbolicity coincides. Therefore all the results in the previous subsection apply.

We assume that $x\in\supp\mu$ is not contained in the stable or the unstable manifold of any singularity. We remark that the argument below applies to points that are not Birkhoff typical points of $\mu$. 

We will show that if $y\in \Gamma_{\vep_{\Exp}}(x)$ where $\Gamma_\vep(x)$ is the bi-infinite Bowen ball at $x$ with scale $\vep$, then $y\in \Orb(x)$. First, observe that by invariance it suffices to consider $x\notin W_r$. In this case, the $E$ and $F$-length of $y$ are well-defined. We will show that 
$$
d^E_x(\cP_x(y)) = d^F_x(\cP_x(y)) = 0.
$$  
Assuming for the sake of contradiction that this is not true, we may require that 
\begin{equation}\label{e.exp.Flarge}
d^E_x(\cP_x(y)) \le d^F_x(\cP_x(y)),\mbox{ and }  d^F_x(\cP_x(y))>0.
\end{equation}
The other case can be handled in the same way by considering $-X$. 

Given such a point $x$, we parse the time interval $[0,\infty)$ as
$$
0\le T_1^i < T_1^o < T_2^i < T_2^o <\cdots,
$$
where (here `$i$' and `$o$' refer to getting {\em in} and {\em out} of $W_r$):
\begin{itemize}
	\item {all $T_k^*$, $*= i,o$, $k=1,2,\ldots$ are integers;}
	\item the (open) orbit segment $(x_{T_k^i+1}, T_k^o-T_k^i-2)$ is contained in $W_r$;  consequently, $T_k^o-T_k^i>s_r$ where $s_r = \min \{s_r(\sigma):\sigma\in\Sing_{\tilde\Lambda}(X)\}$;
	\item  the (closed) orbit segment $(x_{T_k^o}, T_{k+1}^i-T_k^o)$ is not in $ W_r$;
	\item $x_{T_k^*}\notin V$, $i=o,i$.
\end{itemize}
i.e., the forward orbit of $x$ enters $W_r$ for the $k$th time in the time interval $[T_k^i, T_{k}^i+1]$, and then leaves $W_r$ for the $k$th time in the time interval $[T_k^o-1,T_k^o]$. 
By the definition of $W_r$, orbits can only enter and leave $W_r$ through $\partial W_r^\pm\subset V$, so we have 
$$
x_{T_{k}^{*}}\notin V, k=1,\ldots, * = i,o. 
$$ 
This is why we can first fix $r_0$, then decrease $r$ without affecting $T_V = T_{B_{r_0}(\Sing_{\tilde\Lambda}(X))}$.
The same parsing applied to the backward orbit of $x$ yields $T_{k}^*$, $k=-1,-2,\ldots,$ $*=i,o$.

Since we do not require $x$ to be a Birkhoff typical point of $\mu$, we do not know if its forward orbit enters $W_r$ infinitely many times. Then, there are two cases to consider. 

\medskip
\noindent{\bf  Case 1.} The forward orbit of $x$ enters $W_r$ infinitely many times.

In this case, we first apply Lemma~\ref{l.reg} (a) on the orbit segment given by the time interval $(0, T_1^i)$ to obtain
$$
d^F_{x_{T_1^i}}\left(\cP_{x_{T_1^i}}(y_{T_1^i})\right)\ge d^E_{x_{T_1^i}}\left(\cP_{x_{T_1^i}}(y_{T_1^i})\right). \hspace{1cm} (\mbox{$F$ larger than $E$ at } x_{T_1^i})
$$
Then, we consider the time interval $(T_1^i,T_1^o)$. During this time period, the orbit of $x$ enters $W_r(\sigma)$ of an active singularity $\sigma$. 

\medskip
\noindent{\bf Sub-case 1}. $\sigma$ is of Lorenz-type. Then we apply Lemma~\ref{l.Flarge} (1) to get
$$
d^F_{x_{T_1^o}}\left(\cP_{x_{T_1^o}}(y_{T_1^o})\right)\ge d^E_{x_{T_1^o}}\left(\cP_{x_{T_1^o}}(y_{T_1^o})\right). \hspace{1cm} (\mbox{$F$ larger than $E$ at } x_{T_1^o})
$$

\medskip
\noindent{\bf Sub-case 2}. $\sigma$ is of reverse Lorenz-type. Then we apply Lemma~\ref{l.Elarge} (1) for $-X$ (keep in mind Remark~\ref{r.symmetry}, switch $E$ and $F$, and switch $i$ and $o$) to get
$$
d^F_{x_{T_1^o}}\left(\cP_{x_{T_1^o}}(y_{T_1^o})\right)\ge d^E_{x_{T_1^o}}\left(\cP_{x_{T_1^o}}(y_{T_1^o})\right). \hspace{1cm} (\mbox{$F$ larger than $E$ at } x_{T_1^o})
$$
These two cases lead to the same conclusion.

Moving on to the time interval $(T_1^o,T_2^i)$ which is outside $W_r$, Lemma~\ref{l.reg} (a) again yields 
$$
d^F_{x_{T_2^i}}\left(\cP_{x_{T_2^i}}(y_{T_2^i})\right)\ge d^E_{x_{T_2^i}}\left(\cP_{x_{T_2^i}}(y_{T_2^i})\right). \hspace{1cm} (\mbox{$F$ larger than $E$ at } x_{T_2^i})
$$

Recursively, we get that for every $k\in\NN$, $*=i,o$, 
\begin{equation}\label{e.exp.Flarge11}
d^F_{x_{T_k^*}}\left(\cP_{x_{T_k^*}}(y_{T_k^*})\right)\ge d^E_{x_{T_k^*}}\left(\cP_{x_{T_k^*}}(y_{T_k^*})\right). \hspace{0.5cm} (\mbox{$F$ larger than $E$ at every } x_{T_k^*})
\end{equation}

Next we consider the expansion for the $F$-length. It uses a recursive argument similar to the previous one, so we will only show the expansion up to the time $T_2^i$.  

For the time interval $(T_1^i, T_1^o)$ spent inside $W_r$, if the associated singularity $\sigma$ is of Lorenz-type, then we use Lemma~\ref{l.Flarge} (2) to get (recall the definition of $\lambda_0$ by~\eqref{e.choice.lambda0})
\begin{equation}\label{e.exp.11}
d^F_{x_{T_1^i}}\left(\cP_{x_{T_1^i}}(y_{T_1^i})\right)\le \lambda_0^{-s_r}\cdot  d^F_{x_{T_1^o}}\left(\cP_{x_{T_1^o}}(y_{T_1^o})\right). \hspace{0.5cm} (\mbox{exponentially large $F$-length})
\end{equation}
If the associated singularity $\sigma$ is of reverse Lorenz-type, then the same conclusion follows from Lemma~\ref{l.Elarge} (2) applied to $-X$. 

For the time interval $(T_1^o, T_2^i)$ which is outside $W_r$,  we apply Lemma~\ref{l.expansion.reg} to get
\begin{equation}\label{e.exp.22}
d^F_{x_{T_1^o}}\left(\cP_{x_{T_1^o}}(y_{T_1^o})\right)\le
\begin{cases}
	L_X(K_1')^{T_V}\cdot d^F_{x_{T_2^i}}\left(\cP_{x_{T_2^i}}(y_{T_2^i})\right), &\mbox{ if $T_2^i -T_1^o \le T_V$};\\
	\omega \cdot d^F_{x_{T_2^i}}\left(\cP_{x_{T_2^i}}(y_{T_2^i})\right), &\mbox{ if $T_2^i -T_1^o > T_V$}.
\end{cases}
\end{equation}

Combining~\eqref{e.exp.11} and~\eqref{e.exp.22} and keeping in mind the choice of $b>1$ by~\eqref{e.choice.b}, we have 
$$
d^F_{x_{T_1^i}}\left(\cP_{x_{T_1^i}}(y_{T_1^i})\right)\le \max\{\omega,b^{-1}\}\cdot d^F_{x_{T_2^i}}\left(\cP_{x_{T_2^i}}(y_{T_2^i})\right);
$$
i.e., between two consecutive entries to $W_r$, the $F$-length of $y$ is expanded  by a factor  that is greater than one.

Recursively, for every $k\ge 2$ we have 
\begin{equation}\label{e.exp.33}
d^F_{x_{T_1^i}}\left(\cP_{x_{T_1^i}}(y_{T_1^i})\right)\le (\max\{\omega,b^{-1}\})^{k-1}\cdot d^F_{x_{T_{k}^i}}\left(\cP_{x_{T_k^i}}(y_{T_k^i})\right).
\end{equation}
 Note that $d^F_{x_{T_{k}^i}}\left(\cP_{x_{T_k^i}}(y_{T_k^i})\right)$ remains bounded as it is always contained in Liao's tubular neighborhood of size $\rho_0|X|$ thanks to the choice of $\vep_{\Exp}$. Also note $\max\{\omega,b^{-1}\}<1$. Therefore, in order for~\eqref{e.exp.33} to hold for all $k\ge 2$, one must have 
$$
d^F_{x_{T_1^i}}\left(\cP_{x_{T_1^i}}(y_{T_1^i})\right)=0,
$$
contradicting our assumption that $ d^F_x(\cP_x(y))>0$.

\medskip
\noindent{\bf  Case 2.} The forward orbit of $x$ enters $W_r$ only finitely many times.

In this case, \eqref{e.exp.Flarge11} still hold for every $T_k^*$, $*=i,o$. Let $T_N^o$ be the last time that the forward orbit of $x$ leaves $W_r$ (and thus exits $V = B_{r_0}(\Sing_{\tilde\Lambda}(X))$). Then  \eqref{e.exp.Flarge11} gives
$$
d^F_{x_{T_N^o}}\left(\cP_{x_{T_N^o}}(y_{T_N^o})\right)\ge d^E_{x_{T_N^o}}\left(\cP_{x_{T_N^o}}(y_{T_N^o})\right). \hspace{1cm} (\mbox{$F$ larger than $E$ at } x_{T_N^o})
$$
For the time interval $(T_N^o,+\infty)$, the corresponding orbit segment does not enter $W_r$ again, although it may enter $V = B_{r_0}(\Sing_{\tilde \Lambda}(X))$. Therefore Lemma~\ref{l.expansion.reg} yields, for every $t> T_V$ such that $x_t\notin V$ (note that such $t$ exists and can be made arbitrarily large), 
\begin{equation*}\label{e.exp.44}
	d^F_{x_{T_N^o}}\left(\cP_{x_{T_N^o}}(y_{T_N^o})\right)\le 
		L_X(\eta')^{-t}\cdot d^F_{x_{t+T_N^o}}\left(\cP_{x_{t+T_N^o}}(y_{t+T_N^o})\right).
\end{equation*}
The boundedness of $ d^F_{x_{t+T_N^o}}\left(\cP_{x_{t+T_N^o}}(y_{t+T_N^o})\right)$ for all $t>T_V$ means that $d^F_{x_{T_N^o}}\left(\cP_{x_{T_N^o}}(y_{T_N^o})\right)=0$ and therefore  $d^F_x(\cP_x(y))=0$, which is a contradiction.

We conclude the proof of Theorem~\ref{t.exp} and Theorem~\ref{m.A}.

\subsection{Proof of Theorem~\ref{m.A1}} Let $\Lambda$ be a multi-singular hyperbolic compact invariant set of a $C^1$ vector field $X$ with positive topological entropy and is isolated, i.e., it is the maixmal invariant set in a neighborhood $U$. It has been shown in \cite[Proposition 1.1]{LSWW} (which follows the classical work of Katok \cite{Katok80, Katok}; see also \cite[Theorem B]{PYY21} and \cite{LLL}) that $\Lambda$ can be approximated by a hyperbolic horseshoe $\Lambda^0$, away from all singularities, whose topological entropy can be made arbitrarily close to that of  $\Lambda$. 

Let $\phi: \bM\to\RR$ be a continuous potential function.  By slightly modifying the result of \cite{Gel}, one can require that the topological pressure of $\phi$ on $\Lambda^0$ is close to that of $\Lambda$. Since the topological pressure of a continuous function varies lower semi-continuously on a hyperbolic horseshoe, it follows that the topological pressure of $\phi$ varies lower semi-continuously w.r.t.\,the vector field in $C^1$ topology in the following sense: If $Y_n$ is a sequence of vector field converging to $X$ in $C^1$ topology, and $\Lambda_{Y_n}$ is the maximal invariant set of $Y_n$ in $U$, then we have (note that the continuation of $\Lambda^0$, denoted by $\Lambda^0_{Y_n}$, is contained in $\Lambda_{Y_n}$) 
$$
\liminf_{n\to\infty} P(\phi,Y_n|_{\Lambda_{Y_n}}) \ge P(\phi,X|_\Lambda).
$$

On the other hand, the robustness of almost expansivity in Theorem \ref{m.A} together with the classical work of Bowen \cite{B72} shows that the metric entropy varies upper semi-continuously w.r.t.\,both the invariant measure (in weak-*  topology) and the vector field (in $C^1$ topology). It then follows that the topological pressure of a continuous function varies upper semi-continuously w.r.t.\,the system in $C^1$ topology. With the lower semi-continuity obtained earlier, we conclude that the topological pressure is a continuous function of $X$, finishing the proof of Theorem~\ref{m.A1}.

\section{The Pliss lemma, hyperbolic times, and forward / backward recurrence Pliss times}\label{s.Pliss}
In this section we introduce the Pliss lemma~\cite{Pliss} and Pliss times. It was the main tool in our previous paper~\cite{PYY23} when we deal with equilibrium states of sectional-hyperbolic attractors. In this paper, we will consider six types of Pliss times (in comparison, in~\cite{PYY23} there were only two types of Pliss times):
\begin{enumerate}
	\item $E$-forward hyperbolic times for the forward  iteration along an orbit segment $(x,t)$;
	\item $F$-backward hyperbolic times for the backward iteration along an orbit segment $(x,t)$;
	\item forward recurrence Pliss times of an orbit segment $(x,t)$ visiting a small neighborhood of $\Sing_\Lambda(X)$;
	\item backward recurrence Pliss times of an orbit segment $(x,t)$ visiting a small neighborhood of $\Sing_\Lambda(X)$;
	\item points that are $E$-forward hyperbolic times for their entire infinite forward orbit; and 
	\item points that are $F$-backward hyperbolic times for their entire infinite backward orbit.
\end{enumerate} 
The goal of this section is to prove, under proper choices of certain parameters:
\begin{itemize}
	\item the co-existence of $E$-forward hyperbolic times and forward recurrence Pliss times of an orbit segment $(x,t)$;
	\item the co-existence of $F$-backward hyperbolic times and backward recurrence Pliss times of an orbit segment $(x,t)$;
	\item the existence of infinite hyperbolic times as typical points of every ergodic invariant measure {\em except} the point mass of singularities.
\end{itemize}
The first two cases are symmetric by considering $-X$.

We start with the Pliss lemma.
\begin{theorem} (The Pliss Lemma,~\cite{Pliss} and \cite[Lemma 11.8]{Mane.book})\label{t.pliss}
	Given $A\ge b_2 > b_1 >0$, let 
	$$
	\theta_0 = \frac{b_2-b_1}{A-b_1}.
	$$
	Then, given any real numbers $a_1, a_2, \ldots, a_N$ such that 
	$$
	\sum_{j=1}^N a_j\ge b_2N, \mbox{ and } a_j\le A \mbox{ for every } 1\le j \le N,
	$$
	there exist $\ell >\theta_0 N$ and $1\le n_1 <\cdots c_\ell\le N$ so that 
	$$
	\sum_{j=n_i}^{n-1}a_j\ge b_1(n-n_i) \mbox{ for every } n_i< n \le N \mbox{ and } i=1,\ldots,\ell.
	$$
\end{theorem}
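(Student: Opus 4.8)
The plan is to reduce the statement to a combinatorial fact about the partial sums of $(a_j-b_1)$, and then to identify the Pliss times as the ``record minima from the right'' of those sums. Concretely, I would set $\sigma_0=0$ and $\sigma_n=\sum_{j=1}^n(a_j-b_1)$ for $1\le n\le N$. Under this substitution the hypothesis $\sum_{j=1}^N a_j\ge b_2N$ reads $\sigma_N\ge(b_2-b_1)N$, and $a_j\le A$ becomes the one-step bound $\sigma_n-\sigma_{n-1}\le A-b_1$ for every $n$. A direct rewriting shows that, for an index $1\le n_i\le N$, the requested inequality ``$\sum_{j=n_i}^{n-1}a_j\ge b_1(n-n_i)$ for all $n_i<n\le N$'' is equivalent to $\sigma_{n_i-1}=\min\{\sigma_m:\ n_i-1\le m\le N-1\}$. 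Hence, setting $t_i=n_i-1$, the task becomes to show that $\cP:=\{t\in\{0,\dots,N-1\}:\ \sigma_t=\min_{t\le m\le N-1}\sigma_m\}$ satisfies $|\cP|>\theta_0N$ (assuming $b_2<A$, the only non-degenerate case).

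Next I would introduce the forward-minimum function $\mu_t:=\min\{\sigma_m:\ t\le m\le N-1\}$ on $\{0,\dots,N-1\}$ and record its elementary features: since the window $\{t,\dots,N-1\}$ shrinks as $t$ increases, $\mu$ is non-decreasing in $t$; one has $\mu_0\le\sigma_0=0$, $\mu_{N-1}=\sigma_{N-1}$, and $\mu_t=\min(\sigma_t,\mu_{t+1})$, so that $t\in\cP$ precisely when $\sigma_t=\mu_t$. The heart of the argument is the telescoping identity $\mu_{N-1}-\mu_0=\sum_{t=0}^{N-2}(\mu_{t+1}-\mu_t)$ together with two observations on the summands: if $t\notin\cP$ then $\sigma_t>\mu_{t+1}$, which forces $\mu_t=\mu_{t+1}$ and the term vanishes; if $t\in\cP$ then $\mu_t=\sigma_t$ while $\mu_{t+1}\le\sigma_{t+1}$, so $\mu_{t+1}-\mu_t\le\sigma_{t+1}-\sigma_t\le A-b_1$. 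Since $N-1$ always belongs to $\cP$ but is not among the summation indices $0,\dots,N-2$, at most $|\cP|-1$ of the summands are nonzero, whence $\mu_{N-1}-\mu_0\le(|\cP|-1)(A-b_1)$.

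To conclude I would combine this with the lower estimate $\mu_{N-1}-\mu_0\ge\sigma_{N-1}\ge\sigma_N-(A-b_1)\ge(b_2-b_1)N-(A-b_1)$, which uses $\mu_0\le0$ and the one-step bound at the last step. Comparing the two gives $(b_2-b_1)N-(A-b_1)\le(|\cP|-1)(A-b_1)$, hence $|\cP|\ge\frac{b_2-b_1}{A-b_1}N=\theta_0N$; a routine inspection of the equality case (using $b_2<A$, which makes the forced extremal configuration incompatible with $\sum a_j=b_2N<AN$) upgrades this to the strict bound $\ell:=|\cP|>\theta_0N$ claimed in the theorem. Listing $\cP$ in increasing order as $t_1<\dots<t_\ell$ and putting $n_i=t_i+1$ then produces the required Pliss times.

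As for difficulty, there is no genuinely hard step here: the whole content is the reformulation in terms of partial sums together with the recognition that the telescoping sum of the forward-minimum function ``sees'' only the Pliss times. The single place that demands care is the index bookkeeping — the sum in the conclusion runs to $n-1$ rather than $n$, and the boundary value $t=N-1$ behaves specially — which is exactly why I would keep the window $\{t,\dots,N-1\}$ fixed throughout and track the unit shifts $\mu_{t+1}-\mu_t$ explicitly rather than passing to a cleaner but less faithful normalization.
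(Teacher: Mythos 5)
Your proof is correct, and it follows the same record-minima/telescoping argument used in the cited reference (Mañé, Lemma 11.8); the paper itself only states the lemma with a citation and gives no proof, so there is nothing further to compare against. The translation to partial sums $\sigma_n=\sum_{j\le n}(a_j-b_1)$ and the identification of Pliss times with $\{t:\sigma_t=\min_{t\le m\le N-1}\sigma_m\}$ are exactly right, and the bookkeeping around the two boundary indices ($\mu_0\le\sigma_0=0$ and $N-1\in\cP$ being excluded from the telescoping sum) is handled carefully.

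One remark worth spelling out, since you gesture at it but leave it compressed: the statement as printed says $A\ge b_2$, but the strict conclusion $\ell>\theta_0 N$ genuinely requires $A>b_2$ (if $A=b_2$ then $\theta_0=1$ and $\ell>N$ is impossible). Under $A>b_2$ your equality analysis does close: if $|\cP|=\theta_0 N$ (so $\theta_0 N\in\ZZ$), then all three inequalities in your chain are tight, which forces (i) $\mu_0=0$, hence $0\in\cP$; (ii) $a_N=A$ and $\sum a_j=b_2N$; and (iii) for every $t\in\cP$ with $t<N-1$, $\mu_{t+1}-\mu_t=A-b_1$, which in turn forces both $a_{t+1}=A$ and $\mu_{t+1}=\sigma_{t+1}$, i.e.\ $t+1\in\cP$. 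Starting from $0\in\cP$ this propagates to $\cP=\{0,\dots,N-1\}$ and $a_j=A$ for all $j$, so $\sum a_j=AN>b_2N$, contradicting (ii). That is the "routine inspection" made explicit; including it would make the write-up fully rigorous.
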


Given a sequence $\{a_i\}_{i=1}^n$ and $b_1>0$. We say that it is a $b_1$-forward Pliss sequence, if for every $0< j\le n$ it holds 
\begin{equation}\label{e.pliss}
	\sum_{i=1}^{n-1}a_i\ge b_1 n. 
\end{equation}
Backward Pliss sequences can be defined similarly by considering the sequence $\tilde a_i = a_{n-i+1}$.

The next lemma is straightforward.
\begin{lemma}\label{l.gluepliss} Assume that 
	$\{a_i\}_{i=1}^n$ and $\{a'_i\}_{i=1}^m$ are two $b_1$-forward Pliss sequences. Define the sequence $\{c_j\}_{j=1}^{n+m}$ as
	$$
	c_j = \begin{cases}
		a_j,& j\le n\\
		a'_{j-n},& j>n.
	\end{cases}
	$$
	Then the sequence $\{c_j\}_{j=1}^{n+m}$ is a $b_1$-forward Pliss sequence. 
\end{lemma}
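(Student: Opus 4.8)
The plan is to verify the defining partial-sum inequality for the concatenated sequence $\{c_j\}_{j=1}^{n+m}$ directly, by splitting the range of indices at the junction point $n$ and reducing each case to one of the two hypotheses. Concretely, I would fix an admissible index and isolate which block(s) of $\{c_j\}$ the relevant partial sum involves: either it lies entirely inside the first block (indices $\le n$), or it reaches into the second block.

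In the first case the partial sum under consideration coincides with the corresponding partial sum of $\{a_i\}_{i=1}^{n}$, so the required bound is immediate from the hypothesis that $\{a_i\}_{i=1}^{n}$ is a $b_1$-forward Pliss sequence. In the second case I would decompose the partial sum as the sum over the \emph{full} first block $\{a_i\}_{i=1}^{n}$ plus the sum over an initial (resp.\ terminal) segment of $\{a'_i\}_{i=1}^{m}$; invoking the Pliss hypothesis for $\{a_i\}$ on its full length and for $\{a'_i\}$ on that segment, and adding the two estimates, reproduces exactly $b_1$ times the total length of the partial sum. Since both the sums and the lengths are additive across the junction, the inequality $\sum c_i \ge b_1\cdot(\text{length})$ propagates, which is what is wanted.

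Carrying this out over all admissible indices exhausts the cases, so $\{c_j\}_{j=1}^{n+m}$ is a $b_1$-forward Pliss sequence. As indicated in the definition, backward Pliss sequences are handled by reversing the order of the entries, so the analogous gluing statement for backward Pliss sequences follows at once by applying the above to the reversed sequences, which simply interchanges the roles of the two blocks. The only point requiring (minimal) care is the bookkeeping — making sure the full-length hypothesis is applied to the block that is completely covered and the partial-sum hypothesis to the block that is only partially covered — so there is no genuine obstacle here; this is the routine ``concatenation of Pliss sequences'' computation.
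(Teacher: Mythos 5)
Your argument is correct and is exactly the intended (and essentially only) argument: for indices $j\le n$ the partial sum of $\{c_j\}$ is a partial sum of $\{a_i\}$, and for $j=n+k>n$ one writes $\sum_{i=1}^{n+k}c_i=\sum_{i=1}^n a_i+\sum_{i=1}^k a'_i\ge b_1 n+b_1 k=b_1(n+k)$, using the Pliss bound for $\{a_i\}$ at full length $n$ and for $\{a'_i\}$ at length $k$. The paper states only that the lemma is ``straightforward'' and supplies no proof, so there is nothing to compare against; your write-up (including the reversal handling of the backward case) is precisely what the authors have in mind.
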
 
A similar statement holds for backward Pliss sequences.

\subsection{$E$ and $F$-hyperbolic times for an orbit segment $(x,t)$}\label{ss.hyptime}
In this section, we assume that $\Lambda$ is a multi-singular hyperbolic compact invariant set for the vector field $X$, with singular dominated splitting $E_N\oplus F_N$ on the normal bundle $N_\Lambda$. 

First we applied the Pliss lemma on the scaled linear Poincar\'e flow. 
\begin{definition}\label{d.EFPliss}
	Let $\lambda>1$. For $x\in\Reg(X)$ and $t>1$:
	\begin{itemize}
		\item We say that $x$ is a $(\lambda,E)$-forward hyperbolic time for the scaled linear Poincar\'e flow $(\psi_t^*)$ and the orbit segment $(x,t)$, if for every $j=1,\ldots,\floor{t}$ we have 
		\begin{equation}\label{e.Epliss}
			\prod_{i=0}^{j-1}	 \|\psi^*_{1}\mid_{E_N(x_{i})}\| \le \lambda^{-j}.
		\end{equation}
		\item We say that $x_t$ is a $(\lambda,F)$-backward hyperbolic time for the scaled linear Poincar\'e flow $(\psi_t^*)$ and the orbit segment $(x,t)$, if for every $j=1,\ldots,\floor{t}$ we have 
		\begin{equation}\label{e.Fpliss}
			\prod_{i=0}^{j-1}	 \|\psi^*_{-1}\mid_{F_N(x_{t-i})}\| \le \lambda^{-j}.
		\end{equation}
	\end{itemize} 
\end{definition}

The next lemma deals with the existence of $E$ and $F$-hyperbolic times. Recall $r_0$ and $U$ from previous sections, in particular, Lemma~\ref{l.robust.hyp}, \ref{l.Elarge} and \ref{l.Flarge}.
 \begin{lemma}\label{l.hyptime}
	There exists $\lambda_0>1$ and $\theta_0\in(0,1)$ such that for any open neighborhood $W$ of $\Sing_\Lambda(X)$, there exists  a constant  $T_{W}>0$ such that the following hold:
	
	\noindent {For any orbit segment $(x,t)\in \cO(U)$ satisfying the following properties:
	\begin{enumerate}
		\item  there exist $t_1,t_2\ge 0$ such that $(x_{-t_1}, t_1+t+t_2)\in \cO(U)$ and $x_{-t_1}\notin B_{r_0}(\Sing_\Lambda(X)), x_{t+t_2}\notin B_{r_0}(\Sing_\Lambda(X))$; 
		\item $x,x_t\notin W$ and $t>T_{W}$;
	\end{enumerate}
	we have:}
	\begin{itemize}
		\item there exist $\theta_0 \floor{t}$ many natural numbers $n_i$  such that each $x_{n_i}$ is a $(\lambda_0,E)$-forward hyperbolic time for $(\psi_t^*)$ and the orbit segment $(x_{n_i},t-n_i)$. 
		\item there exist $\theta_0 \floor{t}$ many natural numbers $n_i$  such that each $x_{t-n_i}$ is a $(\lambda_0,F)$-backward hyperbolic time for $(\psi_t^*)$ and the orbit segment $(x,t-n_i)$. 
	\end{itemize}  
\end{lemma}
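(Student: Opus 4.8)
The plan is to deduce Lemma~\ref{l.hyptime} from the uniform hyperbolicity estimates of Lemma~\ref{l.robust.hyp} by a direct application of the Pliss Lemma (Theorem~\ref{t.pliss}). First I would observe that the hypotheses (1) and (2) of Lemma~\ref{l.hyptime} are exactly the hypotheses of Lemma~\ref{l.robust.hyp} (taking the same $r_0$ and $U$, after possibly shrinking them); so after fixing the open neighborhood $W$ of $\Sing_\Lambda(X)$ and setting $T_W$ to be the constant provided by Lemma~\ref{l.robust.hyp} (which we may freely enlarge), we obtain the constant $\eta>1$ and the estimate \eqref{e.hyp2}, namely
$$
\prod_{i=0}^{\floor{t}-1}\left\|\psi^*_1|_{E_N(x_i)}\right\|\le \eta^{-\floor{t}}, \quad \prod_{i=0}^{\floor{t}-1}\big\|\psi_{-1}^*|_{F_N(x_{\floor{t}-i})}\big\|\le \eta^{-t}.
$$

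Next I would set up the data for the Pliss Lemma. For the $E$-direction, let $a_j = -\log\|\psi^*_1|_{E_N(x_{j-1})}\|$ for $j = 1,\dots,\floor{t}$. By Lemma~\ref{l.scaledflow} (boundedness of the scaled linear Poincaré flow at time $1$, uniformly in $x$), there is a uniform bound $A$ with $|a_j|\le A$; and the displayed estimate above says $\sum_{j=1}^{\floor{t}} a_j \ge \floor{t}\log\eta$. I would then fix $b_2 = \log\eta$ and choose $b_1 = \log\lambda_0$ with $1<\lambda_0<\eta$, so that Theorem~\ref{t.pliss} produces $\ell > \theta_0\floor{t}$ indices $1\le n_1<\cdots<n_\ell\le \floor{t}$ with $\sum_{j=n_i}^{n-1} a_j \ge b_1(n-n_i)$ for all $n_i<n\le \floor{t}$; here $\theta_0 = \frac{b_2-b_1}{A-b_1}\in(0,1)$, which depends only on $\eta,\lambda_0,A$ and hence is uniform. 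Rewriting this back in terms of products, for each such $n_i$ one has $\prod_{k=0}^{j-1}\|\psi^*_1|_{E_N((x_{n_i})_k)}\|\le \lambda_0^{-j}$ for all $1\le j\le \floor{t}-n_i$, which is precisely the condition in \eqref{e.Epliss} saying that $x_{n_i}$ is a $(\lambda_0,E)$-forward hyperbolic time for the orbit segment $(x_{n_i}, t-n_i)$. (A minor point: $\floor{t-n_i}$ versus $\floor{t}-n_i$ differ by at most one; this can be absorbed by slightly enlarging $T_W$ and shrinking $\lambda_0$, or by checking the last step directly.) The $F$-direction is handled symmetrically — either by running the same argument on the reversed sequence $\tilde a_j = -\log\|\psi^*_{-1}|_{F_N(x_{\floor{t}-j+1})}\|$ using the second estimate above, or, more cleanly, by applying the already-proven $E$-statement to the vector field $-X$, keeping in mind Remark~\ref{r.symmetry} that a singular dominated splitting of index $i$ for $X$ is one of index $\dim\bM-1-i$ for $-X$ with the roles of $E_N$ and $F_N$ exchanged, and that $\psi^*_{-t}$ for $X$ is $\psi^*_t$ for $-X$.

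I expect the only genuinely delicate point to be bookkeeping rather than mathematics: ensuring that all constants ($\lambda_0$, $\theta_0$, $A$) are genuinely uniform — independent of $W$, of the orbit segment, and (for the purposes of later robustness) of small $C^1$ perturbations of $X$ — and reconciling the integer-part discrepancies between ``$t$'', ``$\floor{t}$'', and ``$\floor{t-n_i}$'' in Definition~\ref{d.EFPliss} versus what the Pliss Lemma literally outputs. Both issues are routine: uniformity of $A$ comes from Lemma~\ref{l.scaledflow}, uniformity of $\eta$ from Lemma~\ref{l.robust.hyp}, and then $\lambda_0,\theta_0$ are chosen once and for all in between $1$ and $\eta$; the floor discrepancy costs at most a bounded error that is harmless after enlarging $T_W$. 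One should also note, as the statement already hints, that the constants $\lambda_0$ and $\theta_0$ here are chosen compatibly with the $\lambda_0$ appearing in \eqref{e.choice.lambda0} and Lemma~\ref{l.Elarge}/\ref{l.Flarge}, so in practice one fixes $\lambda_0$ to be smaller than both the hyperbolicity rate $\eta$ and the near-singularity expansion rates $\lambda_{\sigma,E},\lambda_{\sigma,F}$ simultaneously.
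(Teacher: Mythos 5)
Your proposal matches the paper's argument essentially line for line: invoke Lemma~\ref{l.robust.hyp} to obtain the cumulative estimate \eqref{e.hyp2}, feed $a_j = -\log\|\psi^*_1|_{E_N(x_{j-1})}\|$ into the Pliss Lemma with $b_2=\log\eta$, $b_1=\log\lambda_0$, and a uniform upper bound $A$ on $a_j$ (the paper uses $A=\log K_1^*$ from Proposition~\ref{p.tubular3}, which is interchangeable with your $C_1$ from Lemma~\ref{l.scaledflow}), and get the $F$-case by symmetry under $-X$. The one point of caution you raise about $\floor{t-n_i}$ vs.\ $\floor{t}-n_i$ is a non-issue since $n_i\in\NN$, so the two coincide exactly; the only genuine off-by-one bookkeeping is that the Pliss indices $n_i$ produce hyperbolic times at $x_{n_i-1}$, exactly as in the paper's proof.
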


\begin{proof}
	These two statements are symmetric by considering $-X$, so we shall only prove the first case. 
	
	Let $\eta>1$ and $T_{W}>0$ be given by Lemma~\ref{l.robust.hyp}, and fix any $\lambda_{0}\in(1, \eta)$. Below we shall prove that $T_{W}$ satisfies the desired property. 
	
	Let $(x,t)$ be an orbit segment satisfying the assumptions of Lemma~\ref{l.hyptime}. Then, by Lemma~\ref{l.robust.hyp}, in particular Equation \eqref{e.hyp2}, we have
	\begin{equation}\label{e.5.5}
		\prod_{i=0}^{\floor{t}-1}	 \left\|\psi^*_{1}|_{E_N(x_{i})}\right\| \le  \eta^{-\floor{t}}.
	\end{equation}
	
	Consider the sequence
	$$
	a_n = -\log\left\|\psi^*_{1}|_{E_N(x_{n-1})}\right\|, n=1,\ldots,\floor{t}.
	$$
	Setting $A=\log K_1^*$ with $K_1^*$ given by~Proposition~\ref{p.tubular3}, $b_2 =\log\eta$ and $b_1=\log\lambda_0$, then we have $a_j\le A, j=1,\ldots, \floor{t}$; furthermore, \eqref{e.5.5} implies that
	$$
	\sum_{i=1}^{\floor{t}}a_i\ge \floor{t}b_2.
	$$ 
	Then, Theorem~\ref{t.pliss} gives $\theta_0>0$, and times $n_1,\ldots,n_\ell$ with $\ell>\theta_0\floor{t}$ such that for each $n_j$, one has
	$$
	\sum_{i=n_j}^{n-1}a_i\ge \floor{t}b_1, \forall n =n_j+1,\ldots, \floor{t}.
	$$
	This shows that
	$$
	\prod_{i=0}^{k-1}	 \left\|\psi^*_{1,x_{n_j-1+i}}|_{E_N(x_{n_j+i})}\right\| \le  \lambda_0^{-k},\,\, \forall j=1,\ldots,\floor{t_j},
	$$
	that is, each $x_{n_j-1}$ is a $(\lambda_0,E)$-forward hyperbolic time for the orbit segment $(x_{n_j-1},t-n_j+1)$. 
\end{proof}

We conclude this subsection with the following two lemmas concerning the contraction and expansion of distance near a hyperbolic time. We assume that $a>0$ is taken small enough such that 
\begin{equation}\label{e.lambda1}
	\lambda_1:=\frac{\lambda_0}{a+1}>1,
\end{equation}
where $\lambda_0$ is given by Lemma~\ref{l.hyptime}.

Without loss of generality, we assume that for all $x\in\bM$ it holds 
\begin{equation}\label{e.inject.r}
	\frac12 \le \inf_ym(D_y\exp_x) <\sup_y\|D_y\exp_x\|\le 2.
\end{equation}
where the supremum and infimum are taken over $y\in B_{\mathfrak{d}_0}(x)$ with $\mathfrak{d}_0$ being the injectivity radius (decrease it if necessary).

The following lemma shows that if $x$ is a $E$-forward hyperbolic time for $(x,t)$, then the map $\cP_{s,x}$ exponentially contracts distance up to the flow speed on the fake leaf $\cF^E_{x,\cN}(x)$ up to time $t$. Recall the constant $K_0$ from Proposition~\ref{p.tubular}. 
\begin{lemma}\label{l.hyp.fakeEleaf}
	Let $\lambda_1$ be given by~\eqref{e.lambda1} for $a>0$ small enough. Then, there exist $\rho_1\in (0,\rho_0)$ and $\alpha_0>0$ such that every $\rho\in(0,\rho_1]$ and $\alpha\in (0,\alpha_0]$ have the following property: 
	
	Assume that $x\in\Lambda$ is a regular point and is a $(\lambda_0,E)$-forward hyperbolic time for $(\psi_t^*)$ and the orbit segment $(x,t)$. Then every point $y$ contained in the $\rho |X(x)|/(4K_0)$-disk of the fake leaf $\cF_{x,\cN}^E(x)$ centered at $x$ is $\rho$-scaled shadowed by the orbit of $x$ up to time $t$. Furthermore, for every $k\in [0,\floor{t}]\cap \NN$ it holds that 
	\begin{equation}\label{e.4.4a}
	d^E_{x_k}(y_{\tau_{x,y}(k)}) 
	\le 4 \frac{|X(x_{k})|}{|X(x)|}\lambda_1^{-k} \cdot d^E_{x}(y) \le \lambda_1^{-k} K_0^{-1} \rho |X(x_{k})|,
	\end{equation}
	where $\tau_{x,y}(\cdot)$ is given by Definition \ref{d.shadow1}.

	A similar statement holds for orbit segments $(x,t)\in \cO(U)\setminus\Lambda\times\RR^+$ satisfying Lemma \ref{l.hyptime}, Assumption (1) and such that $x$ is a $(\lambda_0,E)$-forward hyperbolic time for $(\psi_t^*)$. 
\end{lemma}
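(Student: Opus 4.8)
The statement to prove, Lemma~\ref{l.hyp.fakeEleaf}, asserts that at an $E$-forward hyperbolic time, points on the fake $E$-leaf through $x$ are scaled shadowed and their $E$-length contracts exponentially up to the flow-speed ratio. Here is how I would approach it.

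\textbf{Setup and reduction to the scaled normal bundle.} The plan is to work entirely with the scaled sectional Poincar\'e maps $P^*_{1,x_k}$ and the scaled foliations $\cF^{E,*}_{x_k,N}$ on the normal planes $N(x_k)$, and only at the end translate back to $\cN(x_k)$ via $\exp_{x_k}\circ S_{x_k}$, using the derivative bounds \eqref{e.inject.r}. Recall $D_0\tilde P_{1,x} = D_0 P^*_{1,x} = \psi^*_{1,x}$, and that by Proposition~\ref{p.tubular3}(2), by choosing $\upsilon$ (hence the cutoff scale) small, $\tilde P_{1,x}$ is $\iota$-$C^1$-close to the linear map $\psi^*_{1,x}$ for the $\iota = a$ fixed in the Hadamard--Perron theorem. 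The key local estimate is: since $\|Dh^E_x\|_{C^0}<\alpha$, the $E$-leaf $\cF^{E,*}_{x,N}(0)$ is tangent to the $(\alpha,E_N)$-cone, so on a leaf the $F_N$-component of $\tilde P_{1,x}$ restricted to $\cF^{E,*}_{x,N}(0)$ behaves essentially like $\psi^*_{1,x}|_{E_N}$ up to an error controlled by $\alpha$ and $\iota=a$; concretely, the distance-expansion of $\tilde P_{1,x}$ along the (image) $E$-leaf is bounded by $\|\psi^*_{1,x}|_{E_N(x)}\| + a$. This is where the choice $\alpha\le\alpha_0$ and $\iota$ small enter.

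\textbf{Iterating the contraction.} Let $d^{E,*}_{x_k}(\cdot)$ denote the corresponding $E$-length on $N(x_k)$. Because $x$ is a $(\lambda_0,E)$-forward hyperbolic time, for each $k\le\floor{t}$ we have $\prod_{i=0}^{k-1}\|\psi^*_1|_{E_N(x_i)}\| \le \lambda_0^{-k}$. Composing the one-step estimates and using $\lambda_1 = \lambda_0/(a+1)$, one gets, as long as the iterates stay in the domain where the leaves are well-posed,
$$
d^{E,*}_{x_k}(\tilde P_{k,x}(y)) \le \prod_{i=0}^{k-1}(\|\psi^*_1|_{E_N(x_i)}\| + a)\cdot d^{E,*}_x(y) \le (a+1)^k\lambda_0^{-k} d^{E,*}_x(y) = \lambda_1^{-k} d^{E,*}_x(y).
$$
Here one must be slightly careful that this is a genuine induction: the bound shows the $E$-length \emph{shrinks}, so if $y$ starts within the $\rho|X(x)|/(4K_0)$-disk (in $\cN(x)$, which after unscaling by $|X(x)|$ and using \eqref{e.inject.r} means radius roughly $\rho/(2K_0)$ in $N(x)$), then $\tilde P_{k,x}(y)$ stays within the Hadamard--Perron chart and within the $\rho K_0^{-1}$-scale where Proposition~\ref{p.tubular3}(1)--(2) and the foliation invariance hold, so $\tilde P_{k,x}$ coincides with the genuine scaled Poincar\'e map $P^*_{k,x}$ on it. This last point gives $\rho$-scaled shadowing up to time $t$: apply Lemma~\ref{l.shadowing2} once one knows each $y^{k,*}$ lands in $\cN_{\rho K_0^{-1}|X(x_k)|}(x_k)$.

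\textbf{Translating to $\cN$ and the flow-speed factor.} Finally, unscale: $\cF^E_{x_k,\cN}(x_k) = \exp_{x_k}(S_{x_k}(\cF^{E,*}_{x_k,N}(0)))$ with $S_{x_k}$ multiplication by $|X(x_k)|$. So $d^E_{x_k}(y_{\tau_{x,y}(k)}) \le 2|X(x_k)| d^{E,*}_{x_k}(P^*_{k,x}(y))$ and $d^{E,*}_x(y) \le 2 d^E_x(y)/|X(x)|$ by \eqref{e.inject.r}, which combined with the displayed contraction yields $d^E_{x_k}(y_{\tau_{x,y}(k)}) \le 4\,\frac{|X(x_k)|}{|X(x)|}\lambda_1^{-k} d^E_x(y)$, and then the bound $d^E_x(y)\le \rho|X(x)|/(4K_0)$ gives the second inequality in \eqref{e.4.4a}. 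The case $(x,t)\in\cO(U)\setminus\Lambda\times\RR^+$ is identical once one invokes the fake foliations $\cF^{i,(x,t)}_{x_s,\cN}$ constructed in Section~\ref{sss.fakefoliation.nbhd}, whose invariance and tangency properties are the same on the finite window $[0,t)$, and Lemma~\ref{l.robust.hyp}/Lemma~\ref{l.hyptime} supply the hyperbolic-time estimate.

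\textbf{Main obstacle.} The delicate point is the bootstrapping between ``$y$ stays in the domain where everything is well-defined'' and ``the contraction estimate holds'': the foliation invariance \eqref{e.inv} and the $C^1$-closeness of $\tilde P_{1,x}$ to $\psi^*_{1,x}$ are only valid at the uniformly-relative scale $\rho_0 K_0^{-1}|X(x_k)|$, and near singularities $|X(x_k)|$ can be tiny, so one genuinely needs the contraction (not just boundedness) to keep the iterates inside the chart — i.e.\ one must verify that $\lambda_1^{-k} K_0^{-1}\rho|X(x_k)|$ stays below the admissible radius $\rho_0 K_0^{-1}|X(x_k)|$ for all $k$, which holds since $\rho\le\rho_1\le\rho_0$ and $\lambda_1>1$. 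Making the dependence of $\rho_1$ and $\alpha_0$ on $a$ (and hence on $\lambda_1$) explicit and checking the one-step cone estimate carefully is the technical heart; everything else is routine composition.
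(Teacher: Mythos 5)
Your approach is the same as the paper's: induct on $k$, working in the scaled normal bundle via $P^*_{1,x_i}$, compare $D P^*_{1,x_i}$ on the $\alpha$-cone with $\psi^*_{1,x_i}|_{E_N}$ using Proposition~\ref{p.tubular3}(2) and $D_0P^*_{1,x}=\psi^*_{1,x}$, compose via the hyperbolic-time hypothesis to get $\lambda_1^{-k}$-contraction, and then unscale through $\exp_{x_k}\circ S_{x_k}$ with \eqref{e.inject.r} to produce the flow-speed ratio and the factor $4$. Your bootstrap — contraction keeps each iterate inside the admissible relative chart, which validates the next one-step estimate and shows $\tilde P_{k,x}=P^*_{k,x}$ on the relevant disk, hence $\rho$-scaled shadowing via Lemma~\ref{l.shadowing2} — is exactly how the paper runs the induction, and your treatment of the $\cO(U)\setminus\Lambda\times\RR^+$ case via Section~\ref{sss.fakefoliation.nbhd}, Lemma~\ref{l.robust.dom.spl} and Lemma~\ref{l.robust.hyp} is also the same.

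One step as written does not hold. You state the one-step bound additively, $\le \|\psi^*_1|_{E_N(x_i)}\|+a$, and then claim
$\prod_{i=0}^{k-1}\bigl(\|\psi^*_1|_{E_N(x_i)}\|+a\bigr)\le (1+a)^k\lambda_0^{-k}$.
This fails in general: term by term it would require $\|\psi^*_1|_{E_N(x_i)}\|\ge 1$, which is exactly the opposite of what happens when $E_N$ is contracted (e.g.\ with each norm $0.1$ and $a=0.1$, the left side per step is $0.2$ while the right is $0.11$). What the paper proves, and what you need, is the \emph{multiplicative} one-step bound for cone vectors $v$:
$|D_{\overline y^{i,*}}P^*_{1,x_i}(v)|\le (1+a)\|\psi^*_{1,x_i}|_{E_N(x_i)}\|\,|v|$,
which composes directly to $(1+a)^k\prod\|\psi^*_1|_{E_N(x_i)}\|\le (1+a)^k\lambda_0^{-k}=\lambda_1^{-k}$. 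To convert the additive $C^1$-closeness from Proposition~\ref{p.tubular3}(2) into a multiplicative one, use that $\|\psi^*_1|_{E_N}\|$ is bounded below uniformly on $\Reg(X)$ (by Lemma~\ref{l.psi.bdd} applied to $\psi^*_{-1}$, since $\|\psi^*_{-1}\|\le C_1$ forces $\|\psi^*_1|_{E_N}\|\ge 1/C_1$), so for $\rho_1,\alpha_0$ small the additive error becomes a $(1+a)$ multiplicative factor. With this correction your argument matches the paper's proof.
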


We point out that this lemma is an alternate version of \cite[Lemma 7.1]{PYY23} (see Lemma~\ref{l.hyptime.dist} below) with a major difference: in \cite[Lemma 7.1]{PYY23} the $\rho-$scaled shadowing property is an assumption; here we shall prove it for points on the fake leaf of $x$ (note that this statement may not hold on fake leaves of a nearby point $x\ne z\in \cN_{\rho|X(x)|}(x)$!). This difference calls for a more careful treatment at each step of iteration; therefore we provide the proof below. We also remark that for non-singular flows and diffeomorphisms (without the term ${|X(x_{k})|/|X(x)|}$, of course), this is a well-known result. See~\cite{ABV00, PS}.

\begin{proof}
	First we consider $x\in\Lambda$. We prove inductively that \eqref{e.4.4a} holds. 
	
	For $k=0$, the assumption that $d^E_{x_k}(y_k)< \rho|X(x)|/(4K_0) $ means that the orbit of $y$ is $\rho$-scaled shadowed by the orbit of $x$ up to time one, and it is clear that \eqref{e.4.4a} holds. 
	
	Now assume that \eqref{e.4.4a} holds for $k=j$. This implies that the orbit of $y$ is $\rho$-scaled shadowed by the orbit of $x$ up to time $j+1$ due to Proposition~\ref{p.tubular1} and Lemma~\ref{l.shadowing2}. 
	Recall the definition of $P_{1,x}$ on the normal bundle by~\eqref{e.p}. To simplify notation, for $i=0,\ldots, j+1$ we write $\tilde y^i = \cP_{x_i}(y_i)$ and $\overline y^i = \exp_{x_i}^{-1}(\tilde y^i)$ for its image on the normal bundle. Then we have
	$$
	P_{1,x_i}(\overline y^{i-1}) = \overline y^{i},\,\, i=1,\ldots, j+1. 
	$$ 
	If we further define $\overline y^{i,*} = \overline y^i / |X(x_i)|$, then~\eqref{e.p*} means that 
	$$
	P^*_{1,x_i}(\overline y^{i-1,*}) = \overline y^{i,*},\,\, i=1,\ldots, j+1. 
	$$
	Below we shall first prove
	that 
	$$
	|D_{P_{\overline x}(\overline y)}P_{j+1, \overline x}(u)|\le  \frac{|X(x_{j+1})|}{|X(x)|}\lambda_1^{-(j+1)} |u|,
	$$
	for every vector $u$ such that $D_{P_{\tilde x}(\tilde y)}P_{i, \tilde x}(u)\in C_{\alpha}(E_N)$, $i=0,\ldots, j+1$. Note that this property is satisfied by tangent vectors of the fake leaf $\cF_{x,\cN}^E(x)$.

	Recall that $D_z P^*_{1,x}$ is uniformly continuous at a uniform scale (Proposition~\ref{p.tubular3} (2)). Also note that $D_{0_x} P^*_{1,x}= \psi^*_{1,x}$. Then, for every $a$ sufficiently small,  there exist $\alpha_0>0$ and $\rho_1>0$ such that for all $\rho \le \rho_1$, $\alpha\le\alpha_0$, $v\in C_\alpha(E_N(\overline y^{i,*}))$ with $u= D_{\overline y^{i,*}}P^*_{1, x_{i}}(v)\in C_\alpha(E_N(\overline y^{i+1,*}))$, we have 
	$$
	|D_{\overline y^{i,*}} P^*_{1, x_{i}}(v)|\le  (1+a) \left\|D_{\overline x_i} P^*_{1, x_{i}}|_{E_N(x_i)}\right\|  |v| =  (1+a)\left\|\psi^*_{1,x_i}|_{E_N(x_i)}\right\|  |u|.
	$$
	This implies that for every vector $u$ such that $D_{P_{\tilde x}(\tilde y)}P_{i, \tilde x}(u)\in C_{\alpha}(E_N)$, $i=0,\ldots, j+1$,
	\begin{align*}
		|D_{\overline y^{0,*}} P^*_{j+1, x_t}(u)|&= \left|\left( \prod_{i=1}^{j+1}D_{\overline y^{i-1,*}} P^*_{1, x_{i-1}}\right)(u)\right| \\
		&\le (1+a)^{j+1}\prod_{i=1}^{j+1}\left\|\psi^*_{1,x_{i-1}}|_{E_N(x_{i-1})}\right\||u|\\
		&\le \left((1+a)\lambda_0^{-1}\right)^{j+1}|u|,
	\end{align*}
	where the last inequality follows from the assumption that $x$ is a $(\lambda_0,E)$-forward hyperbolic time for $(\psi_t^*)$. This shows that 
	$$
	|D_{\overline y^{0}} P_{j+1, x}(u)|\le \frac{|X(x_{j+1})|}{|X(x)|}\left((1+a)\lambda_0^{-1}\right)^{j+1}|u| =  \frac{|X(x_j)|}{|X(x_t)|}\left(\lambda_1^{-1}\right)^{j+1}|u|,
	$$
	as required. In particular, by~\eqref{e.inject.r} we have 
	$$
	|D_{y} \cP_{j+1, x}(u)|\le 4 \frac{|X(x_j)|}{|X(x_t)|}\left(\lambda_1^{-1}\right)^{j+1}|u|
	$$
	for every tangent vector of the fake leaf $\cF_{x,\cN}^E(x)$. This shows that \eqref{e.4.4a} holds for $j+1$, finishing the induction.

	For orbit segments in $\cO(U)\setminus \Lambda\times \RR^+$ satisfying Lemma \ref{l.hyptime}, Assumption (1), note that Lemma \ref{l.robust.dom.spl} and \ref{l.robust.hyp} provides the invariant bundles. The fake foliations on the normal plane have been constructed in Section \ref{sss.fakefoliation.nbhd} and form a local product structure, and therefore $d^E_{x_s}$ are well-defined. The same argument as before applies, giving the desired result. 
\end{proof}

For points that are on $\cN_{\rho_0|X(x)|}(x)$ but not on $\cF_{x,\cN}^E(x)$, we have the following lemma which was originally stated for $F$-backward hyperbolic times in~\cite{PYY23}. To obtain this version, one only need to consider $-X$.

\begin{lemma}\cite[Lemma 7.1]{PYY23}\label{l.hyptime.dist}
	Let $\lambda_1$ be given by~\eqref{e.lambda1} for $a>0$ small enough. Then, there exist $\rho_1\in (0,\rho_0)$ and $\alpha_0>0$ such that for every $\rho\in(0,\rho_1]$ and $\alpha\in (0,\alpha_0]$, the following property hold. 
	
	Let $x\in\Lambda$ be a regular point that is a $(\lambda_0,E)$-forward hyperbolic time for the orbit segment $(x,t)$ for $(\psi_t^*)$. Let $y\in \cN_{\rho|X(x)|}(x)$ be $\rho$-scaled shadowed by the orbit of $x$ up to time $t$. Then, for every $j=0,\ldots,\floor{t}$ and every vector $u\in C_\alpha(E_N(y))$  such that  $D_y\cP_{j,x}(u) \in C_\alpha(E_N(\cP_{j,x}(y))),$ one has 
	\begin{equation}\label{e.Fcont}
		|D_y\cP_{j, x}(u)|\le 4 \frac{|X(x_{j})|}{|X(x)|}\lambda_1^{-j} |u|.
	\end{equation}
	Consequently, for every $j=0,\ldots,\floor{t}$,
	\begin{equation}\label{e.F.dist.cont}
		d^E_{x_{j}}(y_{\tau_{x,y}(j)})\le 4 \frac{|X(x_{j})|}{|X(x)|}\lambda_1^{-j} \cdot d^E_{x}(y).
	\end{equation}

	A similar statement holds for orbit segments $(x,t)\in \cO(U)\setminus\Lambda\times\RR^+$ satisfying Lemma \ref{l.hyptime}, Assumption (1) and such that  $x$ is a $(\lambda_0,E)$-forward hyperbolic time for $(\psi_t^*)$. 
\end{lemma}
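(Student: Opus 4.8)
The plan is to run the argument of Lemma~\ref{l.hyp.fakeEleaf}, which simplifies substantially here because the $\rho$-scaled shadowing of $y$ by the orbit of $x$ is now a hypothesis rather than part of the conclusion; for orbit segments in $\Lambda\times\RR^+$ this is literally \cite[Lemma 7.1]{PYY23} read for $-X$ (so that the splitting $E_N\oplus F_N$ and ``forward/backward'' are interchanged), but I will describe the substantive steps since the same scheme also yields the statement for $(x,t)\in\cO(U)\setminus\Lambda\times\RR^+$. Since $y$ is $\rho$-scaled shadowed up to time $t$, for each $j\in[0,\floor{t}]\cap\NN$ the iterate $\cP_{j,x}(y)$ is well defined, lies in $\cN_{\rho|X(x_j)|}(x_j)$, and equals $y_{\tau_{x,y}(j)}$; lifting to the normal bundle and rescaling by the flow speed as in \eqref{e.p}, \eqref{e.p*}, the points $\overline y^{j,*}=|X(x_j)|^{-1}\exp_{x_j}^{-1}(\cP_{j,x}(y))$ sit in the $\rho_1$-ball around $0_{x_j}\in N(x_j)$ and satisfy $P^*_{1,x_j}(\overline y^{j,*})=\overline y^{j+1,*}$.

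The first step I would carry out is the one-step cone estimate. By Proposition~\ref{p.tubular3}(2), $DP^*_{1,x}$ is uniformly continuous at a uniform scale, and $D_{0_x}P^*_{1,x}=\psi^*_{1,x}$; combining this with the one-step domination \eqref{e.onestepDS1} and the forward invariance of the $(\alpha,E_N)$-cone yields, for $a>0$ small and with $\alpha_0$ then $\rho_1$ chosen small in that order, that for $\rho\le\rho_1$, $\alpha\le\alpha_0$, any $v\in C_\alpha(E_N(\overline y^{j,*}))$ whose image $D_{\overline y^{j,*}}P^*_{1,x_j}(v)$ again lies in $C_\alpha(E_N(\overline y^{j+1,*}))$ obeys $|D_{\overline y^{j,*}}P^*_{1,x_j}(v)|\le(1+a)\,\|\psi^*_{1,x_j}|_{E_N(x_j)}\|\,|v|$. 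Composing this over $j$ consecutive steps and using that $x$ is a $(\lambda_0,E)$-forward hyperbolic time, i.e. $\prod_{i=0}^{j-1}\|\psi^*_{1}|_{E_N(x_i)}\|\le\lambda_0^{-j}$, one gets $|D_{\overline y^{0,*}}P^*_{j,x}(u)|\le\big((1+a)\lambda_0^{-1}\big)^{j}|u|=\lambda_1^{-j}|u|$ for every $u\in C_\alpha(E_N(y))$ with $D_y\cP_{i,x}(u)\in C_\alpha(E_N(\cP_{i,x}(y)))$ for $i=0,\ldots,j$. Unscaling reinserts the factor $|X(x_j)|/|X(x)|$, and passing from $P_{j,x}$ on $N$ to $\cP_{j,x}$ on $\bM$ via $\exp$ together with \eqref{e.inject.r} contributes a factor $4$; this is exactly \eqref{e.Fcont}.

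For the distance bound \eqref{e.F.dist.cont} I would pick an arc $\ell\subset\cF^E_{x,\cN}(y^F)$ joining $y^F$ to $y$ of length $d^E_x(y)$. Its tangent vectors lie in $C_\alpha(E_N)$ since the fake leaves are tangent to the $(\alpha,E_N)$-cone, and by invariance of the $E$-foliation under the maps $\cP_{1,\cdot}$ the image $\cP_{j,x}(\ell)$ is an arc in $\cF^E_{x_j,\cN}$ from $(\cP_{j,x}(y))^F=\cP_{j,x}(y^F)$ to $\cP_{j,x}(y)$ whose tangent vectors remain in the cones; moreover every point of $\ell$ is itself $\rho$-scaled shadowed by $x$ (the leaf maps into leaves and hence never leaves the tubular neighborhoods), so \eqref{e.Fcont} applies pointwise along $\ell$. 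Integrating, $d^E_{x_j}(\cP_{j,x}(y))\le\mathrm{length}(\cP_{j,x}(\ell))\le 4\,\tfrac{|X(x_j)|}{|X(x)|}\lambda_1^{-j}\,\mathrm{length}(\ell)$, which is \eqref{e.F.dist.cont} since $\cP_{j,x}(y)=y_{\tau_{x,y}(j)}$. For orbit segments $(x,t)\in\cO(U)\setminus\Lambda\times\RR^+$ satisfying Assumption (1) of Lemma~\ref{l.hyptime}, the dominated splitting on $N_{(x,t)}$ and the fake foliations on the normal planes of the $x_s$ come from Lemmas~\ref{l.robust.dom.spl}, \ref{l.robust.hyp} and the construction of Section~\ref{sss.fakefoliation.nbhd}; one-step domination, the uniform continuity of $DP^*_{1,\cdot}$, and \eqref{e.inject.r} hold at every regular point, so the argument applies verbatim once $x$ is assumed to be a $(\lambda_0,E)$-forward hyperbolic time. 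The only point requiring genuine care — exactly as in Lemma~\ref{l.hyp.fakeEleaf} — is the nested choice of $a$, then $\alpha_0$, then $\rho_1$ needed to convert the $C^1$-closeness of $DP^*_{1,x}$ to $\psi^*_{1,x}$ on the $\rho_1$-ball, together with the one-step domination, into the clean estimate $(1+a)\|\psi^*_{1,x}|_{E_N}\|$ on cone vectors whose images stay in the cone; everything else is bookkeeping.
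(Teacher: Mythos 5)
Your approach is the same one the paper uses for the cognate Lemma~\ref{l.hyp.fakeEleaf}, and, as the paper itself remarks, the statement for orbit segments in $\Lambda\times\RR^+$ is exactly \cite[Lemma~7.1]{PYY23} read for $-X$; so the overall scheme (lift and rescale via $P^*_{1,x}$, get a one-step cone estimate $(1+a)\|\psi^*_{1,x_i}|_{E_N}\|$ from Proposition~\ref{p.tubular3}(2) at scale $\rho_1$, multiply along the Pliss sequence, unscale, and absorb the $\exp_x$ distortion into the factor $4$ via \eqref{e.inject.r}) is the intended one and the derivative bound \eqref{e.Fcont} goes through as you describe.

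One step in the passage to \eqref{e.F.dist.cont} is asserted too quickly and, as stated, is circular. You claim that ``every point of $\ell$ is itself $\rho$-scaled shadowed by $x$ (the leaf maps into leaves and hence never leaves the tubular neighborhoods).'' Invariance of the $E$-foliation only tells you that the image of a leaf lands in a leaf; it does not by itself control \emph{where} $\cP_{j,x}(z)$ sits relative to $x_j$, which is what you need in order to apply Proposition~\ref{p.tubular3}(2) at scale $\rho_1 K_0^{-1}|X(x_j)|$ at each intermediate step. The quantity that controls this is precisely the length of $\cP_{j,x}(\ell)$, i.e.\ the estimate you are in the course of proving, so a naked appeal to ``leaves map into leaves'' begs the question. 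The fix is the same inductive bootstrap as in the proof of Lemma~\ref{l.hyp.fakeEleaf}: prove by induction on $j$ that the arc $\cP_{j,x}(\ell)$ has length at most $4\lambda_1^{-j}\tfrac{|X(x_j)|}{|X(x)|}\,d^E_x(y)$; together with the hypothesis $\cP_{j,x}(y)\in\cN_{\rho|X(x_j)|}(x_j)$ this keeps $\cP_{j,x}(\ell)$ inside $\cN_{c\rho|X(x_j)|}(x_j)$ for a constant $c$ depending only on $\alpha$, so that once $\rho_1$ is taken with $c\rho_1\le\rho_0 K_0^{-1}$ the next application of $\cP_{1,x_j}$ and the one-step cone estimate are available, closing the induction. (Equivalently, one can run the derivative estimate for the globally defined interpolating maps $\tilde P_{1,x}$ of \eqref{e.tildeP} — on which it holds unconditionally — and then check that along $\cP_{j,x}(\ell)$ one has $\tilde P = P^*$, again by induction.) Since you already invoke the nested choice of $a$, then $\alpha_0$, then $\rho_1$ ``exactly as in Lemma~\ref{l.hyp.fakeEleaf},'' you clearly have the right picture; the point is only that the arc-stays-in-chart step is part of that induction, not free-standing bookkeeping.
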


The proof can be found in~\cite{PYY23} and is omitted here. We remark that the $E$ and $F$-length of $\cP_{x_s}(y_s)$  are well-defined for every $s\in [0,t]$ since $y$ is assumed to be $\rho$-scaled shadowed by the orbit of $x$ up to time $t$. 

To conclude this subsection, we point out that both lemmas can be applied to $-X$ to obtain corresponding statements for $(\lambda_0,F)$-backward hyperbolic times. 

\subsection{Forward and backward recurrence Pliss times}\label{ss.recpliss}
In~\cite{PYY23} we introduced backward recurrence Pliss times that characterize the ``good recurrence property'' to a small neighborhood of $\Sing_\Lambda(X)$. In this paper, we shall consider recurrence Pliss times defined for both forward and backward orbits.

\begin{definition}\label{d.RecPliss}
	Given $\beta\in(0,1)$, a set $W\subset \bM$ and an orbit segment $(x,t)$ with $t\ge 1$, 
	\begin{itemize}
		\item 	we say that $x$ is a {\em $(\beta, W)$-forward recurrence Pliss time for the orbit segment $(x,t)$}, if for every $s=0,1,\ldots, \floor{t}$, it holds 
		\begin{equation}\label{e.RecPliss1}
			\frac{\# \{\tau\in[0,s]\cap\NN: x_{\tau}\in W\}}{s+1}\le \beta. 
		\end{equation}
		\item 	we say that $x_t$ is a {\em $(\beta, W)$-backward recurrence Pliss time for the orbit segment $(x,t)$}, if for every $s=0,1,\ldots, \floor{t}$, it holds 
		\begin{equation}\label{e.RecPliss2}
			\frac{\# \{\tau\in[0,s]\cap\NN: x_{t-\tau}\in W\}}{s+1}\le \beta. 
		\end{equation}
	\end{itemize}

\end{definition}
The backward recurrence Pliss time coincides with the recurrence Pliss time defined in~\cite[Section 4.2]{PYY23}. Furthermore, it is clear from the definition that $x_t$ is a  $(\beta, W)$-backward recurrence Pliss time for the orbit segment $(x,t)$ if and only if $y=x_t$ is a $(\beta, W)$-forward recurrence Pliss time for the orbit segment $(y,t)$ under the flow $-X$. 

\begin{remark}\label{r.RecPliss}
	It should be noted that the if $x$ is a $(\beta,W)$-forward recurrence Pliss time, then $x\notin W$; indeed, let $s\in[0,\floor{t}-1]\cap \NN$ be the smallest integer such that $x_{s}\in W$, then it follows that $\frac{1}{s+1}\le \beta$, i.e., $s\ge \beta^{-1}-1$. In other words, the orbit segment $\{ x,x_{1}, \ldots,x_{\floor{t}}\}$ (as an orbit segment for the time-$1$ map $f_{1}$) must spend at least $\floor{\beta^{-1}}$ many iterates outside $W$ before entering it for the first time. Similarly, if $x_t$ is a $(\beta,W)$-backward recurrence Pliss time, then the orbit segment $x_{t}, x_{t-1}, \ldots, x_{t-\floor{t}}$ (as an orbit segment for the time-$(-1)$ map $f_{-1}$) must spend at least $\floor{\beta^{-1}}$ many iterates outside $W$ before entering it for the first time.
\end{remark}

The next lemma gives the existence of $(\beta, W)$-recurrence times with density arbitrarily close to one, provided that the overall time that the orbit segment spends in $W$ is small enough.

\begin{lemma}\cite[Lemma 4.6]{PYY23}\label{l.rec}
	For any $\beta_0\in(0,1)$ and $\kappa\in(0,1)$, there exists $\beta_1 \in(0,\beta_0)$ such that for any set  $W\in\bM$ and any orbit segment $(x,t)\in\bM\times\RR^+$, the following statements hold:
	\begin{itemize}
		\item if 
		\begin{equation}\label{e.pliss1}
			\frac{\#\{s=0,\ldots, \floor{t}-1: x_{s}\in W\}}{\floor{t}} \le \beta_1,
		\end{equation}
		then there exist at least $\kappa (\floor{t}+1)$ many $n_i$'s, such that each $x_{n_i}$ is a $(\beta_0,W)$-forward recurrence Pliss time for the orbit segment $(x_{n_i},t-n_i)$ and the set $W$. 
		\item if 
		\begin{equation}\label{e.pliss2}
			\frac{\#\{s=0,\ldots, \floor{t}-1: x_{t-s}\in W\}}{\floor{t}} \le \beta_1,
		\end{equation}
		then there exist at least $\kappa (\floor{t}+1)$ many $m_i$'s, such that each $x_{t-m_i}$ is a $(\beta_0,W)$-backward recurrence Pliss time for the orbit segment $(x,t-m_i)$ and the set $W$. 
	\end{itemize}
\end{lemma}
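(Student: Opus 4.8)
The plan is to deduce the lemma directly from the Pliss lemma (Theorem~\ref{t.pliss}), applied to the indicator sequence of $W$ along the orbit of $x$ rather than to a derivative cocycle as in the hyperbolic-time lemmas; this is the same mechanism behind the classical construction of positive-density hyperbolic times (cf.~\cite{ABV00,PS}). Write $N=\floor{t}$ and, for $j=0,\dots,N-1$, put $b_j=1$ if $x_j\in W$ and $b_j=0$ otherwise, so that hypothesis~\eqref{e.pliss1} reads $\sum_{j=0}^{N-1}b_j\le\beta_1 N$. The first step is a reformulation of Definition~\ref{d.RecPliss}: $x_n$ is a $(\beta_0,W)$-forward recurrence Pliss time for $(x_n,t-n)$ exactly when $\sum_{j=n}^{m-1}b_j\le\beta_0(m-n)$ for all $n<m\le N+1$; equivalently, every forward partial sum $\sum_{j=n}^{m-1}a_j$ of the sequence $a_j:=\beta_0-b_j$ starting at index $n$ is nonnegative. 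So it suffices to produce at least $\kappa(N+1)$ indices $n$ with this property.

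For the second step I would apply Theorem~\ref{t.pliss} to $(a_j)$ with constants $A=\beta_0$ (note $a_j\le\beta_0$), $b_2=\beta_0-\beta_1$, and an auxiliary $b_1>0$ chosen small. Both hypotheses of the Pliss lemma hold: $a_j\le A$ trivially, and $\sum_j a_j\ge\beta_0 N-\beta_1 N=b_2 N$. The lemma then yields more than $\theta_0 N$ indices, where $\theta_0=\tfrac{b_2-b_1}{A-b_1}$, along each of which every forward partial sum is nonnegative; by the reformulation, the corresponding points are $(\beta_0,W)$-forward recurrence Pliss times. Since $\theta_0\to 1-\beta_1/\beta_0$ as $b_1\to 0$, one chooses $\beta_1\in(0,\beta_0)$ with $1-\beta_1/\beta_0>\kappa$ --- so that $\beta_1$ depends only on $\beta_0$ and $\kappa$ --- and then $b_1>0$ small enough that $\theta_0 N\ge\kappa(N+1)$ in the relevant regime, completing the count.

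Two routine points need a little attention. First, the Pliss lemma controls partial sums only up to the terminal index $N$, whereas the recurrence condition for $(x_{n_i},t-n_i)$ reaches $m=N+1$ (equivalently $s=\floor{t-n_i}$); the one- or two-iterate discrepancy at the end is absorbed either by working with the length-$(N+1)$ indicator sequence $b_0,\dots,b_N$ (so that $\sum_{j=0}^N b_j\le\beta_1 N+1$, which perturbs the constants only by $O(1/N)$), or by treating the finitely many small values of $N$ directly, where the hypothesis forces $\sum b_j=0$ and the conclusion is immediate. Second, the backward statement~\eqref{e.pliss2} follows at once by applying the forward statement to the reversed flow $-X$, using the equivalence recorded right after Definition~\ref{d.RecPliss}. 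I do not expect a genuine obstacle here: the argument is purely combinatorial, and the only mild care required is the joint calibration of $\beta_1$ and $b_1$ against $\kappa$ together with this endpoint bookkeeping.
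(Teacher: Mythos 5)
Your proof is essentially correct and follows the same route as the paper's: the cited \cite[Lemma 4.6]{PYY23} is exactly this Pliss-lemma argument applied to the indicator sequence of visits to $W$. The reformulation of the recurrence Pliss condition as nonnegativity of forward partial sums of $a_j := \beta_0 - b_j$, the calibration $A = \beta_0$, $b_2 = \beta_0 - \beta_1$, the observation that $\theta_0 \to 1 - \beta_1/\beta_0$ as $b_1 \to 0$, and the resulting choice $\beta_1 < \beta_0(1-\kappa)$ are all correct, and the backward case does follow by the $-X$ symmetry recorded after Definition~\ref{d.RecPliss}.

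One loose end in your second fallback for small $N$: the hypothesis \eqref{e.pliss1} constrains only $b_0,\ldots,b_{N-1}$, whereas the recurrence Pliss condition for $(x_{n_i}, t-n_i)$ runs up to $s = \lfloor t-n_i\rfloor = N - n_i$ and therefore involves $b_N$ (the indicator of $x_{\lfloor t\rfloor}\in W$), which is \emph{not} controlled by the hypothesis. So ``$\sum b_j = 0$ forces the conclusion'' isn't quite right: if $b_0=\cdots=b_{N-1}=0$ but $b_N=1$, the indices $n_i$ near $N$ fail the check at $m=N+1$, and for $N$ below roughly $(\beta_0(1-\kappa))^{-1}$ the surviving count can fall short of $\kappa(N+1)$. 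This really reflects an imprecision inherited from the lemma's statement — in all of its uses (e.g.\ Lemma~\ref{l.bihyptime}) one assumes $x, x_t\notin W$ and $t>T_W$, so the endpoint never bites — but as you have written it, the small-$N$ case needs either the extra hypothesis $x_{\lfloor t\rfloor}\notin W$ or a vacuity remark when $\kappa(\lfloor t\rfloor+1)<1$. Your first fallback (absorbing the $+1$ into $b_2$ for large $N$) is sound.
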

The proof for backward recurrence Pliss times can be found in~\cite[Section 4.2]{PYY23} and is omitted. The proof for forward recurrence Pliss times follows from symmetry. 


\subsection{Existence of simultaneous Pliss times}
The following definition will play a central role in the next section when we construct the good orbit segments collection $\cG$.
\begin{definition}\label{l.bi-Pliss}
	Given $\lambda_0>1$, $\beta_0\in(0,1)$ and a set $W\subset \bM$:
	\begin{itemize}
		\item we say that $x$ is a $(\lambda_0,E, \beta_0,W)$-simultaneous forward Pliss time for the orbit segment $(x,t)$, if $x$ is a $(\lambda_0,E)$-forward hyperbolic time for $(\psi_t^*)$, as well as a $(\beta_0, W)$-forward recurrence Pliss time for the orbit segment $(x,t)$.
		\item we say that $x_t$ is a $(\lambda_0,F, \beta_0,W)$-simultaneous backward Pliss time for the orbit segment $(x,t)$, if $x_t$ is a $(\lambda_0,F)$-backward hyperbolic time for $(\psi_t^*)$, as well as a $(\beta_0, W)$-backward recurrence Pliss time for the orbit segment $(x,t)$.
	\end{itemize}

\end{definition}

The existence of such times is given by the following lemma. For this purpose, let $\lambda_0>1$ be given by Lemma~\ref{l.hyptime}.

\begin{lemma}\cite[Lemma 4.7]{PYY23}\label{l.bihyptime}
	For every $\beta_0\in(0,1)$ there exists $\beta_1\in (0,\beta_0)$ such that for every open isolating neighborhood $W$ of $\Sing_\Lambda(X)$ that is contained in $B_{r_0}(\Sing_\Lambda(X))$, the constant $T_{W}$ given by  Lemma \ref{l.robust.hyp} satisfies that for every orbit segment $(x,t)\in\cO(U)$ satisfying the following properties:
	\begin{enumerate}
		\item  there exist $t_1,t_2\ge 0$ such that $(x_{-t_1}, t_1+t+t_2)\in \cO(U)$ and $x_{-t_1}\notin B_{r_0}(\Sing_\Lambda(X)), x_{t+t_2}\notin B_{r_0}(\Sing_\Lambda(X))$;
		\item $x,x_t\notin W$ and $t>T_{W}$;
	\end{enumerate}  the following statements hold: 
	\begin{itemize}
		\item assume that $$\frac{\#\{s=0,\ldots, \floor{t}: x_{s}\in W\}}{\floor{t}+1} \le \beta_1,$$ then there exists $s\in[0,t]\cap\NN$ such that $x_s$ is a $(\lambda_0,E, \beta_0,W)$-simultaneous forward Pliss time for the orbit segment $(x_s,t-s)$ and the set $W$.
		\item assume that $$\frac{\#\{s=0,\ldots, \floor{t}: x_{t-s}\in W\}}{\floor{t}+1} \le \beta_1,$$ then there exists $s\in[0,t]\cap\NN$ such that $x_{t-s}$ is a $(\lambda_0,F, \beta_0,W)$-simultaneous backward Pliss time for the orbit segment $(x,t-s)$ and the set $W$.
	\end{itemize}
	In both cases, given any $\epsilon>0$ one can further decrease $\beta_1$ so that the number of simultaneous Pliss times is at least $(\theta_0-\epsilon)\floor{t};$ i.e., the density of such Pliss times can be made arbitrarily close to $\theta_0$.
\end{lemma}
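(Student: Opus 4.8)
The plan is to combine the two existence results already established — Lemma~\ref{l.hyptime} for hyperbolic times and Lemma~\ref{l.rec} for recurrence Pliss times — by a pigeonhole (counting) argument at the level of densities. Since the two statements are symmetric under passing to $-X$ (recall that a $(\lambda_0,F)$-backward hyperbolic time for $(\psi_t^*)$ and $(x,t)$ becomes a $(\lambda_0,E)$-forward hyperbolic time under $-X$, and likewise backward recurrence Pliss times become forward recurrence Pliss times), I would only prove the first bullet, concerning $(\lambda_0,E,\beta_0,W)$-simultaneous forward Pliss times.

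First, fix $\theta_0\in(0,1)$ from Lemma~\ref{l.hyptime} and fix any $\epsilon>0$; we aim for density at least $\theta_0-\epsilon$. Apply Lemma~\ref{l.rec} with the choice $\beta_0$ as given and with $\kappa := 1-\tfrac{\epsilon}{2}$ (adjusting so that $\kappa>1-\theta_0$, which we may assume by shrinking $\epsilon$), obtaining $\beta_1\in(0,\beta_0)$; this will be our output constant. Now let $(x,t)\in\cO(U)$ satisfy Assumptions (1) and (2) together with the density bound
$$
\frac{\#\{s=0,\ldots,\floor{t}: x_s\in W\}}{\floor{t}+1}\le\beta_1.
$$
By Assumptions (1), (2) and the fact that $W\subset B_{r_0}(\Sing_\Lambda(X))$, Lemma~\ref{l.hyptime} applies to $(x,t)$ and yields at least $\theta_0\floor{t}$ integers $n\in[0,\floor{t}]$ such that $x_n$ is a $(\lambda_0,E)$-forward hyperbolic time for $(\psi_t^*)$ and the orbit segment $(x_n,t-n)$; call this set of good indices $H$, so $\#H\ge\theta_0\floor{t}$. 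Meanwhile the density hypothesis triggers Lemma~\ref{l.rec} (first bullet): there exist at least $\kappa(\floor{t}+1)$ integers $n\in[0,\floor{t}]$ such that $x_n$ is a $(\beta_0,W)$-forward recurrence Pliss time for $(x_n,t-n)$; call this set $R$, so $\#R\ge\kappa(\floor{t}+1)\ge(1-\tfrac{\epsilon}{2})\floor{t}$.

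Then by inclusion-exclusion inside the index set $\{0,1,\ldots,\floor{t}\}$ of size $\floor{t}+1$,
$$
\#(H\cap R)\ge \#H+\#R-(\floor{t}+1)\ge \theta_0\floor{t}+(1-\tfrac{\epsilon}{2})\floor{t}-(\floor{t}+1)=(\theta_0-\tfrac{\epsilon}{2})\floor{t}-1\ge(\theta_0-\epsilon)\floor{t}
$$
for $\floor{t}$ large (which we may assume by increasing $T_W$ if necessary, noting this does not interfere with the other constraints since $T_W$ only needs to be large). For every index $n\in H\cap R$, the point $x_n$ is simultaneously a $(\lambda_0,E)$-forward hyperbolic time for $(\psi_t^*)$ and a $(\beta_0,W)$-forward recurrence Pliss time for $(x_n,t-n)$, hence by Definition~\ref{l.bi-Pliss} it is a $(\lambda_0,E,\beta_0,W)$-simultaneous forward Pliss time for $(x_n,t-n)$ and $W$. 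In particular such $s=n$ exists, proving the first assertion, and the count $\#(H\cap R)\ge(\theta_0-\epsilon)\floor{t}$ gives the density statement. The backward case follows verbatim by applying the above to $-X$, using the second bullets of Lemma~\ref{l.hyptime} and Lemma~\ref{l.rec} and the symmetry observation in Remark on recurrence Pliss times.

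The only genuinely delicate point — and the one I expect to require the most care — is making sure the quantifiers on constants are ordered correctly and uniformly: $\beta_1$ must depend only on $\beta_0$ (and the fixed $\epsilon$, $\theta_0$, which come from $X$ and the splitting, not from $W$), while the threshold $T_W$ is exactly the one furnished by Lemma~\ref{l.robust.hyp}/Lemma~\ref{l.hyptime} for the given $W$; one must check that the hyperbolic-time density $\theta_0$ from Lemma~\ref{l.hyptime} is genuinely independent of $W$ (it is, since it comes from the Pliss Lemma with $b_2=\log\eta$, $b_1=\log\lambda_0$, $A=\log K_1^*$, none of which involve $W$), so that the intersection bound does not degrade as $W$ shrinks. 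Everything else is bookkeeping with floor functions and the elementary inclusion-exclusion inequality.
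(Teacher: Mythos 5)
Your proof is correct and takes essentially the same approach as the paper: both combine the density of $(\lambda_0,E)$-forward hyperbolic times from Lemma~\ref{l.hyptime} with the density of $(\beta_0,W)$-forward recurrence Pliss times from Lemma~\ref{l.rec} via the inclusion-exclusion/pigeonhole bound and then invoke the symmetry under $-X$ for the backward case. Your treatment is, if anything, slightly more explicit about the $-1$ boundary term and the quantifier dependencies, which the paper handles implicitly by taking $\kappa$ close enough to $1$.
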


\begin{proof}
	These two cases are again symmetric, so we shall only consider the first case. 
	
	Let $\theta_0\in(0,1)$ be given by Lemma~\ref{l.hyptime}. Given any $\epsilon>0$ small, we take $\kappa>1-\epsilon$. Then 
	$$
	(1-\kappa) + (1-\theta_0) < 1-(\theta_0-\epsilon).
	$$
	Now we apply Lemma~\ref{l.rec} to obtain $\beta_1$ so that the density of forward recurrence Pliss times is at least $\kappa$. For every orbit segment satisfying both assumptions of Lemma~\ref{l.bihyptime}, Lemma~\ref{l.rec} provides $\kappa\floor{t}$ many $n_i\in \NN$ such that each $x_{n_i}$ is a $(\beta_0, W)$-forward recurrence Pliss time for the orbit segment $(x_{n_i},t-n_i)$. Moreover, by Lemma~\ref{l.hyptime} we have $\theta_0(\floor{t}+1)$  many natural numbers  $k_i$ such that each $x_{k_j}$ is a $(\lambda_0,E)$-forward hyperbolic time for the orbit segment $(x_{k_j},t-k_j)$. Then there are at least $(\theta_0-\epsilon)\floor{t}$ many places where $n_i$ must coincide with one of the $k_j$. Such a point is a $(\lambda_0,E, \beta_0,W)$-simultaneous forward Pliss time; the density of such points is at least $\theta_0-\epsilon,$ as required.

\end{proof}

We conclude this subsection with the following lemma which is an immediate consequence of Lemma~\ref{l.gluepliss}, and therefore the proof is omitted.

\begin{lemma}\label{l.gluePliss}
	Let $t,t'>1$ with $t\in \NN$. Assume that $x$ is a $(\lambda_0,E, \beta_0,W)$-simultaneous forward Pliss times for the orbit segment $(x,t)$, and that $x_{t}$ is a $(\lambda_0,E, \beta_0,W)$-simultaneous forward Pliss times for the orbit segment $(x_t,t')$. Then $x$ is a $(\lambda_0,F, \beta_0,W)$-simultaneous backward Pliss times for the orbit segment $(x,t+t')$. 
	
	A similar statement holds for backward Pliss times. 
\end{lemma}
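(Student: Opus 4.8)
The plan is to verify, straight from the definitions, that $x$ inherits \emph{both} defining properties of a $(\lambda_0,E,\beta_0,W)$-simultaneous forward Pliss time on the concatenated segment $(x,t+t')$; the hypothesis $t\in\NN$ is exactly what lets the two time-one grids line up, so that the segment $(x_t,t')$ sits on the forward orbit of $x$ starting at integer time $t$. (I read the stated conclusion as asserting that $x$ is a $(\lambda_0,E,\beta_0,W)$-simultaneous \emph{forward} Pliss time for $(x,t+t')$, the backward version then following by passing to $-X$.)

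For the hyperbolic-time half I would set $a_n=-\log\|\psi^*_1|_{E_N(x_{n-1})}\|$ for $n=1,\dots,\floor{t+t'}$. Since $t$ is an integer, $\floor{t+t'}=t+\floor{t'}$, the first $t$ of these numbers form precisely the sequence attached to $(x,t)$, and the remaining $\floor{t'}$ form the sequence attached to $(x_t,t')$. The hypothesis, together with \eqref{e.Epliss}, says that each of these two blocks is a $(\log\lambda_0)$-forward Pliss sequence; hence by Lemma~\ref{l.gluepliss} their concatenation is again a $(\log\lambda_0)$-forward Pliss sequence. Exponentiating, this is exactly the statement $\prod_{i=0}^{j-1}\|\psi^*_1|_{E_N(x_i)}\|\le\lambda_0^{-j}$ for every $j=1,\dots,\floor{t+t'}$, i.e. $x$ is a $(\lambda_0,E)$-forward hyperbolic time for $(x,t+t')$.

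For the recurrence half I would argue by cases on $s\in\{0,\dots,\floor{t+t'}\}$. If $s\le t$, inequality \eqref{e.RecPliss1} for $(x,t)$ is already the claim. If $s=t+s'$ with $1\le s'\le\floor{t'}$, I would first invoke Remark~\ref{r.RecPliss} to note that $x_t\notin W$ (because $x_t$ is a $(\beta_0,W)$-forward recurrence Pliss time for $(x_t,t')$). Consequently, splitting $[0,s]\cap\NN$ as the disjoint union of $[0,t-1]\cap\NN$ and $[t,t+s']\cap\NN$ loses no visit to $W$, and
$$
\#\{\tau\in[0,s]\cap\NN:x_\tau\in W\}=\#\{\tau\in[0,t-1]\cap\NN:x_\tau\in W\}+\#\{\sigma\in[0,s']\cap\NN:x_{t+\sigma}\in W\}.
$$
Applying \eqref{e.RecPliss1} for $(x,t)$ at index $t-1$ bounds the first term by $\beta_0 t$, and \eqref{e.RecPliss1} for $(x_t,t')$ at index $s'$ bounds the second by $\beta_0(s'+1)$, so the total is $\beta_0(t+s'+1)=\beta_0(s+1)$, which is \eqref{e.RecPliss1} for $(x,t+t')$. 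Combining the two halves yields that $x$ is a $(\lambda_0,E,\beta_0,W)$-simultaneous forward Pliss time for $(x,t+t')$; the backward statement follows by applying the forward one to $-X$ (which interchanges $E\leftrightarrow F$ and forward $\leftrightarrow$ backward).

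The one place requiring care — the reason the argument is not entirely mechanical — is the bookkeeping at the junction time $t$: the naive decomposition $[0,s]=[0,t]\cup[t+1,s]$ together with the two hypotheses only gives the bound $\beta_0(s+2)$, which is off by one factor of $\beta_0$. Remark~\ref{r.RecPliss} is precisely the ingredient that repairs this, by certifying $x_t\notin W$ so that the junction index can be absorbed into the ``$[0,t-1]$'' block, where the sharper estimate $\beta_0 t$ (rather than $\beta_0(t+1)$) is available. Everything else is already packaged in Lemma~\ref{l.gluepliss}, which is presumably why the authors regard the proof as immediate.
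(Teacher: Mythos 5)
Your proof is correct, and it also correctly identifies the typo in the lemma's conclusion (which should read ``$(\lambda_0,E,\beta_0,W)$-simultaneous forward Pliss time for $(x,t+t')$,'' with the backward version obtained by passing to $-X$). The paper omits the proof, calling it an immediate consequence of Lemma~\ref{l.gluepliss}; your argument supplies the missing details by gluing Pliss sequences for the hyperbolic half and by a direct count for the recurrence half. This is essentially the paper's approach. (If one wishes, the recurrence half can also be pushed through Lemma~\ref{l.gluepliss} verbatim: set $a_n=\beta_0$ if $x_{n-1}\notin W$ and $a_n=\beta_0-1$ otherwise; then \eqref{e.RecPliss1} is exactly the statement that $(a_n)_{n=1}^{\floor{t}+1}$ is a $0$-forward Pliss sequence, and one glues the length-$t$ prefix to the length-$(\floor{t'}+1)$ sequence for $(x_t,t')$, truncating by one to avoid double-counting the junction index $t$. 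Your direct count is an equivalent, arguably cleaner, route.)

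One small inconsistency in the write-up: the invocation of Remark~\ref{r.RecPliss} and the final ``care'' paragraph are superfluous. Your actual computation decomposes $[0,s]\cap\NN$ as the disjoint union of $[0,t-1]\cap\NN$ and $[t,t+s']\cap\NN$, which is an exact identity requiring no information about whether $x_t\in W$; the word ``Consequently'' after noting $x_t\notin W$ is a non-sequitur, and the bound $\beta_0 t + \beta_0(s'+1)=\beta_0(s+1)$ holds regardless. The observation $x_t\notin W$ would only be needed if one used the alternative split $[0,t]\cup[t+1,s]$, as your closing paragraph describes — but your main text does not use that split, so the remark does no work there. This is a matter of exposition rather than mathematics; the proof stands.
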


\subsection{Infinite hyperbolic  times}\label{ss.infinitePliss}
Finally, let us turn our attention to points that are infinite Pliss times. They can be considered as the limit points of hyperbolic times for finite orbit segments, when the length of said orbits tend to infinity. Such infinite hyperbolic times will play a central role in the proof of the specification property; see Section~\ref{s.spec}.

\begin{definition}\label{d.infinitePliss}
	Let $\lambda>1$. For $x\in\Reg(X)$:
	\begin{itemize}
		\item we say that $x$ is a $(\lambda,E)$-forward infinite hyperbolic time for $(\psi_t^*)$, if for every $j\in\NN$ we have 
		\begin{equation}\label{e.inf.Epliss}
			\prod_{i=0}^{j-1}	 \|\psi^*_{1}\mid_{E_N(x_{i})}\| \le \lambda^{-j};
		\end{equation}
		\item we say that $x$ is a $(\lambda,F)$-backward infinite hyperbolic time for $(\psi_t^*)$, if for every $j\in\NN$ we have 
		\begin{equation}\label{e.inf.Fpliss}
			\prod_{i=0}^{j-1}	 \|\psi^*_{-1}\mid_{F_N(x_{-i})}\| \le \lambda^{-j}.
		\end{equation}
	\end{itemize} 
\end{definition}

\begin{remark}\label{r.inf.hyp}
Assume that $N = E^X_N\oplus F^X_N$ is a singular dominated splitting for $X$. Then $N = E^{-X}_N\oplus F^{-X}_N$, with $E^{-X}_N = F^X_N$ and $ F^{-X}_N = E^X_N $ is a singular dominated splitting for $-X$; furthermore, 
	\begin{itemize}
		\item Every $(\lambda,E^X)$-forward infinite hyperbolic time for $(\psi_t^*)$ is a $(\lambda,F^{-X})$-backward infinitely hyperbolic time for $(\psi_{-t}^*)$. 
		\item $x$ is a $(\lambda,E^X)$-forward infinite hyperbolic time for  $(\psi_{t}^*)$ if and only if for every $t\ge 1$, $x$ is a $(\lambda,E^X)$-forward hyperbolic time for the orbit segment $(x,t)$ and $(\psi_{t}^*)$.
	\end{itemize}
\end{remark}

Let $\lambda>1$ be fixed. We denote by
$$
\Lambda^E(\lambda) = \{x\in\Lambda:  \mbox{$x$ is a $(\lambda,E)$-forward infinite hyperbolic time for $(\psi_t^*)$}\},
$$
and 
$$
\Lambda^F(\lambda) = \{x\in\Lambda:  \mbox{$x$ is a $(\lambda,F)$-backward infinite hyperbolic time for $(\psi_t^*)$}\}.
$$
Note that $\Lambda^*(\lambda)$, $*=E,F$ may not be compact. This is because $\Reg(X)$ is not compact, and a sequence of infinite hyperbolic times may tend to a singularity (this phenomenon does not appear for diffeomorphisms and non-singular flows). To this end, we have the following result.  
\begin{proposition}\label{p.hyptime.cpt}
	Let $\Lambda$ be a multi-singular hyperbolic compact invariant set for the flow $X$. Then, for every $\lambda >1$ and $*=E,F$, the set  $\Lambda^*(\lambda)\cup\Sing_\Lambda(X)$ is compact. 
\end{proposition}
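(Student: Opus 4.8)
The plan is as follows. Since $\Lambda$ is compact and both $\Lambda^*(\lambda)$ and $\Sing_\Lambda(X)$ are contained in $\Lambda$, the set $\Lambda^*(\lambda)\cup\Sing_\Lambda(X)$ is automatically relatively compact, so it suffices to prove that it is closed. Moreover, by Remark~\ref{r.inf.hyp} together with Remark~\ref{r.symmetry}, the set $\Lambda^F(\lambda)$ for $X$ coincides with $\Lambda^E(\lambda)$ for the flow $-X$ (which is again multi-singular hyperbolic on $\Lambda$), while $\Sing_\Lambda(-X)=\Sing_\Lambda(X)$; hence it is enough to treat the case $*=E$.

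So let $x_n\in\Lambda^E(\lambda)$ be a sequence converging to some $x\in\Lambda$; I must show $x\in\Lambda^E(\lambda)\cup\Sing_\Lambda(X)$. If $x\in\Sing(X)$ there is nothing to prove, so assume $x$ is regular. Since the orbit of a regular point consists entirely of regular points, for every $j\in\NN$ the finite orbit segment $x_0,\dots,x_{j-1}$ lies in $\Lambda\cap\Reg(X)$, and in particular stays away from the closed set $\Sing(X)$; consequently $|X|$ is bounded away from $0$ on a neighborhood of each such segment, and for $n$ large the corresponding segment of $x_n$ lies in that same region.

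The key step is to pass to the limit in \eqref{e.inf.Epliss}. Recall that the dominated splitting $E_N\oplus F_N$ is continuous on $\Lambda\cap\Reg(X)$ (domination with uniform constants forces continuity of the invariant subbundles), and that the linear Poincar\'e flow $\psi_1$, hence also the scaled flow $\psi^*_1=(|X(\cdot)|/|X(f_1(\cdot))|)\,\psi_1$, is continuous over $\Lambda\cap\Reg(X)$. Therefore the map $y\mapsto\|\psi^*_1|_{E_N(y)}\|$ is continuous on $\Lambda\cap\Reg(X)$. Fix $j\in\NN$. By continuity of the flow, $(x_n)_i\to x_i$ within $\Lambda\cap\Reg(X)$ for each $i=0,\dots,j-1$, so
$$
\prod_{i=0}^{j-1}\bigl\|\psi^*_1|_{E_N((x_n)_i)}\bigr\|\ \longrightarrow\ \prod_{i=0}^{j-1}\bigl\|\psi^*_1|_{E_N(x_i)}\bigr\|.
$$
Each term on the left is $\le\lambda^{-j}$ because $x_n\in\Lambda^E(\lambda)$, so the limit is $\le\lambda^{-j}$ as well; since $j$ was arbitrary, $x\in\Lambda^E(\lambda)$. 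This proves closedness, and hence compactness.

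I expect the only delicate point to be the continuity of the invariant subbundle $E_N$ up to the limit point $x$: this is exactly where the non-compactness of $\Lambda\cap\Reg(X)$ could in principle cause trouble. It does not, precisely because $x$ is regular, which confines the relevant finite orbit segments to a region where $|X|$ is uniformly positive and the dominated splitting behaves as in the compact non-singular setting. The genuine failure of compactness of $\Lambda^*(\lambda)$ alone — sequences of infinite hyperbolic times escaping toward $\Sing(X)$ — is accounted for exactly by adjoining $\Sing_\Lambda(X)$.
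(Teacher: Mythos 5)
Your proof is correct and follows essentially the same approach as the paper's: both arguments reduce to showing closedness and pass to the limit in the defining Pliss inequalities for each fixed $j$, using the continuity of the scaled linear Poincar\'e flow (and implicitly the continuity of the dominated splitting $E_N$) on $\Lambda\cap\Reg(X)$, which holds precisely because the limit point $x$ is assumed regular. Your explicit reduction to the case $*=E$ via $-X$ and Remark~\ref{r.inf.hyp} is simply a formalization of the paper's ``similar argument holds'' remark.
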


\begin{proof}
	We make the following observation: 
	let $\{x^n\}\subset\Lambda$ be a sequence of points such that each $x^n$ is a $(\lambda,E)$-forward hyperbolic time for the orbit segment $(x^n,t)$ and $(\psi_{t}^*)$. Assume that $x^n\to x$ where $x$ is not a singularity; then $\psi_s^*$ is continuous at $x$ for every $s\ge 0$. As a result, $x$ is a $(\lambda,E)$-forward hyperbolic time for the orbit segment $(x,t)$. 
	
	Using this observation, we see that if $\{x^n\}\subset\Lambda$ is a sequence of points such that each $x^n$ is a $(\lambda,E)$-forward hyperbolic time for the orbit segment $(x^n,t_n)$ and $(\psi_{t}^*)$, with $t_n\to\infty$, and if $x$ is a limit point of $\{x^n\}$ which is not a singularity, then $x$ is a $(\lambda,E)$-forward hyperbolic time for the orbit segment $(x,t)$ and $(\psi_t^*)$ for every $t>0$. This shows that $x$ is a $(\lambda,E)$-forward infinite hyperbolic time.  In particular, this shows that $\Lambda^E(\lambda)\cup\Sing_\Lambda(X)$ is compact. Similar argument holds for $\Lambda^F(\lambda)\cup\Sing_\Lambda(X).$
\end{proof}

Next, we show that  $\Lambda^*(\lambda)$  is non-empty. Clearly this would require us to carefully choose $\lambda$. Indeed, in the following proposition we shall prove that there are plenty of infinite hyperbolic times as typical points of every ergodic measure that is not  a point mass of a singularity.

\begin{proposition}\label{p.hyptime.density}
	Let $\Lambda$ be a multi-singular hyperbolic compact invariant set, and $\eta>1$ be given by Definition~\ref{d.multising} (2). Then, for every $\lambda\in(1,\eta)$ there exists $\theta_\lambda>0$ such that for every ergodic invariant measure $\mu$ support on $\Lambda$ that is not a point mass of a singularity, one has 
	\begin{equation}\label{e.4.12a}
	\mu(\Lambda^*({\lambda}))\ge\theta_\lambda,\,\, *=E,F.
	\end{equation}
	Furthermore, \eqref{e.4.12a} also holds for all ergodic invariant measure of the time-one map $f_1$ (which may not be invariant for $X$) that is not a point mass of a singularity.
\end{proposition}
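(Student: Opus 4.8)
The plan is to reduce the statement to two ingredients: an integrated inequality bounding the ``normal Lyapunov exponents'' of $\mu$ along $E_N$ (resp.\ $F_N$), extracted from Definition~\ref{d.multising}(2) via Poincar\'e recurrence; and the Pliss Lemma in its density form applied to the Birkhoff sums of a single bounded observable. Fix a regular ergodic measure $\mu$, i.e.\ one that is not a point mass of a singularity. Since $\Sing_\Lambda(X)$ is a finite set of fixed points and $\mu$ is ergodic, $\mu(\Sing_\Lambda(X))=0$, so $\mu$ is carried by $\Lambda\cap\Reg(X)$ and
$$\phi^E(x):=\log\big\|\psi^*_1|_{E_N(x)}\big\|$$
is defined $\mu$-a.e. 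By Lemma~\ref{l.psi.bdd} (with $\tau=1$) there is $C_1\ge\eta$ with $\|\psi^*_t\|\le C_1$ for $|t|\le1$; since $E_N$ is $\psi^*$-invariant, $(\psi^*_1|_{E_N(x)})^{-1}=\psi^*_{-1}|_{E_N(x_1)}$, so $C_1^{-1}\le\|\psi^*_1|_{E_N(x)}\|\le C_1$ and $\phi^E$ is bounded, hence $\mu$-integrable. Its analogue $\phi^F(x):=\log\|\psi^*_{-1}|_{F_N(x)}\|$ will be handled by the $-X$-symmetry (Remark~\ref{r.inf.hyp}), so below I only discuss $\Lambda^E(\lambda)$.

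\emph{Step 1: $\int\phi^E\,d\mu\le-\log\eta$.} I would pick $r_0>0$ small enough that $V:=B_{r_0}(\Sing_\Lambda(X))$ is a ``sufficiently small'' open isolating neighborhood in the sense of Definition~\ref{d.multising}(2) and, simultaneously, small enough that $\mu(\Lambda\cap V^c)>0$ — the latter is possible since $\bigcap_{r_0>0}\overline{B_{r_0}(\Sing_\Lambda(X))}=\Sing_\Lambda(X)$ has $\mu$-measure zero. Let $T_V$ be the corresponding constant. For $\mu$-a.e.\ $x$, the Birkhoff theorem applied to $\mathbf 1_{\Lambda\cap V^c}$ shows that the integers $n$ with $x_n\in\Lambda\cap V^c$ have positive density; after replacing $x$ by some $x_{n_0}$ lying in $V^c$ we may assume $x\in V^c$, and we obtain integers $t_k\to\infty$ with $t_k>T_V$ and $x,x_{t_k}\in\Lambda\cap V^c$. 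Applying the $E_N$-part of~\eqref{e.hyp} to the orbit segment $(x,t_k)$ gives $\frac1{t_k}\sum_{i=0}^{t_k-1}\phi^E(x_i)\le-\log\eta$ for every $k$; letting $k\to\infty$ and using $\frac1n\sum_{i=0}^{n-1}\phi^E(x_i)\to\int\phi^E\,d\mu$ yields the bound. The key point is that~\eqref{e.hyp} constrains only the \emph{endpoints} $x,x_t$ to lie outside $V$, so no uniform control along the (possibly singularity-visiting) interior of the orbit is required.

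\emph{Step 2: Pliss times, finite then infinite.} Fix $\lambda\in(1,\eta)$ and choose $\lambda'\in(\lambda,\eta)$, e.g.\ $\lambda'=\sqrt{\lambda\eta}$. Set $a_i(x):=-\phi^E(x_i)\le\log C_1=:A$; by Step 1 and the Birkhoff theorem, for $\mu$-a.e.\ $x$ there is $n_0(x)$ with $\sum_{i=0}^{n-1}a_i(x)\ge n\log\lambda'$ for all $n\ge n_0(x)$. The Pliss Lemma (Theorem~\ref{t.pliss}) with $b_2=\log\lambda'$, $b_1=\log\lambda$ produces, for each $N\ge n_0(x)$, at least $\theta_0 N$ integers $n\in[1,N]$ with $\sum_{j=n}^{m-1}a_j(x)\ge(m-n)\log\lambda$ for all $n<m\le N$ — i.e.\ $x_n$ is a $(\lambda,E)$-forward hyperbolic time for $(x_n,N-n)$ — where $\theta_0=\frac{\log\lambda'-\log\lambda}{\log C_1-\log\lambda}>0$ depends only on $\lambda$, not on $\mu$. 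To pass to infinite hyperbolic times, set $B_n(x):=\sum_{j=0}^{n-1}\big(a_j(x)-\log\lambda\big)$; Step 1 gives $B_n(x)/n\to-\int\phi^E\,d\mu-\log\lambda\ge\log\eta-\log\lambda>0$, so $B_n(x)\to+\infty$, hence the set $\Gamma(x):=\{n\ge0:B_m(x)\ge B_n(x)\ \forall m>n\}$, which is exactly $\{n:x_n\in\Lambda^E(\lambda)\}$, is unbounded. If $N\in\Gamma(x)$ and $n\le N$ is a Pliss index as above, then for every $m>N$,
$$\sum_{j=n}^{m-1}a_j(x)=\sum_{j=n}^{N-1}a_j(x)+\sum_{j=N}^{m-1}a_j(x)\ge(N-n)\log\lambda+(m-N)\log\lambda,$$
so $n\in\Gamma(x)$; thus $\#\big(\Gamma(x)\cap[1,N]\big)\ge\theta_0 N$ whenever $N\in\Gamma(x)$. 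Taking $N=N_k\to\infty$ along $\Gamma(x)$ gives $\limsup_n\frac1n\#\big(\Gamma(x)\cap[0,n)\big)\ge\theta_0$ for $\mu$-a.e.\ $x$. Since $\Lambda^E(\lambda)$ is Borel, the Birkhoff theorem gives that this orbital density is in fact a limit $\mu$-a.e.\ and equals $\mu(\Lambda^E(\lambda))$; combining, $\mu(\Lambda^E(\lambda))\ge\theta_0=:\theta_\lambda$. The same argument run for $-X$ gives $\mu(\Lambda^F(\lambda))\ge\theta_\lambda$, and for an ergodic $f_1$-invariant measure on $\Lambda$ (not a point mass of a singularity, hence with $\mu(\Sing_\Lambda(X))=0$) the proof is verbatim, since Poincar\'e recurrence and the Birkhoff theorem hold for $f_1$ and $x_i=f_i(x)$.

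I expect Step~1 to be the main obstacle: the hyperbolicity in Definition~\ref{d.multising}(2) is asserted only for orbit segments whose endpoints avoid a neighborhood of $\Sing_\Lambda(X)$ — reflecting the genuine non-compactness and lack of uniform estimates on $N_\Lambda$ — so one cannot integrate a cocycle over a compact set as in the non-singular case, and the recurrence argument is what bridges this gap. A secondary point requiring care is obtaining a density bound that is \emph{uniform in} $\mu$, which forces $\theta_\lambda$ to be produced directly from the Pliss constants and the telescoping identity, and makes the fact $B_n\to+\infty$ (hence $\Gamma(x)$ unbounded) essential for converting finite into infinite hyperbolic times.
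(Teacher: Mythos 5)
Your proof is correct (modulo a small presentational point noted below) and takes a genuinely different route from the paper's. The paper proves the $f_1$-statement first: it fixes an $f_1$-Birkhoff-typical point $x$, applies the Pliss Lemma with $b_2=\log\eta$ along orbit segments $(x,n)$ whose endpoints lie outside the isolating neighborhood, collects the resulting finite hyperbolic times into sets $I_n(c)$ and passes to Hausdorff limit points, identifies regular limit points as infinite hyperbolic times (the compactness argument encoded in Proposition~\ref{p.hyptime.cpt}), and bounds $\mu(I(c))$ from below via empirical measures; the flow version is then deduced by integrating over $f_1$-ergodic components. Your argument replaces the Hausdorff-limit/compactness step with a purely combinatorial one: you identify $\{n:x_n\in\Lambda^E(\lambda)\}$ as the set $\Gamma(x)$ of tail-minimizers of $B_n=\sum_{j<n}(a_j-\log\lambda)$, show $\Gamma(x)$ is unbounded via $B_n\to+\infty$ (which needs your Step~1), and observe that prepending a finite $\lambda$-Pliss block to an index $N\in\Gamma(x)$ keeps you in $\Gamma(x)$. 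This is a clean simplification that avoids limit-point bookkeeping altogether. The trade-off is that you need the integrated bound $\int\phi^E\,d\mu\le-\log\eta$ (Step~1) just to ensure $\Gamma(x)\ne\emptyset$ and unbounded, and you must downgrade to $b_2=\log\lambda'<\log\eta$ because the Birkhoff average is only eventually $\ge\log\lambda'$, not $\ge\log\eta$; the paper avoids both issues by working directly with the $\eta$-estimate on the carefully chosen segments. Both yield a $\theta_\lambda>0$ depending only on $\lambda$ and the ambient constants, as required.

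One small caution on your Step~1 and the end of Step~2: you invoke ``$\frac1n\sum_{i<n}\phi^E(x_i)\to\int\phi^E\,d\mu$'' and ``$\frac1n\#(\Gamma(x)\cap[0,n))\to\mu(\Lambda^E(\lambda))$'' as if they held for a.e.\ $x$. These are statements of the $f_1$-Birkhoff theorem and are correct when $\mu$ is $f_1$-ergodic (the ``furthermore'' case), but when $\mu$ is only $X$-ergodic the discrete averages converge to an $f_1$-invariant conditional expectation, not necessarily to the spatial mean. The fix is exactly the paper's first reduction: run your verbatim argument on an $f_1$-ergodic component $\tilde\mu_s=(f_s)_*\tilde\mu$ (each of which has $\tilde\mu_s(\Sing)=0$, hence is regular), obtain $\tilde\mu_s(\Lambda^E(\lambda))\ge\theta_\lambda$ for a.e.\ $s$, and integrate $\mu=\int_0^1\tilde\mu_s\,ds$. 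This is a two-line addition, not a gap in the core argument.
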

\begin{remark}\label{r.lambda0}
	From the proof it will be clear that one could take $\lambda = \lambda_0$ as in  Lemma~\ref{l.hyptime}, in which case $\theta_\lambda = \theta_0$.  
\end{remark}

\begin{proof}
	We shall prove the ``furthermore'' part: for every ergodic invariant measure of the time-one map $f_1$ (which may not be invariant for $X$) that is not a point mass of a singularity, we have $\mu(\Lambda^*({\lambda}))\ge\theta_\lambda,\,\, *=E,F$.
	
	First let us demonstrate how the proposition follows from its ``furthermore'' part. 	Let $\mu$ be an ergodic invariant measure of $X$ that is not a point mass of a singularity. Note that ergodicity for the flow does not imply ergodicity for the time-one map $f_1$; however, we can always take a typical ergodic component of $\mu$ for $f_1$, which we denote by $\tilde\mu$. Then for every measurable set $A$ we have 
	$$
	\mu(A) = \int_0^1 (f_t)_*\tilde\mu (A)\,dt. 
	$$
	Furthermore, each $\tilde \mu_s : = (f_s)_*\tilde\mu$ is ergodic and invariant for $f_1$. Then the proposition follows from the ``furthermore'' part by integrating over $t$.
	 
	It remains to prove the ``furthermore'' part. For this purpose we let $\mu$ be an ergodic invariant measure of $f_1$ that is not a point mass of a singularity, and take a Birkhoff typical point $x$ of $\mu$ under the map $f_1$. This means that the empirical measures 
	$$
	\mu_n:= \frac1n \sum_{i=0}^{n-1} \delta_{x_i}
	$$ 
	converge in weak-* topology to $\mu$; here $\delta_{x_i}$ is the point mass on $x_i=f_i(x)$. As before we shall only consider the case $*=E$, and the other case follows from symmetry.
	
	Let $\lambda\in (1,\eta)$ and $\theta_\lambda$ be given by Theorem~\ref{t.pliss} with $A = K_1^*, b_2 = \log \eta$ and $b_1 = \log\lambda$. 
	Let $W \subset B_{r_0}(\Sing_\Lambda(X))$ be an isolating neighborhood of $\Sing_\Lambda(X)$ and 
	let $T_{W}$ be given by Lemma \ref{l.robust.hyp}. Changing from $x$ to another point $y$ on the forward orbit of $x$, we may assume that $x\notin B_{r_0}(\Sing_\Lambda)(X)$.\footnote{Note that since $x$ is a Birkhoff typical point of a regular measure $\mu$, $x$ cannot be in the stable manifold of any singularity.} For each integer $n> T_{W}$ such that $x_n\notin W$ (there are infinitely many such $n$'s because $W$ is an isolating neighborhood of $\Sing_\Lambda(X)$, and $\mu$ is not a point mass on $\Sing_\Lambda(X)$), we have
	$$
	\prod_{i=0}^{n-1}\left\|\psi^*_1|_{E_N(x_i)}\right\|\le \eta^{-n}.
	$$
	By Lemma~\ref{l.hyptime}, there are at least $\theta_\lambda \cdot n$ many $k_i$'s such that each $x_{k_i}$ is a $(\lambda,E)$-forward hyperbolic time for the orbit segment $(x_{k_i}, n-k_i)$. 
	
	Now fix any $c\in (0,\theta_\lambda)$ and define the set 
	\begin{equation*}
		\begin{split}
			I_n(c)=\{x_{k}:\,\, & k\in[0, (1-c)(n-1)]\cap\NN, x_k \mbox{ is a $(\lambda,E)$-forward }\\
			& \mbox{ hyperbolic time for the orbit segment $(x_{k}, n-k)$}\}.
		\end{split}
	\end{equation*}
	Then we have 
	$$
	\mu_n(I_n(c)) \ge \theta_\lambda-c. 
	$$
	
	Take a sequence $n_i\to\infty$ 
	and consider the set 
	$$
	\tilde I(c) = \bigcup_i I_{n_i}(c)
	$$
	and denote by $I(c)$ the set of all limit points of $\tilde I(c)$. In other words, $I(c)$ is obtained by take the limit of any sequence $\{x_{k_i}\}_{i=1}^\infty$ with $x_{k_i}\in I_{n_i}(c)$. Then it is clear that $I(c)$ is closed. 
	
	\medskip
	\noindent Claim 1. Every $y\in I(c)\cap\Reg(\Lambda)$ is a  $(\lambda,E)$-forward infinite hyperbolic time.
	
	\noindent Proof of Claim 1. Note that each $x_{k_i}\in I_{n_i}(c)$ is a $(\lambda,E)$-forward hyperbolic time for an orbit segment with length at least $c\cdot n_i$ which tends to infinity as $i\to\infty$. Then any limit point $y$ along such a sequence must be an infinite hyperbolic time as we proved in Proposition~\ref{p.hyptime.cpt}. 
	
	\medskip
	\noindent Claim 2. We have 
	$$
	\mu(I(c)) \ge \theta_\lambda - c.  
	$$
	
	\noindent Proof of Claim 2. Take an open neighborhood $U\supset I(c)$. Then there exists $N>0$ such that $I_{n_i}(c)\subset U$ for all $i> N$ (otherwise we have a limit point outside $U$, a contradiction). This shows that $\mu_{n_i}(\overline U)\ge\mu_{n_i}( U)\ge \theta_\lambda -c$, and consequently $\mu(\overline U)\ge \theta_\lambda-c$ since $\overline U$ is compact. Then take a decreasing sequence of neighborhoods $U_k$ such that $\bigcap_k \overline U_k = I(c)$ which is possible since $I(c)$ is compact, we obtain the desired inequality. 
	
	Combining these two claims, we obtain 
	$$
	\mu(\Lambda^E(\lambda)) \ge \mu(I(c)) \ge \theta_\lambda-c.
	$$
	Since $c$ is arbitrary, we get $\mu(\Lambda^E(\lambda)) \ge\theta_\lambda$ as required. 
\end{proof}

The following theorem of Liao shows for points in $\Lambda^*(\lambda)$, there exist invariant manifolds at a certain relative uniform size.

\begin{definition}
	Let $x$ be a regular point of the vector field $X$. An embedded submanifold $W_{loc, \cN}^s(x)\subset \cN(x)$ is called the local stable manifold of $x$ on the normal plane, if:
	\begin{itemize}
		\item there exists $\rho>0$ such that every $y\in W_{loc, \cN}^s(x)$ is $\rho$-scaled shadowed by the orbit of $x$ up to time $t$ for every $t>0$;
		\item for every $y\in W_{loc, \cN}^s(x)$, we have 
		$$
		d_{\cN(x_t)}(\cP_{t,x}(y), x_t) \to 0 \mbox{ as } t\to+\infty.
 		$$
	\end{itemize}
	The local unstable manifold can be defined similarly by considering $-X$. 
\end{definition}

\begin{proposition}\label{p.inv.mld}
	For each $\lambda>1$, there exists a constant $\rho_\textit{inv} = \rho_\textit{inv}(\lambda)\in (0,\rho_0] $ such that:
	\begin{itemize}
		\item for every $x \in \Lambda^E(\lambda)$, the local stable manifold of $x$ exists and has diameter at least $\rho_\textit{inv}|X(x)|$; and 
		\item for every $x\in \Lambda^F(\lambda)$, the  local unstable manifold of $x$ exists and has diameter at least $\rho_\textit{inv}|X(x)|$.
	\end{itemize}
\end{proposition}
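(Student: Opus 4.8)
The plan is to construct the local stable manifold of a point $x\in\Lambda^E(\lambda)$ as a leaf of the fake foliation $\cF^E_{x,\cN}$ built in Section~\ref{ss.fake}, and to use the iterated contraction estimate of Lemma~\ref{l.hyp.fakeEleaf} (with $\lambda_1>1$ from \eqref{e.lambda1}) to show it has the required properties and a uniform relative size. Since $x$ is a $(\lambda,E)$-forward infinite hyperbolic time, by Remark~\ref{r.inf.hyp} it is a $(\lambda_0,E)$-forward hyperbolic time for every finite orbit segment $(x,t)$, so Lemma~\ref{l.hyp.fakeEleaf} applies for every $t>0$ simultaneously. Fix $\rho_1\in(0,\rho_0]$ and $\alpha_0>0$ from that lemma and set $\rho_\textit{inv}(\lambda)=\rho_1/(4K_0)$; define $W^s_{loc,\cN}(x)$ to be the $\rho_\textit{inv}|X(x)|$-disk of the leaf $\cF^E_{x,\cN}(x)$ centered at $x$. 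First I would check the two defining properties of a local stable manifold: the $\rho_1$-scaled shadowing up to every time $t>0$ is exactly the first conclusion of Lemma~\ref{l.hyp.fakeEleaf}; and the convergence $d_{\cN(x_t)}(\cP_{t,x}(y),x_t)\to0$ follows from \eqref{e.4.4a}, since $d^E_{x_k}(y_{\tau_{x,y}(k)})\le 4\frac{|X(x_k)|}{|X(x)|}\lambda_1^{-k}d^E_x(y)$, and $|X(x_k)|$ is bounded above on $\bM$ while $\lambda_1^{-k}\to0$ — so the $E$-length (and hence the distance, up to the projection $\cP_{x_k}$ which is bi-Lipschitz by Proposition~\ref{p.tubular3}(3)) tends to $0$. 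Interpolating over non-integer times $t$ is routine using Lemma~\ref{l.scaledflow} (boundedness of $\psi^*_t$ for $t\in[0,1]$) and Proposition~\ref{p.tubular3}(3).

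The unstable case is obtained by symmetry: for $x\in\Lambda^F(\lambda)$, Remark~\ref{r.inf.hyp} says $x$ is a $(\lambda,E^{-X})$-forward infinite hyperbolic time for the flow $-X$ with the bundle roles swapped ($E^{-X}_N=F^X_N$), so the construction above applied to $-X$ produces a local stable manifold for $-X$, which is by definition the local unstable manifold of $x$ for $X$; this has diameter at least $\rho_\textit{inv}|X(x)|$ with the same constant (taking the minimum of the constants obtained for $X$ and $-X$, both of which depend only on $\lambda$ and the uniform Liao constants via Remark~\ref{r.Liao.robust2}).

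The main obstacle I anticipate is not the contraction estimate itself — that is handed to us by Lemma~\ref{l.hyp.fakeEleaf} — but rather verifying that the leaf of the fake foliation genuinely qualifies as an \emph{invariant} (stable) manifold in the sense required, i.e.\ that the images $\cP_{t,x}(W^s_{loc,\cN}(x))$ behave coherently and that the diameter $\rho_\textit{inv}|X(x)|$ is genuinely uniform in $x$ (not just uniformly relative). The first point is handled by the invariance relation \eqref{e.inv}, $\cP_{1,x}(\cF^E_{x,\cN}(y))\subset\cF^E_{x_1,\cN}(\cP_{1,x}(y))$ for $y$ in the relative $\rho_0K_0^{-1}|X(x)|$-scale, combined with the fact that the shadowing keeps all iterates inside that scale (this is precisely why we need $\rho_\textit{inv}=\rho_1/(4K_0)$ rather than $\rho_1$). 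The second point is where care is needed: $\rho_\textit{inv}|X(x)|$ is a \emph{relative} size, which is the correct and expected formulation (the same as in the sectional-hyperbolic case of \cite{PYY23}); the constant $\rho_\textit{inv}$ itself is uniform because all the ingredients — $\rho_1$, $\alpha_0$, $K_0$, $\lambda_1$ — are uniform in $x$ by Proposition~\ref{p.tubular3}, Lemma~\ref{l.hyp.fakeEleaf} and Remark~\ref{r.Liao.robust2}. Finally, one should note that leaves of a fake foliation are only $C^1$ and the foliation is only continuous, so one must phrase "embedded submanifold" accordingly — this is the standard situation for Hadamard--Perron graphs (Theorem~\ref{t.fakeleaves}) and causes no real difficulty.
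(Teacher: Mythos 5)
Your proposal is correct and follows essentially the same approach as the paper, which simply cites the Hadamard--Perron theorem, the uniform continuity of $DP^*_{1,x}$ from Proposition~\ref{p.tubular3}\,(2), and Lemma~\ref{l.hyp.fakeEleaf}; your identification of $W^s_{loc,\cN}(x)$ with the fake leaf $\cF^E_{x,\cN}(x)$ at scale $\rho_\textit{inv}|X(x)|$ is exactly what the paper records just after Proposition~\ref{p.fake.inv}. One small slip: you write that a $(\lambda,E)$-forward infinite hyperbolic time is a $(\lambda_0,E)$-forward hyperbolic time for every finite orbit segment via Remark~\ref{r.inf.hyp}, which is false when $\lambda<\lambda_0$; the correct reading of the remark gives $(\lambda,E)$-hyperbolic times, and you must then run the argument of Lemma~\ref{l.hyp.fakeEleaf} with $\lambda$ in place of $\lambda_0$ (so the constants $\rho_1$, $\alpha_0$ and the contraction rate depend on $\lambda$ --- exactly why the proposition states $\rho_\textit{inv}=\rho_\textit{inv}(\lambda)$).
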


The proof follows from the Hadamard-Perron theorem and the uniform continuity of $DP_{1,x}^*$ in Liao's scaled tubular neighborhood (see Proposition~\ref{p.tubular3} (2)). Also see Lemma~\ref{l.hyp.fakeEleaf}.

We conclude this section with the following proposition concerning the relation between invariant manifolds of infinite hyperbolic times and fake leaves at hyperbolic times of a finite orbit segment.

Below we shall use $\cF^*_{x,\cN}(x,r)$ to denote the $r$-ball centered at $x$ inside the fake leaf $\cF^*_{x,\cN}(x)$, $*=E,F$.

\begin{proposition}\label{p.fake.inv}
	Let $\{(x^n,t_n)\}$ be a sequence of orbit segments in $\cO(U)$ satisfying Lemma \ref{l.hyptime}, Assumption (1). Furthermore, 
	assume that $\{x^n\}$ converges to a regular point $x$, $t_n \to\infty$,
	and $x^n$ is a $(\lambda_0,E)$-forward hyperbolic time for the orbit segment $(x^n,t_n)$. Then $x$ is a $(\lambda_0,E)$-forward infinite hyperbolic time; furthermore,  we have
	$$
	\lim_n\cF_{x_n,\cN}^{E, (x,t_n)}(x_n,\rho_\textit{inv}|X(x_n)|)= W_{loc,\cN}^s(x),\footnote{	When $x_n\in\Lambda$, $\cF_{x_n,\cN}^{E, (x,t_n)}$ is replaced by $\cF_{x_n,\cN}^{E}$.}
	$$
	where the limit is taken in the $C^0$ topology in the space of $C^1$ embeddings of $\dim E_N$-dimensional disks. 

\end{proposition}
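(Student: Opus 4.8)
The plan is to establish the two assertions in turn. For the first — that $x$ is a $(\lambda_0,E)$-forward infinite hyperbolic time — I would simply repeat the argument of Proposition~\ref{p.hyptime.cpt}: since $x$ is regular its entire forward orbit avoids $\Sing(X)$, so for each fixed $j\in\NN$ the quantities $\|\psi^*_1|_{E_N(\cdot)}\|$ are continuous along $x_0,\dots,x_{j-1}$ (using the continuity of the dominated splitting $E_N\oplus F_N$ supplied by Lemma~\ref{l.robust.dom.spl} on orbit segments satisfying Lemma~\ref{l.hyptime}(1)); hence, letting $n\to\infty$ with $\floor{t_n}\ge j$,
$$
\prod_{i=0}^{j-1}\|\psi^*_1|_{E_N(x_i)}\|=\lim_{n}\prod_{i=0}^{j-1}\|\psi^*_1|_{E_N(x^n_i)}\|\le\lambda_0^{-j}.
$$
Thus $x$ is a $(\lambda_0,E)$-forward infinite hyperbolic time, and by the same argument as Proposition~\ref{p.inv.mld} (which only uses multi-singular hyperbolicity in a neighborhood of $\Lambda$, available by Theorem~\ref{t.robust.hyp}) its local stable manifold $W^s_{loc,\cN}(x)$ exists; I take $W^s_{loc,\cN}(x)$ to be the fake $E$-leaf disk of radius $\rho_\textit{inv}|X(x)|$ produced there, and, shrinking $\rho_\textit{inv}=\rho_\textit{inv}(\lambda_0)$ if necessary, I assume $\rho_\textit{inv}\le\rho_1/(4K_0)$ so that Lemma~\ref{l.hyp.fakeEleaf} applies to $\rho_\textit{inv}|X(\cdot)|$-disks on fake $E$-leaves.

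For the convergence, set $D^n:=\cF^{E,(x^n,t_n)}_{x^n,\cN}\bigl(x^n,\rho_\textit{inv}|X(x^n)|\bigr)$ (replaced by $\cF^{E}_{x^n,\cN}\bigl(x^n,\rho_\textit{inv}|X(x^n)|\bigr)$ when $x^n\in\Lambda$). Each $D^n$ is the exponential image of the graph of a $C^1$ map with derivative bounded by $\alpha$ over a $\dim E_N$-disk in $E_N(x^n)$ of radius comparable to $\rho_\textit{inv}|X(x^n)|$, and $|X(x^n)|\to|X(x)|\ne0$; so the $D^n$ have uniformly bounded $C^1$ size, and by the Arzel\`a--Ascoli theorem every subsequence of $\{D^n\}$ has a further subsequence converging, in the $C^0$ topology on $C^1$ embeddings of $\dim E_N$-disks, to a Lipschitz $\dim E_N$-disk $D$ through $x$ which is a graph over a $\dim E_N$-disk in $E_N(x)$ of radius $\rho_\textit{inv}|X(x)|$ with Lipschitz constant $\le\alpha$. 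It therefore suffices to show that any such subsequential limit $D$ equals $W^s_{loc,\cN}(x)$, for then the full sequence $\{D^n\}$ converges to $W^s_{loc,\cN}(x)$.

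Fix a subsequence $n_j$ with $D^{n_j}\to D$. To see $D\subseteq W^s_{loc,\cN}(x)$, take $y\in D$ and write $y=\lim_j y^{n_j}$ with $y^{n_j}\in D^{n_j}$. Since $x^{n_j}$ is a $(\lambda_0,E)$-forward hyperbolic time for $(x^{n_j},t_{n_j})$ and that segment satisfies Lemma~\ref{l.hyptime}(1), Lemma~\ref{l.hyp.fakeEleaf} gives that the orbit of $y^{n_j}$ is $\rho_\textit{inv}$-scaled shadowed by that of $x^{n_j}$ up to time $t_{n_j}$, that $\cP_{k,x^{n_j}}(y^{n_j})$ stays on the fake $E$-leaf through $x^{n_j}_{k}$, and that
$$
d^E_{x^{n_j}_{k}}\bigl(\cP_{k,x^{n_j}}(y^{n_j})\bigr)\le 4\frac{|X(x^{n_j}_{k})|}{|X(x^{n_j})|}\,\lambda_1^{-k}\,d^E_{x^{n_j}}(y^{n_j})\le \lambda_1^{-k}K_0^{-1}\rho_\textit{inv}\,|X(x^{n_j}_{k})|
$$
for $0\le k\le\floor{t_{n_j}}$; since $|X|\le L_X$, the middle term yields $d\bigl(\cP_{k,x^{n_j}}(y^{n_j}),x^{n_j}_{k}\bigr)\le C\lambda_1^{-k}$ with $C$ independent of $j$. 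Now fix $t>0$; for $j$ large enough that $t_{n_j}>t$, the reparametrization functions $\tau_{x^{n_j},y^{n_j}}$ and the sectional Poincar\'e maps $\cP_{t,x^{n_j}}$ converge to $\tau_{x,y}$ and $\cP_{t,x}$ (continuity of Liao's objects in the base point at the relative scale $\rho_0|X|$: Proposition~\ref{p.tubular3}, Lemma~\ref{l.tau}), so passing to the limit in $j$ gives $f_{\tau_{x,y}(t)}(y)\in\cN_{\rho_\textit{inv}|X(x_t)|}(x_t)$ for all $t>0$ and $d(\cP_{t,x}(y),x_t)=\lim_j d(\cP_{t,x^{n_j}}(y^{n_j}),x^{n_j}_t)\le C\lambda_1^{-\floor{t}}\to0$; hence $y\in W^s_{loc,\cN}(x)$. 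For the reverse inclusion, $D$ and $W^s_{loc,\cN}(x)$ are both graphs over $\dim E_N$-disks in $E_N(x)$ of radius $\rho_\textit{inv}|X(x)|$ (the radius of $D$ is preserved in the limit because $|X(x^{n_j})|\to|X(x)|$), and $D\subseteq W^s_{loc,\cN}(x)$; a graph contained in another graph over a base disk of equal radius must coincide with it, so $D=W^s_{loc,\cN}(x)$. As every subsequence of $\{D^n\}$ has a further subsequence converging to $W^s_{loc,\cN}(x)$, we conclude $D^n\to W^s_{loc,\cN}(x)$, which is the assertion.

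The crux is the limiting argument of the last paragraph. The fake foliations attached to the segments $(x^n,t_n)$ belong to pairwise distinct (finite) cocycles and their leaves are only well-posed at Liao's relative scale $\rho_0|X(\cdot)|$, which degenerates near $\Sing(X)$; the key observations that make the limit go through are that all of Lemma~\ref{l.hyp.fakeEleaf}'s estimates live at this relative scale — so the uniform continuity of $DP^*_{1,\cdot}$ and of the reparametrization times lets one take limits of $\cP_{t,x^{n_j}}$ in the base point — and that the contraction rate $\lambda_1^{-k}$ is uniform in $j$ (together with $|X|\le L_X$), so the geometric decay survives the limit and pins the limiting disk inside $W^s_{loc,\cN}(x)$. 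The remaining ingredients — compactness from the cone condition, the matching of radii, and the subsequence argument — are routine.
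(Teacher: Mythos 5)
Your proof is correct and takes a genuinely different route from what the paper indicates. The paper omits the proof of the second assertion with the remark that it is ``essentially the continuity of the local invariant manifold given by the Hadamard-Perron Theorem,'' i.e.\ an argument through continuous dependence of the Hadamard--Perron fixed point on the cocycle data as the finite cocycles attached to $(x^n,t_n)$ converge to the infinite cocycle over $\Orb(x)$. You instead take a compactness route: the uniform cone condition $\|Dh^E\|\le\alpha$ supplied by Theorem~\ref{t.fakeleaves} gives Arzel\`a--Ascoli compactness of the disks $D^n$, and then you pin down every subsequential limit by pushing the contraction estimate of Lemma~\ref{l.hyp.fakeEleaf} through the limit, using the continuity of the scaled Poincar\'e maps and reparametrization times in the base point (which is available because the orbit of the regular point $x$ avoids $\Sing(X)$ on any fixed time window). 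This bypasses the somewhat delicate issue of making precise in what topology the finite cocycles (whose continuations past time $t_n$ are $\psi^*_{1,x^n_{t_n}}$ and thus differ for each $n$) converge to the infinite one, at the price of invoking Lemma~\ref{l.hyp.fakeEleaf}. Both are standard devices and both work here; yours has the advantage of identifying the limit directly rather than through a fixed-point continuity statement.

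Two small points you should tighten. First, when you apply Lemma~\ref{l.hyp.fakeEleaf} to points of $D^n = \cF^E_{x^n,\cN}(x^n,\rho_\textit{inv}|X(x^n)|)$ you need the $\rho$ of that lemma to be $4K_0\rho_\textit{inv}$ (so that the admissible $\rho|X(x^n)|/(4K_0)$-disk covers $D^n$); the resulting bound is $d^E_{x^n_k}(\cdot)\le 4\lambda_1^{-k}\rho_\textit{inv}|X(x^n_k)|$, not $\lambda_1^{-k}K_0^{-1}\rho_\textit{inv}|X(x^n_k)|$ as written. This only changes the constant $C$ in $d(\cP_{k,x^{n_j}}(y^{n_j}),x^{n_j}_k)\le C\lambda_1^{-k}$ and so does not affect the argument, but the constant as stated is incorrect. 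Second, in the reverse-inclusion step you should be explicit that the set of $y\in\cN_{\rho|X(x)|}(x)$ with $\cP_{t,x}(y)\to x_t$ and the scaled-shadowing property is \emph{exactly} the fake $E$-leaf $\cF^E_{x,\cN}(x)$: if $d^F_x(y)>0$, the domination (one-step domination \eqref{e.onestepDS1} together with the $E$-contraction along hyperbolic times) forces the $F$-length, and hence $d(\cP_{t,x}(y),x_t)$, to stay bounded away from zero, contradicting the convergence. This is the step that turns ``$y$ has the stable-manifold properties'' into ``$y$ lies on $W^s_{loc,\cN}(x)$.'' Once that is noted, the radius-matching argument is sound since the intrinsic radii $\rho_\textit{inv}|X(x^n)|\to\rho_\textit{inv}|X(x)|$.
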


Since every infinite hyperbolic time $x$ is also a hyperbolic time for the orbit segment $(x,t)$ for every $t>0$, we see that 
$$
\cF_{x,\cN}^E(x,\rho_\textit{inv}|X(x)|)= W_{loc,\cN}^s(x).
$$

The first part of the proposition, namely that $x$ is a infinite hyperbolic time, is due to the continuity of $(\psi_t^*)$ at $x$ for every $t$ (we already used this fact in Proposition~\ref{p.hyptime.cpt}). The proof of the second part is essentially the continuity of the local invariant manifold given by the Hadamard-Perron Theorem, and is therefore omitted.

\section{Proof of Theorem~\ref{m.B}}\label{s.mainthm.proof}
This section contains the proof of  Theorem~\ref{m.B}. Unless otherwise specified (for example, see Theorem~\ref{t.Bowen}), we shall always assume that $\Lambda$ is a multi-singular hyperbolic compact invariant set that is an isolated chain recurrence class. 

\subsection{Orbit segments with the Bowen property and specification}\label{ss.BS}
In this section we construction two orbit segments collections, $\cG_B$ and $\cG_S$, on which the Bowen property and specification holds, respectively. 

\subsubsection{Definition of $\cG_B$ and the Bowen property}\label{sss.bowen}

Let $r_0>0$ and $U$ be the isolating neighborhood of $\Sing_\Lambda$(X) given by Lemma~\ref{l.robust.hyp}, and recall the definition of $W_r\subset B_{r_0}(\Sing_\Lambda(X))$ from \eqref{e.W.def}. Given $\lambda_0>1$, $\beta\in (0,1)$ and $0<r<r_0$, we let $\cG_B=\cG_B(\lambda_0,\beta, r_0, W_{r})$ be the collection of orbit segments $(x,t)\in \cO(U)$ with $t\in\NN$, such that:
\begin{enumerate}
	\item there exist $t_1,t_2\ge 0$ such that $(x_{-t_1}, t_1+t+t_2)\in \cO(U)$ and $x_{-t_1}\notin B_{r_0}(\Sing_\Lambda(X)), x_{t+t_2}\notin B_{r_0}(\Sing_\Lambda(X))$; in particular, by Lemma~\ref{l.robust.dom.spl} and \ref{l.robust.hyp}, the ``larger'' orbit segment $(x_{-t_1}, t_1+t+t_2)$ that contains $(x,t)$ has a dominated splitting $E_N\oplus F_N$ on its normal bundle, and consequently, fake foliations are well-defined on the normal planes with size proportional to the flow speed;
	\item $x$ is a $(\lambda_0,E, \beta,W_r)$-simultaneous forward Pliss times for the orbit segment $(x,t)$ and the set $W_r$;
	\item $x_t$ is a $(\lambda_0,F, \beta,W_r)$-simultaneous backward Pliss times for the orbit segment $(x,t)$ and the set $W_r$;
\end{enumerate}
Since $x$ is a forward recurrence Pliss time, by Remark~\ref{r.RecPliss} we have $x\notin W_r$; the same statement holds for $x_t$. Also note that there is a natural symmetry in the definition of $\cG_B$: $\cG_B$ defined for $-X$ is the same as $\cG_B$ for $X$. This observation will significantly simplify the proof of the next theorem.

The following theorem gives the Bowen property on $\cG_B$ for proper choice of parameters. 
\begin{theorem}\label{t.Bowen}
	Let $\Lambda$ be a multi-singular hyperbolic set with all singularities active. Then, there exists $\beta_0\in(0,1)$, such that for every $\beta\in (0,\beta_0]$, $r_0>0$ small enough and $U\supset \Lambda$ small enough, there exists $\overline r\in (0,r_0)$, such that for every $r\in (0,\overline r)$, there exists $\vep_B>0$ such that $\cG_B(\lambda_0,  \beta,r_0, W_r)$ has the Bowen property for every H\"older continuous function at scale $\vep_B$.
\end{theorem}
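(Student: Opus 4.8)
The plan is to mimic the strategy of \cite{PYY23} for sectional-hyperbolic attractors, replacing the uniform stable contraction used there by the simultaneous Pliss times built in Section~\ref{s.Pliss}. Concretely, I would (a) fix the constants, (b) show that every point of the Bowen ball of a segment in $\cG_B$ is scaled shadowed by that segment, so that the fake foliation charts of Section~\ref{ss.fake} stay well posed along the whole Bowen ball, and (c) run the classical bounded-distortion estimate along those charts. For (a): pick $\lambda_0>1$ and $\theta_0$ from Lemma~\ref{l.hyptime}, $a>0$ small with $\lambda_1=\lambda_0/(a+1)>1$ as in \eqref{e.lambda1}, and $\eta'<\eta$; let $\lambda_{\sigma,*}>1$, $*=E,F$, be the exponents from Lemmas~\ref{l.Elarge} and \ref{l.Flarge}; then choose $\beta_0\in(0,1)$ so small (in terms of $\lambda_1$, $\eta'$ and the $\lambda_{\sigma,*}$) that the distortion accumulated on a $\beta_0$-sparse set of times can be absorbed by the exponential contraction coming from the hyperbolic times, and let $\beta_1=\beta_1(\beta)$ be given by Lemma~\ref{l.bihyptime} for each $\beta\le\beta_0$; finally shrink $r_0$ and $U$ so that Lemmas~\ref{l.nbhd.1}, \ref{l.robust.dom.spl}, \ref{l.robust.hyp}, \ref{l.hyp.fakeEleaf}, \ref{l.hyptime.dist}, \ref{l.reg}, \ref{l.expansion.reg}, \ref{l.Elarge} and \ref{l.Flarge} all apply (so that every $(x,t)\in\cG_B$ carries the singular dominated splitting $E_N\oplus F_N$ on $N_{(x,t)}$ together with its fake foliations), pick $\rho_1\in(0,\rho_0)$ small enough for the uniform-continuity statements of Proposition~\ref{p.tubular3}(1) and (2) at relative scale $\rho_1|X(\cdot)|$, let $\overline r$ and $\vep_1$ come from Lemmas~\ref{l.Elarge} and \ref{l.Flarge} applied to $\rho_1$, and for $r\in(0,\overline r)$ set $\vep_B=\frac12\min\{\rho_1K_0^{-1}\inf_{y\in W_r^c}|X(y)|,\,\vep_1\}$, as in \eqref{e.choice.vep}.

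The crux is the scaled-shadowing claim (the analogue for $\cG_B$ of the scaled shadowing of Bowen balls used in \cite{PYY23}): for every $(x,t)\in\cG_B$ and $y\in B_{t,\vep_B}(x)$, the orbit of $y$ is $\rho_1$-scaled shadowed by the orbit of $x$ up to time $t$, and at every integer $k\in[0,t]$ both $d^E_{x_k}(\cP_{x_k}(y_k))$ and $d^F_{x_k}(\cP_{x_k}(y_k))$ stay below $\frac12\rho_1K_0^{-1}|X(x_k)|$. I would prove this by a two-sided bootstrap over the integer times, parsing $[0,t]$ at the entrance and exit times of $W_r$ exactly as in the proof of Theorem~\ref{t.exp}. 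On a stretch outside $W_r$ the sectional Poincar\'e holonomy is $C^1$-close at relative scale $\rho_1|X|$ to the scaled linear Poincar\'e flow (Proposition~\ref{p.tubular3}), whose splitting is dominated; since $x$ is a $(\lambda_0,E)$-forward hyperbolic time, Lemmas~\ref{l.hyp.fakeEleaf} and \ref{l.hyptime.dist} contract the $E$-length forward by $\lambda_1^{-k}$ up to the flow speed, and symmetrically, since $x_t$ is a $(\lambda_0,F)$-backward hyperbolic time, the $F$-length is contracted backward from $x_t$. On a passage through $W_r(\sigma)$ with $\sigma$ of Lorenz type, the larger of the two lengths can only be $F$ (Lemma~\ref{l.Flarge}(1)) or correspond to an ``$E$-dominant exit'' (Lemma~\ref{l.Elarge}(1)); in the first case Lemma~\ref{l.Flarge}(2) gives forward expansion of $d^F$, in the second Lemma~\ref{l.Elarge}(2) gives backward expansion of $d^E$, and in both cases Lemma~\ref{l.Elarge}(3) keeps the orbit scaled shadowed across $W_r$; reverse-Lorenz singularities are handled by applying these lemmas to $-X$, which is legitimate by the built-in symmetry of $\cG_B$. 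The point of the forward and backward $(\beta,W_r)$-recurrence Pliss times is that the time spent in $W_r$ is at most a $\beta$-fraction of every prefix and of every suffix of the orbit; the choice of $\beta_0$ guarantees that the (at most exponential) distortion picked up inside $W_r$ on such a sparse set of times is dominated by the exponential contraction of the $E$-length forward and of the $F$-length backward, which forbids either length from reaching the threshold and closes the bootstrap. This interplay of four Pliss times with the local lemmas near singularities is where the proof departs from \cite{PYY23} (where $E_N$ was uniformly contracted), and it is the main obstacle.

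Granting the claim, the fake charts $\cF^{E,(x,t)}_{x_s,\cN}$ and $\cF^{F,(x,t)}_{x_s,\cN}$ are well posed and invariant along the whole segment for every point of the Bowen ball, so $\cP_x(y)$ has a local product decomposition $\cP_x(y)=[y^E,y^F]$ and, combining Lemmas~\ref{l.hyptime.dist}, \ref{l.reg}, \ref{l.Elarge}, \ref{l.Flarge} and \ref{l.expansion.reg}, the length $d^E_{x_s}(\cP_{x_s}(y_s))$ decays geometrically as $s$ increases from $0$ and $d^F_{x_s}(\cP_{x_s}(y_s))$ decays geometrically as $s$ decreases from $t$ (both up to the factor of flow speed), the decay being interrupted only on the $\le\beta$-fraction of time the orbit spends in $W_r$, where the distortion remains bounded. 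Since $d(x_s,\cP_{x_s}(y_s))$ is comparable to $d^E_{x_s}(\cP_{x_s}(y_s))+d^F_{x_s}(\cP_{x_s}(y_s))$ and the flow speed is bounded above and below along the relevant orbit segments, the $\alpha$-H\"older estimate $|\phi(x_s)-\phi(\cP_{x_s}(y_s))|\le C_\phi\,d(x_s,\cP_{x_s}(y_s))^\alpha$ yields a sum over integer times dominated by a convergent geometric series whose total is bounded independently of $(x,t)$ and of $y$; together with Lemma~\ref{l.tau}, which bounds the increments of the time reparametrization $\tau_{x,y}$ by the same $E$- and $F$-lengths, a change of variables then gives $|\Phi_0(x,t)-\Phi_0(y,t)|\le K$ with $K$ depending only on $\phi$ (through its H\"older norm) but not on $(x,t)$ or $y$. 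As $\vep_B$ was chosen independently of $\phi$, this is the Bowen property for every H\"older potential on $\cG_B(\lambda_0,\beta,r_0,W_r)$ at scale $\vep_B$, completing the proof.
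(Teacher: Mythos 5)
Your high-level plan matches the paper's: fix constants so the $E$-/$F$-lengths and fake foliation charts are well-posed on $\cG_B$, prove scaled shadowing of Bowen balls, and run a bounded-distortion estimate along the charts. You also correctly identify the scaled-shadowing claim as the crux and correctly anticipate the role of the four Pliss times. But the ``two-sided bootstrap'' you describe to prove the scaled-shadowing claim has a circularity that you do not resolve, and this is precisely where the paper's proof has real content (Proposition~\ref{p.key} and Lemma~\ref{l.key}).

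The circularity is this. To contract the $F$-length of $\cP_{x_\tau}(y_\tau)$ backward from $x_t$ via Lemma~\ref{l.hyptime.dist}, you need the orbit of $y$ to already be $\rho$-scaled shadowed by $x$ over $[\tau,t]$. A forward induction on $\tau$ starting from $0$ only gives you shadowing on $[0,\tau]$; the $F$-side of your bootstrap therefore has no starting point. The paper does not run a bootstrap at all: in Proposition~\ref{p.key} it lets $s$ be the supremum of shadowing times from $x$, notes that $s$ falls strictly inside some $(T_k^i,T_k^o)$, and then exploits the observation that at the exit time $T_k^o$ (which is outside $W_r$) the quantities $d^E,d^F$ are well-defined from the choice of $\vep_B$ alone, \emph{without} shadowing — so one can case on $d^E$ versus $d^F$ at $T_k^o$. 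In the $F$-dominant case the Key Lemma~\ref{l.key} shows shadowing holds from $T_k^i$ all the way to $t$, and Lemma~\ref{l.shadowing}(2) then splices this with the forward shadowing over $[0,T_k^i]$. In the $E$-dominant case the same lemma is applied to $-X$. Lemma~\ref{l.key} itself is a ``push one step backward'' argument on the \emph{minimal} time from which shadowing to $t$ holds: the backward $F$-hyperbolic time controls $d^F$ at the exit $t^+=T_j^o$, the backward recurrence Pliss time bounds $t^+-\roof{s}$ by $\frac{\beta}{1-\beta}(t-t^+)$, and the specific inequality \eqref{e.beta0'}, namely $\bigl((K_1^*)^{\beta_0/(1-\beta_0)}\lambda_1^{-1}\bigr)^{\beta_0^{-1}-1}\rho_1<\tfrac{1}{16}\rho'$, forces the distance at $\roof{s}$ below $\tfrac12\rho'|X(x_{\roof{s}})|$, contradicting minimality. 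Your description of ``choose $\beta_0$ so that the distortion on a $\beta$-sparse set is absorbed'' gestures at the right balance, but the concrete trade-off $(K_1^*)^{\beta/(1-\beta)}$ versus $\lambda_1^{-1}$ and, more importantly, the specific place in the argument where it enters (inside the downward/minimality argument, not a forward induction) is the content you are missing.

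Two smaller inaccuracies in your sketch of the passage through $W_r$: you invoke ``forward expansion of $d^F$'' from Lemma~\ref{l.Flarge}(2) and then say ``in both cases Lemma~\ref{l.Elarge}(3) keeps the orbit scaled shadowed.'' Lemma~\ref{l.Flarge} has no part (3), and forward expansion of $d^F$ is what drives the expansivity proof (Theorem~\ref{t.exp}), not the Bowen estimate; the Bowen estimate uses backward \emph{contraction} of $d^F$ from $x_t$. The shadowing across a stay in $W_r$ in the $F$-dominant case is not supplied by a local lemma; it is a consequence of the global downward argument in Lemma~\ref{l.key}. Once these points are fixed — replace the bootstrap by the two-level contradiction argument of Proposition~\ref{p.key}/Lemma~\ref{l.key}, put the $\beta_0$-inequality \eqref{e.beta0'} in place, and drop the nonexistent Lemma~\ref{l.Flarge}(3) — your step (c) (the H\"older/geometric-series estimate together with Lemma~\ref{l.tau} for the time-reparametrization) goes through exactly as you write it, and that part matches the paper.
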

Here the smallness of $r_0$ and $U$ is given by Lemma \ref{l.nbhd.1} to \ref{l.robust.hyp} (keep in mind Remark \ref{r.U}).

{We remark that $\beta_0$ is a constant that only depends on the hyperbolicity near $\Lambda$ ($\eta$ in \eqref{e.hyp1} and \eqref{e.hyp2}) and $K_1^*$, the $C^0$ norm of $\psi_1^*$ (Proposition \ref{p.tubular3}. Also, in Theorem~\ref{t.Bowen} we do not need Assumption (B) of Theorem \ref{m.B}, nor do we need $\Lambda$ to be isolated or chain-transitive.} The proof of Theorem~\ref{t.Bowen} occupies Section~\ref{s.bowen}.

\subsubsection{Definition of $\cG_S$ and specification}
Next, we turn our attention to the specification property. To start with, recall that $\theta_0$ is the density of Pliss times given by Lemma~\ref{l.hyptime} (also keep in mind Proposition~\ref{p.hyptime.density} and Remark~\ref{r.lambda0}) with $\lambda_0$ fixed as before. Let $\beta_0$ be given by Theorem~\ref{t.Bowen}. Below we will take $\beta_1\ll\beta_0$
as given by Lemma~\ref{l.bihyptime} with $\epsilon = \theta_0/1000 $. In other words, if $t\in\NN$ and
\begin{equation}\label{e.choice.beta1a}
	\frac{\#\{s=0,\ldots, \floor{t}: x_{s}\in W\}}{\floor{t}+1} \le \beta_1
\end{equation}
then for $*=E,F$, the density of $(\lambda_0,*,\beta_0,W)$-simultaneous Pliss times in the time interval $[0,t]\cap\NN$ is at least $\frac{999}{1000}\theta_0$. We shall further decrease $\beta_1$ so that $\beta_1<\theta_0/1000.$

Recall that $\Lambda^*(\lambda_0)$,  $*=E,F$, are the sets of $(\lambda_0,*)$-infinite hyperbolic times in $\Lambda$. On these sets, points have invariant manifolds whose sizes are proportional to the flow speed. For $x\in \Lambda^E(\lambda_0)$ and $\delta_x< \rho_\textit{inv}(\lambda_0) |X(x)|$ (recall $\rho_\textit{inv}$ from Proposition~\ref{p.inv.mld}), we write $W^s_{\delta_x,\cN}(x)$ for the $\delta_x$-disk centered at $x$ inside $W^s_{loc,\cN}(x)$. $W^u_{\delta_x,\cN}(x)$ is defined similarly for points in $\Lambda^F(\lambda_0)$. For a hyperbolic periodic orbit $\gamma$ and $d>0$, we write $W^*_d(\gamma)$ for the disk of $W^*(\gamma)$ with diameter $d$, $*=s,u$. Their dimensions satisfy
$$
\dim W^s_{\delta_x,\cN}(x) = \dim E_N = \dim W^s_d(\gamma)-1.
$$

Below we introduce a proposition concerning the transversal intersection between invariant manifolds of points in $\Lambda^*(\lambda_0)$ and a hyperbolic periodic orbit $\gamma$. The proof shall be postponed to Section~\ref{ss.transversal}.

\begin{proposition}\label{p.transversal}
	Under Assumption (A) and (B) of Theorem~\ref{m.B}, for every isolating open neighborhood $W\subset B_{r_0}(\Sing_\Lambda(X))$ of $\Sing_\Lambda(X)$ and every $\delta>0$ sufficiently small, there exist compact sets $K^*_W\subset \Lambda^*(\lambda_0)\cap W^c$, a hyperbolic periodic orbit $\gamma$ and a constant $d_0>0$, such that the following properties hold: 
	\begin{enumerate}
		\item every $x\in K^E_W$ satisfies $W^s_{\delta,\cN}(x)\pitchfork  W^u_{d_0}(\gamma)\ne\emptyset$, and every $x\in K^F_W$ satisfies $W^u_{\delta,\cN}(x)\pitchfork  W^s_{d_0}(\gamma)\ne\emptyset$;
		\item for every $f_1$-invariant measure $\mu$ (not necessarily invariant for $X$) with $\mu\left(
		W\right)<\beta_1$, it holds that 
		$$
		\mu(\Lambda^*(\lambda_0)\setminus K^*_W)< 2\beta_1; 
		$$
		moreover, $\mu(K^*_W)>\theta_0-3\beta_1 \ge \frac{997}{1000}\theta_0$.
	\end{enumerate}
\end{proposition}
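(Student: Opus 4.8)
The plan is to combine the homoclinic relation hypothesis (Assumption (B)) with the density of infinite hyperbolic times (Proposition 4.?? / Proposition~\ref{p.hyptime.density}) and a compactness argument. First I would fix, using Assumption (B), a single hyperbolic periodic orbit $\gamma\subset\Lambda$; since all periodic orbits in $\Lambda$ are pairwise homoclinically related and $\Lambda$ is ($C^1$ generically) a homoclinic class containing plenty of periodic orbits, the homoclinic class $H(\gamma)$ is dense in $\Lambda$, and the stable/unstable laminations of $\gamma$ are dense in $\Lambda$ as well. The index of $\gamma$ is $i=\dim E_N$, so $\dim W^u(\gamma) = \dim F_N + 1$ and $\dim W^s(\gamma)=\dim E_N + 1$; these are exactly complementary (up to the flow direction) to the normal-plane invariant manifolds $W^s_{loc,\cN}(x)$ and $W^u_{loc,\cN}(x)$ of points $x\in\Lambda^E(\lambda_0)$, resp. $\Lambda^F(\lambda_0)$, whose existence and uniform relative size is given by Proposition~\ref{p.inv.mld}.

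Second, I would show that for \emph{every} point $x\in\Lambda^E(\lambda_0)\cap W^c$ one has $W^s_{\delta,\cN}(x)\pitchfork W^u_{d_0}(\gamma)\neq\emptyset$ for suitable uniform $\delta, d_0$. The mechanism: the unstable manifold of $\gamma$ accumulates on all of $\Lambda$ (by density of the homoclinic class and $\lambda$-lemma / inclination lemma type arguments), so in particular it comes $\delta$-close to $x$ with a plaque that is nearly tangent to the $F_N$-cone; meanwhile $W^s_{\delta,\cN}(x)$ is a disk of dimension $\dim E_N$ tangent to the $E_N$-cone with uniform relative radius $\rho_{\textit{inv}}|X(x)|$ (and $|X(x)|$ is bounded below on $W^c$). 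By the local product structure on the normal plane built from the fake foliations (Section~\ref{ss.fake}) — or rather by the genuine transversality coming from the dominated splitting $E_N\oplus F_N$ — the two disks must intersect transversally. This is the step I expect to require the most care: one must make the ``$W^u(\gamma)$ comes close with the right geometry'' statement uniform over $x$, which is where the compactness of $\Lambda^E(\lambda_0)\cup\Sing_\Lambda(X)$ (Proposition~\ref{p.hyptime.cpt}) and the uniform continuity of the fake foliations / invariant manifolds in Liao's scaled tubular neighborhood enter. Concretely, I would take $K^E_W$ to be the (compact) set of those $x\in\Lambda^E(\lambda_0)\cap W^c$ whose $W^s_{\delta,\cN}(x)$ transversally meets $W^u_{d_0}(\gamma)$; transversality is an open condition, and a limiting argument using Proposition~\ref{p.fake.inv} (convergence of fake leaves at finite hyperbolic times to the genuine invariant manifold) shows $K^E_W$ can be taken compact while still containing ``almost all'' of $\Lambda^E(\lambda_0)$. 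The analogous construction with $-X$ (swapping $E\leftrightarrow F$, $s\leftrightarrow u$, keeping Remark~\ref{r.symmetry} in mind) gives $K^F_W$.

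Finally, for the measure estimate (item (2)), I would argue as follows. Fix an $f_1$-invariant (not necessarily $X$-invariant) probability measure $\mu$ with $\mu(W)<\beta_1$. Decompose $\mu$ into $f_1$-ergodic components; the point-mass-of-singularity components are irrelevant since by Poincar\'e recurrence and $\mu(W)<\beta_1<1$ their total mass is controlled, and in any case singularities lie in $W$. For a regular ergodic component $\nu$, Proposition~\ref{p.hyptime.density} (the ``furthermore'' part, applied to $f_1$-ergodic measures) gives $\nu(\Lambda^E(\lambda_0))\ge\theta_0$ (taking $\lambda=\lambda_0$, $\theta_\lambda=\theta_0$ as in Remark~\ref{r.lambda0}). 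By construction the only way a point of $\Lambda^E(\lambda_0)$ fails to lie in $K^E_W$ is either that it lies in $W$, or that it lies in the (small, by compactness/genericity of transversality) exceptional set where the required transversal intersection degenerates; a Birkhoff-type estimate bounds the $\nu$-measure of the first by $\mu(W)<\beta_1$ and the second can be absorbed into another $\beta_1$ by shrinking the exceptional set in the definition of $K^E_W$. Hence $\mu(\Lambda^E(\lambda_0)\setminus K^E_W)<2\beta_1$, and combined with $\mu(\Lambda^E(\lambda_0))\ge\theta_0$ this gives $\mu(K^E_W)>\theta_0-3\beta_1\ge\frac{997}{1000}\theta_0$ by the choice $\beta_1<\theta_0/1000$. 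The $F$-case is identical via $-X$. The main obstacle, as noted, is the uniform geometric control in the transversality step; everything else is bookkeeping with Pliss densities and weak-$*$ limits.
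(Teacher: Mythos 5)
Your measure-theoretic bookkeeping in the final paragraph is essentially the same as the paper's (it matches Lemma~\ref{l.measure.hyp} and the accounting in Lemma~\ref{l.cpt.intersect}), but the transversality step has a genuine gap, and you flagged exactly the right spot as suspect.

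The problem is the mechanism you propose for ``$W^u(\gamma)$ comes close to $x$ with the right geometry.'' You appeal to density of the homoclinic class of $\gamma$ and the $\lambda$-lemma to conclude that $W^u(\gamma)$ accumulates on all of $\Lambda$, and then hope that compactness of $\Lambda^E(\lambda_0)\cup\Sing_\Lambda(X)$ plus uniform continuity of the fake foliations turns this into uniform transversal intersection. This does not work, for two reasons. First, the density of $W^u(\gamma)$ in $\Lambda$ is precisely the ``quasi $u$-section'' property, which the paper explicitly records (in the comparison table of Section~\ref{s.intro}) as \emph{unknown} for multi-singular hyperbolic sets that are not sectional-hyperbolic attractors; you cannot assume it. Second, even where $W^u(\gamma)$ does accumulate on $x$, mere accumulation does not produce a plaque of $W^u(\gamma)$ of \emph{uniform} size, tangent to the $F_N$-cone, positioned so that it crosses the $E_N$-dimensional disk $W^s_{\delta,\cN}(x)$; density plus compactness gives you ``arbitrarily close'' but not ``transversal with uniform $d_0$.'' Your proposed definition of $K^E_W$ (those $x$ for which the intersection happens) is fine as a set, but the argument that it carries almost full $\nu$-measure is exactly what is missing.

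The key idea the paper uses and you omit is a $C^1$ shadowing lemma of Liao/Gan type (Lemma~\ref{l.Liao.shadowing}, proving Lemma~\ref{l.measure.intersect}). For $\mu$-a.e.\ $x$ in $\Lambda^E_W(\lambda_0)\cap f_{\tau_0}(\Lambda^F_W(\lambda_0))$ (a positive-measure set by ergodicity), Poincar\'e recurrence produces a long pseudo-closed orbit segment starting and ending near $x$ and at infinite hyperbolic times; the shadowing lemma then manufactures a genuine hyperbolic periodic orbit $\gamma_x$ that is close to $x$ \emph{and} comes equipped with a transversal intersection $W^u(\gamma_x)\pitchfork W^s_{\iota_x,\cN}(x)\ne\emptyset$ at a controlled scale. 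Only then does Assumption (B) plus the inclination lemma enter — to replace the point-dependent $\gamma_x$ by a single fixed $\gamma$. The uniform constant $d_0$ then comes from a compactness argument not over $\Lambda^E(\lambda_0)$ directly but over the (weak-$*$ compact) space of measures $\cM_{\beta_1,W}$, via a finite subcover of neighborhoods $\cV_{\mu_i}$. So the correct order of quantifiers is: shadowing gives $\gamma_x$ for $\mu$-a.e.\ $x$; homoclinic relation + $\lambda$-lemma replaces $\gamma_x$ by $\gamma$; compactness in measure space, not in phase space, makes $d_0$ uniform. Without the shadowing input, the transversality has no source.
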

%

Now we are ready to define $\cG_S$. For any open neighborhoods $U^*$ ($*=E,F$) of $K^*_W$, we define $\cG_S = \cG_S(\lambda_0,r_0,W, U^E,U^F)$ as the collection of orbit segments $(x,t)\in{\cO(U)}$ with $t\in\NN$ such that following properties hold:
\begin{enumerate}
	\item there exist $t_1,t_2\ge 0$ such that $(x_{-t_1}, t_1+t+t_2)\in \cO(U)$ and $x_{-t_1}\notin B_{r_0}(\Sing_\Lambda(X)), x_{t+t_2}\notin B_{r_0}(\Sing_\Lambda(X))$;
	\item $x\in U^E\cap {W^c}$ is a $(\lambda_0,E)$-forward hyperbolic time for the orbit segment $(x,t)$ and $(\psi_t^*)$;
	\item $x_t\in U^F\cap {W^c}$ is a $(\lambda_0,F)$-backward hyperbolic time for the orbit segment $(x,t)$ and $(\psi_t^*)$.
\end{enumerate}

The next theorem gives the specification on $\cG_S$. 

\begin{theorem}\label{t.spec}
Assume that Theorem~\ref{m.B}, Assumption (A) and (B) hold. Then, for $r_0>0$ sufficiently small, every isolating open neighborhood $W\subset B_{r_0}(\Sing_\Lambda(X))$ and every $\delta>0$ sufficiently small, {there exist $U_1\subset U$  open neighborhood of $\Lambda$},   open neighborhoods $U^*\supset K^*_W$, $*=E,F$,  such that:
\begin{enumerate}
	\item  tail specification at scale $\delta$ holds on $\cG_S(\lambda_0,r_0,W, U^E,U^F)\cap \cO(U_1)$ with shadowing orbits contained in $\cO(U)$; and 
	\item  tail specification at scale $\delta$ holds on $\cG_S(\lambda_0,r_0,W, U^E,U^F)\cap \Lambda\times\RR^+$ with shadowing orbits contained in $\cO(U_1)$.
\end{enumerate}


\end{theorem}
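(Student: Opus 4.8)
The plan is to reduce the construction of a shadowing orbit to three mechanisms available by the time we reach this theorem: the transversal intersections of invariant manifolds of infinite hyperbolic times with a fixed hyperbolic periodic orbit $\gamma$ (Proposition~\ref{p.transversal}), the $\lambda$-lemma for the transition near $\gamma$, and the scaled shadowing / contraction estimates along fake foliations near hyperbolic times (Lemma~\ref{l.hyp.fakeEleaf} and Lemma~\ref{l.hyptime.dist}). Concretely: given a finite collection $\{(x^i,t_i)\}_{i=1}^k\subset \cG_S(\lambda_0,r_0,W,U^E,U^F)\cap\cO(U_1)$, each $x^i$ is a $(\lambda_0,E)$-forward hyperbolic time with a local stable disk $W^s_{\delta,\cN}(x^i)$ of size proportional to $|X(x^i)|$ (since $x^i\in U^E$ lies close to the compact set $K^E_W$, on which infinite hyperbolic times and their invariant manifolds are defined, and $x^i\notin W$ so $|X(x^i)|$ is bounded below), and $x^i_{t_i}$ is a $(\lambda_0,F)$-backward hyperbolic time with a local unstable disk $W^u_{\delta,\cN}(x^i_{t_i})$. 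By shrinking $U^E$, $U^F$ around $K^E_W$, $K^F_W$ I get uniform control: the stable disk of $x^i$ transversally meets $W^u_{d_0}(\gamma)$ and the unstable disk of $x^i_{t_i}$ transversally meets $W^s_{d_0}(\gamma)$, with intersection angles and sizes bounded away from degeneracy (this is where Proposition~\ref{p.transversal}(1) and a compactness argument on $K^*_W$ enter).

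The heart of the construction is then an inductive gluing. I would first pick, for each $i$, a point $z^i$ near $\gamma$ on $W^u_{d_0}(\gamma)$ whose forward orbit runs along $W^s_{\delta,\cN}(x^i)$ for a time comparable to $t_i$, $\delta$-shadowing $(x^i,t_i)$ for $t$ large (here the backward-expansion/forward-contraction along the fake $E$-leaf at a hyperbolic time, Lemma~\ref{l.hyp.fakeEleaf}, guarantees that a whole neighborhood on the stable disk stays $\delta$-close to the orbit of $x^i$ for all of $[0,t_i]$, scaled by the flow speed, which at the relevant points is uniformly bounded below). Symmetrically, at the exit end $x^i_{t_i}$ we use its unstable disk and the forward-expansion along the fake $F$-leaf. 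Now the orbit of $z^i$, after the $t_i$-block, lands near $W^s_{d_0}(\gamma)$ via $W^u_{\delta,\cN}(x^i_{t_i})\pitchfork W^s_{d_0}(\gamma)$; applying the $\lambda$-lemma along $\gamma$ one stays near $\gamma$ for a bounded transition time $\tau_0$ and then picks up the stable disk of $x^{i+1}$, i.e.\ a piece of $W^u_{d_0}(\gamma)$ that transversally meets $W^s_{\delta,\cN}(x^{i+1})$. Iterating $k-1$ times (taking $z^i$ from the shrinking nested family of transverse intersections of $W^u$-disks, which is nonempty by transversality and the inclination lemma), and taking $T_0$ large so that only blocks with $t_i\ge T_0$ are considered (tail specification), I obtain the required point $y$ with gluing gaps $\tau_i\le \tau_0+\,(\text{bounded contraction time})=:\tau(\delta)$. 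The requirement $(y,s_k)\in\cO(U)$ in case~(1) holds because $\gamma\subset\Lambda$, the transitions stay in a small neighborhood of $\gamma$, and the blocks stay in $U_1$ by hypothesis and the $\delta$-shadowing; for case~(2), where the $(x^i,t_i)\in\Lambda\times\RR^+$, one instead needs the shadowing orbit to lie in $\cO(U_1)$, which follows by choosing $U_1$ large enough relative to the (a priori smaller) neighborhood in which the transitions and scaled-shadowing corrections are confined — this is the step where the relative sizes of $U_1\subset U$ must be fixed carefully.

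The main obstacle I expect is precisely the passage from ``a point on the right piece of $W^u(\gamma)$ shadows one block'' to ``a single point shadows all $k$ blocks with uniformly bounded gaps and stays inside $U$ (resp.\ $U_1$)''. Three things must be controlled simultaneously and uniformly in $k$: (a) the size of the successive transverse intersection disks must not shrink below a fixed scale as we compose many $\lambda$-lemma transitions and many hyperbolic-time contractions — this needs the uniform lower bounds on disk sizes and angles extracted from compactness of $K^*_W$ together with the fact that the $E$-contraction along $W^s_{\delta,\cN}(x^i)$ is exactly compensated in the scaled metric by $|X(\cdot)|$, so nothing degenerates; (b) the gluing gap near $\gamma$ must be bounded independently of the blocks, which is the standard uniform $\lambda$-lemma statement for the fixed orbit $\gamma$; and (c) the shadowing orbit must not leak out of $U$ (resp.\ $U_1$) — here one uses that the blocks themselves are in $\cO(U_1)$, that $\delta$ can be taken much smaller than $\operatorname{dist}(\partial U_1,\partial U)$ (resp.\ $\operatorname{dist}(\Lambda,\partial U_1)$), and that the transition pieces stay in an arbitrarily small neighborhood of $\gamma\subset\Lambda$. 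Once these uniform bounds are in place the induction closes and the two cases of the theorem follow by the two choices of where the shadowing orbit is allowed to live. A secondary technical nuisance is that the fake foliations and the $E$/$F$-lengths are only defined at Liao's relative scale $\rho_0|X|$, so every estimate along a block must be phrased in the scaled metric and then converted back using the uniform lower bound on $|X|$ away from $W$; this is routine given Section~\ref{ss.Liao} and Lemma~\ref{l.hyptime.dist}, but must be threaded through consistently.
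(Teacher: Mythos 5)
Your overall strategy matches the paper's: use the fixed periodic orbit $\gamma$ as a ``bridge,'' iterate disks tangent to the $F_N$-cone through the dynamics (the graph-transform result is the paper's Lemma~\ref{l.udisk}), pass from one block to the next via the inclination lemma near $\gamma$ (Lemma~\ref{l.D_intersection}), and at the end extract a single shadowing point from the nested intersection. Your observation that finite hyperbolic times in $U^E, U^F$ inherit the transversal intersections from the nearby compact sets $K^*_W$ of infinite hyperbolic times is also the paper's route (Proposition~\ref{p.fake.inv} and Lemma~\ref{l.fake.intersect}), modulo the fact that the paper works throughout with fake leaves $\cF^{E,(x,t)}_{x,\cN}$ rather than genuine stable manifolds, since the $x^i$ are only finite hyperbolic times.

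The genuine gap is in the containment statement --- the part of the theorem that distinguishes ``shadowing orbits contained in $\cO(U_1)$'' from ``contained in $\cO(U)$.'' You try to control this by a metric argument: the transition arcs stay ``in a small neighborhood of $\gamma$,'' so choose $U_1$ ``large enough.'' This does not close for two reasons. First, the scale $d_0$ at which $W^u(\gamma)$ and $W^s(\gamma)$ must meet the fake leaves is produced by the compactness argument in Proposition~\ref{p.transversal}/Lemma~\ref{l.cpt.intersect} and is \emph{not} freely shrinkable, so there is no guarantee that the neighborhood of $\gamma$ traversed during the transition fits inside $U_1$; the quantifier order is against you. Second, and more fundamentally, nothing in the inclination-lemma argument prevents a single intermediate transition segment from making an excursion away from $\gamma$ before returning, and a purely metric ``$\delta$ small'' bound cannot rule that out uniformly over arbitrarily many concatenations. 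The paper's resolution (Proposition~\ref{p.nbhd}) is topological, not metric: it exploits that $\Lambda$ is an isolated chain recurrence class, hence has \emph{filtrating} neighborhoods (Lemma~\ref{l.crc}, Remark~\ref{r.crc1}). Choosing $U_1\subset U$ filtrating with $B_\delta(\Lambda)\subset U_1$ and $B_\delta(U_1)\subset U$, if a transition arc $(x_{s_i},\tau_i)$ ever left $U_1$ it could never return, contradicting that the next block $(x_{s_i+\tau_i},t_{i+1})$ lies back inside $U_1$. This is the one ingredient of the proof your proposal is missing, and it is where the hypothesis that $\Lambda$ is a chain recurrence class (as opposed to an arbitrary isolated invariant set) is actually used. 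You should also be careful calling the Liao-scale issues ``routine'': the nontrivial content of Lemma~\ref{l.udisk} is precisely that the disk sizes, which are proportional to $|X|$, do not collapse while the orbit segment passes near singularities; this is controlled by the hyperbolic-time structure, not by a generic Hadamard--Perron argument.
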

We point out that in Theorem~\ref{t.spec} we do not require $x$ or $x_t$ to be recurrence Pliss times. Instead, we need Assumption (B) of Theorem~\ref{m.B} concerning the transversal intersection between invariant manifolds of periodic orbits in $\Lambda$. 

In the rest of this section, we will prove Theorem \ref{m.B} assuming that Theorem~\ref{t.Bowen}, \ref{t.spec} and Proposition~\ref{p.transversal} hold; the proof of these theorems will be postponed to Section \ref{s.bowen} and \ref{s.spec}.
	
\subsection{Choice of parameters and the $(\cP,\cG,\cS)$-decomposition}\label{ss.para}
Now we describe the choice of $\vep$ (the scale for the Bowen property) and $\delta$ (the scale for specification). We will also construct the $(\cP,\cG,\cS)$-decomposition on the space of orbit segments $\cO(U_1)$ for a neighborhood $U_1\subset U$ of $\Lambda$.

Given an invariant probability measure $\mu$ and a continuous function $\phi:\Lambda\to \RR$, we define its metric pressure as
$$
P_\mu(\phi) = h_\mu(f_1)+\int\phi\,d\mu.
$$
The next lemma is taken from~\cite{PYY23}. It states that for potential functions $\phi$ satisfying Assumption (C) of Theorem~\ref{m.B}, if a measure assigns a slightly larger weight to $B_{r_0}(\Sing_\Lambda(X))$ then it must have small pressure and therefore cannot be an equilibrium state.

As before, $\Cl$ denotes the closure of a set. 
\begin{lemma}\cite[Lemma 5.3]{PYY23}\label{l.gap1}
	Let $\phi:\bM\to\RR$ be a H\"older continuous function satisfying Assumption (C) of Theorem~\ref{m.B}. Then for any $b>0$, there exist $r_0>0$ and $a_0>0$ such that for any $r\le r_0$ and any invariant probability measure $\mu$ satisfying $\supp\mu\subset \Lambda$ and $\mu\left(\Cl\left( B_{r}(\Sing_\Lambda(X))\right)\right)\ge b$, we have $P_\mu(\phi)<P(\phi)-a_0$. 
\end{lemma}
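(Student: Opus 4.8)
The statement is essentially a pressure-gap estimate at a singularity, and the natural strategy is to argue by contradiction using a compactness/semicontinuity argument. Fix $b>0$. Suppose, for a sequence $r_n\to 0$ (which we may take $\le r_0$ for the $r_0$ that we will eventually shrink), there were invariant probability measures $\mu_n$ supported on $\Lambda$ with $\mu_n(\Cl(B_{r_n}(\Sing_\Lambda(X))))\ge b$ but $P_{\mu_n}(\phi)\ge P(\phi)-1/n$. Passing to a subsequence, $\mu_n$ converges in the weak-$*$ topology to some invariant probability measure $\mu$ supported on $\Lambda$ (using compactness of the space of invariant measures on the compact set $\Lambda$). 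Because the sets $\Cl(B_{r_n}(\Sing_\Lambda(X)))$ are nested and shrink to $\Sing_\Lambda(X)$, and each $\mu_n$ gives the $n$-th such set mass at least $b$, one obtains $\mu(\Cl(B_{r}(\Sing_\Lambda(X))))\ge b$ for every fixed $r>0$ (any $\mu_n$ with $n$ large has $\Cl(B_{r_n})\subset \Cl(B_r)$, so $\mu_n(\Cl(B_r))\ge b$, and $\Cl(B_r)$ is closed, so $\mu(\Cl(B_r))\ge\limsup \mu_n(\Cl(B_r))\ge b$ by the portmanteau theorem). Letting $r\to 0$ gives $\mu(\Sing_\Lambda(X))\ge b>0$.

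Now decompose $\mu$ along its ergodic components. A positive fraction (at least $b$) of the mass of $\mu$ is carried by ergodic components that are point masses $\delta_\sigma$ for $\sigma\in\Sing_\Lambda(X)$; write $\mu = \mu_{\mathrm{sing}} + \mu_{\mathrm{reg}}$ where $\mu_{\mathrm{sing}}$ is the part supported on $\Sing_\Lambda(X)$, with $\mu_{\mathrm{sing}}(\bM)\ge b$, and $\mu_{\mathrm{reg}}$ the complementary part. By affinity of metric pressure in the measure, $P_\mu(\phi) = P_{\mu_{\mathrm{sing}}}(\phi)+P_{\mu_{\mathrm{reg}}}(\phi)$, where for a point mass $P_{\delta_\sigma}(\phi)=\phi(\sigma)$ (the entropy term vanishes). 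By Assumption (C) of Theorem~\ref{m.B}, $\phi(\sigma)<P(\phi)$ for every $\sigma\in\Sing_\Lambda(X)$; since there are only finitely many singularities (all hyperbolic), there is a uniform gap: $\phi(\sigma)\le P(\phi)-2a_0$ for some $a_0>0$ and all $\sigma\in\Sing_\Lambda(X)$. Hence $P_{\mu_{\mathrm{sing}}}(\phi)\le (P(\phi)-2a_0)\,\mu_{\mathrm{sing}}(\bM)$, while $P_{\mu_{\mathrm{reg}}}(\phi)\le P(\phi)\,\mu_{\mathrm{reg}}(\bM)$ by the variational principle applied to the normalized regular part. Combining, $P_\mu(\phi)\le P(\phi)-2a_0\,\mu_{\mathrm{sing}}(\bM)\le P(\phi)-2a_0 b$.

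On the other hand, upper semicontinuity of metric pressure along $\mu_n\to\mu$ — which holds by Theorem~\ref{m.A}/\ref{m.A1} (almost expansivity gives upper semicontinuity of metric entropy as a function of the measure, via~\cite{B72}) together with continuity of $\phi\mapsto\int\phi\,d\mu$ — yields $P_\mu(\phi)\ge\limsup_n P_{\mu_n}(\phi)\ge P(\phi)$. This contradicts $P_\mu(\phi)\le P(\phi)-2a_0 b$. Therefore for $r_0$ chosen small enough (and $a_0 = a_0 b$, say, after relabeling) the claimed inequality $P_\mu(\phi)<P(\phi)-a_0$ holds for all $r\le r_0$ and all invariant $\mu$ with $\mu(\Cl(B_r(\Sing_\Lambda(X))))\ge b$.

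The main obstacle is the upper semicontinuity of the metric entropy at the limit measure $\mu$ when $\mu$ charges the singular set: this is precisely why Theorem~\ref{m.A} (robust almost expansivity, hence robust entropy expansivity, hence upper semicontinuity of $h_\mu$ with \emph{no} assumption $\mu(\Sing(X))=0$) is needed, rather than the weaker semicontinuity available for measures with $\mu(\Sing(X))=0$. Everything else is soft: affinity of pressure, the variational principle for the restriction to $\Lambda$, the portmanteau theorem, and the finiteness and hyperbolicity of the singularity set under the star property. One should also double-check the bookkeeping of which "$r_0$" is being shrunk — the $r_0$ in the statement may need to be taken smaller than the $r_0$ appearing in the almost-expansivity scale, but since we only use that $\Lambda$ is multi-singular hyperbolic and isolated, this causes no difficulty.
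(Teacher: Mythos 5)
Your argument is correct and matches the approach the paper invokes: the paper does not reproduce the proof here but cites~\cite[Lemma 5.3]{PYY23} and explicitly notes that the proof only needs upper semicontinuity of $h_\mu(f_1)$ (supplied by Theorem~\ref{m.A} via~\cite{LVY} and~\cite{B72}) together with the fact that the point masses at singularities are not equilibrium states — which are exactly the two ingredients your compactness/contradiction argument combines, via the ergodic decomposition of the limit measure into a singular part (of mass at least $b$, contributing pressure at most $P(\phi)-2a_0$ per unit mass) and a regular part (contributing at most $P(\phi)$). The bookkeeping concern you raise at the end is harmless: the $r_0$ produced by the contradiction is independent of the expansivity scale, since the contradiction only uses that the shrinking closed balls intersect down to $\Sing_\Lambda(X)$.
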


We remark that the proof of \cite[Lemma 5.3]{PYY23} does not rely on the sectional-hyperbolic structure of $\Lambda$. Instead, it only requires the upper semi-continuity of $h_\mu(f_1)$ as a function of invariant measures and the assumption that the singletons at singularities are not equilibrium states. In our setting this is given by Theorem~\ref{m.A}; the rest of the proof remains the same. 

We also need the next lemma which improves Lemma~\ref{l.bihyptime}. 

\begin{lemma}\label{l.bipliss2}
	Let $r_0>0$ be small and $U_1$ be a sufficiently small neighborhood of $\Lambda$.  For any isolating neighborhood $W$ of $\Sing_\Lambda(X)$ in $B_{r_0}(\Sing_\Lambda(X))$,  
	$\delta>0$ and neighborhood $U^E$ of $K^E_{W}$, there exists a constant $T_U\ge T_W$ where $T_W$ is given by Lemma \ref{l.robust.hyp}, such that the following hold.

	\noindent For any orbit segment $(x,t)\in \cO(U_1)$ satisfying
	\begin{enumerate}
		\item  there exist $t_1,t_2\ge 0$ such that $(x_{-t_1}, t_1+t+t_2)\in \cO(U)$ and $x_{-t_1}\notin B_{r_0}(\Sing_\Lambda(X)), x_{t+t_2}\notin B_{r_0}(\Sing_\Lambda(X))$;
		\item $x,x_t\notin W$ and $t>T_{U}$;
		\item the following inequality holds
		\begin{equation}\label{e.5.5a}
		\frac{1}{\floor{t}+1}\#\left\{ i\in[0,\floor{t}]\cap\NN: x_i\in W\right\} < \beta_1;
		\end{equation}
	\end{enumerate}
	there exist $x_i\in U^E, i\in[0,\floor{t}]\cap\NN$ such that $x_i$ is a $(\lambda_0,E,\beta_0,W)$-simultaneous forward Pliss time for the orbit segment $(x_i, t-i)$ and $(\psi_t^*)$. 
	
	A similar statement holds for any neighborhood $U^F$ of $K^F_{W}.$
\end{lemma}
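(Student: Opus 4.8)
The plan is to argue by contradiction and extract a limiting empirical measure, just as in the proof of Lemma~\ref{l.bihyptime}, but now feeding the limit into Proposition~\ref{p.transversal}. It suffices to treat $U^E$; the statement for $U^F$ follows by applying the argument to $-X$ and invoking Remark~\ref{r.symmetry}.

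Assume no admissible $T_U$ exists. Then for each $n\in\NN$ there is an orbit segment $(x^n,t_n)\in\cO(U_1)$ satisfying (1)--(3) with $t_n>\max\{T_W,n\}$ (so $t_n\to\infty$), such that no $x^n_i$ with $i\in[0,\floor{t_n}]\cap\NN$ is simultaneously a $(\lambda_0,E,\beta_0,W)$-simultaneous forward Pliss time for $(x^n_i,t_n-i)$ and a point of $U^E$. Since $(x^n,t_n)$ satisfies conditions (1), (2) (in particular $x^n,x^n_{t_n}\notin W$ and $t_n>T_W$) and (3), Lemma~\ref{l.bihyptime} applies; with the choices $\epsilon=\theta_0/1000$ and $\beta_1<\theta_0/1000$ fixed in Section~\ref{ss.BS}, it produces a set $S_n\subset[0,\floor{t_n}]\cap\NN$ with $\#S_n\ge(\theta_0-\epsilon)\floor{t_n}=\tfrac{999}{1000}\theta_0\floor{t_n}$, each $x^n_i$ ($i\in S_n$) being a $(\lambda_0,E,\beta_0,W)$-simultaneous forward Pliss time for $(x^n_i,t_n-i)$. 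By the contradiction hypothesis $x^n_i\notin U^E$ for every $i\in S_n$, and by Remark~\ref{r.RecPliss} also $x^n_i\notin W$.

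Form the $f_1$-empirical measures $\mu_n=\tfrac{1}{\floor{t_n}+1}\sum_{i=0}^{\floor{t_n}}\delta_{x^n_i}$ and $\nu_n=\tfrac{1}{\#S_n}\sum_{i\in S_n}\delta_{x^n_i}$, and set $c_n=\#S_n/(\floor{t_n}+1)$. Passing to a subsequence we may assume $\mu_n\to\mu$, $\nu_n\to\nu$ weak-$*$, and $c_n\to c\ge\tfrac{999}{1000}\theta_0$; splitting the sum defining $\mu_n$ along $S_n$ gives $\mu=c\,\nu+(1-c)\rho$ for some probability measure $\rho$, hence $c\,\nu\le\mu$ as measures. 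Because $(x^n,t_n)\in\cO(U_1)\subset\cO(U)$, $t_n\to\infty$, and $\Lambda$ is the maximal invariant set in $U$, the argument in the proof of Lemma~\ref{l.equalpressure} shows $\mu$ is $f_1$-invariant and supported in $\Lambda$; since $c\,\nu\le\mu$, so is $\nu$. Condition (3) gives $\mu_n(W)<\beta_1$, whence $\mu(W)\le\beta_1$ ($W$ open), and this can be upgraded to $\mu(W)<\beta_1$ by running the whole argument with a slightly smaller isolating neighborhood of $\Sing_\Lambda(X)$ — the only place strictness is needed. It remains to identify $\nu$. Since the $x^n_i$ ($i\in S_n$) avoid the open sets $W$ and $U^E$, we get $\nu(W)=0$ and $\nu(U^E)=0$, hence $\nu(K^E_W)=0$. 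Discarding from $S_n$ the $o(t_n)$ indices $i$ with $t_n-i\le L_n$ (any $L_n\to\infty$, $L_n=o(t_n)$) does not change the limit $\nu$, and every remaining $x^n_i$ is a $(\lambda_0,E)$-forward hyperbolic time for an orbit segment of length $>L_n$; by the continuity argument of Proposition~\ref{p.hyptime.cpt} (the orbit of a regular point of $\Lambda$ stays away from $\Sing_\Lambda(X)$, where $\psi^*_s$ is continuous), every regular point of $\supp\nu\subset\Lambda$ is a $(\lambda_0,E)$-forward infinite hyperbolic time. Combined with $\nu(\Sing_\Lambda(X))\le\nu(W)=0$ this gives $\nu(\Lambda^E(\lambda_0))=1$, and therefore $\nu(\Lambda^E(\lambda_0)\setminus K^E_W)=1$.

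Now $c\,\nu\le\mu$ yields
$$
\mu\bigl(\Lambda^E(\lambda_0)\setminus K^E_W\bigr)\ \ge\ c\,\nu\bigl(\Lambda^E(\lambda_0)\setminus K^E_W\bigr)\ =\ c\ \ge\ \tfrac{999}{1000}\theta_0,
$$
while Proposition~\ref{p.transversal}(2), applicable because $\mu$ is $f_1$-invariant with $\mu(W)<\beta_1$, forces $\mu(\Lambda^E(\lambda_0)\setminus K^E_W)<2\beta_1<\tfrac{2}{1000}\theta_0$, the last inequality by $\beta_1<\theta_0/1000$. This contradiction ($\tfrac{999}{1000}\theta_0>\tfrac{2}{1000}\theta_0$) produces the required $T_U$, which may be taken $\ge T_W$. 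I expect the main difficulty to be precisely the identification of $\nu$ carried out in the previous paragraph: one must use the recurrence-Pliss half of the definition of a simultaneous Pliss time to keep $\nu$ off $W$, hence off $\Sing_\Lambda(X)$ (near which the uniform hyperbolic-time estimates would otherwise break down), the hyperbolic-time half together with $t_n-i\to\infty$ to push $\nu$ onto the set of infinite hyperbolic times, and the contradiction hypothesis to push it off $K^E_W$; once this is established, the comparison $c\,\nu\le\mu$ transfers the conclusion to the invariant measure $\mu$ and Proposition~\ref{p.transversal}(2) closes the argument without any delicate density counting.
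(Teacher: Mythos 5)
Your proof is correct and follows essentially the same strategy as the paper's: argue by contradiction, invoke Lemma~\ref{l.bihyptime} to find a high-density set of simultaneous forward Pliss times along each $(x^n,t_n)$, pass to a weak-$*$ limit of empirical measures, identify limit points of the Pliss times as infinite hyperbolic times via continuity of $\psi^*_s$ away from singularities, and finish with Proposition~\ref{p.transversal}(2). Where you diverge is in the closing step: the paper works with the compact set $I$ of accumulation points of the Pliss times, shows $\mu(I\cap K^E_W)>0$, and then argues topologically that $I_n$ must eventually meet the open neighborhood $U^E\supset K^E_W$; you instead build a second limit measure $\nu$ concentrated on the Pliss times, exploit the contradiction hypothesis directly to get $\nu(U^E)=0$ (hence $\nu(K^E_W)=0$) and $\nu(W)=0$, deduce $\nu\bigl(\Lambda^E(\lambda_0)\setminus K^E_W\bigr)=1$, and derive a purely numerical contradiction from $c\,\nu\le\mu$ against the bound $\mu\bigl(\Lambda^E(\lambda_0)\setminus K^E_W\bigr)<2\beta_1$. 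This measure-theoretic closing is cleaner than the paper's density bookkeeping and handles the avoidance of $\Sing_\Lambda(X)$ transparently via $\nu(W)=0$. One small caveat: both your argument and the paper's only give $\mu(W)\le\beta_1$ in the limit while Proposition~\ref{p.transversal}(2) is stated with strict inequality; you flag this and propose a (slightly awkward) fix by shrinking $W$, but the cleaner repair is simply to run the argument with $\beta_1'>\beta_1$ still below $\theta_0/1000$, which costs nothing.
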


Comparing to  Lemma~\ref{l.bihyptime}, the previous lemma provides more information on where simultaneous Pliss times occur: they must appear near infinite hyperbolic times. 

\begin{proof}	
	By Lemma~\ref{l.bihyptime}, there  exist  $(\lambda_0,E,\beta_0,W)$-forward hyperbolic times as long as $t> T_W$. We only need to show that such times can be found in $U^E$. The proof below is similar to the proof of Proposition~\ref{p.hyptime.density}.
	
	Towards a contradiction, we assume that there exists a sequence of orbit segments $(x^n,t_n)$ with $t_n\to\infty$ such that $x^n,(x^n)_{t_n}\notin  W$, and \eqref{e.5.5a} hold for each $n$, but there is no $(\lambda_0,E,\beta_0,W)$-simultaneous forward Pliss time in $U^E$.

	For simplicity's sake we assume that $t_n\in\NN$. For each $n$ we consider the following sets: 
	\begin{equation*}
		\begin{split}
			I_n = \Bigg\{(x^n)_i:\,\, & i\in \left[0,\left(1-\frac{1}{1000}\theta_0\right)t_n\right]\cap\NN, (x^n)_i \mbox{ is a $(\lambda_0,E, \beta_0,W)$-}\\
			&\mbox{ simultaneous forward Pliss time for the orbit}\\
			&	\mbox {   segment  } ((x^n)_i,t_n-i)	\Bigg\}.
		\end{split}
	\end{equation*}
	By  Lemma~\ref{l.bihyptime} 
	(see also Lemma~\ref{l.rec}) we have $\#I_n \ge  \frac{998}{1000} \theta_0t_n$. 

	Now consider the empirical measure 
	$$
	\mu_n:= \frac{1}{t_n} \sum_{i=0}^{t_n-1}\delta_{(x^n)_i}
	$$	
	where $\delta_y$ is the point mass of $y$. By taking subsequence if necessary, we assume that  $\{\mu_n\}$ converges to an $f_1$-invariant measure $\mu$. Furthermore, \eqref{e.5.5a} shows that 
	$$
	\mu_n(W) < \beta_1,\forall n>0. 
	$$
	As a result, 
	\begin{equation}\label{e.5.5b}
	\mu(W)\le \beta_1 <\frac{\theta_0}{1000}.  
	\end{equation}
	
	Define $\tilde I = \bigcup_n I_n$, and denote by $I$ the set of all limit points of $\tilde I$. 
	Then $I$ is compact. 
	
	\medskip 
	\noindent Claim 1. Every $y\in I\cap\Reg(\Lambda)$ is a $(\lambda_0,E)$-forward infinite hyperbolic time; consequently $I\subset \Lambda^E(\lambda_0)$. 
	
	\noindent This claim immediately follows from the continuity of $(\psi_t^*)$ at regular points. Here the choice of $\left(1-\frac{1}{1000}\theta_0\right)t_n$ in the definition of $I_n$ guarantees that those forward hyperbolic times corresponds to orbit segments with length at least $0.001\theta_0t_n$, which tends to infinity. 
	
	\medskip 
	\noindent Claim 2. We have 
	$$
	\mu(I)\ge \frac{998}{1000}\theta_0;\,\, 
	$$
	
	\noindent Proof of Claim 2. Take any open neighborhood $V\supset I$, we have $I_n\subset V$ for all $n$ large enough. Then we obtain $\mu_n(\overline V)\ge \mu_n(I_n) \ge \frac{998}{1000}\theta_0$ for all $n$ large. Therefore the same inequality holds for $\mu(\overline V)$. Since $V$ is arbitrary, we get the same inequality for $I$ by taking a sequence of decreasing neighborhoods. 

	On the other hand, by Proposition~\ref{p.transversal} (2) and Equation \eqref{e.5.5b} we see that 
	$$
	\mu\left(I\cap K^E_W\right)\ge \mu(I) - \mu\left(\Lambda^E(\lambda_0)\setminus  K^E_W\right)\ge \frac{996}{1000}\theta_0.
	$$
	Since $U^E$ is an open neighborhood of $K^E_W$, we must have $I_n\cap U^E\ne\emptyset$ for $n$ large enough,  a contradiction.
	
\end{proof}

We continue with the choice of parameters. 
\begin{enumerate}
	\item We have already chosen $\beta_0$ given by Theorem~\ref{t.Bowen} and let $\beta_1 < \min\{\beta_0,\theta_0/1000\}$ be given by Lemma~\ref{l.bihyptime} so that the density of simultaneous Pliss times (both backward and forward) is at least $\frac{999}{1000}\theta_0$ on orbit segments satisfying \eqref{e.choice.beta1a}.
	\item We then apply Lemma~\ref{l.gap1} with $b = \beta_1/2$ to obtain $r_0$ (which must also satisfy Lemma \ref{l.robust.dom.spl} and \ref{l.robust.hyp}) and $a_0>0$ so that for any invariant probability measure $\mu$ supported on $\Lambda$, we have 
	\begin{equation}\label{e.gap1}
		\mu(\Cl(B_{r_0}(\Sing_\Lambda(X))))\ge \beta_1/2\implies P_\mu(\phi)<P(\phi)-a_0.
	\end{equation}	
	\item {Next we obtain $U$, the open neighborhood of $\Lambda$ from Lemma \ref{l.robust.dom.spl} and \ref{l.robust.hyp}. For any orbit segment $(x,t)$ in $\cO(U)$ with $x,x_t\notin B_{r_0}(\Sing_\Lambda(X))$ there exists a dominated splitting on the normal bundle of $(x,t)$ with desired contraction/expansion property, namely \eqref{e.hyp1} and \eqref{e.hyp2}, as long as the time is sufficiently long (depending on $W$ which will be chosen in the next step). In particular, the normal plane at $x_s,s\in[0,t]$ has fake foliations that form a local product structure with size proportional to $|X(x_s)|$.  If necessary, we shall decrease $r_0$ and $U$ so that Lemma~\ref{l.bipliss2} holds. Keep in mind Remark \ref{r.U}.}
	\item Next we apply Theorem~\ref{t.Bowen} with $\beta = \beta_0$, $r_0$ and $ U$ (further decrease them if necessary, and note that $\beta_0$ does not depend on them) to obtain $r\in (0, \overline r]$, $\vep_B>0$ and open set $W_r = \bigcup_{\sigma\in\Sing_\Lambda(X)} W_r(\sigma)\subset B_{r_0}(\Sing_\Lambda(X)) $ an isolating neighborhood of $\Sing_\Lambda(X)$  so that the Bowen property holds on $\cG_B(\lambda_0,\beta_0,r_0,W_r)$ at scale $\vep_B$; note that the Bowen property also holds at all smaller scales. 
	\item We shall further decrease $\vep_B$ if necessary, such that $X|_ \Lambda$ is almost expansive at scale $\vep_B$ (recall Theorem~\ref{m.A}); furthermore, since $\phi$ is continuous, we require
	\begin{equation}\label{e.vep}
		\sup_{x,y\in\bM, d(x,y)\le\vep} |\phi(x)-\phi(y)|< \frac{a_0}{2}.
	\end{equation}
	The purpose of this step is to get rid of the second scale in the pressure $P([\cC], \phi,\delta,\vep)$.
	\item We pick $\delta_0>0$ which is the scale of the specification; fix
	$$
	\delta_0\le \frac{\vep_B}{1000L_X}
	$$
	where $L_X$ is defined by \eqref{e.lip}; decreasing $\delta_0$ if necessary, we apply Proposition~\ref{p.transversal} with $W = W_r$ to obtain compact sets $K^*_W\subset \Lambda^*(\lambda)\cap (W_r)^c$, $*=E,F$.
	\item Finally we apply Theorem~\ref{t.spec} on $U_1\subset U$, an open neighborhood of $\Lambda$ in $U$, and $\delta_0$ (decrease it if necessary) to obtain  open neighborhoods $U^*\supset K^*_W$ and the constant $T_U\ge T_W$, such that $\cG_S = \cG_S(\lambda_0,r_0,W_r,E^E,U^F)\subset \cO(U)$ satisfies 
	\begin{itemize}
		\item $\cG_S\cap\cO(U_1)$ has tail specification at scale $\delta_0$ with shadowing orbits contained in $\cO(U)$; and 
		\item $\cG_S\cap\Lambda\times\RR^+$ has tail specification at scale $\delta_0$ with shadowing orbits contained in $\cO(U_1)$.
	\end{itemize}
\end{enumerate}

Now we construct the $(\cP,\cG,\cS)$ decomposition on $\cO(U_1)$ and describe $\cG$, the collection of ``good'' orbit segments required in~Theorem~\ref{t.improvedCL1}. In view of Theorem~\ref{t.Bowen} and Theorem~\ref{t.spec}, we take $\cG = \cG_B\cap \cG_S\cap\cO(U_1)$. More precisely, $\cG$ consists of orbit segments in $\cO(U_1)$ with the following properties:
\begin{itemize}
	\item $t\in\NN$.
	\item there exist $t_1,t_2\ge 0$ such that $(x_{-t_1}, t_1+t+t_2)\in \cO(U)$ and $x_{-t_1}\notin B_{r_0}(\Sing_\Lambda(X)), x_{t+t_2}\notin B_{r_0}(\Sing_\Lambda(X))$; therefore the dominated splitting $E_N\oplus F_N$ and the associated fake foliations are well-defined on $(x_{-t_1}, t_1+t+t_2)$ and hence on $(x,t)$.
	
	\item $x\in U^E\cap W_r^c$ and $x_t\in U^F\cap W_r^c$;
	\item $x$ is a $(\lambda_0,E, \beta,W_r)$-simultaneous forward Pliss times for the orbit segment $(x,t)$ and the set $W_r$.
	\item $x_t$ is a $(\lambda_0,F, \beta,W_r)$-simultaneous backward Pliss times for the orbit segment $(x,t)$ and the set $W_r$.
\end{itemize}

As an immediate corollary, we obtain:
\begin{proposition}\label{p.bs}
	The orbit segment collection $\cG$ defined above has the Bowen property at scale $\vep_B$ and tail specification at scale $\delta_0$ with shadowing orbits contained in $\cO(U)$. Furthermore, $\cG\cap \Lambda\times\RR^+$ has tail specification at scale $\delta_0$ with shadowing orbits contained in $\cO(U_1)$.
\end{proposition}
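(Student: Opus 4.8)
The plan is to derive Proposition~\ref{p.bs} directly from Theorems~\ref{t.Bowen} and~\ref{t.spec}, together with the elementary observation that both the Bowen property and (tail) specification pass to sub-collections of orbit segments.

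First I would record the two inclusions
$$
\cG \subset \cG_B(\lambda_0,\beta_0,r_0,W_r), \qquad \cG \subset \cG_S(\lambda_0,r_0,W_r,U^E,U^F)\cap \cO(U_1).
$$
The first is immediate once one notes that $\cO(U_1)\subset\cO(U)$ and that the simultaneous-Pliss-time conditions defining $\cG$ are exactly those defining $\cG_B$ with the parameter $\beta=\beta_0$ fixed in Step~(4) of Section~\ref{ss.para}. For the second, the only thing to check is that a $(\lambda_0,E,\beta_0,W_r)$-simultaneous forward Pliss time is in particular a $(\lambda_0,E)$-forward hyperbolic time (Definition~\ref{l.bi-Pliss}), and likewise for the backward condition; combined with $x\in U^E\cap W_r^c$, $x_t\in U^F\cap W_r^c$ and $\cG\subset\cO(U_1)$, this places every element of $\cG$ inside $\cG_S\cap\cO(U_1)$.

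Next I would invoke Theorem~\ref{t.Bowen}: the collection $\cG_B(\lambda_0,\beta_0,r_0,W_r)$ has the Bowen property at scale $\vep_B$ for every H\"older continuous potential, with $r_0$, $U$, $r$, $\vep_B$ being precisely the constants fixed in Section~\ref{ss.para}. Since the Bowen estimate~\eqref{e.Bowen} is a supremum over $(x,t)\in\cC$, it only decreases when $\cC$ is replaced by a subset, so $\cG$ inherits the Bowen property at scale $\vep_B$. Similarly, Theorem~\ref{t.spec}(1) gives tail specification at scale $\delta_0$ on $\cG_S\cap\cO(U_1)$ with shadowing orbits in $\cO(U)$; since specification is a statement about arbitrary finite sub-families — any finite collection drawn from $\cG$ is also a finite collection drawn from $\cG_S\cap\cO(U_1)$, so the gluing orbit and gluing times provided by Theorem~\ref{t.spec} work verbatim — $\cG$ has tail specification at scale $\delta_0$ with shadowing orbits in $\cO(U)$. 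For the ``furthermore'' clause, I would note that $\cG\cap\Lambda\times\RR^+\subset \cG_S\cap\Lambda\times\RR^+$ and apply Theorem~\ref{t.spec}(2), which yields tail specification at scale $\delta_0$ on the latter with shadowing orbits in $\cO(U_1)$, hence on the former.

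There is no real obstacle: the whole substance of the proposition already resides in Theorems~\ref{t.Bowen} and~\ref{t.spec}, whose proofs are deferred to Sections~\ref{s.bowen} and~\ref{s.spec}. The only point demanding care is bookkeeping — verifying that the constants ($\beta_0$, $r_0$, $W_r$, $U$, $U_1$, $U^E$, $U^F$, $\vep_B$, $\delta_0$, $T_U$) used to define $\cG$ in Section~\ref{ss.para} are exactly those for which Theorems~\ref{t.Bowen} and~\ref{t.spec} were applied, so that the two displayed inclusions hold with the stated scales.
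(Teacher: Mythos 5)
Your proof is correct and takes essentially the same approach as the paper: the paper states Proposition~\ref{p.bs} as an ``immediate corollary'' of Theorems~\ref{t.Bowen} and~\ref{t.spec} without further argument, and your write-up simply makes explicit the bookkeeping — the inclusions $\cG\subset\cG_B$ and $\cG\subset\cG_S\cap\cO(U_1)$, plus the observation that the Bowen property and (tail) specification pass to sub-collections — that the paper leaves to the reader.
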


Next we describe the collection of orbit segments $\cD$ on which there exists a $(\cP,\cG,\cS)$-decomposition. To do this, we will introduce some ``bad'' orbit segments along the way, namely $\cB_1$ to $\cB_3$.

Let $(x,t)\subset \cO(U_1)$. We define the following two functions:
\begin{equation}\label{e.ps1}
	\begin{split}
		&\tilde p(x,t) = \min\{s\in[0,t]: x_s\notin {B_{r_0}(\Sing_\Lambda(X))}\}, \mbox{ and }\\
		&\tilde s(x,t) = \max\{s\in[0,t]: x_s\notin {B_{r_0}(\Sing_\Lambda(X))}\}.
	\end{split}
\end{equation}
 By replacing $\tilde s$ by some $\tilde s'$ with $|\tilde s'-\tilde s|<1$, we may assume that $\tilde s - \tilde p\in\NN$ while $x_{\tilde s}\notin W_r$.
Then define (recall $T_U$ from Lemma \ref{l.bipliss2})
\begin{equation}\label{e.B1}
	\cB_1 = \{(x,t)\in\Lambda\times \RR^+: \mbox{ either $\tilde p$ or $\tilde s$ does not exist, or $\tilde s - \tilde p\le T_{U}$}\}.
\end{equation}

For an orbit segment that is not in $\cB_1$, we have $x_{\tilde p}\notin {B_{r_0}(\Sing_\Lambda(X))}$, $x_{\tilde s} \notin {B_{r_0}(\Sing_\Lambda(X))}$ and $\tilde s-\tilde p > T_{U}\ge T_W.$   In particular, Lemma \ref{l.robust.dom.spl} and \ref{l.robust.hyp} applies. Also recall that $W_r \subset B_{r_0}(\Sing_\Lambda(X))$.
Now consider Lemma~\ref{l.bipliss2} for the orbit segment $(x_{\tilde p}, \tilde s - \tilde p)$, and define 
\begin{equation}\label{e.B2}
	\cB_2 = \left\{(x,t) \notin \cB_1: \frac{1}{\tilde s-\tilde p+1}\#\{i\in [0,\tilde s-\tilde p]\cap\NN: x_{\tilde p+i}\in W_r\}\ge {0.9}\beta_1\right\}.
\end{equation}
If $(x,t)\notin \cB_2$ then by Lemma~\ref{l.bipliss2} one can find an integer $k\ge 0$ such that $x_{\tilde p+k}\in U^E$ is a $(\lambda_0,E,\beta_0,W_r)$-simultaneous forward Pliss time for the orbit segment $(x_{\tilde p+k}, \tilde s -\tilde p - k)$ and $(\psi_t^*)$. Let $\tilde k$ be the smallest such $k$ and define 
$$
p(x,t) = \tilde p(x,t) + \tilde k.
$$
Note that $x_p$ is a $(\beta_0,W_r)$-forward recurrence Pliss time, and therefore $x_p\notin W_r$ (see Remark \ref{r.RecPliss}). 

\begin{lemma}\label{l.firsthyptime}
	One of the following statement hold for the orbit segment $(x,t)\notin \cB_1\cup\cB_2$:
	\begin{itemize}
		\item either $p-\tilde p \le T_{U};$ 
		\item or $$
		 \frac{1}{p-\tilde p + 1}\,\#\{i\in [0,p-\tilde p]\cap\NN: x_{\tilde p+i}\in W_r\}\ge \beta_1.
		$$
	\end{itemize}
\end{lemma}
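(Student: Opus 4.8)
Suppose $(x,t)\notin\cB_1\cup\cB_2$ and assume, towards a contradiction, that \emph{neither} alternative in the statement holds; that is,
$$
\tilde k:=p-\tilde p>T_U\quand\frac{1}{\tilde k+1}\,\#\bigl\{i\in[0,\tilde k]\cap\NN: x_{\tilde p+i}\in W_r\bigr\}<\beta_1 .
$$
The plan is to produce an integer $j\in[0,\tilde k)$ with $x_{\tilde p+j}\in U^E$ that is a $(\lambda_0,E,\beta_0,W_r)$-simultaneous forward Pliss time for the orbit segment $(x_{\tilde p+j},\tilde s-\tilde p-j)$. Since $\tilde k=p-\tilde p$ was chosen to be the \emph{smallest} index with exactly that property, the existence of such a $j<\tilde k$ is the desired contradiction.

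First I would apply Lemma~\ref{l.bipliss2} to the shorter orbit segment $(x_{\tilde p},\tilde k)\in\cO(U_1)$ and check its three hypotheses. For hypothesis (1): the enlargement $(x_{\tilde p},\tilde s-\tilde p)$ is a sub-segment of $(x,t)\in\cO(U_1)\subset\cO(U)$ with $x_{\tilde p},x_{\tilde s}\notin B_{r_0}(\Sing_\Lambda(X))$ and $\tilde k<\tilde s-\tilde p$, so one may take $t_1=0$ and $t_2=\tilde s-\tilde p-\tilde k>0$. For hypothesis (2): $x_{\tilde p}\notin B_{r_0}(\Sing_\Lambda(X))\supset W_r$ by the definition of $\tilde p$, while $x_{\tilde p+\tilde k}=x_p\notin W_r$ by Remark~\ref{r.RecPliss} (it is a $(\beta_0,W_r)$-forward recurrence Pliss time), and $\tilde k>T_U$, which dominates the constant $T_{W_r}$ of Lemma~\ref{l.robust.hyp}. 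Hypothesis (3) is exactly the density inequality assumed above. Lemma~\ref{l.bipliss2} then yields an integer $j$, with $x_{\tilde p+j}\in U^E$, that is a $(\lambda_0,E,\beta_0,W_r)$-simultaneous forward Pliss time for $(x_{\tilde p+j},\tilde k-j)$; here necessarily $j<\tilde k$ (indeed $j<\tilde k-1$) since a simultaneous Pliss time is defined only for an orbit segment of length $>1$.

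Next I would glue the two Pliss segments. By the definition of $\tilde k$, the point $x_p=x_{\tilde p+\tilde k}$ is a $(\lambda_0,E,\beta_0,W_r)$-simultaneous forward Pliss time for $(x_p,\tilde s-\tilde p-\tilde k)$, and we just obtained that $x_{\tilde p+j}$ is one for $(x_{\tilde p+j},\tilde k-j)$; both lengths exceed $1$ and $\tilde k-j\in\NN$, so Lemma~\ref{l.gluePliss} applies. Concretely, Lemma~\ref{l.gluepliss} concatenates the $E$-hyperbolicity Pliss sequence $a_i=-\log\|\psi^*_1|_{E_N(\cdot)}\|$ along $[\tilde p+j,\tilde p+\tilde k]$ with the one along $[\tilde p+\tilde k,\tilde s]$, and it simultaneously concatenates the counting sequence recording visits to $W_r$; hence $x_{\tilde p+j}$ is a $(\lambda_0,E,\beta_0,W_r)$-simultaneous forward Pliss time for $(x_{\tilde p+j},(\tilde k-j)+(\tilde s-\tilde p-\tilde k))=(x_{\tilde p+j},\tilde s-\tilde p-j)$. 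Together with $x_{\tilde p+j}\in U^E$ and $j<\tilde k$, this contradicts the minimality of $\tilde k$, finishing the argument.

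I expect the genuinely delicate points to be bookkeeping rather than conceptual: inheriting the ``larger orbit segment'' hypothesis of Lemma~\ref{l.bipliss2} by the shorter segment $(x_{\tilde p},\tilde k)$ (handled via $(x_{\tilde p},\tilde s-\tilde p)$ itself), confirming $x_p\notin W_r$ (immediate from Remark~\ref{r.RecPliss}), and verifying that the gluing preserves \emph{all} the defining conditions of a simultaneous forward Pliss time at once — forward $E$-hyperbolicity for $(\psi^*_t)$ and $(\beta_0,W_r)$-forward recurrence — which is precisely the content of Lemma~\ref{l.gluepliss}; one also needs the harmless integer-rounding conventions already imposed on $\tilde s$. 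The actual content of the lemma is just the dichotomy that the first ``good'' simultaneous Pliss time after time $\tilde p$ cannot be both far beyond $T_U$ and preceded by a visit density to $W_r$ below $\beta_1$, since otherwise Lemma~\ref{l.bipliss2} applied to that very stretch would have produced an even earlier one.
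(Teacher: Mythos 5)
Your argument is correct and follows essentially the same route as the paper's own proof: assume both alternatives fail, apply Lemma~\ref{l.bipliss2} to the segment $(x_{\tilde p}, p-\tilde p)$ to produce a simultaneous forward Pliss time at some $x_{\tilde p+j}$ with $j<\tilde k$, glue it with the one at $x_p$ via Lemma~\ref{l.gluePliss}, and contradict the minimality of $\tilde k$. Your careful checking of the hypotheses of Lemma~\ref{l.bipliss2} (the enlarged segment, $x_{\tilde p},x_p\notin W_r$, and $\tilde k>T_U$) is useful bookkeeping that the paper leaves implicit, and you correctly read through the apparent typo in the conclusion of Lemma~\ref{l.gluePliss} (which should say $(\lambda_0,E,\beta_0,W)$-simultaneous \emph{forward} Pliss time).
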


\begin{proof}
	Note that $x_{\tilde p}, x_p\notin W_r$. Assuming the contrary, we can apply Lemma \ref{l.bipliss2} to the orbit segment $(x_{\tilde p}, p-\tilde p)$ to find an integer $\ell\in [0, p-\tilde p -1]$ so that $x_{\tilde p+\ell}\in U^E$ is a $(\lambda_0,E,\beta_0,W_r)$-simultaneous forward Pliss time for the orbit segment $(x_{\tilde p+\ell}, p -\tilde p - \ell)$ and $(\psi_t^*)$. Since $x_p$ is also  a $(\lambda_0,E,\beta_0,W_r)$-simultaneous forward Pliss time for the orbit segment $(x_{p}, \tilde s - p)$, we see that $x_{\tilde p+\ell}\in U^E$ is a $(\lambda_0,E,\beta_0,W_r)$-simultaneous forward Pliss time for the orbit segment $(x_{\tilde p+\ell}, \tilde s -\tilde p - \ell)$ and $(\psi_t^*)$ thanks to Lemma~\ref{l.gluePliss}. Since $\ell < k$, this contradicts the minimality of $\tilde k$ in the definition of $p$. 
\end{proof}

Now that we have defined $p(x,t)$, we consider the orbit segment $(x_p,\tilde s-p)$ and note that $x_p,x_{\tilde s}\notin W_r$. Note that 
$$
\frac{1}{\tilde s-\tilde p+1}\#\{i\in [0,\tilde s-\tilde p]\cap\NN: x_{\tilde p+i}\in W_r\}< {0.9}\beta_1
$$
since we are not in $\cB_2$. By  Lemma \ref{l.firsthyptime}, the orbit segment $(x_{\tilde p}, p-\tilde p)$ has two cases: either the time is shorter than $T_U$, or the density of visits to $W_r$ is at least  $\beta_1.$ In the second case of we have 
$$
\frac{1}{\tilde s- p + 1}\,\#\{i\in [0,\tilde s- p]\cap\NN: x_{\tilde p+i}\in W_r\}< \beta_1.
$$ 	
In the first case we have the same inequality as long as $\tilde s - p > 10 T_U$.\footnote{Here the factor 10 is obtained by considering the worst case: $p-\tilde p= T_U$, and the corresponding orbit segment does not visit $W_r$.}
Either way, we can apply Lemma~\ref{l.bipliss2} to the orbit segment $(x_p,\tilde s - p)$ if $\tilde s-p> 10T_{U} $. Put  
\begin{equation}\label{e.B3}
	\cB_3 = \left\{(x,t) \notin \cB_1\cup\cB_2: \tilde s-p \le 10T_{U}\right\}.
\end{equation}
For $(x,t)\notin \cB_1\cup\cB_2\cup\cB_3$ we use Lemma~\ref{l.bipliss2} (and note that $\tilde s-p\in\NN$) for $-X$ to find $j\in [0,\tilde s-p]$ so that $x_{p+j}\in U^F$ is a $(\lambda_0,F,\beta_0,W_r)$-simultaneous backward Pliss time for the orbit segment $(x_{p}, j)$. Let $\tilde j$ be the largest such $j$, and define 
$$
s(x,t) = p(x,t)+\tilde j.
$$

\begin{lemma}\label{l.lasthyptime}
	For $(x,t)\notin \cB_1\cup\cB_2\cup \cB_3$, one of the following statement holds for the orbit segment $(x_{\tilde p}, p-\tilde p)$:
	\begin{itemize}
		\item either $\tilde s-s \le T_{U};$ 
		\item or $$
		\frac{1}{\tilde s-s + 1}\,\#\{i\in [0,\tilde s-s]\cap\NN: x_{s+i}\in W_r\}\ge \beta_1.
		$$
	\end{itemize}
\end{lemma}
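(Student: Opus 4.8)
The plan is to argue by contradiction, mirroring the proof of Lemma~\ref{l.firsthyptime} under the time reversal $X\mapsto -X$ together with the forward/backward symmetry of simultaneous Pliss times (keeping in mind Remark~\ref{r.symmetry}). Recall that, by construction, $s(x,t)=p(x,t)+\tilde j$ where $\tilde j\in[0,\tilde s-p]\cap\NN$ is the \emph{largest} integer for which $x_{p+\tilde j}\in U^F$ is a $(\lambda_0,F,\beta_0,W_r)$-simultaneous backward Pliss time for the orbit segment $(x_p,\tilde j)$; note also that $\tilde s-s=\tilde s-p-\tilde j\in\NN$ since $\tilde s-p,\tilde j\in\NN$. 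First I would record that $x_s\notin W_r$ (it is a $(\beta_0,W_r)$-backward recurrence Pliss time, so Remark~\ref{r.RecPliss} applies; and if $\tilde j=0$ then $x_s=x_p\notin W_r$ as already observed) and that $x_{\tilde s}\notin W_r$ by the choice of $\tilde s$.

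Suppose, contrary to the statement, that simultaneously $\tilde s-s>T_U$ and $\frac{1}{\tilde s-s+1}\#\{i\in[0,\tilde s-s]\cap\NN: x_{s+i}\in W_r\}<\beta_1$. I would then apply the $U^F$ (backward) version of Lemma~\ref{l.bipliss2} to the orbit segment $(x_s,\tilde s-s)\subset\cO(U_1)$. Hypothesis (1) of that lemma is satisfied by taking the enlarging segment $(x_{\tilde p},\tilde s-\tilde p)$, i.e. $t_1=s-\tilde p$ and $t_2=0$, since $x_{\tilde p},x_{\tilde s}\notin B_{r_0}(\Sing_\Lambda(X))$ by the definitions of $\tilde p$ and $\tilde s$; hypothesis (2) holds because $x_s,x_{\tilde s}\notin W_r$ and $\tilde s-s>T_U$; and hypothesis (3) is precisely the density bound just assumed. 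Lemma~\ref{l.bipliss2} then produces an integer $i\in[0,\tilde s-s]\cap\NN$ with $x_{s+i}\in U^F$ a $(\lambda_0,F,\beta_0,W_r)$-simultaneous backward Pliss time for $(x_s,i)$; as in the proof of Lemma~\ref{l.firsthyptime}, such a Pliss time can be taken nontrivial, i.e. with $i\ge 1$, because the density of such times is bounded below by a fixed positive constant and $\tilde s-s$ is large.

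Finally I would glue. Since $x_s=x_{p+\tilde j}$ is a $(\lambda_0,F,\beta_0,W_r)$-simultaneous backward Pliss time for $(x_p,\tilde j)$ and $x_{s+i}$ is one for $(x_s,i)$, the backward version of Lemma~\ref{l.gluePliss} yields that $x_{s+i}=x_{p+(\tilde j+i)}$ is a $(\lambda_0,F,\beta_0,W_r)$-simultaneous backward Pliss time for $(x_p,\tilde j+i)$; here $\tilde j+i=s+i-p\le\tilde s-p$ and $x_{s+i}\in U^F$, so $\tilde j+i$ is an admissible index in the definition of $s(x,t)$. Since $i\ge 1$ we get $\tilde j+i>\tilde j$, contradicting the maximality of $\tilde j$, which finishes the argument. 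The only delicate point is the bookkeeping — verifying that the sub-segment $(x_s,\tilde s-s)$ meets all three hypotheses of Lemma~\ref{l.bipliss2} and that the Pliss time it outputs is genuinely interior so that $\tilde j+i>\tilde j$ strictly; once these are in place the contradiction with the extremal choice of $\tilde j$ is immediate.
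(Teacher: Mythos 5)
Your argument is correct and is exactly the time-reversed version of the proof of Lemma~\ref{l.firsthyptime} that the paper has in mind when it writes that the proof "follows the same lines ... applied to $-X$." The bookkeeping you flag (hypotheses of Lemma~\ref{l.bipliss2} for the sub-segment $(x_s,\tilde s-s)$, the endpoints avoiding $W_r$, and nontriviality $i\ge 1$ of the produced backward Pliss time so that gluing via Lemma~\ref{l.gluePliss} strictly increases $\tilde j$) is handled correctly and mirrors the omitted argument.
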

The proof follows the same lines as the proof of Lemma~\ref{l.firsthyptime} applied to $-X$, and it therefore omitted.

The process above gives us four functions on $D: = (\cB_1\cup\cB_2\cup\cB_3)^c$:
$$
0\le \tilde p(x,t) \le p(x,t) < s(x,t) \le \tilde s(x,t)\le t.
$$
By Remark~\ref{r.RecPliss}, $x_p, x_s\notin W_r$ since they are recurrence Pliss times (forward and backward, respectively). 

The next lemma follow immediately from the construction (and note that $s-p = \tilde j\in\NN$).
\begin{lemma}
	For $(x,t)\in\cD$, the orbit segment $(x_p,s-p)$ is  in $\cG.$
\end{lemma}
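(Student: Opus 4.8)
The plan is to unwind the construction of the functions $\tilde p,p,s,\tilde s$ from Section~\ref{ss.para} and verify, one at a time, each defining property of $\cG$ for the orbit segment $(x_p,s-p)$. Since $(x,t)\in\cD$, by definition $(x,t)\notin\cB_1\cup\cB_2\cup\cB_3$, so all four functions are well defined and $0\le\tilde p\le p<s\le\tilde s\le t$; moreover $p-\tilde p=\tilde k$ and $s-p=\tilde j$ with $\tilde k,\tilde j\in\NN$, so in particular the length $s-p$ is a positive integer, which is the first requirement for membership in $\cG$. That $(x_p,s-p)\in\cO(U_1)$ is immediate from $(x,t)\in\cD\subseteq\cO(U_1)$ together with $[p,s]\subseteq[0,t]$.

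Next I would verify the ``enclosing orbit segment'' condition by taking $t_1=p-\tilde p\ge0$ and $t_2=\tilde s-s\ge0$: then $(x_p)_{-t_1}=x_{\tilde p}$ and $(x_p)_{(s-p)+t_2}=x_{\tilde s}$, both of which lie outside $B_{r_0}(\Sing_\Lambda(X))$ by the definitions \eqref{e.ps1} of $\tilde p$ and $\tilde s$, while the enclosing segment $(x_{\tilde p},\tilde s-\tilde p)$ is a sub-segment of $(x,t)\in\cO(U_1)\subseteq\cO(U)$. The endpoint membership conditions then follow directly from how $\tilde k$ and $\tilde j$ were chosen via Lemma~\ref{l.bipliss2}: $x_p=x_{\tilde p+\tilde k}\in U^E$ and $x_s=x_{p+\tilde j}\in U^F$; and since $x_p$ (respectively $x_s$) is a $(\beta_0,W_r)$-forward (respectively backward) recurrence Pliss time, Remark~\ref{r.RecPliss} gives $x_p,x_s\notin W_r$, whence $x_p\in U^E\cap W_r^c$ and $(x_p)_{s-p}=x_s\in U^F\cap W_r^c$.

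It then remains to check the two simultaneous-Pliss conditions. By construction $x_p$ is a $(\lambda_0,E,\beta_0,W_r)$-simultaneous forward Pliss time for the orbit segment $(x_p,\tilde s-p)$; since $s-p\le\tilde s-p$ and both the forward hyperbolic-time inequality \eqref{e.Epliss} and the forward recurrence inequality \eqref{e.RecPliss1} only constrain prefixes, they are inherited by the shorter sub-segment $(x_p,s-p)$, so $x_p$ is a $(\lambda_0,E,\beta_0,W_r)$-simultaneous forward Pliss time for $(x_p,s-p)$. Symmetrically, by the very choice of $\tilde j$, $x_s=x_{p+\tilde j}$ is a $(\lambda_0,F,\beta_0,W_r)$-simultaneous backward Pliss time for the orbit segment $(x_p,\tilde j)=(x_p,s-p)$. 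Since $\beta=\beta_0$ in the definition of $\cG$ (step (4) of the parameter choice), this exhausts the defining properties of $\cG$ and completes the proof. I do not expect any genuine obstacle here: as the subsection title suggests, this lemma is purely a bookkeeping consequence of the $(\cP,\cG,\cS)$-decomposition constructed in Section~\ref{ss.para}, the only points needing a word of care being that $s-p\in\NN$ is strictly positive and that the conditions of Definitions~\ref{d.EFPliss}, \ref{d.RecPliss} and \ref{l.bi-Pliss} pass to shorter prefixes starting at the same point.
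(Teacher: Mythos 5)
Your proof is correct and follows the same route the paper intends: the paper dispenses with this lemma in one line ("follows immediately from the construction"), and your argument supplies exactly the bookkeeping that the paper leaves implicit — taking $t_1=p-\tilde p$, $t_2=\tilde s-s$ so the enclosing segment is $(x_{\tilde p},\tilde s-\tilde p)$, noting $x_p\in U^E\cap W_r^c$, $x_s\in U^F\cap W_r^c$ from Lemma~\ref{l.bipliss2} and Remark~\ref{r.RecPliss}, and observing that the forward and backward simultaneous-Pliss conditions restrict to prefixes/suffixes since the defining inequalities \eqref{e.Epliss}, \eqref{e.Fpliss}, \eqref{e.RecPliss1}, \eqref{e.RecPliss2} are monotone in the orbit-segment length.
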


We are left to show that the pressure gap property holds. This is the content of Section~\ref{ss.gap}.

\subsection{The pressure gap property and the proof of Theorem~\ref{m.B}}\label{ss.gap}

Finally we are ready to prove Theorem~\ref{m.B}. We do this by verifying the assumption of the (further) improved CT criterion (Theorem~\ref{t.improvedCL1}). 

First, Theorem~\ref{m.A} shows that $X|_{\Lambda}$ is almost expansive at scale $\vep$.
Furthermore, we already constructed the $(\cP,\cG,\cS)$-decomposition on $\cD  = (\cB_1\cup\cB_2\cup\cB_3)^c$ and showed that $\cG$ has the Bowen property at scale $\vep$ (Theorem \ref{t.Bowen}) and specification at $\delta_0 < \vep/(1000L_X)$ (Theorem \ref{t.spec}). This means that $(\cG)^1$ (recall the definition by \eqref{e.g11}) has specification at scale $\delta:=\delta_0 L_X<\vep/1000$. So it remains to prove the pressure gap property, namely (recall the definition of $[\cC]$ from \eqref{e.[C]})
\begin{equation}\label{e.gap11}
	P(\cB_1\cup\cB_2\cup\cB_3\cup[\cP]\cup[\cS],\phi,\delta,\vep) < P(\phi, X|_\Lambda). 
\end{equation} 

We make two observations on the pressure.

\medskip
\noindent {\em Observation 1}. 
\begin{equation*}
	\begin{split}
		&P(\cB_1\cup\cB_2 \cup\cB_3\cup [\cP]\cup[\cS],\phi,\delta,\vep)\\ &\le \max\left\{P(\cB_1,\phi,\delta,\vep), P(\cB_2,\phi,\delta,\vep),P(\cB_3,\phi,\delta,\vep), P([\cP],\phi,\delta,\vep), P([\cS],\phi,\delta,\vep)
		\right\}.
	\end{split}
\end{equation*}

\noindent {\em Observation 2}:
For any $\cC\subset \bM\times\RR^+$, by~\eqref{e.vep} we have $P(\cC,\phi,\delta,\vep)< P(\cC,\phi,\delta) + \frac{a_0}{2}$.

Therefore, in order to prove~\eqref{e.gap11} we must prove that $P(\cC, \phi,\delta) < P(\phi, X|_\Lambda)-\frac{a_0}{2}$ for $\cC = \cB_1,\cB_2,\cB_3,[\cP]$ and $[\cS]$. The proof relies on the following general results concerning the pressure of an orbit segments collection.

Let $\cC\subset \bM\times \RR^+$ be a collection of orbit segments. Recall that the pressure of the potential $\phi$ on $\cC$ is defined as (here the second scale is zero, in view of Observation 2 above)
$$
\Lambda(\cC,\phi, \delta,t) = \sup\left\{\sum_{x\in E_t}\exp(\Phi_0(x,t)): E_t \mbox{ is a $(t,\delta)$-separated set of } \cC_t \right\}, 
$$
and
$$
P(\cC,\phi, \delta)  =\limsup_{t\to\infty} \frac1t \log \Lambda(\cC,\phi, \delta,t),
$$
where $(\cC)_t = \{x:(x,t)\in\cC\}$.

For each $t>0$ we choose $E_t$ a $(t,\delta)$-separated set of $(\cC)_t$ with 
$$
\log\sum_{x\in E_t}\exp(\Phi_0(x,t)) \ge \log \Lambda(\cC,\phi, \delta,t)-1.
$$
Then we consider 
\begin{equation}\label{e.mu}
	\begin{split}
		\nu_t :&= \frac{\sum_{x\in E_t}\exp(\Phi_0(x,t))\cdot \delta_x}{\sum_{x\in E_t}\exp(\Phi_0(x,t))}, \mbox{ and }\\
		\mu_t :&= \frac1t \int_0^t(f_s)_*\nu_t\, ds\\
		& = \frac{1}{\sum_{x\in E_t}\exp(\Phi_0(x,t))} \sum_{x\in E_t}\exp(\Phi_0(x,t))\cdot \left(\frac1t\int_0^t\delta_{x_s} \,ds\right).
	\end{split}
\end{equation}
Let $\mu$ be any limit point of the integer-indexed sequence $(\mu_t)_{t\in \NN}$, under the weak-* topology.

\begin{lemma}\cite[Proposition 5.1]{BCTT},\cite[Lemma 5.8]{PYY23}\label{l.var.princ}
	It holds that 
	$$
	P(\cC, \phi,\delta)\le P(\mu): = h_\mu(X) + \int \phi\,d\mu.
	$$
\end{lemma}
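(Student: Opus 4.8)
The goal is to establish Lemma~\ref{l.var.princ}: the pressure of $\phi$ on $\cC$ at scale $\delta$ is bounded above by $P(\mu) = h_\mu(X) + \int\phi\,d\mu$, where $\mu$ is any weak-$*$ limit point of the measures $\mu_t$ constructed from the $(t,\delta)$-separated sets $E_t$.

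My plan is to reduce the continuous-time statement to the discrete-time version for the time-one map $f_1$ and then invoke the classical Katok-Brin-Katok style entropy estimate. First I would record the standard relation between the flow pressure and the time-one map pressure: for a $(t,\delta)$-separated set for the flow, one can compare it with an $(\floor{t},\delta')$-separated set for $f_1$ (at a possibly adjusted scale, using uniform continuity of the flow on the compact time interval $[0,1]$), so that $P(\cC,\phi,\delta) \le \frac1{?}$-type bounds reduce to $P(\phi, f_1)$-type quantities. Then, with $\nu_t$ the normalized weighted sum of Dirac masses supported on $E_t$ and $\mu_t$ its time-average, I would apply the entropy-distribution inequality of Misiurewicz (as formalized in \cite[Proposition 5.1]{BCTT} and used in \cite[Lemma 5.8]{PYY23}): if $\nu_t$ is supported on a $(n,\delta)$-separated set and we weight each point $x$ by $e^{S_n\phi(x)}$ (normalized), then any weak-$*$ limit $\mu$ of the averages $\frac1n\sum_{k=0}^{n-1}(f_1)^k_*\nu_n$ satisfies
$$
\limsup_{n\to\infty}\frac1n\log\sum_{x\in E_n}e^{S_n\phi(x)} \le h_\mu(f_1)+\int\phi\,d\mu.
$$
The key technical points here are (i) choosing a finite measurable partition $\mathcal Q$ of $\bM$ with diameter less than $\delta$ whose boundary has $\mu$-measure zero, so that the separated-set cardinality is controlled by the number of $n$-cylinders of $\mathcal Q$ charged by the relevant measures; and (ii) passing the $\limsup$ through the weak-$*$ convergence using the upper semicontinuity of $\mu\mapsto h_\mu(f_1,\mathcal Q)$ for a fixed partition with null boundary. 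Since $\phi$ is continuous, $\int\phi\,d\mu_t \to \int\phi\,d\mu$ is automatic.

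The translation back to the flow uses Abramov's formula: $h_\mu(f_1) = h_\mu(X)$ for the suspension/flow entropy normalization in force here (the time-one map entropy equals the flow entropy since we have normalized the flow speed), and the Birkhoff integral $\Phi_0(x,t) = \int_0^t\phi(f_sx)\,ds$ differs from the discrete Birkhoff sum $\sum_{k=0}^{\floor t -1}\phi(f_k x)$ plus lower-order corrections by at most $O(\Var(\phi))$ times a bounded contribution, which is absorbed in the $\frac1t\log$ limit. Combining, $P(\cC,\phi,\delta) \le h_\mu(X) + \int\phi\,d\mu = P(\mu)$, as claimed. I should also note that $\mu$ is automatically $X$-invariant because it is a limit of time-averaged measures $\mu_t = \frac1t\int_0^t(f_s)_*\nu_t\,ds$, and supported in $\Lambda$ (or its small neighborhood's maximal invariant set) by local maximality as in Lemma~\ref{l.equalpressure} — though for the pressure-gap application only invariance and the entropy bound are needed.

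The main obstacle, and the only place requiring genuine care, is the interplay between the continuous-time averaging in the definition of $\mu_t$ and the discrete-time separated-set combinatorics: one must make sure that weighting by the \emph{integrated} potential $e^{\Phi_0(x,t)}$ rather than the discrete sum does not break the Misiurewicz argument, and that the scale adjustment $\delta \rightsquigarrow \delta'$ when passing between $d_{t,\delta}$ and $d^{f_1}_{\floor t,\delta'}$ separation is uniform and disappears in the limit. This is precisely the content of \cite[Proposition 5.1]{BCTT}, so the cleanest route is to cite that result directly in the form stated there and only verify that our $\nu_t$, $\mu_t$ fit its hypotheses; the remaining steps are then bookkeeping. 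Hence the proof will be short, essentially: set up the partition, quote \cite[Proposition 5.1]{BCTT} (equivalently \cite[Lemma 5.8]{PYY23}), and apply Abramov's formula together with the continuity of $\phi$ to pass from $f_1$ to the flow.
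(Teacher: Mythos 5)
Your proposal is correct and takes essentially the same approach as the paper: the paper likewise invokes \cite[Proposition 5.1]{BCTT} and \cite[Lemma 5.8]{PYY23} and describes the argument as a close adaptation of Walters' variational-principle proof, i.e.\ the Misiurewicz method with a fixed partition of small diameter and $\mu$-null boundary, upper semicontinuity of $h_{\cdot}(f_1,\cQ)$, and passage to the weak-$*$ limit of the time-averaged empirical measures $\mu_t$, with $X$-invariance of $\mu$ coming from the $\frac{1}{t}\int_0^t(f_s)_*\,ds$ averaging. Two small imprecisions worth fixing (neither affects validity): the identity $h_\mu(f_1)=h_\mu(X)$ is the definition of flow metric entropy, not Abramov's formula (which concerns suspensions), and has nothing to do with normalizing the flow speed; and the discrete Birkhoff sum to compare with $\Phi_0(x,t)$ is $\sum_{k=0}^{\floor{t}-1}\Phi_0(x_k,1)$ built from the integrated one-step potential $\Phi_0(\cdot,1)$ (cf.\ Lemma~\ref{l.holder}), which differs from $\Phi_0(x,t)$ by $O(\|\phi\|_{C^0})$, rather than $\sum_k\phi(x_k)$.
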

The proof follows the proof of the variational principle~\cite[Theorem 8.6]{Wal} closely. Details can be found in~\cite[Appendix A]{PYY23}.

For simplicity, below we will denote by 
$$
\delta_{(x,t)} = \frac1t \int_0^t \delta_{x_s} \,ds
$$
the empirical measure on the orbit segment $(x,t)$. It should not be confused with $\delta_{x_t}$ which is the point mass at the point $x_t = f_t(x)$. The next five lemmas deal with the pressure of $\cB_1, \cB_2, \cB_3, [\cP]$ and $[\cS]$, respectively. Their proof all follow the same argument: show that every $(x,t)\in\cC$ with $t$ sufficiently large (note that short orbit segments do not contribute towards the pressure due to the pressure being defined using $(\cC)_t$ for $t$ sufficiently large) must spent at least $0.5\beta_1$ portion of its time inside $W_r$ or $B_{r_0}(\Sing_\Lambda(X))$. This forces the limiting measure $\mu$ to satisfy $\mu(\Cl(B_{r_0}(\Sing_\Lambda(X))))\ge 0.5\beta_1$. Then we use Lemma~\ref{l.var.princ} and \eqref{e.gap1} to show that the pressure on $\cC$ is at most $P(\phi, X|_\Lambda)-a_0$.

\begin{lemma}\label{l.B1}
	We have $P(\cB_1, \phi,\delta) < P(\phi, X|_\Lambda)- a_0$.
\end{lemma}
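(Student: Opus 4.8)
The plan is to show that every long orbit segment in $\cB_1$ spends almost all of its time inside $B_{r_0}(\Sing_\Lambda(X))$, so that the measure produced by the pressure-to-measure argument is concentrated near $\Sing_\Lambda(X)$ and hence, by Lemma~\ref{l.gap1}, has metric pressure strictly below $P(\phi,X|_\Lambda)$.

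First I would record the elementary observation that for every $(x,t)\in\cB_1$ the Lebesgue measure of $\{s\in[0,t]:x_s\notin B_{r_0}(\Sing_\Lambda(X))\}$ is at most $T_U$. Indeed, if $\tilde p(x,t)$ or $\tilde s(x,t)$ fails to exist then the whole orbit segment lies in $B_{r_0}(\Sing_\Lambda(X))$ and this set is empty; otherwise, by the minimality of $\tilde p$ and the maximality of $\tilde s$ the set is contained in $[\tilde p,\tilde s]$, which has length $\tilde s-\tilde p\le T_U$ since $(x,t)\in\cB_1$. Consequently the empirical measure satisfies $\delta_{(x,t)}\big(\Cl(B_{r_0}(\Sing_\Lambda(X)))\big)\ge 1-T_U/t$.

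Next I would run the standard pressure-to-measure argument laid out before the statement. If $(\cB_1)_t=\emptyset$ for all large $t$ there is nothing to prove, so assume otherwise; for each large integer $t$ pick a $(t,\delta)$-separated set $E_t\subset(\cB_1)_t$ with $\log\sum_{x\in E_t}e^{\Phi_0(x,t)}\ge\log\Lambda(\cB_1,\phi,\delta,t)-1$, form $\nu_t$ and $\mu_t$ as in \eqref{e.mu}, and pass to a subsequence $t_k\to\infty$ along which $\tfrac1{t_k}\log\Lambda(\cB_1,\phi,\delta,t_k)\to P(\cB_1,\phi,\delta)$ and $\mu_{t_k}\to\mu$ weakly-$*$. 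Since $\mu_t$ is a convex combination of the measures $\delta_{(x,t)}$ with $(x,t)\in\cB_1$, the previous paragraph yields $\mu_t\big(\Cl(B_{r_0}(\Sing_\Lambda(X)))\big)\ge 1-T_U/t$; as this set is closed, the portmanteau theorem gives $\mu\big(\Cl(B_{r_0}(\Sing_\Lambda(X)))\big)=1\ge\beta_1/2$. Moreover $\mu$ is $X$-invariant — the averaging over $[0,t]$ in \eqref{e.mu} annihilates the $f_\tau$-action in the limit — and $\supp\mu\subset\Lambda$ because $(\cB_1)_t\subset\Lambda$ and $\Lambda$ is compact and invariant.

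Finally I would apply Lemma~\ref{l.var.princ}, which gives $P(\cB_1,\phi,\delta)\le h_\mu(X)+\int\phi\,d\mu=P_\mu(\phi)$ (using $h_\mu(X)=h_\mu(f_1)$), and then \eqref{e.gap1}, which applies precisely because $\supp\mu\subset\Lambda$ and $\mu\big(\Cl(B_{r_0}(\Sing_\Lambda(X)))\big)\ge\beta_1/2$, to obtain $P_\mu(\phi)<P(\phi,X|_\Lambda)-a_0$; combining these two inequalities proves the lemma. There is no genuine obstacle here: the only points needing care are that the ``bad-time'' set of a $\cB_1$-segment is confined to $[\tilde p,\tilde s]$ (immediate from the definitions of $\tilde p$ and $\tilde s$) and that the limit measure is invariant and carried by $\Lambda$ (built into the construction \eqref{e.mu}); the rest is a verbatim application of Lemmas~\ref{l.var.princ} and \ref{l.gap1}.
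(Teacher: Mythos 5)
Your proposal is correct and follows essentially the same argument as the paper: bound the time a $\cB_1$-segment spends outside $B_{r_0}(\Sing_\Lambda(X))$ by $T_U$, pass to a weak-$*$ limit of the pressure-maximizing empirical measures, and apply Lemma~\ref{l.var.princ} together with~\eqref{e.gap1}. The only cosmetic difference is that you obtain the sharper bound $\mu\bigl(\Cl(B_{r_0}(\Sing_\Lambda(X)))\bigr)=1$ whereas the paper settles for $\ge 0.6\beta_1$; both exceed the $\beta_1/2$ threshold required by~\eqref{e.gap1}, so the conclusion is the same.
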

\begin{proof}
		Recall that $\cB_1$ (see~\ref{e.B1}) consists of orbit segments $(x,t)$ such that either $\tilde p$ or $\tilde s$ does not exist, or $\tilde s-\tilde p\le T_{U}$. In the first two case, for every $(x,t)\in\cB_1$ we have $\delta_{(x,t)}(B_{r_0}(\Sing_\Lambda(X))) = 1$. In the last case,  note that the orbit segment from $0$ to $\tilde p$ and from $\tilde s$ to $t$ are spent in $B_{r_0}(\Sing_\Lambda(X))$. Consequently, for all $t$ sufficiently large one has 
		$$
		\delta_{(x,t)}(B_{r_0}(\Sing_\Lambda(X))) > 0.6\beta_1. 
		$$
		Either way,  we see that for any $(t,\delta)$-separated set $E_t$ of $(\cB_1)_t$, the measure $\mu_t$ defined as in~\eqref{e.mu} is a convex combination of empirical measures $\dxt$ for $x\in E_t$, and consequently, satisfies
		$$
		\mu_t(B_{r_0}(\Sing_\Lambda(X))) > 0.6\beta_1. 
		$$
		This shows that for any limit point $\mu$ of $\mu_t$ (as $t\to\infty$), it holds that
		$$
		\mu\left(\Cl(B_{r_0}(\Sing_\Lambda(X)))\right)\ge  0.6\beta_1,
		$$ 
		which, according to~\eqref{e.gap1}, implies that $P(\mu) < P(\phi, X|_\Lambda) - a_0$. Then we obtain from Lemma~\ref{l.var.princ} that
		$$
		P(\cB_1, \phi,\delta) \le P(\mu) < P(\phi, X|_\Lambda) - a_0,
		$$
		as required.
\end{proof}

\begin{lemma}\label{l.B2}
	$P(\cB_2, \phi,\delta) < P(\phi, X|_\Lambda) - a_0$.
\end{lemma}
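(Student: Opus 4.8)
The plan is to follow the scheme used in Lemma~\ref{l.B1}: I will show that every $(x,t)\in\cB_2$ with $t$ large forces the empirical measure $\delta_{(x,t)}$ to assign mass at least $0.6\beta_1$ to $\Cl(B_{r_0}(\Sing_\Lambda(X)))$. Granting this, if $t_n\to\infty$ through integers realizes $P(\cB_2,\phi,\delta)$ and $\mu$ is a weak-$*$ limit of the corresponding measures $\mu_{t_n}$ from~\eqref{e.mu}, then $\mu$ is an $X$-invariant probability measure which, by local maximality of $U_1$ (cf.\ the proof of Lemma~\ref{l.equalpressure}), is supported on $\Lambda$, and which satisfies $\mu(\Cl(B_{r_0}(\Sing_\Lambda(X))))\ge 0.6\beta_1\ge\beta_1/2$ because $\Cl(B_{r_0}(\Sing_\Lambda(X)))$ is closed. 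Then~\eqref{e.gap1} gives $P_\mu(\phi)<P(\phi,X|_\Lambda)-a_0$, and Lemma~\ref{l.var.princ} yields $P(\cB_2,\phi,\delta)\le P(\mu)=P_\mu(\phi)<P(\phi,X|_\Lambda)-a_0$, which is the claim.

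To prove the mass estimate, I would fix $(x,t)\in\cB_2$. Since $(x,t)\notin\cB_1$, the times $\tilde p=\tilde p(x,t)\le\tilde s=\tilde s(x,t)$ from~\eqref{e.ps1} exist with $\tilde s-\tilde p\in\NN$, $x_{\tilde p}\notin B_{r_0}(\Sing_\Lambda(X))$ and $x_{\tilde s}\notin W_r$, and by~\eqref{e.B2} at least $0.9\beta_1(\tilde s-\tilde p+1)$ of the integers $i\in[0,\tilde s-\tilde p]$ have $x_{\tilde p+i}\in W_r$. The key point is that $W_r$ is flow-saturated and the orbit lies outside $W_r$ at times $\tilde p$ and $\tilde s$, so every maximal excursion of the orbit into $W_r$ during $[\tilde p,\tilde s]$ is a complete sojourn in some $\overline{B_{r_0}(\sigma)}$ passing through $B_r(\sigma)$, and hence has continuous duration $>s_r$, where $s_r=\min_\sigma s_r(\sigma)$ is as in the parameter choices (shrinking $r$ once more I may assume $s_r\ge 2$). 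An excursion of continuous duration $D_j$ contains at most $D_j+1$ of the integer times $\tilde p+i$, and if $C$ denotes the total continuous time the orbit spends in $W_r$ during $[\tilde p,\tilde s]$, there are fewer than $C/s_r$ excursions; hence the number of integer visits to $W_r$ is less than $C(1+1/s_r)$, which together with~\eqref{e.B2} gives $C\ge\frac{0.9\beta_1 s_r}{s_r+1}(\tilde s-\tilde p)$. On $[0,\tilde p]\cup[\tilde s,t]$ the orbit lies in $\Cl(B_{r_0}(\Sing_\Lambda(X)))$, disjoint from the $W_r\subset\Cl(B_{r_0}(\Sing_\Lambda(X)))$ excursions, so with $u=\tilde s-\tilde p\in[0,t]$, and up to an $O(1/t)$ error coming from the rounding of $\tilde s$,
\[
\delta_{(x,t)}\big(\Cl(B_{r_0}(\Sing_\Lambda(X)))\big)\ \ge\ \frac{(t-u)+C}{t}\ \ge\ 1-\frac{u}{t}\Big(1-\frac{0.9\beta_1 s_r}{s_r+1}\Big)\ \ge\ \frac{0.9\beta_1 s_r}{s_r+1}\ \ge\ 0.6\beta_1 ,
\]
uniformly in $(x,t)\in\cB_2$.

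I expect the only genuinely non-routine step to be this geometric trade between the number of integer visits to $W_r$ and the continuous time spent in it, which rests on $W_r$ being flow-saturated with excursion lengths uniformly bounded below by $s_r$; everything else is bookkeeping together with an application of the already-established pressure gap~\eqref{e.gap1} and of Lemma~\ref{l.var.princ}.
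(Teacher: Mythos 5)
Your proof is correct and follows the same strategy as the paper's: reduce to showing that the empirical measures of orbit segments in $\cB_2$ put mass at least $\beta_1/2$ on $\Cl(B_{r_0}(\Sing_\Lambda(X)))$, then invoke Lemma~\ref{l.var.princ} together with~\eqref{e.gap1}. The one place you go further than the paper is the discrete-to-continuous time translation: the paper compresses this into the single observation that if $x_i,x_{i+1}\in W_r^c$ then $(x_i,1)\cap W_r=\emptyset$, and then asserts $\dxt(B_{r_0}(\Sing_\Lambda(X)))\ge 0.9\beta_1$; your excursion count shows the sharp bound is actually $\frac{0.9\beta_1 s_r}{s_r+1}$, which is what one honestly obtains since an excursion of duration $D$ can contain up to $D+1$ integers. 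This is smaller than the paper's stated $0.9\beta_1$, but (as you note, after ensuring $s_r\ge 2$, which the paper's choice~\eqref{e.choice.b} certainly guarantees) still exceeds $\beta_1/2$, so the conclusion is unaffected. In short: same approach, with more careful bookkeeping on a step the paper glosses over.
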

\begin{proof}
	Recall the definition of $\cB_2$ from \eqref{e.B2}: the portion of time spent in $W_r$ (w.r.t.\,the time-one map $f_1$) in the time interval $[\tilde p,\tilde s]$ is at least $0.9\beta_1$. 	The definition of $W_r$ (see~\eqref{e.W.def}) guarantees that if $x_{i}$ and $x_{i+1} = f_1(x_i)$ are both in $W_r^c$, then the orbit segment $(x_i,1)$ does not intersect with $W_r$.	Also note that the orbit segment for the time interval  $[0,\tilde p]$ and $[\tilde s, t]$ are contained in $B_{r_0}(\Sing_\Lambda(X))$, a fact we already used in the previous lemma. 
	
	Then, for any $(t,\delta)$-separated set $E_t$ of $(\cB_2)_t$ and every $x\in E_t$, it holds 
	$$
	\dxt(B_{r_0}(\Sing_\Lambda(X))) \ge  0.9\beta_1.
	$$
	Similar to the previous lemma, we then have 
	$$
	\mu\left(\Cl\left(  B_{r_0}(\Sing_\Lambda(X))\right)\right)  \ge \frac{\beta_1}{2},
	$$
	where $\mu$ is any limit point of the sequence of measures $(\mu_t)_t$ defined by~\eqref{e.mu}. Now Lemma~\ref{l.var.princ} and Equation~\eqref{e.gap1} implies that 
	$$
	P(\cB_2, \phi,\delta) < P(\phi, X|_\Lambda) - a_0,
	$$
	as required.
	
\end{proof}

\begin{lemma}\label{l.B3}
	$P(\cB_3, \phi,\delta) < P(\phi, X|_\Lambda) - a_0$.
\end{lemma}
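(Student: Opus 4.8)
The proof of Lemma~\ref{l.B3} follows the same template as Lemmas~\ref{l.B1} and~\ref{l.B2}, and indeed it will be the shortest of the three. Recall from \eqref{e.B3} that $\cB_3$ consists of orbit segments $(x,t)\notin \cB_1\cup\cB_2$ with $\tilde s - p \le 10 T_U$; informally, these are orbit segments where, after locating the first simultaneous forward Pliss time $x_p$, the remaining ``regular'' portion up to $\tilde s$ is too short to contain a backward Pliss time. The plan is to show that for such segments, with $t$ large, the empirical measure $\delta_{(x,t)}$ must assign a definite mass to $\Cl(B_{r_0}(\Sing_\Lambda(X)))$, and then invoke Lemma~\ref{l.var.princ} together with \eqref{e.gap1} exactly as before.

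First I would account for the three stretches of time that are forced to lie in $B_{r_0}(\Sing_\Lambda(X))$ or $W_r$. The time intervals $[0,\tilde p]$ and $[\tilde s, t]$ are spent in $B_{r_0}(\Sing_\Lambda(X))$ by the very definition of $\tilde p$ and $\tilde s$ in \eqref{e.ps1} (this was already used in the proof of Lemmas~\ref{l.B1} and~\ref{l.B2}). Next, since $(x,t)\notin \cB_2$, the portion of time in $W_r$ during $[\tilde p,\tilde s]$ is less than $0.9\beta_1$, so by Lemma~\ref{l.firsthyptime} the sub-segment $(x_{\tilde p}, p-\tilde p)$ either has length at most $T_U$ or spends at least a $\beta_1$-fraction of its time in $W_r$; in either case it contributes $O(T_U)$ extra time in $B_{r_0}(\Sing_\Lambda(X))$ up to a bounded additive error (in the first case because its length is bounded; in the second case directly). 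Finally, the defining condition of $\cB_3$ gives $\tilde s - p \le 10 T_U$. Putting these together, the total time of the orbit segment $(x,t)$ that lies \emph{outside} $B_{r_0}(\Sing_\Lambda(X))$ is at most $(p - \tilde p) + (\tilde s - p) \le T_U + 10 T_U = 11 T_U$ in the ``short'' case, and in the ``long'' case (where $p - \tilde p$ is itself large but $\beta_1$-recurrent to $W_r$) the time outside $W_r$ within $[\tilde p, \tilde s]$ is still a bounded multiple of $T_U$ plus a $\beta_1$-fraction; either way, for $t$ sufficiently large (larger than some threshold depending only on $T_U$ and $\beta_1$) we get
$$
\delta_{(x,t)}\left(B_{r_0}(\Sing_\Lambda(X))\right) > 0.6\beta_1.
$$

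With this uniform lower bound in hand, the rest is routine and identical to Lemmas~\ref{l.B1} and~\ref{l.B2}: for any $(t,\delta)$-separated set $E_t$ of $(\cB_3)_t$ with $t$ large, the measure $\mu_t$ from \eqref{e.mu} is a convex combination of the $\delta_{(x,t)}$ with $x\in E_t$, hence $\mu_t(B_{r_0}(\Sing_\Lambda(X))) > 0.6\beta_1$; passing to a weak-$^*$ limit $\mu$ along integer times and using that $B_{r_0}(\Sing_\Lambda(X))$ is contained in its closure, $\mu(\Cl(B_{r_0}(\Sing_\Lambda(X)))) \ge 0.6\beta_1 > \beta_1/2$. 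Then \eqref{e.gap1} gives $P_\mu(\phi) < P(\phi) - a_0$, and Lemma~\ref{l.var.princ} yields $P(\cB_3,\phi,\delta) \le P(\mu) < P(\phi, X|_\Lambda) - a_0$, as desired. The only mild subtlety — and the step I would be most careful about — is bookkeeping the ``long'' case of Lemma~\ref{l.firsthyptime}, i.e.\ checking that when $p - \tilde p > T_U$ the enforced $\beta_1$-density of visits to $W_r$ on $(x_{\tilde p}, p - \tilde p)$ still feeds into the total count of time in $B_{r_0}(\Sing_\Lambda(X))$ with the right constant; but since $W_r \subset B_{r_0}(\Sing_\Lambda(X))$ this is immediate, and the factor $10$ built into the definition of $\cB_3$ (see the footnote after \eqref{e.B3}) is precisely what absorbs the worst case.
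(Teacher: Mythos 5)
Your proof is correct and follows essentially the same template as the paper's: decompose the time interval $[0,t]$ into the four pieces $[0,\tilde p]$, $[\tilde p, p]$, $[p,\tilde s]$, $[\tilde s, t]$, observe (via the definition of $\tilde p, \tilde s$, Lemma~\ref{l.firsthyptime}, and the defining bound $\tilde s - p \le 10T_U$) that for $t$ large the empirical measure $\delta_{(x,t)}$ assigns mass $> \beta_1/2$ to $B_{r_0}(\Sing_\Lambda(X))$, and then close via Lemma~\ref{l.var.princ} together with \eqref{e.gap1}. This is exactly what the paper does; the only differences are cosmetic — the paper states the lower bound as $0.9\beta_1$ where you settle for $0.6\beta_1$, and both comfortably exceed the $\beta_1/2$ threshold in \eqref{e.gap1}.

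One sentence of yours is garbled and, read literally, false: in the ``long'' case you write that ``the time outside $W_r$ within $[\tilde p,\tilde s]$ is still a bounded multiple of $T_U$ plus a $\beta_1$-fraction.'' That cannot be right — when $p-\tilde p$ is large, the time spent outside $W_r$ on $[\tilde p,p]$ can be as large as $(1-\beta_1)(p-\tilde p)$, which is not $O(T_U)$ plus a $\beta_1$-fraction of anything. What you should say (and what the displayed conclusion $\delta_{(x,t)}(B_{r_0}(\Sing_\Lambda(X)))>0.6\beta_1$ actually rests on) is the dual statement: the time \emph{inside} $W_r\subset B_{r_0}(\Sing_\Lambda(X))$ on $[\tilde p,p]$ is at least $\beta_1(p-\tilde p)$, so together with the endpoint intervals one gets time in $B_{r_0}(\Sing_\Lambda(X))$ at least $\beta_1\bigl(t-(\tilde s-p)\bigr)\ge\beta_1(t-10T_U)$, which gives the claimed density once $t$ exceeds a threshold of order $T_U/\beta_1$. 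The displayed bound is therefore correct and the argument goes through, but that intermediate sentence should be fixed before the proof could be considered clean.
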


\begin{proof}
	Recall the definition of $\cB_3$ from \eqref{e.B3}; it consists of orbit segments such that $\tilde s-p\le 10T_{U}$, where $p$ is given by the first simultaneous forward Pliss time in the time interval $[\tilde p,\tilde s]$. 
	
	We make the following observation:
	\begin{itemize}
		\item the time interval $[0,\tilde p]$ is spent in $B_{r_0}(\Sing_\Lambda(X))$;
		\item for the time interval $[\tilde p,p]$, Lemma~\ref{l.firsthyptime} shows that either $p-\tilde p\le T_{U}$, or the portion of time spent in $W_r$ is larger than $\beta_1$;
		\item the time interval $[p,\tilde s]$ has length at most $10T_{U}$;
		\item the time interval $[\tilde s,t]$ is spent in $B_{r_0}(\Sing_\Lambda(X))$.
	\end{itemize}
	This shows that the overall time of $(x,t)$ spent in $B_{r_0}(\Sing_\Lambda(X))$ is at least $0.9\beta_1$, as long as $t$ is sufficiently large. Then the same argument as the previous two lemmas shows that $\mu(B_{r_0}(\Sing_\Lambda(X)))\ge 0.9\beta_1 $ 	where $\mu$ is any limit point of the sequence of measures $(\mu_t)_t$ defined by~\eqref{e.mu}, meaning that the pressure is less than $P(\phi, X|_\Lambda)-a_0.$
\end{proof}

\begin{lemma}\label{l.P}
	$P([\cP], \phi,\delta) < P(\phi, X|_\Lambda) - a_0$.
\end{lemma}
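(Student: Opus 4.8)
The plan is to argue exactly as in Lemmas~\ref{l.B1}--\ref{l.B3}, the point being that every prefix orbit segment of large length spends a definite fraction of its time inside $B_{r_0}(\Sing_\Lambda(X))$. Concretely, I would fix a prefix orbit segment $(x,p(x,t))$ with $(x,t)\in\cD$ and $p=p(x,t)$ large, recall that $p=\tilde p+\tilde k$ where the initial segment $(x,\tilde p)$ lies entirely in $B_{r_0}(\Sing_\Lambda(X))$ by the definition \eqref{e.ps1} of $\tilde p$, and invoke the dichotomy of Lemma~\ref{l.firsthyptime} for the middle piece $(x_{\tilde p},p-\tilde p)$: either its length is at most $T_U$, or the density of integers $i$ with $x_{\tilde p+i}\in W_r$ is at least $\beta_1$. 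In the first case $\tilde p\ge p-T_U$, so the fraction of $(x,p)$ inside $B_{r_0}(\Sing_\Lambda(X))$ is at least $1-T_U/p$, which exceeds $\tfrac12\beta_1$ once $p>10T_U/\beta_1$. In the second case I would convert the integer-time recurrence density into a Lebesgue-time estimate on the mass in $B_{r_0}(\Sing_\Lambda(X))$ verbatim as in the proof of Lemma~\ref{l.B2}, using $W_r\subset B_{r_0}(\Sing_\Lambda(X))$ and the fact that $W_r$ is flow-saturated; together with the contribution of $(x,\tilde p)$ this again gives $\delta_{(x,p)}\big(B_{r_0}(\Sing_\Lambda(X))\big)\ge\tfrac12\beta_1$ for $p$ large.

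With this estimate in hand the conclusion follows by the same mechanism used for $\cB_1,\cB_2,\cB_3$. Since $P([\cP],\phi,\delta)$ is a $\limsup$ over the sections $([\cP])_n$ as $n\to\infty$, only prefix segments of large length contribute, and since by \eqref{e.[C]} every element of $[\cP]$ differs from a genuine prefix segment by at most one unit of time at each end, the mass estimate above applies. For each large $n$ I would take a near-optimal $(n,\delta)$-separated subset of $([\cP])_n$, form the measures $\nu_n$ and $\mu_n$ as in \eqref{e.mu}, observe that $\mu_n$ is a convex combination of empirical measures of prefix segments and hence inherits $\mu_n\big(B_{r_0}(\Sing_\Lambda(X))\big)\ge\tfrac12\beta_1$, pass to a weak-$*$ limit $\mu$ (which is $X$-invariant and, by local maximality, supported on $\Lambda$), and deduce $\mu\big(\Cl(B_{r_0}(\Sing_\Lambda(X)))\big)\ge\beta_1/2$. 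Then \eqref{e.gap1} gives $P_\mu(\phi)<P(\phi)-a_0$, and Lemma~\ref{l.var.princ} yields $P([\cP],\phi,\delta)\le P(\mu)=P_\mu(\phi)<P(\phi,X|_\Lambda)-a_0$, as required.

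There is no real obstacle here: the only substantive input is Lemma~\ref{l.firsthyptime}, which has already been proved, and the remaining steps are pure bookkeeping copied from Lemmas~\ref{l.B1}--\ref{l.B3}. The one mildly technical point is the passage from density of visits to $W_r$ counted along the time-one map to Lebesgue-measure mass inside $B_{r_0}(\Sing_\Lambda(X))$; this is dispatched exactly as in Lemma~\ref{l.B2}, where the fact that $s_r\to\infty$ as $r\to0$ makes the conversion essentially lossless.
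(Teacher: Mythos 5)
Your proposal is correct and follows essentially the same route as the paper's proof: you combine the observation that the initial block $(x,\tilde p)$ lies entirely in $B_{r_0}(\Sing_\Lambda(X))$ with the dichotomy of Lemma~\ref{l.firsthyptime} for $(x_{\tilde p},p-\tilde p)$, deduce a definite fraction of time in $B_{r_0}(\Sing_\Lambda(X))$ for all long prefix segments, and feed that into the Lemma~\ref{l.var.princ}/\eqref{e.gap1} machinery from Lemmas~\ref{l.B1}--\ref{l.B3}. The paper's version is more terse (it asserts the mass bound and refers to "the same argument as before"), while you spell out the two cases and the passage from $\cP$ to $[\cP]$, but the underlying argument is the same.
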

\begin{proof}
	Recall the definition of  $[\cP]$ from~\eqref{e.[C]}.  Also recall that $\cP$ consists of orbit segments of the form $(x,p(x,t))$ where $p$ is given by the first simultaneous forward Pliss time in the time interval $[\tilde p,\tilde s]$.
	
	Similar to the previous lemma, we have 
	\begin{itemize}
		\item the time interval $[0,\tilde p]$ is spent in $B_{r_0}(\Sing_\Lambda(X))$;
		\item for the time interval $[\tilde p,p]$, Lemma~\ref{l.firsthyptime} shows that either $p-\tilde p\le T_{U}$, or the portion of time spent in $W_r$ is at least  $\beta_1$;
	\end{itemize}
	In particular, the overall time of $(x,p(x,t))$ spent in $B_{r_0}(\Sing_\Lambda(X))$ is at least $\beta_1$, as long as $t$ is sufficiently large. The same argument as before shows that the pressure is less than $P(\phi, X|_\Lambda)-a_0$.
\end{proof}

\begin{lemma}\label{l.S}
	$P([\cS], \phi,\delta) < P(\phi, X|_\Lambda)- a_0$.
\end{lemma}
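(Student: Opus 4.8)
The plan is to repeat the argument used for $[\cP]$ in Lemma~\ref{l.P}, with Lemma~\ref{l.lasthyptime} playing the role of Lemma~\ref{l.firsthyptime}. Recall that $\cS$ consists of the terminal pieces $(x_{s(x,t)},\,t-s(x,t))$ of orbit segments $(x,t)\in\cD=(\cB_1\cup\cB_2\cup\cB_3)^c$, where $s(x,t)=p(x,t)+\tilde j$ is produced by the last $(\lambda_0,F,\beta_0,W_r)$-simultaneous backward Pliss time in the interval $[\tilde p,\tilde s]$. Since $P([\cS],\phi,\delta)$ is computed from $([\cS])_t$ with $t\to\infty$, it suffices to control empirical measures along pieces whose length $t-s(x,t)$ is large; bounded-length pieces do not affect the pressure.

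First I would show that for every such piece with $t-s(x,t)$ sufficiently large,
$$
\delta_{(x_{s(x,t)},\,t-s(x,t))}\bigl(B_{r_0}(\Sing_\Lambda(X))\bigr)\ \ge\ \tfrac12\beta_1 .
$$
Indeed, since $(x,t)\notin\cB_1$, the sub-interval $[\tilde s,t]$ of the $x$-orbit lies entirely in $B_{r_0}(\Sing_\Lambda(X))$ by maximality of $\tilde s$. For the sub-interval $[s,\tilde s]$, Lemma~\ref{l.lasthyptime} gives two alternatives: either $\tilde s-s\le T_U$, in which case --- once $t-s$ is large --- almost all of $[s,t]$ is contained in $[\tilde s,t]\subset B_{r_0}(\Sing_\Lambda(X))$; or the $f_1$-density of visits to $W_r\subset B_{r_0}(\Sing_\Lambda(X))$ over $[s,\tilde s]$ is at least $\beta_1$. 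In both cases the fraction of time that $(x_{s(x,t)},\,t-s(x,t))$ spends in $B_{r_0}(\Sing_\Lambda(X))$ exceeds $\tfrac12\beta_1$ for $t-s(x,t)$ large enough, the bounded $T_U$-sized correction at the ends being negligible --- precisely the worst-case estimate underlying the factor $10$ in the definition of $\cB_3$.

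Consequently, for any $(t,\delta)$-separated set $E_t$ of $([\cS])_t$, the measure $\mu_t$ built in~\eqref{e.mu} is a convex combination of the empirical measures $\delta_{(x_{s(x,t)},\,t-s(x,t))}$ over $x\in E_t$, so $\mu_t\bigl(B_{r_0}(\Sing_\Lambda(X))\bigr)\ge\tfrac12\beta_1$ for all large $t$, whence any weak-$*$ limit point $\mu$ of $(\mu_t)_{t\in\NN}$ satisfies $\mu\bigl(\Cl(B_{r_0}(\Sing_\Lambda(X)))\bigr)\ge\tfrac12\beta_1$. Then~\eqref{e.gap1} gives $P_\mu(\phi)<P(\phi)-a_0$, and Lemma~\ref{l.var.princ} yields $P([\cS],\phi,\delta)\le P(\mu)<P(\phi,X|_\Lambda)-a_0$, as desired. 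The only delicate point is the end-effect bookkeeping in the displayed estimate above; alternatively one can bypass it entirely by noting that the construction of $\cD$ and its decomposition is symmetric under $X\mapsto -X$ (with $\tilde p\leftrightarrow\tilde s$, $p\leftrightarrow s$, $U^E\leftrightarrow U^F$, while $\cB_i$ and $\cG_B$ are unchanged), so that $[\cS]$ for $X$ agrees with $[\cP]$ for $-X$, and Lemma~\ref{l.P} applies directly.
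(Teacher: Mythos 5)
Your argument matches the paper's proof: both invoke Lemma~\ref{l.lasthyptime} to split $[s,\tilde s]$ into the ``short'' case $\tilde s-s\le T_U$ and the ``$\beta_1$-dense visits to $W_r$'' case, combine this with the fact that $[\tilde s,t]\subset B_{r_0}(\Sing_\Lambda(X))$ to lower-bound $\delta_{(x_s,t-s)}\bigl(B_{r_0}(\Sing_\Lambda(X))\bigr)$, and then conclude via Lemma~\ref{l.var.princ} and \eqref{e.gap1}. The closing symmetry remark is in the spirit of the paper's own comment but should be treated as heuristic rather than a literal identification of $[\cS]_X$ with $[\cP]_{-X}$, since $s(x,t)$ is chosen as the last backward Pliss time in the already-truncated interval $[p,\tilde s]$ (not $[\tilde p,\tilde s]$), so the $X\mapsto -X$ swap of the decomposition requires a small additional check; your direct density estimate does not depend on this and stands on its own.
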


\begin{proof}
	$\cS$ consists of orbit segments of the form $(y,\tau)$ where for some $(x,t)\in\cD$,  $y = x_{s(x,t)}$ and $\tau = t-s(x,t)$. Here $s$ is given by the last simultaneous backward Pliss time in the time interval $[p,\tilde s]$.

	The following observations hold for the orbit segment $(x,t)$:
	\begin{itemize}
		\item for the time interval $[s,\tilde s]$, Lemma~\ref{l.lasthyptime} shows that either $\tilde s-\tilde s\le T_{U}$, or the portion of time spent in $W_r$ is at least  $\beta_1$;
		\item the time interval $[\tilde s, t]$ is spent in $B_{r_0}(\Sing_\Lambda(X))$.
	\end{itemize}
	In particular, we are in a symmetric case comparing to Lemma~\ref{l.P}, and the overall time of $(y,\tau ) = (x_{s},t-s)$ spent in $B_{r_0}(\Sing_\Lambda(X))$ is at least $\beta_1$, as long as $\tau$ is sufficiently large. The same argument as before shows that the pressure is less than $P(\phi, X|_\Lambda)-a_0$.
\end{proof}

Collecting the previous five lemmas and keeping in mind Observation 1 and 2, we have 
\begin{align*}
	&P(\cD^c\cup[\cP]\cup[\cS],\phi,\delta,\vep)\\ 
	&\le \max\left\{P(\cB_1,\phi,\delta,\vep),  P(\cB_2,\phi,\delta,\vep), P(\cB_3,\phi,\delta,\vep), P([\cP],\phi,\delta,\vep),  P([\cS],\phi,\delta,\vep)\right\}\\
	&\le \max\left\{P(\cB_1,\phi,\delta),  P(\cB_2,\phi,\delta),  P(\cB_3,\phi,\delta),  P([\cP],\phi,\delta),  P([\cS],\phi,\delta)\right\} + \frac{a_0}{2}\\
	&\le P(\phi, X|_\Lambda)-a_0+ \frac{a_0}{2}\\
	&= P(\phi, X|_\Lambda)-\frac{a_0}{2}.
\end{align*}
This verifies Assumption (III) of the improved CT criterion (Theorem~\ref{t.improvedCL1}) and concludes the proof of Theorem~\ref{m.B}.

\section{The Bowen property}\label{s.bowen}
In this section we prove Theorem~\ref{t.Bowen}, namely the Bowen property on $\cG_B$. Recall that an orbit segment $(x,t)\in\cO(U)$ is in $\cG_B$ if and only 
\begin{itemize}
	\item $(x,t)$ is contained in a larger orbit segment $(x_{-t_1}, t_1+t+t_2) \in \cO(U)$ with endpoints $x_{-t_1}, x_{t+t_2}$ outside $B_{r_0}(\Sing_\Lambda(X))$. This guarantees the existence of a dominated splitting $E_N\oplus F_N$ on the normal bundle, and therefore the existence of fake foliations and the local product structure on the normal planes.
	\item  $x$ is a simultaneous forward Pliss time for the set $W_r$.
	\item  $x_t$ is a simultaneous backward Pliss time for the set $W_r$. 
\end{itemize}

\subsection{Fitting Bowen balls into Liao's tubular neighborhoods}\label{ss.key}
In this subsection we will state and prove the key result of Section~\ref{s.bowen}: if $(x,t)\in \cG_B$ then for every $y\in  B_{t,\vep_B}(x)$, $\cP_x(y)$ is $\rho$-scaled shadowed by the orbit of $x$ up to time $t$. This allows us to use the fake foliations constructed in Section~\ref{ss.fake} as a local product structure on the normal bundle of the orbit segment $(x,t)$, and use $E$ and $F$-hyperbolic times to control the contraction and expansion of $E$ and $F-$length along the orbit of $y$. 

First we fix some parameters. Recall the choice of $a>0$ and $\lambda_1$ as in~\eqref{e.lambda1}. Let $\rho_1$ be given by Proposition~\ref{p.tubular3} (1) and (2) applied to $\rho_0$. Fix $\rho\in (0,\rho_1]$, and let $\rho'$ be given by Proposition~\ref{p.tubular3} (5) applied to $\rho K_0^{-2}$ where $K_0$ is given by Proposition~\ref{p.tubular}. In particular, if $y\in \cN_{\rho'|X(x)|}(x)$ then $\tilde y_{-1}:=\cP_{-1,x_{-1}}(y)$ exists and is contained in $\cN_{\rho K_0^{-2}|X(x_{-1})|}(x_{-1})$; consequently, $\tilde y_{-1}$ is $\rho$-scaled shadowed by the orbit of  $x_{-1}$ up to time $T=2$. We also assume that $\alpha_0>0$ is taken small enough so that the construction in Section~\ref{ss.fake} and, in particular, Lemma \ref{l.Elarge}, \ref{l.Flarge} and \ref{l.hyptime.dist} hold.

\begin{proposition}[The Main Proposition]\label{p.key}
	There exists $\beta_0\in(0,1)$, such that for every $\beta\in(0,\beta_0]$, $r_0>0$ small enough and $U$ small enough, there exists $\overline r\in (0,{r_0})$, such that for every $r \in (0,\overline r]$, there exists $\vep_B>0$ such that for every  $(x,t)\in \cG_B = \cG_B(\lambda_0,\beta, r_0, W_{r })$, we have that 
	every $y\in \cP_{x}\left(B_{t,\vep_B}(x)\right)$ is $\rho$-scaled shadowed by the orbit of $x$ up to  time $t$.
\end{proposition}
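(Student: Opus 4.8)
The plan is to use the co-existence of the four Pliss times encoded in the definition of $\cG_B$ to parse the orbit segment $(x,t)$ into blocks, and then track the point $y\in\cN(x)$ block-by-block, using the local product structure furnished by the fake foliations to decompose $y$ into its $E$-length and $F$-length at each step. First I would fix the parameters as in the statement: choose $\beta_0$ small (depending only on $\eta$ from \eqref{e.hyp1}-\eqref{e.hyp2} and the $C^0$-bound $K_1^*$ from Proposition~\ref{p.tubular3}), then shrink $r_0$ and $U$ so that Lemmas~\ref{l.nbhd.1}-\ref{l.robust.hyp}, Lemma~\ref{l.Elarge}, Lemma~\ref{l.Flarge}, Lemma~\ref{l.reg}, Lemma~\ref{l.expansion.reg} and Lemma~\ref{l.hyp.fakeEleaf} all apply; then pick $\overline r$ and finally $\vep_B<\vep_{\Exp}$ small enough that every $y\in\cP_x(B_{t,\vep_B}(x))$ starts inside $\cN_{\rho K_0^{-1}|X(x)|}(x)$ and the local product structure is available along the entire orbit via Lemma~\ref{l.shadowing2}.

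The core of the argument is a bootstrap: assume inductively that $y$ has been $\rho$-scaled shadowed by the orbit of $x$ up to some time $s\le t$, so that $y^s:=\cP_{x_s}(y_s)$ is defined and lies in $\cN_{\rho|X(x_s)|}(x_s)$; I then want to show $d^E_{x_s}(y^s)$ and $d^F_{x_s}(y^s)$ are each bounded by $\tfrac{1}{4}K_0^{-1}\rho|X(x_s)|$, which by Lemma~\ref{l.shadowing2} extends the shadowing one more unit of time. The key mechanism is a \emph{dichotomy at the source and sink}: since $x$ is a $(\lambda_0,E)$-forward hyperbolic time for $(\psi_t^*)$, the $E$-length is exponentially contracted going forward (Lemma~\ref{l.hyptime.dist}, applied to the finite-orbit version); since $x_t$ is a $(\lambda_0,F)$-backward hyperbolic time, the $F$-length is exponentially contracted going backward, i.e.\ exponentially \emph{expanded} going forward. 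The recurrence Pliss conditions at $x$ and $x_t$ guarantee that whenever the orbit enters $W_r(\sigma)$, it has spent at least $\floor{\beta^{-1}}$ iterates outside $W_r$ first (Remark~\ref{r.RecPliss}), so the exponential gains from Lemmas~\ref{l.reg} and~\ref{l.expansion.reg} on the $W_r^c$-portions dominate the bounded losses incurred near singularities (Lemmas~\ref{l.Elarge} and~\ref{l.Flarge}) — this is exactly the role of the constant $b>1$ from \eqref{e.choice.b} in the almost-expansivity proof. Running the forward estimate from $x$ and the backward estimate from $x_t$ and combining them at each intermediate point $x_s$ (via the "$F$ large" / "$E$ large" bookkeeping exactly as in Section~\ref{ss.ThmA.proof}, Case 1), one concludes that both $E$- and $F$-lengths of $y^s$ stay within Liao's relative scale for all $s\in[0,t]$, closing the induction.

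The main obstacle I anticipate is the \emph{interaction between the two opposite hyperbolic estimates near singularities}. Away from $W_r$ the dominated splitting $E_N\oplus F_N$ and the uniform continuity of $D\cP_{1,x}$ make the forward/backward contraction estimates routine (Lemma~\ref{l.reg}, Lemma~\ref{l.hyptime.dist}). But inside $W_r(\sigma)$ there is no control on the length of the sojourn, and a Lorenz-type singularity expands the $F$-length while a reverse-Lorenz-type singularity expands the $E$-length; one must verify that the "large coordinate" of $y^s$ — whichever it is — never escapes Liao's tubular neighborhood. The resolution is that the large coordinate is precisely the one being expanded \emph{towards} its controlled endpoint: if $F$ is large, Lemma~\ref{l.Flarge} forces $F$ to stay large and grow, but $F$ is contracted going forward from nowhere — rather, it is the backward $F$-hyperbolicity at $x_t$ that pins it down, so one must run the estimate from the correct end. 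Making this "run from the correct end" precise — and checking that the choice of which end to use is consistent across all the singular excursions, so that the bounds on $d^E$ and $d^F$ never blow up simultaneously — is the delicate point, and it is where I would expect to spend most of the work; it mirrors, but is strictly more involved than, the corresponding step in \cite[Section 7]{PYY23} because here the orbit segment need not lie in $\Lambda$ and the fake foliations are the $(x,t)$-dependent ones of Section~\ref{sss.fakefoliation.nbhd}.
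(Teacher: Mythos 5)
Your proposal identifies the right ingredients (the four Pliss times, the $E$/$F$-length dichotomy near singularities, the fact that one must ``run from the correct end''), but the forward bootstrap organization has a genuine circularity that it does not resolve. In your inductive step, to extend shadowing past time $s$ you would need to bound $d^F_{x_s}(y^s)$, and — as you yourself note — the only control on the $F$-length comes from the $(\lambda_0,F)$-backward hyperbolic time at $x_t$; but invoking Lemma~\ref{l.hyptime.dist} (applied to $-X$) from $x_t$ to $x_s$ requires knowing that $y$ is already $\rho$-scaled shadowed on the interval $[s,t]$, which is precisely what the forward bootstrap has not yet established. Nor does the dominated splitting alone supply the missing bound: domination controls the ratio $d^E/d^F$ but not $d^F$ itself. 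So the induction does not close.

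The paper avoids this by replacing the forward bootstrap with an extremal/contradiction scheme. The outer layer (proof of Proposition~\ref{p.key} from Lemma~\ref{l.key}) takes $s=\sup\{\tau:\text{$y$ is $\rho$-scaled shadowed by $x$ up to time }\tau\}$, observes by the choice of $\vep_B$ that a failure can only occur inside some $W_r$-excursion $(T_k^i,T_k^o)$, and then branches on whether the $F$- or $E$-length dominates at $T_k^o$, treating the $E$-large case by passing to $-X$. The inner layer (Lemma~\ref{l.key}) takes the \emph{minimal} time $s$ for which $\rho$-scaled shadowing holds on $[s,t]$; because shadowing is already known on $[s,t]$, the backward $F$-hyperbolicity at $x_t$ and Lemma~\ref{l.hyptime.dist} legitimately give $d^F_{x_{t^+}}\big(\cP_{x_{t^+}}(y_{t^+})\big)\le 4\lambda_1^{-(t-t^+)}\rho_1|X(x_{t^+})|$ at the exit time $t^+=T_j^o$, and the $F$-large property (propagated via Lemma~\ref{l.Flarge1}) turns this into a bound on the full normal distance. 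One then iterates $\|D(\cP_{1,x_{\cdot}})^{-1}\|\le (K_1^*)$ backward from $t^+$ to $\roof s$, and the \emph{backward} recurrence Pliss time at $x_t$ forces $t^+-\roof s\le\tfrac{\beta}{1-\beta}(t-t^+)$, so the uncontrolled loss $(K_1^*)^{t^+-\roof s}$ is beaten by the hyperbolic gain $\lambda_1^{-(t-t^+)}$ provided $\beta\le\beta_0$ with $\beta_0$ fixed by \eqref{e.beta0'} — not by the almost-expansivity constant $b$ of \eqref{e.choice.b}, which plays no role in this argument. The resulting bound $d_{\cN(x_{\roof s})}(x_{\roof s},y_\tau)\le\frac12\rho'|X(x_{\roof s})|$ lets Proposition~\ref{p.tubular3}(5) pull the shadowing one step earlier, contradicting minimality of $s$. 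So the needed repair to your plan is structural: replace the forward induction by the two-layer minimality argument and use \eqref{e.beta0'} as the quantitative mechanism.
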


The proof of this proposition occupies the rest of Section~\ref{ss.key}.

To begin with, we fix $r_0>0$ and $U$ small enough so that Lemma~\ref{l.Elarge} and~\ref{l.Flarge} hold, and obtain $\overline r\in (0,r_0)$ and $\vep_1>0$. In particular, the choice of $\overline r$ does not depend on $\beta$ and $\beta_0$. 

Now we let 
\begin{equation}\label{e.vep2}
	\vep_B = \vep_B(r ,r_0,\rho) = \frac12 \min\left\{ \rho K_0^{-1} \inf_{z\in W_{r }^c}\{|X(z)|\},\,\,\vep_1\right\}.
\end{equation}
The reason behind this choice is similar to that of $\vep_{\Exp}$ in Section~\ref{ss.choice.vexp} (see \eqref{e.choice.vep}): if the orbit segment $(x,t)$ does not enter $W_r$, then for every $y\in B_{t,\vep_B}(x)$, $\cP_x(y)$ is $\rho$-scaled shadowed by the orbit of $x$ up to time $t$. Also note that Lemma \ref{l.reg} and~\ref{l.expansion.reg} apply to such orbit segments.

Given any $(x,t)\in\cG_B$, we parse the time interval $[0,t]$ in the same way as we did in Section~\ref{s.fakefoliation}, but keep in mind that $x,x_t\notin W_r$ (Remark \ref{r.RecPliss}):
\begin{equation}\label{e.parsing}
0< T_1^i < T_1^o < T_2^i < T_2^o <\cdots < T_N^o < t  ,
\end{equation}
such that for $k=1,\ldots, N$:
\begin{itemize}
	\item the orbit segment $(x_{T_k^i+1}, T_k^o-T_k^i-2)$ is contained in $\overline W_r$; 
	\item  the orbit segment $(x_{T_k^o}, T_{k+1}^i-T_k^o)$ is not in $W_r$;
	\item $T_1^i\ge \floor{\beta^{-1}}$ and $t - T_N^o \ge \floor{\beta^{-1}}$.
\end{itemize}
We shall further assume that $T_k^*\in\NN$, $k=1,\ldots, N$, $* = i,o$. Due to our choice of $\vep_B$, the product structure and the $E$, $F$-length of $y$ are well defined for the sub-orbit segment corresponding to the time interval $[T_k^o, T_{k+1}^i]$  for each $k$.

The next lemma is summarized from the proof of Theorem~\ref{t.exp}.

\begin{lemma}\label{l.Flarge1}
	Assume that for some $k\in[1,N]\cap\NN$, we have 
	$$
	d^F_{x_{T_k^o}}\left(\cP_{x_{T_k^o}}(y_{T_k^o})\right)\ge 	d^E_{x_{T_k^o}}\left(\cP_{x_{T_k^o}}(y_{T_k^o})\right).
	$$
	Then for every $j>k$ and $*=i,o$, one has 
	$$
	d^F_{x_{T_j^*}}\left(\cP_{x_{T_j^*}}(y_{T_j^*})\right)\ge d^E_{x_{T_j^*}}\left(\cP_{x_{T_j^*}}(y_{T_j^*})\right).
	$$
	
	Similarly, if 	
	$$
	d^E_{x_{T_k^o}}\left(\cP_{x_{T_k^o}}(y_{T_k^o})\right)\ge 	d^F_{x_{T_k^o}}\left(\cP_{x_{T_k^o}}(y_{T_k^o})\right),
	$$
	then for every $j\le k$ and $*=i,o$, one has 
	$$
	d^E_{x_{T_j^*}}\left(\cP_{x_{T_j^*}}(y_{T_j^*})\right)\ge d^F_{x_{T_j^*}}\left(\cP_{x_{T_j^*}}(y_{T_j^*})\right).
	$$
\end{lemma}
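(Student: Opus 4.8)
\textbf{Proof proposal for Lemma~\ref{l.Flarge1}.}

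The plan is to establish the first statement by induction on $j$, stepping from $T_j^*$ to the next parsing time, and then obtain the second statement by applying the first to $-X$ (keeping in mind Remark~\ref{r.symmetry}, under which $\cG_B$ for $-X$ coincides with $\cG_B$ for $X$, and the roles of $E$ and $F$, as well as of ``in'' and ``out'' times, are interchanged). So I would only treat the ``$F$ large propagates forward'' case.

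For the inductive step there are exactly two types of time intervals to cross. First, between $T_j^o$ and $T_{j+1}^i$ the orbit segment $(x_{T_j^o}, T_{j+1}^i - T_j^o)$ stays outside $W_r$; since our choice of $\vep_B$ in~\eqref{e.vep2} guarantees that every $y\in B_{t,\vep_B}(x)$ is $\rho$-scaled shadowed by the orbit of $x$ on such a segment, Lemma~\ref{l.reg}(a) applies directly and shows that if the $F$-length dominates at $x_{T_j^o}$ then it dominates (in fact exponentially) at $x_{T_{j+1}^i}$. Second, between $T_k^i$ and $T_k^o$ the orbit enters $W_r(\sigma)$ of some active singularity $\sigma$. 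If $\sigma\in\Sing^+_\Lambda(X)$ is of Lorenz-type, then Lemma~\ref{l.Flarge}(1) (which applies because $(x,t)\in\cO(U)$, the entry into $B_{r_0}(\sigma)$ is along the center direction by Lemma~\ref{l.nbhd.1}, and $r, \vep_B$ were chosen small enough) gives that $F$-dominance at $x_{T_k^i}$ passes to $F$-dominance at $x_{T_k^o}$. If instead $\sigma\in\Sing^-_\Lambda(X)$ is of reverse Lorenz-type, then one applies Lemma~\ref{l.Elarge}(1) to $-X$ — switching $E$ with $F$ and the entry/exit roles — to reach the same conclusion. Chaining these two observations from $k$ up to $j$ yields $F$-dominance at every $x_{T_j^*}$ with $j > k$, $*=i,o$; the only subtlety is to handle the half-intervals at both ends of a pass through $W_r$ (i.e., the short pieces $[T_k^i, T_k^i+1]$ and $[T_k^o-1, T_k^o]$) which are absorbed by bounded distortion since they have length $\le 1$ and the flow speed is bounded away from zero there.

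I do not expect a genuine obstacle here: this is essentially a bookkeeping lemma extracting the monotone-coordinate dichotomy already proved inside Theorem~\ref{t.exp}. The one point requiring a little care is that the fake foliations, and hence the $E$- and $F$-lengths $d^E_{x_s}(\cdot), d^F_{x_s}(\cdot)$, must be well-defined at each $x_{T_j^*}$ and along each sub-orbit-segment used; this is exactly why $(x,t)$ is required to sit inside a larger segment $(x_{-t_1}, t_1+t+t_2)\in\cO(U)$ with endpoints outside $B_{r_0}(\Sing_\Lambda(X))$ (so Lemma~\ref{l.robust.dom.spl} and~\ref{l.robust.hyp} furnish the splitting and the fake foliation charts of Section~\ref{sss.fakefoliation.nbhd}), and why $\vep_B$ in~\eqref{e.vep2} forces $\rho$-scaled shadowing on each excursion outside $W_r$ so that the local product structure is available at every relevant point. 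Once these are in place, the induction closes and the second statement follows by symmetry, completing the proof.
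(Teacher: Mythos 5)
Your proof is correct and takes essentially the same route the paper sketches: the authors explicitly omit the proof, remarking only that one should repeatedly apply Lemma~\ref{l.Flarge} (and Lemma~\ref{l.Elarge} applied to $-X$ for reverse Lorenz singularities) on the passes through $W_r$, apply Lemma~\ref{l.reg} on the excursions outside $W_r$, and note that these lemmas need only the dominated splitting on $\cO(U)$ furnished by Lemma~\ref{l.robust.dom.spl} rather than $x\in\Lambda$. Your induction, your treatment of the two singularity types via the $X\leftrightarrow -X$ symmetry, and your remark that the second statement is the time-reversed version of the first are exactly what the paper has in mind.
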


The proof is omitted. One only need to repeatedly use Lemma~\ref{l.Elarge}, \ref{l.Flarge} for orbit segments inside $W_r$, and use Lemma~\ref{l.reg} outside $W_r$. Note that these lemmas do not require $x\in\Lambda$; instead we only need the dominated splitting for orbit segments in $\cO(U)$, namely Lemma \ref{l.robust.dom.spl}.

Now we fix $\beta_0$ sufficiently close to zero, such that 
\begin{equation}\label{e.beta0'}
	\left((K_1^*)^{\frac{\beta_0}{1-\beta_0}}\lambda_1^{-1}\right)^{\beta_0^{-1}-1}\rho_1<\frac{1}{16}\rho', 
\end{equation}
where $K_1^*$ is given by Proposition~\ref{p.tubular3}. Here~\eqref{e.beta0'} is possible because the base satisfies 
$$
(K_1')^{\frac{\beta_0}{1-\beta_0}}\lambda_1^{-1}<1
$$
as long as $\beta_0$ is close to zero, meanwhile $(\beta_0^{-1}-1)$ can be made arbitrarily large. Then we fix any $\beta\in(0,\beta_0]$. Below we shall prove that Proposition~\ref{p.key} holds with the choice of parameters describe above.

The following lemma is the crucial step in the proof of Proposition~\ref{p.key}.
\begin{lemma}[The Key Lemma]\label{l.key}
	Under the assumptions of Proposition~\ref{p.key}, assume in addition that for some $k\in [1,N]\cap\NN$, 
	$$
	d^F_{x_{T_k^o}}\left(\cP_{x_{T_k^o}}(y_{T_k^o})\right)\ge 	d^E_{x_{T_k^o}}\left(\cP_{x_{T_k^o}}(y_{T_k^o})\right).
	$$
	Then $\cP_{x_{T_k^i}}(y_{T_k^i})$ is $\rho$-scaled shadowed by the orbit of $x_{T_k^i}$ up to time $t-T_k^i$.
\end{lemma}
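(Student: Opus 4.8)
The proof is a ``no first failure'' bootstrap. Set $z=\cP_{x_{T_k^i}}(y_{T_k^i})$; since $x_{T_k^i}\notin W_r$ the flow speed at $x_{T_k^i}$ is bounded below by $\inf_{W_r^c}|X|>0$, so by the choice of $\vep_B$ in \eqref{e.vep2} together with Proposition~\ref{p.Fx} the point $z$ is well defined, lies inside Liao's relative tubular neighbourhood of $x_{T_k^i}$, and has $d^E_{x_{T_k^i}}(z)$ and $d^F_{x_{T_k^i}}(z)$ both bounded by a fixed multiple of $\vep_B$. Assume for contradiction that the $\rho$-scaled shadowing of $z$ along the orbit of $x_{T_k^i}$ fails for the first time at some $s^\ast\in(0,t-T_k^i]$, and put $\tau^\ast=T_k^i+s^\ast$. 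If $x_{\tau^\ast}\in W_r^c$, then on $[0,s^\ast)$ the projected point agrees with $\cP_{x_{(\cdot)}}(y_{(\cdot)})$, which stays within a fixed multiple of $\vep_B$ of the orbit of $x$; as $|X(x_{\tau^\ast})|$ is bounded below this keeps the projection strictly inside $\cN_{\rho|X(x_{\tau^\ast})|}(x_{\tau^\ast})$, so $\tau^\ast$ is not a failure time. Hence $x_{\tau^\ast}$ lies in $W_r(\sigma_j)$ for some $j\ge k$, i.e.\ $s^\ast$ occurs during the $j$-th passage of the orbit through $W_r$, and it is this possibility that must be excluded.

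To do so I would estimate the $E$- and $F$-lengths of the projection along $[T_k^i,\tau^\ast]$, subdividing at the times $T_\ell^i,T_\ell^o$. The hypothesis that $F$ dominates $E$ at $x_{T_k^o}$ propagates, by Lemma~\ref{l.Flarge1}, to every $x_{T_\ell^\ast}$ with $\ell>k$, and an elementary analysis of the first passage $[T_k^i,T_k^o]$ (separating $\sigma_k\in\Sing^+_\Lambda(X)$ from $\sigma_k\in\Sing^-_\Lambda(X)$ and invoking Lemma~\ref{l.Elarge}, resp.\ Lemma~\ref{l.Flarge} for $-X$) puts us, throughout $[T_k^i,t]$, in the regime where the $F$-coordinate is the larger one at every $x_{T_\ell^\ast}$. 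On a regular stretch $[T_\ell^o,T_{\ell+1}^i]\subset W_r^c$ I would use Lemma~\ref{l.reg}(a) to keep $F$ dominant, Lemma~\ref{l.hyptime.dist} with scaled contraction rate \eqref{e.F.dist.cont} to contract the $E$-length over the $(\lambda_0,E)$-forward hyperbolic times of $(x,t)$ (which exist by item~(2) of the definition of $\cG_B$ and Lemma~\ref{l.hyptime}, applied to the normal-bundle splitting guaranteed by item~(1) of that definition), and Lemma~\ref{l.expansion.reg} to bound the $E$-length on the remaining short stretches with loss at most $L_X(K_1')^{T_V}$. On a passage $[T_\ell^i,T_\ell^o]\subset W_r(\sigma_\ell)$ I would use Lemma~\ref{l.Elarge}(2) and Lemma~\ref{l.Flarge} for the $E$- and $F$-lengths and Lemma~\ref{l.Elarge}(3) for the scaled shadowing across the passage (in the $F$-dominant case this last point rests on the same local analysis, whose validity uses the Lyapunov-exponent inequality \eqref{e.sing.Lya} to ensure that the forward expansion of the $F$-coordinate over the first half of a passage is overwhelmed by the collapse of $|X|$). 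Running the symmetric computation backwards from $t$, started from the $\vep_B$-closeness of $y_t$ to $x_t\notin W_r$, controls $d^F$ the same way, and Lemma~\ref{l.shadowing} finally patches the partial scaled shadowings on the regular stretches and the passages into scaled shadowing on all of $[T_k^i,t]$.

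The decisive bookkeeping, and the place where $\beta_0$ is used, is this: over $[T_k^i,\tau^\ast]$ the $E$-length is multiplied by at most $\lambda_1^{-1}<1$ per unit of regular time and by at most $(K_1^\ast)^{\#\{\text{unit intervals spent in }W_r\}}$ per passage; since $x$ is a $(\lambda_0,E,\beta,W_r)$-simultaneous forward Pliss time with $\beta\le\beta_0$, the fraction of time spent in $W_r$ up to any stage is $\le\beta_0$, so the two effects combine into a per-block factor $\le(K_1^\ast)^{\beta_0/(1-\beta_0)}\lambda_1^{-1}<1$ over blocks of length $\ge\beta_0^{-1}-1$, and \eqref{e.beta0'} is exactly the inequality forcing the $E$-length to stay below $\tfrac1{16}\rho'|X(x_s)|$, hence strictly inside $\cN_{\rho|X(x_s)|}(x_s)$. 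Together with the backward bound on $d^F$ this contradicts the failure at $s^\ast$. I expect the main obstacle to be, as in \cite{PYY23}, the control of $d^E$ and $d^F$ across the passages through $W_r$, where the linear estimates \eqref{e.hyp}, \eqref{e.hyp1} and \eqref{e.hyp2} are unavailable because $|X|$ collapses: this is carried entirely by Lemmas~\ref{l.Elarge}, \ref{l.Flarge} and \ref{l.Flarge1}, and hinges on calibrating $\beta_0$ through \eqref{e.beta0'} so that the bounded-but-possibly-large distortion $K_1^\ast$ incurred over $W_r$-passages is beaten by the exponential $E$-contraction accumulated over the long regular stretches that the recurrence Pliss property guarantees.
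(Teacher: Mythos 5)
Your proposal takes a genuinely different route, and it has a structural gap that the paper's formulation is specifically designed to avoid.

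The paper's proof is a backward minimality argument, not a forward ``first failure'' bootstrap. It sets $s$ to be the \emph{minimum} starting time $\tau$ such that the projected orbit of $y$ is $\rho$-scaled shadowed by $x_\tau$ all the way to $t$. The crucial point is that this minimality \emph{automatically} provides scaled shadowing on $[\roof{s},t]$, hence on $[T_j^o,t]$ where $T_j^o$ is the exit of the passage containing $s$; that shadowing is exactly what Lemma~\ref{l.hyptime.dist} (for $-X$, backward hyperbolic time at $x_t$) needs in order to contract $d^F$ from $x_t$ back to $x_{T_j^o}$. Combined with the $F$-dominance propagated forward by Lemma~\ref{l.Flarge1}, this bounds the full normal distance at $x_{T_j^o}$ by $2d^F$; a \emph{single} operator-norm pullback by $(K_1^*)^{T_j^o-\roof{s}}$ gives the distance at $x_{\roof{s}}$; and the backward recurrence Pliss time at $x_t$ caps the exponent by $T_j^o-\roof{s}\le\frac{\beta}{1-\beta}(t-T_j^o)$ with $t-T_j^o\ge\beta^{-1}-1$ (Remark~\ref{r.RecPliss}). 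Then~\eqref{e.beta0'} closes: the distance at $\roof{s}$ is below $\tfrac12\rho'|X(x_{\roof{s}})|$, so one can push back one more integer step, contradicting the minimality of $s$. Notably, $d^E$ is never estimated, and the forward hyperbolic/recurrence structure at $x$ is never used inside this lemma at all — it is only invoked in Proposition~\ref{p.key} when the lemma is applied to $-X$ in the $E$-dominant case.

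Your forward bootstrap from $T_k^i$ has a circularity problem at the heart of it. At the first failure time $\tau^*$ you want to bound both $d^E_{x_{\tau^*}}$ and $d^F_{x_{\tau^*}}$. The $d^E$ bound (Lemma~\ref{l.hyptime.dist} with forward hyperbolic time at $x$) uses the scaled shadowing on $[T_k^i,\tau^*)$, which your bootstrap hypothesis provides. But the $d^F$ bound (Lemma~\ref{l.hyptime.dist} for $-X$ with backward hyperbolic time at $x_t$) needs scaled shadowing of the $y$-orbit by $x_{\tau^*}$ on the interval \emph{forward} to $t$, i.e.\ on $[\tau^*,t]$ — which is what you are trying to establish. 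Your suggested ``symmetric computation backwards from $t$'' would need its own bootstrap, and the two bootstraps have no reason to overlap, so Lemma~\ref{l.shadowing} cannot patch them. The paper avoids this entirely by choosing $s$ to be the minimal \emph{successful} starting time: shadowing on $[s,t]$ then comes for free, and the only thing to verify is that one more step back still works. Secondarily, your description of where $\beta_0$ enters conflates two bookkeepings: the scaled hyperbolic-time contraction of $d^E$ at rate $\lambda_1^{-1}$ per unit time already accounts for passages through $W_r$ (it is built from $\psi_1^*$ over the whole interval), so there is no accumulated ``$(K_1^*)^{\#\text{unit intervals in }W_r}$'' correction to $d^E$; in the paper the $K_1^*$ factor arises once, from the pullback of the total normal distance across the single passage $[\roof{s},T_j^o]$, and the backward recurrence Pliss property is what makes that pullback subordinate to the $F$-contraction.
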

Note that the conclusion is stated for $T_k^i<T_k^o$ instead of $T_k^o$. 

Before proving the lemma, we first demonstrate how to obtain Proposition~\ref{p.key} from Lemma~\ref{l.key}. 

\begin{proof}[Proof of Proposition~\ref{p.key}, assuming Lemma~\ref{l.key}]

Let 
$$
s = \sup\big\{\tau\in[0,t]: \mbox{ $y$ is $\rho$-scaled shadowed by $x$ up to time $\tau$}\big\}.
$$
Then $s\ge T_1^i>0$. Furthermore, our choice of $\vep_B$ means that $s\in (T_k^i,T_k^o)$ for some $k\in [1,N]\cap\NN$. 

Now consider the following two cases: 

\medskip 
\noindent Case 1.	$$
d^F_{x_{T_k^o}}\left(\cP_{x_{T_k^o}}(y_{T_k^o})\right)\ge 	d^E_{x_{T_k^o}}\left(\cP_{x_{T_k^o}}(y_{T_k^o})\right).
$$

In this case, Lemma~\ref{l.key} shows that $\cP_{x_{T_k^i}}(y_{T_k^i})$ is $\rho$-scaled shadowed by the orbit of $x_{T_k^i}$ up to time $t-T_k^i$. On the other hand, our choice of $s$ means that $y$ is $\rho$-scale shadowed by the orbit of $x$ up to time $s \ge  T_k^i$. Then by Lemma~\ref{l.shadowing} we see that $y$ is $\rho$-scale shadowed by the orbit of $x$ up to time $t$, which is a contradiction.

\medskip 
\noindent Case 2. $$
d^E_{x_{T_k^o}}\left(\cP_{x_{T_k^o}}(y_{T_k^o})\right)\ge 	d^F_{x_{T_k^o}}\left(\cP_{x_{T_k^o}}(y_{T_k^o})\right).
$$

By Lemma~\ref{l.Flarge1}, we have
\begin{equation}\label{l.6.1.11}
d^E_{x_{T_k^i}}\left(\cP_{x_{T_k^i}}(y_{T_k^i})\right)\ge 	d^F_{x_{T_k^i}}\left(\cP_{x_{T_k^i}}(y_{T_k^i})\right).
\end{equation}
Now we consider $-X$.  Note that the assumptions of Proposition~\ref{p.key} holds for $-X$ with the same parameters and the orbit segment collection $\cG_B$ (although the time is reversed on each orbit segment). Furthermore, \eqref{l.6.1.11} means that for $-X$, the $F$-length of $y_{_{T_k^i}}$ is larger than its $E$-length (note that $T_k^i$, when considering $-X$, is the $(n-k+1)$th time that the orbit {\em leaves} $W_r$). This allows us to apply Lemma~\ref{l.key} for $-X$ to conclude that the orbit of $y$ is $\rho$-scaled shadowed by $x$ for the time interval $[T_k^i,T_k^o]$ (which, for $-X$, is the $(n-k+1)$th visit to $W_r$). In particular, Lemma~\ref{l.shadowing} shows that $y$ is $\rho$-scale shadowed by the orbit of $x$ up to time $T_k^o$. This contradicts with the maximality of  $s\in (T_k^i, T_k^o)$.

We conclude the proof of Proposition~\ref{p.key} assuming Lemma~\ref{l.key}.

\end{proof}

Now it remains to prove Lemma~\ref{l.key}.

\begin{proof}[Proof of Lemma~\ref{l.key}]	
	For the sake of contradiction, assume that there exists an orbit segment $(x,t)\in\cG_B$  such that for some $y\in \cP_x(B_{t,\vep_B}(x))$,  one has 	$$
	d^F_{x_{T_k^o}}\left(\cP_{x_{T_k^o}}(y_{T_k^o})\right)\ge 	d^E_{x_{T_k^o}}\left(\cP_{x_{T_k^o}}(y_{T_k^o})\right).
	$$
	However, $\cP_{x_{T_k^i}}(y_{T_k^i})$ is not $\rho$-scaled shadowed by the orbit of $x_{T_k^i}$ up to time $t-T_k^i$.
	
	We use Lemma~\ref{l.Flarge1} to conclude that when $i\ge k,$ 
	\begin{equation}\label{e.Flarge3}
	d^F_{x_{T_i^o}}\left(\cP_{x_{T_i^o}}(y_{T_i^o})\right)\ge d^E_{x_{T_i^o}}\left(\cP_{x_{T_i^o}}(y_{T_i^o})\right). \hspace{0.5cm} \mbox{($F$ large on the forward orbit)}
	\end{equation}

	For contradiction's sake, we define
	\begin{align*}
		s=\min\{\tau\in[0,t]:  \exists \tilde s\in(0,t) \mbox{ such that } y_{\tilde s} \in \cN_{\rho_0|X(x_\tau)|}(x_\tau) \\
		\mbox{ is $\rho$-scaled shadowed by $x_\tau$ up to time $t-\tau$}\}. 
	\end{align*}
	We must have $s\in ( T_{j}^i, T_{j}^o)$ for some $j\in [k,N]\cap\NN$. Furthermore, if we choose $\tau>0$ such that  
	$$
	y_\tau = f^{-X}_{\tau^{-X}_{\tilde x,\tilde y}(\floor{\tilde s})}(\tilde y)\in\cN_{\rho_0|X(x_{\roof{s}})|}(x_{\roof{s}}),\footnote{The purpose of this discussion is to avoid using $\cP_{x_s}(y_s)$ which may not be well-defined since $x_s$ may be very close to a singularity.}
	$$
	then the previous discussion shows that $y_\tau$ is $\rho$-scaled shadowed by the orbit of $x_{\roof{s}}$ up to time $t-\roof{s} = \floor {\tilde s}$.

Let
$$
s_0: = \roof{s} -1 \in\NN,
$$

Below we will prove that 
$$
y_{\tau}\in \cN_{\rho'|X(x_{\roof{s}})|}(x_{\roof{s}})
$$
where $\rho'$ is given by Proposition~\ref{p.tubular3} (5) applied to $\rho K_0^{-2}$; this shows that  $\cP_{1,x_{s_0}}^{-1}\left(y_{\tau }\right)$ exists and is contained in $\cN_{\rho K_0^{-2}|X(x_{s_0})|}(x_{s_0}); 
$
this, together with Lemma~\ref{l.shadowing2}, will give the desired contradiction.

To simplify notation, we write 
$$
t^+ = T_j^o \in (s,t).
$$
Since $t^+>s$, the orbit of $\cP_{x_{t^+}}(y_{t^+})$ is $\rho$-scaled shadowed by the orbit of $x_{t^+}$ up to the  time $t-t^+$. Also recall that $x_t$ is a $(\lambda_0, F)$-backward hyperbolic time for the orbit segment $(x,t)$ and the scaled linear Poincar\'e flow $(\psi_t^*)$. This allows us to apply Lemma~\ref{l.hyptime.dist} with $j=t^+$
and obtain
\begin{align*}
	d_{x_{t^+}}^F\left(\cP_{x_{t^+}}(y_{t^+})\right)&\le4 \frac{|X(x_{t^+})|}{|X(x_t)|}\lambda_1^{-(t-t^+)}d_{x_{t}}^F\left(\cP_{x_{t}}(y_{t})\right)\\
	&\le4  \frac{|X(x_{t^+})|}{|X(x_t)|}\lambda_1^{-(t-t^+)}\vep_B\\
	&\le 4\frac{|X(x_{t^+})|}{|X(x_t)|}\lambda_1^{-(t-t^+)}\rho_1|X(x_t)|\\
	&=4\lambda_1^{-(t-t^+)}\rho_1|X(x_{t^+})|.
\end{align*}
Here the second line is due to  $\cP_{x_{t}}(y_{t})\in \cN_{\vep_B}(x_t), $ and the next line is due to the choice of $\vep_B$; see~\eqref{e.vep2} and recall that $x_t\notin W_r$.
In particular, by~\eqref{e.Flarge3} we have 
\begin{align*}
	d_{\cN(x_{t^+})}\left(x_{t^+}, \cP_{x_{t^+}}(y_{t^+}) \right) &\le d_{x_{t^+}}^F\left(\cP_{x_{t^+}}(y_{t^+})\right) + d_{x_{t^+}}^E\left(\cP_{x_{t^+}}(y_{t^+})\right)\\
	&\le 2 d_{x_{t^+}}^F\left(\cP_{x_{t^+}}(y_{t^+})\right)\\
	\numberthis\label{e.dist}	&\le 8\lambda_1^{-(t-t^+)}\rho_1|X(x_{t^+})|. 
\end{align*}

To estimate the change of distance on the normal plane along the orbit segment between the times $\roof{s}$ and $t^+$, recall that $K^*_1>1$ is the upper-bound of $|D(P^*_{1,z})^{-1}|$ for all $z\notin \Sing(X)$ given by Proposition~\ref{p.tubular3} (4). Then we have  
\begin{equation}\label{e.Dsmall}
	\|D(\cP_{(t^+-\roof{s}), x_{\roof{s}}})^{-1}\|\le \frac{|X(x_{\roof{s}})|} {|X(x_{t^+})|}(K_1^*)^{t^+-\roof{s}}. 
\end{equation}
Combining~\eqref{e.dist} and~\eqref{e.Dsmall}, we obtain
\begin{align*}
	d_{\cN(x_{\roof{s}})}\left(x_{\roof{s}}, y_{\tau }\right) &\le
	\frac{|X(x_{\roof{s}})|} {|X(x_{t^+})|}(K_1^*)^{t^+-\roof{s}} \cdot d_{\cN(x_{t^+})}\left(x_{t^+}, \cP_{x_{t^+}}(y_{t^+}) \right) \\
	&\le 8\frac{|X(x_{\roof{s}})|} {|X(x_{t^+})|}(K_1^*)^{t^+-\roof{s}}\lambda_1^{-(t-t^+)}\rho_1|X(x_{t^+})|\\
	\numberthis\label{e.dist1}	&=8(K_1^*)^{t^+-\roof{s}}\lambda_1^{-(t-t^+)}\rho_1|X(x_{\roof{s}})|.
\end{align*}

On the other hand, since $x_t$ is a $\beta$-recurrence Pliss time for the orbit segment $(x,t)$, we have 
$$
\frac{t^+-\roof{s}}{t-\roof{s}}<\beta,
$$
which leads to 
\begin{equation}\label{e.smalltime}
	t^+-\roof{s}\le \frac{\beta}{1-\beta} (t-t^+). 
\end{equation}
Combining~\eqref{e.smalltime} with~\eqref{e.dist1}, we see that 
\begin{align*}
	d_{\cN(x_{\roof{s}})}\left(x_{\roof{s}}, y_{\tau }\right) &\le 8\left((K_1^*)^{\frac{\beta}{1-\beta}}\right)^ {(t-t^+)}\lambda_1^{-(t-t^+)}\rho_1|X(x_{\roof{s}})|\\
	&=8\left((K_1^*)^{\frac{\beta}{1-\beta}}\lambda_1^{-1}\right)^ {(t-t^+)}\rho_1|X(x_{\roof{s}})|\\
	&	\le\frac12 \rho'|X(x_{\roof{s}})|,
\end{align*}
where the last inequality follows from Equation~\eqref{e.beta0'} and the observation that 
$$
t-t^+>\beta^{-1}-1
$$
which is due to Remark~\ref{r.RecPliss}. As a result of Lemma~\ref{p.tubular3} (5),  the point 
$$
\overline y:=\cP_{1,x_{s_0}}^{-1}\left(y_{\tau }\right) \in \cP_{1,x_{s_0}}^{-1}\left(\cN_{\rho'|X(x_{\roof{s}})|}(x_{\roof{s}})\right)
$$ 
exists and is contained in $\cN_{\rho K_0^{-2}|X(x_{s_0})|}(x_{s_0})$. By Proposition~\ref{p.tubular} and Remark~\ref{r.shadowing}, $\overline y$ is $\rho$-scaled shadowed by the orbit of $x_{s_0}$ up to  time $\tau=2$. Recall that $y_\tau$ is $\rho$-scaled shadowed by the orbit of $x_{\roof{s}}$ up to  time $t-\roof{s}$. Applying Lemma~\ref{l.shadowing}, we see that $\overline y$ is indeed $\rho$-scaled shadowed by the orbit of $x_{s_0}$ up to  time $t-s_0$. Because $s_0<s$, this contradicts the minimality of $s$ (i.e., the maximality of $\tilde s$), and concludes the proof of Lemma~\ref{l.key}. 
\end{proof}

Now the proof of Proposition~\ref{p.key} is complete.

\subsection{The Bowen property}
In this section, we use Proposition~\ref{p.key} to prove Theorem~\ref{t.Bowen}.

Note that our definition of hyperbolic times only control iterates at integer times. To deal with this, we use the following  lemma.

\begin{lemma}\label{l.holder}\cite[Lemma 7.7]{PYY23}
	Let $\phi$ be a H\"older continuous function with H\"older index $\gamma$. Then $\Phi_0(x,1) = \int_{0}^1 \phi(x_s)\,ds$, as a function of $x$, is also H\"older continuous with H\"older index $\gamma$.
\end{lemma}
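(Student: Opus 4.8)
\textbf{Proof proposal for Lemma~\ref{l.holder}.}

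The plan is to estimate the difference $|\Phi_0(x,1) - \Phi_0(y,1)|$ directly in terms of $d(x,y)$. Write
$$
\Phi_0(x,1) - \Phi_0(y,1) = \int_0^1 \bigl(\phi(f_s x) - \phi(f_s y)\bigr)\,ds,
$$
so that
$$
|\Phi_0(x,1) - \Phi_0(y,1)| \le \int_0^1 |\phi(f_s x) - \phi(f_s y)|\,ds.
$$
Since $\phi$ is H\"older with index $\gamma$ and constant $C_\phi$, the integrand is bounded by $C_\phi\, d(f_s x, f_s y)^\gamma$. The first step, therefore, is to control $d(f_s x, f_s y)$ for $s\in[0,1]$ by a constant multiple of $d(x,y)$. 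This is where the smoothness of the flow enters: the time-$s$ maps $f_s$ for $s$ in the compact interval $[0,1]$ are uniformly Lipschitz on the compact manifold $\bM$, with a common Lipschitz constant $L := \sup_{s\in[0,1]}\operatorname{Lip}(f_s) < \infty$ (for a $C^1$ flow on a compact manifold, $\sup_{s\in[0,1]}\|Df_s\|$ is finite by continuity and compactness of $\bM\times[0,1]$; one can equivalently invoke $e^{L_X|s|}$ bounds via Gronwall, with $L_X = \sup\|DX\|$ as in \eqref{e.lip}). Hence $d(f_s x, f_s y) \le L\, d(x,y)$ for all $s\in[0,1]$.

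Substituting this bound gives
$$
|\Phi_0(x,1) - \Phi_0(y,1)| \le \int_0^1 C_\phi\, \bigl(L\, d(x,y)\bigr)^\gamma\,ds = C_\phi\, L^\gamma\, d(x,y)^\gamma,
$$
which shows that $x\mapsto \Phi_0(x,1)$ is H\"older continuous with the same index $\gamma$ and H\"older constant $C_\phi L^\gamma$. This completes the argument.

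There is no real obstacle here; the only point requiring a word of care is the uniform Lipschitz bound on $\{f_s\}_{s\in[0,1]}$, but this is standard for $C^1$ flows on compact manifolds and can be quoted without further ado. The essential content is simply that averaging a H\"older function over a uniformly Lipschitz family of time-shifts preserves the H\"older index, at the cost of an explicit multiplicative change in the constant. (One could just as well state the lemma for $\Phi_0(x,\tau)$ with any fixed $\tau>0$; the constant then becomes $C_\phi (\sup_{s\in[0,\tau]}\operatorname{Lip}(f_s))^\gamma \cdot \tau$, but only $\tau=1$ is needed in the sequel, to pass from continuous time to the discrete cocycle over $f_1$.)
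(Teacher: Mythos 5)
Your proof is correct, and this is essentially the only sensible argument: the paper itself defers to~\cite[Lemma 7.7]{PYY23} and does not reproduce a proof, but the cited lemma is proved exactly this way — bound $|\phi(f_s x)-\phi(f_s y)|$ by the H\"older modulus, then use the uniform Lipschitz constant of the time-$s$ maps on $[0,1]$ (which exists by compactness of $\bM\times[0,1]$, or equivalently via Gr\"onwall with $L_X=\sup\|DX\|$) to pull $d(x,y)^\gamma$ outside the integral. The explicit constant $C_\phi L^\gamma$ and your closing remark about generalizing to $\Phi_0(x,\tau)$ are both accurate.
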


\begin{proof}[Proof of Theorem~\ref{t.Bowen}]
	We take the parameters as in Proposition~\ref{p.key} and obtain $\vep_B>0$. 
	Let $(x,t)\in\cG_B$ and $y\in B_{t, \vep_B}(x)$, and recall that $t\in \NN$.
	To simplify notations,  we shall assume that $y\in \cN(x)$; this results in an error of $\Phi_0(y,t)$ of no more than $\|g\|_{C^0}$, which is independent of $(x,t)$ and $y$.

	By Proposition~\ref{p.key}, $y$ is $\rho$-scaled shadowed by the orbit of $x$ up to  time $t$.  In particular, there exists a strictly increasing continuous function $\tau_{x,y}(\cdot)$ that is differentiable (see Lemma~\ref{l.tau}) w.r.t.\,$y$ with $\tau_{x,y}(0)=0$ such that 
	$$
	y_{\tau_{x,y}(s)} \in \cN_{\rho|X(x_s)|}(x_s), \forall s\in[0,t]. 
	$$
	To simplify notation, for the moment we shall drop the sub-indices and write $\tau(s)=\tau_{x,y}(s)$. 
	Then for $i\in [0,t]\cap \NN$, the points $y_{\tau(i)}^j, j=E,F$ are well-defined and satisfy $y_{ \tau(i)} = [y_{ \tau(i)}^E,y_{ \tau(i)}^F]$. Furthermore, by Equation~\eqref{e.Px2} we have 
	$$
	\cP_{x_t}(y_t) = y_{ \tau(t)}. 
	$$
		
	Now let $\phi$ be any H\"older continuous function with H\"older index $\gamma\in (0,1)$. First we estimate $|\Phi_0(x,t) - \Phi_0(y,\tau(t))|$:\footnote{Here we slightly abuse notation and let $\Phi_0(y,t) = -\int^0_{t} g(y_s)\,ds$ if $t<0$. }
	\begin{align*}
			&	|\Phi_0(x,t) - \Phi_0(y,\tau(t))|\\
		\numberthis\label{e.step1}	\le \,&\sum_{i=0}^{t-1} \left| \Phi_0(x_i,1) - \Phi_0\left(y_{ \tau(i)}, \tau(i+1) -  \tau(i)\right)\right|.
	\end{align*}
	To control each summand on the right-hand side, we write
	\begin{align*}
		&\left|\Phi_0(x_i,1) - \Phi_0\left(y_{ \tau(i)}, \tau(i+1) -  \tau(i)\right)\right|\\ &\le  \left|\Phi_0(x_i,1) - \Phi_0(y_{\tau(i)}, 1)\right| + \left| \int_{1}^{\tau(i+1)-\tau(i)} \phi(y_s)\,ds\right|\\
		&\le  \left|\Phi_0(x_i,1) - \Phi_0(y_{\tau(i)}^F, 1)\right| +   \left|\Phi_0(y_{\tau(i)}^F,1) - \Phi_0(y_{\tau(i)}, 1)\right| \\
		&\quad + \|\phi\|_{C^0}\cdot  \left|\tau(i+1)-\tau(i)-1\right|\\
		&= I + II + III.
	\end{align*}

	To estimate I, we use the fact that $x_t$ is a $(\lambda_0,F)$-backward hyperbolic time for the orbit segment $(x,t)$ under the scaled linear Poincar\'e flow. By Lemma~\ref{l.hyptime.dist} (applied to $-X$) , for all $i\in [0,t]\cap \NN$:
	\begin{align*}
		d(x_i, y_{\tau(i)}^F) \le d_{\cN(x_i)}(x_i, y_{\tau(i)}^F) &\le \frac{|X(x_i)|}{|X(x_t)|} c_2\lambda_1^{i-t} \cdot d_{\cN(x_t)}(x_t, y_{\tau(t)}^F)\\
		&\le \frac{|X(x_i)|}{|X(x_t)|} c_2\lambda_1^{i-t}\cdot\rho |X(x_t)|\\
		\numberthis\label{e.FF}	&=c_2\rho\lambda_1^{i-t}|X(x_i)|. 
	\end{align*} 
	Consequently, by Lemma~\ref{l.holder} ($c_1$ is the H\"older constant of $\Phi_0(x,1)$):
	\begin{align*}
		\left|\Phi_0(x_i,1) - \Phi_0(y_{\tau(i)}^F, 1)\right| &\le c_1  d(x_i,y_{\tau(i)}^F)^\gamma \le c_1c_2^\gamma \rho^\gamma \left(\lambda_1^\gamma\right)^{i-t} \sup_{z\in\bM}\{|X(z)|\}^\gamma\\
		\numberthis\label{e.xyF}	&= c_3 \left(\lambda_1^\gamma\right)^{i-t},
	\end{align*}
	where the constant $c_3$ is independent of $x$ and $t$.
	
	To estimate II, we use Lemma~\ref{l.hyptime.dist} again and the assumption that $x$ is a $(\lambda_0,E)$-forward hyperbolic time for the orbit segment $(x,t)$ and $\psi^*$ to obtain, for  all $i\in[0,t]\cap\NN$:
	\begin{align*}
		d(y_{ \tau(i)}^F,y_{ \tau(i)}) \le d_{\cN(x_i)}(y_{ \tau(i)}^F,y_{ \tau(i)}) &\le \frac{|X(x_i)|}{|X(x)|} c_4\lambda_1^{-i} \cdot d_{\cN(x)}( y_{\tau(0)}^E, y_{\tau(0)})\\
		&\le  \frac{|X(x_i)|}{|X(x)|} c_4\lambda_1^{-i} \cdot\rho |X(x)|\\
		\numberthis\label{e.EE}	&=c_4\rho\lambda_1^{-i} |X(x_i)|.
	\end{align*} 
	As a result, we have 
	\begin{align*}
		\left|\Phi_0(y_{\tau(i)}^F,1) - \Phi_0(y_{\tau(i)}, 1)\right| &\le c_1 d(y_{\tau(i)}^F,y_{\tau(i)})^\gamma  \le c_1 c_4^\gamma \rho^\gamma \left(\lambda_1^\gamma \right)^{-i} \sup_{z\in\bM}\{|X(z)|\}^\gamma \\
		\numberthis\label{e.yFy}	&= c_5  \left(\lambda_1^\gamma \right)^{-i},
	\end{align*}
	where the constant $c_5$ does not depend on $x$ or $t$.
	
	We are left with III. First, note that 
	$$
	\tau_{x,y}(i+1) - \tau_{x,y}(i)  = \tau_{x_i,y_{\tau(i)}}(1).  
	$$
	Also, we have 
	$$
	 \tau_{x_i,x_{i}}(1) = 1.
	$$
	Then we apply Lemma~\ref{l.tau} to obtain
	\begin{align*}
		&\quad \left|\tau_{x,y}(i+1)-\tau_{x,y}(i)-1\right|\\ 
		&= \left| \tau_{x_i,y_{\tau(i)}}(1) - \tau_{x_i,x_i}(1)   \right|\\
		&\le  \sup\left\{\left\|D_z \tau_{x_i,z}(1)\right\| : z\in\cN_{\rho_0K_0^{-1}|X(x_i)|}(x_i) \right\}\cdot d_{\cN(x_i)}(y_{\tau(i)}, x_i)\\
		&\le \frac{1}{|X(x_i)|} K_\tau\cdot d_{\cN(x_i)}(y_{\tau(i)}, x_i)\\
		&\le \frac{1}{|X(x_i)|} K_\tau\left(d_{\cN(x_i)}(x_i,y_{\tau(i)}^F) + d_{\cN(x_i)}(y_{\tau(i)}^F,y_{\tau(i)}) \right)\\
		&\le  \frac{1}{|X(x_i)|} K_\tau\left(c_2\rho \lambda_1^{i-t} + c_4\rho \lambda_1^{-i}\right)|X(x_i)|\\
		\numberthis\label{e.III}	&= c_6 \rho\lambda_1^{i-t} + c_7\rho \lambda_1^{-i},
	\end{align*}
	where the sixth line follows from~\eqref{e.FF} and~\eqref{e.EE}. 
	
	Collecting~\eqref{e.xyF},~\eqref{e.yFy} and~\eqref{e.III}, we have 
	\begin{align*}
		& \left|\Phi_0(x_i,1) - \Phi_0\left(y_{ \tau(i)}, \tau(i+1) -  \tau(i)\right)\right|\\
		&\le  c_3 \left(\lambda_1^\gamma\right)^{i-t}+c_5  \left(\lambda_1^\gamma \right)^{-i}+\rho\|\phi\|_{C^0}\left( c_6 \lambda_1^{i-t} + c_7 \lambda_1^{-i}\right),
	\end{align*} 
	where all the involved constants are independent of $x$ and $t$. 
	
	Summing over $i\in [0,t]\cap \NN$, we finally obtain
	\begin{align*}
		&\quad|\Phi_0(x,t) - \Phi_0(y,\tau(t))|\\
		&\le  \|\phi\|_{C^0} +\sum_{i=0}^{t-1} \left(c_3 \left(\lambda_1^\gamma\right)^{i-t}+c_5  \left(\lambda_1^\gamma \right)^{-i}+\rho\|\phi\|_{C^0}\left( c_6 \lambda_1^{i-t} + c_7 \lambda_1^{-i}\right)\right)\\
		&\le c_8,
	\end{align*}
	for some constant $c_8>0$ that depends on $\phi$ but not on $(x,t)\in\cG_B$ or $y\in B_{t,\vep_B}(x)$.

	Next we consider $\left|\Phi_0(y,\tau(t))-\Phi_0(y,t)\right|$. We write
	\begin{align*}
		\left|\Phi_0(y,\tau(t))-\Phi_0(y,t)\right|&\le \|\phi\|_{C^0}\cdot|\tau(t)-t|\\
		&\le \|\phi\|_{C^0}\cdot\sum_{i=0}^{t-1} |\tau(i+1) - \tau(i)-1|.
	\end{align*}
	The right-hand side is precisely III estimated earlier. We conclude from Equation \eqref{e.III} that 
	$$
	\left|\Phi_0(y,\tau(t))-\Phi_0(y,t)\right|\le  \rho\|\phi\|_{C^0}\cdot\sum_{i=0}^{t-1} \left( c_6 \lambda_1^{i-t} + c_7 \lambda_1^{-i}\right) \le c_9
	$$
	with a constant $c_9$ that does not depend on $(x,t)$ or $y$. 
	
	In summary, we have $\left|\Phi_0(y,t)-\Phi_0(y,t)\right|\le c_8+c_9$ and conclude that $\cG_B$ has the Bowen property at scale $\vep_B$ for every H\"older continuous function $\phi$, finishing the proof of Theorem~\ref{t.Bowen}.

\end{proof}

Indeed the estimate on III is interesting in its own right. We summarize this as the following proposition.
\begin{proposition}\label{p.timedrift}
	For a $C^1$ vector field $X$, let $\overline \rho_0$ be given by Proposition \ref{p.Fx}, and $\lambda>1$. Then there exists $C>0$ with the following property:
	
	\noindent Assume that $(x,t)$ is an orbit segment satisfying: 
	\begin{enumerate}
		\item The normal bundle of $(x,t)$ has a dominated splitting $E_N\oplus F_N$ in the sense of \eqref{e.dom.spl1}.
		\item $x$ is a $(\lambda,E)-$forward hyperbolic time for $(\psi^*_t)$ and the orbit segment $(x,t).$
		\item $x_t$ is a $(\lambda,F)-$backward hyperbolic time for $(\psi^*_t)$ and the orbit segment $(x,t).$
	\end{enumerate}
	Then, for every $\rho\in(0,\overline \rho_0/3)$ and every $y\in \cN_{\rho|X(x)|}(x)$ that is $\rho-$scaled shadowed by the orbit of $x$ up to time $t$, we have 
	$$
	|\tau_{x,y}(t)-t|\le C\rho. 
	$$
	In particular, $\tau_{x,y}(t)-t$ converges to zero as $\rho\to 0$, uniformly in $x,y$ and $t$.
\end{proposition}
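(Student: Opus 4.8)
The plan is to extract Proposition~\ref{p.timedrift} directly from the estimate labeled \eqref{e.III} in the proof of Theorem~\ref{t.Bowen}, since that estimate is precisely a bound on $|\tau_{x,y}(i+1)-\tau_{x,y}(i)-1|$ in terms of the $E$- and $F$-lengths at time $i$, and those lengths are controlled geometrically by the hyperbolic-time hypotheses via Lemma~\ref{l.hyptime.dist}. First I would set up the fake foliation chart and local product structure on the normal planes along $(x,t)$ as in Section~\ref{ss.fake} and Section~\ref{sss.fakefoliation.nbhd}: hypothesis (1) (the dominated splitting \eqref{e.dom.spl1}) is exactly what is needed to construct $\cF^{E,(x,t)}_{x_s,\cN}$ and $\cF^{F,(x,t)}_{x_s,\cN}$, so that for $y$ which is $\rho$-scaled shadowed by $x$ up to time $t$, the decomposition $y_{\tau(i)}=[y_{\tau(i)}^E,y_{\tau(i)}^F]$ makes sense for every integer $i\in[0,t]$, where $\tau(i):=\tau_{x,y}(i)$.

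Next I would invoke Lemma~\ref{l.hyptime.dist} twice, exactly as in the proof of Theorem~\ref{t.Bowen}. Using hypothesis (3), that $x_t$ is a $(\lambda,F)$-backward hyperbolic time (apply the lemma to $-X$), one gets $d_{\cN(x_i)}(x_i,y_{\tau(i)}^F)\le c_2\rho\,\lambda_1^{\,i-t}|X(x_i)|$ as in \eqref{e.FF}, where $\lambda_1=\lambda/(a+1)>1$ for $a$ small, matching the constant from \eqref{e.lambda1}. Using hypothesis (2), that $x$ is a $(\lambda,E)$-forward hyperbolic time, one gets $d_{\cN(x_i)}(y_{\tau(i)}^F,y_{\tau(i)})\le c_4\rho\,\lambda_1^{-i}|X(x_i)|$ as in \eqref{e.EE}. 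Adding the two and applying Lemma~\ref{l.tau} (the bound \eqref{e.Ktau} on $\|D_z\tau_{x_i,z}(1)\|$, with the factor $1/|X(x_i)|$ cancelling the flow-speed factor), one obtains, exactly as in \eqref{e.III},
\begin{equation*}
\left|\tau_{x,y}(i+1)-\tau_{x,y}(i)-1\right|\le K_\tau\left(c_2\rho\,\lambda_1^{\,i-t}+c_4\rho\,\lambda_1^{-i}\right).
\end{equation*}

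Finally I would sum over $i\in[0,t-1]\cap\NN$. The two geometric series $\sum_{i\ge 0}\lambda_1^{-i}$ and $\sum_{i\ge0}\lambda_1^{\,i-t}=\sum_{j\ge1}\lambda_1^{-j}$ are each bounded by $\lambda_1/(\lambda_1-1)$, a constant depending only on $X$ and $\lambda$. Telescoping the left side gives $|\tau_{x,y}(t)-t|=\left|\sum_{i=0}^{t-1}(\tau(i+1)-\tau(i)-1)\right|\le \sum_{i=0}^{t-1}|\tau(i+1)-\tau(i)-1|\le C\rho$ with $C:=K_\tau(c_2+c_4)\lambda_1/(\lambda_1-1)$, which is independent of $x$, $y$ and $t$; since $c_2,c_4,K_\tau$ all come from $C^1$-robust constants (Remark~\ref{r.Ktau.robust}, Proposition~\ref{p.tubular3}), $C$ may also be chosen uniformly in a $C^1$-neighborhood if desired, though that is not claimed here. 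Letting $\rho\to0$ gives the ``in particular'' statement. There is no real obstacle here — the content is a bookkeeping extraction of the already-proven \eqref{e.III}; the only point requiring a word of care is that Lemma~\ref{l.hyptime.dist} in its stated form requires $y$ to be $\rho$-scaled shadowed by $x$ up to time $t$, which is exactly our hypothesis, so unlike inside the proof of Theorem~\ref{t.Bowen} we do not need Proposition~\ref{p.key} to establish shadowing first — it is assumed. One should also note that the non-$\Lambda$ version of Lemma~\ref{l.hyptime.dist} (the last sentence of its statement) is what applies when $(x,t)\in\cO(U)\setminus\Lambda\times\RR^+$, and hypothesis (1) here plays the role of ``Lemma~\ref{l.hyptime}, Assumption (1)'' in providing the bundles and fake foliations.
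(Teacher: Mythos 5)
Your proposal is correct and is exactly the argument the paper intends: the paper states only that "the proof is skipped," remarking that the role of the recurrence Pliss times in the Bowen-property argument was solely to establish $\rho$-scaled shadowing (via Proposition~\ref{p.key}), which you have here taken as a hypothesis, and that one then reads off the bound from the telescoped estimate \eqref{e.III}. You have also correctly flagged the two points the paper's remark emphasizes — that shadowing is now assumed rather than derived, and that the $1/|X(x_i)|$ factor from Lemma~\ref{l.tau} is what forces the use of $(\psi^*_t)$ rather than $(\psi_t)$ — so there is nothing to add beyond the caveat (present in the paper's own statement) that strictly one needs $\rho$ below the $\rho_1$ of Lemma~\ref{l.hyptime.dist}, not merely $\rho<\overline\rho_0/3$, for the constants $c_2,c_4$ to come from the uniform-continuity scale; this is a harmless gloss on the stated range of $\rho$.
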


The proof is skipped. Just note that when controlling $(III)$ earlier, the recurrence Pliss times are only used to obtain that $y$ is $\rho-$scaled shadowed by $x$ (Proposition \ref{p.key}). Also note that $(\psi_t^*)$ cannot be replaced by $(\psi_t)$, the (unscaled) linear Poincar\'e flow due to the estimate of $\|D\tau\| \le C/|X(x)|$ leading to \eqref{e.III} .

We conclude this section with the following remark concerning the proof of the Bowen property.

\begin{remark}\label{r.Bowen}
	The proof of Theorem~\ref{t.Bowen} does not involve Assumption (B) of Theorem~\ref{m.B}, namely all periodic orbits in $\Lambda$ are homoclinically related. 
\end{remark}

\section{Specification}\label{s.spec}
In this section, we shall assume that Assumption (B) of Theorem~\ref{m.B} holds. In particular, all periodic orbits in $\Lambda$ are homoclinically related to one another. 
We will prove the specification property on $\cG_S$, namely Theorem~\ref{t.spec}. There are two main ingredients:
\begin{itemize}
	\item the infinite hyperbolic times introduced in Section~\ref{ss.infinitePliss}; the invariant manifolds of those points have non-empty transversal intersections with corresponding invariant manifolds of a given hyperbolic periodic orbit;
	\item characterizing ``good measures'' that assign small masses to a given neighborhood $W$ of $\Sing_\Lambda(X)$; those measures have plenty of infinite hyperbolic times (Lemma~\ref{l.measure.hyp}) 
\end{itemize}
Using these ingredients, we shall build transversal intersections between fake leaves of orbits in $\cG_S$ with the invariant manifolds of a given hyperbolic periodic orbits $\gamma$. Here the one should note that orbit segments in $\cG_S$ are {\em finite} hyperbolic times, and we will show later that they must appear near infinite hyperbolic times. Furthermore, we will show that those transversal intersections happen at a uniform scale. This will allow us to use $\gamma$ as a ``bridge'' (as we did in~\cite{PYY23} although the proof there is much easier) to create the shadowing orbit. Furthermore, the proof here allows us to establish specification for orbit segments in a neighborhood of $\Lambda$ that are finite hyperbolic times, as those orbits must also start and end near infinite hyperbolic times in $\Lambda$.

\subsection{Transversal intersection at infinite hyperbolic times away from singularities: proof of Proposition~\ref{p.transversal}}\label{ss.transversal}

The goal of this subsection is to prove Proposition~\ref{p.transversal}: there exist compact sets $K_W^*$ with $*=E,F$, away from $\Sing_\Lambda(X)$, such that the stable (resp. unstable) manifold of  points in $K_W^E$ (resp. $K_W^F$) have non-empty transversal intersection with the unstable (resp. stable) manifold of a given periodic orbit. 

Let $W\subset B_{r_0}(\Sing_\Lambda(X))$ be any isolating open neighborhood of $\Sing_\Lambda(X)$ (for the moment, think of $W$ as the union of $W_r$ over all singularities in $\Lambda$, with proper choices of $r$ and $r_0$). We define, for $*=E,F:$
$$
\Lambda^*_W(\lambda_0) : \Lambda^*(\lambda_0)\cap W^c,
$$
where $\Lambda^*(\lambda_0)$ are infinite hyperbolic times defined in Section~\ref{ss.infinitePliss}. 
Then it follows from Proposition \ref{p.hyptime.cpt} that $\Lambda^*_W(\lambda_0)$ are compact, and points in $\Lambda^E_W(\lambda_0)$ are away from singularities. As a result, their stable manifolds have diameter at least 
$$
\rho_W := \rho_\textit{inv}(\lambda_0)\cdot \inf_{y\in \Lambda \cap W^c} |X(y)|>0, 
$$
where $\rho_\textit{inv}$ is given by Proposition~\ref{p.inv.mld}.

Consider the following family of measures:
\begin{equation}\label{e.def.cM}\begin{split}
		\cM_{\beta_1, W}& = \{\mu: \mu \mbox{ is invariant {for $f_1$}, and } \mu({W})< \beta_1\} \\
		&= \{\mu:\mu\left({W^c}\right)\ge 1- \beta_1\}.
	\end{split}
\end{equation}

Since $W$ is open, we see that $\cM_{\beta_1, W}$ is compact under the weak-$^*$ topology.

\begin{lemma}\label{l.measure.hyp}
	For any $\mu\in \cM_{\beta_1, W}$, we have 
	$$
	\mu(\Lambda^*_W(\lambda_0)) > \theta_0 - 2\beta_1.
	$$
\end{lemma}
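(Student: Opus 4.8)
\textbf{Proof plan for Lemma~\ref{l.measure.hyp}.}
The plan is to combine the density estimate for infinite hyperbolic times (Proposition~\ref{p.hyptime.density}) with the definition of $\cM_{\beta_1,W}$ and elementary measure theory. First I would recall that $\Lambda^*_W(\lambda_0) = \Lambda^*(\lambda_0)\cap W^c$, so that
$$
\mu(\Lambda^*_W(\lambda_0)) \ge \mu(\Lambda^*(\lambda_0)) - \mu(W).
$$
By the defining property of $\cM_{\beta_1,W}$ we have $\mu(W)<\beta_1$, so the task reduces to bounding $\mu(\Lambda^*(\lambda_0))$ from below by $\theta_0-\beta_1$.

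The bound $\mu(\Lambda^*(\lambda_0))\ge \theta_0-\beta_1$ should be obtained from Proposition~\ref{p.hyptime.density}, but with care: that proposition is stated for ergodic invariant measures of $f_1$ that are not point masses of singularities, and it yields $\mu(\Lambda^*(\lambda_0))\ge\theta_\lambda=\theta_0$ (Remark~\ref{r.lambda0}). For a general $\mu\in\cM_{\beta_1,W}$ I would use the ergodic decomposition $\mu = \int \mu_\omega \, d\mathbb P(\omega)$ into ergodic components for $f_1$. Since $W$ is a neighborhood of $\Sing_\Lambda(X)$, the point masses $\delta_\sigma$ with $\sigma\in\Sing_\Lambda(X)$ satisfy $\delta_\sigma(W)=1$; the set $Z$ of ergodic components that \emph{are} such point masses therefore has $\mu$-measure $\mathbb P(Z) = \mu\big(\bigcup_{\sigma}\{\sigma\}\big)\le \mu(W)<\beta_1$. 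For every ergodic component $\mu_\omega$ with $\omega\notin Z$, Proposition~\ref{p.hyptime.density} gives $\mu_\omega(\Lambda^E(\lambda_0))\ge\theta_0$ and $\mu_\omega(\Lambda^F(\lambda_0))\ge\theta_0$. Integrating,
$$
\mu(\Lambda^*(\lambda_0)) \ge \int_{\omega\notin Z} \mu_\omega(\Lambda^*(\lambda_0))\, d\mathbb P(\omega) \ge \theta_0\,\mathbb P(Z^c) = \theta_0(1-\mathbb P(Z)) > \theta_0(1-\beta_1) \ge \theta_0-\beta_1,
$$
using $\theta_0\le 1$. Here one should note $\Lambda^*(\lambda_0)$ is measurable (it is a Borel set, being the countable intersection over $j\in\NN$ of the Borel sets where the finite hyperbolic-time inequalities hold), so the ergodic decomposition applies.

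Combining the two displays yields
$$
\mu(\Lambda^*_W(\lambda_0)) \ge \mu(\Lambda^*(\lambda_0)) - \mu(W) > (\theta_0-\beta_1) - \beta_1 = \theta_0 - 2\beta_1,
$$
which is the claimed bound, for both $*=E$ and $*=F$. The only mild subtlety — and the step I would be most careful about — is the handling of the point-mass ergodic components: one must make sure that the portion of $\mu$ carried by singularities is exactly the obstruction already controlled by $\mu(W)<\beta_1$, and that no other ergodic component fails the hypotheses of Proposition~\ref{p.hyptime.density} (they cannot, since any non-point-mass ergodic measure on $\Lambda$ is regular by Remark~\ref{r.hyp.measure}, hence covered). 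Everything else is a routine integration against the ergodic decomposition.
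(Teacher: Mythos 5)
Your proof is correct and uses essentially the same strategy as the paper: ergodic decomposition with respect to $f_1$, Proposition~\ref{p.hyptime.density} applied to the non-singular ergodic components, and the observation that the singular point-mass components carry total $\hat\mu$-weight $\mu(\Sing_\Lambda(X))\le\mu(W)<\beta_1$. Your bookkeeping is actually cleaner than the paper's: by first establishing $\mu(\Lambda^*(\lambda_0))\ge\theta_0-\beta_1$ and then subtracting $\mu(W)<\beta_1$, you avoid the paper's further subdivision of the non-singular ergodic components into $\cM^e_{\beta_1,W}$ and $\cM^e_1$ according to whether $\nu(W)<\beta_1$ --- a split which, as your argument shows, is not needed. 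The two small points you flagged (measurability of $\Lambda^*(\lambda_0)$ as a countable intersection, and the identification $\mathbb P(Z)=\sum_\sigma\mu(\{\sigma\})$, which holds because $\{\sigma\}$ is $f_1$-invariant and $\delta_\sigma$ is the unique ergodic measure giving it full mass) are exactly the right things to check and are handled correctly.
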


\begin{proof}
Recall that $W\subset B_{r_0}(\Sing_\Lambda(X))$ is an isolating open neighborhood of $\Sing_\Lambda(X)$.	Given $\mu\in \cM_{\beta_1, W}$, consider the ergodic decomposition of $\mu$ with respect to the time-one map $f_1$ (here we view $\mu(\nu)$ as a measure on $\cM^e$, the set of ergodic invariant measures for $f_1$):
	\begin{align*}
	\mu & = \int_{\cM^e} \nu \,d\mu(\nu) \\
\numberthis \label{e.7.2a}		& = \int_{\cM^e_{\beta_1,W}} \nu \,d\mu(\nu) + \int_{\cM^e_{\Sing}} \nu \,d\mu(\nu) + \int_{\cM^e_{1}} \nu \,d\mu(\nu)  ,
	\end{align*}
	where \begin{itemize}
		\item $\cM^e_{\beta_1,W} =\cM_{\beta_1, W} \cap \cM^e$; measures here satisfy, by Proposition \ref{p.hyptime.density}, 
		$$
		\nu(W)< \beta_1, \mbox{ and } \nu(\Lambda^*(\lambda_0))\ge \theta_0;
		$$		
		\item $\cM_{\Sing}^e$ consists of the point masses of singularities; if $\nu \in \cM_{\Sing}^e$, we have $\nu(\Lambda^*_W(\lambda_0)) = 0$ and 
		$$
		\int_{\cM_{\Sing}^e} 1\, d\mu(\nu) \le \mu(\Sing_\Lambda(X)) <\beta_1;
		$$
		\item $\cM^e_{1} = \cM^e\setminus\left(\cM^e_{\beta_1,W} \cup\cM_{\Sing}^e\right)$; here we have  $\nu(\Lambda^*(\lambda_0))\ge \theta_0$ and so 
		$$
		\nu(\Lambda_W^*(\lambda_0))\ge \theta_0 - \nu(W).
		$$
	\end{itemize}
	
	Combining \eqref{e.7.2a}, Proposition~\ref{p.hyptime.density} and the discussion above, we obtain
	\begin{align*}
	&	\mu(\Lambda^*_W(\lambda_0))\\
		 & = \int_{\cM^e_{\beta_1,W}} \nu((\Lambda^*_W(\lambda_0))) \,d\mu(\nu) + \int_{\cM^e_{1}} \nu((\Lambda^*_W(\lambda_0))) \,d\mu(\nu)\\
		& \ge  \int_{\cM^e_{\beta_1,W}} (\theta_0 - \beta_1) \,d\mu(\nu) + \int_{\cM^e_{1}} (\theta_ 0 - \nu(W)) \,d\mu(\nu)\\
		& \ge \int_{\cM^e_{\beta_1,W}} (\theta_0-\beta_1) \,d\mu(\nu)  +  \int_{\cM^e_{1}} (\theta_ 0 - \beta_1) \,d\mu(\nu)  -\int_{\cM^e_{1}} (\nu(W)) \,d\mu(\nu) \\  
		& = \int_{\cM^e} (\theta_0-\beta_1) \,d\mu(\nu) - \int_{\cM^e_\Sing} (\theta_0-\beta_1) \,d\mu(\nu)-\int_{\cM^e_{1}} (\nu(W)) \,d\mu(\nu) \\ 
		&\ge \theta_0 - \beta_1 - \left(\int_{\cM_{\Sing}^e} 1\,d\mu(\nu) +  \int_{\cM^e_{1}}\nu(W) \,d\mu(\nu) \right)\\
		&\ge \theta_0 - \beta_1 - \mu(W) \ge \theta_0-2\beta_1. 
	\end{align*}
\end{proof}

Recall that for points in $\Lambda^E_W(\lambda_0)$, the existence of the local stable manifold (on the normal plane) at scale $\rho_W$ is given by Proposition~\ref{p.inv.mld}. 
For $0<\iota\le  \rho_W$ and $x\in \Lambda^E_W(\lambda_0)$, we denote by $W_{\iota, \cN}^s(x)\subset \cN_{\rho_0|X(x)|}(x)$ the stable manifold at $x$ with diameter $\iota$.

\begin{lemma}\label{l.measure.intersect}
	For every isolating open neighborhood $W$ of $\Sing_\Lambda(X)$, every $\delta>0$ sufficiently small and every ergodic invariant regular measure $\mu$, there exists a subset $\tilde\Lambda^E_W(\mu,\lambda_0)\subset \Lambda^E_W(\lambda_0)$ satisfying  $\mu(\tilde\Lambda^E_W(\mu,\lambda_0)) = \mu(\Lambda^E_W(\lambda_0))$, such that for every $x\in \tilde\Lambda^E_W(\mu,\lambda_0)$, 
	there exists a hyperbolic periodic orbit $\gamma_x$ such that  
	$$
	W_{{\delta/100}, \cN}^s(x)\pitchfork W^u(\gamma_x)\ne\emptyset. 
	$$
	A similar conclusion holds for $x\in \tilde\Lambda^F_W(\lambda_0)$ by considering $-X$.
\end{lemma}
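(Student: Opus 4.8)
\textbf{Proof proposal for Lemma~\ref{l.measure.intersect}.}

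The plan is to combine three ingredients: (1) the closing-lemma philosophy which produces a nearby periodic orbit $\gamma_x$ from a recurrent regular orbit, exploiting the uniform hyperbolicity of the scaled linear Poincar\'e flow at the infinite hyperbolic time $x\in\Lambda^E_W(\lambda_0)$; (2) the fact that $x$, being a $(\lambda_0,E)$-forward infinite hyperbolic time, carries a local stable manifold $W^s_{\rho_W,\cN}(x)$ of uniform relative size given by Proposition~\ref{p.inv.mld}; (3) Poincar\'e recurrence applied to the measure $\mu$ and the set $\Lambda^E_W(\lambda_0)$ (which is $\mu$-measurable since it is the intersection of a compact set with an invariant set) to guarantee that $\mu$-a.e.\ point of $\Lambda^E_W(\lambda_0)$ returns to itself — and, more importantly, returns to $\Lambda^F_W(\lambda_0)$ under the backward flow — infinitely often.

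First I would define $\tilde\Lambda^E_W(\mu,\lambda_0)$ to be the set of $x\in\Lambda^E_W(\lambda_0)$ that are Birkhoff typical for $\mu$ (under $f_1$), that lie in the support of $\mu$, and whose forward and backward orbits each hit $\Lambda^F_W(\lambda_0)\cap\Lambda^E_W(\lambda_0)$ with positive frequency — by Lemma~\ref{l.measure.hyp} applied to $\mu$ (which satisfies $\mu(W)<\beta_1$ since $\mu$ is a regular ergodic measure on $\Lambda$ and one may assume $W$ was chosen small, or else the statement is vacuous/handled by a separate remark) together with the ergodic theorem, both $\Lambda^E_W(\lambda_0)$ and $\Lambda^F_W(\lambda_0)$ have $\mu$-measure at least $\theta_0-2\beta_1>0$, so by ergodicity and Poincar\'e recurrence this set of ``good'' points has full measure in $\Lambda^E_W(\lambda_0)$. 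For such an $x$, pick a backward return time $T>0$ large with $x_{-T}\in\Lambda^F_W(\lambda_0)$ and $x_{-T}$ close to some point of $\supp\mu$; now $x_{-T}$ has an unstable manifold $W^u_{\rho_W,\cN}(x_{-T})$ of uniform relative size (Proposition~\ref{p.inv.mld} for $-X$), and $x$ has a stable manifold of uniform relative size. Using Liao's tubular neighborhood and the domination/hyperbolicity along the long orbit segment $(x_{-T},T)$ — which by the definitions of $\Lambda^E_W$ and $\Lambda^F_W$ is a simultaneous $(\lambda_0,E)$-forward and $(\lambda_0,F)$-backward hyperbolic time — the forward iterate of $W^u_{\rho_W,\cN}(x_{-T})$ crosses $W^s_{\delta/100,\cN}(x)$ transversally once $T$ is large enough (standard $\lambda$-lemma / inclination argument in the scaled normal coordinates, since the unstable disk gets expanded and flattened along $F_N$ while the stable disk is nearly tangent to $E_N$, and the two cone fields are transversal). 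Then a closing argument — shadowing the pseudo-orbit that goes along $(x_{-T},T)$ and closes up using this transversal intersection, which is possible because of the uniform hyperbolicity along the segment — produces a hyperbolic periodic orbit $\gamma_x$ passing $\delta$-close to $x$, whose unstable manifold contains (a continuation of) the transversal intersection point, hence $W^s_{\delta/100,\cN}(x)\pitchfork W^u(\gamma_x)\ne\emptyset$.

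I expect the main obstacle to be the closing argument in the singular, non-uniformly hyperbolic setting: one cannot invoke a uniform hyperbolic closing lemma, because the orbit segment $(x_{-T},T)$ may pass close to singularities and the flow speed degenerates there. The resolution is to work entirely with the \emph{scaled} linear Poincar\'e flow $(\psi^*_t)$ and Liao's scaled tubular neighborhoods: along $(x_{-T},T)$ we have genuine exponential contraction on $E_N$ in forward time and on $F_N$ in backward time (with uniform rate $\lambda_0$, from the infinite-hyperbolic-time property), and the fake foliations of Section~\ref{ss.fake} together with Lemma~\ref{l.hyptime.dist} and Lemma~\ref{l.hyp.fakeEleaf} give the required contraction estimates \emph{relative to the flow speed}; the scaled shadowing property (Definition~\ref{d.shadow1}, Lemma~\ref{l.shadowing}) then lets one build and close the pseudo-orbit. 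A second, more technical point is measurability and the ``full measure'' claim: one must check $\Lambda^E_W(\lambda_0)$ and $\Lambda^F_W(\lambda_0)$ are Borel (they are, being countable intersections of closed conditions on $(\psi^*_t)$ intersected with $W^c$), and that the typical/recurrent points form a conull subset of $\Lambda^E_W(\lambda_0)$, which is immediate from the ergodic theorem and Poincar\'e recurrence once positivity of the relevant measures is established via Lemma~\ref{l.measure.hyp}. The corresponding statement for $\tilde\Lambda^F_W(\mu,\lambda_0)$ follows verbatim by replacing $X$ with $-X$ and swapping the roles of $E_N$ and $F_N$ (Remark~\ref{r.symmetry}, Remark~\ref{r.inf.hyp}).
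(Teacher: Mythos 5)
Your overall strategy — combine Poincar\'e recurrence / ergodicity with the existence of uniform‐relative‐size invariant manifolds at infinite hyperbolic times, and then close a recurrent pseudo-orbit in Liao's scaled tubular-neighborhood framework — is the same as the paper's, which invokes Liao's Shadowing Lemma (Lemma~\ref{l.Liao.shadowing}) to package the closing argument you sketch. However, there are two genuine gaps in your setup.

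First, you implicitly rely on $\Lambda^E_W(\lambda_0)\cap\Lambda^F_W(\lambda_0)$ having positive $\mu$-measure (your ``good'' points are those whose orbits visit this intersection with positive frequency). Lemma~\ref{l.measure.hyp} only gives $\mu(\Lambda^*_W(\lambda_0))\ge\theta_0-2\beta_1$ for each $*$ separately; there is no reason the intersection is non-null when $\theta_0-2\beta_1\le 1/2$. The paper circumvents this by ergodicity: there exists $\tau_0\ge 0$ with $\mu\big(f_{\tau_0}(\Lambda^F_W)\cap\Lambda^E_W\big)>0$, and then one accelerates the flow to $Y=\tau X$ with $\tau\gg\tau_0$ so that points in $f_{\tau_0}(\Lambda^F_W)$ become, for $Y$, genuine $(\lambda,F)$-backward infinite hyperbolic times. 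Without the $\tau_0$-shift and the acceleration trick your good set may well be empty.

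Second, your configuration of hyperbolic times is reversed and does not actually produce a closing. You take $x\in\Lambda^E_W$ (a forward hyperbolic time, hence a local stable manifold) and flow \emph{backward} to some $x_{-T}\in\Lambda^F_W$ (a backward hyperbolic time, hence a local unstable manifold), and then consider the forward image $f_T\big(W^u_{\rho_W,\cN}(x_{-T})\big)$. But since $f_T(x_{-T})=x$, this forward image is just an unstable disk \emph{through $x$}, so its intersection with $W^s_{\delta/100,\cN}(x)$ at the point $x$ is the trivial one; it yields no periodic orbit. Moreover, for the segment $(x_{-T},T)$ you would need the \emph{endpoint} to be an $F$-backward hyperbolic time and the \emph{start} to be an $E$-forward hyperbolic time, but you have it the other way round. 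What is needed instead is a genuine forward Poincar\'e return of a single point $y\in f_{\tau_0}(\Lambda^F_W)\cap\Lambda^E_W$ to within $\tilde\delta$ of itself, so that the segment $(y,t_y)$ starts at an $E$-forward hyperbolic time, ends at an $F$-backward hyperbolic time (after acceleration), and has $d(y,y_{t_y})$ small; Lemma~\ref{l.Liao.shadowing} then delivers the hyperbolic periodic orbit $\gamma$ together with the transversal intersection $W^s_{\iota_y,\cN}(y)\pitchfork W^u(\gamma)$, with $\iota_y=2Ld(y,y_{t_y})$ as small as you wish. Finally, you also need the transfer step for a general $x\in\Lambda^E_W$: flow $x$ forward by a time $t_x$ to hit $y\in f_{\tau_0}(\Lambda^F_W)\cap\Lambda^E_W$, choose $\vep$ (hence $\tilde\delta$, hence $\iota_y$) so small that $W^s_{\iota_y,\cN}(y)\subset\cP_{t_x,x}\big(W^s_{\delta,\cN}(x)\big)$, and pull the intersection back by invariance. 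Your proposal stops one step short of that.
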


\begin{remark}
	In Lemma \ref{l.measure.intersect} we do not need $\mu(W)$  to be small.
\end{remark}

For $C^{1+\alpha}$ diffeomorphisms and non-singular flows, it is well-known that invariant manifolds (in the sense of Pesin) of typical points of any hyperbolic measure have transversal intersections with some hyperbolic periodic orbit (Katok's Shadowing Lemma). For singular star flows that are only $C^1$, the proof uses Liao's shadowing lemma and can be found in Appendix \ref{s.A2}.

The following lemma summarizes the construction so far and immediately leads to  Proposition~\ref{p.transversal}.

\begin{lemma}\label{l.cpt.intersect}
	For every isolating open neighborhood $W$, every $\delta>0$ sufficiently small and every hyperbolic  periodic orbit $\gamma\in\Lambda$, there exists a compact subset $K^E_W\subset \Lambda^E_W(\lambda_0)$ and a constant $d_0>0$ with the following properties:
	\begin{enumerate}
		\item for every $x\in K^E_W$, $W_{\delta/8,\cN}^s(x)\pitchfork W^{u}_{d_0/2}(\gamma)\ne\emptyset$;
		\item for every $\mu\in\cM_{\beta_1, W}$, $\mu\left(\Lambda^E_W(\lambda_0) \setminus K^E_W\right) < \beta_1$; consequently, we have  $\mu(K^E_W)\ge \theta_0-3\beta_1$.
	\end{enumerate}
\end{lemma}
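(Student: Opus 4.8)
The plan is to deduce Lemma~\ref{l.cpt.intersect} from Lemma~\ref{l.measure.hyp} and Lemma~\ref{l.measure.intersect} by a standard ``compact exhaustion under a weak-$^*$ compact family of measures'' argument, combined with a uniformization of the scale at which the transversal intersection occurs. First I would fix the hyperbolic periodic orbit $\gamma\in\Lambda$ (which exists by Assumption (B)) and recall that, by Assumption (B), all periodic orbits in $\Lambda$ are pairwise homoclinically related; in particular every $\gamma_x$ produced by Lemma~\ref{l.measure.intersect} is homoclinically related to $\gamma$. Using the inclination (lambda-)lemma, a transversal intersection $W^s_{\delta/100,\cN}(x)\pitchfork W^u(\gamma_x)$ can be upgraded to a transversal intersection $W^s_{\delta/8,\cN}(x)\pitchfork W^u(\gamma)$ with the unstable manifold of the fixed orbit $\gamma$ itself: iterate forward so that the $\gamma_x$-unstable leaf accumulates on $W^u(\gamma)$, then transport back along the stable manifold of $x$ (whose size is controlled by $\rho_W$ since $x\in\Lambda^E_W(\lambda_0)$ stays outside $W$). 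This step also produces, for each such $x$, a definite size $d_0=d_0(x)>0$ of the piece of $W^u(\gamma)$ that is used.

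Next I would run the measure-theoretic exhaustion. For each $n\in\NN$ let $d_0=2^{-n}$ and consider the set $A_n$ of points $x\in\Lambda^E_W(\lambda_0)$ for which $W^s_{\delta/8,\cN}(x)\pitchfork W^u_{d_0/2}(\gamma)\ne\emptyset$. By the continuity of stable and unstable manifolds (the stable manifolds of points in $\Lambda^E_W(\lambda_0)$ vary continuously at uniform relative size by Proposition~\ref{p.inv.mld}, and $\Lambda^E_W(\lambda_0)$ is compact by Proposition~\ref{p.hyptime.cpt}), each $A_n$ has compact closure $\overline{A_n}\subset\Lambda^E_W(\lambda_0)$ on which the transversal intersection persists (a transversal intersection is an open condition, and the relevant manifolds depend continuously on the base point), and $A_n$ increases to a set of full $\mu$-measure inside $\tilde\Lambda^E_W(\mu,\lambda_0)$ for every ergodic regular $\mu$, by Lemma~\ref{l.measure.intersect}. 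I would then argue that, given the weak-$^*$ compact family $\cM_{\beta_1,W}$, there is a single $n$ (hence a single $d_0$) with $\mu(\Lambda^E_W(\lambda_0)\setminus \overline{A_n})<\beta_1$ simultaneously for all $\mu\in\cM_{\beta_1,W}$: if not, one extracts $\mu_n\in\cM_{\beta_1,W}$ with $\mu_n(\Lambda^E_W(\lambda_0)\setminus\overline{A_n})\ge\beta_1$, passes to a weak-$^*$ limit $\mu_\infty\in\cM_{\beta_1,W}$, uses that $\Lambda^E_W(\lambda_0)\setminus\overline{A_n}$ is a decreasing sequence of open-in-$\Lambda^E_W$ sets (up to closure) with empty intersection modulo the ergodic regular part, and combines with the ergodic decomposition as in Lemma~\ref{l.measure.hyp} to reach $\mu_\infty(\Lambda^E_W(\lambda_0)\setminus\bigcap_n\overline{A_n})>0$, contradicting Lemma~\ref{l.measure.intersect} applied to the ergodic components of $\mu_\infty$ (after handling the point masses of singularities, which carry no mass on $\Lambda^E_W(\lambda_0)$ anyway). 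Setting $K^E_W:=\overline{A_n}$ for this $n$ and $d_0:=2^{-n}$ gives (1), and (2) follows by combining $\mu(\Lambda^E_W(\lambda_0)\setminus K^E_W)<\beta_1$ with Lemma~\ref{l.measure.hyp}'s bound $\mu(\Lambda^E_W(\lambda_0))>\theta_0-2\beta_1$, yielding $\mu(K^E_W)>\theta_0-3\beta_1$. The case of $K^F_W$ is identical upon replacing $X$ by $-X$, keeping in mind Remark~\ref{r.symmetry} and Remark~\ref{r.inf.hyp}. Finally, Proposition~\ref{p.transversal} is immediate: part (1) is what we just proved (for both $*=E,F$, using the same $\gamma$ and $d_0$), and part (2) is the displayed inequalities, with $\mu(K^*_W)>\theta_0-3\beta_1\ge\frac{997}{1000}\theta_0$ following from the earlier choice $\beta_1<\theta_0/1000$.

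The main obstacle I anticipate is the uniformization step: extracting a \emph{single} scale $d_0$ that works for \emph{all} measures in $\cM_{\beta_1,W}$ simultaneously, rather than a scale depending on the measure. This requires carefully exploiting the weak-$^*$ compactness of $\cM_{\beta_1,W}$ together with the fact that the ``bad set'' $\Lambda^E_W(\lambda_0)\setminus\overline{A_n}$ shrinks as $n\to\infty$ in a way that is detected uniformly by the ergodic decomposition; the subtlety is that $\Lambda^E_W(\lambda_0)$ is not compact-with-continuous-structure until one restricts away from singularities (which is exactly why we intersected with $W^c$), and that the transversal intersection, while open, could a priori escape to scale zero along a sequence of points approaching the boundary of $\Lambda^E_W(\lambda_0)$ — this is ruled out precisely by the compactness of $\Lambda^E_W(\lambda_0)$ (Proposition~\ref{p.hyptime.cpt}) and the uniform lower bound $\rho_W$ on the size of the local stable manifolds there. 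A secondary technical point is making the $\lambda$-lemma argument (replacing $\gamma_x$ by the fixed $\gamma$) quantitatively uniform, so that the size $d_0$ of the used piece of $W^u(\gamma)$ does not degenerate; this is where Assumption (B), giving homoclinic relation to a single $\gamma$ with uniformly transverse connections, is essential.
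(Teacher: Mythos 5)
Your proposal reaches the same conclusion by a genuinely different uniformization step. The inclination-lemma upgrade from $\gamma_x$ to the fixed $\gamma$ matches the paper's argument. Where you diverge is in passing from a per-measure scale to a single $d_0$: the paper, for each $\mu\in\cM_{\beta_1,W}$, extracts a compact $K^E_W(\mu)\subset\tilde\Lambda^E_W(\mu,\lambda_0)$ of near-full relative $\mu$-measure, uniformizes the intersection scale $D(\mu)$ on it by compactness of $K^E_W(\mu)$, enlarges to a small open $U_\mu$ (trading $W^u_{D(\mu)}$ for $W^u_{2D(\mu)}$ in the process), uses the compactness of $\Lambda^E_W(\lambda_0)\setminus U_\mu$ and portmanteau to find a weak-$^*$ neighborhood $\cV_\mu$ on which every $\nu$ gives $\nu(\Lambda^E_W(\lambda_0)\setminus U_\mu)<\beta_1$, and then takes a finite subcover of $\cM_{\beta_1,W}$; the final $K^E_W$ and $d_0$ are the union and maximum over that finite family. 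You instead fix a scale exhaustion $A_n$ and derive a contradiction through a weak-$^*$ limit $\mu_\infty$. Both routes rest on the weak-$^*$ compactness of $\cM_{\beta_1,W}$ and on the compactness of $\Lambda^E_W(\lambda_0)$ away from singularities; the paper's is more local in the measure, yours more global in phase space, and yours is somewhat shorter to state but pushes the pointwise-versus-uniform tension into the limiting argument rather than into the covering.

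Two technical points need to be closed to make your contradiction airtight. First, the set $\Lambda^E_W(\lambda_0)\setminus\overline{A_n}$ is only relatively open, and weak-$^*$ convergence gives $\limsup_k\mu_k(F)\le\mu_\infty(F)$ only for closed $F$; so you should pass to the compact sets $\overline{\Lambda^E_W(\lambda_0)\setminus\overline{A_n}}$, conclude $\mu_\infty\bigl(\bigcap_n\overline{\Lambda^E_W(\lambda_0)\setminus\overline{A_n}}\bigr)\ge\beta_1$, and then use the openness of each $A_n$ in $\Lambda^E_W(\lambda_0)$ (which you correctly identify) to show $\bigcap_n\overline{\Lambda^E_W(\lambda_0)\setminus\overline{A_n}}\subset\Lambda^E_W(\lambda_0)\setminus\bigcup_n A_n$; that is the set on which Lemma~\ref{l.measure.intersect} gives the contradiction, and you have a typo there (you wrote $\bigcap_n\overline{A_n}$ where $\bigcup_n A_n$ is meant). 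Second, taking $K^E_W=\overline{A_{n_0}}$ does not by itself give the transversal intersection on the boundary $\overline{A_{n_0}}\setminus A_{n_0}$; you need a slight enlargement of the unstable disk (this is exactly the paper's passage from $W^u_{D(\mu)}$ to $W^u_{2D(\mu)}$ when replacing $K^E_W(\mu)$ by $\overline{U_\mu}$), or equivalently you should build a margin into the definition of $A_n$. Neither issue threatens your strategy, and you correctly flag the compactness of $\Lambda^E_W(\lambda_0)$ and the uniform size $\rho_W$ of the stable manifolds as the facts that make it work, but the closure/portmanteau step as written does not yet land where you want it to.
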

Again, the same statement holds for a compact subset $K^F_W\subset \Lambda^E_W(\lambda_0)$ by  considering $-X$. 

\begin{proof}
	Let $\delta>0$ be fixed. For every $\mu\in \cM_{\beta_1, W}$, there is a compact set $K^E_W(\mu)\subset \tilde \Lambda^E_W(\mu,\lambda_0)$ where the latter is given by Lemma~\ref{l.measure.intersect}, such that $\mu\left(\Lambda^E_W(\lambda_0) \setminus K^E_W(\mu)\right) < \beta_1/2$, and every $y\in K^E_W(\mu)$ satisfies 
	$$
	W^s_{\delta/32,\cN}(y)\pitchfork W^u(\gamma_y)\ne \emptyset  
	$$
	for some $\gamma_y$ depending on $y$. Let $\gamma$ be a periodic orbit in $\Lambda$ that will be fixed throughout this proof. Then, using Assumption (B) of Theorem~\ref{m.B} and the Inclination Lemma (also known as the $\lambda$-Lemma; see for instance~\cite[Proposition 6.2.23]{Katok}), one has 
	$$
	W^s_{\delta/16,\cN}(y)\pitchfork W^u(\gamma)\ne \emptyset.
	$$
	By compactness of $K^E_W(\mu)$ and the continuity of transversal intersection, there exists $D(\mu)>0$ such that  
	$$
	W^s_{\delta/16,\cN}(y)\pitchfork W^u_{D(\mu)}(\gamma)\ne \emptyset, \,\,\forall y\in K^E_W(\mu).
	$$
	Furthermore, by compactness of $K^E_W(\mu)$ and the continuity of invariant manifolds, for any open neighborhood $U_\mu\supset K^E_W(\mu)$ that is sufficiently small and for every $z\in \Lambda^E_W(\lambda_0)\cap  \overline U_\mu$, 
	$$
	W^s_{\delta/8,\cN}(z)\pitchfork W^u_{2D(\mu)}(\gamma)\ne \emptyset.
	$$
	
	Below we will construct $K^E_W$ and show that the previous conclusion holds with a uniform constant $d_0$ independent of $\mu$. Note that $\Lambda^E_W(\lambda_0)\setminus U_\mu$ is compact with $\mu$ measure less than $\beta_1/2$. Therefore, one can find a small open neighborhood $\cV_\mu$ in the space of ${f_1}-$invariant probability measures, such that every $\nu\in \cV_\mu$ satisfies $\nu(\Lambda^E_W(\lambda_0)\setminus U_\mu) < \beta_1$. 
	
	Recall that the set $\cM_{\beta_1, W}$ is compact, and the collection of such $\cV_\mu$ form an open covering of $\cM_{\beta_1, W}$. Let $\{\cV_{\mu_i}\}_{i=1}^N$ be a finite sub-covering, and define 
	$$
	K^E_W = \bigcup_{i=1}^N \left( \Lambda^E_W(\lambda_0)\cap  \overline U_\mu\right), 
	$$
	and $d_0 = \max_i\{2D(\mu_i)\}$. Then we have $\mu\left( \Lambda^E_W(\lambda_0)\setminus K^E_W\right)< \beta_1$ for every $\mu\in\cM_{\beta_1, W}$ by construction; moreover, for every $z\in K^E_W$, 
	$$
	W^s_{\delta/8,\cN}(z)\pitchfork W^u_{d_0/2}(\gamma)\ne \emptyset,
	$$
	as desired.

\end{proof}
Now that the lemma is proven, we conclude the proof of Proposition~\ref{p.transversal}.

\subsection{Specification on $\cG_S$}\label{ss.spec.G}
Recall that $\Lambda$ is an isolated chain recurrence class. In this section we prove Theorem~\ref{t.spec}.  This is summarized as the following propositions:

\begin{proposition}\label{p.spec}
	For every isolating open neighborhood $W$ of $\Sing_\Lambda(X)$ and every $\delta>0$ sufficiently small, there exist an open sets $U^E,U^F$ with $U^E\supset K^E_W$ and $ U^F\supset K^F_W$, and a constant $L>0$, such that specification at scale $\delta$ holds for the collection of orbit segments $(x,t)\in\cO(U)$ with $t\in\NN$ satisfying the following properties:
	\begin{enumerate}
		\item there exist $t_1,t_2\ge 0$ such that $(x_{-t_1},t_1+t+t_2)\in \cO(U)$ and $x_{-t_1}\notin B_{r_0}(\Sing_\Lambda(X))$, $x_{t+t_2}\notin B_{r_0}(\Sing_\Lambda(X))$; 
		\item $t>L$;
		\item $x\in U^E{\cap W^c}$ is a $(\lambda_0,E)$-forward hyperbolic time for the orbit segment $(x,t)$ and $(\psi_t^*)$;
		\item $x_t\in U^F{\cap W^c}$ is a $(\lambda_0,F)$-forward hyperbolic time for the orbit segment $(x,t)$ and $(\psi_t^*)$.
 	\end{enumerate}
\end{proposition}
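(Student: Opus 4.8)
\textbf{Plan of proof for Proposition~\ref{p.spec}.}
The strategy is to use the hyperbolic periodic orbit $\gamma$ from Proposition~\ref{p.transversal} as a ``hub'' through which all the pieces $(x^i,t_i)$ are glued, exactly in the spirit of the proof in \cite{PYY23}, but now being careful that the shadowing orbit need only be constructed in $\cO(U)$ (or $\cO(U_1)$ when the pieces lie in $\Lambda$) rather than inside $\Lambda$ itself. First I would fix $W$ and $\delta$, apply Proposition~\ref{p.transversal} to obtain the compact sets $K^E_W,K^F_W\subset \Lambda^E_W(\lambda_0),\Lambda^F_W(\lambda_0)$, the periodic orbit $\gamma$ and the uniform transversality scale $d_0$. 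Then I would choose $U^E\supset K^E_W$ and $U^F\supset K^F_W$ small enough that: (i) for every $x$ in $U^E$ that is a $(\lambda_0,E)$-forward hyperbolic time of a sufficiently long orbit segment, the fake leaf $\cF^E_{x,\cN}(x,\rho_\textit{inv}|X(x)|)$ is $C^0$-close to the genuine local stable manifold $W^s_{loc,\cN}(x)$ of a nearby infinite hyperbolic time (this is exactly Proposition~\ref{p.fake.inv}), hence still has a transversal intersection with $W^u_{d_0}(\gamma)$; and symmetrically for $U^F$ and $W^s_{d_0}(\gamma)$. The constant $L$ will be chosen below the threshold $T_W$ of Lemma~\ref{l.robust.hyp} plus whatever uniform time is needed for the iterate of the fake leaf to reach macroscopic size; since the hyperbolicity estimates \eqref{e.hyp1}, \eqref{e.hyp2} are uniform on $\cO(U)$, this $L$ does not depend on the individual pieces.

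Next, given a finite collection $\{(x^i,t_i)\}_{i=1}^k$ of such orbit segments, I would build the shadowing orbit inductively. The forward hyperbolicity at $x^i$ gives, via Lemma~\ref{l.hyp.fakeEleaf}, that the $E$-length of points on $\cF^E_{x^i,\cN}(x^i)$ contracts at rate $\lambda_1^{-s}$ up to time $t_i$ (scaled by flow speed), while backward hyperbolicity at $x^i_{t_i}$ gives the symmetric statement for $F$-lengths near the endpoint; both also yield the $\rho$-scaled shadowing property along the whole segment. Using the transversal intersection $W^s_{\delta/100,\cN}(\cdot)\pitchfork W^u(\gamma)$ near $x^i$ and $W^u(\cdot)\pitchfork W^s(\gamma)$ near $x^i_{t_i}$, together with the Inclination Lemma, one finds a point whose orbit first shadows $\gamma$ for a bounded time $\le \tau$, then follows along the unstable manifold into a $\delta$-neighborhood of $x^i$, shadows $(x^i,t_i)$ to within $\delta$ using the contraction/expansion just quoted, lands near $x^i_{t_i}$, and re-enters a neighborhood of $\gamma$ along its stable manifold; the bounded transition times give the maximum gap size $\tau=\tau(\delta)$. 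Concatenating over $i=1,\dots,k$ produces the required $y$ and gluing times $\tau_i\le\tau$. For assertion ``shadowing orbits in $\cO(U)$'': since each $x^i$ together with the extending segment $(x^i_{-t_1^i},t_1^i+t_i+t_2^i)$ lies in $\cO(U)$ and stays uniformly away from $\Sing_\Lambda(X)$ at its endpoints, and since the shadowing is at the small scale $\delta$, the whole constructed orbit stays in $U$; the pieces near $\gamma\subset\Lambda$ are automatically in $U$. When the $(x^i,t_i)$ additionally lie in $\Lambda\times\RR^+$, the intersection points lie on genuine stable/unstable manifolds of points of $\Lambda$ and of $\gamma\in\Lambda$, so the shadowing orbit remains in $\cO(U_1)$ for $U_1$ chosen appropriately.

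The main obstacle I anticipate is obtaining the \emph{uniform} transversality scale $d_0$ and the \emph{uniform} transition time $\tau$ despite the non-compactness of $\Reg(X)$ and the fact that orbit segments in the collection are only \emph{finite} hyperbolic times rather than infinite ones. This is precisely what Propositions~\ref{p.transversal}, \ref{p.inv.mld} and \ref{p.fake.inv} are designed to handle: the compact sets $K^*_W$ live away from $\Sing_\Lambda(X)$ so that $|X|$ is bounded below there and Liao's relative scale $\rho_0|X(x)|$ becomes genuinely uniform; and the $C^0$-convergence of fake leaves of finite hyperbolic times to the invariant manifold of a nearby infinite hyperbolic time (Proposition~\ref{p.fake.inv}) lets one transfer the transversal intersection from the (compact) set of infinite hyperbolic times to the larger open neighborhoods $U^E,U^F$ containing the relevant endpoints. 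A secondary technical point is to keep track of the time-reparametrization $\tau_{x,y}(\cdot)$ when passing between the scaled Poincaré picture and the actual flow; but Proposition~\ref{p.timedrift} bounds the accumulated time drift by $C\rho$ uniformly, so the gluing times stay bounded and the final estimate $d_{t_j}(f_{s_{j-1}+\tau_{j-1}}(y),x^j)<\delta$ holds after absorbing these drifts into the choice of the scaled-shadowing constant $\rho$. Finally, the restriction $t>L$ is exactly what is needed so that Lemma~\ref{l.robust.hyp} and Lemma~\ref{l.hyptime} apply and so that the contracted/expanded leaves have had enough time to grow to a macroscopic size comparable to $d_0$ and $\delta$.
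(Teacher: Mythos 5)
Your proposal follows essentially the same route as the paper: it uses the periodic orbit $\gamma$ from Proposition~\ref{p.transversal} as a hub, transfers the transversal intersection from the compact sets of infinite hyperbolic times to the open neighborhoods $U^E,U^F$ of finite hyperbolic times via Proposition~\ref{p.fake.inv}, invokes the Inclination Lemma for a uniform gluing time, uses Lemma~\ref{l.hyp.fakeEleaf} for shadowing along the pieces, and controls the time drift via Proposition~\ref{p.timedrift}. The paper's own proof makes this precise with Bowen's inductive disk-tracking scheme (propagating a $(3\alpha,F_N)$-disk through each segment via Lemma~\ref{l.udisk}, bridging segments via Lemma~\ref{l.D_intersection}, and extracting Bowen-ball sub-disks via Lemma~\ref{l.bowen1}), but the underlying ideas and ingredients are the ones you identify; only one small slip: you write that $L$ is chosen ``below'' the threshold $T_W$, when in fact one needs $L$ to be at least $T_W$ plus the growth time, so $t>L$ puts you in the regime where Lemma~\ref{l.robust.hyp} and the leaf-growth estimates apply.
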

Note that the same shadowing property automatically holds if we replace $\cO(U)$ by  by $\Lambda\times\RR^+$ or $\cO(U_1)$ for any $U_1\subset U$. Then, to finish the proof of Theorem \ref{t.spec} one also need the following proposition.

\begin{proposition}\label{p.nbhd}
	Under the assumptions of the previous proposition, for $\delta>0$ sufficiently small, one can choose $U_1$, an open neighborhood of $\Lambda$ that is contained in $U$, such that:
	\begin{itemize}
		\item For orbit segments $(x^i,t_i)$ in $\Lambda\times\RR^+$, the shadowing orbit $(x,t)$ can be taken from $\cO(U_1)$; 
		\item For orbit segments $(x^i,t_i)$ in $\cO(U_1)$, the shadowing orbit $(x,t)$ can be taken from $\cO(U)$.
	\end{itemize}
\end{proposition}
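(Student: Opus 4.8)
\textbf{Proof proposal for Proposition~\ref{p.nbhd}.}

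The plan is to revisit the construction of the shadowing orbit carried out in the proof of Proposition~\ref{p.spec} and track precisely which filtrating neighborhood of $\Lambda$ the pieces of this orbit occupy. Recall that in Proposition~\ref{p.spec} the shadowing orbit is built by concatenating the given orbit segments $(x^i,t_i)$ with \emph{connecting orbit segments} that run along the invariant manifolds of the fixed hyperbolic periodic orbit $\gamma\in\Lambda$: each $x^i$ starts (resp.\ $x^i_{t_i}$ ends) near an infinite hyperbolic time in $K^E_W$ (resp.\ $K^F_W$), whose stable (resp.\ unstable) manifold transversally meets $W^u_{d_0}(\gamma)$ (resp.\ $W^s_{d_0}(\gamma)$) at a uniform scale; one then uses the Inclination Lemma and a transition argument through $\gamma$ to glue consecutive pieces. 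The key observation is that $\gamma$ lies in $\Lambda$, and the pieces of invariant manifolds of $\gamma$ used in the gluing, as well as the local invariant manifolds $W^s_{\delta/8,\cN}(x)$ and $W^u_{\delta/8,\cN}(x_t)$ of the infinite hyperbolic times, are all contained in an arbitrarily small neighborhood of $\Lambda$ once $\delta$ is taken small enough --- this is because $\Lambda$, being an isolated chain recurrence class, has arbitrarily small filtrating neighborhoods, and the relevant manifold pieces have bounded length (at most $d_0$ along $\gamma$, and the connecting orbit segments spend only bounded time away from $\Lambda$ since their length is controlled by the gap bound $L$).

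First I would fix, for the given neighborhood $U_1\subset U$, an intermediate filtrating neighborhood $U_1'$ of $\Lambda$ with $\Lambda\subset U_1'^{\,\circ}\subset U_1'\subset U_1$, and a still smaller neighborhood $U_1''$ with $\Lambda\subset U_1''^{\,\circ}\subset U_1''\subset U_1'$; such nested filtrating neighborhoods exist by isolation. Then I would choose $\delta$ small enough (shrinking the $\delta$ from Proposition~\ref{p.spec} if necessary) so that: (i) all local invariant manifolds $W^{s/u}_{\delta/8,\cN}(\cdot)$ of points in $K^*_W\subset\Lambda$, which depend continuously on the base point, stay within $U_1''$; (ii) the uniform-length pieces $W^{u/s}_{d_0}(\gamma)$ of the periodic orbit stay in $U_1''$; and (iii) the transition/Inclination-Lemma argument near $\gamma$ produces connecting orbit segments entirely inside $U_1''$ — this last point uses that the connecting segments shadow genuine orbits asymptotic to $\gamma\subset\Lambda$ and have uniformly bounded length. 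For the first bullet: if the data segments $(x^i,t_i)$ lie in $\Lambda\times\RR^+$, then since $\Lambda\subset U_1''$ and each connecting piece lies in $U_1''$, the whole concatenation, hence the shadowing orbit $(x,t)$ (which $\delta$-shadows it, with $\delta$ small), lies in $U_1'\subset U_1$, giving $(x,t)\in\cO(U_1)$. For the second bullet: if the data segments lie in $\cO(U_1)$ and the connecting pieces lie in $U_1''\subset U_1$, then the concatenation lies in $U_1$, and its $\delta$-shadowing orbit lies in $U\supset U_1$ for $\delta$ small, so $(x,t)\in\cO(U)$; here I use that $U$ is a (fixed) isolating neighborhood containing $U_1$ with room to spare.

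The main obstacle I anticipate is controlling the connecting orbit segments that pass through (a neighborhood of) $\gamma$: a priori a transition orbit from $W^s_{\delta/8}(x^i_{t_i})\cap W^u_{d_0}(\gamma)$ to $W^s_{d_0}(\gamma)\cap W^u_{\delta/8}(x^{i+1})$ could wander, and one must verify it does not leave $U_1$. This is handled by the standard fact that, after restricting to a small enough tubular neighborhood of $\gamma$ and using the hyperbolicity of $\gamma$, every such transition orbit remains in an arbitrarily small neighborhood of $W^s(\gamma)\cup\gamma\cup W^u(\gamma)$ for the (uniformly bounded) duration of the transition; since the relevant compact portions of $W^s(\gamma),W^u(\gamma)$ and $\gamma$ itself are all in $\Lambda\subset U_1''$, shrinking $\delta$ (and the tubular neighborhood, hence increasing the gap bound $L$) forces the transition orbit into $U_1''$. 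A secondary bookkeeping point is that the time-rescaling function $\tau_{x,y}$ of the scaled shadowing introduces a bounded time drift (Proposition~\ref{p.timedrift}), but this does not affect which neighborhood the orbit occupies, only its parametrization, so it causes no difficulty. Once these localizations are in place, the proof of Theorem~\ref{t.spec} is completed by combining Proposition~\ref{p.spec} with Proposition~\ref{p.nbhd}: statement (1) of Theorem~\ref{t.spec} is the second bullet applied with $U_1$ as in its hypothesis, and statement (2) is the first bullet.
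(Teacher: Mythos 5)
Your proposal takes a genuinely different route from the paper's: you attempt to trace geometrically where the connecting (transition) pieces of the shadowing orbit go, arguing that they remain close to $W^s_{d_0}(\gamma)\cup\gamma\cup W^u_{d_0}(\gamma)$. This intuition is sound in spirit, but the argument as written has two real defects. First, the claim that ``the relevant compact portions of $W^s(\gamma),W^u(\gamma)$ and $\gamma$ itself are all in $\Lambda$'' is false: $\gamma\subset\Lambda$, but the local stable and unstable manifolds $W^{s/u}_{d_0}(\gamma)$ need not lie in $\Lambda$ (a point on $W^u(\gamma)$ converges to $\gamma$ backward in time but can leave every small neighborhood of $\Lambda$ forward in time; only the homoclinic intersection points are forced into $\Lambda$). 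So you cannot make a shrinking filtrating neighborhood $U_1''$ of $\Lambda$ contain these manifold pieces; since $d_0$ is a \emph{fixed} constant from Proposition~\ref{p.transversal}, you would instead have to pick $U_1$ large enough to swallow a neighborhood of $W^{s/u}_{d_0}(\gamma)$ first and then tune $\delta$, reversing the order of quantifiers in your write-up. Second, the maximum gap $\tau$ in Lemma~\ref{l.D_intersection} grows as $\delta$ shrinks (the Inclination Lemma requires longer transition times to achieve tighter $C^1$-approximation), so ``shrinking $\delta$ forces the transition orbit into $U_1''$'' is not free and needs a separate uniform estimate.

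The paper's proof sidesteps all of this with a much shorter argument that never needs to know where the transition orbit goes. Using Lemma~\ref{l.crc} (and Remark~\ref{r.crc1}), take $U_1\subset U$ a filtrating neighborhood of $\Lambda$ with $B_\delta(\Lambda)\subset U_1$ and $B_\delta(U_1)\subset U$. For the first bullet: during each time window $[s_{i-1}+\tau_{i-1},s_i]$ and $[s_i+\tau_i,s_i+\tau_i+t_{i+1}]$, the shadowing orbit is $\delta$-close to $(x^i,t_i)\subset\Lambda$ resp.\ $(x^{i+1},t_{i+1})\subset\Lambda$, hence lies in $B_\delta(\Lambda)\subset U_1$. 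The gap piece $(x_{s_i},\tau_i)$ is therefore sandwiched between two portions of the orbit inside $U_1$; if it ever left $U_1$, the filtrating property (once an orbit leaves, it never returns) would forbid the orbit from being inside $U_1$ at time $s_i+\tau_i$, a contradiction. The second bullet is symmetric, replacing $B_\delta(\Lambda)\subset U_1$ with $B_\delta(U_1)\subset U$ and using that $U$ is filtrating. The moral: isolation plus filtration localize the connecting orbit automatically, with no geometric bookkeeping of the transition through $\gamma$ required.
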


The proof of these two propositions occupy the rest of this section.

Recall that under Assumption (1) of Proposition \ref{p.spec}, the fake foliations are well-defined on the normal planes of points in $(x,t)$. Also recall $\rho_0,\overline\rho_0$ from Proposition \ref{p.Fx}.


\begin{definition}\label{d.cone.disk}
	Given $\alpha>0$ and an embedded disk $D\subset \cN_{\rho_0|X(x)|}(x)$ at a regular point $x$, we say that $D$ is {\em tangent to the $(\alpha,E_N)$-cone} (alternatively, a {\em $(\alpha,E_N)$-disk}), if $\dim D  = \dim E_N$, and $\exp_{x}^{-1}(D)$ is the graph of a $C^1$ function $g: \{v\in E_N: |v|<R\}\to F_N$ for some $R\in (0,\overline\rho_0|X(x)|)$, with $\|Dg\|_{C^0}\le \alpha$. 
	$(\alpha,F_N)$-disks can be defined similarly. 
\end{definition}

Given $(x,t)\in\cO(U)$ satisfying  Assumption (1) of Proposition \ref{p.spec}, $y\in \cN_{\rho_0|X(x)|}(x)$ and $\iota>0$, we will use $\cF_{x,\cN}^{*,(x,t)}(y,\iota)$, $*=E,F$, to denote the disk in the fake leaf $\cF_{x,\cN}^{*,(x,t)}(y)$ with radius $\iota$ centered at $y$. Below the point $x$ will be taken outside $W$, therefore $\rho_0|X|$ is bounded away from zero. In particular, for fixed $\rho\in (0,\rho_0)$ one can find $\iota$ such that  $\cF_{x,\cN}^{*,(x,t)}(y,\iota)\subset \cN_{\rho_0|X(x)|}(x)$ whenever $x\notin W$. 
Due to the construction of the fake foliations, every such disk is tangent to the $\alpha$-cone of the corresponding bundle.

To simplify notation, for fixed $W$ we define 
$$
\epsilon = \inf_{z\notin W}\rho_0 |X(z)|.
$$
Throughout this section, all $(\alpha,*)$-disks ($*=E,F$) are disks contained in an appropriate normal plane $\cN_{\epsilon}(x)$ at some $x\notin W$, and are tangent to the $\alpha$-cone of the corresponding bundle (with the cone fields extended to every point on $\cN_\epsilon(x)$).

\begin{lemma}\label{l.udisk}\cite[Lemma 8.1]{PYY23}
	For any open set $W$ that satisfies  $\Sing_\Lambda(X)\subset  W\subset  B_{r_0}(\Sing_\Lambda(X))$ and any $\alpha>0$, $\delta>0$, $\xi>0$ small enough, 
	for every $\zeta>0$,  there  exists  $L>0$ such that the following property holds:
	
	Assume that $t>L$, $x\in\Lambda\cap W^c$ is a $(\lambda_0,E)$-forward hyperbolic time and $x_t\in \Lambda\cap W^c$ is a $(\lambda_0,F)$-backward hyperbolic time for $(x,t)$ and $(\psi_t^*)$.  
	Let $D$ be a $(3\alpha,F_N)$-disk centered at $z\in\cF_{x,\cN}^{E,(x,t)}(x, \delta/2)$ with size at least $\zeta$. Then $D$ contains a smaller disk $D'$ centered at $z$ such that every point in $D'$ is  $\rho_0$-scaled shadowed by the orbit of $x$ up to time $t$. Furthermore, $\cP_{t,x}(D')$ contains a disk with size at least $\delta$ centered at $\cP_{t,x}(z)$ and has transversal intersection with $\cF_{x_t,\cN}^{E,(x,t)}(x_t,\xi)$ at $\cP_{t,x}(z)$. 
	
\end{lemma}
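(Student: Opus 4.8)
### Proof proposal for Lemma \ref{l.udisk}

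The plan is to transport the disk $D$ forward by the sectional Poincaré maps $\cP_{1,x_s}$ and track three things simultaneously: (i) that a sub-disk $D'$ of $D$ stays inside Liao's relative-uniform scale $\rho_0|X|$ so that the shadowing property of Definition~\ref{d.shadow1} holds; (ii) that the disk remains tangent to the $(3\alpha,F_N)$-cone, hence its iterates never degenerate; and (iii) that its image grows to a definite size $\delta$ and intersects the fake leaf $\cF^{E,(x,t)}_{x_t,\cN}(x_t,\xi)$ transversally. The choice of $L$ will be dictated by how long it takes the $F$-expansion to blow up a disk of size $\zeta$ to size $\delta$.

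First I would set up the iteration. Since $x\notin W$, the scale $\epsilon = \inf_{z\notin W}\rho_0|X(z)|$ is positive, so $\cF^{E,(x,t)}_{x,\cN}(x,\delta/2)$ and a disk of size $\zeta$ around a point $z$ on it are well inside $\cN_{\epsilon}(x)$ once $\delta,\xi,\zeta$ are small enough. The key point is that $x$ is a $(\lambda_0,E)$-forward hyperbolic time for $(x,t)$ and $(\psi^*_t)$. By Lemma~\ref{l.hyp.fakeEleaf}, every point of the $\rho|X(x)|/(4K_0)$-disk on $\cF^E_{x,\cN}(x)$ (and the corresponding statement for orbit segments in $\cO(U)$) is $\rho$-scaled shadowed by the orbit of $x$ up to time $t$, and the $E$-length contracts at rate $\lambda_1^{-k}$ up to the flow speed ratio. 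So I would first shrink $D'$ so that its center $z$ lies in that disk. Now for the forward invariance of the $F_N$-cone: by the one-step domination \eqref{e.onestepDS1}, the $(\alpha,F_N)$-cone field is forward-invariant under $\psi^*_1$, and by Proposition~\ref{p.tubular3}(2) the map $P^*_{1,x_s}$ is $C^1$-close to $\psi^*_1$ at a uniform scale; thus a $(3\alpha,F_N)$-disk maps to a $(3\alpha,F_N)$-disk (shrinking $\alpha$ if necessary so the slight opening of the cone under the nonlinear map is absorbed), provided the disk stays within the relative-uniform scale. Inductively I would define $D'_k \subset \cP_{k,x}(D')$ as the maximal sub-disk that remains inside $\cN_{\rho_0|X(x_k)|}(x_k)$; Lemma~\ref{l.shadowing2} then guarantees each point of $D'$ (the preimage) is $\rho_0$-scaled shadowed up to time $t$.

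Second, the growth estimate. On the $F_N$-cone, the backward hyperbolicity of $x_t$ — more precisely, combining it with Lemma~\ref{l.hyptime.dist} applied to $-X$ along the orbit, or directly the definition of $(\lambda_0,F)$-backward hyperbolic time — gives exponential expansion of $F$-lengths as we iterate forward toward $x_t$: a $(3\alpha,F_N)$-disk of size $\zeta$ at time $0$ has image of size at least $c\,\lambda_1^{\,t-k}$ times something at intermediate times, so by time $t$ its size is at least $\zeta$ times a factor growing like $\lambda_1^{t}$ (up to bounded flow-speed ratios, which are controlled since both $x,x_t\notin W$). Hence there is $L = L(\alpha,\delta,\xi,\zeta)$ so that for $t>L$ the image $\cP_{t,x}(D')$ contains a $(3\alpha,F_N)$-disk of size at least $\delta$ centered at $\cP_{t,x}(z)$. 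For transversality with $\cF^{E,(x,t)}_{x_t,\cN}(x_t,\xi)$: the latter is tangent to the $(\alpha,E_N)$-cone, and our disk is tangent to the $(3\alpha,F_N)$-cone; since $E_N$ and $F_N$ are orthogonal (by our standing metric assumption) and $\alpha$ is small, these cones are uniformly transverse, so the intersection — which is nonempty by the local product structure on $\cN_{\rho_0|X(x_t)|}(x_t)$ once the disk has size $\geq \delta \gg \xi$ — is transversal at $\cP_{t,x}(z)$. The containment $\cP_{t,x}(z) = z^t$ follows because $z\in\cF^E_{x,\cN}(x,\delta/2)$, so by Lemma~\ref{l.hyptime.dist} (or Lemma~\ref{l.hyp.fakeEleaf}) $d^E_{x_t}(z^t)$ is exponentially small, putting $z^t$ within $\xi$ of $x_t$ along $E$, i.e. essentially on $\cF^{E,(x,t)}_{x_t,\cN}(x_t,\xi)$ up to the needed size.

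The main obstacle is bookkeeping the \emph{two competing scales}: the disk $D'$ must shrink in its preimage to stay within the relative-uniform tube $\rho_0|X|$ (which can be small if the orbit passes near $W$, though here $x, x_t\notin W$ and the orbit between them may still dip close to singularities through the center direction), while its \emph{image} must grow to the fixed size $\delta$. Reconciling these requires that the $F$-expansion along the orbit dominates the worst-case contraction forced by the scaling $|X(x_k)|/|X(x)|$ — exactly the role of working with the scaled flow $(\psi^*_t)$ rather than $(\psi_t)$, and of the simultaneous-Pliss machinery that keeps recurrence to $W_r$ sparse. Since the statement is quoted from \cite[Lemma 8.1]{PYY23}, the cleanest route is to reduce to that reference by checking that the hypotheses there (sectional hyperbolicity) are only used to obtain the dominated splitting, the fake foliations, and the invariant-cone estimates, all of which we now have from Lemma~\ref{l.robust.dom.spl}, Lemma~\ref{l.robust.hyp}, and the construction in Section~\ref{sss.fakefoliation.nbhd}; the rest of the argument is local near $x$ and $x_t$, both away from singularities, and goes through verbatim.
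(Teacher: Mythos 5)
Your proposal is correct and follows the same route the paper takes: the paper omits the detailed proof and reduces to \cite{ABV00} and \cite[Lemma 8.1]{PYY23}, noting exactly what you observe—that the only adaptation needed is to replace the globally defined stable manifold $W^s_{\cN}(x)$ (available under sectional hyperbolicity) by the fake leaf $\cF^{E,(x,t)}_{x,\cN}(x)$ at the forward hyperbolic time $x$, with Lemma~\ref{l.hyp.fakeEleaf} supplying the exponential contraction and scaled-shadowing property along that leaf that the stable manifold previously provided. Your sketch of the interior of the ABV00 u-disk argument (cone transport via one-step domination, truncation to stay inside Liao's tubes, growth to a $\delta$-disk from the $(\lambda_0,F)$-backward hyperbolic time at $x_t$, and cone-transversality at the endpoint) is consistent with that reference, and your closing reduction paragraph is essentially verbatim what the paper does.
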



The proof is an adapted version of the Hadamard-Perron theorem and can be found in \cite{ABV00}, and is therefore omitted. We remark that in \cite[Lemma 8.1]{PYY23} the stable bundle $E_N$ is uniformly contracted by $(\psi_t^*)$ (a trait of sectional hyperbolicity); therefore the stable manifold $W^s_{N}(x)$ exists on the normal space of every regular point $x$. Here we assume that $x$ is a forward hyperbolic time, and the role of the stable manifold is played by the fake leaf $\cF^{E,(x,t)}_{x,\cN}(x)$. However, this causes little difference in the proof, thanks to the exponential contraction of $\cP_{s,x}$ on this fake leaf up to time $t$ as proven in Lemma~\ref{l.hyp.fakeEleaf}.

We also need the following lemma regarding the shadowing at the uniform scale $\delta$.  This does not immediately follow from the previous lemma due to the time change $\tau_{x,y}(t)$ involved in the scaled shadowing property. In particular, we need to obtain an upper bound on $|\tau_{x,y}(t)-t|$.

\begin{lemma}\label{l.bowen1}\cite[Lemma 8.2]{PYY23}
	Let $\delta>0$. Under the assumptions of the previous lemma, one can decrease $
	\zeta$ such that for every $y\in D'$, it holds that  $y\in B_{t,\delta}(x)$.

\end{lemma}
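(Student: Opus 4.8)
The plan is to upgrade the scaled shadowing provided by Lemma~\ref{l.udisk} to genuine Bowen $\delta$-closeness by passing to a much finer relative scale $\rho\ll\delta$ and then controlling the time reparametrization $\tau_{x,y}$. First I would fix $\rho\in(0,\rho_0]$, depending only on $\delta$ and on the ambient constants ($\sup_{\bM}\|X\|$, $K_\tau$, $K_1^*$, $\lambda_1$), small enough that $\rho\,\sup_{\bM}\|X\|<\delta/4$ and $C\rho\,\sup_{\bM}\|X\|<\delta/4$, where $C$ is the constant produced in Proposition~\ref{p.timedrift} (equivalently, the constant appearing in the estimate of the term denoted $III$ in the proof of Theorem~\ref{t.Bowen}). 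Since $\delta$ is assumed small and $x,x_t\notin W$, so that $|X(x)|$ and $|X(x_t)|$ are bounded below by $\inf_{W^c}|X|>0$, I may also assume $\delta/2\le\rho K_0^{-1}\inf_{W^c}|X|$. Inspecting the proof of Lemma~\ref{l.udisk} (a Hadamard--Perron graph transform), the same construction goes through verbatim at the relative scale $\rho$ in place of $\rho_0$, at the cost of possibly enlarging $L$ and shrinking the threshold $\zeta$; this shrinking of $\zeta$ is exactly the one asserted in Lemma~\ref{l.bowen1}. The output is therefore a sub-disk $D'\subset D$, centered at $z$, each of whose points $y$ is $\rho$-scaled shadowed by the orbit of $x$ up to time $t$. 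By Definition~\ref{d.shadow1} this means $f_{\tau_{x,y}(s)}(y)\in\cN_{\rho|X(x_s)|}(x_s)$, hence $d\big(x_s,\,f_{\tau_{x,y}(s)}(y)\big)<\rho|X(x_s)|\le\rho\sup_{\bM}\|X\|<\delta/4$ for every $s\in[0,t]$.

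It remains to trade the reparametrized time $\tau_{x,y}(s)$ for the true time $s$. Because $(x,t)$ is contained in a larger orbit segment in $\cO(U)$ with endpoints off $B_{r_0}(\Sing_\Lambda(X))$ (Assumption (1) of Proposition~\ref{p.spec}), its normal bundle carries a dominated splitting $E_N\oplus F_N$; together with the hypotheses that $x$ is a $(\lambda_0,E)$-forward hyperbolic time and $x_t$ a $(\lambda_0,F)$-backward hyperbolic time for $(\psi^*_t)$, Proposition~\ref{p.timedrift} gives $|\tau_{x,y}(t)-t|\le C\rho$. In fact the same geometric computation used there --- bounding $|\tau_{x,y}(i+1)-\tau_{x,y}(i)-1|\le K_\tau|X(x_i)|^{-1}\big(d^E_{x_i}(\cP_{x_i}(y_i))+d^F_{x_i}(\cP_{x_i}(y_i))\big)$ via Lemma~\ref{l.tau}, then invoking the forward/backward hyperbolic-time contraction of Lemma~\ref{l.hyptime.dist} (and its $-X$ analogue) to get $d^E_{x_i}(\cP_{x_i}(y_i))\le 4\rho\,\lambda_1^{-i}|X(x_i)|$ and $d^F_{x_i}(\cP_{x_i}(y_i))\le 4\rho\,\lambda_1^{-(t-i)}|X(x_i)|$, and summing the resulting geometric series --- yields the uniform bound $|\tau_{x,y}(s)-s|\le C\rho$ for every $s\in[0,t]$, not just for $s=t$. (The endpoint inputs $d^E_x(\cP_x(y))\le\rho|X(x)|$ and $d^F_{x_t}(\cP_{x_t}(y_t))\le\rho|X(x_t)|$ needed to start the contraction both come directly from the $\rho$-scaled shadowing just established.) Since $f_s(y)$ and $f_{\tau_{x,y}(s)}(y)$ lie on a common orbit at time distance $\le C\rho$, we obtain $d\big(f_s(y),\,f_{\tau_{x,y}(s)}(y)\big)\le C\rho\sup_{\bM}\|X\|<\delta/4$, and the triangle inequality gives $d(f_s(x),f_s(y))<\delta/2<\delta$ for all $s\in[0,t]$, that is, $y\in B_{t,\delta}(x)$.

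The main obstacle is the two-scale bookkeeping in the first step: one must check that refining the shadowing scale to $\rho\ll\delta$ --- and hence shrinking $\zeta$ and the sub-disk $D'$ --- does not destroy the conclusion of Lemma~\ref{l.udisk} that $\cP_{t,x}(D')$ contains a disk of size at least $\delta$ transverse, at $\cP_{t,x}(z)$, to the fake leaf $\cF^{E,(x,t)}_{x_t,\cN}(x_t,\xi)$. This is precisely where the hypothesis that $\delta$ is sufficiently small (here: $\delta\le\rho\inf_{W^c}|X|$, so that the $\rho$-relative tube around $x_t\notin W$ has actual radius at least $\delta$) is used, and where $t>L$ with $L=L(\rho,\delta,\alpha,\xi,\zeta,W)$ large is invoked to guarantee enough forward expansion of the $(3\alpha,F_N)$-disk. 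A secondary, purely technical point is that Proposition~\ref{p.timedrift} is stated only for the full time $t$, so the uniform-in-$s$ drift bound has to be re-extracted from its proof; this is routine, since the forward and backward hyperbolic-time inequalities for $(x,t)$ already encode all the intermediate contraction that the summation requires.
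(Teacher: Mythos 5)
Your plan of refining the shadowing scale to some $\rho \ll \delta$, so that the trivial tube bound $d\bigl(x_s,\,f_{\tau_{x,y}(s)}(y)\bigr) \le \rho|X(x_s)| \le \rho\sup_{\bM}\|X\| < \delta/4$ settles the distance estimate at one stroke, runs into an unavoidable conflict with the conclusion of Lemma~\ref{l.udisk} that $\cP_{t,x}(D')$ contains a disk of size at least $\delta$. If every $y\in D'$ is $\rho$-scaled shadowed up to time $t$, then at $s=t$ one has $\cP_{t,x}(y)=f_{\tau_{x,y}(t)}(y)\in\cN_{\rho|X(x_t)|}(x_t)$, so $\cP_{t,x}(D')\subset \cN_{\rho|X(x_t)|}(x_t)$ has diameter at most $2\rho|X(x_t)|\le 2\rho\sup_{\bM}\|X\|<\delta/2$ under your constraint $\rho\sup_{\bM}\|X\|<\delta/4$, and so cannot contain a $\delta$-disk. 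You flag exactly this obstacle in your final paragraph, but the fix you propose, $\delta\le\rho\inf_{W^c}|X|$ (or its stronger variant $\delta/2\le\rho K_0^{-1}\inf_{W^c}|X|$), is mathematically incompatible with $\rho\sup_{\bM}\|X\|<\delta/4$: the two together force $4\sup_{\bM}\|X\|<\inf_{W^c}|X|$, which is impossible since $\inf_{W^c}|X|\le\sup_{\bM}\|X\|$. There is no choice of $\rho$ that is simultaneously small enough for the trivial tube bound and large enough to leave room for a $\delta$-sized forward image.

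The correct route, which the paper inherits verbatim from the sectional-hyperbolic case, keeps the shadowing scale $\rho_0$ fixed and does not ask the tube radius to be smaller than $\delta$. Instead, one exploits the exponential contraction of the $E$- and $F$-lengths furnished by the hyperbolic-time hypotheses (Lemmas~\ref{l.hyp.fakeEleaf} and~\ref{l.hyptime.dist}): writing $d(x_s,\cP_{x_s}(y_s))\le d^E_{x_s}+d^F_{x_s}$, the $E$-length decays forward from the endpoint value $d^E_x(y)\lesssim\delta/2$ (because $z\in\cF^{E,(x,t)}_{x,\cN}(x,\delta/2)$ and $D$ is a thin $(3\alpha,F_N)$-disk) and the $F$-length decays backward from $d^F_{x_t}(\cP_{x_t}(y_t))$, which is controlled by the size of the forward image. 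The time drift is then estimated exactly as in the derivation of term $(III)$ in Theorem~\ref{t.Bowen}, using Lemma~\ref{l.tau} together with these contracted lengths rather than the crude $\rho|X(x_s)|$ bound; this is what Proposition~\ref{p.timedrift} encodes, and your extraction of the uniform-in-$s$ drift bound from its proof is correct and is what is needed. Decreasing $\zeta$ serves only to increase the threshold $L$, and hence the length $t$ available for the contraction to work its way down to scale $\delta$ from the endpoint data; it does not refine $\rho_0$. In short, the drift must be fed the contracted $O(\delta)$ endpoint lengths, not a refined tube radius, because the latter cannot be made small without destroying the forward-image size that the specification mechanism relies on.
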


The proof follows verbatim of \cite[Lemma 8.2]{PYY23} with the stable manifold of $x$ replaced by the fake leaf, and use Lemma~\ref{l.hyp.fakeEleaf} to obtain the exponential contraction of distance. See also Proposition \ref{p.timedrift}.

Next, we establish the transversal intersection between fake leaves of a hyperbolic time (not infinite) with invariant manifolds of $\gamma.$

\begin{lemma}\label{l.fake.intersect}
	For every isolating open neighborhood $W$ of $\Sing_\Lambda(X)$, every $\delta>0$ sufficiently small and every hyperbolic  periodic orbit $\gamma\in\Lambda$, there exist open neighborhoods $U^E, U^F$ with $K^E_W\subset U^E$ and $K^F_W\subset U^F$ and constants  $L>0,d_0>0$, such that the following holds for every $\xi$ sufficiently small:
	\begin{enumerate}
		\item For every $x\in U^E$ that is a $(\lambda_0,E)$-forward hyperbolic time for the orbit segment $(x,t)$ and $(\psi_t^*)$ with $t>L$, one has  
		$$
		\cF^{E,(x,t)}_{x, \cN}(x,\delta/4)\pitchfork W^{u}_{d_0}(\gamma)\ne\emptyset
		$$
		\item  Furthermore, assume that $y=x_t\in U^F$ is a $(\lambda_0,F)$-backward hyperbolic time for the orbit segment $(x,t)$ and $(\psi_t^*)$ with $t>L$, and $D_0$ is any $(3\alpha,F_N)$-disk centered at some $x'\in\cF_{x,\cN}^{E,(x,t)}(x, \delta/2)$ with size at least $\zeta$. Then the smaller disk $D'_0\subset D_0$ given by Lemma \ref{l.udisk} satisfies that $D:=\cP_{t,x}(D'_0)$ contains a disk $D'$ with size at least $ \delta/4$ centered at $y':=\cP_{t,x}(x')\subset \cF_{y,\cN}^{E,(x,t)}(x,\delta/2)$,
		satisfying 
		$$
				\overline D' \pitchfork W^{s}_{d_0}(\gamma)\ne\emptyset.
		$$
	\end{enumerate}
	Furthermore, the angles of the transversal intersection in both cases are bounded away from zero.
\end{lemma}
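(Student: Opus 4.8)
The plan is to establish part (1) first, by a compactness‑and‑continuity argument, and then deduce part (2) from part (1) applied to $-X$ together with Lemma~\ref{l.udisk}. Throughout, $\gamma$, $K^E_W$, $K^F_W$ and (the preliminary values of) $d_0$ are those produced by Lemma~\ref{l.cpt.intersect} and Proposition~\ref{p.transversal}, and $\alpha$ is the cone constant fixed back in Section~\ref{ss.fake}.

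For part (1) the key input is Proposition~\ref{p.fake.inv}: if $(x^n,t_n)$ is a sequence of orbit segments in $\cO(U)$ satisfying Assumption (1) of Lemma~\ref{l.hyptime} with $t_n\to\infty$, each $x^n$ a $(\lambda_0,E)$-forward hyperbolic time for $(x^n,t_n)$, and $x^n\to x$ with $x$ regular, then $x\in\Lambda^E(\lambda_0)$ and $\cF^{E,(x^n,t_n)}_{x^n,\cN}(x^n,\rho_\textit{inv}|X(x^n)|)\to W^s_{loc,\cN}(x)$ in the $C^0$ topology of $C^1$ embeddings. I would argue by contradiction: if the conclusion failed for every pair $(U^E,L)$, then for each $n$ there are $x^n\in B_{1/n}(K^E_W)$, $t_n>n$, with $x^n$ a $(\lambda_0,E)$-forward hyperbolic time for $(x^n,t_n)$ but $\cF^{E,(x^n,t_n)}_{x^n,\cN}(x^n,\delta/4)$ not transverse to $W^u_{d_0}(\gamma)$. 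Since $K^E_W\subset\Lambda^E_W(\lambda_0)$ is compact and contained in $W^c$, hence bounded away from $\Sing_\Lambda(X)$, every limit point $x$ of $(x^n)$ lies in $K^E_W$ and is regular, so $\delta/4<\rho_\textit{inv}(\lambda_0)|X(x)|$ for $\delta$ small; along a subsequence the $\delta/4$-subdisks converge in $C^1$ to $W^s_{\delta/4,\cN}(x)\supset W^s_{\delta/8,\cN}(x)$, which by Lemma~\ref{l.cpt.intersect}(1) meets $W^u_{d_0/2}(\gamma)$ transversely, with angle bounded below by compactness of $K^E_W$. Openness of transversality (at a definite angle) in the $C^1$ topology, applied with the slightly larger disk $\cF^{E,(x^n,t_n)}_{x^n,\cN}(x^n,\delta/4)$ and the slightly larger manifold $W^u_{d_0}(\gamma)$, then forces a transversal intersection for $n$ large, a contradiction. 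Running the same argument for $-X$ (Remark~\ref{r.symmetry}; swap $E\leftrightarrow F$ and stable $\leftrightarrow$ unstable) yields a neighborhood $U^F\supset K^F_W$, a constant $L$, and, after enlarging $d_0$, that $\cF^{F,(x,t)}_{x_t,\cN}(x_t,\delta/4)\pitchfork W^s_{d_0}(\gamma)\ne\emptyset$ at a definite angle whenever $x_t\in U^F$ is a $(\lambda_0,F)$-backward hyperbolic time for $(x,t)$ with $t>L$.

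For part (2): apply Lemma~\ref{l.udisk} to $D_0$ to obtain $D_0'$ and $D=\cP_{t,x}(D_0')$, a disk of size at least $\delta$ centered at $y'=\cP_{t,x}(x')$ and transverse at $y'$ to $\cF^{E,(x,t)}_{x_t,\cN}(x_t,\xi)$; as the $\cP_{t,x}$-image of a $(3\alpha,F_N)$-disk, $D$ is tangent to the $(3\alpha,F_N)$-cone (in fact thinner, by domination of $E_N\oplus F_N$). Because $x$ is a $(\lambda_0,E)$-forward hyperbolic time, Lemma~\ref{l.hyp.fakeEleaf} gives $d^E_{x_t}(y')\le 4\tfrac{|X(x_t)|}{|X(x)|}\lambda_1^{-t}\cdot(\delta/2)$, so after enlarging $L$ the point $y'$ is an arbitrarily small multiple of $d_0$ away from $x_t=y$ along the fake $E$-leaf. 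By the $-X$-version of part (1), $\cF^{F,(x,t)}_{y,\cN}(y,\delta/4)$ meets $W^s_{d_0}(\gamma)$ transversely at a point $q$ with $d_{\cN(y)}(y,q)\le\delta/4$ and angle bounded below. In the local product coordinates $[\cdot,\cdot]$ on $\cN_{\rho_0|X(y)|}(y)$ determined by the fake foliations at $y$, $D$ is a graph $w^E=g(w^F)$ with $\|Dg\|_{C^0}\le 3\alpha$ and $|g(0)|=d^E_y(y')$ small, while $W^s_{d_0}(\gamma)\cap\cN(y)$ near $q$ (an $(\dim E_N)$-dimensional disk, by a dimension count) is a graph $w^F=h(w^E)$ with $\|Dh\|$ bounded by the angle lower bound, $h(0)=q^F$, $|q^F|\le\delta/4$, defined on a ball of radius $\gtrsim d_0$. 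An intersection of $D$ with $W^s_{d_0}(\gamma)$ is then a fixed point of $w^E\mapsto g(h(w^E))$, a $3\alpha K$-contraction; taking $\alpha$ small enough that $3\alpha K<1$ and $L$ large enough that $d^E_y(y')$ is small, this is a self-map of a small ball around $0$, hence has a unique fixed point, which lies in $\overline{D'}$ for the $\delta/4$-subdisk $D'$ of $D$ centered at $y'$, with intersection angle bounded below since the relevant slopes are controlled. The shadowing and size statements are exactly Lemma~\ref{l.udisk}, with the scale issue for $\tau_{x,y}$ handled by Lemma~\ref{l.bowen1} (equivalently Proposition~\ref{p.timedrift}).

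I expect the main obstacle to be the quantitative bookkeeping in part (2): one must guarantee that the exponentially small $E$-displacement of $y'$ from $y$ (controlled by the choice of $L$) is genuinely small relative to the a priori small constant $d_0$ — which is produced by Lemma~\ref{l.cpt.intersect} only after $\delta$ is fixed — and that the cone constant $\alpha$, chosen once and for all in Section~\ref{ss.fake}, is already small enough for the graph transform to be a contraction against the slope bound coming from the angle of transversality along $K^E_W$ and $K^F_W$. Everything else is soft: compactness of $K^*_W$, the openness of transversality at a definite angle, and the Inclination Lemma (already invoked in the proof of Lemma~\ref{l.cpt.intersect}).
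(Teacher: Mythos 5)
Your proof of part (1) is essentially the paper's argument expanded out: contradiction, compactness of $K^E_W$, Proposition~\ref{p.fake.inv} giving $C^1$-convergence of fake $E$-leaves to $W^s_{\mathrm{loc},\cN}$, Lemma~\ref{l.cpt.intersect} providing the transversal intersection at $K^E_W$ at scale $(\delta/8,d_0/2)$, and openness of transversality at a definite angle to get the intersection at the slightly larger scale $(\delta/4,d_0)$. This is correct (one implicit point: the orbit segments $(x^n,t_n)$ in the contradiction sequence must satisfy assumption (1) of Lemma~\ref{l.hyptime}, so that the fake foliations $\cF^{E,(x^n,t_n)}$ are actually defined; in the context where this lemma is invoked, namely Proposition~\ref{p.spec}, this is built in).

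For part (2) you take a genuinely different route from the paper, and it is here that the gap you flag is real. The paper's argument is entirely soft: as $t\to\infty$, $D=\cP_{t,x}(D'_0)$ converges in $C^1$ (by domination of $E_N\oplus F_N$ and Lemma~\ref{l.udisk}) to the fake $F$-leaf $\cF^{F,(x,t)}_{y,\cN}(y,\delta)$, which by Proposition~\ref{p.fake.inv} (for $-X$) and compactness of $K^F_W$ is $C^1$-close to $W^u_{\mathrm{loc},\cN}$ of a point in $K^F_W$; the latter is transverse to $W^s_{d_0/2}(\gamma)$ at a definite angle by Lemma~\ref{l.cpt.intersect}, so $D$ inherits a transversal intersection with $W^s_{d_0}(\gamma)$ by openness of transversality. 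No graph transform is needed. Your graph transform argument runs into the circularity you identify: you bound $\|Dg\|_{C^0}$ by the fixed cone constant $3\alpha$ (set once and for all in Section~\ref{ss.fake}), while the slope bound $K$ for the stable-manifold graph depends on $\gamma$, $W$, $\delta$ and $d_0$, all chosen later, so there is no way to arrange $3\alpha K<1$ in advance. The fix is exactly the parenthetical you wrote and then dropped: by domination, $D$ is tangent to an $(\alpha',F_N)$-cone with $\alpha'\to 0$ as $t\to\infty$, so after enlarging $L$ the contraction constant $3\alpha' K$ is as small as you like, independently of the order in which $\alpha,K$ were chosen. With that repair your contraction argument becomes correct — but at that point you have effectively reproved $C^1$-convergence of $D$ to the fake $F$-leaf, which is what the paper uses directly. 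Your concern (a) about the $E$-displacement of $y'$ versus $d_0$ is not an issue: $d_0$ is determined once $W,\delta,\gamma$ are fixed, and $L$ is chosen last.
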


\begin{proof}
	Given $W$ and $\delta,$ we apply Lemma~\ref{l.cpt.intersect} to $X$ and $-X$ to find compact sets $K^*_W$, $*=E,F$ such that
	\begin{equation*}
		\begin{split}
			&W_{\delta/8,\cN}^s(y)\pitchfork W^{u}_{d_0/2}(\gamma)\ne\emptyset, \forall y\in K^E_W;\,\, \mbox{ and }\\
			&W_{\delta/8,\cN}^u(y)\pitchfork W^{s}_{d_0/2}(\gamma)\ne\emptyset, \forall y\in K^F_W.
		\end{split}
	\end{equation*}
	
	Case (1) of the Lemma follows from the continuity of transversal intersections, the compactness of $K^E_W$ and Proposition~\ref{p.fake.inv} on fake $E$ leaves converging to the stable manifold of points in $K^E_W$.

	Case (2) follows from the continuity of transversal intersections, the compactness of $K^E_W$ and Proposition~\ref{p.fake.inv} (applied to $-X$) on fake $F$ leaves converging to the unstable manifold of points in $K^F_W$.  Finally, note that as $t\to\infty$, $\cP_{t,x}(D_0')$ approximates the fake unstable leaf at $y=x_t$ of size $\delta$.
	
%
\end{proof}

The next lemma concerns the maximum gap size, namely the transition time $\tau$ from one orbit segment to the next. 
For this lemma, we use the notation $\pitchfork^N$ to denote the transversal intersection inside $\cN_\epsilon(z)$ between two submanifolds of $\cN_\epsilon(z)$.


\begin{lemma}\label{l.D_intersection}
	For isolating open neighborhood  $ W\subset  B_{r_0}(\Sing_\Lambda(X))$ and $\xi>0$ 
	small enough,  there exist $\zeta>0$ and
	$\tau>0$ with the following property:
	
	 Let $z\in U^E$ be any $(\lambda_0,E)-$forward hyperbolic time for the orbit segment $(z,t')$. Let $(x,t)$ with $t>L$ be such that $x$ is a $(\lambda_0,E)-$forward hyperbolic times, $y=x_t\in U^F$ is a $(\lambda_0,F)$-backward hyperbolic time for the orbit segment $(x,t)$ and $(\psi_t^*)$.
	Finally, Let $D_0$ be any $(3\alpha,F_N)$-disk centered at some $x'\in\cF_{x,\cN}^{E,(x,t)}(x, \delta/2)$ with size at least $\zeta$. Denote by $D'_0\subset D_0$ the smaller disk given by Lemma \ref{l.udisk} and $D:=\cP_{t,x}(D'_0)$. 
	Then there exists $s\in[0,\tau]$ such that $ \cP_z(f_s(D)\cap B_\epsilon(z))$ contains a $(3\alpha,F_N)-$disk $D''$ with size at least $\zeta$, centered at some $z'\in \cP_z\big(f_s(D)\cap B_\epsilon(z)\big)\pitchfork^N \cF^{E,(x,t)}_{z,\cN}(z,\delta/2)$.
\end{lemma}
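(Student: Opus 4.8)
\textbf{Proof plan for Lemma~\ref{l.D_intersection}.}

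The plan is to combine the uniform transversality established in Lemma~\ref{l.fake.intersect}(2) with the standard ``bridging through $\gamma$'' mechanism, and then use the Inclination Lemma to push a disk of uniform size back onto a normal plane near an infinite hyperbolic time $z$. First, by Lemma~\ref{l.udisk} and Lemma~\ref{l.fake.intersect}(2), the disk $D = \cP_{t,x}(D_0')$ contains a sub-disk $D'$ of size at least $\delta/4$ centered at $y' = \cP_{t,x}(x') \in \cF_{y,\cN}^{E,(x,t)}(x,\delta/2)$ whose closure transversally intersects $W^s_{d_0}(\gamma)$, with the angle of intersection bounded away from zero uniformly. Since $D'$ is (up to the exponential estimates of Lemma~\ref{l.hyp.fakeEleaf}, which control the time drift as in Proposition~\ref{p.timedrift}) a $(3\alpha,F_N)$-disk tangent to the $F_N$-cone, and since $W^s(\gamma)$ is transverse to the $F_N$-cone near $\gamma$, the forward flow of $D'$ will, by the Inclination Lemma applied along the periodic orbit $\gamma$, converge in the $C^1$ topology to the local unstable manifold $W^u_{loc}(\gamma)$. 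In particular, for some bounded time $s_1$ depending only on $d_0$, $\delta$, $\gamma$ and the uniform transversality angle, $f_{s_1}(D')$ contains an $F_N$-disk of size at least $2\delta$ (say) lying within a small $C^1$-neighborhood of $W^u_{loc}(\gamma)$.

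Next I would use Lemma~\ref{l.cpt.intersect} applied to $\gamma$: every point $z \in K^E_W$ has $W^s_{\delta/8,\cN}(z) \pitchfork W^u_{d_0/2}(\gamma) \neq \emptyset$ with angle bounded away from zero, and by Proposition~\ref{p.fake.inv} the fake leaf $\cF^{E,(x,t)}_{z,\cN}(z,\delta/2)$ is $C^1$-close to $W^s_{loc,\cN}(z)$ once the relevant hyperbolic time is long (here we use that $z \in U^E$ is a $(\lambda_0,E)$-forward hyperbolic time for $(z,t')$, and that $U^E$ is a neighborhood of the compact set $K^E_W$). Hence for $z \in U^E$ there is a point of $W^u_{loc}(\gamma)$ near which $\cF^{E,(x,t)}_{z,\cN}(z,\delta/2)$ crosses transversally with uniform angle; since $f_{s_1}(D')$ is $C^1$-close to $W^u_{loc}(\gamma)$ and has size at least $2\delta$, it too crosses $\cF^{E,(x,t)}_{z,\cN}(z,\delta/2)$ transversally. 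The transversal intersection point $z'$ and a $(3\alpha,F_N)$-sub-disk $D''$ of size at least $\zeta$ around it — for $\zeta$ chosen small enough depending on these uniform constants — then survive under the projection $\cP_z$ to the normal plane $\cN_\epsilon(z)$, because $z \notin W$ guarantees $\cP_z$ is a well-defined uniform-scale submersion with bounded distortion (Proposition~\ref{p.Fx}, Proposition~\ref{p.tubular3}). Setting $s = s_1$ (and absorbing into $\tau$ the uniform bound on all these intermediate times, plus the extra bounded time needed to flow from the exit of $\gamma$'s neighborhood to $\cN_\epsilon(z)$), we obtain the claimed $D'' \subset \cP_z(f_s(D) \cap B_\epsilon(z))$.

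The main obstacle I anticipate is ensuring that $\tau$ is genuinely \emph{uniform}, i.e.\ independent of the particular orbit segments $(z,t')$ and $(x,t)$ and of the disk $D_0$. The Inclination Lemma gives $C^1$-convergence, but the time needed to achieve a prescribed $C^1$-closeness to $W^u_{loc}(\gamma)$ depends a priori on how close the initial disk $D'$ is to $W^s(\gamma)$ and on its size — both of which are controlled here only through the uniform lower bound $\delta/4$ on its size and the uniform lower bound on the transversality angle coming from Lemma~\ref{l.fake.intersect}. One has to check that these uniform lower bounds are enough to extract a single $s_1$ working for all configurations; this is where the compactness of $K^E_W$, $K^F_W$ and of $\gamma$, together with the uniform-angle statements at the end of Lemma~\ref{l.fake.intersect} and Lemma~\ref{l.cpt.intersect}, do the work. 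A secondary technical point is tracking the time-reparametrization $\tau_{x,y}(\cdot)$ so that ``size at least $\zeta$'' in the normal plane $\cN_\epsilon(z)$ (as opposed to in the Bowen metric) is preserved; this is handled exactly as in Lemma~\ref{l.bowen1} and Proposition~\ref{p.timedrift}, using that all points involved stay outside $W$ so that $|X|$ is bounded below and Liao's tubular neighborhoods have uniform diameter.
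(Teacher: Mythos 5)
Your proposal is correct and follows essentially the same route as the paper: use Lemma~\ref{l.fake.intersect} to get uniform-angle transversal intersections at both ends, bridge through $\gamma$ via the Inclination Lemma with $\tau$ uniform thanks to the uniform angle and disk-size bounds, and project to $\cN_\epsilon(z)$ using that all relevant points lie outside $W$. The only cosmetic difference is that the paper invokes Lemma~\ref{l.fake.intersect}(1) where you re-derive the corresponding statement from Lemma~\ref{l.cpt.intersect} and Proposition~\ref{p.fake.inv}, and the paper explicitly flow-saturates $D'$ to $f_{[-\rho_0,\rho_0]}(D')$ and adjusts $s$ by at most $\rho_0$ to land the intersection point in $f_s(D')$, a step you absorb into the remark about time-reparametrization.
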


The proof resembles that of \cite[Lemma 8.6]{PYY23} with stable manifolds replaced by fake $E$ leaves.

\begin{proof}
	We will use the hyperbolic periodic orbit $\gamma$ as a ``bridge'' to build the transversal intersection.

	First, we apply Lemma~\ref{l.fake.intersect} (2) to obtain 
	$$
	\overline D'\pitchfork W^s_{d_0}(\gamma)\ne\emptyset,
	$$ where $\overline D'\subset D$ is the $\delta/4$-disk centered at $y' :=\cP_{t,x}(x')\subset \cF_{y,\cN}^{E,(x,t)}(x,\delta/2)$.
	The intersection happens at a point
	$\tilde y'\in  D$ that is $\delta/4$-close to $y'$, and at an angle $\theta_T$ that is bounded away from zero. Furthermore, there is a sub-disk $D'$ contained in $D$ centered at $\tilde y'$ with size at least  $\delta/2$. See Figure~\ref{f.spec1}.

	\begin{figure}[h!]
		\centering
		\def\svgwidth{\columnwidth}
		\includegraphics[scale=0.93]{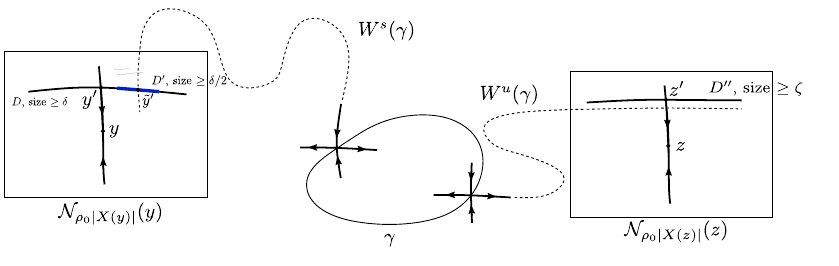}
		\caption{$\gamma$ as a ``bridge''.}
		\label{f.spec1}
	\end{figure}
	
	On the other hand, at $z\in U^E$ we have  
	$$
	W^u_{d_0}(\gamma)\pitchfork \cF^{E,(x,t)}_{z,\cN}(z,\delta/4)\ne\emptyset
	$$
	by Lemma~\ref{l.fake.intersect} (1). Note that both transversal intersections near $y$ and $z$ happen at uniform sizes. 
	Applying the Inclination Lemma (see, for instance,~\cite[Proposition 6.2.23]{Katok}),  there exists $\tau>0$ depending on $\delta$, $\alpha$ (the size of the cone field) and $\theta_T$ but not on $D$ and $y$, such that for some $s\in[0,\tau]$, $f_s(f_{[-\rho_0,\rho_0]}(D'))$ is  $\min\{\alpha, \delta/4\}$-approximated by $W^u_{2 d_0}(\gamma)\cap B_{\epsilon}(z)$ in $C^1$ topology, whose projection to the normal plane, namely $\cP_z\left(W^u_{2 d_0}(\gamma)\cap B_{\epsilon}(z)\right)$, is a disk with uniform size $\epsilon$ and has a transversal intersection with $\cF^{E,(x,t)}_{z,\cN}(z,\delta/4)$ inside $\cN_\epsilon (z)$. This shows that $\cP_z\left(f_s(D')\cap B_{\epsilon}(z)\right)$ is a disk with uniform size and has a transversal intersection with $\cF^{E,(x,t)}_{z,\cN}(z,\delta/2)$ inside $\cN_\epsilon(z)$. Changing $s$  by no more than $\rho_0$ (recall the definition of $\cP_{z}$ from~\eqref{e.Px1}), we may assume that the point of intersection is $z' \in f_{s}(D').$
	In addition,  $\cP_{z}(f_s(D')\cap B_\epsilon(z)) \subset \cN_\epsilon (z)$ contains a disk $D''$ centered at $z'$ with size at least $\zeta$, as desired.

\end{proof}

\begin{proof}[Proof of Proposition~\ref{p.spec}]
	At this point we have proven that \cite[Lemma 8.6]{PYY23} has a counterpart in out setting, namely Lemma~\ref{l.D_intersection}, which has (almost) identical statement and the same logical order in the choice of parameters. Then one only need to follow verbatim the proof of~\cite[Section 8, Proof of Theorem 5.2]{PYY23}. For this reason we shall not include the detailed proof below, but rather outline its structure.
	
	The proof is motivated by the  work of Bowen~\cite{B75_2}. Letting $(x^i,t_i)$, $i=1\ldots, N$	be a sequence of orbit segments satisfying the assumptions of Proposition~\ref{p.spec}, we start with a $(3\alpha,F)$-disk $D_1$ transversely intersecting $\cF^{E,(x,t)}_{x^1,\cN}(x^1,\delta/2)$.\footnote{Indeed one could take $D_1$  to be a sub-disk of $\cF^{F,(x,t)}_{x^1,\cN}(x^1,\delta/2)$.} We first use Lemma~\ref{l.udisk} to obtain a $(\alpha,F)$-disk $\tilde D_1$ in the normal plane of $(x^1)_{t_1}$ with size at least $\zeta_1.$ Then, applying Lemma~\ref{l.D_intersection} we obtain a disk $D_2$ transversely intersects $\cF^{E,(x,t)}_{x^2,\cN}(x^2,\delta/2)$ with uniform size $\zeta$. Furthermore, Lemma~\ref{l.D_intersection} provides a uniform bound on the iterates between $\tilde D_1$ and $D_2$. We are in a position to apply Lemma~\ref{l.udisk} to $D_2$. Recursively, we obtain a sequence of $(3\alpha, F)$-disks $D_k$ in the normal plane of $x^k$, and a sequence of $(\alpha,F)$-disks $\tilde D_k$ in the normal plane of $(x^k)_{t_k}$. They all have uniform sizes ($\zeta$ and $\zeta_1$, respectively) and have transversal intersection with the fake $E$-leaf of their respective reference point $x^k$ or $(x^k)_{t_k}$ at scale $\delta/2$ and angles uniformly bounded away from zero.  Moreover, each $D_{k+1}$ belongs to the forward image of $\tilde D_k$ (therefore, belongs to the forward image of $D_1$) with a uniformly bounded time of iteration. By Lemma~\ref{l.bowen1}, each $D_k$ contains a sub-disk $D_k'$ whose points are in the $(\delta,t_k)$-Bowen ball of $x^k$. This allows us to find a small disk in $D_1$, whose points shadows each orbit segment $(x^k,t_k)$ at scale $\delta$ and has uniformly bounded transition time between consecutive orbit segments. 
\end{proof}

Next we prove Proposition~\ref{p.nbhd} and consequently finish the proof of Theorem \ref{t.spec}. This is the only place where we need $\Lambda$ to be a chain recurrence class. In particular, we need the following result:

\begin{lemma}\label{l.crc}\cite{Con} (see also \cite{BGW})
	Every chain recurrence class $\Lambda$ admits arbitrarily small filtrating neighborhoods $U$, with the property that once an orbit leaves $U$, it will never come back to $U$.
\end{lemma}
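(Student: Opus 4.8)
The lemma is part of Conley's fundamental theory of dynamical systems, so the plan is to recall the relevant ingredients and assemble them rather than to prove anything new. Write $\mathfrak A$ for the collection of chain attractors of the flow $(f_t)$ on $\bM$, and for $A\in\mathfrak A$ let $A^*$ be its dual repeller. The two facts I would quote are: (i) a chain-transitive set is contained in $A$ or in $A^*$ for every pair $(A,A^*)$; and (ii) the chain recurrence class through a point equals the intersection of all attractors and all dual repellers containing that point. Applying these to our class $\Lambda$ gives
\[
\Lambda=\Big(\bigcap_{A\in\mathfrak A,\ \Lambda\subset A}A\Big)\cap\Big(\bigcap_{A\in\mathfrak A,\ \Lambda\subset A^*}A^*\Big),
\]
an intersection of closed sets. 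I would also use the standard fact that every attractor admits a neighborhood basis of \emph{attracting regions} --- open sets $N$ with $\overline{f_t(N)}\subset N$ for all $t>0$, so in particular positively invariant --- and, dually for $-X$, that every repeller admits a basis of \emph{repelling regions}.

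Next, given an arbitrary open neighborhood $V\supset\Lambda$, I would run a compactness argument. Since $\bM\setminus V$ is compact and disjoint from the right-hand side of the displayed equality, the open cover $\{A^c:\Lambda\subset A\}\cup\{(A^*)^c:\Lambda\subset A^*\}$ of $\bM\setminus V$ has a finite subcover, producing finitely many attractors $A_1,\dots,A_k$ and repellers $R_1,\dots,R_m$ containing $\Lambda$ with $A_1\cap\dots\cap A_k\cap R_1\cap\dots\cap R_m\subset V$. A further routine step (shrinking finitely many closed sets whose intersection lies in an open set to open supersets with the same property) gives open $W_i\supset A_i$ and $W_j'\supset R_j$ whose common intersection still lies in $V$. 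I would then pick attracting regions $N_i\subset W_i$ for $A_i$ and repelling regions $M_j\subset W_j'$ for $R_j$, and set $U=\bigcap_iN_i\cap\bigcap_jM_j$. This $U$ is open, contains $\Lambda$, is contained in $V$, and is a filtrating region, since a finite intersection of attracting regions is an attracting region ($\overline{f_t(\bigcap_iN_i)}\subset\bigcap_i\overline{f_t(N_i)}\subset\bigcap_iN_i$ for $t>0$), likewise for repelling regions, and an attracting region intersected with a repelling region is filtrating by definition. Since $V$ is arbitrary, this gives arbitrarily small filtrating neighborhoods.

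For the no-return property I would argue that $I(x):=\{t\in\RR:f_t(x)\in U\}$ is an interval for every $x$. Each $N_i$ is positively invariant and each $M_j$ negatively invariant, hence each $M_j^c$ is positively invariant. If $t_1<t_2<t_3$ with $f_{t_1}(x),f_{t_3}(x)\in U$ but $f_{t_2}(x)\notin U$, then $f_{t_2}(x)$ misses some $N_{i_0}$ or some $M_{j_0}$: the former contradicts $f_{t_1}(x)\in N_{i_0}$ together with positive invariance of $N_{i_0}$, while the latter contradicts $f_{t_3}(x)\in M_{j_0}$ together with positive invariance of $M_{j_0}^c$. So $I(x)$ is an interval, which is exactly the statement that once an orbit leaves $U$ it never returns. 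The only place requiring care is matching conventions --- using the continuous-time notion of attracting/repelling region that is both positively (resp. negatively) invariant and forms a neighborhood basis of the corresponding attractor/repeller, and invoking the correct form of Conley's decomposition, namely facts (i) and (ii). Granting those, everything reduces to the elementary compactness-and-invariance bookkeeping above, which is why we are content to cite \cite{Con, BGW}.
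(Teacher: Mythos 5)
Your argument is correct and is exactly the standard Conley-theory proof that the paper delegates to \cite{Con} and \cite{BGW}: represent $\Lambda$ as the intersection of the attractors and dual repellers containing it, extract a finite subfamily by compactness, and intersect corresponding attracting and repelling regions to get a filtrating neighborhood whose visit set $I(x)$ is an interval. The resulting $U=\bigl(\bigcap_i N_i\bigr)\cap\bigl(\bigcap_j M_j\bigr)$ also has precisely the form $U^+\cap U^-$ recorded in Remark~\ref{r.crc1}, so nothing further is needed.
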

\begin{remark}\label{r.crc1}
	The filtrating neighborhoods of $\Lambda$ have the form $U = U^+\cap U^-$ where $U^\pm$ are open sets such that 
	$$
	f_1(\Cl(U^+))\subset U^+, \,\, \mbox{ and } f_{-1}(\Cl(U^-))\subset U^-.
	$$ 
	See for instance \cite[Section 2, Section 3.2]{BGW}. Note that the inclusions above persist under $C^1$ perturbation of $X$, and hence each filtrating neighborhood $U$ remains a filtrating neighborhood for all nearby vector fields $Y$. 
\end{remark}

\begin{proof}[Proof of Proposition~\ref{p.nbhd}] 
	Let $U$ be the open neighborhood of $\Lambda$ as before. We may assume w.l.o.g.\ that $U$ is a filtrating neighborhood of $U$, keeping in mind Remark \ref{r.U}. Next, we take another filtrating neighborhood $U_1\subset U$ of $\Lambda$ such that (shrink $\delta$ in Proposition \ref{p.spec} when necessary):
	\begin{itemize}
		\item $B_{\delta}(\Lambda)\subset U_1$;
		\item $B_{\delta}(U_1)\subset U$.
	\end{itemize} 
	Below we prove the first item of Proposition \ref{p.nbhd}.
	
	Let $(x^i,t_i)$, $i=1,\ldots, n$ be a finite collection of orbit segments taken from $\Lambda\times\RR^+$ that satisfy the assumptions of Proposition \ref{p.spec}, and $(x,t)$ an orbit segment given by Proposition \ref{p.spec} that shadows each $(x^i,t_i)$ at scale $\delta$ with gap times $\tau_i,i=1,\ldots, n-1$ satisfying $\tau_i\le \tau.$ Define 
	$$
	s_i = \sum_{j=1}^{i} t_i + \sum_{j=1}^{i-1} \tau_i.
	$$
	From the construction of $U_1$, we only need to show that $(x_{s_i},\tau_i)\subset U_1$ for every $1\le i\le n-1$. However, this is an immediate consequence of $U_1$ being a filtrating neighborhood of $\Lambda$: if $(x_{s_i},\tau_i)$ escapes $U_1$ at any point, the forward orbit of $x_{s_i}$ can never come back to $U_1$, contradicting with the fact that $(x_{s_i+\tau_i},t_{i+1})\subset U_1$.
	
	The proof of the second item is analogous.
\end{proof}

\section{Robust uniqueness: proof of Theorem~\ref{m.C}}\label{s.opendense}
In this section we prove Theorem~\ref{m.C}.
 First, recall from Lemma \ref{l.generic} and Theorem \ref{t.dichotomy} 
	that $C^1$ generically, every non-trivial, isolated chain recurrence class $\Lambda$ that is multi-singular hyperbolic must be the homoclinic class of a periodic orbit $\gamma$ and therefore has positive topological entropy. Furthermore, by the main result of \cite{Abd}, there exists a small isolating neighborhood $U$ of $\Lambda$ such that for every $Y$ that is in a small  $C^1$ neighborhood $\mathcal U$ of $X$, the maximal invariant set of $Y$ in $U$, denoted by $\Lambda_Y$, is also a chain recurrence class.

Fix $\mathcal R\subset \mathscr X^1(\bM)$ the residual set such that the discussion above holds, and let $X\in\mathcal R$, $\Lambda\subset \bM$ a non-trivial isolated chain recurrence class. We take $\cU$ a $C^1$ small neighborhood of $X$, and $U$ a small isolating neighborhood of $\Lambda$.
The smallness of $\cU$ and $U$ will be changed a finite number of times in the proof below. For now, we assume that $\cU$ and $U$ are taken so that the discussion above holds; we also assume that Theorem~\ref{t.robust.hyp} and, consequently, Lemma \ref{l.nbhd.1}, \ref{l.robust.dom.spl}, \ref{l.robust.hyp} hold for every $Y\in\mathcal U$. Furthermore, every singularity $\sigma_Y$ in $U$ is hyperbolic and is the continuation of some singularity of $X$. 

Note that Assumption (A) of Theorem \ref{m.B} holds for $X$ and also for every $Y\in\cU$ (shrink $\cU$ if necessary), since $\Lambda_Y$ is a chain recurrence class and singularities being non-degenerate is a $C^1$ open condition. 

Let $\phi:\bM\to \RR$ be a H\"older continuous function that satisfies $\phi(\sigma)<P(\phi, X|_\Lambda)$, for all $\sigma\in\Sing_\Lambda(X)$. Then by Theorem \ref{m.A1} we see that (shrink $\cU$ if necessary)
$$
\phi(\sigma_Y)<P(\phi, Y|_{\Lambda_Y}), \forall \sigma_Y\in \Sing_{\Lambda_Y}(Y),
$$
since both sides vary continuously w.r.t.\ the vector field.
In other words, Assumption (C) also holds for every $Y\in\cU$.

Next, we claim that Assumption (B) of Theorem \ref{m.B} holds for $X\in\mathcal R$ (if necessary, replace $\cR$ by $\cR'\subset\cR$ which is also a residual set). This is because, being multi-singular hyperbolic implies that all periodic orbits are hyperbolic and have the same stable index. Then this statement is a direct consequence of the connecting lemma for chain recurrence classes \cite{Cr06} and \cite[Lemma 3.2]{GW03}.


However, note that Assumption (B) may not hold for $Y\in\cU$. This requires us to reproduce the argument in Section~\ref{s.mainthm.proof} and \ref{s.spec} for $Y$ without directly invoking Assumption~(B) of Theorem~\ref{m.B}. Note that Theorem~\ref{t.Bowen}, namely the Bowen property on $\cG_B$ (defined in the same way for $Y\in \cU$) remains valid in this case, since it does not require Assumption (B). The key step below is to prove a robust version of Theorem~\ref{t.spec}. See Theorem~\ref{t.spec.robust} below.

A remark on the notation in this section: since we will consider $C^1$ small perturbation of $X$, we use the following notations to highlight the dependence on the vector field:
\begin{itemize}
	\item $f^Y_t$ is the flow generated by the vector field $Y$; $x_{t,Y} = f_t^Y(x)$.
	\item $(x,t;Y)$ is the orbit segment of $Y$ starting at $x$ with time $t$.
	\item $\psi_{t,Y}, \psi^*_{t,Y}$ are the associated (scaled) linear Poincar\'e flow for $Y$.
	\item 
	$E^Y_N\oplus F^Y_N$ is the singular dominated splitting on the normal bundle of $Y$; they can be seen as the continuation of $E_N$ and $F_N$ for $X$; when no confusion is caused, we shall still write $E$ and $F$ for the invariant bundles of $Y$.
	\item For $*=E^Y,F^Y$, $(\lambda_0,*;Y)$-hyperbolic times are defined using $\psi^*_{t,Y}$.
	\item $\cF^{*,(x,t;Y)}_{x,\cN,Y}$, $*=E,F$ are the fake leaves for the orbit segment $(x,t;Y)$ defined for $Y$ and are contained in the normal plane $\cN^{Y}(x) = \exp_{x}\left(\langle Y(x)\rangle^\perp\right)$.
	\item  As before, $r_0>0$ and $U$ are given by Lemma \ref{l.nbhd.1} to \ref{l.robust.hyp}. 
	\item { Note that $W_r$ defined by \eqref{e.W.def} may not be flow-saturated by $Y$. To fix this issue we define, for each $\sigma\in \Sing_\Lambda(Y)$ and $0<r<r_0$,
	$$
	W_r^Y(\sigma) = \bigcup_{x\in B_{r }(\sigma)} \{x_{s, Y}:s\in(-t^-,t^+)\}.
	$$
	The proof of Theorem \ref{t.Bowen} remains unchanged. }
	\item {\ Lemma \ref{l.Elarge} and \ref{l.Flarge} only uses the fact that singularities in $\Lambda$ are Lorenz-like and hence also hold for $Y\in \cU$, with all constants uniform in $\cU$.}
	\item { We let $\beta_0$ be defined by \eqref{e.beta0'} as in Section \ref{s.bowen}. Note that $\rho_1$ and $K_1^*$ given by Proposition \ref{p.tubular3} can be chosen uniformly on $\cU$, and hence $\beta$ is also uniform on $\cU.$ }
\end{itemize}

\subsection{Robust Bowen property} 
The definition of $\cG_B(Y) = \cG_B(\lambda_0,\beta, r_0, W^Y_{r};Y)$ is the same as in Section \ref{sss.bowen}. The Bowen property on $\cG_B(Y)$ follows directly from Theorem \ref{t.Bowen}, whose assumptions are satisfied by every $Y\in\cU$ (with all $W_r$ replaced by $W_r^Y$). We recall that the main step in proving Theorem \ref{t.Bowen} is to show that every $y\in B_{t,\vep_B(x)}$ is scaled shadowed by the orbit of $x$ up to time $t$ (Proposition \ref{p.key}). With this in mind, it is easy to check that the all parameters in Theorem \ref{t.Bowen} are uniform in $\cU.$

\subsection{Robust specification}\label{ss.spec.robust}
Recall that all the results in Section \ref{ss.BS}, in particular Proposition~\ref{p.transversal}, apply to $X\in\cR$ and $\Lambda$. For any given isolating neighborhood $W\subset B_{r_0}(\Sing_\Lambda(X))$ and $\delta>0$ small, we let $K^*_W(X)$, $*=E,F$ be the compact subsets of $\Lambda^*(\lambda_0,X)\cap W^c$, $\gamma = \gamma_X$ a hyperbolic periodic orbit, and $d_0>0$ a constant given by Proposition~\ref{p.transversal} applied to $X$. Here $W$ should not be confused with $W_r^Y$ in the previous subsection: $W$ is chosen for $X$ once and for all, while $W_r^Y$ depends on $Y\in\cU$.


Now we are ready to define $\cG_S$ for $C^1$ vector fields $Y$ close to $X$. Let $\cU$ be a small $C^1$ neighborhood of $X$ which shall be specified later, and $U$ a small open isolating neighborhood of $\Lambda$ such that all previous discussions  (in particular Lemmas \ref{l.nbhd.1} to \ref{l.robust.hyp}) hold. For any isolating neighborhood $W$ of $\Sing_\Lambda(X)$,\footnote{Note that $W$ is also an isolating neighborhood of $\Sing_{\Lambda_Y}(Y)$ for $Y$ sufficiently close to $X$.} any open neighborhoods $U^*$ ($*=E,F$) of $K^*_W(X)$ and $Y\in\cU$, we define $\cG_S(Y) = \cG_S(\lambda_0,r_0,W, U^E,U^F;Y)$ as the collection of orbit segments $(x,t;Y)$,  $t\in\NN$, with the following properties:
\begin{enumerate}
	\item There exists $t_1,t_2\ge0$ such that $(x_{-t_1,Y}, t_1+t+t_2;Y)\subset \cO(U)$;
	\item $x\in U^E\cap W^c$ is a $(\lambda_0,E^Y;Y)$-forward hyperbolic time for the orbit segment $(x,t;Y)$ and $(\psi_{t,Y}^*)$;
	\item $x_{t,Y}\in U^F\cap W^c$ is a $(\lambda_0,F^Y;Y)$-backward hyperbolic time for the orbit segment $(x,t;Y)$ and $(\psi_{t,Y}^*)$;
\end{enumerate}

\begin{remark}
	It is worth pointing out that $U^E,U^F$ may not contain any infinite hyperbolic time of $Y$, since the set of infinite hyperbolic times varies upper semi-continuously w.r.t.\ the system. However, all the proofs below will only involve hyperbolic times of {\em finite orbit segments} of $Y$, which are close to infinite hyperbolic times of $X$ as long as $Y$ is close to $X$ and the time is sufficiently long. 
\end{remark}

The next theorem generalizes Theorem~\ref{t.spec} to nearby vector fields that do not necessarily satisfy Assumption (B) of Theorem~\ref{m.B}.

\begin{theorem}\label{t.spec.robust}
	Assume that the assumptions of Theorem~\ref{m.C}. Then, for $r_0>0$ sufficiently small, for every isolating open neighborhood $W\subset B_{r_0}(\Sing_\Lambda(X))$ and every $\delta>0$ sufficiently small, there exist open sets $U^*\supset K^*_W$, $*=E,F$ and a $C^1$ small neighborhood $\cU$ of $X$, such that for every $Y\in\cU$, there exists an open set $U_1\subset U$ such that
	\begin{enumerate}
		\item tail specification at scale $\delta$ holds on $\cG_S(\lambda_0,r_0,W, U^E,U^F;Y)\cap \cO(U_1)$ with shadowing orbits contained in $\cO(U)$; and  
		\item tail specification at scale $\delta$ holds on $\cG_S(\lambda_0,r_0,W, U^E,U^F;Y)\cap \Lambda_Y\times\RR^+$ with shadowing orbits contained in $\cO(U_1)$.
	\end{enumerate}

\end{theorem}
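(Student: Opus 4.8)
The strategy is to follow the blueprint of Theorem~\ref{t.spec} (i.e., Proposition~\ref{p.spec} and Proposition~\ref{p.nbhd}) while carefully isolating the two places where Assumption~(B) of Theorem~\ref{m.B} was invoked for $X$, and showing that their conclusions persist under $C^1$-small perturbation. The crucial point is that Proposition~\ref{p.transversal}, and hence the compact sets $K^*_W = K^*_W(X)$, the periodic orbit $\gamma=\gamma_X$, and the constant $d_0$, are all \emph{fixed once and for all for $X$}. What must be re-established for $Y\in\cU$ is: (i) the analogue of Lemma~\ref{l.fake.intersect}, namely that fake $E$-leaves of \emph{finite} $(\lambda_0,E^Y;Y)$-forward hyperbolic times based in $U^E$, and fake $F$-leaves of $(\lambda_0,F^Y;Y)$-backward hyperbolic times based in $U^F$, still intersect $W^u_{d_0}(\gamma_Y)$ and $W^s_{d_0}(\gamma_Y)$ transversally at uniform scale; and (ii) the Inclination Lemma ``bridging'' step of Lemma~\ref{l.D_intersection}, which is purely local near $\gamma_Y$ and carries over verbatim since $\gamma_Y$ is the hyperbolic continuation of $\gamma$ and its invariant manifolds vary continuously.

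\textbf{Key steps.} First, I would record that all of the ``non-(B)'' machinery is robust: Lemma~\ref{l.nbhd.1} through Lemma~\ref{l.robust.hyp}, the construction of fake foliations on normal planes (Section~\ref{sss.fakefoliation.nbhd}), the Pliss-time estimates of Section~\ref{s.Pliss}, Proposition~\ref{p.inv.mld} on invariant manifolds of infinite hyperbolic times, and Proposition~\ref{p.fake.inv} on fake leaves converging to those invariant manifolds --- each holds for $Y\in\cU$ with constants uniform on $\cU$, by the uniformity remarks already collected (Remark~\ref{r.Liao.robust2}, Theorem~\ref{t.robust.hyp}, and the explicit choice of $\beta_0$ via \eqref{e.beta0'} with $\rho_1,K_1^*$ uniform). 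Second, and this is the heart of the matter, I would prove the robust version of Lemma~\ref{l.cpt.intersect}/Lemma~\ref{l.fake.intersect}: fix the compact set $K^E_W(X)\subset\Lambda^E(\lambda_0,X)\cap W^c$ and the uniform transversal intersection $W^s_{\delta/8,\cN}(y)\pitchfork W^u_{d_0/2}(\gamma_X)\neq\emptyset$ for $y\in K^E_W(X)$. By Proposition~\ref{p.fake.inv}, for $X$ the fake $E$-leaf $\cF^{E,(x,t)}_{x,\cN}(x,\delta/4)$ of a long enough finite forward hyperbolic time $x$ near $K^E_W(X)$ already has this transversal intersection. Now observe that the fake leaf $\cF^{E,(x,t;Y)}_{x,\cN,Y}(x,\delta/4)$ depends $C^0$-continuously (as a $C^1$ embedding) on $Y$, $x$, and the orbit segment, provided the orbit segment stays in $U$ and away from $B_{r_0}(\Sing)$; this is because the Hadamard--Perron construction uses the scaled Poincaré maps $P^*_{1,z}$ and the splitting $E^Y_N\oplus F^Y_N$, both of which vary continuously with $Y$ in the relevant compact region. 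Likewise $W^u_{d_0}(\gamma_Y)$ varies $C^1$-continuously. Hence, by compactness of $K^E_W(X)$, the continuity of transversal intersection, and the Inclination Lemma (to absorb any slight shift of $\gamma_Y$ relative to $\gamma_X$), there are open neighborhoods $U^E\supset K^E_W(X)$ and $\cU\ni X$, and constants $L,d_0>0$ (after enlarging $d_0$), such that for every $Y\in\cU$, every $x\in U^E\cap W^c$ that is a $(\lambda_0,E^Y;Y)$-forward hyperbolic time for an orbit segment $(x,t;Y)$ of length $t>L$, one has $\cF^{E,(x,t;Y)}_{x,\cN,Y}(x,\delta/4)\pitchfork W^u_{d_0}(\gamma_Y)\neq\emptyset$ at angle bounded away from zero; symmetrically for $U^F$, $W^s_{d_0}(\gamma_Y)$, and $-Y$. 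The second assertion of Lemma~\ref{l.cpt.intersect} --- that $\mu(K^*_W)\geq\theta_0-3\beta_1$ for $f_1^Y$-invariant $\mu\in\cM_{\beta_1,W}$ --- is not actually needed here, since for the open-dense result $\cG_S(Y)$ is defined without recurrence Pliss times; the relevant density statement (``Pliss times appear near $K^*_W$'') is instead supplied by the robust analogue of Lemma~\ref{l.bipliss2}, whose proof is the same upper-semicontinuity-of-empirical-measures argument applied to $f_1^Y$ and uses only Proposition~\ref{p.transversal} \emph{for $X$} together with the fact that any weak-$^*$ limit of $Y_n$-empirical measures with $Y_n\to X$ is supported in $\Lambda$ (by upper semicontinuity of chain recurrence classes).

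\textbf{Conclusion of the argument.} With the robust Lemma~\ref{l.fake.intersect} and Lemma~\ref{l.D_intersection} in hand, the disk-iteration scheme of the proof of Proposition~\ref{p.spec} goes through verbatim for $Y\in\cU$: starting from a $(3\alpha,F^Y_N)$-disk transverse to $\cF^{E,(x^1,t_1;Y)}_{x^1,\cN,Y}(x^1,\delta/2)$, one alternately applies the robust version of Lemma~\ref{l.udisk} (Hadamard--Perron, needing only the exponential contraction along fake $E$-leaves from Lemma~\ref{l.hyp.fakeEleaf}, which is robust) and the robust Lemma~\ref{l.D_intersection} to produce a nested sequence of sub-disks shadowing $(x^1,t_1;Y),(x^2,t_2;Y),\dots$ at scale $\delta$ with uniformly bounded gap times $\tau_i\leq\tau$; Lemma~\ref{l.bowen1} (again robust, via Proposition~\ref{p.timedrift} with constants uniform on $\cU$) controls the time-reparametrization so that the shadowing is genuinely at scale $\delta$ in the Bowen metric. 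This establishes the specification statement for $\cG_S(Y)$ with shadowing orbits in $\cO(U)$. Finally, Proposition~\ref{p.nbhd} is reproduced using Lemma~\ref{l.crc} and Remark~\ref{r.crc1}: since $U$ is a filtrating neighborhood of $\Lambda$ and, by Remark~\ref{r.crc1}, remains filtrating for all nearby $Y$, one chooses $U_1\subset U$ with $B_\delta(\Lambda_Y)\subset U_1$ and $B_\delta(U_1)\subset U$ for all $Y\in\cU$ (shrinking $\cU$ and $\delta$), and the filtrating property forces the connecting orbit segments $(x_{s_i,Y},\tau_i;Y)$ to stay in $U_1$ (resp.\ the shadowing orbit to stay in $\cO(U)$ when the $(x^i,t_i;Y)$ come from $\cO(U_1)$). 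The main obstacle I anticipate is precisely the continuous dependence of the fake foliations $\cF^{*,(x,t;Y)}_{\cdot,\cN,Y}$ on the perturbation $Y$: although each individual fake leaf varies continuously, one needs this continuity to be \emph{uniform} over all admissible long orbit segments $(x,t;Y)\subset U$ with endpoints outside $B_{r_0}(\Sing)$, and one must check that the convergence in Proposition~\ref{p.fake.inv} (finite-length fake leaves $\to$ infinite-hyperbolic-time stable manifolds) is itself uniform and survives the replacement of $X$ by $Y$ --- this requires being careful that the index sets in the Hadamard--Perron theorem (Theorem~\ref{t.fakeleaves}), and the smallness constants $\iota,\upsilon,K$ in \eqref{e.tildeP}, can be fixed independently of $Y\in\cU$, which follows from the uniform bounds in Proposition~\ref{p.tubular3} but deserves an explicit verification.
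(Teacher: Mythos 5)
Your proposal is correct and follows essentially the same route as the paper: it isolates the two uses of Assumption~(B) (the transversal-intersection lemma and the Inclination-Lemma bridging step), proves a robust version of Lemma~\ref{l.fake.intersect} by combining continuity of $W^{s/u}(\gamma_Y)$ with upper semi-continuity of hyperbolic times and $C^0$-convergence of Hadamard--Perron fake leaves, and then repeats the disk-iteration argument of Proposition~\ref{p.spec} and the filtrating-neighborhood argument of Proposition~\ref{p.nbhd} via Lemma~\ref{l.crc} and Remark~\ref{r.crc1}. Your side remark about the measure estimate $\mu(K^*_W)\geq\theta_0-3\beta_1$ being deferred to a robust Lemma~\ref{l.bipliss2} is also in line with how the paper organizes things (that step appears in the proof of Theorem~\ref{m.C}, not of Theorem~\ref{t.spec.robust}), and the uniformity concern you flag at the end about the fake foliations' dependence on $Y$ is exactly the point the paper handles through Remark~\ref{r.Liao.robust2} and the observations listed in the proof of Lemma~\ref{l.fake.intersect.robust}.
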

The key point here is that the sets $W, K_W^*, U^*$ are chosen for $X$ and thus do not depend on $Y$. Meanwhile, the open neighborhood $\cU$ depends on $W$.

By the choice of $\cU$, $U$ and Theorem~\ref{t.robust.hyp}, $\Lambda_Y$ is multi-singular hyperbolic. For the sake of simplicity, we shall assume that $\lambda_0>1$, $\theta_0\in (0,1)$ are taken so that Lemma \ref{l.hyptime} holds for every $Y\in\cU$. This is possible because $\eta>1$ and $T_W>0$ given by Definition~\ref{d.multising} and Lemma \ref{l.robust.hyp} can be taken continuously w.r.t.\,$X$ in $C^1$ topology, a fact hidden in the proof of \cite[Theorem B]{CDYZ}. Meanwhile, $\theta_0$, the density given by the Pliss lemma (Theorem \ref{t.pliss}), only depends on $\lambda_0$, $\eta$ and the norm of $\psi^*_1$ which is also continuous in $C^1$ topology. We also take $\lambda_1$ by \eqref{e.lambda1}. With these choices, it is straightforward to check that Lemma \ref{l.udisk} and \ref{l.bowen1} apply to every $Y\in \cU$ (shrink $\cU$ if necessary), with all parameters taken independent of $Y$.\footnote{ Here one can slightly increase $r_0$ and shrink $\cU$ such that the prescribed set $W$ satisfies $W\subset B_{r_0}(\Sing_{\Lambda_Y}(Y))$.  }
	
Next we consider the counterpart of Lemma~\ref{l.fake.intersect} in this setting. In particular, we shall prove the following lemma.

{
\begin{lemma}\label{l.fake.intersect.robust}
	For every isolating open neighborhood $W$ of $\Sing_\Lambda(X)$, every $\delta>0$ sufficiently small and every hyperbolic  periodic orbit $\gamma\in\Lambda$ of $X$, there exist open neighborhoods $U^E, U^F$ with $K^E_W\subset U^E$ and $K^F_W\subset U^F$, constants  $L>0,d_0>0$, and a $C^1$ neighborhood $\cU$ of $X$ such that the following holds for every $\xi$ sufficiently small and every $Y\in\cU$ (here $\gamma_Y$ is the continuation of $\gamma$):
	\begin{enumerate}
		\item for every $x\in \Lambda_Y \cap U^E$ that is a $(\lambda_0,E^Y;Y)$-forward hyperbolic time for the orbit segment $(x,t;Y)$ and $(\psi_{t,Y}^*)$ with $t>L$, one has  
		$$
		\cF^{E, (x,t;Y)}_{x, \cN,Y}(x,\delta/4)\pitchfork W^{u}_{d_0,Y}(\gamma_Y)\ne\emptyset;
		$$
		\item for every $y\in U^F$, every $z\in \cF_{y,\cN,Y }^{E, (x,t;Y)}(y,\xi)$ and every $(\alpha,F^Y_N;Y)$-disk $D$ with radius $\delta/4$ centered at $z$, one has 
		$$
		D\pitchfork W^{s}_{d_0,Y}(\gamma_Y)\ne\emptyset.
		$$
	\end{enumerate}
	Furthermore, the angles of the transversal intersection in both cases are bounded away from zero.
\end{lemma}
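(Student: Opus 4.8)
\textbf{Proof strategy for Lemma~\ref{l.fake.intersect.robust}.} The plan is to upgrade Lemma~\ref{l.fake.intersect} to a robust statement by combining the $C^1$-robustness of the transversal intersections established for $X$ (Proposition~\ref{p.transversal}) with the continuity of the fake leaves under $C^1$ perturbation. The starting point is Proposition~\ref{p.transversal} applied to $X$: for the fixed isolating neighborhood $W$ and small $\delta$ we obtain compact sets $K^E_W, K^F_W$, the periodic orbit $\gamma$, and $d_0>0$ such that each $x\in K^E_W$ satisfies $W^s_{\delta,\cN}(x)\pitchfork W^u_{d_0}(\gamma)\ne\emptyset$ (and the symmetric statement for $K^F_W$), with intersection angles bounded away from zero by compactness. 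All of this data is chosen for $X$ and is frozen once and for all; only $\cU$ will be shrunk in terms of it.

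First I would handle case (1). By Proposition~\ref{p.fake.inv} (applied to $X$), for $x\in K^E_W$ the fake $E$-leaf $\cF^{E,(x,t)}_{x,\cN}(x,\rho_\textit{inv}|X(x)|)$ converges in the $C^0$ topology on $C^1$-embeddings to $W^s_{loc,\cN}(x)$ as the orbit length tends to infinity, uniformly on the compact set $K^E_W$ (this uses that $K^E_W$ consists of infinite hyperbolic times bounded away from $\Sing$, so the relative scale $\rho_\textit{inv}|X|$ is uniform). Since the fake leaves are constructed from the scaled sectional Poincar\'e maps $\tilde P^*_{1,\cdot}$, which depend continuously on the vector field in $C^1$ topology (Remark~\ref{r.Liao.robust2}, and the Hadamard--Perron construction is continuous in the cocycle), the leaf $\cF^{E,(x,t;Y)}_{x,\cN,Y}(x,\delta/2)$ for $x\in \Lambda_Y\cap U^E$ with $t>L$ large is $C^1$-close to a genuine stable-leaf of $X$ at a nearby point of $K^E_W$, provided $U^E$ is a sufficiently small neighborhood of $K^E_W$, $L$ is large, and $\cU$ is a sufficiently small neighborhood of $X$. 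The invariant manifold $W^u_{d_0,Y}(\gamma_Y)$ likewise depends continuously on $Y$. Then the robustness of transversal intersections (an open condition in $C^1$) and the uniform lower bound on the angle give $\cF^{E,(x,t;Y)}_{x,\cN,Y}(x,\delta/4)\pitchfork W^u_{d_0,Y}(\gamma_Y)\ne\emptyset$ with angle bounded below, after possibly enlarging $d_0$ by a factor and shrinking $\delta,\xi,U^E,\cU$. The order of quantifiers is: fix $W,\delta,\gamma$; get $K^*_W,d_0$ from Proposition~\ref{p.transversal}; choose $L$ large and $U^E,U^F$ small using Proposition~\ref{p.fake.inv} for $X$; finally choose $\cU$ small using continuity of $\tilde P^*$, the invariant manifolds, and openness of transversality. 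Case (2) is the same argument applied to $-Y$: here $z$ ranges over a fake $E$-leaf of $Y$ near $U^F$, which for $-Y$ is a fake $F$-leaf, and by Proposition~\ref{p.fake.inv} (for $-X$) it approximates the unstable manifold of a point of $K^F_W$; any $(\alpha,F^Y_N;Y)$-disk of radius $\delta/4$ centered on it is then $C^1$-close to such an unstable manifold, so it transversally meets $W^s_{d_0,Y}(\gamma_Y)$ by the same robustness.

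\textbf{Main obstacle.} The delicate point is the double limit: we need the fake leaves of a \emph{finite} orbit segment $(x,t;Y)$ of the \emph{perturbed} field $Y$ to be $C^1$-close to genuine invariant manifolds of $X$ at points of $K^*_W$. The length $t$ must be large (to approximate an infinite hyperbolic time) and simultaneously $Y$ must be close to $X$ — but how large $t$ must be could a priori depend on how close $Y$ is, and one must check the two estimates are compatible and can be decoupled. The resolution is that the contraction rate along the fake $E$-leaf at a $(\lambda_0,E;Y)$-hyperbolic time is $\lambda_1$, \emph{uniform} over $Y\in\cU$ (Lemma~\ref{l.hyp.fakeEleaf}, with $\lambda_1,\rho_1,\alpha_0$ uniform on $\cU$ by the remarks cited above), so the $C^0$-distance between $\cF^{E,(x,t;Y)}_{x,\cN,Y}(x,\delta/2)$ and the stable leaf of the corresponding point is controlled by $\lambda_1^{-L} + (\text{modulus of continuity of }\tilde P^* \text{ in }Y)$, and one picks $L$ first for the geometric term, then $\cU$ for the perturbation term. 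A second, more technical point is that $x\in\Lambda_Y$ need not be an infinite hyperbolic time of $Y$ at all (the set of infinite hyperbolic times is only upper semi-continuous), which is why the statement is phrased purely in terms of finite hyperbolic times of $Y$ together with the hypothesis $x\in U^E$; the finite-segment version of Proposition~\ref{p.fake.inv}, namely that the fake leaf of $(x^n,t_n)$ converges to $W^s_{loc,\cN}$ of the limit point, is exactly what makes this work, and it transfers to $Y$-orbit segments by continuity of all the Liao-theoretic constants in $\cU$.
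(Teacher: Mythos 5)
Your proof is correct and follows essentially the same route as the paper: freeze the data $(K^*_W,\gamma,d_0)$ from Proposition~\ref{p.transversal} applied to $X$, use the convergence of fake leaves of finite $Y$-orbit segments to genuine invariant manifolds of $X$ at points of $K^*_W$ (Proposition~\ref{p.fake.inv} and its $Y$-dependence via the Hadamard--Perron construction and the $C^1$-continuity of Liao's constants), and close with the openness of transversality. The only presentational difference is that you make the double limit $(t\to\infty, Y\to X)$ explicit via the exponential forgetting estimate $\lambda_1^{-L}$ plus a modulus of continuity, whereas the paper handles it implicitly through a sequential compactness formulation (``if $Y_n\to X$, $x^n\to x\notin\Sing(X)$, $t_n\to\infty$, then the fake leaves of $(x^n,t_n;Y_n)$ converge to $W^s_{\loc,\cN,X}(x)$''); these are two phrasings of the same content, and yours has the advantage of explaining concretely why the two smallness requirements can be decoupled. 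Your remark that $x\in\Lambda_Y$ need not be an infinite hyperbolic time of $Y$ — so that one must argue purely with finite hyperbolic times of $Y$ landing near $K^*_W$ — is exactly the subtlety the paper's fourth bullet (upper semi-continuity of hyperbolic times) is designed to address.
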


\begin{proof}
	The proof essentially follows the same lines as the proof of Lemma~\ref{l.fake.intersect}. Note that we still have 
	\begin{equation*}
		\begin{split}
			&W_{\delta/8,\cN,X}^s(y)\pitchfork W^{u}_{d_0/2,X}(\gamma)\ne\emptyset, \forall y\in K^E_W,\,\, \mbox{ and }\\
			&W_{\delta/8,\cN,X}^u(y)\pitchfork W^{s}_{d_0/2,X}(\gamma)\ne\emptyset, \forall y\in K^F_W,
		\end{split}
	\end{equation*}
	due to $X$ satisfying Lemma~\ref{l.cpt.intersect}. Then, observe that (shrink $\cU$ when necessary):
	\begin{itemize}
		\item one can take $\cU$ small enough so that $W$ is an isolating neighborhood of $\Sing_{\Lambda_Y}(Y)$ for every $Y\in\cU$; furthermore, $\frac{|X(x)|}{|Y(x)|}$ is close to one uniformly for $Y\in\cU$ and $x\notin W$;
		\item the invariant manifolds of $\gamma$ varies continuously in $C^1$ topology;
		\item the bundles $E^X$ varies continuously w.r.t.\,$X$ in $C^1$ topology; 
		\item hyperbolic times vary upper semi-continuously in $C^1$ topology: assume that $Y_n\to X$ in $C^1$ topology, $x^n$ is a $(\lambda_0,E^{Y_n};Y_n)$-forward hyperbolic time for the orbit segment $(x^n,t_n;Y_n)$  with $x^n\to x\notin \Sing(X)$ and $t_n\to\infty$, then $x$ is a $(\lambda_0,E^X;X)$-forward infinite hyperbolic time; this is due to the continuity of $\psi_{s,X}^*$ with respect to both the base point and the vector field;
		\item under the setting of the previous item, the fake leaves at $x^n$ for the vector field $Y_n$ converges to the stable manifold $W^s_{\loc, \cN,X}(x)$ in $C^0$ topology in the space of $C^1$ embeddings of $\dim E_N-$dimensional disks; this is because invariant manifolds given by the Hadamard-Perron Theorem
		vary continuously with respect to the system in $C^1$ topology.
	\end{itemize}
	Then the lemma follows from the robustness  of transversal intersections. Furthermore, one can shrink $U^*$ so that the desired property holds for every $Y\in\cU$.
\end{proof}

\begin{proof}[Proof of Theorem~\ref{t.spec.robust}]
	With Lemma~\ref{l.fake.intersect.robust} replacing Lemma~\ref{l.fake.intersect}, it is straightforward to verify that Lemma~\ref{l.D_intersection} holds when the vector field $X$ is replaced by $Y$. 
	
	At this point we have re-established all relevant lemmas in Section \ref{ss.spec.G} for the perturbed vector field $Y$, and one only need to repeat verbatim the proof of Proposition~\ref{p.spec} to obtain the specification property on $\cG_S(Y) = \cG_S(\lambda_0,r_0,W, U^E,U^F;Y)$. On the other hand, by Lemma \ref{l.crc} and Remark \ref{r.crc1}, $U_1$ and $U$, which are chosen for $X$, are still filtrating neighborhoods of $\Lambda_Y$ for the vector field $Y$. One can then repeat the proof of Proposition \ref{p.nbhd} to get that the shadowing orbits for orbit segments in $\Lambda_Y\times\RR^+$ (resp. $\cO(U_1)$) are in $\cO(U_1)$ (resp. $\cO(U)$), finishing the proof of Theorem \ref{t.spec.robust}.
\end{proof}

}

\subsection{Proof of Theorem~\ref{m.C}}\label{ss.thmC.proof}

Now that we have re-established the Bowen property on $\cG_B(Y)$ and tail specification on $\cG_S(Y)$, one can attempt to reproduce the argument in Section~\ref{ss.para} and \ref{ss.gap}. We only remark that the constant $a_0$ from Equation \eqref{e.gap1} can be chosen for $X$ but applies to all $Y$ sufficiently close to $X$ due to the continuity of the topological pressure as a function of the vector field. Similarly, $U$, the open neighborhood of $\Lambda$, is also chosen for $X$ but applies to all $Y$ close to $X$. This is because \eqref{e.hyp1} and \eqref{e.hyp2} hold for all nearby vector fields, as well as Remark \ref{r.crc1}.

The only issue is that Proposition~\ref{p.transversal} is stated for $X$ but not for nearby vector field $Y$,\footnote{Recall that $K^*_W$ are only defined for $X$ and may not even be contained in $\Lambda_Y$.} and we need this proposition in order to obtain the crucial Lemma~\ref{l.bipliss2}, i.e., to find simultaneous Pliss times of $Y$ inside $U^*$, $*=E,F$. Below we will prove a counterpart of Lemma~\ref{l.bipliss2} directly using a semi-continuity argument. 

All parameters below, e.g., $\theta_0,\lambda_0, \beta_1,\beta_0$ are taken in the same order as they appeared in Section~\ref{s.mainthm.proof}, and recall that they can be chosen uniformly for vector fields $Y$ in a small $C^1$ neighborhood $\cU$ of $X$.

{ 
\begin{lemma}\label{l.bipliss2.robust}
	For any isolating neighborhood $W\subset B_{r_0}(\Sing_\Lambda(X))$, $\delta>0$ and neighborhood $U^E$ of $K^E_{W}$, there exist a constant $T_U\ge T_W$ and a $C^1$ neighborhood $\cU$ of $X$, such that for every $Y\in\cU$, if $(x,t;Y)$ is an orbit segment in $\Lambda_Y$ satisfying
	\begin{enumerate}
		\item  there exist $t_1,t_2\ge 0$ such that $(x_{-t_1,Y}, t_1+t+t_2;Y)\in \cO(U)$ and $x_{-t_1,Y}\notin B_{r_0}(\Sing_\Lambda(X)), x_{t+t_2,Y}\notin B_{r_0}(\Sing_\Lambda(X))$;
		\item $x,x_{t,Y}\notin W$ and $t>T_U$;
		\item  the following holds:
		\begin{equation}\label{e.5.5a.Y}
			\frac{1}{\floor{t}+1}\#\left\{ i\in[0,\floor{t}]\cap\NN: x_{i,Y}\in W\right\} < \beta_1;
		\end{equation}
	\end{enumerate}
	then there exist $x_{i,Y}\in U^E, i\in[0,\floor{t}]\cap\NN$ such that $x_{i,Y}$ is a $(\lambda_0,E^Y,\beta_0,W;Y)$-simultaneous forward Pliss time for the orbit segment $(x_i, t-i;Y)$ and $(\psi_{t,Y}^*)$. 
	
	A similar statement holds for $(\lambda_0,F^Y,\beta_0,W;Y)$-simultaneous backward Pliss times.
\end{lemma}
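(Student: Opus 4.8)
The plan is to run the argument of Lemma~\ref{l.bipliss2} along a sequence of perturbations, transferring the conclusion of Proposition~\ref{p.transversal}---which is available only for $X$---to nearby $Y$ through a weak-$*$ limit of empirical measures. Suppose the statement fails. Then for each $n$ one can find $Y_n\in B_{1/n}(X)$ and an orbit segment $(x^n,t_n;Y_n)\subset\Lambda_{Y_n}$ with $t_n\ge n$ satisfying conditions (1)--(3) (with $\beta_1$ the fixed constant from Section~\ref{s.mainthm.proof}), yet with no $x^n_{i,Y_n}$, $i\in[0,\floor{t_n}]\cap\NN$, being a $(\lambda_0,E^{Y_n},\beta_0,W;Y_n)$-simultaneous forward Pliss time for $(x^n_i,t_n-i;Y_n)$; we may assume $t_n\in\NN$, and note $Y_n\to X$ in $C^1$.

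First I would apply Lemma~\ref{l.bihyptime}, which---by the uniformity of $\lambda_0,\theta_0,\beta_1$ and $\eta$ over $\cU$ recorded above---holds for each $Y_n$ with the same constants: since \eqref{e.5.5a.Y} holds, there are at least $\tfrac{998}{1000}\theta_0 t_n$ integers $i\in[0,(1-\tfrac{1}{1000}\theta_0)t_n]\cap\NN$ for which $x^n_{i,Y_n}$ is a $(\lambda_0,E^{Y_n},\beta_0,W;Y_n)$-simultaneous forward Pliss time for $(x^n_i,t_n-i;Y_n)$. Let $I_n\subset\bM$ be this finite set; each of its points lies in $W^c$ by Remark~\ref{r.RecPliss}, and the associated orbit segment has length at least $\tfrac{1}{1000}\theta_0 t_n\to\infty$. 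Consider the empirical measures $\mu_n=\tfrac1{t_n}\sum_{i=0}^{t_n-1}\delta_{x^n_{i,Y_n}}$, so $\mu_n(W)<\beta_1$ by (3). Passing to a subsequence, $\mu_n\to\mu$ in the weak-$*$ topology; because $f_1^{Y_n}\to f_1^X$ uniformly and $t_n\to\infty$, $\mu$ is $f_1^X$-invariant, is supported in $\Lambda$ by local maximality, and satisfies $\mu(W)\le\liminf_n\mu_n(W)\le\beta_1<\theta_0/1000$.

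Next I would take limits of the sets $I_n$. Put $\tilde I=\bigcup_n I_n$ and let $I$ be its set of accumulation points; $I$ is compact and contained in $W^c$ (which is closed), hence contains no singularity of $X$. If $p\in I$ is the limit (along a subsequence) of points $x^{n}_{i_n,Y_n}\in I_n$, then each such point is a $(\lambda_0,E^{Y_n})$-forward hyperbolic time for an orbit segment of length $\ge\tfrac1{1000}\theta_0 t_n\to\infty$, and since $p$ is regular the joint continuity of $\psi^*_{s,\cdot}$ in the base point and the vector field (the upper semi-continuity of hyperbolic times used in the proof of Lemma~\ref{l.fake.intersect.robust}, cf.\ Proposition~\ref{p.hyptime.cpt}) forces $p$ to be a $(\lambda_0,E^X;X)$-forward infinite hyperbolic time; thus $I\subset\Lambda^E_W(\lambda_0)$, the infinite hyperbolic times of $X$ outside $W$. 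Exactly as in Lemma~\ref{l.bipliss2}, a portmanteau argument gives $\mu(I)\ge\tfrac{998}{1000}\theta_0$: for any open $V\supset I$ one has $I_n\subset V$ for all large $n$, hence $\mu_n(\overline V)\ge\#I_n/t_n\ge\tfrac{998}{1000}\theta_0$ and so $\mu(\overline V)\ge\tfrac{998}{1000}\theta_0$, and one intersects over a decreasing sequence of such $V$ with $\bigcap\overline V=I$.

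Finally, applying Proposition~\ref{p.transversal}~(2) to $X$ and the measure $\mu$ (legitimate since $\mu(W)<\beta_1$ and $\mu$ is $f_1^X$-invariant) yields $\mu(\Lambda^E(\lambda_0)\setminus K^E_W)<2\beta_1$, whence $\mu(I\cap K^E_W)\ge\mu(I)-2\beta_1>\tfrac{996}{1000}\theta_0>0$. In particular $I\cap K^E_W\ne\emptyset$, and since $U^E$ is an open neighborhood of $K^E_W$, every neighborhood of a point of $I\cap K^E_W$ meets $\tilde I$; hence $I_n\cap U^E\ne\emptyset$ for some large $n$. But a point of $I_n\cap U^E$ is an $x^n_{i,Y_n}\in U^E$ that is a $(\lambda_0,E^{Y_n},\beta_0,W;Y_n)$-simultaneous forward Pliss time for $(x^n_i,t_n-i;Y_n)$, contradicting the choice of $(x^n,t_n;Y_n)$. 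This produces $T_U$ and $\cU$ with the required property; the backward statement follows by applying the same reasoning to $-X$ and $-Y$ (recall Remark~\ref{r.symmetry}). The main obstacle is the bookkeeping of the double limit---simultaneously in the base point and in the vector field---together with checking that the limit measure $\mu$ is genuinely $f_1^X$-invariant with $\mu(W)$ small while $I$ stays uniformly away from $\Sing_\Lambda(X)$; the latter is precisely what the $W$-avoidance built into recurrence Pliss times buys us, and beyond this the analytic content is the same as in Lemma~\ref{l.bipliss2}.
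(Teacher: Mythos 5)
Your proof is correct and follows essentially the same contradiction argument as the paper: take a sequence of failing $(Y_n, x^n, t_n)$, extract the sets $I_n$ of simultaneous forward Pliss times via Lemma~\ref{l.bihyptime} (with constants uniform over a $C^1$ neighborhood of $X$), pass to a weak-$*$ limit $\mu$ of the empirical measures, identify the accumulation set $I$ of $\bigcup_n I_n$ inside $\Lambda^E_W(\lambda_0)$ for $X$ by upper semi-continuity of hyperbolic times in the base point and the vector field, show $\mu(I)\ge\tfrac{998}{1000}\theta_0$ by the portmanteau argument, and invoke Proposition~\ref{p.transversal}(2) for $X$ to force $I\cap K^E_W\ne\emptyset$ and hence $I_n\cap U^E\ne\emptyset$ for large $n$. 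The small additional remarks you include (that $\mu$ is supported in $\Lambda$ by local maximality and that $I\subset W^c$ avoids singularities) are implicit in the paper's proof and make the argument marginally more self-contained, but do not constitute a different route.
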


\begin{proof}
	Towards a contradiction, assume that there exist some neighborhood $U^E$ of $K^E_W$, a sequence of times $t_n\nearrow+\infty$, a sequence of $C^1$ vector fields $Y_n\to X$ and a sequence of points $x^n$, for which we have
	\begin{itemize}
		\item $x^n,(x_n)_{t_n,Y_n}\notin W$;
		\item the following holds:
		\begin{equation}\label{e.5.5a.cont}
			\frac{1}{\floor{t_n}+1}\#\left\{ i\in[0,\floor{t_n}]\cap\NN: (x^n)_{i,Y_n}\in W\right\} < \beta_1.
		\end{equation}
	\end{itemize} 
	However, there does not exist any $(\lambda_0,E^{Y_n},\beta_0,W;Y_n)$-simultaneous forward Pliss time for the orbit segment $((x^n)_{i_n}, t_n-i_n;Y_n)$ and $(\psi_{t,Y_n}^*)$ that is contained in $U^E$.
	
	The proof below is similar to Lemma~\ref{l.bipliss2}, except that one need to get a contradiction using Proposition~\ref{p.transversal} applied to $X$. We define
	\begin{equation*}
		\begin{split}
			I_n^{Y_n} = \Bigg\{(x^n)_{i,Y_n}:\,\, & i\in \left[0,\left(1-\frac{1}{1000}\theta_0\right)t_n\right]\cap\NN, (x^n)_{i,Y_n} \mbox{ is a $(\lambda_0,E^Y, \beta_0,W;Y_n)$-}\\
			&\mbox{ simultaneous forward Pliss time for the orbit}\\
			&	\mbox {   segment  } \left((x^n)_{i,Y_n},t_n-i;Y_n\right)	\Bigg\}.
		\end{split}
	\end{equation*}
	Note that Lemma~\ref{l.bihyptime} does not rely on Assumption (B) of Theorem~\ref{m.B} and can be applied to $Y_n$. By the choice of $\beta_1$ by~\eqref{e.choice.beta1a} we have $\#I_n^{Y_n} \ge  \frac{998}{1000} \theta_0t_n$. 

	Now consider the empirical measures
	$$
	\mu_n^{Y_n}:= \frac{1}{t_n} \sum_{i=0}^{t_n-1}\delta_{(x^n)_{i,Y_n}}
	$$	
	where $\delta_y$ is the point mass of $y$. So we have $\mu_n^{Y_n}(I_n^{Y_n})\ge \frac{998}{1000} \theta_0$ by construction.
	By taking a subsequence if necessary, we may assume that  $\{\mu_n^{Y_n}\}$ converges in weak-* topology to an $f^X_1$-invariant measure $\mu^X$, where $f_1^X$ is the time-one map of $X$. Furthermore, \eqref{e.5.5a.cont} means that 
	$$
	\mu_n^{Y_n}(W) < \beta_1,\forall n>0. 
	$$
	As a result, 
	\begin{equation}
		\mu^X\left(B_{r_0}(\Sing_\Lambda(X))\right)\le \beta_1. 
	\end{equation}
	
	Define $\tilde I = \bigcup_n I_n^{Y_n}$, and denote by $I$ the set of all limit points of $\tilde I$. 
	Then $I$ is compact. 
	
	\medskip 
	\noindent Claim 1. Every $y\in I\cap\Reg(\Lambda)$ is a $(\lambda_0,E^X;X)$-forward infinite hyperbolic time; consequently $I\subset \Lambda^E(\lambda_0)$ where $\Lambda^E(\lambda_0)$ is defined for $X$. 
	
	\noindent Proof of Claim 1. Note that $\psi^*_{1,Y_n}$ converges to $\psi^*_{1,X}$ as $n\to\infty$ in operator norm; furthermore, $E^{Y_n}$ converges to $E^X$. This shows that for fixed $T>0$, if $\{y^n\}$ is a sequence of $(\lambda_0,E^{Y_n};Y_n)$-forward hyperbolic times for the orbit segment $(y^n,T;Y_n)$, then the limit point $y$ is a $(\lambda_0,E^{X};X)$-forward hyperbolic time for the orbit segment $(y,T;X)$. The choice of $0.999 t_n$ in the definition of $I^{Y_n}_n$ guarantees that the previous argument applies to orbit segments with arbitrarily large length. Therefore the limit point is an infinite hyperbolic time for $X$.
	
	\medskip 
	\noindent Claim 2. We have 
	$$
	\mu^X(I)\ge \frac{998}{1000}\theta_0.\,\, 
	$$
	
	\noindent Proof of Claim 2. Take any open neighborhood $U\supset I$, we have $I^{Y_n}_n\subset U$ for all $n$ large enough. Then we obtain $\mu_n^{Y_n}(\overline U)\ge \mu_n^{Y_n}(I^{Y_n}_n) \ge \frac{998}{1000}\theta_0$ for all $n$ large. Therefore the same inequality holds for $\mu^X(\overline U)$. Since $U$ is arbitrary, we get the same inequality for $I$ by taking a sequence of decreasing neighborhoods. 
	
	On the other hand, by Proposition~\ref{p.transversal} (2) applied to $X$ and Equation \eqref{e.5.5b} we see that 
	$$
	\mu^X\left(I\cap K^E_W\right)\ge \mu^X(I) - \mu^X\left(\Lambda^E(\lambda_0)\setminus  K^E_W\right)\ge \frac{996}{1000}\theta_0.
	$$
	Since $U^E$ is an open neighborhood of $K^E_W$, we must have $I^{Y_n}_n\cap U^E\ne\emptyset$ for $n$ large enough,  a contradiction.
	
\end{proof}

At this point we have re-established, for the vector field $Y\in\cU$, all major lemmas leading to Theorem~\ref{m.B}. One only need to repeat verbatim the argument in Section~\ref{ss.para} and \ref{ss.gap}, and Theorem~\ref{m.C} follows.

}

\section{Application: equilibrium states for singular star flows}\label{s.star}
In this section we apply Theorems \ref{m.A} to \ref{m.C} to singular star flows, and prove Theorem~\ref{mc.C} to~\ref{mc.A}. We will need the following lemma which verifies Assumption (B) of Theorem~\ref{m.B}.

\begin{lemma}\label{l.generic.HR}
	There exists a residual set $\cR$ in the space of $C^1$ star vector fields, such that for every $X\in\cR$ and every chain recurrence class $C$ of $X$, all periodic orbits in $C$ are pairwise homoclinically related. 
\end{lemma}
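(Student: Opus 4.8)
The plan is to realize $\cR$ as the intersection of a residual set of ``good'' star flows with a countable family of $C^1$ open and dense sets coming from the connecting lemma; recall that the space of $C^1$ star vector fields is $C^1$ open in $\mathscr X^1(\bM)$ and hence a Baire space. The argument is essentially the one used in Section~\ref{s.opendense} to verify Assumption~(B) of Theorem~\ref{m.B}, the only new point being that Theorem~\ref{t.BD} lets us arrange that \emph{every} chain recurrence class is multi-singular hyperbolic, so that all of its periodic orbits automatically have a common stable index.

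First I would take $\cR_0$ to be the intersection of: the residual set on which $X$ is Kupka--Smale (item (1) of Lemma~\ref{l.generic}); the residual set on which every chain recurrence class containing a periodic orbit equals the homoclinic class of that orbit (item (2) of Lemma~\ref{l.generic}); and the $C^1$ open and dense set of Theorem~\ref{t.BD} on which $\CR(X)$ lies in finitely many pairwise disjoint multi-singular hyperbolic filtrating regions. For $X\in\cR_0$ and any chain recurrence class $C$, the set $C$ is a compact invariant subset of one of those regions and is therefore multi-singular hyperbolic; by the remark following Definition~\ref{d.multising}, every periodic orbit in $C$ is then hyperbolic and all of them share the same stable index $i$.

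It remains to invoke the genericity statement that, $C^1$ generically among star flows, two hyperbolic periodic orbits of equal index that lie in a common chain recurrence class are homoclinically related. This follows from the connecting lemma for chain recurrence classes of flows (\cite{Cr06}; see also \cite[Lemma~3.2]{GW03} and \cite{BC}) via the Bonatti--Crovisier scheme: fixing a countable basis of $\bM$ and, for each finite family of basic open sets, letting $\cU_n$ be the set of those star flows such that, for every pair of hyperbolic periodic orbits $p,q$ of equal index lying in a common chain recurrence class and whose orbits meet the prescribed open sets, one has $W^u(p)\pitchfork W^s(q)\ne\emptyset$ and $W^s(p)\pitchfork W^u(q)\ne\emptyset$, one checks that $\cU_n$ is $C^1$ open (robustness of transversality) and $C^1$ dense (the connecting lemma produces, by an arbitrarily small perturbation, the required intersections, which the equal-index hypothesis forces to be transverse rather than parts of a heterodimensional cycle). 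Setting $\cR=\cR_0\cap\bigcap_n\cU_n$, for $X\in\cR$ and a non-trivial chain recurrence class $C$ one gets: if $C$ contains no periodic orbit there is nothing to prove, and otherwise all periodic orbits of $C$ have index $i$ and any two are homoclinically related by construction; for a trivial class the conclusion is read with the usual convention that a periodic orbit is homoclinically related to itself, matching the usage in Assumption~(B) of Theorem~\ref{m.B}.

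I expect the only real difficulty to be the density of $\cU_n$, i.e.\ performing a $C^1$ perturbation that creates the transverse homoclinic/heteroclinic intersections while keeping $p$ and $q$ in the same chain recurrence class; this is exactly what the connecting lemma for chain recurrence classes delivers, and the extra care needed in the singular setting is merely that the perturbations be carried out along regular orbit segments away from $\Sing(X)$, which is possible precisely because the star property and the coincidence of the two indices exclude heterodimensional tangencies or cycles obstructing the connection.
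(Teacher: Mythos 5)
Your proposal is correct and follows essentially the same route as the paper: generic coincidence of chain recurrence classes with homoclinic classes, equal stable index of periodic orbits in a class (the paper cites \cite{SGW} directly for this, while you derive it via Theorem~\ref{t.BD} and multi-singular hyperbolicity, a mild detour to the same fact), and then the connecting lemma for chain recurrence classes \cite{Cr06}, \cite[Lemma~3.2]{GW03} to produce the transverse intersections. Your extra detail in setting up the Bonatti--Crovisier countable intersection is exactly the standard mechanism the paper is implicitly invoking.
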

\begin{proof}
	Recall from Lemma~\ref{l.generic} that $C^1$ generically, any chain recurrence class that contains a  periodic orbit coincide with the homoclinic classes of this periodic orbit. Also note that the star properties guarantees that all periodic orbits in $C$ have the same stable index (see for instance \cite{SGW}). Then this lemma is a direct consequence of the connecting lemma for chain recurrence classes \cite{Cr06} and \cite[Lemma 3.2]{GW03}.
\end{proof}

\subsection{Proof of Theorem~\ref{mc.A}}\label{ss.maincorA}
\begin{proof}[Proof of Theorem \ref{mc.A}]
	As we have seen in Theorem~\ref{t.BD} (see also \cite[Theorem 1.8]{CDYZ}), $C^1$ open and densely, every star vector field $X$ is multi-singular hyperbolic. Also note that Theorem~\ref{m.A} does not require (chain) transitivity nor singularities being active. Therefore one can apply Theorem~\ref{m.A} to the chain recurrent set $\mbox{CR}(X)$ which is a compact invariant set to obtain that $X$ is robustly almost expansive at some scale $\vep_{\Exp}>0$, finishing the proof of Theorem~\ref{mc.A}.
\end{proof}

We remark that the only reason $C^1$ open and dense is involved in Theorem~\ref{mc.A} is because of Theorem~\ref{t.BD}. In particular, if one can prove that all star flows are multi-singular hyperbolic (such a proof is recently announced in dimension three; see \cite{BDJ}. Also see \cite[Question 1]{BD21} and \cite[Question 1.9]{CDYZ}), then one immediately obtains that all star flows are robustly almost expansive.

\subsection{Finiteness of equilibrium states for open and dense star flows: proof of Theorem~\ref{mc.C} and \ref{mc.C1}}\label{ss.maincorB}

We state the precise result on the finiteness of equilibrium states for star vector fields. Recall that $\mathscr{X}^{1,*}(\bM)$ is the collection of $C^1$ star vector fields, 
and $\mathscr{X}^{1,*}_\phi(\bM)$ is the set of  $C^1$ star vector fields such that 
\begin{equation}\label{e.pgap.global}
	\phi(\sigma)< P(\phi,X),\forall \sigma\in\Sing(X).
\end{equation}

%
%
%
%
%

\begin{theorem}\label{t.mc.B}
There exists a $C^1$ residual set $\cR\subset \mathscr{X}^{1,*}(\bM)$ such that  for every $X\in\cR$, every H\"older continuous function $\phi:\bM\to\RR$ satisfying \eqref{e.pgap.global} has only finitely many ergodic equilibrium states.
\end{theorem}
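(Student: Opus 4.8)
\textbf{Proof proposal for Theorem~\ref{t.mc.B}.}

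The plan is to assemble the finiteness statement from Theorems~\ref{mc.A}, \ref{m.A1}, \ref{m.C}, together with the structural dichotomy for star flows (Theorem~\ref{t.dichotomy}) and the genericity of homoclinic relatedness (Lemma~\ref{l.generic.HR}). First I would fix the residual set $\cR\subset\mathscr{X}^{1,*}(\bM)$ to be the intersection of: the residual set from Theorem~\ref{t.dichotomy}; the residual set from Lemma~\ref{l.generic.HR}; the residual set from Theorem~\ref{m.C}; and the $C^1$ open-dense set of multi-singular hyperbolic star flows from Theorem~\ref{t.BD} (intersected with its interior so that the whole thing is residual). For $X\in\cR$ and a H\"older potential $\phi$ with $\phi(\sigma)<P(\phi,X)$ for all $\sigma\in\Sing(X)$, note that by Theorem~\ref{mc.A} the flow is almost expansive, so by Proposition~\ref{p.existence} every continuous potential has at least one equilibrium state; thus the content is the \emph{upper} bound on their number.

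The second step is a localization: every ergodic equilibrium state $\mu$ of $\phi$ is supported on a single chain recurrence class $C$ (Poincar\'e recurrence), and since $X$ is multi-singular hyperbolic on $\CR(X)$, each such $C$ is multi-singular hyperbolic. If $C$ is trivial (a hyperbolic singularity or periodic orbit), it carries a unique ergodic invariant measure, contributing at most one equilibrium state, and there are only finitely many such $C$ (finitely many singularities by the star property; $C^1$ generically finitely many ``isolated'' periodic classes — actually we only need: trivial classes each carry one measure). If $C$ is non-trivial, Theorem~\ref{t.dichotomy} applies: either $h_{top}(X|_C)>0$, in which case $C$ is an isolated homoclinic class, or $h_{top}(X|_C)=0$, in which case $C$ supports \emph{no} ergodic measure other than point masses of singularities — and those point masses are not equilibrium states by \eqref{e.pgap.global} and Remark~\ref{r.Pineq2}. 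So non-trivial zero-entropy classes contribute nothing, and we are reduced to counting equilibrium states supported on non-trivial, isolated, positive-entropy (hence multi-singular hyperbolic) homoclinic classes.

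The third step is to bound the number of such classes and the number of equilibrium states on each. By Theorem~\ref{m.C}, for $X\in\cR$ every non-trivial isolated multi-singular hyperbolic class $\Lambda$ (which, by Lemma~\ref{l.generic.HR} and the discussion in Section~\ref{s.opendense}, satisfies Assumptions~(A),(B) of Theorem~\ref{m.B}, and satisfies (C) because $\phi(\sigma)<P(\phi,X|_\Lambda)$: here I would observe that $P(\phi,X)=\max_C P(\phi,X|_C)$ over chain recurrence classes, and on a positive-entropy class the inequality $\phi(\sigma)<P(\phi,X)$ must in fact be $\phi(\sigma)<P(\phi,X|_\Lambda)$ once we know $\Lambda$ realizes or nearly realizes the pressure — more carefully, for classes $\Lambda$ with $P(\phi,X|_\Lambda)=P(\phi,X)$ the gap condition transfers directly, and only such classes can carry equilibrium states of the global system) has a unique equilibrium state for $\phi$, robustly. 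To get finiteness of the \emph{number} of competing classes, I would use Theorem~\ref{mc.B}'s mechanism: fix $h\in(0,P(\phi,X)-\max_{\sigma}\phi(\sigma))$ small; only classes with topological entropy above some threshold can carry a measure whose pressure equals $P(\phi,X)$ (since a measure supported on a low-entropy piece has pressure at most $h+\sup\phi$, which by choice of $h$ is $<P(\phi,X)$ — this uses $\phi$ bounded and the variational principle on that class), and by Theorem~\ref{mc.B} there are only finitely many chain recurrence classes with entropy $>h$. Each such class contributes exactly one ergodic equilibrium state by Theorem~\ref{m.C}. Hence finitely many in total. To deduce Theorem~\ref{mc.C} and \ref{mc.C1} in the open-dense (not just residual) form one runs the same argument but invokes the $C^1$ openness built into Theorem~\ref{m.C} and the continuity of pressure (Theorem~\ref{m.A1}); the residual statement above is what the theorem as stated asks for.

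The main obstacle I anticipate is the bookkeeping that links the \emph{global} pressure gap \eqref{e.pgap.global} to the \emph{local} pressure gap \eqref{e.pgap} needed to apply Theorem~\ref{m.B}/\ref{m.C} on each class, and simultaneously controlling which classes can host equilibrium states: one must show that a class $\Lambda$ carrying an equilibrium state of the global flow satisfies $P(\phi,X|_\Lambda)=P(\phi,X)$ and hence inherits $\phi(\sigma)<P(\phi,X|_\Lambda)$, while classes with strictly smaller local pressure are irrelevant. This requires that $P(\phi,X)$ is attained on some chain recurrence class — which follows since any equilibrium state decomposes ergodically and each ergodic component lives on a single class, so some class attains the sup — and that there are only finitely many classes attaining it, which is where the entropy threshold plus Theorem~\ref{mc.B} enters. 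The remaining work (verifying Assumptions (A)--(C) on each relevant class, and that trivial classes contribute at most finitely many equilibrium states) is routine given the cited results.
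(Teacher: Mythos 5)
Your first two steps — setting up the residual set $\cR$ and the localization to chain recurrence classes via Theorem~\ref{t.dichotomy} — are correct and match the paper. The third step, however, takes a different route than the paper and contains a genuine gap.

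You try to bound the number of ``relevant'' classes by an entropy threshold and invoke Theorem~\ref{mc.B}. There are two problems. First, the arithmetic: you pick $h\in\bigl(0,P(\phi,X)-\max_{\sigma}\phi(\sigma)\bigr)$ but then argue that a measure on a class with $h_{\mathrm{top}}(X|_C)\le h$ has pressure $\le h+\sup\phi<P(\phi,X)$; this requires $h<P(\phi,X)-\sup\phi$, and in general $\sup\phi$ is strictly larger than $\max_{\sigma}\phi(\sigma)$ (indeed $P(\phi,X)-\sup\phi$ can be $\le 0$), so the chosen $h$ does not serve. Second, and more fundamentally, there is no uniform positive lower bound on the topological entropy of classes that can host an equilibrium state of $\phi$: Theorem~\ref{t.dichotomy} gives $h_{\mathrm{top}}(X|_C)>0$ for non-trivial classes carrying regular ergodic measures, but this entropy can be arbitrarily small along a sequence of distinct classes, and equilibrium states can also live on hyperbolic periodic orbits (zero entropy, trivial homoclinic class), which an entropy threshold cannot detect. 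So ``only classes with entropy $>h$ can carry an equilibrium state'' is false, and the finiteness from Theorem~\ref{mc.B} cannot be transferred to finiteness of equilibrium states this way.

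The paper instead avoids any counting of classes and argues by contradiction from a weak-* limit: if $\{\mu^i\}$ is an infinite family of distinct ergodic equilibrium states with $\mu^i\to\mu$, upper semi-continuity of metric entropy (Theorem~\ref{m.A} plus Bowen) shows $\mu$ is itself an equilibrium state; a typical ergodic component $\tilde\mu$ of $\mu$ is also an equilibrium state, supported on a chain recurrence class $\tilde C$. One then shows $\tilde C$ is isolated in both possible cases (a non-trivial positive-entropy homoclinic class via Theorem~\ref{t.dichotomy}, or a periodic orbit with trivial homoclinic class — the paper's footnote handles the latter using the star property and the Hausdorff-limit structure of generic chain recurrence classes). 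Since $\supp\tilde\mu\subset\limsup_n C_n$ and $\tilde C$ is isolated with a filtrating neighborhood, one gets $C_n=\tilde C$ for all large $n$, and then Theorem~\ref{m.B} (or triviality of the class) forces the $\mu^n$ to coincide for large $n$, contradiction. To repair your proposal you would need to replace the entropy-threshold counting with this limit/upper-semicontinuity mechanism; the rest of your structure (residual set, localization, the pressure-transfer observation that classes carrying equilibrium states satisfy $P(\phi,X|_\Lambda)=P(\phi,X)$) is sound and survives.
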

\begin{proof}
	Let $\cR$ be the residual set given by Theorem~\ref{t.dichotomy}. Passing to a generic subset, we may assume that for every $X\in\cR$ and every chain recurrence class $C$ of $X$, all periodic orbits in $C$ (if exist) are pairwise homoclinically related due to Lemma~\ref{l.generic.HR}. 
	
	For $X\in\cR*,$ let $\phi:\bM\to\RR$ be a H\"older continuous function satisfying~\eqref{e.pgap.global}, and $\mu$ be an ergodic equilibrium state of $\phi$.  Then from Section~\ref{ss.crc} we see that the support of $\mu$ is contained in a chain recurrence class $C$. We claim that if $C$ is non-trivial (i.e., it is not a singularity or a periodic orbit), then $h_{top}(X|_C)>0$; by Theorem~\ref{t.dichotomy} (1), this implies that $C$ is an isolated homoclinic class.
	
	Assume by contradiction that $C$ is non-trivial, yet $h_{top}(X|_C)=0$. Then Theorem~\ref{t.dichotomy} (2) shows that the only invariant measures support on $C$ are the point masses of singularities. By~\eqref{e.pgap.global}, such measures cannot be equilibrium states of $\phi$. This is a contradiction. 
	
	It remains to show the finiteness. Assume by contradiction that there exist infinitely many distinct, ergodic equilibrium states of $\phi$. We may take a sequence of such measures, which we denote by $\{\mu^i: i\in\NN\}$. Without loss of generality, we may assume that $\mu^i\to \mu$ in weak-$^*$ topology. By Theorem~\ref{m.A} and~\cite{B72}, the metric entropy $h_\mu(X)$ varies upper  semi-continuously as a function of $\mu$ in the weak-* topology. As a result, the metric pressure 
	$$
	P_\mu(X):= h_\mu(X) + \int \phi\,d\mu
	$$
	is also upper semi-continuous. This implies that 
	$$
	P_\mu(X) \ge \limsup_n P_{\mu^n}(\phi) = P(\phi),
	$$ 
	i.e., $\mu$ is also an equilibrium state although not necessarily ergodic. Take a typical ergodic component $\tilde\mu$ of $\mu$, then $\tilde\mu$ is an equilibrium state because the metric pressure is an affine map of the invariant measure. Note that $\supp\tilde\mu \subset \limsup_n C_n$ where $C_n$ is the chain recurrence class containing $\supp\mu^n$, and the limit is taken in the Hausdorff topology.  
	On the other hand, $\tilde\mu$ is supported on a chain recurrence class $\tilde C$. If $\tilde C$ is non-trivial, then the previous claim shows that it is an isolated homoclinic class, which is a contradiction; if it is trivial, then it cannot be a singularity due to~\eqref{e.pgap.global}; this means that it is a hyperbolic periodic orbit $\gamma$ whose homoclinic class is trivial. Either way, $\tilde C$ is isolated,\footnote{In the second case where $\tilde C = \gamma$, it is isolated due to the following reason. Assume by contradiction that $\{D_n\}$ is a sequence of distinct chain recurrence classes of $X$ converging to $\gamma$ in Hausdorff topology. By Lemma~\ref{l.generic} (3), each $D_n$, if non-trivial, must be the Hausdorff limit of periodic orbits. This shows that $\gamma$ is approximated in Hausdorff topology by periodic orbits $\gamma_n$ of $X$ that are different from $\gamma$. By the star property, for $n$ sufficiently large $\gamma$ and $\gamma_n$ must have the same index and are homoclinically related; this is impossible since the homoclinic class of $\gamma$ is assumed to be trivial.} meaning that $\tilde C=C_n$ for all $n$ large enough. However, this is impossible due to Theorem~\ref{m.B}, which states that $\tilde C$ supports a unique equilibrium state. This finishes the proof of Claim 1.
\end{proof}

\begin{theorem}\label{t.mc.B1}
	 For every H\"older continuous function $\phi:\bM\to\RR$,  there exists a $C^1$ residual set $\cR_\phi \subset  \mathscr{X}^{1,*}_\phi(\bM)$ such that for every $X\in \cR_\phi$, there exists a $C^1$ neighborhood $\cU_X$ such that every $Y\in\cU_X$ has only finitely many ergodic equilibrium states. Furthermore, the number of ergodic equilibrium states for $Y$ is no more than the number of ergodic equilibrium states for $X$.
\end{theorem}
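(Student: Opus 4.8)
The plan is to derive Theorem~\ref{t.mc.B1} by combining the robust uniqueness result (Theorem~\ref{m.C}), the structural dichotomy for generic star flows (Theorem~\ref{t.dichotomy}), the upper semi-continuity of metric entropy/pressure (Theorem~\ref{m.A} together with \cite{B72}), and the continuity of the topological pressure (Theorem~\ref{m.A1}). First I would fix a H\"older continuous $\phi$ and take the $C^1$ residual set $\cR$ from the proof of Theorem~\ref{t.mc.B} (generic Kupka--Smale star flows for which every chain recurrence class containing a periodic orbit is a homoclinic class with all periodic orbits pairwise homoclinically related, and for which the dichotomy of Theorem~\ref{t.dichotomy} holds). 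Set $\cR_\phi = \cR \cap \mathscr X^{1,*}_\phi(\bM)$; this is residual in $\mathscr X^{1,*}_\phi(\bM)$ since \eqref{e.pgap.global} is a $C^1$ open condition by Remark~\ref{r.Pineq1} (using that $P(\phi,X)$ varies continuously in $C^1$ topology, which follows from Theorem~\ref{m.A1} applied to $\CR(X)$).

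For $X \in \cR_\phi$, Theorem~\ref{t.mc.B} gives finitely many ergodic equilibrium states $\mu^1,\ldots,\mu^k$ of $\phi$; as in that proof, each $\supp\mu^j$ lies in a chain recurrence class $C_j$ which is either a non-trivial isolated multi-singular hyperbolic homoclinic class, or a hyperbolic periodic orbit whose homoclinic class is trivial (and hence isolated), with $C_1,\ldots,C_m$ the distinct such classes, $m\le k$. For each non-trivial $C_j$, Theorem~\ref{m.C} provides a $C^1$ neighborhood $\cU_j$ of $X$ and an isolating neighborhood $U_j$ of $C_j$ so that for every $Y\in\cU_j$ the maximal invariant set $\tilde\Lambda_{Y,j}$ in $U_j$ carries a \emph{unique} equilibrium state of $\phi$ for $Y|_{\tilde\Lambda_{Y,j}}$. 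For each trivial $C_j = \gamma_j$ a hyperbolic periodic orbit, choose an isolating neighborhood $U_j$ and a $C^1$ neighborhood $\cU_j$ in which $\gamma_j$ persists as a hyperbolic periodic orbit and remains the maximal invariant set in $U_j$; then $U_j$ supports exactly one $Y$-equilibrium state of $\phi$ (the $Y$-invariant measure on $\gamma_{j,Y}$), provided we also shrink $\cU_j$ so that $\phi(\sigma_Y) < P(\phi,Y)$ for all singularities $\sigma_Y$ (possible by continuity of the pressure). The key global step is then a limiting/compactness argument run for $Y$: set $\cU_X = \bigcap_{j=1}^m \cU_j$ and suppose, for contradiction, that some $Y_n \to X$ in $C^1$ with $Y_n$ having at least $k+1$ distinct ergodic equilibrium states $\nu^0_n,\ldots,\nu^k_n$. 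Passing to subsequences, $\nu^i_n \to \nu^i$ in weak-$^*$; by upper semi-continuity of the metric pressure (Theorem~\ref{m.A}, \cite{B72}) and continuity of $P(\phi,\cdot)$ in $C^1$ topology (Theorem~\ref{m.A1}), each $\nu^i$ is a (possibly non-ergodic) $X$-equilibrium state, hence — taking an ergodic component and using that $X\in\cR_\phi$ — is supported in $\bigcup_{j=1}^m C_j$.

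Next I would argue that for $n$ large all the $\nu^i_n$ are supported in $\bigcup_j U_j$: if not, extract a further subsequence of (ergodic components of) measures escaping $\bigcup_j U_j$, take a weak-$^*$ limit, obtain an $X$-equilibrium state supported outside $\bigcup_j U_j$, contradicting the classification of $X$-equilibrium states above. Since each $U_j$ is isolating and supports a unique $Y_n$-equilibrium state of $\phi$ for $Y_n \in \cU_X$, and since distinct $C_j$ have disjoint isolating neighborhoods, $Y_n$ can have at most $m \le k$ ergodic equilibrium states, contradicting the assumption of $k+1$. This proves both the finiteness on a $C^1$ neighborhood $\cU_X$ and the bound that the number of ergodic equilibrium states of $Y\in\cU_X$ is at most that of $X$. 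The main obstacle I anticipate is the escaping-mass control: ruling out that an ergodic equilibrium state of a nearby $Y_n$ ``leaks'' out of the neighborhoods $U_j$ toward a piece of the chain recurrent set of $X$ that is either a zero-entropy aperiodic sectional-hyperbolic class (where, by Theorem~\ref{t.dichotomy}(2), only point masses of singularities live, and these are excluded by \eqref{e.pgap.global}) or a trivial periodic orbit with a slightly larger ``ghost'' homoclinic class appearing only after perturbation. The first case is handled cleanly by Theorem~\ref{t.dichotomy} and the pressure gap; the second requires noting that although the homoclinic class of $\gamma_j$ can grow under perturbation, the robust uniqueness of Theorem~\ref{m.C} applies to whatever multi-singular hyperbolic isolated class $\gamma_{j,Y}$ sits in (and if that class is still trivial, the unique-equilibrium-state statement is immediate), so the count in $U_j$ stays at one. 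Assembling these observations, and using the continuity of $P(\phi,Y)$ to keep the gap condition \eqref{e.pgap.global} open, completes the proof of Theorem~\ref{t.mc.B1}.
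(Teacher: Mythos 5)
Your proposal follows the same overall route as the paper's own proof: classify the finitely many ergodic equilibrium states of $X$ (from Theorem~\ref{t.mc.B}) according to their supporting chain recurrence classes, apply Theorem~\ref{m.C} to obtain disjoint isolating neighborhoods $U_i$ and a $C^1$ neighborhood $\cU_X$ on which uniqueness is robust, and then argue by contradiction: any ``extra'' ergodic equilibrium state of a nearby $Y_n$ must escape $\bigcup_i U_i$, and a weak-$^*$ limit (using robust almost expansivity to get upper semi-continuity of metric pressure) would be a new ergodic equilibrium state of $X$, a contradiction. This is exactly the argument the paper gives.

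Two points deserve attention. First, your choice of residual set $\cR_\phi = \cR \cap \mathscr X^{1,*}_\phi(\bM)$ omits the further intersection with the residual set $\tilde\cR_\phi$ of continuity points of the map $Y \mapsto P(\phi,Y)$; the paper takes $\cR_\phi = \tilde\cR_\phi \cap \cR \cap \mathscr X^{1,*}_\phi(\bM)$. This continuity at $X$ is used both to ensure the pressure-gap condition \eqref{e.pgap.global} holds on a $C^1$ neighborhood of $X$ and, more importantly, in the final compactness step, where one needs $P(\phi,Y_n) \to P(\phi,X)$ (not merely $\limsup P(\phi,Y_n) \le P(\phi,X)$) to conclude that the limit of the escaping equilibrium states is an $X$-equilibrium state. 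Your justification via ``Theorem~\ref{m.A1} applied to $\CR(X)$'' is loose: Theorem~\ref{m.A1} is stated for a single isolated multi-singular hyperbolic set with positive topological entropy, and applying it to all of $\CR(X)$ requires invoking Theorem~\ref{t.BD} and addressing chain recurrence classes of $X$ that might have zero entropy. Passing to the residual set of continuity points, as the paper does, sidesteps this cleanly.

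Second, and more substantially, your treatment of a trivial class $C_j = \gamma_j$ (an isolated hyperbolic periodic orbit with trivial homoclinic class) has a gap. You assert that one can choose $U_j$ and $\cU_j$ so that $\gamma_{j,Y}$ ``remains the maximal invariant set in $U_j$'' for every $Y\in\cU_j$, and, as a fallback, that ``the robust uniqueness of Theorem~\ref{m.C} applies to whatever multi-singular hyperbolic isolated class $\gamma_{j,Y}$ sits in.'' Neither statement is justified. Under arbitrarily small $C^1$ perturbations of $X$, transversal homoclinic intersections of $\gamma_{j,Y}$ can be created, so the maximal invariant set of $Y$ in $U_j$ can strictly contain $\gamma_{j,Y}$ and even contain a horseshoe. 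And Theorem~\ref{m.C} is stated for non-trivial, isolated, multi-singular hyperbolic chain recurrence classes of $X\in\cR$; it does not apply to chain recurrence classes of an arbitrary nearby $Y\notin\cR$, so you cannot invoke it for the class of $Y$ containing $\gamma_{j,Y}$. (To be fair, the paper's proof is also terse here, simply applying Theorem~\ref{m.C} to every $C_i$; the correct treatment of the trivial case requires either showing that such classes do not carry equilibrium states for $X\in\cR_\phi$, or a separate argument bounding the number of equilibrium states of $Y$ near $\gamma_j$ -- your proposal identifies the issue but the argument you give to dispose of it does not close the gap.)
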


\begin{proof}
	By Theorem \ref{mc.A}, there exists a $C^1$ open and dense subset $\cU\subset \mathscr{X}^{1,*}(\bM)$, such that for every continuous function $\phi:\bM\to \RR$, the topological pressure $P(\phi,X)$ varies upper semi-continuously w.r.t. $X\in\cU$ in $C^1$ topology. Therefore there exists a $C^1$ residual subset $\tilde \cR_\phi\subset \cU$ consists of the point of continuity of $P(\phi,X)$ (as a function of $X$). In particular, \eqref{e.pgap.global} holds in a $C^1$ neighborhood of $X$.

	Let $\cR_\phi = \tilde \cR_\phi \cap \cR\cap \mathscr{X}^{1,*}_\phi(\bM)$ where $\cR$ is given by Theorem \ref{t.mc.B}. Every $X\in \cR_\phi$ has only finitely many ergodic equilibrium states $\mu_{1,X},\ldots, \mu_{N,X}$. Suppose that each $\mu_{i,X}$ is contained in the chain recurrence class $C_i$. Then $C_i$, $i=1,\ldots, N$ are all distinct due to Theorem~\ref{m.B}. 
	
	For $i=1,\ldots, N$, let $\cU_i$ be the $C^1$ neighborhood of $X$ and $U_i$ the open neighborhood of $C_i$ given by Theorem~\ref{m.C}. W.l.o.g.\ we may assume that $U_i$ are pairwise disjoint. Define $\tilde \cU = \bigcap_i \cU_i$ an open neighborhood of $X$. We claim that some open subset $\cU\subset \tilde \cU$ that contains $X$ has the desired property. 
	
	Towards a contradiction, assume that $\{Y_n\}\subset \tilde\cU$ is a sequence of vector fields converging to $X$ in $C^1$ topology, such that  $\phi(\sigma)< P(\phi,Y_n),\forall \sigma\in\Sing(Y_n)$ and for every $n$; however, each $Y_n$ has at least $N+1$ ergodic equilibrium states. By Theorem~\ref{m.C}, each $Y_n$ can have at most one equilibrium state in each $U_i$. Consequently, each $Y_n$ has at least one ergodic equilibrium state outside $\bigcup_i U_i$. Denote this equilibrium state by $\mu^n_{Y_n}$, and assume by taking a subsequent that $\mu^n_{Y_n}\to\mu$ in weak-*. By Theorem~\ref{m.A}, the metric entropy is an upper semi-continuous function w.r.t.\,both the measure and the vector field; also recall that $P(\phi,X)$ is continuous at $X$. Therefore $\mu$ is an equilibrium state for $X$. However, the support of $\mu$ does not intersect with $\supp \mu_{i,X}$, $i=1,\ldots, N$. So $X$ has at least $N+1$ ergodic equilibrium states. This is a contradiction.
\end{proof}

Note that Theorem \ref{mc.C} is a special case of Theorem \ref{mc.C1} with $\phi\equiv 0$. Therefore we will only prove  Theorem \ref{mc.C1}.

\begin{proof}[Proof of Theorem~\ref{mc.C1}]
Given a H\"older continuous function $\phi:\bM\to \RR$, let $\cR_\phi\subset \mathscr{X}^{1,*}_\phi(\bM)$ be given by Theorem \ref{t.mc.B1}. Let $\cU = \bigcup_{X\in \cR_\phi} \cU_X$, then the conclusion of Theorem \ref{mc.C1} holds on $\cU$ due to Theorem \ref{t.mc.B1}.
\end{proof}

\subsection{On the number of chain recurrence classes with positive  entropy: proof of Theorem \ref{mc.B}} {
We state precisely the result to be proven in this section, which immediate leads to Theorem \ref{mc.B}.
\begin{theorem}\label{t.mc.B0}
	There exists a residual set $\cR_1\subset \mathfrak{X}^{1,*}_+(\bM)$ such that
	\begin{enumerate}
		\item For every $X\in\cR_1$ and every $h>0$, there are only finitely many chain recurrence classes with topological entropy at least  $h$. Denote this number by $N^{\ge h}(X)$.
		\item Given $X\in\cR_1$ and $h>0$, there exists a $C^1$ neighborhood $\cU_X\subset \mathfrak{X}^{1,*}_+(\bM)$  such that  every $Y\in \cU_X$  has only finitely many chain recurrence classes with topological entropy greater than $h$. Denoting this number by $N^{\ge h}(Y)$, we have 
		$$
		N^{\ge h}(Y) \le N^{\ge h}(X).
		$$
	\end{enumerate} 
	In both cases, each such chain recurrence class $C$ supports a unique measure of maximal entropy for $X|_C$.
\end{theorem}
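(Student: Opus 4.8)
The plan is to derive Theorem~\ref{t.mc.B0} as a consequence of Theorem~\ref{t.dichotomy}, Theorem~\ref{m.A}, Theorem~\ref{m.B}, and Theorem~\ref{m.C}, by an argument parallel to the proof of Theorem~\ref{t.mc.B} and Theorem~\ref{t.mc.B1} with the potential function $\phi\equiv 0$. Fix the residual set $\cR$ from Theorem~\ref{t.dichotomy}, intersected with the residual set from Lemma~\ref{l.generic.HR} (so that all periodic orbits in each chain recurrence class are pairwise homoclinically related) and with the residual set from Lemma~\ref{l.generic} (so that every non-trivial chain recurrence class is a Hausdorff limit of periodic orbits, and each such class containing a periodic orbit is its homoclinic class). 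Call this residual set $\cR_1$. The key observation is that for $X\in\cR_1$, by Theorem~\ref{t.dichotomy}, any chain recurrence class $C$ with $h_{top}(X|_C)>0$ must contain a periodic orbit and be an isolated homoclinic class; moreover, being multi-singular hyperbolic with all periodic orbits homoclinically related, Assumptions (A) and (B) of Theorem~\ref{m.B} hold, and Assumption (C) holds trivially for $\phi\equiv 0$ provided $h_{top}(X|_C)>0$ (since $P(0,X|_C)=h_{top}(X|_C)>0 = \phi(\sigma)$). Therefore $X|_C$ has a unique MME.

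\textbf{Finiteness (item (1)).} To prove there are only finitely many chain recurrence classes with topological entropy $\ge h$ for $X\in\cR_1$, I would argue by contradiction: suppose $\{C_n\}$ is an infinite sequence of distinct chain recurrence classes with $h_{top}(X|_{C_n})\ge h$, each carrying its unique MME $\mu_n$, so $h_{\mu_n}(X)\ge h$. Pass to a weak-$^*$ limit $\mu$ of $\{\mu_n\}$. By Theorem~\ref{m.A} together with \cite{B72}, the metric entropy is upper semi-continuous, so $h_\mu(X)\ge \limsup_n h_{\mu_n}(X)\ge h>0$; in particular $\mu$ is not a combination of point masses on singularities. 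Take an ergodic component $\tilde\mu$ of $\mu$ with positive entropy (which exists since entropy is affine), supported in a chain recurrence class $\tilde C$. Since $h_{top}(X|_{\tilde C})\ge h_{\tilde\mu}(X)>0$, Theorem~\ref{t.dichotomy}(1) shows $\tilde C$ is an isolated homoclinic class. On the other hand, $\supp\tilde\mu\subset\supp\mu\subset\limsup_n C_n$ (Hausdorff limit), which is contained in some chain recurrence class by upper semi-continuity of chain recurrence classes; combined with isolatedness of $\tilde C$ we get $\tilde C = C_n$ for all large $n$, contradicting the distinctness of the $C_n$. This proves item (1), and we set $N^{\ge h}(X)$ to be this finite number.

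\textbf{Upper semi-continuity (item (2)).} Given $X\in\cR_1$ and $h>0$, let $C_1,\dots,C_N$ (with $N=N^{\ge h}(X)$) be the chain recurrence classes with topological entropy $\ge h$. Each is a non-trivial isolated multi-singular hyperbolic class, so by Theorem~\ref{m.C} there exist a $C^1$ neighborhood $\cU_i$ of $X$ and a pairwise disjoint isolating neighborhood $U_i$ of $C_i$ such that the maximal invariant set $\tilde\Lambda_{Y,i}\subset U_i$ of any $Y\in\cU_i$ has a unique equilibrium state for $\phi\equiv 0$, i.e.\ a unique MME, and (by Theorem~\ref{m.A1}) $h_{top}(Y|_{\tilde\Lambda_{Y,i}})$ is close to $h_{top}(X|_{C_i})\ge h$. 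Shrinking, we may assume this entropy exceeds $h$ for all $Y\in\cU_i$. Set $\cU_X=\bigcap_i\cU_i$ (further intersected with the $C^1$ open dense set of star flows that are multi-singular hyperbolic from Theorem~\ref{t.BD}). Now argue by contradiction: if $Y_n\to X$ with $Y_n\in\cU_X$ each having at least $N+1$ chain recurrence classes of entropy $\ge h$, then (since each $U_i$ is a filtrating region, hence $\tilde\Lambda_{Y_n,i}$ is a chain recurrence class of $Y_n$ by the choice of $U_i$) each $Y_n$ has at least one chain recurrence class $D_n$ of entropy $\ge h$ disjoint from $\bigcup_i U_i$; take its MME $\nu_n$ with $h_{\nu_n}(Y_n)\ge h$, pass to a weak-$^*$ limit $\nu$, and use upper semi-continuity of metric entropy in both the measure and the vector field (Theorem~\ref{m.A} and \cite{B72}) to get $h_\nu(X)\ge h>0$; an ergodic component $\tilde\nu$ of positive entropy is supported in a chain recurrence class $\tilde D$ of $X$ with $h_{top}(X|_{\tilde D})>0$, which is disjoint from $C_1,\dots,C_N$ by construction, contradicting the fact that the $C_i$ exhaust the chain recurrence classes of entropy $\ge h$. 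Hence $N^{\ge h}(Y)\le N^{\ge h}(X)$ for $Y\in\cU_X$, and each such class carries a unique MME by Theorem~\ref{m.C}. This completes the proof; Theorem~\ref{mc.B} follows directly by taking the union of the neighborhoods $\cU_X$ over $X\in\cR_1$.

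\textbf{Main obstacle.} The delicate point is the ``escaping class'' argument in item (2): ruling out that a new chain recurrence class of entropy $\ge h$ can appear for $Y_n$ arbitrarily close to $X$ outside the prescribed neighborhoods $U_i$. This hinges on the uniform lower bound $h$ on the entropy of $\nu_n$ surviving the passage to the limit, which is exactly where the robust upper semi-continuity of the metric entropy (Theorem~\ref{m.A}, via robust almost expansivity with uniform scale, combined with \cite{B72}) is essential — without a scale of expansivity that is uniform in a $C^1$ neighborhood, the limit measure $\nu$ could lose entropy and the contradiction would collapse. The remaining ingredients (isolatedness from Theorem~\ref{t.dichotomy}, uniqueness of the MME from Theorems~\ref{m.B} and \ref{m.C}, and filtrating neighborhoods so that the local maximal invariant sets are genuine chain recurrence classes) are by now standard in this circle of ideas.
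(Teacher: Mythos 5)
Your Part~(1) is fine and structurally equivalent to the paper's argument (you pass to a weak-$^*$ limit of MMEs and use $C^1$-robust almost expansivity to get upper semi-continuity of metric entropy; the paper passes to a Hausdorff limit of the classes and uses continuity of topological entropy via Theorem~\ref{m.A1} — the two routes are interchangeable). The genuine gap is in Part~(2), in the parenthetical step ``since each $U_i$ is a filtrating region, hence $\tilde\Lambda_{Y_n,i}$ is a chain recurrence class of $Y_n$.'' This does not follow: a filtrating neighborhood of an isolated class $C_i$ only guarantees that the maximal invariant set inside it is isolated and equals $C_i$ \emph{for} $X$; after a $C^1$ perturbation to $Y$, the maximal invariant set $\tilde\Lambda_{Y,i}\subset U_i$ can a priori split into \emph{several} distinct chain recurrence classes of $Y$. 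Without ruling this out, the counting argument collapses: all $N+1$ classes of $Y_n$ with entropy $\geq h$ could sit inside a single $U_i$, so you cannot conclude that some class $D_n$ lives outside $\bigcup_i U_i$.

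Your fallback — that uniqueness of the MME on $\tilde\Lambda_{Y_n,i}$ (Theorem~\ref{m.C}) forces at most one chain recurrence class of entropy $\geq h$ in $U_i$ — also does not close the gap: two distinct classes $D_1,D_2\subset \tilde\Lambda_{Y_n,i}$ with $h_{\mathrm{top}}(D_1)>h_{\mathrm{top}}(D_2)\geq h$ are perfectly compatible with $\tilde\Lambda_{Y_n,i}$ having a unique MME (it lives on $D_1$). The paper handles this by proving a separate tameness result, Lemma~\ref{l.n.2}, which establishes via a Baire-category argument over a countable topological basis that, for a $C^1$-residual set $\cR_0$ of star flows, any finite collection of isolated classes has isolating neighborhoods in which every nearby vector field has \emph{exactly one} chain recurrence class. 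Your residual set $\cR_1$ needs to be intersected with $\cR_0$ (or, equivalently, with the residual set of Abdenur~\cite{Abd} that the paper quotes at the start of Section~\ref{s.opendense}), and this tameness, not the filtrating property alone, is what licenses the inequality $N^{\geq h}(Y)\leq N^{\geq h}(X)$.
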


The proof of this theorem requires the following lemmas.

\begin{lemma}\label{l.n.1}
	Let $X\in \mathfrak{X}^{1,*}(\bM) $. Then for every chain recurrence class $C$ of $X$, there exists an open neighborhood $U$ of $C$ and a $C^1$ open neighborhood $\cU$ of $X$, such that every $Y\in\cU$ has at least one chain recurrence class in $U$. 
\end{lemma}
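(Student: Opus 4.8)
The plan is to use the fact that a chain recurrence class, being a compact invariant set with a well-chosen isolating (filtrating) neighborhood, persists under $C^1$ perturbation. Specifically, I would invoke Conley's theory (Lemma~\ref{l.crc} together with Remark~\ref{r.crc1}) to produce, for the given chain recurrence class $C$, an arbitrarily small \emph{filtrating} neighborhood $U = U^+\cap U^-$, where $U^\pm$ are open sets satisfying
$$
f_1\big(\Cl(U^+)\big)\subset U^+,\qquad f_{-1}\big(\Cl(U^-)\big)\subset U^-.
$$
As noted in Remark~\ref{r.crc1}, these two inclusions are $C^1$ open conditions: for every vector field $Y$ in a sufficiently small $C^1$ neighborhood $\cU$ of $X$ one still has $f^Y_1(\Cl(U^+))\subset U^+$ and $f^Y_{-1}(\Cl(U^-))\subset U^-$. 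Hence $U$ remains a filtrating neighborhood for each such $Y$.

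Next I would observe that a nonempty filtrating neighborhood always contains a chain recurrence class of the corresponding flow. Indeed, let $\Lambda_Y = \bigcap_{t\in\RR} f^Y_t(U)$ be the maximal invariant set of $Y$ in $U$; since $U$ is filtrating, $\Lambda_Y$ is a nonempty compact invariant set (it contains, for instance, any $\alpha$- or $\omega$-limit set of a point of $U$, which is nonempty by compactness and stays in $U$ by the filtrating property). Moreover, because orbits cannot re-enter $U$ once they leave (this is precisely the content of Lemma~\ref{l.crc} and the filtrating property), the chain recurrent set of $Y$ restricted to $U$ coincides with $\CR(Y)\cap \Lambda_Y$, and standard Conley theory shows $\CR(Y)\cap U\neq\emptyset$: take any point $x\in\Lambda_Y$; its forward orbit stays in the compact set $\Lambda_Y$, so its $\omega$-limit set is a nonempty compact invariant set contained in $\Lambda_Y$, and one checks using the absence of recurrence to the complement of $U$ that every point of an attractor-repeller-minimal piece inside $\Lambda_Y$ is chain recurrent for $Y$. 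Thus there is at least one chain recurrence class of $Y$ contained in $U$.

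Combining the two steps: choose the filtrating neighborhood $U$ of $C$ and the corresponding $C^1$ neighborhood $\cU$ of $X$ as above; then every $Y\in\cU$ has a nonempty maximal invariant set in $U$ and therefore at least one chain recurrence class contained in $U$. This is exactly the assertion of Lemma~\ref{l.n.1}. I do not expect a serious obstacle here; the only point requiring a little care is verifying that a filtrating neighborhood genuinely contains a chain recurrence class of the perturbed flow, but this is classical and follows directly from the Conley decomposition applied to $Y|_{\Lambda_Y}$. If one prefers to avoid re-deriving Conley's statement, one can instead argue by upper semi-continuity of the chain recurrent set together with the filtrating property: the Hausdorff limit, as $Y\to X$, of any chain recurrence class of $Y$ inside $U$ is contained in a chain recurrence class of $X$, hence in $C$; conversely, since $C\subset \CR(X)$ is nonempty and $U$ is robustly filtrating, $\CR(Y)\cap U\neq\emptyset$, which gives the desired class.
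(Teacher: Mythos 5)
Your argument has a genuine gap, and it sits exactly where the paper leans on the star property. You build the filtrating neighborhood $U = U^+\cap U^-$ and correctly observe that the inclusions $f_1(\Cl(U^+))\subset U^+$, $f_{-1}(\Cl(U^-))\subset U^-$ are $C^1$ open, so $U$ remains filtrating for every $Y$ in a small $C^1$ neighborhood of $X$. But the next step is the crux, and your justification of it is wrong. You claim that for $x\in U$ the $\omega$-limit set ``stays in $U$ by the filtrating property.'' That is false: the forward $Y$-orbit of a point of $U^+\cap U^-$ is trapped in $U^+$, but it is perfectly free to leave $U^-$, so $\omega_Y(x)$ lies in the attractor $A^+_Y=\bigcap_{n\ge 0} f^Y_n(\Cl(U^+))$ but need not meet $U^-$ at all. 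The maximal invariant set $\Lambda_Y=A^+_Y\cap R^-_Y$ in a filtrating neighborhood can in general be empty for nearby $Y$ — this is exactly what happens, say, when a saddle-node critical element is annihilated by a $C^1$-small perturbation and the former filtrating region becomes a pure flow-through tube. Your fallback remark (``since $C\subset\CR(X)$ is nonempty and $U$ is robustly filtrating, $\CR(Y)\cap U\ne\emptyset$'') simply re-asserts the missing step without argument; upper semi-continuity of chain recurrence classes gives $\limsup\subset$, never $\ne\emptyset$.

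The paper closes precisely this hole by using $X\in\mathscr X^{1,*}(\bM)$ to manufacture a $C^1$-persistent invariant object inside $C$: if $C$ contains no singularity, the star property forces $C$ to be uniformly hyperbolic and hence to contain a hyperbolic periodic orbit $\gamma$, whose continuation $\gamma_Y$ survives and stays in $U$ for $Y$ near $X$; if $C$ contains a singularity $\sigma$, the star property makes $\sigma$ hyperbolic, so its continuation $\sigma_Y$ survives. Either way the persistent critical element gives $\Lambda_Y\ne\emptyset$, and then upper semi-continuity of chain recurrence classes confines the class $C(\gamma_Y)$ or $C(\sigma_Y)$ to $U$. Without the star hypothesis the statement of the lemma is simply false, so any proof must invoke it somewhere; yours never does, which is the tell-tale sign of the gap.
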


\begin{proof}
	First recall that chain recurrence classes vary upper semi-continuously in the following sense: if $X_n\to X$ in $C^1$ topology and $C_n$ is a chain recurrence class of $X_n$, then ${\limsup\limits_n}\, C_n$ is a chain recurrence class of $X$ (here the limit is taken in Hausdorff topology).
	
	Now assume that $X\in \mathfrak{X}^{1,*}(\bM)$ and $C$ is a chain recurrence class of $X$. Then there are two cases: 
	
	\medskip 
	\noindent {\em Case 1.} $C$ contains no singularity. In this case, $C$ is a uniformly hyperbolic set and therefore contains at least one periodic orbit $\gamma$. Take any small neighborhood $U$ of $C$, then for every vector field $Y$ $C^1$ close to $X$, the continuation of $\gamma$, denoted by $\gamma_Y$, belongs to a chain recurrence class $C(\gamma_Y)$ which must be contained in $U$ if $Y$ is sufficiently close to $X$. 
	
	\medskip 
	\noindent {\em Case 2.} $C$ contains a singularity $\sigma$ which must be hyperbolic. In this case, the continuation of $\sigma$, denoted by $\sigma_Y$, belongs to a chain recurrence class $C(\sigma_Y)$ which must be contained in $U$ if $Y$ is sufficiently close to $X$. 
	
\end{proof}

\begin{remark}
	Note that the conclusion of the lemma holds if $U$ is replaced by any $V\subset U$ that is also an isolating neighborhood of $C$ (shrink $\cU$).
\end{remark}

The next lemma deals with the continuity of the number of chain recurrence classes for $C^1$ generic star vector fields.

\begin{lemma}\label{l.n.2}
	There exists a $C^1$ residual set $\cR_0\in \mathfrak{X}^{1,*}(\bM) $	such that for every $X\in \cR_0$ and for any isolated chain recurrence classes $C_1,\ldots, C_n$, there exist isolating neighborhoods $U_i$ of $C_i$ and a $C^1$ neighborhood $\cU$ of $X$, such that every $Y\in \cU$ has exactly one chain recurrence class $C^Y_i$ in $U_i$.
\end{lemma}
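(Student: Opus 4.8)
\textbf{Proof proposal for Lemma~\ref{l.n.2}.}
The plan is to build the residual set $\cR_0$ as a countable intersection of open dense sets, one for each ``slot'' in a countable basis of the configuration of isolated chain recurrence classes, using the standard Kupka--Smale-type genericity machinery together with Lemma~\ref{l.n.1} and the upper semi-continuity of chain recurrence classes recalled in Section~\ref{ss.crc}. First I would fix a countable basis $\{V_j\}_{j\in\NN}$ for the topology of $\bM$, and for each finite tuple $J=(j_1,\ldots,j_n)$ of pairwise disjoint basic open sets consider the set $\mathcal O_J$ of vector fields $X$ for which either (i) $X$ does not have isolated chain recurrence classes $C_1,\ldots,C_n$ with $\overline{C_i}\subset V_{j_i}$, or (ii) $X$ does have such classes and moreover there exist isolating neighborhoods $U_i\subset V_{j_i}$ of $C_i$ and a $C^1$ neighborhood $\cU$ of $X$ such that every $Y\in\cU$ has exactly one chain recurrence class in each $U_i$. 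The key point is that $\mathcal O_J$ is open (immediate from the definition, which already quantifies over a $C^1$ neighborhood in case (ii), and because ``not having isolated classes in the prescribed configuration'' is also $C^1$-robust by upper semi-continuity) and dense. Setting $\cR_0 = \bigcap_J \mathcal O_J$ then gives the lemma: given $X\in\cR_0$ and isolated classes $C_1,\ldots,C_n$, choose disjoint basic open sets $V_{j_i}$ with $\overline{C_i}\subset V_{j_i}$, apply the defining property of $\mathcal O_{(j_1,\ldots,j_n)}$, and note that (i) fails by hypothesis so (ii) must hold.

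The substance is the density of $\mathcal O_J$, and here I would separate the two ways a class can behave. If $C_i$ contains a periodic orbit $\gamma_i$, then by Lemma~\ref{l.generic}~(2) (applied on a prior residual set, which we may intersect into $\cR_0$) $C_i = H(\gamma_i)$, and by Lemma~\ref{l.generic}~(4) (Abdenur's result on $C^1$ continuity of homoclinic classes of hyperbolic saddles) the homoclinic class of the continuation $\gamma_i^Y$ depends continuously on $Y$; since $C_i$ is isolated, for $Y$ $C^1$-close to $X$ the maximal invariant set of $Y$ in a small isolating neighborhood $U_i$ is exactly $H(\gamma_i^Y)$ and is a chain recurrence class, and it is the \emph{unique} chain recurrence class of $Y$ in $U_i$ because any other chain recurrence class in $U_i$ would be contained in the maximal invariant set. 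The case in which $C_i$ contains a singularity $\sigma_i$ but no periodic orbit is handled the same way via the continuation $\sigma_i^Y$; under the star property such a class is (generically) sectional-hyperbolic with zero entropy by Theorem~\ref{t.dichotomy}~(2), but all we need is that for $Y$ near $X$ the continuation produces exactly one chain recurrence class in a small isolating neighborhood. In fact for vector fields $X$ that already lie in the residual set of Lemma~\ref{l.generic} and Theorem~\ref{t.dichotomy} the required continuity and uniqueness hold \emph{without further perturbation}, so density of $\mathcal O_J$ reduces to density of that prior residual set (which is automatic) plus the observation that ``$X$ has isolated classes in configuration $J$'' already forces us into case (ii); hence $\mathcal O_J$ contains a residual set and in particular is dense.

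The main obstacle I anticipate is not the genericity bookkeeping but pinning down the correct uniqueness statement ``exactly one chain recurrence class in $U_i$'' uniformly for all nearby $Y$: upper semi-continuity of chain recurrence classes gives that any class of $Y$ meeting a small neighborhood of $C_i$ is close to $C_i$, and Lemma~\ref{l.n.1} gives at least one such class, but ruling out two distinct chain recurrence classes inside the same small isolating neighborhood requires that $C_i$ be robustly isolated \emph{and} robustly chain transitive, i.e.\ that the maximal invariant set in $U_i$ stays chain transitive under perturbation. For classes containing a periodic orbit this is exactly the content of Abdenur's theorem (Lemma~\ref{l.generic}~(4)) combined with generic coincidence of the class with the homoclinic class; for classes reduced to a periodic orbit or a singularity it is elementary. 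So the proof of density of $\mathcal O_J$ will be organized as a short case analysis keyed on whether $C_i$ contains a critical element of each type, invoking in each case the appropriate $C^1$-continuity result already recorded in Lemma~\ref{l.generic} and Theorem~\ref{t.dichotomy}, together with the elementary fact that every chain recurrence class of $Y$ contained in an isolating neighborhood $U_i$ lies inside the maximal invariant set $\bigcap_{t\in\RR} f_t^Y(U_i)$.
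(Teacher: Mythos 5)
Your proposal runs into two genuine gaps, and the paper in fact circumvents both with a different — and simpler — Baire-category trick that your write-up does not identify.

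First, the openness of $\mathcal O_J$. You assert that case (i), ``$X$ does not have isolated chain recurrence classes $C_1,\ldots,C_n$ with $\overline{C_i}\subset V_{j_i}$,'' is $C^1$-open ``by upper semi-continuity.'' Upper semi-continuity of chain recurrence classes gives you the wrong direction: it controls limits of classes of perturbed systems $Y_k\to X$, but it does not prevent a perturbation of $X$ from \emph{creating} an isolated class inside $V_{j_i}$ where $X$ had none (e.g.\ splitting a non-isolated class, or pushing a class across $\partial V_{j_i}$). The negation of (i) is also not closed, so $\mathcal O_J$ as you define it is not obviously open, and the Baire argument $\cR_0=\bigcap_J\mathcal O_J$ does not go through as written. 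This is precisely the difficulty the paper sidesteps: it never tries to make ``exactly one class'' or its negation an open condition. Instead it fixes a finite union $U^I$ of basis elements and sets $\cU^I$ to be the set of vector fields with \emph{at least two} chain recurrence classes in $U^I$, which \emph{is} open by a direct application of Lemma~\ref{l.n.1} (choose disjoint sub-isolating neighborhoods of the two classes). Then $\cV^I:=\bigl(\Cl(\cU^I)\bigr)^c$ together with $\cU^I$ misses exactly $\partial\cU^I$, a nowhere dense set, so $\cV^I\cup\cU^I$ is automatically open and dense; no dynamical input whatsoever is needed for density. Taking $\cR_0=\bigcap_I(\cV^I\cup\cU^I)$, a vector field $X\in\cR_0$ having a single class in $U^I$ must lie in $\cV^I$, which is \emph{openly} the set ``at most one class in $U^I$,'' and Lemma~\ref{l.n.1} upgrades ``at most one'' to ``exactly one.''

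Second, even granting openness, your density argument does not actually produce uniqueness for all nearby $Y$. From Abdenur's continuity (Lemma~\ref{l.generic}(4)) you get that $H(\gamma_i^Y)$ is Hausdorff-close to $C_i=H(\gamma_i)$, and from standard upper semi-continuity that the maximal invariant set $\tilde\Lambda_Y=\bigcap_t f_t^Y(U_i)$ is Hausdorff-close to $C_i$ as well. But the step ``the maximal invariant set of $Y$ in $U_i$ is exactly $H(\gamma_i^Y)$'' does not follow: a priori $H(\gamma_i^Y)\subsetneq\tilde\Lambda_Y$, and $\tilde\Lambda_Y$ could contain a second chain recurrence class of $Y$ disjoint from $C(\gamma_i^Y)$. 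Ruling this out \emph{for all $Y$ in a $C^1$ neighborhood} (not just for generic $Y$) is exactly the content of the lemma, so your argument is dangerously close to circular at this point; Abdenur's \emph{tameness} theorem (rather than the continuity statement you cite) would give what you want, but then you would be invoking the conclusion rather than proving it. The paper's approach never needs this step: it proves robustness of ``at most one class in $U^I$'' by pure point-set topology (interior of a complement of a closure) rather than by dynamical continuity of homoclinic classes, and hence avoids the uniqueness gap entirely. If you want to salvage your route, the repair is to replace your two-case definition of $\mathcal O_J$ with the paper's counting set $\cU^I$, and let the nowhere-density of $\partial\cU^I$ do the work that Abdenur and Theorem~\ref{t.dichotomy} cannot quite deliver here.
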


\begin{proof}
	Let $\{U_1, U_2,\ldots \}$ be a countable topological basis of $\bM$. For any finite index set $I\subset \cN$, define 
	$$
	\cU^I = \left\{f\in \mathfrak{X}^{1,*}(\bM): f \mbox{ has at least two chain recurrence classes in } \bigcup_{i\in I} U_i\right\}.
	$$

We claim that $\cU^I$ is open for every finite $I\subset \NN$.

\medskip 
\noindent 
{\em Proof of the claim.}
Let $X\in \cU^I$ and let $C_1, C_2$ be two chain recurrence classes of $X$ in $U^I: = \bigcup_{i\in I}U_i$. By Lemma \ref{l.n.1}, there exists isolating neighborhoods $U\supset C_1$, $V\supset C_2$ and open neighborhoods $\cU_1, \cU_2$ of $X$, such that every $Y\in \cU_1\cap \cU_2$ has at least one chain recurrence class in $U$, and at least one chain recurrence class in $V$. as a result, we have $\cU_1\cap \cU_2\subset \cU^I$. 

Now let $\displaystyle \cV^I = \left(\Cl\left(\cU^I \right)\right)^c$ where $\Cl$ denotes the closure in $C^1$ topology, and the closure is taken in $\mathfrak{X}^{1,*}(\bM)$. Then $\cV^I\cup \cU^I$ is open and dense in $\mathfrak{X}^{1,*}(\bM)$. Furthermore, if $X\in \cV^I$ then for every $Y$ that is $C^1$ close to $X$, $Y$ has at most one chain recurrence class in $U^I$. 

Define 
$$
\cR_0 = \bigcap_{I\subset \NN,\ |I|<\infty} \left(\cV^I\cup \cU^I\right).
$$
Then $\cR_0$ is a residual subset of $\mathfrak{X}^{1,*}(\bM)$. We claim that for every isolated chain recurrence class $C$ of $X\in\cR_0$, there exists an isolating neighborhood $U\supset C$ and a $C^1$ neighborhood $\cU$ of $X$ such that every $Y\in \cU$ has {\em exactly one} chain recurrence class in $U$.

\medskip 
\noindent {\em Proof of the claim.} Let $X\in \cR_0$ and $C$ be a chain recurrence class of $X$. Then $\{U_1,U_2,\ldots\}$ is an open covering of $C$, and we can select a finite covering $\{U_i:i\in I\subset \NN, |I|<\infty\}$. We may also assume that $U^I = \bigcup_{i\in I} U_i$ is an isolating neighborhood of $C$ and satisfies the conclusion of Lemma \ref{l.n.1}. Since $X\in \cR_0$ and has only one chain recurrence class in $U^I$, we must have $X\in \cV^I.$ Consequently, there exists a $C^1$ neighborhood $\cU^C_X$ of $X$ such that every $Y\in \cU^C_X$ has at most one chain recurrence class in $U^I$. By Lemma \ref{l.n.1}, $Y$ has exactly one chain recurrence class in $U^I$.

Now if we are given finitely many isolated chain recurrence classes of $X$, we take $\cU_X$ as the intersection of $\cU^C_X$'s, and the conclusion of the lemma follows. 

\end{proof}

\begin{remark}
	In general, systems satisfying the conclusion of Lemma \ref{l.n.2} is called {\em tame}. Such systems have been well-studied under the $C^1$ generic context; see for instance \cite{Abd}. 
\end{remark}

Finally we are ready to prove Theorem \ref{t.mc.B0}.
\begin{proof}[Proof of Theorem \ref{t.mc.B0}]
	Let $\cR$ be the residual set given by Theorem \ref{t.dichotomy} and $\cR_0$ be given by Lemma \ref{l.n.2}. Let $\cR_1 = \cR_0\cap\cR$. We will show that $\cR_1$ satisfies the conclusion of the theorem.
	
	Let $h>0$ be fixed and take $X\in\cR_1$. Assume that $X$ has distinct chain recurrence classes $C_1,C_2,\ldots$ all with topological entropy at least $h$. Taking a subsequence, we may assume that $C_n\to C$ in Hausdorff topology. Then $C$ is also a chain recurrence class with topological entropy at least $h$, due to Theorem \ref{mc.A}. By Theorem \ref{t.dichotomy}, $C$ must be isolated, which is a contradiction. This finishes the proof of (1).
	
	For (2), we let $C_1,\ldots, C_N$ be all the chain recurrence classes of $X$ with topological entropy at least $h$. Each $C_i$ is isolated, so we can take isolating neighborhoods $U_i\supset C_i$ and a $C^1$ neighborhood $\cU_X$ of $X$ given by Lemma \ref{l.n.2}. Every $Y$ in $\cU_X$ has exactly one chain recurrence class in each $U_i$. Below we will show that there exists a open set $\cU_X'\subset \cU_X$ that contains $X$, such that the number of chain recurrence classes of $Y\in \cU'_X$ with topological entropy at least $h$ is at most $N$. 
	
	Towards a contradiction, assume that there exists a sequence of $C^1$ vector fields $Y_k\to X$ in $C^1$ topology, and each $Y_k$ has at least $(N+1)$-many chain recurrence classes with topological entropy at least $h$. Denote them by $C^{Y_k}_i,i=1,\ldots,N+1$. Then, at least one of them must be outside $\bigcup_{i=1}^nU^i$. Say it is $C^{Y_k}_{N+1}$. Then by taking a subsequence if necessary, we assume that $C^{Y_k}_{N+1}\to C_0$ as $k\to\infty$ in Hausdorff topology. Then by Theorem \ref{mc.A}, $C_0$ is a chain recurrence class of $X$ with topological entropy at least $h$. However, $C_0$ is outside $\bigcup_{i=1}^n U_i$ and therefore is distinct from all $C_i,i=1,\ldots, N$, which is a contradiction.
	
	Now the last statement on the uniqueness of MME is a direct corollary of \ref{m.C}. We conclude the proof of Theorem \ref{t.mc.B0}.
	
\end{proof}

\begin{proof}[Proof of Theorem \ref{mc.B}]
	Theorem \ref{mc.B} is a corollary of Theorem \ref{t.mc.B0} by taking $\cU = \bigcup_{X\in \cR_1} \cU_X$. 
\end{proof}
}

\appendix
\section{On multi-singular hyperbolicity}\label{A.multi-sing}

\subsection{Proof of Proposition~\ref{p.equivalent}}

In this section we will prove Proposition~\ref{p.equivalent}, namely Definition~\ref{d.multising} and~\cite[Definition 1.6]{CDYZ} are equivalent. 

Let $\Lambda$ be a compact invariant set. Under the assumption that all singularities are active and hyperbolic, it is proven in~\cite[Theorem D and E]{CDYZ} that \cite[Definition 1.6]{CDYZ} is equivalent to the original definition of Bonatti and da Luz (Definition~\ref{d.multising.BD} below; also see~\cite[Definition 2]{BD21} and~\cite[Definition 5.5]{CDYZ}). Below we will introduce the original definition of Bonatti and da Luz on multi-singular hyperbolicity, and prove that it implies item (2) in Definition~\ref{d.multising}. 

Recall the extended linear Poincar\'e flow $\psi_t$ defined in Section~\ref{ss.active}. At regular points, the extended linear Poincar\'e flow can be naturally identified with the linear Poincar\'e flow, and thus is denoted by the same notation.  Also recall $\mathfrak{B}(\Lambda)$ defined in~\eqref{e.BLambda}. $\mathfrak{B}(\Lambda)\subset G^1$ is a compact set that is invariant under $(\psi_t)$.

Let $\Lambda$ be a compact invariant set of $X$. Define
$$
\mathfrak M(\Lambda) = \mathfrak{B}(\Lambda)\cup \bigcup_{\sigma\in\Sing_\Lambda(X)}G^1(\sigma).
$$
$\mathfrak M(\Lambda) $ is also compact and invariant under $(\psi_t)$. A {\em (real-valued) multiplicative cocycle over the extended linear Poincar\'e flow $(\psi_t)_{t\in\RR}$} is a continuous function $H: \mathfrak M(\Lambda) \times\RR \to  (0,+\infty)$ such that
$$
H(L, t+s) = H(L,t)\cdot H(\psi_t(L),s),\,\,\forall t,s\in\RR\mbox{ and }L\in\mathfrak M(\Lambda).
$$
Below we write  $h^t(L) = H(L,t)$. 

\begin{definition}\cite[Definition 5.3]{CDYZ}; \cite[Definition 5]{BD21}\label{d.cocycle}
	Let $\sigma$ be a hyperbolic singularity of $X$. A multiplicative cocycle $(h^t)$ over $(\psi_t)$ is a {\em local rescaling cocycle at $\sigma$}, if
	\begin{itemize}
		\item there exist a neighborhood $U_\sigma$ of $\sigma$ and a constant $C>1$, such that for any orbit segment $(x,t)$ with $x,x_t\in U_\sigma$, $L\in G^1(x)\cap\mathfrak M(\Lambda)$, it holds that 
		$$
		\frac1C \le \frac{h^t(L)}{\|Df_t\mid_{\langle X(x) \rangle}\|}\le C;
		$$
		\item for any small neighborhood $V$ of $\sigma$, there exists $C_V>0$ such that for any $(x,t)$ with $x,x_t\notin V$, $L\in G^1(x)\cap\mathfrak M(\Lambda)$, one has
		$$
		\frac1C_V\le h^t(L)\le C_V.
 		$$ 
	\end{itemize}	
\end{definition}
The existence of local rescaling cocycle is proven in~\cite[Theorem 1]{BD21}. Furthermore, it is proven that for every $\sigma$, the local rescaling cocycle is unique up to multiplication
by a cocycle bounded away from $0$ and $+\infty$.

\begin{definition}\cite[Definition 5.5]{CDYZ}; \cite[Definition 8]{BD21}\label{d.multising.BD}
A compact invariant set $\Lambda$ is multi-singular hyperbolic in the sense of Bonatti and da Luz, if:
\begin{itemize}
	\item all singularities in $\Lambda$ are hyperbolic, and we fix a local rescaling cocycle $(h^t_\sigma)$ at each $\sigma\in\Sing_\Lambda(X)$;
	\item the extended linear Poincar\'e flow admits a dominated splitting $\tilde E_N\oplus \tilde F_N$ over $\mathfrak{B}(\Lambda)$;
	\item there exists a subset $S_+\subset \Sing_\Lambda(X)$, such that for $h^t_+ : = \prod_{\sigma\in S_+} h^t_\sigma$, the cocycle $\big(h^t_+\cdot \psi_t|_{\tilde E_N}\big)_{t\in\RR}$ is uniformly contracting;
	\item there exists a subset $S_-\subset \Sing_\Lambda(X)$, such that for $h^t_- : = \prod_{\sigma\in S_-} h^t_\sigma$, the cocycle $\big(h^t_-\cdot \psi_t|_{\tilde F_N}\big)_{t\in\RR}$ is uniformly expanding.
\end{itemize}
\end{definition}

\begin{remark}
	At each regular point $x\in\Lambda$, the splitting $\tilde E_N\oplus \tilde F_N$ can be viewed as a dominated splitting on $N(x)$. This gives the singular dominated splitting $E_N\oplus F_N$ on the normal bundle $N_\Lambda$.
\end{remark}

\begin{remark}
	Under the assumption that all singularities in $\Lambda$ are active, the set $S_{\pm}$ coincides with $\Sing^\pm_\Lambda(X)$. See~\cite[Remark 5.6, Proposition 5.7]{CDYZ}.
\end{remark}

Now we are ready to prove Proposition~\ref{p.equivalent}.

\begin{proof}[Proof of Proposition~\ref{p.equivalent}]
	\noindent \item  Definition~\ref{d.multising}  (2) $\implies$ (2)': this direction is trivial.
	

	\medskip 
	\noindent \item  \cite[Definition 1.6]{CDYZ} $\implies$ Definition~\ref{d.multising} (2): Under the assumption that all singularities in $\Lambda$ are active, \cite[Definition 1.6]{CDYZ} is equivalent to the original definition of Bonatti and da Luz. Therefore we only need to show that (2) follows from Definition~\ref{d.multising.BD}.
 	
 	Recall that $\mathfrak M(\Lambda)$ and $\mathfrak B(\Lambda)$ are both compact. By choosing an appropriate Riemannian metric (alternatively, by speeding up the flow $X$) we may assume that there exists $\eta>1$ such that 
 	$$
	\big\|h^1_+\cdot \psi_1|_{\tilde E_N}\big\|\le \eta^{-1},\,\mbox{ and }\big\|h^{-1}_-\cdot \psi_{-1}|_{\tilde F_N}\big\| \le \eta^{-1}.
 	$$
 	Then we have, for all $y\in \Reg(X)\cap\Lambda$ (where we naturally identify the extended linear Poincar\'e flow with the linear Poincar\'e flow), 
 	\begin{align*}
 		\big\|\psi^*_{-1}|_{F_N(y)}\big\| &=  \frac{\big\|\psi_{-1}|_{F_N(y)}\big\|}{\|Df_{-1}|_{\langle X(y)\rangle}\|}\\
 		&=\frac{\big\|h^{-1}_-\cdot \psi_{-1}|_{\tilde F_N(y)}\big\|}{h^{-1}_-(\langle X(y)\rangle)}\cdot \frac{|X(y)|}{|X(y_{-1})|}.		
 	\end{align*}
	Multiplying over the forward orbit of $x\in \Reg(X)\cap \Lambda$, we obtain (here w.l.o.g. we assume that $t\in\NN$; otherwise one introduces a constant $L_X = \sup\{\|DX\|\}$ and increase $T_V$):
	\begin{align*}
		\prod_{i=0}^{t-1}\left\|\psi^*_{-1}|_{F_N(x_{t-i})}\right\| &\le \prod_{i=0}^{t-1}\frac{\big\|h^{-1}_-\cdot \psi_{-1}|_{\tilde F_N(x_{t-i})}\big\|}{h^{-1}_-(\langle X(x_{t-i})\rangle)}\cdot \frac{|X(x_{t-i})|}{|X(x_{t-i-1})|}\\
		& = \frac{1}{h_-^{-t}(\langle X(x_{t})\rangle)}\frac{|X(x_t)|}{|X(x)|} \cdot \prod_{i=0}^{t-1}\big\|h^{-1}_-\cdot \psi_{-1}|_{\tilde F_N(x_{t-i})}\big\|\\
		&\le  \frac{1}{h_-^{-t}(\langle X(x_{t})\rangle)}\frac{|X(x_t)|}{|X(x)|}\cdot \eta^{-t}.
	\end{align*}
	Now let $V$ be any open neighborhood of $\Sing_\Lambda(X)$ and assume that  $x,x_t\in \Lambda\cap V^c$. Then we have 
	$$
	\sup_{y,z\in \Lambda\cap V^c}\frac{|X(y)|}{|X(z)|}<\infty,\,\, \mbox{ and } \frac{1}{h_-^{-t}(\langle X(x_{t})\rangle)} \le C_V,
	$$
	where $C_V>0$ is the constant given by Definition~\ref{d.cocycle}. Then, fix any $\eta'\in(1,\eta)$, there exists $T_V>0$ such that whenever $t>T_V$, we have 
	$$
	\prod_{i=0}^{t-1}\left\|\psi^*_{-1}|_{F_N(x_{t-i})}\right\|\le (\eta')^{-t}, 
	$$
	as desired. The same argument applies to $E_F$ by considering  $-X$. This proves \eqref{e.hyp}. 
	
	\eqref{e.hyp0} can be proven along the same lines without the telescoping factor $\frac{|X(y)|}{|X(y_{-1})|}$. With this we conclude the proof of Proposition~\ref{p.equivalent}.
\end{proof}

\subsection{Proof of Lemma \ref{l.nbhd.1}, \ref{l.robust.dom.spl} and \ref{l.robust.hyp}}
\begin{proof}[Proof of Lemma \ref{l.nbhd.1}]
	In this section we will stop using $x_t$ to denote $f_t(x)$ since we need $x_k$ to denote the $k$th coordinate of the Euclidean space $\RR^n$. 
	
	Let $\sigma^+\in\Sing_\Lambda^+(X)$. For simplicity we treat $\sigma^+$ as the origin of $\RR^n$, and assume that $E^{ss}_{\sigma^+} = \{(x_1,\ldots, x_n): x_i = 0, i\ge k\} $, $E^c_{\sigma^+} = \{(x_1,\ldots, x_n): x_i = 0, i\ne  k\} $ and $E^{uu}_{\sigma^+} = \{(x_1,\ldots, x_n): x_i = 0, i\le  k\} $.

	By Lemma \ref{l.lgw} we have that every $L\in \mathfrak B_{\sigma^+}(\Lambda)$ is contained in $E^c_{\sigma^+}\oplus E^{uu}_{\sigma^+}$. Fix any small $\alpha>0$. Then by continuity, one can find $r_0(\sigma)>0$ small enough so that for every $x = (x_1,\ldots, x_n)\in \partial B_{r_0(\sigma^+)}(\sigma^+)\cap \Lambda$, we have $\frac{|x_i|}{r_0(\sigma^+)}<\frac{\alpha}{2}$ for $i=1,\ldots, k-1.$ Note that this property also holds if $r_0(\sigma^+)$ is replaced by any smaller radius.	Similarly, one can define $r_0(\sigma^-)$ for $\sigma^-\in \Sing_\Lambda^-(X)$ by considering $-X.$
	
	Now let $r_0 = \min_{\sigma\in\Sing_\Lambda(X)} r_0(\sigma)>0$. 
	Then, one can take a small open neighborhood $\tilde U$ of $ \Lambda \cap \left(\bigcup_{\sigma\in\Sing_\Lambda(X)} B_{r_0}(\sigma)\right)^c$ so that:
	$$
	\mbox{if } y\in \tilde U\cap \partial B_{r_0}(\sigma^+) \mbox{ for some } \sigma^+\in \Sing_\Lambda^+(X), \mbox{ then }\frac{|y_i|}{r_0}< {\alpha}, i = 0,\ldots, k-1.
	$$
	Also, 
	$$
	\mbox{if } y\in \tilde U\cap \partial B_{r_0}(\sigma^-) \mbox{ for some } \sigma^-\in \Sing_\Lambda^-(X), \mbox{ then }\frac{|y_i|}{r_0}< {\alpha}, i = k+1,\ldots,n.
	$$
	In other words, orbit segments in $\cO(\tilde U)$ that start outside of $B_{r_0}(\Sing_\Lambda(X))$ can only enter and leave $B_{r_0}(\sigma^+)$ for any $\sigma^+\in\Sing_\Lambda^+(X)$ in the $\alpha-$cone of the $E^c_{\sigma^+}\oplus E^{uu}_{\sigma^+}-$plane. Similarly, it can only enter and leave 
	$B_{r_0}(\sigma^-)$ for any $\sigma^-\in\Sing_\Lambda^-(X)$ in the $\alpha-$cone of the $E^{ss}_{\sigma^-}\oplus E^{c}_{\sigma^-}-$plane.
	
	Now let $U = \tilde U\cup \bigcup_{\sigma\in\Sing_\Lambda(X)} B_{r_0}(\sigma)$ which is an open neighborhood of $\Lambda$. Since the balls $ B_{r_0}(\sigma)$ are open, orbit segments in $U$ that start outside all $B_{r_0}(\sigma)$ still have the property described above. 	
	At this point, one can repeat verbatim the proof of \cite[Lemma 2.16]{PYY23} to get that if $r<r_0$ is small enough, then any orbit in $\cO(U)$ that starts outside $B_{r_0}(\sigma^+)$ and enters $B_{r}(\sigma^+)$ must enter $B_{r_0}(\sigma^+)$ in the $\alpha-$ cone of $E^c_{\sigma^+}$, as required.
	
\end{proof}

\begin{proof}[Proof of Lemma \ref{l.robust.dom.spl}]
	Recall from \eqref{e.zeta} and \eqref{e.BLambda} that $\mathfrak B(\Lambda)\subset G^1$ is the closure of the lift of $\Lambda$ to $G^1$. Furthermore, it is proven in \cite[Section 3]{CDYZ} the singular dominated splitting $N_\Lambda = E_N \oplus F_N$ extends continuously to a dominated splitting on $\mathfrak B(\Lambda)$ for the extended linear Poincar\'e flow. Then by \cite[Appendix B]{BDV}, there exists an open neighborhood $\mathfrak U$ of $\mathfrak B(\Lambda)$ in $G^1$, such that for any orbit segments $(x,t)$ satisfying  $\zeta(x,t)\subset \mathfrak U$, there is a dominated splitting that is the continuation of $E_N \oplus F_N$. Here $\zeta:\Reg(X)\to G^1$ is the map defined by \eqref{e.zeta}.
	
	Now the main issue is that $\mathfrak U$ cannot be treated as a neighborhood of $\Lambda$ since $T_\sigma\bM\nsubseteq
	 \mathfrak U$ for $\sigma\in\Sing_\Lambda(X)$. However, by taking $\tilde U\supset \Lambda \cap \left(\bigcup_{\sigma\in\Sing_\Lambda(X)} B_{r_0}(\sigma)\right)^c$ in the proof of  Lemma \ref{l.nbhd.1} small enough, one can require that  $\tilde U \cap \Sing(X) = \emptyset$, and every $x\in \tilde U$ satisfies that $\zeta(x)\in \mathfrak U$. We claim that 
	every $(x,t)\in\cO(U)$ with $x,x_t\notin B_{r_0}(\Sing_\Lambda(X))$ satisfies $\zeta((x,t))\subset \mathfrak U$, and the Lemma \ref{l.robust.dom.spl} follows from the choice of $\mathfrak U$.

	Let $(x,t)\in\cO(U)$ with $x,x_t\notin B_{r_0}(\Sing_\Lambda(X))$. Write 
	$$
	0 < t_1^- < t_1^+ < \cdots < t_n^- < t_n^+ < t,
	$$
	where $ (t_i^-, t_i^+)$ corresponds to the $i$th time that the orbit segment spends in $B_{r_0}(\Sing_\Lambda(X))$. Due to our choice of $\tilde U$, we only need to show that, writing $y^i = f_{t_i^-} (x)$ and $s_i = t_i^+ - t_i^-$, we have $\zeta((y^i, s_i))\subset \mathfrak U$ for every $i$.
	
	We will only show the case $i=1$ as all subsequent cases are the same. Assume that $(y^1, s_1)\subset B_{r_0}(\sigma)$. We will also only consider $\sigma\in\Sing_\Lambda^+(X)$. The case of $\sigma\in\Sing_\Lambda^-(X)$ follows by considering $-X$. 
	
	Like before, we treat $\sigma$ as the origin of $\RR^n$. Recall from the  proof of  Lemma \ref{l.nbhd.1} that $y^1 = (y_1,\ldots, y_n)\in \partial B_{r_0}(\sigma)$ is contained in the $\alpha-$cone of $E^c_\sigma\oplus E^{uu}_\sigma;$ in other words we have $\frac{|y_i|}{r_0} <\alpha, i= 1,\ldots, k-1$ where $k-1=\dim E^{ss}_\sigma$. Define 
	$$
	C(\alpha) = \{z= (z_1,\ldots,z_n): |z|\in  (0, r_0), \frac{|z_i|}{|z|}<\alpha: i=1\ldots, k-1\}. 
	$$
	If $\alpha$ is small enough, then we have $\zeta (C(\alpha))\subset \mathfrak U$. Furthermore, since $E^{ss}_\sigma \oplus \left(E^c_\sigma\oplus E^{uu}_\sigma\right)$ is a dominated splitting, we have that $C(\alpha)$ is invariant in the sense that if $z\in C(\alpha)$ and $(z,\tau)\subset B_{r_0}(\sigma) $, then $f_{\tau} (z)\in C(\alpha)$. Since we have $y^1\in C(\alpha)$ and $(y^1,s_1)\subset  B_{r_0}(\sigma)$, it follows that $\zeta((y^1,s_1))\subset \mathfrak U$, as needed. 
\end{proof}

\begin{proof}[Proof of Lemma \ref{l.robust.hyp}]
	Recall that $\psi_t,\psi^*_t$ are continuous cocycles on $ \mathfrak B(\Lambda)$ which is a compact subset of $G^1$. Consequently, $\psi_t,\psi^*_t$ are uniformly continuous. 		
	Also note that for orbit segments in a small neighborhood of $\mathfrak B(\Lambda)$, the dominated splitting $N_{(x,t)} = E_N\oplus F_N$ (lifted to $G^1$) varies continuously with respect to the base point. Finally, we remark the local rescaling cocycles $\{h_\sigma^t:\sigma\in \Sing(X)\}$ constructed in \cite[Section 6]{BD21} are in fact defined on $\mathfrak M(\bM)$ which is
	$$
	\mathfrak M(\bM) = \Cl\Big\{ \langle X(x)\rangle: x\in\Reg(X)\Big\}\cup\bigcup_{\sigma\in\Sing(X)} \mathbb G^1(\sigma),
	$$
	and is continuous. In particular, they are well-defined in any neighborhood of $\mathfrak B(\Lambda)$, uniformly continuous, and have the cocycle property. Using Definition \ref{d.multising.BD},  we get
	$$
	\big\|h^1_+\cdot \psi_1|_{E_N}\big\|\le (\eta')^{-1},\,\mbox{ and }\big\|h^{-1}_-\cdot \psi_{-1}|_{F_N}\big\| \le (\eta')^{-1}
	$$
	in a small neighborhood $\mathfrak U$ of $\mathfrak B(\Lambda)$ for some $(\eta')\in (1,\eta)$. 
	Repeating the proof of Proposition \ref{p.equivalent}, we see that for any isolating neighborhood $W$ of $\Sing_\Lambda(X)$ and for all orbit segments $(x,t)$ satisfying $\zeta(x,t)\subset \mathfrak U$, $x,x_t\notin W$ and $t> T_W$ for some constant $T_W>0$, we have 
 	$$
 		\prod_{i=0}^{\floor{t}-1}\left\|\psi^*_1|_{E_N(x_i)}\right\|\le (\eta')^{-\floor{t}}, \mbox{ and } \prod_{i=0}^{\floor{t}-1}\big\|\psi_{-1}^*|_{F_N((x_{\floor{t}-i}))}\big\|\le (\eta')^{-t}.
 	$$	
	Same holds if $\psi^*$ is replaced by $\psi$.
	
	Now, let $(x,t)$ be an orbit segment satisfying the assumptions of Lemma \ref{l.robust.hyp}; in particular,  there exists $t_1,t_2\ge 0$ such that $(x_{-t_1}, t_1+t+t_2)\in \cO(U)$ and $x_{-t_1}\notin B_{r_0}(\Sing_\Lambda(X)), x_{t+t_2}\notin B_{r_0}(\Sing_\Lambda(X))$. Then, as in the proof of Lemma \ref{l.robust.dom.spl} we get (shrink $U$ if necessary) that $\zeta(x_{-t_1}, t_1+t+t_2)\subset \mathfrak U$, meaning that $\zeta(x,t)\subset \mathfrak U$, and the conclusion of the lemma follows. 
\end{proof}

\section{Proof of Lemma \ref{l.measure.intersect}}\label{s.A2}

The proof of Lemma \ref{l.measure.intersect} calls for the following shadowing lemma, which is a modification of Liao and Gan's Shadowing Lemma\cite{Liao, Gan02}. See also Katok's Shadowing Lemma~\cite{Katok80} and its adaptation to $C^{1+\alpha}$ singular flows in a recent work \cite{LLL}.

\begin{lemma}\label{l.Liao.shadowing}
	Let $X$ be a $C^1$ vector field and $\Lambda$ a compact invariant set on which there is  a dominated splitting $N_\Lambda = E_N\oplus F_N$ for the linear Poincar\'e flow. For any compact yet not necessarily invariant set $\Lambda_0$ with $\Lambda_0 \cap \Sing(X) = \emptyset$ and $\lambda>1$, there exists $\tilde\delta_0>0$ and $L>0$, such that for any $\varepsilon>0$ sufficiently small, there exist $\tilde\delta>0$ such that for any orbit segment $(x,T)$ with the following properties:
	\begin{enumerate}
		\item $x,x_T\in \Lambda_0$, and $d(x,x_T)<\tilde \delta$;
		\item $x$ is a $(\lambda,E)$-forward infinite hyperbolic time for $(\psi_t^*)$;
		\item $x_T$ is a $(\lambda,F)$-backward infinite hyperbolic time for $(\psi_t^*)$;
	\end{enumerate}
	then there exist a point $p$ and a $C^1$ strictly increasing function $\theta:[0,T] \to \RR$, such that
	\begin{enumerate}[label=(\alph*)]
		\item $\theta(0)=0$ and $|\theta'(t)-1|<\varepsilon$; furthermore, there exists a constant $C>0$ independent of $T$, such that $|\theta(T) - T|\le C\vep$.
		\item $p$ is a hyperbolic periodic point with period $\theta(T)$;
		\item $p$ has stable and unstable manifold with size at least $\tilde\delta_0$.
		\item $p$ is $\vep$-scaled shadowed by the orbit of $x$ up to time $T$;
		\item $d(x_t, p_{\theta(t)})\le Ld(x,x_T)$;
		\item for $\iota_x: = 2Ld(x,x_T)$, the unstable manifold of $p$ of size $\iota_x$ has non-empty transversal intersection with the stable manifold of $x$ of size $\iota_x$, and the stable manifold of $p$ of size $\iota_x$ has non-empty transversal intersection with the unstable manifold of $x_T$ of size $\iota_x$.
	\end{enumerate}
\end{lemma}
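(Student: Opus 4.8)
The statement is a $C^1$ version of Katok's shadowing lemma adapted to singular flows, where the role of Pesin blocks is played by the set of infinite hyperbolic times $\Lambda^*(\lambda)$ and the role of the $C^{1+\alpha}$ linearization is played by Liao's tubular neighborhood theory together with the fake foliations constructed in Section~\ref{ss.fake}. The plan is to deduce it from the Liao--Gan shadowing lemma for the linear Poincar\'e flow (see~\cite{Liao, Gan02}) combined with the uniform-size invariant manifolds from Proposition~\ref{p.inv.mld} and the exponential contraction/expansion estimates along fake leaves at hyperbolic times (Lemma~\ref{l.hyp.fakeEleaf} and Lemma~\ref{l.hyptime.dist}). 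First I would fix the compact set $\Lambda_0\subset\Reg(X)$, so that the flow speed $|X|$ is bounded away from zero and above on a neighborhood of $\Lambda_0$; this makes the relative scale $\rho|X|$ in Liao's theory genuinely uniform on $\Lambda_0$, and we may then work with a fixed small scale $\tilde\delta_0$. From the hyperbolic-time assumptions (2) and (3) and Proposition~\ref{p.inv.mld}, the point $x$ carries a local stable manifold $W^s_{\loc,\cN}(x)$ on its normal plane of size $\rho_\textit{inv}|X(x)|$, and $x_T$ carries a local unstable manifold of comparable size; by the choice of $\Lambda_0$ these sizes are bounded below by a uniform constant $\tilde\delta_0$, independent of $T$.

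\textbf{Key steps.} (i) \emph{Closing up on the normal plane.} Given $(x,T)$ with $d(x,x_T)<\tilde\delta$, project $x$ to the normal plane $\cN(x_T)$ of $x_T$ via the holonomy-type map $\cP_{x_T}$ (Proposition~\ref{p.tubular}), obtaining a point in $\cN_{\rho_0|X(x_T)|}(x_T)$ at distance $\le L\,d(x,x_T)$ from $x_T$ for a uniform $L$. Using the local product structure from the fake foliations, write this point via its $E$- and $F$-coordinates; the $F$-coordinate lands (approximately) on $W^u_{\loc,\cN}(x_T)$ up to an error controlled by $d(x,x_T)$, and similarly, following the orbit back, the $E$-coordinate relates to $W^s_{\loc,\cN}(x)$. (ii) \emph{Finding the periodic point.} Consider the return map $\cP_{T,x}$ composed with the projection identifying $\cN(x_T)$ near $x_T$ with $\cN(x)$ near $x$; this is a map that contracts the $E$-direction by $\lambda_1^{-T}$ (Lemma~\ref{l.hyp.fakeEleaf}) and expands the $F$-direction by $\lambda_1^{T}$ (apply the same to $-X$, using that $x_T$ is an $F$-backward hyperbolic time), both up to the flow-speed ratio which is uniformly bounded on $\Lambda_0$. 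A graph-transform / contraction-mapping argument on the product of an $E$-disk and an $F$-disk of size $\tilde\delta_0$ then produces a unique fixed point $\bar p$ in $\cN(x)$ provided $\tilde\delta$ is small enough; its full orbit $p$ is periodic, with period $\theta(T)$ where $\theta$ is the time reparametrization arising from $\tau_{x,y}(\cdot)$ in Definition~\ref{d.shadow1}. (iii) \emph{Estimating $\theta$.} The bound $|\theta'(t)-1|<\varepsilon$ and $|\theta(T)-T|\le C\varepsilon$ follow exactly as in the estimate of term $III$ in the proof of Theorem~\ref{t.Bowen}; indeed this is precisely the content of Proposition~\ref{p.timedrift}, applied to the orbit of $p$ which is $\varepsilon$-scaled shadowed by the orbit of $x$ and for which $x$ (resp.\ $x_T$) is a forward (resp.\ backward) hyperbolic time. (iv) \emph{Hyperbolicity and invariant manifolds of $p$.} Since $p$ is $\varepsilon$-scaled shadowed by $(x,T)$ and $x$ is an $E$-forward hyperbolic time, $x_T$ an $F$-backward one, the linear Poincar\'e cocycle over the periodic orbit of $p$ inherits the Pliss inequalities, so $p$ is a hyperbolic periodic point; its stable/unstable manifolds have size at least $\tilde\delta_0$ by the uniform-continuity of $DP^*_{1,\cdot}$ in Liao's scale (Proposition~\ref{p.tubular3}(2)) exactly as in Proposition~\ref{p.inv.mld}. (v) \emph{Transversal intersections.} Conclusions (e) and (f): the distance estimate $d(x_t,p_{\theta(t)})\le L\,d(x,x_T)$ comes from (i) and the boundedness of $\cP$ and the flow on $\Lambda_0$; the transversal intersection $W^u_{\iota_x}(p)\pitchfork W^s_{\iota_x}(x)\ne\emptyset$ (and the symmetric statement at $x_T$) follows because both manifolds are graphs over the $F$-cone (resp.\ $E$-cone) passing within $L\,d(x,x_T)$ of each other, hence intersect inside a disk of radius $2L\,d(x,x_T)$, transversally because the cones are disjoint and the domination forces the angle to stay bounded away from zero.

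\textbf{Main obstacle.} The delicate point is the time reparametrization $\theta$ and keeping all size estimates \emph{uniform in $T$}: the contraction/expansion rates $\lambda_1^{\mp T}$ are multiplied by the ratio $|X(x_t)|/|X(x)|$, which is uniformly bounded only because $x,x_T\in\Lambda_0$ and because the intermediate orbit, being $\varepsilon$-scaled shadowed, stays within Liao's tubular neighborhood where the holonomy maps are well-defined and $C^1$-bounded; one must verify that the graph transform stays inside the region where Propositions~\ref{p.tubular}--\ref{p.tubular3} apply, which requires choosing $\tilde\delta$ after $\tilde\delta_0$, $L$ and $\varepsilon$ in the correct order. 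A secondary subtlety is that the fake foliations $\cF^{*}_{x,\cN}$ are only invariant under the \emph{integer-time} maps $\cP_{1,x}$, so the periodic point is produced for the time-one-map return and the period $\theta(T)$ need not be an integer; tracking this, and the resulting $C^1$ function $\theta$ with $\theta(0)=0$, is where the argument for term $III$ in Theorem~\ref{t.Bowen} (equivalently Lemma~\ref{l.tau} and Proposition~\ref{p.timedrift}) does the heavy lifting. Everything else is a routine transcription of the classical $C^1$ closing/shadowing argument of Liao and Gan into the scaled-linear-Poincar\'e-flow setting, so I would cite~\cite{Liao, Gan02, LLL} for the parts that are identical and concentrate the write-up on the flow-speed bookkeeping and the transversality statement (f).
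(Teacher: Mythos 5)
The paper does not prove Lemma~\ref{l.Liao.shadowing} at all: immediately after the statement it says only that ``the version cited here can be found in~\cite[Section 4.2, Lemma 4.5]{PYY}; see \cite{Liao81} and \cite{Gan02, LLL} for the proof,'' and moves on. Your proposal, by contrast, attempts a self-contained reconstruction of the shadowing argument from the paper's own machinery (fake foliations, Liao tubular neighborhoods, the scaled linear Poincar\'e flow, and Propositions~\ref{p.inv.mld} and~\ref{p.timedrift}). So the comparison is: the paper buys brevity by delegating to the literature, while your route would make the appendix self-contained at the cost of re-deriving a known (and somewhat technical) result.

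Your sketch is broadly sound and identifies the right ingredients. Two points deserve more care if you were to actually write it out. First, the contraction estimate $4\frac{|X(x_T)|}{|X(x)|}\lambda_1^{-T}$ from Lemma~\ref{l.hyp.fakeEleaf} is only $<1$ once $T$ exceeds a threshold depending on $\lambda_1$ and on $M:=\sup\{|X(y)|/|X(z)|:y,z\in\Lambda_0\}$; your graph-transform step tacitly assumes $T$ large, whereas the statement quantifies over all $(x,T)$. This is harmless for the intended application in Lemma~\ref{l.measure.intersect} (where $T$ is produced by Poincar\'e recurrence and can be made large), but it should be made explicit, as the cited Liao--Gan statements typically include a lower bound on the arc length. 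Second, producing a genuine periodic orbit rather than merely a fixed point of an approximate return map requires handling the time reparametrization and the closing condition simultaneously; in Liao's and Gan's treatments this is done via a parametrized implicit-function argument, not a one-sided graph transform on the normal plane followed by a separate estimate of $\theta$ as in your step (iii). Citing Proposition~\ref{p.timedrift} gives the right \emph{size} for $|\theta(T)-T|$, but the function $\theta$ itself has to come out of the same fixed-point argument that produces $p$. Since the paper itself punts to \cite{Gan02, LLL} for exactly this, your instinct to cite those sources for the identical parts and only elaborate on the flow-speed bookkeeping and on conclusion~(f) is the right call.
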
  

The version cited here can be found in~\cite[Section 4.2, Lemma 4.5]{PYY}; see \cite{Liao81} and \cite{Gan02, LLL} for the proof.

\begin{proof}[Proof of Lemma~\ref{l.measure.intersect}]
	Let $\mu$ be an ergodic invariant regular measure. We only consider the case $\mu(\Lambda^*_W(\lambda_0))\ne 0$ for $* = E,F$, otherwise one take $\tilde \Lambda^*_W(\lambda_0) = \emptyset$ and there is nothing to prove. Since $\mu$ is ergodic, there exists a time $\tau_0\ge 0$ such that $f_{\tau_0}(\Lambda^F_W)\cap \Lambda^E_W$ has positive measure.  Denote the closure of this set by $\tilde \Lambda_W$ and note that $\tilde \Lambda_W$ contains no singularity. 
	
	Fix any $\lambda>1$ and let $(y,t)\in\Lambda\times\RR^+$  be such that $y, z := f_t(y)\in f_{\tau_0}(\Lambda^F_W)\cap \Lambda^E_W$. Then, treating it as an orbit segment of the accelerated vector field $Y = \tau \cdot X$ for some $\tau \gg \tau_0$ depending $\lambda$ but not on $(y,t)$, we see that $y$ is a $(\lambda,E)$-forward infinite hyperbolic time and $z$ is a $(\lambda,E)$-backward infinite hyperbolic time for the vector field $Y$. Applying Lemma \ref{l.Liao.shadowing} to the vector field $Y$ and the compact set $\tilde \Lambda_W$, one obtains $L>1$ and for any $\vep>0$ one obtains $\tilde \delta >0$. Note that $L$ does not depend on $\vep$ or $\tilde\delta$.
	
	Now let $y\in f_{\tau_0}(\Lambda^F_W)\cap \Lambda^E_W$ be a Birkhoff typical point of $\mu$. Then by Poincar\'e Recurrence Theorem, one can find $t_y>0$ such that $f^Y_{t_y}(y) \in B_{\tilde \delta}(y)\cap f_{\tau_0}(\Lambda^F_W)\cap \Lambda^E_W$; here  $(f^Y_{t})$ denote the flow for the vector field $Y$. Lemma \ref{l.Liao.shadowing} provides a hyperbolic periodic orbit $\gamma$ of $Y$ (and therefore of $X$) such that for $\iota_y : = 2Ld(y,f^Y_{t_y}(y)) < 2L\tilde\delta$,  the unstable manifold of $\gamma$ with size $\iota_y$ has non-empty transversal intersection with the stable manifold of $y$ with size $\iota_y$. In other words, the conclusion of Lemma \ref{l.measure.intersect} holds for points in a full measure subset of $f_{\tau_0}(\Lambda^F_W)\cap \Lambda^E_W \subset \Lambda^E_W.$ Also note that $\tilde \delta \to 0$ as $\vep \to 0$, so consequently $\iota_y$ can be made arbitrarily small (although $\gamma$ depends on $\vep$). 
	
	Let $\delta>0$ be fixed. Now we show that the same conclusion holds for a full measure subset of $\Lambda^E_W$ with the point of intersection taken inside $W_{\delta, \cN}^s(x)$. Let $x\in\Lambda^E_W$ be a Birkhoff typical point of $\mu$. Then there exists $t_x > 0$ such that $y: =f_{t_x}(x) \in f_{\tau_0}(\Lambda^F_W)\cap \Lambda^E_W $. By taking $\vep$ small enough, one can require $\iota_y$ to be so small such that $W_{\iota_y,\cN}^s(y)\subset \cP_{t_x,x}\left( W_{\delta, \cN}^s(x)\right)$. Here $\cP_{t_x,x}\left( W_{\delta, \cN}^s(x)\right)$ is well-defined because $x$ is a forward infinite hyperbolic time, and therefore every point in the stable manifold of $x$ is $\rho_0$-scaled shadowed by the orbit of $x$ for all $t>0$ (Lemma \ref{l.hyp.fakeEleaf}). The previous argument shows that there exists a hyperbolic periodic orbit $\gamma$ such that $W_{\iota_y,\cN}^s(y)\pitchfork W^u(\gamma)\ne\emptyset.$ By invariance, we get $W_{\delta,\cN}^s(x)\pitchfork W^u(\gamma)\ne\emptyset.$

\end{proof}


\begin{thebibliography}{99}
\bibitem{Abd}
F. Abdenur.
\newblock  Generic robustness of spectral decompositions.
\newblock {\em Ann. Sci. École Norm. Sup.} (4) 36 (2003), no. 2, 213--224.

\bibitem{ABS}
\newblock V. S. Afraimovich, V. V. Bykov, and L. P. Shil’nikov.
\newblock On the appearence and structure of the Lorenz attractor.
\newblock {\em Dokl. Acad. Sci. USSR}, 234:336--339, 1977.


\bibitem{ABV00}
J.~F. Alves, C.~Bonatti, and M.~Viana.
\newblock S{RB} measures for partially hyperbolic systems whose central
direction is mostly expanding.
\newblock {\em Invent. Math.}, 140:351--398, 2000.

\bibitem{Anosov}
D. Anosov.
\newblock  Geodesic Flows on Closed Riemannian Manifolds with Negative Curvature. \newblock {\em Proc. Steklov Inst. Math.}, 90 (1969), 1--235.

\bibitem{AM16}
V. Ara\'ujo and I. Melbourne.
\newblock  Exponential decay of correlations for nonuniformly hyperbolic flows with a $C^{1+\alpha}$ stable foliation, including the classical Lorenz attractor. 
\newblock  {\em Ann. Henri Poincar\'e} 17 (2016), no. 11, 2975--3004.
 
\bibitem{AM}
V. Ara\'ujo and I. Melbourne.
\newblock Existence and smoothness of the stable foliation for sectional hyperbolic attractors. 
\newblock {\em Bull. Lond. Math. Soc.} 49 (2017), no. 2, 351--367. 

\bibitem{AMV15}
V. Ara\'ujo, I. Melbourne and P. Varandas.
\newblock Rapid mixing for the Lorenz attractor and statistical limit laws for their time-1 maps.
\newblock {\em Comm. Math. Phys.} 340 (2015), no. 3, 901--938. 


\bibitem{ArPa10} V.~Ara{\'u}jo and M.~J. Pacifico; 
\newblock {\em
	Three-dimensional flows}, volume~53 of \emph{Ergebnisse der Mathematik und
	ihrer Grenzgebiete. 3. Folge. A Series of Modern Surveys in Mathematics
	[Results in Mathematics and Related Areas. 3rd Series. A Series of Modern
	Surveys in Mathematics]}. \newblock Springer, Heidelberg, 2010. \newblock %
With a foreword by Marcelo Viana.



\bibitem{APPV09}
V.~Ara{\'u}jo, M.~J. Pacifico; E. R. Pujals; M. Viana.
\newblock  Singular-hyperbolic attractors are chaotic. 
\newblock {\em Trans. Amer. Math. Soc.} 361 (2009), no. 5, 2431--2485.

\bibitem{AR}
A. Arroyo and F. Rodriguez Hertz.
\newblock Homoclinic bifurcations and uniform hyperbolicity for three-dimensional flows. 
\newblock {\em Ann. Inst. H. Poincaré C Anal. Non Lin\'eaire} 20 (2003), no. 5, 805--841.


\bibitem{BDJ}
J. Bohorquez, A. da Luz and N. Jaque.
\newblock Star vector fields on three-manifolds are multi-singular hyperbolic.
\newblock Preprint. Available at arXiv:2204.07085.


\bibitem{BC}
C. Bonatti and S. Crovisier. 
\newblock R\'ecurrence et g\'en\'ericit\'e. (French) [Recurrence and genericity]
\newblock {\em Invent. Math.} 158 (2004), no. 1, 33--104.

\bibitem{BD21}
C. Bonatti and A. da Luz.
\newblock Star flows and multisingular hyperbolicity. 
\newblock {\em J. Eur. Math. Soc. (JEMS)} 23 (2021), no. 8, 2649--2705.

\bibitem{BDV}
C. Bonatti, L. Diaz and M. Viana.
\newblock Dynamics beyond uniform hyperbolicity.
\newblock Encyclopaedia of Mathematical Sciences, 102. Mathematical Physics, III. Springer-Verlag, Berlin, 2005.

\bibitem{BGW}
C. Bonatti, S. Gan and L. Wen.
\newblock On the existence of non-trivial homoclinic classes.
\newblock {\em Ergodic Theory Dynam. Systems} 27 (2007), no. 5, 1473--1508.

\bibitem{BGY}
C. Bonatti, S. Gan and D. Yang.
\newblock Dominated chain recurrent class with singularities. 
\newblock {\em Ann Sc Norm Super Pisa Cl Sci}, (5), 2015, 14: 83--99.

\bibitem{BPV}
C. Bonatti, A. Pumari\~no and M. Viana.
\newblock Lorenz attractors with arbitrary expanding dimension. 
\newblock {\em C. R. Acad. Sci. Paris S\'er. I Math.} 325 (1997), no. 8, 883--888. 




\bibitem{B71}
R. Bowen. 
\newblock Entropy for group endomorphisms and homogeneous spaces.
\newblock {\em  Trans. Amer. Math. Soc.} 153 (1971), 401--414.


\bibitem{B72}
R. Bowen.
\newblock Entropy expansive maps.
\newblock {\em Trans. Amer. Math. Soc.}, 164:323--331, 1972.

\bibitem{B75}
R. Bowen.
\newblock Some systems with unique equilibrium states.
\newblock {\em Math. Syst. Theory} 8(3) (1975) 193--202.

\bibitem{B75_2}
R. Bowen.
\newblock Equilibrium States and the Ergodic Theory of Anosov Diffeomorphisms.
\newblock {\em Lect. Notes in Math.} vol. 470, Springer Verlag, 1975.

\bibitem{BR}
R. Bowen and D. Ruelle.
\newblock The ergodic theory of Axiom A flows.
\newblock {\em  Invent. Math.} 29(3) (1975) 181--202.

\bibitem{Bu79} {Bumimovich L., Sinai Y.,} {Stochasticity of the attractor in the Lorenz model}, {Nonlinear Waves, Proc. Winter School, Gorki 1977} (Moscow: Nauka) pp 212---26, 1979.

\bibitem{BCTT}
K. Burns, V. Climenhaga, T. Fisher and D. J. Thompson.
\newblock  Unique equilibrium states for geodesic flows in nonpositive curvature.
\newblock {\em Geom. Funct. Anal.} 28 (2018), no. 5, 1209--1259.


\bib{BW}
K. Burns and A. Wilkinson.
\newblock On the ergodicity of partially hyperbolic systems.
\newblock {\em Annals of Math.}, 171: 451--489, 2010.

\bibitem{Buzzi06}
J. Buzzi and R. Ruette.
\newblock Large entropy implies existence of a maximal entropy measure for interval maps
\newblock {\em Discrete Contin. Dyn. Syst.} 14 (2006), no. 4, 673--688.


\bibitem{CFT18}
V. Climenhaga, T. Fisher and D. Thompson.
\newblock Unique equilibrium states for Bonatti-Viana diffeomorphisms. 
\newblock {\em Nonlinearity} 31 (2018), no. 6, 2532--2570.

\bibitem{CFT19}
V. Climenhaga, T. Fisher and D. Thompson.
\newblock Equilibrium states for Ma\~n\'e diffeomorphisms. 
\newblock {\em Ergodic Theory Dynam. Systems} 39 (2019), no. 9, 2433--2455.

\bibitem{CKW}
V. Climenhaga, Vaughn; G. Knieper and K. War.
\newblock Uniqueness of the measure of maximal entropy for geodesic flows on certain manifolds without conjugate points. 
\newblock {\em Adv. Math.} 376 (2021), Paper No. 107452, 44 pp.



\bibitem{CT16}
V. Climenhaga and D. Thompson.
\newblock Unique Equilibrium states for flows and homeomorphisms with non-uniform structure.
\newblock {\em Adv. Math.} 303:745--799, 2016.





\bibitem{Con}
C. Conley.
\newblock Isolated invariant sets and the Morse index. 
\newblock CBMS Regional Conference Series in Mathematics, 38. American Mathematical Society, Providence, R.I., 1978.

\bibitem{Cr06}
S. Crovisier.
\newblock Periodic orbits and chain transitive sets of $C^1$-diffeomorphisms.
\newblock {\em Publ. Math. Inst. Hautes \'Etudes Sci.}, 104 (2006), 87--141.

\bibitem{CDYZ}
S. Crovisier, A. da Luz, D. Yang, and J. Zhang.
\newblock On the notions of singular domination and (multi)singular hyperbolicity. 
\newblock {\em Sci. China Math.} 63 (2020), no. 9, 1721--1744.

\bibitem{CWYZ}
S. Crovisier, X. Wang, D. Yang and J. Zhang.
\newblock Ergodic properties of multi-singular hyperbolic vector fields
\newblock Preprint. arXiv:2305.03910.


\bibitem{CY1}
S. Crovisier and D. Yang.
\newblock Homoclinic tangencies and singular hyperbolicity for three dimensional
vector fields.
\newblock Preprint. arXiv:1702.05994v2.

\bibitem{CY}
S. Crovisier and D. Yang.
\newblock Robust transitivity of singular hyperbolic attractors.
\newblock {\em  Math. Z.} 298 (2021), no. 1-2, 469--488.

\bibitem{CY25}
S. Crovisier and D. Yang.
\newblock  On the nonlinear Poincar\'e flow.
\newblock  {\em Nonlinearity} 38 (2025), no. 3, Paper No. 035022, 25 pp.




\bibitem{CS}
V. Cyr and O. Sarig.
\newblock Spectral gap and transience for Ruelle operators on countable Markov shifts.
\newblock {\em Comm. Math. Phys.} 292 (2009), no. 3, 637--666.


\bibitem{daLuz}
A. da Luz.
\newblock Star flows with singularities of different indices.
\newblock Available at https://arxiv.org/abs/1806.09011


\bibitem{Franco}
E. Franco.
\newblock Flows with unique equilibrium states.
\newblock {\em Amer. J. Math.} 99(3) (1977) 486--514.

\bibitem{Franks}
J. Franks.
\newblock Necessary conditions for stability of diffeomorphisms.
\newblock {\em Trans. Amer. Math. Soc.}, 158 (1971), 301--308.

\bibitem{Gan02}
S. Gan.
\newblock A generalized shadowing lemma.
\newblock {\em Discrete Contin. Dyn. Syst.}, 8:627--632, 2002.


\bibitem{GW03}
S. Gan and L. Wen.
\newblock Heteroclinic cycles and homoclinic closures for generic diffeomorphisms.
\newblock {\em J. Dynam. Differential Equations} 15(2003), no.2--3, 451--471.

\bibitem{GW}
S. Gan and L. Wen.
\newblock Nonsingular star flows satisfy Axiom A and the no-cycle condition,
\newblock {\em Invent. Math.}, 164:279--315, 2006.


\bibitem{GY}
S. Gan and D. Yang.
\newblock Morse-Smale systems and horseshoes for three dimensional singular flows.
\newblock {\em Ann. Sci. \'Ec. Norm. Sup\'er.},  51:39--112, 2018. 


\bibitem{Gel}
K. Gelfert. 
\newblock Horseshoes for diffeomorphisms preserving hyperbolic measures.
\newblock {\em Math. Z.}, 283 (2016), 685--701.



\bibitem{Gu76}
{Guckenheimer, J.}, {A strange, strange attractor,} The Hopf bifurcation and its applications. Springer New York, 368-381, 1976.

\bibitem{GW79}
\newblock J. Guckenheimer and R. F. Williams.
\newblock Structural stability of Lorenz attractors. 
\newblock {\em Publ. Math. IHES}, 50:59--72, 1979.


\bibitem{Hayashi}
S. Hayashi.
\newblock Connecting invariant manifolds and the solution of the $C^1$ stability and $\Omega$-stability conjectures for flows. 
\newblock {\em Ann. of Math.} (2) 145 (1997), no. 1, 81--137.


\bibitem{Katok80}
A. Katok.
\newblock Lyapunov exponents, entropy and periodic orbits for diffeomorphisms. 
\newblock {\em Inst. Hautes \'Etudes Sci. Publ. Math.} No. 51 (1980), 137--173.

\bibitem{Katok}
A. Katok and B. Hasselblatt.
\newblock Introduction to the Modern Theory of Dynamical Systems.
\newblock Encyclopedia of Mathematics and its Applications, 54. Cambridge University Press, Cambridge, 1995

\bibitem{LY}
R. Leplaideur and D. Yang.
\newblock SRB measures for higher dimensional singular partially hyperbolic attractors.
\newblock  {\em Ann. Inst. Fourier (Grenoble)} 67 (2017), no. 6, 2703--2717.

\bibitem{LGW}
M. Li, S. Gan and L. Wen.
\newblock Robustly transitive singular sets via approach of extended linear Poincar\'e flow.
\newblock {\em Discrete Contin. Dyn. Syst.}, 13:239--269, 2005.


\bibitem{LLL}
M. Li, C. Liang and X. Liu.
\newblock A closing lemma for non-uniformly hyperbolic singular flows. 
\newblock {\em Comm. Math. Phys.} 405, no. 8, Paper No. 195, 35 pp, 2024.

\bibitem{Liao81}
S. T. Liao.
\newblock Obstruction sets. II. (in Chinese)
\newblock {\em Beijing Daxue Xuebao} 2 (1981), 1--36.



\bibitem{Liao}
S. T. Liao. 
\newblock On $(\eta, d)$-contractible orbits of vector fields. 
\newblock {\em Systems Science and Mathematical Sciences.}, 2:193--227, 1989.

\bibitem{Liao96}
S. T. Liao.
\newblock The qualitative theory of differential dynamical systems. 
\newblock Science Press, 1996.


\bibitem{LVY}
G. Liao, M. Viana and J. Yang.
\newblock The entropy conjecture for diffeomorphisms away from tangencies.
\newblock {\em J.  Eur. Math. Soc.}, 15:2043--2060, 2013.

\bibitem{LSWW}
M. Li, Y. Shi, S. Wang and X. Wang.
\newblock Measures of intermediate entropies for star vector fields.
\newblock {\em Israel J. Math.} 240 (2020), no. 2, 791--819.


\bibitem{Lo63} E.~N. Lorenz.
\newblock Deterministic nonperiodic flow. 
\newblock {\em {J. Atmosph. Sci.}}, {20}:{130--141}, {1963}.

\bibitem{LMP}
S. Luzzatto, I. Melbourne and F. Paccaut.
\newblock The Lorenz attractor is mixing. 
\newblock {\em Comm. Math. Phys.} 260 (2005), no. 2, 393--401.

\bibitem{Knieper}
G. Knieper.
\newblock The uniqueness of the measure of maximal entropy for geodesic flows on rank 1 manifolds.
\newblock {\em Ann. of Math.} (2) 148 (1998), no. 1, 291--314.


\bibitem{Mane.book}
R. Ma\~n\'e.
\newblock Ergodic theory and differentiable dynamics. Translated from the Portuguese by Silvio Levy. Ergebnisse der Mathematik und ihrer Grenzgebiete (3)
\newblock  [Results in Mathematics and Related Areas (3)], 8. Springer-Verlag, Berlin, 1987. xii+317 pp.


\bibitem{Mane}
R. Ma\~n\'e.
\newblock A proof of the $C^1$ stability conjecture. 
\newblock {\em Inst. Hautes \'Etudes Sci. Publ. Math.} No. 66 (1988), 161--210.


\bibitem{Margulis}
G.A. Margulis.
\newblock On some aspects of the theory of Anosov systems. With a survey by Richard Sharp: Periodic orbits of hyperbolic flows.
\newblock  Translated from the Russian by Valentina Vladimirovna Szulikowska. Springer Monographs in Mathematics. Springer-Verlag, Berlin, 2004. vi+139 pp.

\bibitem{MM}
R. Metzger and C. A. Morales.
\newblock Sectional-hyperbolic systems. 
\newblock {\em Ergodic Theory \& Dynam. Systems} 28:1587--1597, 2008.

\bibitem{MSV}
M. Morro, R. Sant'Anna and P. Varandas.
\newblock Ergodic optimization for hyperbolic flows and Lorenz attractors.
\newblock {\em Ann. Henri Poincar\'e} 21 (2020), no. 10, 3253--3283.


\bibitem{MPP99} 
C. A. Morales, M. J. Pacifico and E. R. Pujals.
\newblock Singular hyperbolic systems.
\newblock {\em Proc. Am. Math. Soc}. 127, (1999), 3393--3401.

\bibitem{MPP} Morales, C. A.; Pacifico, M. J.; Pujals, E. R. Robust transitive singular sets for 3-flows are partially hyperbolic attractors or repellers.  Ann. of Math. (2) 160 (2004), no. 2, 375--432


\bibitem{Newhouse}
S. Newhouse.
\newblock On a differentiable linearization theorem of Philip Hartman. 
\newblock {\em Modern theory of dynamical systems}, 209–262, Contemp. Math., 692, Amer. Math. Soc., Providence, RI, 2017.

\bibitem{PYY} M.\,J. Pacifico, F. Yang and J. Yang.
\newblock Entropy theory for sectional hyperbolic flows.  
\newblock {\em Ann. Inst. Henri Poincar\'e, Anal. Non Lin\'eaire}, {38} (2021), 1001--1030.


\bibitem{PYY21} M.\,J. Pacifico, F. Yang and J. Yang.
\newblock Existence and uniqueness of equilibrium states for systems with specification at a fixed scale: an improved Climenhaga–Thompson criterion. 
\newblock {\em Nonlinearity}, {35} (2022), no. 12, 5963--5992.

\bibitem{PYY23a} M.\,J. Pacifico, F. Yang and J. Yang.
\newblock An entropy dichotomy for generic singular star flows.
\newblock  {\em Trans. Amer. Math. Soc.}, 2023.


\bibitem{PYY23} M.\,J. Pacifico, F. Yang and J. Yang.
\newblock Equilibrium states for the classical Lorenz attractor and sectional-hyperbolic attractors in higher dimensions.
\newblock {\em Duke Math. J.}, 2025. 



\bibitem{PYYY24} M.\,J. Pacifico, F. Yang, J. Yang and G. Yao.
\newblock An Improved Climenhaga-Thompson Criterion for Locally Maximal Sets.
\newblock Preprint. Available at https://arxiv.org/abs/2503.23173.

\bibitem{Palis88}
J. Palis. 
\newblock On the  $C^1$ $\Omega$-stability conjecture.
\newblock {\em Inst. Hautes Études Sci. Publ. Math.} No. 66 (1988), 211--215.



\bibitem{Palis} J. Palis.
\newblock Open questions leading to a global perspective in dynamics. 
\newblock {\em Nonlinearity}, {21} (2008), T37--T43

\bibitem{Pliss}
V. Pliss.
\newblock On a conjecture due to Smale. 
\newblock {\em Diff. Uravnenija,} {9}: 262--268, 1972.

\bibitem{PS}
E. R. Pujals and M. Sambarino.
\newblock  Homoclinic tangencies and hyperbolicity for surface diffeomorphisms.
\newblock {\em Ann. of math.}, 151 (2000), 961--1023.

\bibitem{Ry90}{Rychlik, M. R.}, {Lorenz attractors through Shilnikov-type bifurcation.} Ergodic Theory Dynam. Systems 10, 793-821, 1990.

\bibitem{Rob89} {Robinson, C.}, {Homoclinic bifurcation to a transitive attractor of Lorenz type.} Nonlinearity 2, 495-518, 1989.

\bibitem{SGW}
Y. Shi, S. Gan and L. Wen.
\newblock On the singular hyperbolicity of star flows.
\newblock {\em J. Mod. Dyn.} 8:191--219, 2014. 

\bibitem{SY}
Y. Shi and F. Yang.
\newblock Star flows with singular equilibrium states. 
\newblock In preparation.

\bibitem{SYY}
Y. Shi, F. Yang and J. Yang.
\newblock A countable partition for singular flows, and its application on the entropy theory.
\newblock {\em Israel J. Math.} 249 (2022), no. 1, 375--429.

\bibitem{S72}
Y. Sinai.
\newblock Differentiable dynamical systems.
\newblock {\em Russ. Math. Surv.}  27, 21--69, 1972.

\bibitem{Smale}
S. Smale.
\newblock  Differentiable dynamical systems.
\newblock {\em Bull. Amer. Math. Soc.} 73(1967), 747--817.

\bibitem{Sparrow}
C. Sparrow.
\newblock The Lorenz equations: bifurcations, chaos and strange
attractors.
\newblock  volume 41 of Applied Mathematical Sciences. Springer Verlag, 1982.

\bibitem{Sternberg}
S. Sternberg.
\newblock Local contractions and a theorem of Poincar\'e.
\newblock {\em Amer. J. Math.} 79 (1957), 809--824.



\bibitem{Tu00}{Tucker, W.},{ The Lorenz attractor exists},{ Comptes Rendus de l'Acad{\'e}mie des Sciences-Series I-Mathematics},
volume 328, pages 1197-1202, 1999.

\bibitem{Tucker}
W. Tucker.
\newblock A rigorous ODE solver and Smale's 14th problem.
\newblock {\em Found. Comput. Math.} 2 (2002), no. 1, 53--117. 

\bibitem{Viana}
M. Viana.
\newblock  What's new on Lorenz strange attractors?
\newblock {\em Math. Intelligencer} 22 (2000), no. 3, 6--19. 

\bibitem{Wal} P. Walters: An Introduction to Ergodic Theory; Springer 1982.

\bibitem{WW}
X. Wen and L. Wen.
\newblock A rescaled expansiveness for flows.
\newblock {\em Trans. Amer. Math. Soc.} 371, no. 5, 3179--3207, 2019.


\bibitem{YZ}
D. Yang and J. Zhang.
\newblock Ergodic optimization for some dynamical systems beyond uniform hyperbolicity. 
\newblock {\em Dyn. Syst.} 37 (2022), no. 4, 630--647. 

\bibitem{Yang23}
J. Yang.
\newblock Geometrical and measure-theoretic structures of maps with a mostly expanding center.
\newblock {\em  J. Inst. Math. Jussieu} 22 (2023), no. 2, 919--959.


\bibitem{ZGW}
S. Zhu, S. Gan and L. Wen.
\newblock Indices of singularities of robustly transitive sets.
\newblock {\em Discrete Contin. Dyn. Syst.}, 21 (2008), 945--957.

\end{thebibliography}
\end{document}